\definecolor{Prune}{RGB}{99,0,60}
\definecolor{myprpl}{RGB}{255,0,130}
\DeclareFontFamily{U}{wncy}{}
\DeclareFontShape{U}{wncy}{m}{n}{<->wncyr10}{}
\DeclareSymbolFont{mcy}{U}{wncy}{m}{n}
\DeclareMathSymbol{\Sh}{\mathord}{mcy}{"58} 
\DeclareMathSymbol{\Be}{\mathord}{mcy}{"42} 
\newcommand\lcomp{{\scalebox{1}[1.5]{$\big[$}}}
\newcommand\rcomp{{\scalebox{1}[1.5]{$\big]$}}}
\newcommand{\fppf}{\mathrm{fppf}}
\newcommand{\lqf}{\mathrm{lqf}}
\newcommand{\et}{\mathrm{\acute{e}t}}
\newcommand{\Et}{\mathrm{\acute{E}t}}
\newcommand{\Br}{\mathrm{Br}}
\newcommand{\saut}{\mathrm{SAut}}
\newcommand{\sout}{\mathrm{SOut}}
\newcommand{\aut}{\mathrm{Aut}}
\newcommand{\out}{\mathrm{Out}}
\newcommand{\Gal}{\mathrm{Gal}}
\newcommand{\Cb}{{\overline{C}}}
\newcommand{\Gb}{{\hspace{1pt}\overline{\hspace{-1pt}G}}}
\newcommand{\Hb}{{\hspace{1.5pt}\overline{\hspace{-1.5pt}H}}}
\newcommand{\Qb}{{\overline{Q}}}
\newcommand{\Ub}{{\overline{U}}}
\newcommand{\Xb}{{\hspace{1.5pt}\overline{\hspace{-1.5pt}X}}}
\newcommand{\Ca}{{\overline{C'}}}
\newcommand{\Ga}{{\overline{G'}}}
\newcommand{\Caa}{\overline{C'_{k'_s}\hspace{-6pt}}\hspace{6pt}}
\newcommand{\Gaa}{\overline{G'_{k'_s}\hspace{-6pt}}\hspace{6pt}}
\newcommand{\ab}{\mathrm{ab}}
\DeclareMathOperator{\im}{im}
\DeclareMathOperator{\coker}{coker}
\DeclareMathOperator{\Spec}{Spec}
\DeclareMathOperator{\Hom}{Hom}
\newtheoremstyle{myplain}
  {.5em}       
  {.5em}       
  {\itshape}   
  {0pt}        
  {\scshape}   
  {.}          
  {5pt plus 1pt minus 1pt} 
  {}           
\newtheoremstyle{mydefinition}
  {.5em}       
  {.5em}       
  {\normalfont}
  {0pt}        
  {\bfseries}  
  {.}          
  {5pt plus 1pt minus 1pt} 
  {}           
\newtheoremstyle{myremark}
  {.5em}       
  {.5em}       
  {\normalfont}
  {0pt}        
  {\itshape}   
  {.}          
  {5pt plus 1pt minus 1pt} 
  {}           
\numberwithin{equation}{section}
\theoremstyle{myplain}
\newtheorem{thm}{Theorem}[subsection]
\newtheorem{lem}[thm]{Lemma}
\newtheorem{cor}[thm]{Corollary}
\newtheorem{prop}[thm]{Proposition}
\theoremstyle{mydefinition}
\newtheorem{defn}[thm]{Definition}
\newtheorem{exmp}[thm]{Example}
\theoremstyle{myremark}
\newtheorem{rem}[thm]{Remark}
\newtheorem*{intprf}{Proof}
\newtheorem*{intprfsk}{Sketch of Proof}
\newenvironment{prf}[1][]
    {\ifstrempty{#1}{\begin{intprf}}
                    {\begin{intprf}[\textit{#1}]}}
    {\qed\end{intprf}}
\patchcmd{\tocsection}{#1}{{\makebox(0pt,13pt){}}%
\textsection}{}{}
\patchcmd{\tocsubsection}{#1}{\hspace{1cm}
#1}{}{}
\leaders\hbox{\,.\,}\hfil}
\title[Separable Pseudo-reductive Bands with Applications to Rational Points]
{{\large{\centerline{Separable Pseudo-reductive Bands}\centerline{with Applications to Rational Points}}}
\hspace{0pt}
\\\;
\vspace{-7pt}
\\{
\normalfont{Azur ĐonlagiĆ}
}}
\begin{document}
\selectlanguage{english}

\begin{abstract}
\vspace{-5pt}

We extend the Galois-theoretic Springer-Borovoi interpretation of algebraic bands to a class of \'etale-locally represented bands on the fppf site of an arbitrary field $k$, which we call separable bands. Next, a band represented \'etale-locally over $k$ by a pseudo-reductive group is shown to be globally representable when $[k : k^p] = p$, with counterexamples in general.

When $k$ is a global or local field, we deduce a generalization of Borovoi's abelianization theory to separable bands represented by smooth connected algebraic groups. As an application, we prove that the Brauer-Manin obstruction is the only one to weak approximation on $X$, when $X$~is a homogeneous space of a pseudo-reductive group with smooth connected geometric stabilizer.
\end{abstract}
\maketitle

\vspace{-30pt}

\tableofcontents
\section{Introduction}

In the 1960s, J.\ Giraud introduced the concept of a ``band'' (fr.\ ``lien'') on a site, an object locally represented by group sheaves with gluing data defined up to inner automorphisms (see \cite{Gir71}). He assigned to each band a reasonably behaved $\mathrm H^2$ set with a distinguished subset of ``neutral elements'', generalizing the second cohomology group of a commutative group sheaf. Around the same time, a similar object (called a ``kernel'' in \cite{Spr66}) was defined in the context of groups and Galois cohomology by T.\ Springer, (almost) corresponding to the notion of a band on the small \'etale site $k_\et$ of a field $k$. 
The difference is in that, while Springer's kernels were defined over the separable closure $k_s$ of $k$ with ``outer'' Galois action, the continuity condition imposed on this action was too restrictive in practice (see Remark \ref{remcontcond}).

A more appropriate continuity condition was recognized by M.\ Borovoi and added in \cite{Brv93} to the definition of ``algebraic kernels'' (those represented by group schemes of finite type over~$k_s$; although not named as such in that paper). The definition of continuity was further refined~by Y.\ Flicker, C.\ Scheiderer and R.\ Sujatha in \cite{FSS98} into a form fully equivalent to Giraud's (algebraic) bands on $k_\et$. This equivalence is proven by C.\ Demarche and G.\ Lucchini Arteche in \cite{DLA19} (and reproven in the course of Subsection \ref{ssectetandsep} of this paper, which also highlights at which point in the proof algebraicity becomes necessary).

Our interest in the study of bands is due to the existence of a band (called the ``Springer band'' after \cite{Spr66} and equipped with a particular $\mathrm H^2$ class $\xi$) associated to a given homogeneous space of a group sheaf, which lifts to a principal homogeneous space if and only if the class $\xi$ is neutral. In \cite{Brv93}, Borovoi developed an abelianization theory for connected affine algebraic bands over number fields and applied it for the class $\xi$ to study the Hasse principle for homogeneous spaces. These ideas are also implicit in \cite{Brv96}, where he proves that the Brauer-Manin obstruction is the only obstruction to the Hasse principle (and weak approximation) for homogeneous spaces of connected affine algebraic groups with connected geometric stabilizers over number fields.

This theorem was extended by C.\ Demarche and D.\ Harari in \cite{DH22} to homogeneous spaces of (connected) reductive groups with (connected) reductive geometric stabilizers over global function fields.
The reductivity hypothesis is difficult to remove in positive characteristic,~due~to the existence of the following types of (affine) algebraic groups:
%
\begin{enumerate}[$\hspace{0.5cm}\bullet$]
    \item Non-split unipotent algebraic groups -- this is not a problem from the perspective of bands, but instead from the perspective of the theory of rational points (and that of Brauer-Manin obstructions), since it is difficult to control the first cohomology set of such groups
    \item Pseudo-reductive groups (i.e.\ smooth connected groups with trivial unipotent radical; see Appendix A) which are not reductive
    \item Non-smooth algebraic groups, which necessitate the use of fppf cohomology, instead~of~\'etale cohomology, especially in statements involving d\'evissage arguments
\end{enumerate}
%
In particular, the third point above shows that the fppf topology is the ``right one'' to consider when working with algebraic groups in positive characteristic. Even when working with smooth groups, the Springer-Borovoi theory does not agree with the notion of a band on the fppf site (in general, it defines multiple bands on $k_\Et$ ``lying over'' the same band on $k_\fppf$, in a sense to be made precise in Definition \ref{deflyingover}) and in particular does not calculate the fppf cohomology~$\mathrm H^2$~set. As a result, it is necessary to carefully distinguish these bands in positive characteristic:

\begin{exmp}
Let $G$ be a smooth algebraic group over a field $k$ with absolute Galois group $\Gamma\coloneqq\Gal(k_s/k)$. Because $G$ is smooth, the points $G(k_s)$ are schematically dense in $G$ and thus $\mathrm Z_G(k_s) = \mathrm Z(G(k_s))$, meaning that the formation of the (pre)sheaf center of $G$ commutes with restriction of the corresponding (pre)sheaf from $\mathrm{Sch}/k$ to $k_\et$ (this is implicit in \cite[1.5]{FSS98}). Let $\mathrm H^2_\et(k,G)$ (resp.\ $\mathrm H^2(k,G)$) be the second cohomology set associated to the unique band $L_\et$ on $k_\et$ (resp. $L$ on $k_\fppf$) represented by $G$. It is in canonical bijection with $\mathrm H^2_\et(k,\mathrm Z_G) = \mathrm H^2(\Gamma,\mathrm Z_G(k_s))$ (resp.\ $\mathrm H^2(k,\mathrm Z_G)$), defined by the global representative $G$.

The map $\mathrm H^2_\et(k,G)\rightarrow\mathrm H^2(k,G)$ is in general neither injective nor surjective, and neither is the map $\mathrm H^1(\Gamma,G(k_s)/\mathrm Z_G(k_s))\rightarrow\mathrm H^1(k,G/\mathrm Z_G)$ (see Proposition \ref{propdiaglet}). These two $H^1$ sets surject onto the sets of \'etale (resp.\ fppf) inner forms of $G$, which are exactly the global representatives of $L_\et$ (resp.\ $L$). Thus in particular, if $G$ is twisted by an fppf-torsor $P$ of $G/\mathrm Z_G$, the resulting inner twist $_PG$ represents the same band $L$ on $L_\fppf$, but the band $L'_\et$ which it represents on~$k_\et$ may be different from $L_\et$ in general!

Another subtle difference is in the notion of maximal Abelian quotient (over $k$) of $L$, resp.\ $L_\et$ (cf.\ Definition \ref{defmainabel}). In the fppf topology, this is the quotient algebraic group $G_\ab\coloneqq G/\mathcal D(G)$, where $\mathcal D(G)$ is the derived subgroup of $G$. However, in the \'etale topology, this is some sheaf $\mathcal F$ (on the big \'etale site $k_\Et$) such that $\mathcal F(k_s) = (G(k_s))_\ab$, but which may be non-representable.~The abelianization theory developed in this paper (Theorem \ref{thmmainabelh2}) works for the cohomology group $\mathrm H^2(k,G_\ab)$ which is, at least a priori, not intrinsically defined using only the \'etale site.
\end{exmp}

To partially extend the Galois-theoretic Springer-Borovoi definition to (algebraic) fppf bands, we propose in this paper an intermediate object called a ``separable band'': This is an algebraic band on the site $k_\fppf$ which admits a local representative over some finite separable extension $k'/k$. We also demand that any two such representatives become isomorphic over a common finite separable extension. This additional condition guarantees that such a band admits a Galois-theoretic description in a unique way (Definition \ref{defetsepband}) and it is satisfied when the local representatives are smooth algebraic groups (see Proposition \ref{propcharsepband}; the main argument here is that then $\mathrm H^1(k_s,G/\mathrm Z_G) = 1$, as suggested by the above example).

After showing some basic properties of separable bands, we devote most of the paper to studying their behavior over local and global fields of positive characteristic. Here, the essential case is that of pseudo-reductive separable bands (which is a well-behaved notion since, if some representative of a band over a finite separable extension is pseudo-reductive, then all such representatives are too). The main results of the paper are:
\begin{enumerate}[$\hspace{0.5cm}\bullet$]
    \item A global representability statement (based on the reductive case, \cite[V, Prop.~3.2]{Dou76})~for \'etale (resp.\ separable) pseudo-reductive bands -- Theorem~\ref{thmmainpsredrepr} (resp.\ Corollary~\ref{cormainpsredrepr})
    \item An $\mathrm H^2$ abelianization theory for smooth connected affine separable bands -- Theorem \ref{thmmainabelh2}\;
    \item A proof that the Brauer-Manin obstruction is the only obstruction to the Hasse principle and to weak approximation on a homogeneous space of a smooth connected affine algebraic group with split unipotent radical (e.g.\ a pseudo-reductive group)
    having smooth connected geometric stabilizer -- Theorem \ref{thmmainbmobs} 
\end{enumerate}
The paper is split into 5 sections (including this introduction) and 3 appendices, all of which have introductions which present their content in more detail. Here is a summary:

In \textit{Appendix A} we review the main aspects of the structure theory of pseudo-reductive groups (mostly following \cite{CGP15}) and clearly formulate them in a way which will allow their repeated use throughout the paper. In \textit{Appendix B} we review some facts about algebraic groups and their cohomology (including Weil restrictions), with an accent on technical statements in positive characteristic that have better-known or classical analogues in characteristic $0$.

\textit{Section 2} of the paper starts with a recollection of basic definitions related to bands on an arbitrary site, which then lead into a comparison with the Springer-Borovoi definition (objects which will further simply be called ``\'etale bands'') over a field. The definition of a separable~band is given in parallel; it is then systematically compared with and related to the \'etale bands lying over it. Last, we introduce the class of ``nicely representable bands with locally~algebraic~center'' on the site $k_\fppf$, which in particular includes (locally) algebraic bands, and we show that the~$\mathrm H^2$ set of such bands admits a description in terms of nonabelian fppf Čech $2$-cocycles.
\footnote{The question of existence of such a Čech cohomology theory in positive characteristic (as well as the remark that we cannot expect it to exist in general) was communicated to the author by prof.\ Borovoi in late 2023, during one of the professor's visits to Orsay. We are happy to give a positive answer in the algebraic case.}

In \textit{Section 3}, we fix a field $k$ of characteristic $p > 0$ such that $[k:k^p] = p$ holds and prove that a given \'etale (or separable) band represented over $k_s$ by a pseudo-reductive group always admits a global representative. We also show that this property fails over general fields even for very well-behaved pseudo-reductive groups (for example, Weil restrictions of semisimple, simply connected groups; see Example \ref{exmprepringen}). These results are further used in \textit{Section 4} to show the main abelianization theorems over a local or global field: The essential case is that of a pseudo-reductive separable band, where we may work with a fixed global representative.~Along~the way, we also prove that the natural map $\mathrm H^1(k,G)\rightarrow\mathrm H^1(k,G/\mathcal D(G))$ is surjective for a smooth connected affine algebraic group $G$ over a local or global field $k$ (Theorem \ref{thmmainabelh1}).

Finally, we apply in \textit{Section 5} the theory developed in previous sections. The main theorem on Brauer-Manin obstructions (generalizing \cite[Thms.\ 2.5, 4.1]{DH22}) is proven after a series of reduction steps and lemmas related to Springer bands and rational points. The key statement which connects these different areas is Lemma \ref{lemphslift}, which says that a smooth homogeneous space with smooth connected stabilizer and no Brauer-Manin obstruction can always be lifted to a principal homogeneous space. The proof of this lemma requires a technical step which is classically, in characteristic $0$ and \'etale topology, a direct consequence of some well-understood statements. In positive characteristic and fppf topology, the analogous statements require a substantial amount of work and difficult calculations using Čech cohomology. Their proof has thus been relegated to \textit{Appendix C}, of a slightly different flavor than the main text, dealing~with the fppf analogue of the $\mathrm{UPic}(\Xb)$ complex (cf.\ \cite{BvH09}) and its connection to Poitou-Tate~theory.

\begin{rem}
In the reductive case, Lemma \ref{lemphslift} reduces to the statement of \cite[Lem.~2.6]{DH22}. In characteristic $0$, it has recently been generalized to the context of ``nonabelian descent types'' by \foreignlanguage{vietnamese}{Nguyễn} M.\ Linh; see \cite[Prop.\ 3.9]{Ngu25}.
\end{rem}

This paper is part of the author's PhD project at the \textit{Laboratoire de Math\'ematiques d'Orsay} of Paris-Saclay University, funded through the PhD Track of \textit{Fondation Math\'ematique Jacques Hadamard}. The author would like to thank his thesis advisor, prof.\ David Harari, for introducing him to the study of rational points and for the very helpful and regular meetings throughout the writing of this work.

\begin{rem}\label{remnotconv}
A few words about notation and conventions: For a field $k$, we write $k_s$ (resp.~$\overline{k}$) for a fixed choice of separable (resp.\ algebraic) closure of $k$. By a ``(locally) algebraic group'' we mean a group scheme (locally) of finite type over a field, not necessarily smooth or affine. Importantly, each ``reductive group'' is assumed connected.

Group schemes are also routinely identified with the corresponding fppf sheaves over the same base scheme $X$ and we always write $\mathrm H^n(X, G)$ for the fppf cohomology of $G$. On~the~other hand, \'etale cohomology (on both the big site $X_\Et$ or small site $X_\et$, which compute the same cohomology) will be denoted by $\mathrm H^n_\et(X,G)$ or more often, when $X = \Spec(k)$ and $G$ is a locally algebraic group, by $\mathrm H^n(\Gamma,G(k_s))$ for $\Gamma = \Gal(k_s/k)$.

We may make this last identification because then the natural map $\varinjlim G(K)\rightarrow G(k_s)$ is an isomorphism, where the direct limit is taken over all finite separable field extensions $K/k$ in $k_s$ (see \cite[IV$_3$, Prop.\ 8.14.2]{EGA}).
\end{rem}

\section{Generalities on Bands}
In this section, we first recall the general concept of a band on a site and its relation~to~inner forms. Then we specialize to the case of the big \'etale or fppf site ($k_\Et$ or $k_\fppf$, respectively) over a field $k$ and compare this general definition with Galois-theoretic definitions given in \cite{Spr66}, \cite{Brv93}, \cite{FSS98} and \cite{DLA19}. This leads naturally to the concept of a separable band on $k_\fppf$, which we study for later use. In the last subsection we show that the $\mathrm{H}^2$ set of locally algebraic bands on $k_\Et$ or $k_\fppf$ admits an equivalent definition in terms of Čech cocycles.

\subsection{Bands, Twisting and Nonabelian $\mathrm{H}^2$ Sets}

Let $\mathcal C$ be a site with final object $S$. For simplicity of notation, assume that every covering of $S$ can be refined by a one-element covering. After this general introduction, we will consider only $S = \Spec(k)$ for a field $k$ and the \'etale or fppf sites $k_\Et$ or $k_\fppf$, respectively.

Given an object $T\in\mathcal C$, we denote by $\mathcal C/T$ the category of morphisms to $T$. For sheaves of groups $\mathcal F, \mathcal G\in\mathrm{Sh}_\mathrm{Grp}(\mathcal C/T)$, there is an obvious sheaf of sets $\mathcal{Isom}_{\mathcal F,\, \mathcal G}$ on $\mathcal C/T$. It is acted on from the right by $\mathcal{Aut}_{\mathcal F}\coloneqq\mathcal{Isom}_{\mathcal F,\, \mathcal F}\in\mathrm{Sh}_\mathrm{Grp}(\mathcal C/T)$, which also naturally acts on $\mathcal F$ from~the~left. The image of the natural map $\mathcal F\rightarrow\mathcal{Aut}_{\mathcal F}$ given by inner automorphisms is by definition the sheaf quotient $\mathcal F/\mathrm{Z}_{\mathcal F}$, where $\mathrm{Z}_{\mathcal F}$ denotes the center of $\mathcal F$.

Let $\mathcal{Out}_{\mathcal F,\,\mathcal G}$ denote the sheafified quotient of $\mathcal{Isom}_{\mathcal F,\, \mathcal G}$ by the above action of $\mathcal F/\mathrm{Z}_{\mathcal F}$. In concrete terms, we have the following direct limit taken over all coverings $T'\rightarrow T$ in $\mathcal C$:
\begin{equation}\label{eqdefshquot}
    \mathcal{Out}_{\mathcal F,\mathcal G}(T)\coloneqq\varinjlim_{T'\rightarrow T}\ker\left(\frac{\mathcal{Isom}_{\mathcal F,\mathcal G}(T')}{(\mathcal F/\mathrm Z_{\mathcal F})(T')}\rightrightarrows\frac{\mathcal{Isom}_{\mathcal F,\mathcal G}(T'\times_T T')}{(\mathcal F/\mathrm Z_{\mathcal F})(T'\times_T T')}\right)
\end{equation}
Note that, since we are considering only isomorphisms and not general morphisms, this has the same effect as taking the left-quotient by $\mathcal G/\mathrm{Z}_{\mathcal G}$. There is in particular a well-defined operation $$\circ\,:\,\mathcal{Out}_{\mathcal G,\,\mathcal H}\times\mathcal{Out}_{\mathcal F,\,\mathcal G}\longrightarrow\mathcal{Out}_{\mathcal F,\,\mathcal H}$$
induced by the usual composition of (iso)morphisms.

\begin{defn}\label{defband}
Consider a triple $(T{\rightarrow} S,\mathcal F,\varphi)$, where $T\rightarrow S$ is a covering, $\mathcal F\in\mathrm{Sh}_\mathrm{Grp}(\mathcal C/T)$ a group sheaf~and $\varphi\in\mathcal{Out}_{\mathrm{pr}_1^*\mathcal F,\,\mathrm{pr}_2^*\mathcal F}(T\times_S T)$. Such a triple is called a \textit{representative triple} if:  $$\mathrm{pr}_{13}^*\varphi = \mathrm{pr}_{23}^*\varphi\circ\mathrm{pr}_{12}^*\varphi
\;\;\textrm{ holds in }\;\;
\mathcal{Out}_{\mathrm{pr}_{13}^*\mathrm{pr}_1^*\mathcal F,\,\mathrm{pr}_{13}^*\mathrm{pr}_2^*\mathcal F}(T\times_S T\times_S T)$$
Here the projections $\mathrm{pr}_I$ have their usual meaning.
Thus $\varphi$ can be seen as a descent datum~of~$\mathcal F$ ``defined up to inner automorphisms'' with respect to the cover $p_0 : T\rightarrow S$.
If $p : T'\rightarrow T$ is a map such that $p_0\circ p$ is a refinement of $p_0$, the triples $(T{\rightarrow} S,\mathcal F,\varphi)$ and $(T'{\rightarrow} S,p^*\mathcal F,(p\times p)^*\varphi)$ will be considered equivalent, and we will say that the second \textit{refines} the first. An equivalence class of triples generated by this relation is called a \textit{band} (fr. \textit{lien}) on $\mathcal C$.

Given bands $L,L'$ represented by $(T{\rightarrow} S,\mathcal F,\varphi)$ and $(T{\rightarrow} S,\mathcal F',\varphi')$, respectively, consider an isomorphism of group sheaves $\alpha : \mathcal F\rightarrow\mathcal F'$ such that $\mathrm{pr}_2^*\alpha\circ\varphi = \varphi'\circ\mathrm{pr}_1^*\alpha$ in $\mathcal{Out}_{\mathrm{pr}_1^*\mathcal F,\,\mathrm{pr}_2^*\mathcal F'}(T\times_S T)$. An equivalence class of such isomorphisms $\alpha$, up to obvious base change, is an \textit{isomorphism of bands} $L\xrightarrow{\sim} L'$. Moreover, when such an $\alpha$ exists over the covering $T\rightarrow S$, we will say that this isomorphism is $T$\textit{-representable}.
\end{defn}

We remark now that there is a more general notion of morphism of bands, defined similarly with respect to the double-sided sheaf quotient $(\mathcal G/\mathrm{Z}_{\mathcal G}){\setminus}\mathcal{Hom}_{\mathcal F,\, \mathcal G}/(\mathcal F/\mathrm{Z}_{\mathcal F})$. We will not introduce such morphisms and will prefer to work more explicitly whenever relating two bands (for instance, in Lemma \ref{lemmainlift}). In such calculations, it will be useful to distinguish the following notion:

\begin{defn}\label{defnice}
A representative triple $(T{\rightarrow} S,\mathcal F,\varphi)$ will be called a \textit{nice triple} if $\varphi$ belongs to the image of the natural map
$$\mathcal{Isom}_{\mathrm{pr}_1^*\mathcal F,\,\mathrm{pr}_2^*\mathcal F}(T\times_S T)\longrightarrow\mathcal{Out}_{\mathrm{pr}_1^*\mathcal F,\,\mathrm{pr}_2^*\mathcal F}(T\times_S T)$$
induced on global sections by the formation of sheaf quotients.

A band defined by a representative triple $(T{\rightarrow} S,\mathcal F,\varphi)$ is also said to be \textit{locally represented} by $\mathcal F$. It is said to be \textit{trivial} or \textit{(globally) representable} if it can be represented a triple of the form $(\mathrm{id}_S,\mathcal F,\mathrm{id})$ for some group sheaf $\mathcal F$ on $S$. 
Note that, if a band is represented by some triple $(T{\rightarrow} S,\mathcal F,\varphi)$, then the cocycle condition on $\varphi$ forces that $\varphi|_T = [\mathrm{id}]\in\mathcal{Out}_{\mathcal F,\,\mathcal F}(T)$.~In~particular, this means that a band is globally representable if and only if it is represented by a nice triple in which the covering $T\rightarrow S$ is an isomorphism.
\end{defn}

\begin{rem}
Our definition of bands via representative triples follows \cite{DM82}, but there the isomorphism $\alpha$ is only required to exist as a section of $\mathcal{Out}_{\mathcal F,\,\mathcal F'}(T)$. The ultimate definition of morphisms of bands is of course completely equivalent, since our stronger condition can always be assumed up to refining the covering $T\rightarrow S$.

On the other hand, it does not follow that every representative triple can be refined by a nice one through refining $T\rightarrow S$, as there does not need to exist any refinement $T'\rightarrow S$ such that the map $T'\times T'\rightarrow T\times T$ factors through a given covering $R\rightarrow T\times T$. Nevertheless, we will see in Proposition \ref{propetrefin} that such a refinement does always exist when $\mathcal C$ is the \'etale site of a field. In particular, every band on that site is represented by a nice triple.

Note that, given a nice triple $(T{\rightarrow} S,\mathcal F,\varphi)$ and a lift $f : \mathrm{pr}_1^*\mathcal F\rightarrow\mathrm{pr}_2^*\mathcal F$ of $\varphi$, the automorphism $(\mathrm{pr}_{13}^*f)^{-1}\circ(\mathrm{pr}_{23}^*f)\circ(\mathrm{pr}_{12}^*f)$ of the sheaf $\mathrm{pr}_{12}^*\mathrm{pr}_1^*\mathcal F = \mathrm{pr}_{13}^*\mathrm{pr}_1^*\mathcal F$ lies in the kernel of the map
$$\mathcal{Aut}_{\mathrm{pr}_{13}^*\mathrm{pr}_1^*\mathcal F,\,\mathrm{pr}_{13}^*\mathrm{pr}_2^*\mathcal F}(T\times_S T\times_S T)\longrightarrow\mathcal{Out}_{\mathrm{pr}_{13}^*\mathrm{pr}_1^*\mathcal F,\,\mathrm{pr}_{13}^*\mathrm{pr}_2^*\mathcal F}(T\times_S T\times_S T)$$
by definition. Thus it is naturally a section in $(\mathcal F/\mathrm Z_{\mathcal F})(T\times_S T\times_S T)$. We will drop the prefix $\mathrm{pr}_{13}^*\mathrm{pr}_1^*$ in front of this expression and similar ones in this paper, always making the choice to favor pullbacks with respect to the smallest-indexed projections, e.g.\ $\mathrm{pr}_1$ and not $\mathrm{pr}_2$ (which we can do since we are considering only isomorphisms and not general morphisms). This consistent and subtle piece of notation should never cause confusion; it features in all of our later cocycle calculations (cf.\ Example~\ref{exmptwisting}, Definition \ref{defh2cech}, Subsections \ref{ssectspringer} and \ref{ssectptcomm}).

We take this opportunity to write down explicitly the data of a $T''$-representable isomorphism of bands defined between (the refinements of) two nice triples $(T{\rightarrow} S,\mathcal F,\varphi)$ and $(T'{\rightarrow} S,\mathcal F',\varphi')$, where $T''$ is a common refinement of $T,T'$. This is a group sheaf isomorphism $\alpha : \mathcal F\rightarrow\mathcal F'$ such that the following diagram (in which we fix arbitrary representatives $f,f'$ of $\varphi,\varphi'$, respectively)
\begin{equation}\label{eqTrepreq}
\begin{tikzcd}[column sep = huge]
\mathrm{pr}_1^*(\mathcal F_{T''})\arrow[d, "\mathrm{pr}_1^*\alpha"]\arrow[r, "f_{\; T''\times_{\!S\,} T''}"] & \mathrm{pr}_2^*(\mathcal F_{T''})\arrow[d, "\mathrm{pr}_2^*\alpha"]\\
\mathrm{pr}_1^*(\mathcal F'_{T''})\arrow[r, "f'_{\; T''\times_{\!S\,} T''}"] & \mathrm{pr}_2^*(\mathcal F'_{T''})
\end{tikzcd}
\end{equation}
of sheaves over $T''\times_S T''$ commutes up to the action of $\mathrm{pr}_1^*(\mathcal F/\mathrm Z_{\mathcal F})(T''\times_S T'')$ on $\mathrm{pr}_1^*(\mathcal F_{T''})$. Again, we simply write that this difference is an element of $(\mathcal F/\mathrm Z_{\mathcal F})(T''\times_S T'')$.
\end{rem}

\begin{exmp}
A band on $k_\Et$ or $k_\fppf$ is called \textit{algebraic} if it is locally represented by a group sheaf $\mathcal F$ which is an algebraic group. To similarly define an algebraic band on the small \'etale site $k_\et$, we must additionally require that the morphism $\varphi$ is represented by a morphism~of~group schemes (which does not hold in general, since group schemes don't embed as a full subcategory of \'etale sheaves on a small site). This condition is necessary to guarantee that a trivial algebraic band on $k_\et$ is represented by an algebraic group on $k$ (by Galois descent, cf.\ \cite[\textsection 2]{Spr66}).

A band is also often called any adjective which describes a geometric property of the group sheaf $\mathcal F$, in particular when that group sheaf is an algebraic group; for example \textit{smooth}, \textit{reductive} or \textit{Abelian} (meaning ``commutative'' in this context).
\end{exmp}

\begin{rem}
A much more conceptual definition of bands is the one given in \cite[IV, 1.1.5]{Gir71}:

Consider the (split) stack of group sheaves on $\mathcal C$ (fr. \textit{champ scind\'e des faisceaux de groups}) $\mathrm{FAGRSC}$, where each fiber $\mathrm{FAGRSC}(S) = \mathrm{Sh}_\mathrm{Grp}(\mathcal C/S)$ is the category of group sheaves over an object $S\in\mathcal C$ (equipped with canonical morphisms given by pullbacks). The stack $\mathrm{LIEN}$ on $\mathcal C$ is constructed by considering the objects of $\mathrm{FAGRSC}$ with morphisms only up to inner automorphisms (two group sheaf morphisms are identified if they can be related by giving~an~inner automorphism on each side), then stackifying the resulting prestack. A \textit{band} (fr. \textit{lien}) on $S$ is an element of the fiber $\mathrm{LIEN}(S)$.

The (underlying groupoid) category of bands on $S$ in this sense clearly agrees with our explicit construction given in Definition \ref{defband}. We do not consider general morphisms between bands.
\end{rem}

\begin{defn}\label{defh2}
A \textit{gerbe} on $\mathcal C$ is a stack $\mathscr X\rightarrow\mathcal C$ fibered in groupoids (meaning that, for each $R\in\mathcal C$, every morphism in the fiber $\mathscr X(R)$ has an inverse) such that:
\begin{itemize}
    \item $\mathscr X$ is locally nonempty: that is, $\mathscr X(T)\neq\varnothing$ for some covering map $T\rightarrow S$
    \item For every $R\in\mathcal C$, any two objects $a,b\in\mathscr X(R)$ are locally isomorphic: $\mathcal{Isom}_{a,b}(T_i)\neq\varnothing$ for some covering $\{T_i\rightarrow R\}_{i\in I}$ and all $i\in I$
\end{itemize}
A gerbe $\mathscr X$ is said to be \textit{trivial} if $\mathscr X(S)\neq\varnothing$.

To each gerbe we may associate a band as follows: Take a covering map $T\rightarrow S$ and an object $a\in\mathscr X(T)$. The objects $\mathrm{pr}_1^*a,\,\mathrm{pr}_2^*a\in\mathscr X(T\times_S T)$ are locally isomorphic, so we may choose isomorphisms between $\mathrm{pr}_1^*\mathcal{Aut}_a$ and $\mathrm{pr}_2^*\mathcal{Aut}_a$ locally on $T\times_S T$ which (trivially) glue up to local actions of $\mathcal{Aut}_a$. This defines a section $\varphi$ in $\mathcal{Out}_{\mathrm{pr}_1^*\mathcal{Aut}_a,\,\mathrm{pr}_2^*\mathcal{Aut}_a}(T\times_S T)$ and a triple $(T{\rightarrow} S, \mathcal{Aut}_a, \varphi)$ which is representative. Note that this section $\varphi$ is not necessarily represented by an isomorphism over $T\times_S T$; this will be discussed in Remark \ref{rembreen}.

The resulting band is unique up to unique isomorphism (induced by a change of choice of $T$ and $a$) and we denote it by $L(\mathscr X)$. Given a band $L$, a pair $(\mathscr X,\,L\simeq L(\mathscr X))$ is called an $L$\textit{-gerbe}. Usually we omit the isomorphism and simply say that $\mathscr X$ is a gerbe \textit{bound} by the band $L$.

We denote by $\mathrm{H}^2(\mathcal C, L)$ the set of gerbes (up to $L$-isomorphism, in a natural sense) bound by a fixed band $L$; also by $\mathrm{H}^2(S, L)$ if the site $\mathcal C$ is understood. The subset of trivial gerbes will be denoted by $\mathrm{N}^2(S, L)$ (the notation $\mathrm{H}^2(S, L)'$ is sometimes used in literature). This is the \textit{subset of neutral elements} in $\mathrm{H}^2(\mathcal C, L)$.
\end{defn}

\begin{exmp}
Given a group sheaf $\mathcal F$ on $S$, denote by $L(\mathcal F)$ the band globally represented by $(\mathrm{id}_S,\mathcal F,\mathrm{id})$. We often write $\mathrm{H}^2(S, \mathcal F)\coloneqq\mathrm{H}^2(S, L(\mathcal F))$. Moreover, there is a distinguished class $1_{\mathcal F}\in\mathrm{H}^2(S, \mathcal F)$, the class of the gerbe $\mathrm{TORS}(S, \mathcal F)$ of torsors of $\mathcal F$ on $\mathcal C$. Clearly, $1_{\mathcal F}\in\mathrm{N}^2(S, \mathcal F)$ since $\mathcal F$ itself is a $\mathcal F$-torsor in $\mathrm{TORS}(S, \mathcal F)(S)$. Conversely, if a band $L$ has $\mathrm{N}^2(S, L)\neq\varnothing$, then it is globally representable by the automorphism sheaf of any element of $\mathscr X(S)$, for any trivial $L$-gerbe $\mathscr X$.

If $\mathcal F$ is commutative, the set $\mathrm{H}^2(S, \mathcal F)$ is in canonical bijection with the ``usual'' $H^2$ group of $\mathcal F$ (and then $\mathrm{N}^2(S, \mathcal F) = \{1_{\mathcal F}\}$). There is a canonical group structure on the set $\mathrm{H}^2(S, \mathcal F)$ of gerbes (which is both defined and generalized by the proposition below) and it agrees with the usual group structure on $\mathrm{H}^2$. We thus keep the same notation for both instances of the group $\mathrm{H}^2(S, \mathcal F)$. See \cite[IV, \textsection2.5]{Mil80} for more details.
\end{exmp}

Starting from a band $L$ on $S$ represented by a triple $(T{\rightarrow} S, \mathcal F, \varphi)$, the restricted triple $(T{\rightarrow} S, \mathrm Z_{\mathcal F}, \varphi|_{\mathrm{pr}_1^*\mathrm Z_{\mathcal F}})$ defines a descent datum on $\mathrm Z_{\mathcal F}$. This datum represents a group sheaf $\mathrm Z_L$ over $S$ independent of any choices. We call it the \textit{center} of $L$.

\begin{prop}\label{propcentacth2}
There is a natural action of the commutative group $\mathrm{H}^2(S, \mathrm Z_L)$ on $\mathrm{H}^2(S, \mathcal F)$. If $\mathrm{H}^2(S, \mathcal F)$ is nonempty, then this action is both free and transitive.
\end{prop}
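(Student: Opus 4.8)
The plan is to build the action from the contracted product of gerbes and then read off freeness and transitivity from a canonical ``difference gerbe''. Write $L$ for the band in question, represented by $(T{\to}S,\mathcal F,\varphi)$. Since $\mathrm Z_L$ is commutative it is an honest sheaf on $S$ --- the descent datum $(T{\to}S,\mathrm Z_{\mathcal F},\varphi|_{\mathrm{pr}_1^*\mathrm Z_{\mathcal F}})$ involves no inner automorphisms --- so $L(\mathrm Z_L)$ is globally representable, $\mathrm H^2(S,\mathrm Z_L)$ is the usual abelian cohomology group, and for any $L$-gerbe $\mathscr X$, any covering $U\to S$ with $\mathscr X(U)\neq\varnothing$ and object $a\in\mathscr X(U)$, functoriality of the center gives a canonical central embedding $\mathrm Z_L|_U\hookrightarrow\mathcal{Aut}_a$. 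Given $[\mathscr X]\in\mathrm H^2(S,L)$ and $c\in\mathrm H^2(S,\mathrm Z_L)$ represented by a $\mathrm Z_L$-gerbe $\mathscr Z$, I would form the contracted product $\mathscr X\wedge^{\mathrm Z_L}\mathscr Z$ (the stackification of the prestack whose objects over $U$ are pairs $(a,z)$ and whose morphisms are pairs of morphisms modulo the antidiagonal action of $\mathrm Z_L$; cf.\ \cite{Gir71}, IV.3). Because the $\mathrm Z_L$-part is absorbed into $\mathcal{Aut}_a$ through the embedding above, one obtains a canonical identification $\mathcal{Aut}_{(a,z)}\xrightarrow{\sim}\mathcal{Aut}_a$, so $\mathscr X\wedge\mathscr Z$ is again an $L$-gerbe; set $[\mathscr X]\cdot c\coloneqq[\mathscr X\wedge\mathscr Z]$. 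That this is well defined (independent of $\mathscr Z$ in its class and of the local objects) and is an action --- $\mathscr X\wedge\mathrm{TORS}(S,\mathrm Z_L)\simeq\mathscr X$ and $\mathscr X\wedge(\mathscr Z_1\wedge\mathscr Z_2)\simeq(\mathscr X\wedge\mathscr Z_1)\wedge\mathscr Z_2$ --- is the unit law and associativity of the contracted product.

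For freeness and transitivity, assume $\mathrm H^2(S,L)\neq\varnothing$ and fix $[\mathscr X]$. For any $[\mathscr X']\in\mathrm H^2(S,L)$, let $\mathcal{Isom}_L(\mathscr X,\mathscr X')$ denote the stack on $\mathcal C$ whose fiber over $R$ is the groupoid of $L$-gerbe isomorphisms $\mathscr X|_R\to\mathscr X'|_R$. As $\mathscr X$ and $\mathscr X'$ are both bound by $L$, they are locally $L$-isomorphic, so this stack is locally nonempty; and the self-isomorphisms of any $L$-gerbe isomorphism are precisely the sections of $\mathrm Z_L$ --- here commutativity of $\mathrm Z_L$ is essential, ruling out any inner twist of the band --- so $\mathcal{Isom}_L(\mathscr X,\mathscr X')$ is a $\mathrm Z_L$-gerbe, with a class $d(\mathscr X,\mathscr X')\in\mathrm H^2(S,\mathrm Z_L)$. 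Evaluation $(a,u)\mapsto u(a)$ yields a canonical $L$-isomorphism $\mathscr X\wedge\mathcal{Isom}_L(\mathscr X,\mathscr X')\xrightarrow{\sim}\mathscr X'$, whence $[\mathscr X]\cdot d(\mathscr X,\mathscr X')=[\mathscr X']$ and the action is transitive. For freeness, one checks that $z\mapsto(x\mapsto x\wedge z)$ is an equivalence $\mathscr Z\xrightarrow{\sim}\mathcal{Isom}_L(\mathscr X,\mathscr X\wedge\mathscr Z)$ of $\mathrm Z_L$-gerbes; then $[\mathscr X]\cdot c=[\mathscr X]\cdot c'$ forces $\mathscr X\wedge\mathscr Z\simeq\mathscr X\wedge\mathscr Z'$ for chosen representatives, and applying $\mathcal{Isom}_L(\mathscr X,-)$ gives $\mathscr Z\simeq\mathscr Z'$, i.e.\ $c=c'$.

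The real content --- and the main obstacle --- lies in the two structural claims: that contracting by a central gerbe does not change the band ($\mathcal{Aut}_{(a,z)}\simeq\mathcal{Aut}_a$), and that $\mathcal{Isom}_L(\mathscr X,\mathscr X')$ is bound by the trivial band $L(\mathrm Z_L)$ rather than by some inner twist of $\mathrm Z_L$; together with the check that the contracted product is genuinely a stack (stackification, where the standing hypothesis that coverings of $S$ refine to one-element coverings is convenient). All of this reduces to the fact that a commutative sheaf has no inner automorphisms, so that $\mathrm Z_L$ is its own trivial band and the canonical embedding $\mathrm Z_L\hookrightarrow\mathcal{Aut}_a$ is compatible with every gluing; spelling this out carefully is the bulk of the work. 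Alternatively, once $L$ is replaced by a nice representative triple and one passes to a cover over which $\varphi$ lifts to an isomorphism $f$, one may describe an $L$-gerbe by a $\mathrm Z_L$-valued Čech $2$-cochain trivializing the associativity defect $(\mathrm{pr}_{13}^*f)^{-1}(\mathrm{pr}_{23}^*f)(\mathrm{pr}_{12}^*f)$ and read the action off as addition of $\mathrm Z_L$-valued $2$-cocycles; this makes freeness and transitivity transparent, but is less general than the proposition requires.
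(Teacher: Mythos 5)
Your proposal is correct and follows exactly the route the paper takes: the paper's proof is simply a citation of Giraud's \cite[IV, Thm.\ 3.3.3]{Gir71}, noting that the action is given by the contracted product of gerbes, which is precisely the construction you spell out (including the difference gerbe $\mathcal{Isom}_L(\mathscr X,\mathscr X')$ used there for freeness and transitivity). Your writeup is a reasonable expansion of that cited argument, and your closing Čech-cocycle remark matches the paper's later treatment in Subsection \ref{ssectcech}.
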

\begin{prf}
This is shown in \cite[IV, Thm.\ 3.3.3]{Gir71}. The action is given by the contracted product of gerbes (\cite[IV, \textsection2.4]{Gir71}), and we return to it in Proposition \ref{proph2actcomm}.
\end{prf}

Next, we consider twists of group sheaves by cocycles valued in the automorphism sheaf.~This serves to set up some notation for the remainder of the paper.

\begin{exmp}\label{exmptwisting}
Let $\mathcal F,\mathcal N$ be group sheaves on $S$ and consider a morphism $a : \mathcal N\rightarrow\mathcal{Aut}_{\mathcal F}$ of group sheaves. Let $\mathcal P$ be a sheaf of sets on $S$ such that there is a free and (locally) transitive right action $\mathcal P\times\mathcal N\rightarrow\mathcal P$. In other words, $\mathcal P$ is an $\mathcal N$-torsor with class $[\mathcal P]\in\mathrm{H}^1(S,\mathcal N)$. We consider the quotient sheaf ${_\mathcal P}\mathcal F\coloneqq (\mathcal P\times\mathcal F)/\mathcal N$ under the action given by $(x,f)\mapsto (x.n,\, a(n)^{-1}(f))$.

The sheaf ${_\mathcal P}\mathcal F$ has a natural group law given by (the descent of) $[x, f]\cdot[x, f'] = [x, ff']$. In particular, it is an $S$-form of $\mathcal F$ which becomes isomorphic to $\mathcal F$ over any covering $T\rightarrow S$ such that $\mathcal P(T)\neq\varnothing$, via isomorphisms of the form $[x.n, f]\mapsto a(n)(f)$. The group sheaf ${_\mathcal P}\mathcal F$ depends only on the image of the class $[\mathcal P]\in\mathrm{H}^1(S,\mathcal N)$ in $\mathrm{H}^1(S,\mathcal{Aut}_{\mathcal F})$, up to non-unique isomorphism.\quad

Conversely, any $S$-form $\mathcal F'$ of $\mathcal F$ arises in this way: $\mathcal F'\simeq{_\mathcal P}\mathcal F$ for $\mathcal P = \mathcal{Isom}_{\mathcal F,\mathcal F'}$. The right action of $\mathcal{Aut}_{\mathcal F}$ on $\mathcal{Isom}_{\mathcal F,\mathcal F'}$ is always transitive; it is free if and only if the sheaf $\mathcal{Isom}_{\mathcal F,\mathcal F'}$ is locally nonempty, which is the case if and only if $\mathcal F'$ is an $S$-form of $\mathcal F$. We may in fact find the class $[\mathcal P]\in\mathrm{H}^1(S,\mathcal{Aut}_{\mathcal F})$ as a cocycle:

Take some $\alpha\in\mathcal{Isom}_{\mathcal F,\mathcal F'}(T)$ for a covering map $T\rightarrow S$. For descent data $\varphi_{\mathcal F},\varphi_{\mathcal F'}$ associated to $\mathcal F,\mathcal F'$, there is a unique element $\gamma\in\mathcal{Aut}_{\mathcal F}(T\times_S T)$ such that the following square
\begin{center}\begin{tikzcd}[column sep = huge]
\mathrm{pr}_1^*(\mathcal F_T)\arrow[d, "\mathrm{pr}_1^*\alpha"]\arrow[r, "\varphi_{\mathcal F}\;\circ\;\gamma^{-1}"] & \mathrm{pr}_2^*(\mathcal F_T)\arrow[d, "\mathrm{pr}_2^*\alpha"]\\
\mathrm{pr}_1^*(\mathcal F'_T)\arrow[r, "\varphi_{\mathcal F'}"] & \mathrm{pr}_2^*(\mathcal F'_T)
\end{tikzcd}\end{center}
commutes. Pulling back the square in three different ways, we can piece together the following commutative diagram (writing $\mathrm{p}$ for $\mathrm{pr}$):
\begin{center}\begin{tikzcd}[cramped, column sep = tiny]
\mathrm{p}_{12}^*\mathrm{p}_1^*(\mathcal F_T)\arrow[d, "\mathrm{p}_{12}^*\mathrm{p}_1^*\alpha"]\arrow[rrrrrr, "\mathrm{p}_{12}^*(\varphi_{\mathcal F}\,\circ\,\gamma^{-1\!})"]
&&&&&& \mathrm{p}_{12}^*\mathrm{p}_2^*(\mathcal F_T)\arrow[d, "\mathrm{p}_{12}^*\mathrm{p}_2^*\alpha", swap]\arrow[r, equal]
& \mathrm{p}_{23}^*\mathrm{p}_1^*(\mathcal F_T)\arrow[d, "\mathrm{p}_{23}^*\mathrm{p}_1^*\alpha"]\arrow[rrrrrr, "\mathrm{p}_{23}^*(\varphi_{\mathcal F}\,\circ\,\gamma^{-1\!})"]
&&&&&& \mathrm{p}_{23}^*\mathrm{p}_2^*(\mathcal F_T)\arrow[d, "\mathrm{p}_{23}^*\mathrm{p}_2^*\alpha", swap]\arrow[r, equal]
& \mathrm{p}_{13}^*\mathrm{p}_2^*(\mathcal F_T)\arrow[d, "\mathrm{p}_{13}^*\mathrm{p}_2^*\alpha"]\arrow[rrrrrr, "\;\;\mathrm{p}_{13}^*(\varphi_{\mathcal F}\,\circ\,\gamma^{-1\!})^{-1\!}\!\!"]
&&&&&& \mathrm{p}_{13}^*\mathrm{p}_1^*(\mathcal F_T)\arrow[d, "\mathrm{p}_{13}^*\mathrm{p}_1^*\alpha", swap]
\\
\mathrm{p}_{12}^*\mathrm{p}_1^*(\mathcal F'_T)\arrow[rrrrrr, "\mathrm{p}_{12}^*\varphi_{\mathcal F'}"]
&&&&&& \mathrm{p}_{12}^*\mathrm{p}_2^*(\mathcal F'_T)\arrow[r, equal]
& \mathrm{p}_{23}^*\mathrm{p}_1^*(\mathcal F'_T)\arrow[rrrrrr, "\mathrm{p}_{23}^*\varphi_{\mathcal F'}"]
&&&&&& \mathrm{p}_{23}^*\mathrm{p}_2^*(\mathcal F'_T)\arrow[r, equal]
& \mathrm{p}_{13}^*\mathrm{p}_2^*(\mathcal F'_T)\arrow[rrrrrr, "\mathrm{p}_{13}^*\varphi_{\mathcal F'}^{-1}"]
&&&&&& \mathrm{p}_{13}^*\mathrm{p}_1^*(\mathcal F'_T)
\end{tikzcd}\end{center}
\bigskip

\noindent
Since the leftmost and rightmost vertical arrows coincide, and the bottom row is simply the identity map, we conclude that the top row is also the identity. Applying the cocycle property for $\varphi_{\mathcal F}$, we may compute:
$$\mathrm{pr}_{13}^*\gamma = \mathrm{pr}_{12}^*\gamma\circ\mathrm{pr}_{12}^*\varphi_{\mathcal F}^{-1}\circ\mathrm{pr}_{23}^*\gamma\circ\mathrm{pr}_{12}^*\varphi_{\mathcal F} = \mathrm{pr}_{12}^*\gamma\circ(\mathrm{pr}_{12}^*\,\varphi_{\mathcal{Aut}_{\mathcal F}}^{-1})(\mathrm{pr}_{23}^*\gamma)$$
Here we have used that the descent datum $\varphi_{\mathcal{Aut}_{\mathcal F}}$ of the automorphism sheaf is exactly given by $\mathrm{int}(\varphi_{\mathcal F}) : \gamma'\longmapsto\varphi_{\mathcal F}\circ\gamma'\circ\varphi_{\mathcal F}^{-1}$. Finally, this shows that $\gamma$ is a cocycle: $\gamma\in\mathrm{\check Z}^1(T/S,\mathcal{Aut}_{\mathcal F})$

The class $[\gamma]\in\mathrm{H}^1(S,\mathcal{Aut}_{\mathcal F})$ agrees with the class $[\mathcal P]$. If $\mathcal N\hookrightarrow\mathcal{Aut}_{\mathcal F}$ is a subsheaf and $\gamma\in\mathcal N(T\times_S T)$ inside $\mathcal{Aut}_{\mathcal F}(T\times_S T)$, then we see that $\mathcal F'\simeq{_\mathcal P}\mathcal F$ can even be constructed by starting from some $\mathcal N$-torsor $\mathcal P$ such that $[\mathcal P] = [\gamma]\in\mathrm{H}^1(S,\mathcal N)$.
\end{exmp}

\begin{defn}\label{definntwist}
Recall that an $S$-form of $\mathcal F$ is called an \textit{inner form} if it is constructed as above for $\mathcal N = \mathcal F/\mathrm Z_{\mathcal F}\hookrightarrow\mathcal{Aut}_{\mathcal F}$. It is moreover a \textit{pure inner form} if it can be constructed via the map $\mathcal N = \mathcal F\rightarrow\mathcal{Aut}_{\mathcal F}$. Such forms are hence parametrized by the natural image of $\mathrm{H}^1(S,\mathcal F/\mathrm Z_{\mathcal F})$ (resp. $\mathrm{H}^1(S,\mathcal F)$) in $\mathrm{H}^1(S,\mathcal{Aut}_{\mathcal F})$.
\end{defn}

We state the following proposition in a bit more generality than is usual, for later reference:\;\;

\begin{prop}\label{propparambandforms}
Suppose a band $L$ on $S$ is represented by a nice triple $(T{\rightarrow} S, \mathcal F, \varphi)$.~Let $\mathscr S(L,T)$ denote the set of nice triples of the form $(T{\rightarrow} S, \mathcal F', \varphi')$ representing bands isomorphic to $L$, up to $T$-representable equivalence. Then there is a canonical injection of $\mathscr S(L,T)$ into the set $\mathscr{I}(\mathcal F,T)\coloneqq\im(\mathrm{H}^1(T,\mathcal F/\mathrm Z_{\mathcal F})\rightarrow\mathrm{H}^1(T,\mathcal{Aut}_{\mathcal F}))$ of inner forms of $\mathcal F$ over $T$.
\end{prop}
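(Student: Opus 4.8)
The plan is to attach to each nice triple $(T{\to} S,\mathcal F',\varphi')\in\mathscr S(L,T)$ the isomorphism class of $\mathcal F'$, regarded as a form of $\mathcal F$ over $T$, to check that this form is \emph{inner}, and to show the resulting assignment descends to an injection on $T$-representable equivalence classes. To construct the map, fix a representative $(T{\to} S,\mathcal F',\varphi')$ together with an isomorphism of bands $L\xrightarrow{\sim}L'$; after replacing $T\to S$ by a refinement $T''\to S$ factoring through $T$ (which alters neither band nor the triples up to $T$-representable equivalence) this isomorphism becomes $T''$-representable, i.e.\ is given by a group sheaf isomorphism $\alpha:\mathcal F_{T''}\xrightarrow{\sim}\mathcal F'_{T''}$ over $T''$ such that, for chosen lifts $f,f'$ of $\varphi,\varphi'$, the square \eqref{eqTrepreq} commutes up to the action of $\mathrm{pr}_1^*(\mathcal F/\mathrm Z_{\mathcal F})(T''\times_S T'')$. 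In particular $\mathcal F'$ becomes isomorphic to $\mathcal F$ over the covering $T''\to T$, hence is a form of $\mathcal F$ over $T$, and I would send $(T{\to} S,\mathcal F',\varphi')$ to the class of the $\mathcal{Aut}_{\mathcal F}$-torsor $\mathcal{Isom}_{\mathcal F,\mathcal F'}$ in $\mathrm H^1(T,\mathcal{Aut}_{\mathcal F})$ --- a class visibly independent of the choices of the band isomorphism $L\xrightarrow{\sim}L'$, of $\alpha$ and of $T''$.

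The heart of the matter is to show this class lies in $\mathscr I(\mathcal F,T)$. By Example \ref{exmptwisting}, applied with base object $T$ and trivialising covering $T''\to T$, the class of $\mathcal F'$ is represented by the cocycle $\gamma=(\mathrm{pr}_1^*\alpha)^{-1}\circ\mathrm{pr}_2^*\alpha\in\mathrm{\check Z}^1(T''/T,\mathcal{Aut}_{\mathcal F})$, where one uses that $\mathcal F$ and $\mathcal F'$, being sheaves already defined over $T$, carry the trivial descent datum with respect to $T''/T$. I would then pull the square \eqref{eqTrepreq} back along the canonical map $\iota:T''\times_T T''\to T''\times_S T''$. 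The point is that the composite $T''\times_T T''\xrightarrow{\iota}T''\times_S T''\to T\times_S T$ factors through the diagonal $\Delta:T\hookrightarrow T\times_S T$, so $\iota^*f$ and $\iota^*f'$ are pulled back from $\Delta^*f$ and $\Delta^*f'$; but the cocycle condition forces $\varphi|_T=\mathrm{id}$ and $\varphi'|_T=\mathrm{id}$ in the respective $\mathcal{Out}$ sheaves (see Definition \ref{defnice}), so $\Delta^*f$ and $\Delta^*f'$ are \emph{inner} automorphisms. Inserting this into the pulled-back commutativity of \eqref{eqTrepreq} and solving for $\gamma$ exhibits $\gamma$ as a product of inner automorphisms of $\mathcal F$ --- namely those coming from $f$ and $f'$, together with the $\mathrm{pr}_1^*\alpha$-conjugate of the inner automorphism measuring the failure of \eqref{eqTrepreq} to commute strictly. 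Hence $\gamma$ takes values in $\mathcal F/\mathrm Z_{\mathcal F}\subseteq\mathcal{Aut}_{\mathcal F}$, so that $[\mathcal F']=[\gamma]$ belongs to $\mathscr I(\mathcal F,T)$.

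Next I would verify compatibility with the equivalence relation in both directions. If $\beta:(T{\to} S,\mathcal F',\varphi')\to(T{\to} S,\mathcal F'',\varphi'')$ is a $T$-representable equivalence, then $\beta$ is in particular an isomorphism $\mathcal F'\xrightarrow{\sim}\mathcal F''$ of group sheaves over $T$, hence post-composition by $\beta$ identifies $\mathcal{Isom}_{\mathcal F,\mathcal F'}$ with $\mathcal{Isom}_{\mathcal F,\mathcal F''}$ as $\mathcal{Aut}_{\mathcal F}$-torsors, so the two triples have the same image and the map is well defined on $\mathscr S(L,T)$. For injectivity, suppose two classes have the same image. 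Then $\mathcal F''\cong\mathcal F'$ over $T$ (a form is recovered from its torsor of isomorphisms, cf.\ Example \ref{exmptwisting}); transporting $\varphi'$ along such an isomorphism we reduce to the case $\mathcal F''=\mathcal F'$, with two descent data $\varphi'_1,\varphi'_2$ both giving a band $\simeq L$. Composing the two isomorphisms to $L$ produces a band isomorphism $L(\mathcal F',\varphi'_1)\simeq L(\mathcal F',\varphi'_2)$, which after a refinement $T''\to T$ is given by some $\mu\in\mathcal{Aut}_{\mathcal F'}(T'')$ compatible, up to inner automorphisms, with lifts of $\varphi'_1,\varphi'_2$ over $T''\times_S T''$. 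Pulling this compatibility back along $\iota:T''\times_T T''\to T''\times_S T''$ and using $\varphi'_i|_T=\mathrm{id}$ exactly as above yields $\mathrm{pr}_1^*\bar\mu=\mathrm{pr}_2^*\bar\mu$ in $\mathcal{Out}_{\mathcal F'}(T''\times_T T'')$ for the class $\bar\mu$ of $\mu$; by the sheaf property of $\mathcal{Out}_{\mathcal F'}$ over $T$, $\bar\mu$ descends to a section $\bar\beta\in\mathcal{Out}_{\mathcal F'}(T)$, and unwinding the definitions gives $\mathrm{pr}_2^*\bar\beta\circ\varphi'_1=\varphi'_2\circ\mathrm{pr}_1^*\bar\beta$ in $\mathcal{Out}(T\times_S T)$. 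This is precisely a $T$-representable isomorphism of the two triples in the sense recalled (and declared equivalent to that of Definition \ref{defband}) in the remark following Definition \ref{defnice}; hence the two triples agree in $\mathscr S(L,T)$ and the map is injective.

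The step I expect to be the main obstacle is the second one: one must carry along at the same time the $T/S$-descent datum $\varphi$, which lives on $T\times_S T$ only up to inner automorphisms, and the honest descent datum exhibiting $\mathcal F'$ as a form of $\mathcal F$ over $T$, which lives on $T''\times_T T''$, and see that their interaction --- through the compatibility square \eqref{eqTrepreq} and the identity $\varphi|_T=\mathrm{id}$ --- confines the relevant $1$-cocycle to $\mathcal F/\mathrm Z_{\mathcal F}$ rather than to all of $\mathcal{Aut}_{\mathcal F}$. The same base-change computation then also powers the injectivity argument of the previous paragraph; there, one should keep in mind that the produced $\bar\beta$ is a priori only an $\mathcal{Out}_{\mathcal F'}(T)$-section, so ``$T$-representable equivalence'' is to be read in the \cite{DM82}-style sense recalled in that remark.
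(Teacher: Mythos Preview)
Your construction of the map and the verification that it lands in $\mathscr I(\mathcal F,T)$ are essentially the paper's argument. The one cosmetic difference is that the paper first normalises the lifts so that $f|_T=\mathrm{id}_{\mathcal F}$ and $f'|_T=\mathrm{id}_{\mathcal F'}$ (which is legitimate: $\varphi|_T=\mathrm{id}$ in $\mathcal{Out}$ forces $f|_T\in(\mathcal F/\mathrm Z_{\mathcal F})(T)$, and one replaces $f$ by $f\circ\mathrm{int}(\mathrm{pr}_1^*(f|_T))^{-1}$); after this normalisation the pulled-back horizontal arrows become literally the canonical descent data $\widetilde\varphi_{\mathcal F},\widetilde\varphi_{\mathcal F'}$ for $T'/T$, so Example~\ref{exmptwisting} applies verbatim and the resulting $\gamma$ is visibly in $\mathcal F/\mathrm Z_{\mathcal F}$. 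You keep $f,f'$ arbitrary and argue that the pullbacks $\iota^*f,\iota^*f'$ are inner; this is the same computation without the preliminary normalisation.

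Your injectivity argument, however, has a genuine gap. You produce only a section $\bar\beta\in\mathcal{Out}_{\mathcal F'}(T)$ satisfying $\mathrm{pr}_2^*\bar\beta\circ\varphi'_1=\varphi'_2\circ\mathrm{pr}_1^*\bar\beta$, and then invoke the remark after Definition~\ref{defnice} to conclude this is a $T$-representable equivalence. But that remark does \emph{not} say what you need: it says that the \emph{ultimate} notion of band isomorphism (an equivalence class under refinement of $T$) is the same in the paper's and in \cite{DM82}'s conventions, precisely because one may always refine $T$. It does \emph{not} assert that, for fixed $T$, a section of $\mathcal{Out}_{\mathcal F',\mathcal F'}(T)$ amounts to an honest automorphism of $\mathcal F'$ over $T$; the obstruction to lifting your $\bar\beta$ to some $\beta\in\mathcal{Aut}_{\mathcal F'}(T)$ is a class in $\mathrm H^1(T,\mathcal F'/\mathrm Z_{\mathcal F'})$, which has no reason to vanish. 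Since $\mathscr S(L,T)$ is defined via the paper's stronger notion of $T$-representable equivalence (Definition~\ref{defband}), your conclusion does not follow.

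The paper avoids this by a different route. It first reduces injectivity to showing that the fibre over the distinguished class is a singleton (applying this in turn to each inner form of $\mathcal F$ handles every fibre). In that special case the constructed cocycle $\gamma$ is a coboundary $\gamma=\mathrm d\alpha_0$ in $\mathcal{Aut}_{\mathcal F}$ over $T'$, and the point is that $\alpha\circ\alpha_0^{-1}$ then literally commutes with the canonical descent data $\widetilde\varphi_{\mathcal F},\widetilde\varphi_{\mathcal F'}$ for $T'/T$, hence descends to an \emph{actual} isomorphism $\mathcal F\to\mathcal F'$ over $T$; the $T$-equivalence condition then descends from $T'$ by the sheaf property of $\mathcal{Out}$. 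This coboundary-and-descend step is exactly what is missing from your argument: you descend the outer class, whereas one must modify $\mu$ by a suitable $\alpha_0\in\mathcal{Aut}_{\mathcal F'}(T'')$ so that $\mu\alpha_0^{-1}$ itself descends.
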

\begin{prf}
For a nice triple $(T{\rightarrow} S, \mathcal F', \varphi')$ representing a band isomorphic to $L$, choose a refinement $T'\rightarrow S$ of $T\rightarrow S$ over which it is $T'$-equivalent to $(T{\rightarrow} S, \mathcal F, \varphi)$ through $\alpha : \mathcal F_{T'}\rightarrow\mathcal F'_{T'}$. We fix isomorphisms $f,f'$ over $T\times_S T$ lifting $\varphi,\varphi'$, respectively. Up to replacing $f$ by $f\circ\mathrm{pr}_1^*(f_T^{-1})$ (which amounts to a $T$-representable equivalence, as $f_T\in(\mathcal F/\mathrm Z_{\mathcal F})(T)$; cf.\ Definition \ref{defnice}), we may assume that $f_T = \mathrm{id}_{\mathcal F}$ and, similarly, $f'_T = \mathrm{id}_{\mathcal F'}$. 
Consider the commutative diagram:
\begin{center}\begin{tikzcd}
    T'\times_T T'\arrow[r]\arrow[d] & T\arrow[d, "\Delta"]\\
    T'\times_S T'\arrow[r] & T\times_S T\\
\end{tikzcd}\end{center}
By base-changing the defining diagram \eqref{eqTrepreq} of $T'$-equivalence from $T'\times_S T'$ to $T'\times_T T'$, we get the following square which commutes up to the action of $(\mathcal F/\mathrm Z_{\mathcal F})(T'\times_T T')$:
\begin{center}\begin{tikzcd}[column sep = large]
\mathrm{pr}_1^*(\mathcal F_{T'})\arrow[d, "\mathrm{pr}_1^*\alpha"]\arrow[rr, "\big(f_{\; T'\times_{\!S\,} T'}\big)_{\; T'\times_{\!T\,} T'}"] & & \mathrm{pr}_2^*(\mathcal F_{T'})\arrow[d, "\mathrm{pr}_2^*\alpha"]\\
\mathrm{pr}_1^*(\mathcal F'_{T'})\arrow[rr, "\big(f'_{\; T'\times_{\!S\,} T'}\big)_{\; T'\times_{\!T\,} T'}"] & & \mathrm{pr}_2^*(\mathcal F'_{T'})
\end{tikzcd}\end{center}
The top arrow agrees with $(f_T)_{ T'\times_{\!T\,} T'} = (\mathrm{id}_{\mathcal F})_{ T'\times_{\!T\,} T'} = \widetilde{\varphi}_{\mathcal F}$, where $\widetilde{\varphi}_{\mathcal F}$ denotes the descent data of $\mathcal F$ with respect to $T'\rightarrow T$. Similarly for $\mathcal F'$ and $f'$. By the preceding example, this square determines a cocycle $\gamma\in\mathrm{\check Z}^1(T'/T,\,\mathcal F/\mathrm Z_{\mathcal F})$.

Replacing $\alpha$ by $\alpha\circ\alpha_0^{-1}$ for any $\alpha_0\in\mathcal{Aut}_{\mathcal F}(T')$ preserves the resulting class $[\gamma]\in\mathrm{H}^1(T,\mathcal{Aut}_{\mathcal F})$ (it replaces $\gamma$ by $\mathrm{pr}_1^*\alpha_0^{-1}\circ\gamma\circ\widetilde{\varphi}_{\mathcal{Aut}_{\mathcal F}}^{-1}(\mathrm{pr}_2^*\alpha_0)$), as does refining $T'$. Finally, changing $(T{\rightarrow} S, \mathcal F', \varphi')$ by a $T$-equivalent triple commutes with the formation of the descent data $\widetilde{\varphi}'_{\mathcal F}$, hence the desired function $\mathscr S(L,T)\rightarrow\mathscr I(\mathcal F,T)$ is well-defined.

To see that it is an injection, first observe that we only need to show that the fiber of the distinguished element in $\mathscr I(\mathcal F,T)\subseteq\mathrm{H}^1(T,\mathcal{Aut}_{\mathcal F})$ is trivial. Indeed, if we then apply this weaker statement to every inner form of $\mathcal F$, we will have completed the proof for $\mathcal F$. 
Thus it suffices to show that every triple $(T{\rightarrow} S,\mathcal F',\varphi')$ for which the constructed cocycle $\gamma$ is a boundary must~be $T$-equivalent to $(T{\rightarrow} S,\mathcal F,\varphi)$. Fix $\alpha : \mathcal F_{T'}\rightarrow\mathcal F'_{T'}$ and suppose that 
$$\gamma = \mathrm{pr}_1^*\alpha_0^{-1}\circ\widetilde{\varphi}_{\mathcal{Aut}_{\mathcal F}}^{-1}(\mathrm{pr}_2^*\alpha_0) = \mathrm{pr}_1^*\alpha_0^{-1}\circ\widetilde{\varphi}_{\mathcal F}^{-1}\circ\mathrm{pr}_2^*\alpha_0\circ\widetilde{\varphi}_{\mathcal F}$$
for some $\alpha_0\in\mathcal{Aut}_{\mathcal F}(T')$. This immediately implies that $\alpha\circ\alpha_0^{-1}$ commutes with the descent data and descends to a map $\mathcal F\rightarrow\mathcal F'$ over $T$. The definition of $T$-equivalence is then automatically satisfied by descent from $T'$.
\end{prf}

\begin{cor}
Two group sheaves $\mathcal F,\mathcal G$ on $S$ define the same band $L(\mathcal F) = L(\mathcal G)$ if and only if they are inner forms of each other.
\end{cor}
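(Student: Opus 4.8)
The plan is to deduce both implications from Proposition \ref{propparambandforms} and Example \ref{exmptwisting}, specialized to the trivial covering $T=S$. Throughout, write $\varphi_{\mathcal F}$ (resp.\ $\varphi_{\mathcal G}$) for the canonical descent datum carried by $\mathcal F_T$ (resp.\ $\mathcal G_T$) over a covering $T\rightarrow S$, coming from $\mathcal F$ (resp.\ $\mathcal G$) already being a sheaf on $S$; refining $(\mathrm{id}_S,\mathcal F,\mathrm{id})$ along $T\rightarrow S$ then yields the representative triple $(T\rightarrow S,\mathcal F_T,\varphi_{\mathcal F})$ of $L(\mathcal F)$, and similarly for $L(\mathcal G)$.

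\emph{Inner forms $\Rightarrow$ equal bands.} Suppose $\mathcal G$ is an inner form of $\mathcal F$. By Definition \ref{definntwist} and Example \ref{exmptwisting} we may write $\mathcal G\simeq{_\mathcal P}\mathcal F$ for an $\mathcal F/\mathrm Z_{\mathcal F}$-torsor $\mathcal P$; pick a covering $T\rightarrow S$ with $\mathcal P(T)\neq\varnothing$ and an isomorphism $\beta:\mathcal F_T\xrightarrow{\sim}\mathcal G_T$. The last paragraph of that example then produces a cocycle $\gamma\in\mathrm{\check Z}^1(T/S,\mathcal{Aut}_{\mathcal F})$ with values in the subsheaf $\mathcal F/\mathrm Z_{\mathcal F}$ measuring the failure of $\beta$ to intertwine $\varphi_{\mathcal F}$ and $\varphi_{\mathcal G}$. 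Since a section of $\mathcal F/\mathrm Z_{\mathcal F}$ acts trivially on $\mathcal{Out}$, we obtain $\mathrm{pr}_2^*\beta\circ\varphi_{\mathcal F}=\varphi_{\mathcal G}\circ\mathrm{pr}_1^*\beta$ in $\mathcal{Out}_{\mathrm{pr}_1^*\mathcal F_T,\,\mathrm{pr}_2^*\mathcal G_T}(T\times_S T)$, which by Definition \ref{defband} is precisely the statement that $\beta$ is a ($T$-representable) isomorphism of bands $L(\mathcal F)\xrightarrow{\sim}L(\mathcal G)$.

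\emph{Equal bands $\Rightarrow$ inner forms.} Apply Proposition \ref{propparambandforms} to the nice triple $(\mathrm{id}_S,\mathcal F,\mathrm{id})$ representing $L\coloneqq L(\mathcal F)$, taking $T=S$. Since $S\times_S S\times_S S=S$, the cocycle condition forces any nice triple $(\mathrm{id}_S,\mathcal F',\varphi')$ to have $\varphi'=\mathrm{id}$, so $\mathscr{S}(L,S)$ is simply the set of isomorphism classes of group sheaves $\mathcal F'$ on $S$ with $L(\mathcal F')\simeq L(\mathcal F)$. Unwinding the proof of that proposition in the case $T=S$ — where the chosen lifts $f,f'$ may be taken to be identities, so that the descent data appearing there are the canonical $\varphi_{\mathcal F},\varphi_{\mathcal F'}$ — shows that the canonical injection $\mathscr{S}(L,S)\hookrightarrow\mathscr{I}(\mathcal F,S)$ sends the class of $\mathcal F'$ to the class of $\mathcal F'$ viewed as a form of $\mathcal F$ via the cocycle of Example \ref{exmptwisting}. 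Hence, if $L(\mathcal G)=L(\mathcal F)$ then $\mathcal G$ represents an element of $\mathscr{S}(L,S)$, so its class lies in $\mathscr{I}(\mathcal F,S)$, i.e.\ $\mathcal G$ is an inner form of $\mathcal F$; since the equivalence just proven is symmetric in $\mathcal F$ and $\mathcal G$, they are inner forms of each other.

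I expect the one delicate point to be the verification that the abstract injection furnished by Proposition \ref{propparambandforms} is the ``expected'' map sending a group sheaf to its form class over $\mathcal F$; this is routine bookkeeping once the proof of that proposition is specialized to $T=S$, and requires no new idea. If one prefers to avoid citing the proposition, the converse can be run by hand: given a $T$-representable band isomorphism $\alpha:\mathcal F_T\rightarrow\mathcal G_T$, the cocycle $\gamma$ attached to $\alpha$ by Example \ref{exmptwisting} (relative to the canonical descent data of $\mathcal F_T$ and $\mathcal G_T$) is forced by the $\mathcal{Out}$-intertwining condition to take values in $\mathcal F/\mathrm Z_{\mathcal F}$, so that $[\mathcal{Isom}_{\mathcal F,\mathcal G}]\in\mathrm H^1(S,\mathcal{Aut}_{\mathcal F})$ lifts to $\mathrm H^1(S,\mathcal F/\mathrm Z_{\mathcal F})$ and $\mathcal G$ is an inner form of $\mathcal F$.
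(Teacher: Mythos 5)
Your proposal is correct and follows essentially the same route as the paper: the direction ``equal bands $\Rightarrow$ inner forms'' is exactly the injection $\mathscr S(L,S)\hookrightarrow\mathscr I(\mathcal F,S)$ of Proposition \ref{propparambandforms} with $T=S$, and the converse is the observation (via Example \ref{exmptwisting}) that an inner form is isomorphic to $\mathcal F$ over a covering by a map whose descent defect lies in $\mathcal F/\mathrm Z_{\mathcal F}$ and hence vanishes in $\mathcal{Out}$. You merely spell out in more detail the bookkeeping that the paper dismisses as obvious.
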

\begin{prf}
In the notation of the above proposition, $\mathscr S(L,S)\hookrightarrow\mathscr I(\mathcal F,S)$, and we only need to show that this map is a surjection. However, it is obvious that any inner form of $\mathcal F$ defines the same band as $\mathcal F$, since they are isomorphic over some covering and this isomorphism descends up to inner automorphisms.
\end{prf}

\begin{prop}\label{propnoncommles}
Let $\mathcal F$ be a group sheaf on $S$. There is a canonical map $\delta :\mathrm{H}^1(S,\mathcal F/\mathrm Z_{\mathcal F})\rightarrow\mathrm{H}^2(S,\mathrm Z_{\mathcal F})$ such that the following sequence
$$\cdots\rightarrow\mathrm{H}^1(S,\mathrm Z_{\mathcal F})\rightarrow\mathrm{H}^1(S,\mathcal F)\rightarrow\mathrm{H}^1(S,\mathcal F/\mathrm Z_{\mathcal F})\rightarrow\mathrm{H}^2(S,\mathrm Z_{\mathcal F})$$
of pointed sets is exact.
\end{prop}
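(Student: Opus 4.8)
The plan is to recognize the four displayed terms as (part of) the long exact cohomology sequence of the central extension of group sheaves $1\to\mathrm Z_{\mathcal F}\to\mathcal F\to\mathcal F/\mathrm Z_{\mathcal F}\to 1$ on $S$, and to establish it directly by torsor-and-gerbe manipulations valid on an arbitrary site. Throughout write $Z\coloneqq\mathrm Z_{\mathcal F}$ and $C\coloneqq\mathcal F/\mathrm Z_{\mathcal F}$; the single structural fact driving everything is that $Z$ is central, so $\mathcal F$ acts trivially on it by conjugation and every inner twist of $Z$ is canonically $Z$ again. The two leftmost maps are the functorial ones induced by $Z\hookrightarrow\mathcal F$ and $\mathcal F\twoheadrightarrow C$, and their composite is trivial because $Z\to C$ is the zero map. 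For exactness at $\mathrm H^1(S,\mathcal F)$ I would argue that an $\mathcal F$-torsor $\mathcal Q$ maps in $\mathrm H^1(S,C)$ to the class of the pushforward $C$-torsor $\mathcal Q/Z$; if this is trivial, a global section of $\mathcal Q/Z$ pulls back, along the map $\mathcal Q\to\mathcal Q/Z$ (which is a $Z$-torsor, by a local check using centrality), to a $Z$-torsor $\mathcal Q_\sigma$, and pushing $\mathcal Q_\sigma$ forward along $Z\hookrightarrow\mathcal F$ recovers $\mathcal Q$; hence $[\mathcal Q]$ comes from $\mathrm H^1(S,Z)$.

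For the connecting map $\delta$ I would avoid \v Cech cochains --- which on a general site need not compute $\mathrm H^2$ of the commutative sheaf $Z$, precisely the phenomenon the paper grapples with later when building a \v Cech theory for nice bands --- and instead use the \emph{gerbe of liftings}. Given a $C$-torsor $\mathcal P$, let $\mathscr X_{\mathcal P}$ be the fibered category over $\mathcal C$ whose objects over $U\to S$ are pairs $(\mathcal Q,u)$, with $\mathcal Q$ an $\mathcal F_U$-torsor and $u$ an isomorphism of $C_U$-torsors from the pushforward $\mathcal Q/Z$ to $\mathcal P_U$, morphisms being $\mathcal F_U$-torsor isomorphisms compatible with $u$. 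I would check that $\mathscr X_{\mathcal P}$ is a stack (torsors and their isomorphisms glue), that it is locally nonempty (over any cover trivializing $\mathcal P$, take $(\mathcal F_U,\mathrm{id})$), and that any two of its objects over a common base are locally isomorphic (the obstruction to an isomorphism of liftings is a local section of $C$, which lifts to $\mathcal F$ since $\mathcal F\to C$ is an epimorphism of sheaves). The decisive point is the automorphism sheaf: that of $(\mathcal Q,u)$ is the kernel of $\,{}^{\mathcal Q}\mathcal F\to{}^{\mathcal Q}C$, which is $\,{}^{\mathcal Q}Z$, which is canonically $Z$ by centrality. Hence $\mathscr X_{\mathcal P}$ is a gerbe bound by $L(Z)$, and I would set $\delta([\mathcal P])\coloneqq[\mathscr X_{\mathcal P}]\in\mathrm H^2(S,Z)$. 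This depends only on the class of $\mathcal P$ (an isomorphism of $C$-torsors induces an $L(Z)$-equivalence of gerbes by post-composing $u$), and it is pointed since for $\mathcal P=C$ the pair $(\mathcal F,\mathrm{id})$ lies in $\mathscr X_{\mathcal P}(S)$.

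Exactness at $\mathrm H^1(S,C)$ then falls out of the construction: $\delta([\mathcal P])$ is the neutral class iff $\mathscr X_{\mathcal P}$ is a trivial gerbe iff $\mathscr X_{\mathcal P}(S)\neq\varnothing$ iff $\mathcal P$ admits a lift to an $\mathcal F$-torsor over $S$ iff $[\mathcal P]$ lies in the image of $\mathrm H^1(S,\mathcal F)\to\mathrm H^1(S,C)$; and the construction also makes transparent that $\delta$ annihilates that image. Thus the displayed four-term sequence is exact as a sequence of pointed sets (one can moreover continue it to the left with the usual maps $\mathrm H^0(S,C)\to\mathrm H^1(S,Z)$, etc., by the same arguments). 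An alternative, more hands-on route is to combine the free transitive action of Proposition \ref{propcentacth2} with the inner twisting of Example \ref{exmptwisting} --- sending $[\mathcal P]$ to the unique class carrying $1_{\mathcal F}$ to $[\mathrm{TORS}(S,{}_{\mathcal P}\mathcal F)]$ --- but pinning down the fibre of the neutral element is then less immediate, so I prefer the gerbe-of-liftings description.

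The hard part is not the exactness statements, which are formal once $\delta$ is in hand, but the verification in the middle paragraph that $\mathscr X_{\mathcal P}$ is genuinely a gerbe and, above all, that its band is exactly $L(Z)$ and nothing larger. The two inputs carrying that step are that $\mathcal F\twoheadrightarrow C$ is an epimorphism of sheaves (so $\mathcal F$-torsors lift $C$-torsors locally) and that $Z$ is central in $\mathcal F$ (so every inner twist $\,{}^{\mathcal Q}Z$ is canonically $Z$); I would be careful to phrase the whole argument in the sheaf/stack language of the site $\mathcal C$, so that no representability or ``niceness'' hypothesis on $\mathcal F$ intervenes and the statement holds uniformly for $\mathcal C=k_{\et}$ and $\mathcal C=k_{\fppf}$. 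For the classical template one may compare \cite[IV, 4.2]{Gir71}.
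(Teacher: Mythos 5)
Your proposal is correct and follows essentially the same route as the paper: the connecting map $\delta$ is defined via the gerbe of liftings $(\mathcal Q,u)$ with $u$ an isomorphism of the pushforward $C$-torsor onto $\mathcal P$, bound by $\mathrm Z_{\mathcal F}$ via centrality, with exactness at $\mathrm H^1(S,\mathcal F/\mathrm Z_{\mathcal F})$ immediate from the construction and the remaining exactness being standard. You spell out the verifications (gerbe axioms, identification of the band, exactness at $\mathrm H^1(S,\mathcal F)$) that the paper delegates to the literature, but the argument is the same.
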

\begin{prf}
Let $\mathcal P$ be a torsor of $\mathcal F/\mathrm Z_{\mathcal F}$. Then we let $\delta(\mathcal P)$ be the gerbe $\mathscr X\in\mathrm{H}^2(S,\mathrm Z_{\mathcal F})$ for which every fiber $\mathscr X(T)$ consists of pairs $(\mathcal P',a)$ where $\mathcal P'$ is an $\mathcal F$-torsor over $T$ and $a : \mathcal P'\times^{\mathcal F}(\mathcal F/\mathrm Z_{\mathcal F})\xrightarrow{\sim}\mathcal P_T$ an isomorphism of $(\mathcal F/\mathrm Z_{\mathcal F})$-torsors over $T$. Morphisms in $\mathscr X$ are defined in the obvious way. Here, $\mathcal P'\times^{\mathcal F}(\mathcal F/\mathrm Z_{\mathcal F})$ denotes the contracted product of $\mathcal P'$ and $(\mathcal F/\mathrm Z_{\mathcal F})$ seen as a $(\mathcal F/\mathrm Z_{\mathcal F})$-torsor over $T$. It's clear that $\mathcal P$ lifts to an $\mathcal F$-torsor over $S$ if and only if $\delta(\mathcal P)$ is trivial (that is, $\mathscr X(S)$ is nonempty).
The exactness of the rest of the sequence is well-known (a useful account is given in \cite[\textsection B.3]{Con12}).
\end{prf}

We now give a description of the subset of neutral elements for a group sheaf on $S$. This description underpins the abelianization theory in Section \ref{sectabel}, which has consequences even for bands which are not representable.

\begin{cor}\label{corneutelemdesc}
Let $\mathcal F$ be a group sheaf on $S$. Consider the bijection $\mathrm{H}^2(S,\mathrm Z_{\mathcal F})\rightarrow\mathrm{H}^2(S,\mathcal F)$ defined by taking $[\mathscr X]$ to $[\mathscr X].1_{\mathcal F}$. The image of the subset $\mathrm{im}(\delta)\subseteq\mathrm{H}^2(S,\mathrm Z_{\mathcal F})$ is exactly $\mathrm{N}^2(S,\mathcal F)$. 
\end{cor}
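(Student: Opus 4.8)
The plan is to reduce the statement to the single identity
$$[\mathrm{TORS}(S,{_{\mathcal P}}\mathcal F)] = \delta([\mathcal P]).1_{\mathcal F}\qquad(\text{in }\mathrm H^2(S,\mathcal F)),$$
valid for every $(\mathcal F/\mathrm Z_{\mathcal F})$-torsor $\mathcal P$ over $S$. To see that this suffices, recall that a neutral gerbe is equivalent to the gerbe of torsors of the automorphism sheaf of any of its global objects (cf.\ the examples following Definition \ref{defh2}); for a gerbe bound by $L(\mathcal F)$ that automorphism sheaf globally represents $L(\mathcal F)$, hence is an inner form of $\mathcal F$ by the corollary following Proposition \ref{propparambandforms}, and conversely $\mathrm{TORS}(S,\mathcal F')$ is a neutral $L(\mathcal F)$-gerbe for every inner form $\mathcal F'$. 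Thus $\mathrm N^2(S,\mathcal F)$ is precisely the set of classes $[\mathrm{TORS}(S,\mathcal F')]$ with $\mathcal F'$ an inner form of $\mathcal F$, and every such $\mathcal F'$ is of the form ${_{\mathcal P}}\mathcal F$ for some $[\mathcal P]\in\mathrm H^1(S,\mathcal F/\mathrm Z_{\mathcal F})$ (Example \ref{exmptwisting} and Definition \ref{definntwist}). Granting the identity, the inclusion $\subseteq$ is immediate --- ${_{\mathcal P}}\mathcal F$ being a group sheaf over $S$, $\mathrm{TORS}(S,{_{\mathcal P}}\mathcal F)$ is a neutral gerbe --- and $\supseteq$ follows since every neutral class is some $[\mathrm{TORS}(S,{_{\mathcal P}}\mathcal F)]$; the freeness of the action in Proposition \ref{propcentacth2} is what lets one phrase these as an equality of subsets of $\mathrm H^2(S,\mathrm Z_{\mathcal F})$ under the bijection $[\mathscr X]\mapsto[\mathscr X].1_{\mathcal F}$.

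For the identity itself I would use explicit gerbe representatives. By the construction in the proof of Proposition \ref{propnoncommles}, $\delta([\mathcal P])$ is represented by the $L(\mathrm Z_{\mathcal F})$-gerbe $\mathscr X_{\mathcal P}$ whose fibre over $T$ consists of pairs $(\mathcal Q,a)$ with $\mathcal Q$ an $\mathcal F$-torsor over $T$ and $a : \mathcal Q\times^{\mathcal F}(\mathcal F/\mathrm Z_{\mathcal F})\xrightarrow{\sim}\mathcal P_T$; and by Proposition \ref{propcentacth2} (that is, \cite[IV, Thm.~3.3.3]{Gir71}) the class $\delta([\mathcal P]).1_{\mathcal F}$ is represented by the contracted product of gerbes $\mathscr X_{\mathcal P}\wedge^{\mathrm Z_{\mathcal F}}\mathrm{TORS}(S,\mathcal F)$. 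It then remains to produce an equivalence of $L(\mathcal F)$-gerbes
$$\mathscr X_{\mathcal P}\wedge^{\mathrm Z_{\mathcal F}}\mathrm{TORS}(S,\mathcal F)\ \xrightarrow{\sim}\ \mathrm{TORS}(S,{_{\mathcal P}}\mathcal F),$$
which I would define on a local object $(\mathcal Q,a)\wedge\mathcal R$ as the sheaf of $\mathcal F$-equivariant isomorphisms between $\mathcal R$ and $\mathcal Q$: this sheaf is naturally a torsor under the group sheaf ${_{\mathcal Q}}\mathcal F$ of $\mathcal F$-torsor automorphisms of $\mathcal Q$, and --- since inner twisting of $\mathcal F$ depends only on the induced $(\mathcal F/\mathrm Z_{\mathcal F})$-torsor --- the datum $a$ identifies ${_{\mathcal Q}}\mathcal F$ with ${_{\mathcal P}}\mathcal F$ and so turns it into a ${_{\mathcal P}}\mathcal F$-torsor; on morphisms the functor is induced by pre- and post-composition with the two components of a morphism of contracted objects. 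Since a morphism of gerbes compatible with the common binding $L(\mathcal F)$ is automatically an equivalence, only the well-definedness and the band-compatibility of this functor need to be checked.

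That check is the step I expect to be the main obstacle: one must match the automorphism sheaf $(\mathrm Z_{\mathcal F}\times\mathcal F)/\mathrm Z_{\mathcal F}\cong\mathcal F$ of a contracted object with the automorphism sheaf of the corresponding ${_{\mathcal P}}\mathcal F$-torsor, keeping careful track of how the two copies of $\mathrm Z_{\mathcal F}=\mathrm Z_{{_{\mathcal Q}}\mathcal F}$ sit inside on either side and of the antidiagonal sign convention in $\wedge^{\mathrm Z_{\mathcal F}}$ --- here the inner-twist invariance of the center and the fact that a torsor realizes its own automorphism sheaf are what make the bookkeeping close up. For the algebraic bands of later interest over $k_\et$ or $k_\fppf$, one could instead trivialize $\mathcal P$ over a covering and carry out the same verification on the $2$-cocycle $c_{ij}c_{jk}c_{ik}^{-1}$ representing $\delta([\mathcal P])$; but the gerbe-theoretic argument has the advantage of applying to an arbitrary group sheaf $\mathcal F$ on an arbitrary site, and is in substance a repackaging of \cite[IV, \textsection\textsection3.3--3.4]{Gir71}.
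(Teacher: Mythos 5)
Your argument is correct and is in substance the argument that the paper delegates to \cite[1.2.10]{DD99} and \cite[IV, \textsection 3.2--3.4]{Gir71}: the reduction of $\mathrm N^2(S,\mathcal F)$ to classes $[\mathrm{TORS}(S,{_{\mathcal P}}\mathcal F)]$ of inner forms, followed by the identification $\delta([\mathcal P]).1_{\mathcal F}=[\mathrm{TORS}(S,{_{\mathcal P}}\mathcal F)]$ via an explicit equivalence out of the contracted product, is exactly the content of those references. The verification you flag is indeed the only delicate point: for the functor $((\mathcal Q,a),\mathcal R)\mapsto\mathcal{Isom}_{\mathcal F}(\mathcal R,\mathcal Q)$ to descend through the antidiagonal $\mathrm Z_{\mathcal F}$-quotient on Hom-sheaves one must fix the binding of the lifting gerbe $\delta(\mathcal P)$ by $\mathrm Z_{\mathcal F}$ with the correct sign (the paper's ``morphisms defined in the obvious way'' leaves this implicit), since with the opposite convention the central automorphisms compose to $z^2$ rather than cancel.
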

\begin{prf}
This becomes clear if we allow inner twists of the above sequence. See \cite[1.2.10]{DD99}~for details; the fact can also be deduced from the results in \cite[IV, \textsection3.2]{Gir71}.
\end{prf}
\subsection{\'Etale and Separable Bands}\label{ssectetandsep}
Let $\mathcal C\in\{k_\Et,k_\fppf\}$ in this subsection. The notion of a band on the \'etale site of a field has been reinterpreted using Galois cohomology in \cite{Spr66}, \cite{Brv93} and finally \cite{FSS98}. The equivalence of this definition with the general one is shown in \cite{DLA19}, and in this section we provide a similar discussion (with the goal of extending the Galois formulation to a certain class of \'etale-locally represented fppf bands). Encoded in the content of the above cited papers is the following proposition, which we now make explicit:

\begin{prop}\label{propetrefin}
Let $(k'/k,\mathcal F,\varphi)$ be a representative triple such that $k'$ is an \'etale $k$-algebra. Suppose that there exists an \'etale covering $R\rightarrow\Spec(k'\otimes_k k')$ such that $\varphi_R$ is contained inside the image of the map:
$$\mathcal{Isom}_{\mathrm{pr}_1^*\mathcal F,\,\mathrm{pr}_2^*\mathcal F}(R)\longrightarrow\mathcal{Out}_{\mathrm{pr}_1^*\mathcal F,\,\mathrm{pr}_2^*\mathcal F}(R)$$
Then $(k'/k,\mathcal F,\varphi)$ is refined by a nice triple $(k''/k,\mathcal F,\varphi)$, where $k''/k$ is a finite separable extension of fields.
\end{prop}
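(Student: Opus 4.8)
The plan is to recast the conclusion as a factorization problem between finite \'etale schemes over $\Spec k$ and to solve it using the structure theory of \'etale algebras over a field. Unwinding the definitions, producing a nice triple $(k''/k,\mathcal F,\varphi)$ that refines $(k'/k,\mathcal F,\varphi)$ amounts to exhibiting: a finite separable field extension $k''/k$; a $k$-morphism $q\colon\Spec k''\to\Spec k'$ (so that $\Spec k''\to\Spec k$ remains a covering); and a lift of $(q\times q)^{*}\varphi$ to a global section of $\mathcal{Isom}_{\mathrm{pr}_1^{*}\mathcal F,\mathrm{pr}_2^{*}\mathcal F}$ over $\Spec(k''\otimes_k k'')$. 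Since pulling back a lift of $\varphi_R$ along a morphism $\Spec(k''\otimes_k k'')\to R$ over $\Spec(k'\otimes_k k')$ produces a lift of the corresponding restriction of $\varphi$, it is enough to choose $k''$ and $q$ so that $q\times q\colon\Spec(k''\otimes_k k'')\to\Spec(k'\otimes_k k')$ factors through $R$. This is exactly the factorization whose existence is \emph{not} automatic on a general site, as flagged in the remark following Definition~\ref{defnice}; the content of the proposition is that over a field it can always be arranged.

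First I would reduce to the case where $k'$ is a field. Writing $k'=\prod_i k'_i$, the clopen immersion $\Spec k'_1\hookrightarrow\Spec k'$ refines $(k'/k,\mathcal F,\varphi)$ to $\bigl(k'_1/k,\mathcal F|_{k'_1},\varphi|_{\Spec(k'_1\otimes_k k'_1)}\bigr)$: the cocycle identity restricts, and the hypothesis restricts with $R$ replaced by its base change to $\Spec(k'_1\otimes_k k'_1)$, still an \'etale surjection. So assume from now on that $k'/k$ is a finite separable field extension. Then $k'\otimes_k k'\cong\prod_\alpha L_\alpha$ with each $L_\alpha/k$ a finite separable field extension, and over each $\Spec L_\alpha$ the \'etale surjection $R$ has a component $\Spec M_\alpha$ with $M_\alpha/L_\alpha$ — hence $M_\alpha/k$ — finite separable.

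Next I would take $k''/k$ to be any finite Galois extension containing a copy of $k'$ and of the normal closure over $k$ of each of the (finitely many) fields $M_\alpha$; this is still finite over $k$. A choice of $k$-embedding $k'\hookrightarrow k''$ gives $q$. Since $k''/k$ is Galois, $\Spec(k''\otimes_k k'')$ is a disjoint union of copies of $\Spec k''$ indexed by $\Gal(k''/k)$, and each such copy maps to some component $\Spec L_\alpha$ of $\Spec(k'\otimes_k k')$, endowing $k''$ with an $L_\alpha$-algebra structure $\psi\in\Hom_k(L_\alpha,k'')$. Because $k''$ contains a copy of the normal closure of $M_\alpha/k$, the set $\Hom_k(M_\alpha,k'')$ is nonempty, and as $\Gal(k''/k)$ acts transitively on $\Hom_k(L_\alpha,k'')$ the restriction map $\Hom_k(M_\alpha,k'')\to\Hom_k(L_\alpha,k'')$ is surjective; a preimage of $\psi$ is exactly an $L_\alpha$-algebra map $M_\alpha\to k''$, i.e.\ a lift of the given map $\Spec k''\to\Spec L_\alpha$ through $\Spec M_\alpha\hookrightarrow R$. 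Assembling these lifts over all of $\Gal(k''/k)$ yields the required factorization $\Spec(k''\otimes_k k'')\to R$ over $\Spec(k'\otimes_k k')$; the resulting nice triple over $k''$ refines the original one, refinement being transitive.

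The heart of the matter — and the only place where the field hypothesis is used essentially — is the construction of $k''$ in the third step. It relies on the two facts that an \'etale $k$-algebra is a \emph{finite} product of finite separable field extensions and that a finite separable extension has a \emph{finite} normal closure, so that a single finite separable $k''/k$ can simultaneously dominate $k'$ and ``split'' the finitely many components of $R$ lying over the finitely many components of $k'\otimes_k k'$. On a general site there is no reason for such a common refinement to exist, which is precisely the subtlety noted after Definition~\ref{defnice}. The one mildly delicate point is the Galois-theoretic bookkeeping needed to match the prescribed $L_\alpha$-structure $\psi$ (rather than some conjugate of it), but this is forced; everything else — stability of liftability under pullback, transitivity of refinement, the fact that finite separable field extensions are coverings — is formal.
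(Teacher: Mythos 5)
Your proof is correct and follows essentially the same route as the paper: enlarge to a finite Galois extension $k''/k$ splitting the finitely many field components of $R$ over those of $k'\otimes_k k'$, so that $\Spec(k''\otimes_k k'')$ decomposes into copies of $\Spec(k'')$ each of which lifts through $R$ componentwise. The only cosmetic difference is that the paper first reduces to $k'/k$ Galois (so the components of $k'\otimes_k k'$ are copies of $k'$), whereas you keep general components $L_\alpha$ and use transitivity of the $\Gal(k''/k)$-action to match the prescribed $L_\alpha$-structure; both variants are fine.
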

\begin{prf}
An \'etale $k$-algebra is a disjoint union of finite separable extensions of $k$, so, by possibly refining $(k'/k,\mathcal F,\varphi)$, we may assume that $k'/k$ is a finite Galois extension. Write the discrete set $\Spec(k'\otimes_k k')$ as $\bigsqcup_{t\in\Gal(k'/k)}\Spec(k')^{(t)}$. Then in particular $R = \bigsqcup_{t\in\Gal(k'/k)} R^{(t)}$ with \'etale maps $p^{(t)} : R^{(t)}\rightarrow\Spec(k')^{(t)}$.

Choose a large enough finite Galois extension $k''/k'$ such that each $R^{(t)}$ admits a $k''$-point over $\Spec(k')^{(t)}$. If we now write $\Spec(k''\otimes_k k'') = \bigsqcup_{s\in\Gal(k''/k)}\Spec(k'')^{(s)}$, then we can construct a factorization of the natural map $\Spec(k''\otimes_k k'')\rightarrow\Spec(k'\otimes_k k')$ through $R$ by fixing (any choice of) compositions of the form $\Spec(k'')^{(s)}\rightarrow R^{(\overline{s})}\rightarrow\Spec(k')^{(\overline{s})}$ for each $s\in\Gal(k''/k)$, where $\overline{s}$ is the natural image of $s$ in $\Gal(k'/k)$. This shows that the obvious refinement $(k''/k,\mathcal F,\varphi)$ of the starting triple is nice.
\end{prf}

\begin{defn}\label{defetlocrep}
We will say that a band on $\mathcal C$ is \textit{nicely representable} if it can be represented by a nice triple. Note that in this situation every representative triple can be refined by a nice triple (because any two coverings of $k$ have a common refinement).

In view of the above proposition, we will say that a band on $\mathcal C$ is \textit{\'etale-locally representable} if it can be represented by a nice triple over a finite separable extension $k'/k$. If $\mathcal C$ is the \'etale site of $k$, then of course every band has this property. Otherwise, note that an isomorphism of two \'etale-locally representable bands does not have to be defined over a separable extension~of~$k$.
\end{defn}

\begin{exmp}\label{exmpsmoothisnice}
A band on $k_\fppf$ represented by $(k'/k,\Gb,\varphi)$ for a locally algebraic $k'$-group $\Gb$ is nicely representable: For all $k''/k'$ there are exact sequences in which the last set is pointed
$$\mathcal{Isom}_{\mathrm{pr}_1^*\Gb,\,\mathrm{pr}_2^*\Gb}(k''\otimes_k k'')\longrightarrow\mathcal{Out}_{\mathrm{pr}_1^*\Gb,\,\mathrm{pr}_2^*\Gb}(k''\otimes_k k'')\longrightarrow\mathrm H^1(k''\otimes_k k'',\,\Gb/\mathrm Z_{\Gb})$$
so it suffices to show that $\mathrm H^1(\overline k\otimes_k\overline k,\Gb/\mathrm Z_{\Gb}) = 1$ (see \cite[Thm.\ 2.1]{Mar07} for this limit argument). Since $\Gb$ is locally algebraic, this holds by Lemma \ref{lemskewvanish}.
\end{exmp}

If a covering $\Spec(K)\rightarrow\Spec(k)$ in $\mathcal C$ comes from a finite Galois extension $K/k$, then fppf descent along this map can be formulated as Galois descent. Similarly, a (nice) representative triple $(K/k,\mathcal F,\varphi)$ can be given an equivalent Galois-theoretic definition. This is best expressed in the language of semiautomorphisms, which we now recall:

\begin{defn}
Let $K/k$ be a (possibly infinite) Galois extension with Galois group $\Gamma^K$. Let $\mathcal F$ be a group sheaf on $K$ (on the $K_\Et$ or $K_\fppf$ site, respectively). A $(K/k)$-\textit{semiautomorphism} of $\mathcal F$ is a pair $(\alpha,s)$ for $s\in\Gamma^K$ and $\alpha\in\mathrm{Isom}(s_*\mathcal F,\,\mathcal F)$, where $s_*\mathcal F$ denotes the pullback of $\mathcal F$ by $\Spec(s)$. The composition rule 
$$(\alpha,s)\circ(\beta,t)\coloneqq (\alpha\circ s_*\beta,s\circ t) \;:\; (s\circ t)_*\mathcal F\cong s_*(t_*\mathcal F)\xrightarrow{s_*\beta} s_*\mathcal F\xrightarrow{\alpha}\mathcal F$$
makes the set $\saut(\mathcal F/k)$ of $(K/k)$-semiautomorphisms of $\mathcal F$ into a group. Moreover, there~is~an obvious left exact sequence (which is in general not right exact):
\begin{equation}\label{eqdefsemiaut}
1\longrightarrow\aut(\mathcal F)\xrightarrow{\;\alpha\,\mapsto\,(\alpha,\mathrm{id})\;}\saut(\mathcal F/k)\xrightarrow{\;(\alpha,s)\,\mapsto\,s\;}\Gal(K/k)    
\end{equation}
Finally, the construction of pullbacks gives $(s_*\mathcal F)(K) = \mathcal F(K)$. Each $(K/k)$-semiautomorphism $\alpha$ therefore induces a map $\alpha_{K} : \mathcal F(K)=(s_*\mathcal F)(K)\rightarrow\mathcal F(K)$. This is a well-defined action of $\saut(\mathcal F/k)$ on $\mathcal F(K)$.
\end{defn}

\begin{rem}\label{remspringersplit}
In \cite{Spr66}, Springer works over the small \'etale site $k_\et$ on which a group sheaf is simply a Galois module. In that situation and for $K = k_s$, $\saut(\mathcal F/k)\rightarrow\Gal(k_s/k)$ admits a canonical homomorphic section (given by the trivial action of $\Gal(k_s/k)$, which corresponds to the constant sheaf on $k_\et$). In particular, then $\saut(\mathcal F/k) = \aut(\mathcal F)\times\Gal(k_s/k)$ as groups. Over more general sites, such a section need not exist.

Note also that, when $\mathcal F$ is represented by some group scheme $\Gb$ on $K$, the pullback $s_*\Gb$ is the same as the usual pullback by the morphism $\Spec(s)$. The following diagram illustrates that the data of a semiautomorphism $\alpha$ is, by the defining property of pullbacks (applied here to $\alpha^{-1}$), the same as that of an isomorphism $\Gb\rightarrow\Gb$ ``lying over'' $s^*\coloneqq\Spec(s)^{-1} : \Spec(K)\rightarrow\Spec(K)$.
\begin{center}\begin{tikzcd}[row sep = tiny]
    & s_*\Gb\arrow[ld, "\alpha", swap]\arrow[r]\arrow[ddd, "s_*p", pos=0.8]\arrow[rddd, phantom, "\!\!\ulcorner\;\;", very near start] & \Gb\arrow[ddd, "p"]\\
    \Gb\arrow[rdd, "p", swap]\arrow[rru, dashed, bend right=20] \\
    \; \\
    & \Spec(K)\arrow[r, "\Spec(s)"] & \Spec(K)\\
\end{tikzcd}\end{center}
This is an isomorphism of $k$-schemes. Looking at $\Gb$ as a functor of points on $k$-algebras,~the~map $\alpha_K : \Gb(K)\rightarrow\Gb(K)$ is given by taking the $k$-morphism $j : \Spec(K)\rightarrow\Gb$ to $j\circ s^*$.
\end{rem}

Consider the inclusions $(\mathcal F/\mathrm Z_{\mathcal F})(K)\subseteq\aut(\mathcal F)\subseteq\saut(\mathcal F/k)$. The sequence \eqref{eqdefsemiaut} induces~the following exact sequence for $(K/k)$-semiautomorphisms, important in the example below:
\begin{equation}\label{eqdefsemiout}
1\longrightarrow\frac{\aut(\mathcal F)}{(\mathcal F/\mathrm Z_{\mathcal F})(K)}\longrightarrow\frac{\saut(\mathcal F/k)}{(\mathcal F/\mathrm Z_{\mathcal F})(K)}\longrightarrow\Gal(K/k)    
\end{equation}
We will not introduce the usual notation $\out$ and $\sout$ for these groups as we want to always keep in mind the formation of the quotient $\mathcal F/\mathrm Z_{\mathcal F}$ (on the \'etale, resp. fppf site).

\begin{exmp}\label{exmpgalband}
Suppose given a band $L$ on $\mathcal C$ admitting a nice triple $(K/k,\mathcal F,\varphi)$ for a finite Galois extension $K/k$. There is a natural ring isomorphism $K\otimes_k K\cong \prod_{\sigma\in\Gal(K/k)}K$ taking $a\otimes b$ to $(a\,\sigma(b))_{\sigma}$. This gives that $\Spec(K)\times_{\Spec(k)}\Spec(K)\cong\Spec(K\otimes_k K)\cong\bigsqcup_{\sigma}\Spec(K)$ and there is a commutative diagram:
\begin{center}\begin{tikzcd}
\bigsqcup_{\sigma\in\Gal(K/k)}\Spec(K)\arrow[d, "{\rotatebox{90}{$\sim$}}", pos=0.4] & & \bigsqcup_{\sigma\in\Gal(K/k)}\Spec(K)\arrow[d, "{\rotatebox{90}{$\sim$}}", pos=0.4]\arrow[ll, swap,  "\bigsqcup_{\sigma}\Spec(\sigma)"]\\
\Spec(K)\times_{\Spec(k)}\Spec(K)\arrow[r, "\mathrm{pr}_1"] & \Spec(K) & \Spec(K)\times_{\Spec(k)}\Spec(K)\arrow[l, "\mathrm{pr}_2", swap]
\end{tikzcd}\end{center}
Thus $\mathrm{Isom}(\mathrm{pr}_1^*\mathcal F, \mathrm{pr}_2^*\mathcal F)$ is in canonical bijection with the sections of $\saut(\mathcal F/k)\rightarrow\Gal(K/k)$, not necessarily homomorphisms. Because the given triple is nice, we may choose such a section $f$ which represents a lift of $\varphi$. The difference of any two such sections takes values in $(\mathcal F/\mathrm Z_{\mathcal F})(K)$. Therefore $\varphi$ canonically defines the following section of the sequence \eqref{eqdefsemiout}
$$\kappa : \Gal(K/k)\longrightarrow\frac{\saut(\mathcal F/k)}{(\mathcal F/\mathrm Z_{\mathcal F})(K)}$$
which is moreover a homomorphism by the cocycle condition on $\varphi$. Conversely, a homomorphic section $\kappa$ for a given pair $(K,\mathcal F)$ defines a nice triple $(K,\mathcal F,\varphi)$, since $\kappa$ can always (noncanonically) be lifted to some section $f$.

Finally, if a section $f$ lifting $\kappa$ is a homomorphism (of $\Gal(K/k)$ into $\saut(\mathcal F/k)$), then it defines a $k$-form $\mathcal F_0$ of $\mathcal F$ by Galois descent and $L(\mathcal F_0)\simeq L$. If $L$ is an algebraic band, then $\mathcal F_0$ is representable by an algebraic group (recall that this is specific to Galois descent; in the case of fppf descent for algebraic, but non-affine $L$, the sheaf $\mathcal F_0$ could fail to be representable).
\end{exmp}

For the purpose of working with equivalence classes of triples defined over arbitrarily large finite Galois extensions, it is easiest to define them directly over the separable closure $k_s$ of $k$ with a ``continuity condition'' (that ensures that a representative of the class can always be found over a finite extension $K/k$). This will lead us to an alternative definition of bands with which we will be working for most of this paper.

From now on, all semiautomorphisms will be $(k_s/k)$-semiautomorphisms. Moreover, we restrict ourselves to working with algebraic bands; this condition seems essentially indispensable in the proof (direction (3)$\Rightarrow$(4)) of the crucial Proposition \ref{propcont} below -- even if we assume that a sheaf $\mathcal F$ over $k_s$ admits some $K$-form $\mathcal F_0$ for $K/k$ finite, a given (semi)automorphism of $\mathcal F$ can in general not be descended to any such $\mathcal F_0$.

\begin{defn}\label{defcont}
Let $\Gb$ be an algebraic group defined over $k_s$. Then it admits a $K$-form $G_0$ for some finite separable extension $K/k$. If we write $\Gamma_K\coloneqq\Gal(k_s/K)$, the group $G_0$ defines a section $\sigma : \Gamma_K\rightarrow\saut(\Gb/k)$ which is a homomorphism.

Let $f$ be a section (not necessarily a homomorphism) of $\saut(\Gb/k)\rightarrow\Gal(k_s/k)$. It is said to be \textit{continuous} (with respect to $K$ and $G_0$) if, for every $s\in\Gamma$, the map
$$\Gamma_K\rightarrow\aut(\Gb),\;\;\;\;
t\mapsto\sigma_t^{-1}f_s^{-1}f_{st}$$
is locally constant, with respect to the Krull topology on $\Gamma_K$. Note that this notation suppresses the base changes in the definition of composition of semiautomorphisms.
\end{defn}

This notion of continuity is taken from \cite[1.10]{FSS98}, where it is implicit that the definition is independent of the choice of $K$ and $G_0$. We explore this condition and deduce its equivalence to an (a priori stronger) statement which is the key to Galois descent along infinite extensions:

\begin{prop}\label{propcont}
With notations as in Definition \ref{defcont}, let $f : \Gamma\rightarrow\saut(\Gb/k)$ be a section of $\saut(\Gb/k)\rightarrow\Gal(k_s/k)$. The following conditions are equivalent:
\begin{enumerate}[\hspace{0.5 cm}(1)]
    \item $f$ is continuous with respect to some finite separable $K/k$ (and some $G_0$)
    \item $f$ is continuous with respect to all finite separable $K/k$ (and any $G_0$)
\end{enumerate}
In particular, we may just say that $f$ is continuous (suppressing mention of $K$ or $G_0$).
\begin{enumerate}[\hspace{0.5 cm}(1)]
\setcounter{enumi}{2}
    \item There exists a finite subextension $K/k$ of $k_s/k$ such that $f_{st} = f_s\sigma_t$ for all $s\in\Gamma$, $t\in\Gamma_K$
    \item The section $s\mapsto f_{s^{-1}}^{-1}$ is continuous
\end{enumerate}
In view of the last two statements, for any continuous $f$ there exists a finite subextension $K/k$ of $k_s/k$ (and a $K$-form $G_0$ of $\Gb$) such that $f_{st} = f_s\sigma_t$ and $f_{ts} = \sigma_tf_s$ for all $s\in\Gamma$, $t\in\Gamma_K$.
\end{prop}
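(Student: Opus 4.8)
The plan is to establish the cycle $(2)\Rightarrow(1)\Rightarrow(3)\Rightarrow(2)$ together with $(3)\Leftrightarrow(4)$, and then deduce the concluding two-sided assertion. The basic bookkeeping device is the family of \emph{discrepancy functions} attached to $f$ and a chosen pair $(K,G_0)$: writing $\sigma=\sigma^{G_0}$ for the (homomorphic) descent section of the $K$-form $G_0$ and setting $c_s\colon\Gamma_K\to\aut(\Gb)$, $c_s(t)=\sigma_t^{-1}f_s^{-1}f_{st}$, continuity of $f$ with respect to $(K,G_0)$ says exactly that every $c_s$ is locally constant, while ``$f_{st}=f_s\sigma_t$ for all $s\in\Gamma,\ t\in\Gamma_K$'' (which is statement $(3)$, once one fixes a $K$-form and its descent section) says exactly that every $c_s$ vanishes identically. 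The identity
$$c_s(tt') \;=\; \bigl(c_s(t)\triangleleft t'\bigr)\cdot c_{st}(t'), \qquad \phi\triangleleft t':=\sigma_{t'}^{-1}\,\phi\,\sigma_{t'},$$
which follows formally from $f_s^{-1}f_{st}=\sigma_t\,c_s(t)$ and from $\sigma$ being a homomorphism (so $\triangleleft$ is the natural right action of $\Gamma_K$ on $\aut(\Gb)$ coming from $G_0$), is the engine of the whole argument: it recovers $c_s\big|_{\Gamma_K}$ from $c_{s_i}\big|_{\Gamma_K}$ as soon as $s\in s_i\Gamma_K$, and it lets ``transition cocycles'' propagate across cosets.

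Two facts, both consequences of $\Gb$ being \emph{algebraic}, will be used throughout. First, every $k_s$-automorphism of $\Gb$ is already defined over some finite subextension of $k_s/K$ (it lies in $\aut\bigl((G_0)_M\bigr)$ for a finite $M/K$); hence $\triangleleft$ is a smooth action (open stabilizers), and any locally constant map from the \emph{compact} group $\Gamma_K$ to a discrete set factors through a finite quotient $\Gamma_K/\Gamma_{K'}$ and has finite image. Second, for two $K$-forms $G_0,G_0'$ of $\Gb$ the ``transition'' $t\mapsto(\sigma_t^{G_0})^{-1}\sigma_t^{G_0'}$ is a locally constant cocycle $\Gamma_K\to\aut(\Gb)$: it is enough to check this on $\Gamma_M$ for a finite $M/K$ over which $G_0$ and $G_0'$ become isomorphic (such $M$ exists, both being of finite type), where it has the shape $t\mapsto(\phi_0\triangleleft t)\,\phi_0^{-1}$ for a fixed $\phi_0\in\aut(\Gb)$, locally constant by the first fact, and then to propagate to $\Gamma_K$ by the identity above. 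The second fact makes $(3)$ essentially independent of the choice of form and at once yields $(3)\Rightarrow(2)$: if the $c_s$ for $(K_0,G_0)$ all vanish, then for any other $(K,G)$ the function $c_s^{(K,G)}$ agrees on $\Gamma_{K_0K}$ with the ($s$-independent) transition cocycle, hence is locally constant there, and then on all of $\Gamma_K$ by the cocycle identity. The implications $(2)\Rightarrow(1)$ and $(3)\Rightarrow(1)$ are formal.

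The core of the proposition is $(1)\Rightarrow(3)$, and I expect \emph{uniformity in $s$} to be the main obstacle: continuity only provides, separately for each $s$, an open subgroup of $\Gamma_K$ on which $c_s\equiv\mathrm{id}$, whereas $(3)$ demands a single $\Gamma_{K'}$ working for \emph{all} $s\in\Gamma$. This is forced by compactness. After enlarging $K$ one may assume $K/k$ finite Galois; pick coset representatives $s_1,\dots,s_n$ of $\Gamma/\Gamma_K=\Gal(K/k)$. Each $c_{s_i}$ is locally constant with $c_{s_i}(\mathrm{id})=\mathrm{id}$, hence factors through a finite quotient and has finite image; let $\Gamma_{K'}\subseteq\Gamma_K$ be an open subgroup, normal in $\Gamma$, contained in the common period subgroup of the $c_{s_i}$ and in the stabilizer of each of the finitely many values $c_{s_i}(t_0)$. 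For an arbitrary $s=s_it_0$ with $t_0\in\Gamma_K$ and any $t\in\Gamma_{K'}$, the cocycle identity gives $c_s(t)=\bigl(c_{s_i}(t_0)\triangleleft t\bigr)^{-1}c_{s_i}(t_0 t)$, and by the choice of $K'$ this equals $c_{s_i}(t_0)^{-1}c_{s_i}(t_0)=\mathrm{id}$. Thus $f_{st}=f_s\sigma_t$ for all $s\in\Gamma$ and $t\in\Gamma_{K'}$, which is $(3)$ (with form $(G_0)_{K'}$).

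For $(3)\Leftrightarrow(4)$ I would apply the now-established equivalence $(1)\Leftrightarrow(3)$ to the section $g_s:=f_{s^{-1}}^{-1}$: statement $(4)$ is literally ``$(1)$ for $g$'', and ``$(3)$ for $g$'' unwinds to ``$f_{ts}=\sigma_t f_s$ for all $s\in\Gamma,\ t\in\Gamma_K$''. So it remains to prove the symmetry statement that $f_{st}=f_s\sigma_t$ (for some $(K,G_0)$) holds iff $f_{ts}=\sigma_t f_s$ (for some $(K',G_0')$) holds. Assuming the former with $K/k$ finite Galois, one computes $f_{ts}=f_s\sigma_{s^{-1}ts}$, and for each coset representative $s_i$ the map $\rho^{(i)}\colon t\mapsto f_{s_i}\sigma_{s_i^{-1}ts_i}f_{s_i}^{-1}$ is a homomorphic section of $\saut(\Gb/k)\to\Gal(k_s/k)$ over $\Gamma_K$, hence --- by effectivity of Galois descent for algebraic groups (Example~\ref{exmpgalband}) --- the descent section of a genuine $K$-form of $\Gb$; one checks $f_{ts}=\rho^{(i)}_t f_s$ whenever $s\in s_i\Gamma_K$. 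Since there are only finitely many $\rho^{(i)}$ and their pairwise transitions are locally constant by the second standing fact, all of them agree on some $\Gamma_{K''}$, and there $f_{ts}=\sigma''_t f_s$ for all $s$; the converse direction is symmetric. Finally, the concluding assertion follows by taking $(K,G_0)$ realizing $(3)$ and $(K',G_0')$ realizing $f_{ts}=\sigma'_tf_s$, passing to the compositum, and enlarging once more so that the two descent sections coincide (again possible by the transition-cocycle fact). The two places where algebraicity of $\Gb$ is truly indispensable are the smoothness of the Galois action on $\aut(\Gb)$ underlying the uniformity step, and the identification of the sections $\rho^{(i)}$ with descent data of honest algebraic $K$-forms in the treatment of $(3)\Leftrightarrow(4)$.
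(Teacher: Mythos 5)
Your proof is correct, and its essential inputs coincide with the paper's: compactness of $\Gamma$ to force uniformity in $s$, and the finite-type/limit argument (morphisms and isomorphisms of algebraic groups over $k_s$ descend to a finite level) as the only place algebraicity enters. The organization is genuinely different, though. You systematize everything through the discrepancy cocycles $c_s(t)=\sigma_t^{-1}f_s^{-1}f_{st}$ and the identity $c_s(tt')=(c_s(t)\triangleleft t')\,c_{st}(t')$, prove the cycle $(2)\Rightarrow(1)\Rightarrow(3)\Rightarrow(2)$, and then obtain $(3)\Leftrightarrow(4)$ formally by applying the already-established $(1)\Leftrightarrow(3)$ to the inverted section $g_s=f_{s^{-1}}^{-1}$, reducing to the symmetry $f_{st}=f_s\sigma_t\Leftrightarrow f_{ts}=\sigma'_tf_s$, which you settle via the Galois-twisted forms $\bar{s_i}^*G_0$ and their transition cocycles. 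The paper instead proves $(1)\Rightarrow(3)$, $(3)\Rightarrow(2)$ and $(3)\Rightarrow(4)$ directly, the last by showing that $f_{ts}=\sigma_tf_s$ on $\Gamma_{K(s)}$ is equivalent to $f_s$ descending to a morphism $\bar{s}^*G_{0,K(s)}\rightarrow G_{0,K(s)}$. These are the same argument in different clothing ($\rho^{(i)}=\sigma$ on $\Gamma_M$ exactly when $f_{s_i}$ descends to $M$), but your cocycle bookkeeping makes the propagation steps and the independence of $(3)$ from the chosen form more transparent, at the cost of a slightly heavier setup.

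One justification should be repaired: you assert that $\rho^{(i)}_t=f_{s_i}\sigma_{s_i^{-1}ts_i}f_{s_i}^{-1}$ is the descent section of a genuine $K$-form ``by effectivity of Galois descent (Example~\ref{exmpgalband}).'' That example concerns homomorphic sections over a \emph{finite} Galois group; an abstract homomorphic section over the infinite group $\Gamma_K$ need not come from an algebraic form without a continuity hypothesis, which is precisely what is at stake. The claim is nevertheless true for the tautological reason that $\rho^{(i)}$ is the descent section of the algebraic $K$-group $\bar{s_i}^*G_0$ (the Galois conjugate of $G_0$, available since you arranged $K/k$ Galois) with its geometric fiber identified with $\Gb$ via $f_{s_i}$ --- no descent needs to be performed. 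With that substitution your ``second standing fact'' applies and the rest of the argument stands.
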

\begin{proof}
Implication (2)$\Rightarrow$(1) is obvious. We now prove (1)$\Rightarrow$(3): For this, suppose that (1) holds with respect to some $K/k$. Then by definition of the Krull topology, we may choose for every $s\in\Gamma$ a finite extension $K(s)$ of $K$ such that the function $t\mapsto\sigma_t^{-1}f_s^{-1}f_{st}$ is constant on $\Gamma_{K(s)}\subseteq\Gamma_K$. In particular, it's equal to $1$, since $\sigma_1^{-1}f_s^{-1}f_{s1} = f_s^{-1}f_{s} = 1$.

Cover $\Gamma$ by the open sets $s\Gamma_{K(s)}$. Since $\Gamma$ is compact, we can find a finite subcover $s_j\Gamma_{K(s_j)}$. Finally, define a finite extension $K'/K$ as the compositum of all $K(s_j)$, so that $\Gamma_{K'}\subseteq\Gamma_{K(s_j)}$.
Now, given $s\in\Gamma$ and $t\in\Gamma_{K'}$, let $s' = s_j$ be such that $s\in s'\Gamma_{K(s')}$. Then $s = s't'$ and also $t, t'\in\Gamma_{K(s')}$, so that:
$$f_{st} = f_{s't't} = f_{s'}\sigma_{t't} = f_{s'}\sigma_{t'}\sigma_t = f_{s't'}\sigma_t = f_{s}\sigma_t$$

For the proof of (3)$\Rightarrow$(2), we use the algebraicity of $\Gb$. Fix $K$ and $G_0$ and assume that the property from (3) holds for some $K''/K$ and some $K''$-form $G_0''$ of $\Gb$. By enlarging $K''$, we may assume that $G_0''\simeq G_{0,\,k''}$. Now it suffices to prove that, given $s\in\Gamma$ and $t\in\Gamma_K$, the equality
$$\sigma_t^{-1}f_s^{-1}f_{st} = \sigma_{tt'}^{-1}f_s^{-1}f_{stt'}$$
holds for all $t'\in\Gamma_{K'}$ in some finite extension $K'/K''$. By $f_{stt'} = f_{st}\sigma_{t'}$ and $\sigma_{tt'} = \sigma_{t}\sigma_{t'}$, we get:
$$\sigma_{tt'}^{-1}f_s^{-1}f_{stt'} = \sigma_{t'}^{-1}\sigma_t^{-1}f_s^{-1}f_{st}(f_{st}^{-1}f_{stt'}) = \sigma_{t'}^{-1}(\sigma_t^{-1}f_s^{-1}f_{st})\sigma_{t'}$$
Finally, if $K'$ is such that the automorphism $\sigma_t^{-1}f_s^{-1}f_{st}\in\aut(\Gb)$ is defined for $G_{K'}$, then this automorphism commutes with $\sigma_{t'}$ for $t'\in\Gamma_{K'}$ by definition, which is what we wanted.

It remains to prove (3)$\Rightarrow$(4), as the converse will then hold by symmetry. Fix $K/k$ such that $f_{st} = f_s\sigma_t$ for all $s\in\Gamma$, $t\in\Gamma_K$. The continuity of $s\mapsto f_{s^{-1}}^{-1}$ amounts to saying that, for an arbitrary fixed $s$, the function 
$t\longmapsto\sigma_t^{-1}\big( f_{s^{-1}}^{-1}\big)^{-1}f_{(st)^{-1}}^{-1}$
is locally constant. Equivalently, we may consider the same expression with $s^{-1}$ substituted for $s$:
$$t\longmapsto\sigma_t^{-1}\left( f_s^{-1}\right)^{-1}f_{t^{-1}s}^{-1} = \sigma_{t^{-1}}f_sf_{s(s^{-1}ts)^{-1}}^{-1} = \sigma_{t^{-1}}f_s\left(f_s\sigma_{s^{-1}ts}^{-1}\right)^{-1} = \sigma_{t^{-1}}f_s\,\sigma_{s^{-1}ts}f_s^{-1}$$
We claim that, for a sufficiently large finite extension $K(s)/K$, this function sends all $t\in\Gamma_{K(s)}$ to $\mathrm{id}_\Gb$. This is moreover equivalent to $f_{ts} = \sigma_t f_s$ for all $t\in\Gamma_{K(s)}$. By a compactness argument as in (1)$\Rightarrow$(3) taken over all $s\in\Gamma$, this claim implies that there is some finite $K'/K$ such that condition (3) holds for $s\mapsto f_{s^{-1}}^{-1}$, which will end the proof.

This claim is in fact equivalent to saying that the map $f_s$ is defined over some Galois extension $K(s)/k$: Indeed, if we write $\overline{s}$ for the image of $s$ in $\Gal(K(s)/k)$, then $f_s : s_*\Gb\rightarrow\Gb$ descends to a (unique) map $\overline{f}_s : \overline{s}_*G_{0,\,K(s)}\rightarrow G_{0,\,K(s)}$ if and only if the following diagram commutes for all $t\in\Gamma_{K(s)}$ (which corresponds to the function in the claim being trivial):
\begin{center}\begin{tikzcd}
    & s_*\Gb\arrow[r, "f_s"] & \Gb\\
    s_*(s^{-1}ts)_*\Gb\arrow[ur, "s_*(\sigma_{s^{-1}ts})"]\arrow[r, equal] & t_*s_*\Gb\arrow[u]\arrow[r, "t_*f_s"] & t_*\Gb\arrow[u, "\sigma_t"]
\end{tikzcd}\end{center}
As $G_{0,\,K(s)}$ and $\overline{s}_*G_{0,\,K(s)}$ are algebraic groups, $\overline{f}_s$ must be defined over a sufficiently large finite extension $K(s)/k$. Then the diagram commutes, which proves the claim.
\end{proof}

\begin{exmp}\label{exmpgalband2}
Returning to Example \ref{exmpgalband} and the pair $(\mathcal F,\kappa)$, we will assume now that $\mathcal F$ is represented by an algebraic group $G$. We will keep the expression $\mathcal F/\mathrm Z_{\mathcal F}$ for the sheaf quotient taken in the topology of $\mathcal C$ (so if $\mathcal C = k_\Et$, then $(\mathcal F/\mathrm Z_{\mathcal F})(k_s) = \mathcal F(k_s)/\mathrm Z_{\mathcal F}(k_s)$). The pair $(G,\kappa)$ over $K$ defines by base change a pair $(\Gb,\widetilde{\kappa})$ over $k_s$, where $\Gb = G_{k_s}$ and the homomorphism 
$$\widetilde{\kappa} : \Gal(k_s/k)\longrightarrow\frac{\saut(\Gb/k)}{(\mathcal F/\mathrm Z_{\mathcal F})(k_s)}$$
is induced by $\widetilde{f} : \Gal(k_s/k)\rightarrow\saut(\Gb/k)$ such that $\widetilde{f}_s = f_{\overline{s}}\times_{\overline{s}^*} s^*$ for any $s\in\Gal(k_s/k)$ and its image $\overline{s}\in\Gal(K/k)$. Indeed, the element $\widetilde{f}_{st}^{-1}\widetilde{f}_s\widetilde{f}_t = (f_{\overline{st}}^{-1}f_{\overline{s}}f_{\overline{t}})\times_{\mathrm{id}_K}\mathrm{id}_{k_s}$ is~just~the~restriction of the element $f_{\overline{st}}^{-1}f_{\overline{s}}f_{\overline{t}}\in(\mathcal F/\mathrm Z_{\mathcal F})(K)\subseteq\aut(G)$, so it is in $(\mathcal F/\mathrm Z_{\mathcal F})(k_s)$ and the induced function $\widetilde{\kappa}$ is a homomorphism. The section $\widetilde{f}$ is clearly continuous in the sense of Definition \ref{defcont}.

Now, take another nice triple $(K'/k,G',\varphi')$ for which there is a $K''$-representable isomorphism with our starting triple $(K/k,G,\varphi)$, where $K''$ is a finite \textit{separable} extension of $k$ containing~both $K$ and $K'$. Then these two triples define isomorphic pairs over $k_s$ (see Definition \ref{defetsepband} below). 
Conversely, given any pair $(\Gb,\widetilde{\kappa})$ over $k_s$ with a $K$-form $G$ and continuous lift $\widetilde{f}$ of $\widetilde{\kappa}$, we get a nice triple $(K/k,G,\varphi)$ which represents a band $L$ depending only on the pair $(\Gb,\widetilde{\kappa})$. These two constructions are mutually inverse, up to obvious isomorphisms.

We sketch the construction of $\varphi$: 
Suppose that $\widetilde{f}_{st} = \widetilde{f}_s\sigma_t$ and $\widetilde{f}_{ts} = \sigma_t\widetilde{f}_s$ both hold for the natural homomorphism $\sigma : \Gal(k_s/K)\rightarrow\saut(\Gb/K)$ and $s\in\Gal(k_s/k),\, t\in\Gal(k_s/K)$.~As~in the proof of (3)$\Rightarrow$(4), this defines for each $\overline{s}\in\Gal(K/k)$ a unique semiautomorphism $f_{\overline{s}}$ of $G$ lying over $\overline{s}$ (the Galois descent of $f_s$ independent of the choice of $s$ lifting $\overline{s}$). It remains only to show that the section $f$ is a homomorphism up to inner automorphisms, i.e. that the element $f_{\overline{st}}^{-1}f_{\overline{s}}f_{\overline{t}}\in\aut(G)$ lies in $(\mathcal F/\mathrm Z_{\mathcal F})(K)$ for $s,t\in\Gal(k_s/k)$. Because $\smash{\widetilde{f}_{st}^{-1}\widetilde{f}_s\widetilde{f}_t}\in(\mathcal F/\mathrm Z_{\mathcal F})(k_s)$ and $\mathcal F/\mathrm Z_{\mathcal F}\hookrightarrow\mathcal{Aut}_G$ is an inclusion of sheaves, this again follows by descent. By the discussion in Example \ref{exmpgalband}, the section $f$ defines the desired map $\varphi$. 
\end{exmp}

\begin{rem}\label{remcontcond}
Examples \ref{exmpgalband} and \ref{exmpgalband2} show that nice triples correspond to our stronger interpretation of continuity (following Proposition \ref{propcont}), which is evidently connected to Galois descent. On the other hand, property (1) in Proposition \ref{propcont} is more directly related to the general definition of a band, where $t\mapsto\sigma_t^{-1}f_s^{-1}f_{st}$ being locally constant corresponds to the formation of the sheaf $\mathcal{Out}_{\mathrm{pr}_1^*\Gb,\,\mathrm{pr}_2^*\Gb}$.

See \cite[1.14]{FSS98} for a discussion of other variants of ``continuity'' for sections in literature:~In \cite{Brv93}, Borovoi gives a condition more general than ours and thus his results still hold in our case (when $\mathrm{char}(k) = 0$, which he assumes). On the other hand, Springer's condition~in~\cite{Spr66} is too strong: together with our Definition \ref{defcont}, it would imply that $\Gamma_K$ acts on $\Gb(k_s)$ by inner automorphisms (so if $\Gb$ is smooth commutative, $G_0$ must be a constant $K$-group scheme).
\end{rem}

\begin{defn}\label{defetsepband}
Let $\mathcal F$ be a group sheaf defined on the restriction of the site $\mathcal C$ to $\mathrm{Sch}/k_s$. Suppose it is represented by an algebraic group $\Gb$ over $k_s$ (we write $\mathcal F/\mathrm Z_{\mathcal F}$ for the sheaf quotient taken on~$\mathcal C$, e.g.\ $(\mathcal F/\mathrm Z_{\mathcal F})(k_s) = \Gb(k_s)/\mathrm Z_\Gb(k_s)\subseteq (\Gb/\mathrm Z_\Gb)(k_s)$ if $\mathcal C = k_\Et$). Fix a homomorphism:
$$\kappa : \Gal(k_s/k)\longrightarrow\frac{\saut(\Gb/k)}{(\mathcal F/\mathrm Z_{\mathcal F})(k_s)}$$
Suppose that $\kappa$ admits a continuous lift $f$ to $\saut(\Gb/k)$.
Such a pair $(\Gb,\kappa)$ will be called an \textit{\'etale band} if the site $\mathcal C$ with respect to which it is defined is $k_\Et$. If the site $\mathcal C$ is instead $k_\fppf$, then we suppose furthermore that $\mathrm{H}^1(k_s,\Gb/\mathrm Z_{\Gb}) = 1$ 
and call this pair a \textit{separable band}. These two definitions clearly agree if $k$ is a perfect field (which coincides with the case considered in \cite[\textsection2.2]{DLA19}) or if both $\Gb$ and $\mathrm Z_\Gb$ are smooth (as then $\Gb(k_s)/\mathrm Z_\Gb(k_s) = (\Gb/\mathrm Z_\Gb)(k_s)$).

Note that a particular choice of $K$, $G_0$ or $f$ is not part of the data of these pairs, and only their existence is required.
An isomorphism of two \'etale (resp. separable) bands $(\Gb,\kappa)$ and $(\Gb',\kappa')$ is an isomorphism $\alpha : \Gb\simeq\Gb'$ of algebraic groups such that
    the sections $\kappa,\kappa'$ commute with the natural group isomorphism (for $\mathcal F,\mathcal F'$ the associated \'etale, resp.\ fppf, sheaves on $k_s$)
$$\frac{\saut(\Gb/k)}{(\mathcal F/\mathrm Z_{\mathcal F})(k_s)}\xrightarrow{\;\;\;\sim\;\;\;}\frac{\saut(\Gb'/k)}{(\mathcal F'/\mathrm Z_{\mathcal F'})(k_s)}$$
    induced by the map $(\alpha',s)\mapsto (\alpha\circ\alpha'\circ s_*\alpha^{-1}, s)$ on semiautomorphisms. 
It is immediate that any continuous lift of $\kappa$ then corresponds to a continuous lift of $\kappa'$ and vice versa, because we know that $\alpha$ descends to some $\alpha_0 : G_0\simeq\Gb'_0$ over a finite separable extension of $k$.
\end{defn}

\begin{exmp}
Let $\Gb$ be an algebraic group on $k_s$. In \cite{FSS98}, an ``inner automorphism'' of $\Gb$ is defined to be conjugation by an element of $\Gb(k_s)$. This gives a definition of a band which agrees with our definition of \'etale band.
\end{exmp}

By Example \ref{exmpgalband2}, an ``\'etale band'' is (up to isomorphism) the same as an algebraic band on $k_\Et$. On the other hand, a given ``separable band'' only defines some algebraic band $L$ on $k_\fppf$ (representable \'etale-locally, in the sense of Definition \ref{defetlocrep}), but it is not a priori clear that this presentation of $L$ is unique. However, the following statement shows that this is indeed the case, thanks to our more restrictive definition of a separable band:

\begin{prop}\label{propcharsepband}
There is an equivalence of groupoid categories (that is, we consider only morphisms which are isomorphisms):
\begin{enumerate}[(i)]
    \item separable bands on $k$
    \item algebraic bands on $k_\fppf$ which admit representation by a nice triple $(K/k,G,\varphi)$ such that $K/k$ is separable and $\mathrm H^1(k_s, G/\mathrm Z_G) = 1$
\end{enumerate}
Note that the chosen representative triple is not part of the data of the objects in (ii), only the existence of such a triple.
\end{prop}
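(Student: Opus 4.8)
The plan is to exhibit quasi-inverse functors between the two groupoids, using that most of the work is already contained in Example \ref{exmpgalband2}. To a separable band $(\Gb,\kappa)$ one associates the algebraic band $L$ on $k_\fppf$ represented by the nice triple $(K/k,G_0,\varphi)$ built from any choice of finite separable $K/k$, $K$-form $G_0$ of $\Gb$ and continuous lift $f$ of $\kappa$ normalized by the closing assertion of Proposition \ref{propcont} so that $f_{st}=f_s\sigma_t$ and $f_{ts}=\sigma_tf_s$; by Example \ref{exmpgalband2} this $L$ is independent of the choices up to isomorphism, and since the fppf sheaf $G_0/\mathrm Z_{G_0}$ pulls back along $K\hookrightarrow k_s$ to $\Gb/\mathrm Z_{\Gb}$ we get $\mathrm H^1(k_s,G_0/\mathrm Z_{G_0})=\mathrm H^1(k_s,\Gb/\mathrm Z_{\Gb})=1$, so $L$ belongs to (ii). In the other direction, to an object of (ii) presented by a nice triple $(K/k,G,\varphi)$ with $K/k$ separable and $\mathrm H^1(k_s,G/\mathrm Z_G)=1$ one associates the pair $(\Gb,\widetilde\kappa)$ with $\Gb=G_{k_s}$, obtained by base change as in Example \ref{exmpgalband2}; the induced lift $\widetilde f$ is continuous and $\mathrm H^1(k_s,\Gb/\mathrm Z_{\Gb})=\mathrm H^1(k_s,(G/\mathrm Z_G)_{k_s})=1$, so $(\Gb,\widetilde\kappa)$ is a separable band. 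By the ``mutually inverse up to isomorphism'' part of Example \ref{exmpgalband2}, both round trips are canonically isomorphic to the identity, so it only remains to check that these two assignments are well-defined functors.

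The one point not already in Example \ref{exmpgalband2}, and the one where $\mathrm H^1(k_s,G/\mathrm Z_G)=1$ enters, is that the assignment (ii)$\to$(i) is independent of the chosen nice separable presentation of $L$ and carries isomorphisms of bands on $k_\fppf$ to isomorphisms of separable bands. So let $(K_1/k,G_1,\varphi_1)$ and $(K_2/k,G_2,\varphi_2)$ be objects of (ii) and $\alpha\colon L_1\xrightarrow{\sim}L_2$ an isomorphism of bands on $k_\fppf$ (for well-definedness, take $L_1=L_2$ and $\alpha=\mathrm{id}$). Pulling back to $\Spec(k_s)$ makes both bands representable, by $\Gb_1=G_{1,\,k_s}$ and $\Gb_2=G_{2,\,k_s}$, so $\alpha$ restricts to an isomorphism $L(\Gb_1)\simeq L(\Gb_2)$ of bands on $(k_s)_\fppf$, i.e.\ to a section $\bar\beta$ of $\mathcal{Out}_{\Gb_1,\Gb_2}$ over $\Spec(k_s)$, which by construction of the sheafified quotient lifts locally to $\mathcal{Isom}_{\Gb_1,\Gb_2}$. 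The sheaf of such lifts is a torsor under $\Gb_1/\mathrm Z_{\Gb_1}$, hence trivial because $\mathrm H^1(k_s,\Gb_1/\mathrm Z_{\Gb_1})=1$; thus $\bar\beta$ is represented by a genuine group-scheme isomorphism $\beta\colon\Gb_1\xrightarrow{\sim}\Gb_2$ over $k_s$ (in particular $\Gb_1\simeq\Gb_2$, as one could also deduce from the injection of Proposition \ref{propparambandforms} into the then-trivial set of inner forms). Tracking the compatibility of $\alpha$ with the descent data $\varphi_1,\varphi_2$ through the dictionary of Examples \ref{exmpgalband} and \ref{exmpgalband2} — at the level of semiautomorphisms over $k_s$, that conjugation by $\beta$ sends a lift of $\widetilde\kappa_1$ to a lift of $\widetilde\kappa_2$ modulo $(\Gb_2/\mathrm Z_{\Gb_2})(k_s)$ — shows that $\beta$ is an isomorphism of separable bands. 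With $\alpha=\mathrm{id}$ this is well-definedness on objects; in general it defines a map on $\Hom$-sets, inverse to the one in the other direction, since by Definition \ref{defetsepband} any isomorphism of separable bands descends to an isomorphism $G_{1,0}\simeq G_{2,0}$ over a finite separable extension and hence, over a common finite separable $K''/k$ containing $K_1$ and $K_2$, to a $K''$-representable isomorphism of the nice triples. Both functors are therefore fully faithful, and together with the essential surjectivity noted above this gives the equivalence.

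The main obstacle is the middle step, the passage from an isomorphism of bands on $k_\fppf$ to a group-scheme isomorphism $\beta$ over $k_s$: this is precisely what $\mathrm H^1(k_s,G/\mathrm Z_G)=1$ buys, and without it the band $L$ on $k_\fppf$ does not determine its $k_s$-representative (hence the condition is imposed in the definition of a separable band). The remaining verification that $\beta$ intertwines $\widetilde\kappa_1$ and $\widetilde\kappa_2$ is a direct but somewhat delicate diagram chase; I would organize it by first checking the statement for the lifts $f_i$ over finite Galois fields $K_i$ and then base-changing to $k_s$, mirroring how the functor (i)$\to$(ii) was constructed.
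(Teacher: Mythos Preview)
Your proof is correct and uses the same essential ingredient as the paper's proof—the vanishing $\mathrm H^1(k_s,G/\mathrm Z_G)=1$ to upgrade an isomorphism of fppf bands to an honest group-scheme isomorphism—but it is organized differently. The paper stays at finite separable levels throughout: it invokes Proposition~\ref{propparambandforms} to say that, after a common separable refinement $K''$, the two nice triples land in the set $\mathscr I(G,K'')$ of inner forms over $K''$, and then uses the limit $\varinjlim_{K''}\mathrm H^1(K'',G/\mathrm Z_G)=\mathrm H^1(k_s,G/\mathrm Z_G)=1$ to force the two classes to agree for $K''$ large enough, yielding a $K''$-representable isomorphism of triples; Example~\ref{exmpgalband2} then finishes. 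You instead pass straight to $k_s$, lift the $\mathcal{Out}$-section to $\mathcal{Isom}$ there, and then track the semiautomorphism data by hand. The paper's route is shorter because the compatibility with $\kappa$ is packaged into the notion of $K''$-representable isomorphism and into Example~\ref{exmpgalband2}, so no separate ``diagram chase'' is needed; your route is more self-contained but leaves the verification that $\beta$ intertwines $\widetilde\kappa_1,\widetilde\kappa_2$ somewhat informal (it does go through, since any two lifts $\beta$ of $\bar\beta$ differ by an element of $(\Gb_1/\mathrm Z_{\Gb_1})(k_s)$, which acts trivially on the quotient defining $\kappa$, and the $\mathcal{Out}$-compatibility over $K''\otimes_kK''$ base-changes to the required relation over $k_s$).
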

\begin{prf}
In view of Example \ref{exmpgalband2}, we only need to prove that two nice triples $(K/k,G,\varphi)$ and $(K'/k,G',\varphi')$ as in (ii) representing isomorphic bands on $k_\fppf$ must also define isomorphic separable bands. For this, it suffices to show that there is a $K''$-representable isomorphism between them for some finite separable extension $K''/K$.
Take any finite separable refinement $K''$ of $K,K'$. By Proposition \ref{propparambandforms}, these two triples correspond to two classes in the image of $\mathrm{H}^1(K'',G/\mathrm Z_G)$ in $\mathrm{H}^1(K'',\mathcal{Aut}_G)$, coinciding if and only if they are $K''$-representably isomorphic.
Because $\varinjlim\mathrm H^1(K'', G/\mathrm Z_G) = \mathrm H^1(k_s, G/\mathrm Z_G) = 1$, we may arrange for the two classes to coincide by enlarging $K''$. Here the limit is taken over all $K''/k$ finite separable (in a fixed separable closure $k_s/k$) and the equality holds because $G$ is algebraic.
\end{prf}

This proposition shows that a ``separable band'' is not a structure on a given band $L$ on $k_\fppf$, but rather a property that $L$ may or may not have. Moreover, if $L$ is locally represented~by~some group $G$ such that $\mathrm{H}^1(k_s,G_0/\mathrm Z_{G_0}) = 1$ for every $k_s$-form $G_0$ of $G_{\overline k}$, then we only need to know that $L$ is represented \'etale-locally over $k$. There are two obvious classes of algebraic groups $G$ for which this holds -- commutative and smooth ones. We are mostly interested in the second class. It follows that:
\begin{cor}
The notion of a smooth separable band is the same as that of a band on $k_\fppf$ represented \'etale-locally by a smooth algebraic group.
\end{cor}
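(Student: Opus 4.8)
The plan is to read off both implications from Proposition~\ref{propcharsepband}, using the same input already exploited in Example~\ref{exmpsmoothisnice}: if $G$ is smooth then so is $G/\mathrm Z_G$, and a smooth group over a separably closed field has trivial $\mathrm H^1$ (Lemma~\ref{lemsmoothskewvanish}). Thus the hypothesis ``$\mathrm H^1(k_s,G/\mathrm Z_G)=1$'' appearing in part~(ii) of that proposition is automatic as soon as a local representative is smooth, which is exactly the mechanism pointed out in the remark preceding this corollary.

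First I would show that a band $L$ on $k_\fppf$ which is étale-locally represented by a smooth group scheme is a smooth separable band. By Definition~\ref{defetlocrep} such an $L$ admits a nice triple $(K/k,G,\varphi)$ with $K/k$ finite separable and $G$ smooth over $K$; then $(G/\mathrm Z_G)_{k_s}$ is smooth over $k_s$, so $\mathrm H^1(k_s,G/\mathrm Z_G)=1$ by Lemma~\ref{lemsmoothskewvanish}, and Proposition~\ref{propcharsepband} identifies $L$ with a separable band, which is smooth by construction. If one instead starts only from a representative triple over a separable extension that is not assumed nice, one first invokes Proposition~\ref{propetrefin}: the obstruction to lifting $\varphi$ along $\mathcal{Isom}\to\mathcal{Out}$ over $\Spec(K\otimes_k K)$ is a class in $\mathrm H^1$ of the smooth group $G/\mathrm Z_G$, hence is split by a finite étale cover, so that proposition produces a nice triple over a finite separable extension and one concludes as before.

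For the converse, let $L$ be a smooth separable band. Being separable, $L$ admits by Proposition~\ref{propcharsepband} a nice triple $(K/k,G,\varphi)$ with $K/k$ finite separable; being smooth, $L$ is also locally represented by some smooth group scheme $H$ over an fppf covering $T\to\Spec k$. I would pass to a common refinement $T'\to\Spec k$ of $\Spec K$ and $T$. Over $T'$ both descent data become trivial (for the covering $T$ one has $\varphi|_T=\mathrm{id}$ by the cocycle condition, cf.\ Definition~\ref{defnice}), so the restriction of $L$ to $\mathcal C/T'$ is represented both by $G_{T'}$ and by $H_{T'}$; by the corollary to Proposition~\ref{propparambandforms}, applied over $T'$, these are inner forms of one another, and hence $G_{T'}$ is smooth over $T'$. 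Since $T'\to\Spec K$ again inherits faithful flatness and local finite presentation from $T'\to\Spec k$ (because $\Spec K\to\Spec k$ is faithfully flat and finite), smoothness descends and $G$ is smooth over $K$, so $(K/k,G,\varphi)$ exhibits $L$ as étale-locally represented by a smooth group scheme.

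I do not anticipate a genuine difficulty: the statement is essentially a repackaging of Proposition~\ref{propcharsepband} together with the remark preceding it. The only points requiring a little care are the two mild descent checks just indicated — that the relevant $\mathrm H^1$ class for a smooth group is split by a finite étale cover, and that a common refinement of a separable covering with an fppf covering stays fppf over the separable covering, so that smoothness really descends from $T'$ down to $\Spec K$.
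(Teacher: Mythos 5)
Your argument is correct and is essentially the paper's own (implicit) one: the corollary is Proposition~\ref{propcharsepband} combined with the observation that the condition $\mathrm H^1(k_s,G/\mathrm Z_G)=1$ is automatic for smooth $G$, which is exactly what the paragraph preceding the corollary records. Two small remarks: Lemma~\ref{lemsmoothskewvanish} is about $\mathrm H^1(\overline k\otimes_k\overline k,-)$, not $\mathrm H^1(k_s,-)$ --- the vanishing you actually need is the standard fact that a torsor under a smooth algebraic group over a separably closed field is trivial; and your converse direction is more elaborate than necessary, since a separable band $(\Gb,\kappa)$ with $\Gb$ smooth yields via Example~\ref{exmpgalband2} a nice triple $(K/k,G,\varphi)$ in which $G$ is a $K$-form of $\Gb$ and hence already smooth, so no comparison with an auxiliary fppf-local representative is required.
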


\subsection{Diagrams with Smooth Separable Bands}

It is natural to study comparisons between separable and \'etale bands: Let $\pi$ denote the fppf-to-\'etale map of sites. An algebraic group $\Gb$ is identified with a presheaf on $\mathrm{Sch}/k_s$ (similarly on $\mathrm{Sch}/k$) which is an fppf sheaf. It is then also an \'etale sheaf, which we denote by $\pi_*\Gb$ (however, we may drop the notation $\pi_*$ when the context makes the chosen topology clear). While we can make identifications $\pi_*\mathrm Z_{\Gb} = \mathrm Z_{\pi_*\Gb}$ and $\pi_*\mathcal{Aut}_{\Gb} = \mathcal{Aut}_{\pi_*\Gb}$, there is in general only an inclusion $\pi_*\Gb/\pi_*\mathrm Z_{\Gb}\hookrightarrow\pi_*(\Gb/\mathrm Z_{\Gb})$. Note that $(\pi_*\Gb/\pi_*\mathrm Z_{\Gb})(k_s) = \Gb(k_s)/\mathrm Z_{\Gb}(k_s)$. We consider a commutative diagram with exact rows:
\begin{center}\begin{tikzcd}
0\arrow[r] & \Gb(k_s)/\mathrm Z_{\Gb}(k_s)\arrow[d, hook]\arrow[r] & \saut(\Gb/k)\arrow[d, equal]\arrow[r] & \dfrac{\saut(\Gb/k)}{\Gb(k_s)/\mathrm Z_{\Gb}(k_s)}\arrow[d, two heads]\arrow[r] & 0\\
0\arrow[r] & (\Gb/\mathrm Z_{\Gb})(k_s)\arrow[r] & \saut(\Gb/k)\arrow[r] & \dfrac{\saut(\Gb/k)}{(\Gb/\mathrm Z_{\Gb})(k_s)}\arrow[r] & 0\\
\end{tikzcd}\end{center}
\vspace{-22pt}

\begin{defn}\label{deflyingover}
If $\Gb$ is an algebraic group on $k_s$ such that $\mathrm{H}^1(k_s,\Gb/\mathrm Z_{\Gb}) = 1$, then any \'etale band $(\pi_*\Gb, \kappa)$ induces a separable band $(\Gb,\overline{\kappa})$ by the composition
\begin{center}\begin{tikzcd}
\overline{\kappa} \;:\; \Gamma\arrow[r, "\kappa"] & \dfrac{\saut(\Gb/k)}{\Gb(k_s)/\mathrm Z_{\Gb}(k_s)}\arrow[r, two heads] & \dfrac{\saut(\Gb/k)}{(\Gb/\mathrm Z_{\Gb})(k_s)}
\end{tikzcd}\end{center}
where $\Gamma\coloneqq\Gal(k_s/k)$. In this situation we say that $(\pi_*\Gb, \kappa)$ \textit{lies over} $(\Gb,\overline{\kappa})$. 
Note that a given separable band $(\Gb,\overline{\kappa})$ has an \'etale band lying over it if and only if it admits a continuous lift $f$ which is a ``homomorphism up to $\Gb(k_s)/\mathrm Z_{\Gb}(k_s)$, not just up to $(\Gb/\mathrm Z_{\Gb})(k_s)$''.
\end{defn}

When talking about the $\mathrm H^i_\et$ cohomology of the higher direct image \'etale sheaves $\mathrm{R}^j\pi_*\Gb$ on $k_\Et$, we will identify it with the Galois cohomology groups $\mathrm{H}^i(\Gamma, \mathrm{H}^j(k_s, \Gb))$. 
Recall that the center (see discussion preceding Proposition \ref{propcentacth2}) of a band is defined over $k$, hence so are all the \'etale sheaves $\mathrm{R}^j\pi_*\mathrm Z_L$.

\begin{rem}\label{remconjact}
Let $\alpha$ be a (semi)automorphism of $\Gb$. Then $\alpha$ acts on $\mathcal{Aut}_{\Gb}$ by: 
$$\mathrm{int}(\alpha) : \alpha_0\longmapsto\alpha\circ\alpha_0\circ\alpha^{-1}$$ 
For the natural inclusion of $\Gb/\mathrm Z_{\Gb}$ (or $\pi_*\Gb/\pi_*\mathrm Z_{\Gb}$) into $\mathcal{Aut}_{\Gb}$, these two actions agree. Indeed, for $x\in(\Gb/\mathrm Z_{\Gb})(k_s)$ and $y\in\Gb(R)$, where $R$ is a $\overline{k}$-algebra, we have:
$$\mathrm{int}(\alpha(x))(y) = \alpha(x)\cdot y\cdot\alpha(x)^{-1} = \alpha(x\cdot \alpha^{-1}(y)\cdot x^{-1}) = (\alpha\circ\mathrm{int}(x)\circ\alpha^{-1})(y) = \mathrm{int}(\alpha)(\mathrm{int}(x))(y)$$
This fact will often be useful in calculations, where we will interchangeably write $\alpha\circ\mathrm{int}(x)\circ\alpha^{-1}$ and $\alpha(x)$ (by identifying $\Gb/\mathrm Z_{\Gb}$ with its image). A good example is the following proposition:
\end{rem}

\begin{prop}\label{propparamlet}
Let $(\Gb,\overline{\kappa})$ be a separable band on $k$ and moreover suppose $\mathrm{H}^1(k_s,\Gb) = 1$. Let $\mathscr{L}_\et(\Gb,\overline{\kappa})$ be the set of all \'etale bands $(\pi_*\Gb,\kappa)$ lying above $(\Gb,\overline{\kappa})$ up to conjugation of $\kappa$ by (the conjugation action of) an element of $(\Gb/\mathrm Z_{\Gb})(k_s)$. The following properties hold:
\begin{enumerate}[\hspace{0.5cm}(a)]
    \item There is a class in $\mathrm{H}^2(\Gamma, \mathrm{H}^1(k_s, \mathrm Z_{\Gb}))$ which vanishes if and only if $\mathscr{L}_\et(\Gb,\overline{\kappa})\neq\varnothing$.
    \item Suppose that $\mathscr{L}_\et(\Gb,\overline{\kappa})\neq\varnothing$.\ Then the group $\mathrm{H}^1(\Gamma, \mathrm{H}^1(k_s, \mathrm Z_{\Gb}))$ acts freely and transitively on $\mathscr{L}_\et(\Gb,\overline{\kappa})$.
\end{enumerate}
\end{prop}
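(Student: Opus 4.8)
The plan is to translate both statements into the classical obstruction theory for a group extension with abelian kernel, applied to the short exact sequence extracted from the commutative diagram of semiautomorphism groups displayed just before Definition~\ref{deflyingover}. Write $E:=\saut(\Gb/k)/\bigl(\Gb(k_s)/\mathrm Z_{\Gb}(k_s)\bigr)$ and $Q:=\saut(\Gb/k)/(\Gb/\mathrm Z_{\Gb})(k_s)$; the right-hand vertical map of that diagram is a surjection $E\twoheadrightarrow Q$ with kernel $(\Gb/\mathrm Z_{\Gb})(k_s)/\bigl(\Gb(k_s)/\mathrm Z_{\Gb}(k_s)\bigr)$. The fppf cohomology sequence over $k_s$ attached to $1\to\mathrm Z_{\Gb}\to\Gb\to\Gb/\mathrm Z_{\Gb}\to 1$, together with the hypothesis $\mathrm H^1(k_s,\Gb)=1$, identifies this kernel (via the connecting map) with $A:=\mathrm H^1(k_s,\mathrm Z_{\Gb})$, and exhibits $A$ as the image of $(\Gb/\mathrm Z_{\Gb})(k_s)$ in $E$. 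Thus I obtain a short exact sequence $1\to A\to E\to Q\to 1$ with $A$ abelian; the conjugation action of $E$ on $A$ therefore factors through $Q$, and pulling it back along $\overline{\kappa}\colon\Gamma\to Q$ makes $A$ a discrete $\Gamma$-module, which I would check (by unwinding the construction of the center $\mathrm Z_L$ of the band over $k$, and using that inner automorphisms act trivially on $\mathrm Z_{\Gb}$, cf.\ Remark~\ref{remconjact}) coincides with the Galois-module structure of $\mathrm H^1(k_s,\mathrm Z_L)$. Finally, unwinding Definitions~\ref{deflyingover} and~\ref{defetsepband} — and using the identity $(\pi_*\Gb/\pi_*\mathrm Z_{\Gb})(k_s)=\Gb(k_s)/\mathrm Z_{\Gb}(k_s)$ recorded there — an \'etale band lying over $(\Gb,\overline{\kappa})$ is precisely a homomorphic lift $\kappa\colon\Gamma\to E$ of $\overline{\kappa}$ admitting a continuous lift to $\saut(\Gb/k)$, and $\mathscr{L}_\et(\Gb,\overline{\kappa})$ is the set of these modulo conjugation by $(\Gb/\mathrm Z_{\Gb})(k_s)$, i.e.\ modulo conjugation by $A\subseteq E$.

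For part (a), fix a continuous lift $\overline{f}\colon\Gamma\to\saut(\Gb/k)$ of $\overline{\kappa}$ and, after enlarging the auxiliary finite Galois extension $K/k$ and choosing a $K$-form $G_0$, normalize it using Proposition~\ref{propcont} so that $\overline{f}_1=\mathrm{id}$ and $\overline{f}_{st}=\overline{f}_s\sigma_t$, $\overline{f}_{ts}=\sigma_t\overline{f}_s$ for $s\in\Gamma$, $t\in\Gamma_K$. Letting $\bar{\bar f}$ denote the induced set-theoretic section of $E\to Q$, the defect $\bar c(s,t):=\bar{\bar f}_{st}^{-1}\bar{\bar f}_s\bar{\bar f}_t\in A$ is a $2$-cocycle; the normalization forces $\bar c$ to vanish whenever one of its arguments lies in $\Gamma_K$, and the cocycle identity then makes $\bar c$ invariant under translating either argument by $\Gamma_K$ on the right, hence locally constant, so $[\bar c]\in\mathrm H^2(\Gamma,A)$ is well defined and, by the usual change-of-lift computation, independent of all choices. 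The standard dictionary gives the equivalence: if $[\bar c]=0$, write $\bar c$ as the coboundary of a locally constant $1$-cochain $b$, lift $b$ to a locally constant $\tilde b\colon\Gamma\to(\Gb/\mathrm Z_{\Gb})(k_s)$, and check that $s\mapsto\tilde b(s)^{-1}\overline{f}_s$ is a continuous lift of an honest homomorphism $\kappa\colon\Gamma\to E$ over $\overline{\kappa}$, so $\mathscr{L}_\et(\Gb,\overline{\kappa})\neq\varnothing$; conversely, a continuous homomorphic lift, brought over a common finite Galois $K$ with $\overline{f}$, produces a locally constant $b$ realizing $\bar c$ as a coboundary.

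For part (b), assume $\mathscr{L}_\et(\Gb,\overline{\kappa})\neq\varnothing$ and fix a member $\kappa_0$. For any other homomorphic lift $\kappa$ admitting a continuous lift, the function $z(s):=\kappa(s)\kappa_0(s)^{-1}\in A$ is a $1$-cocycle, locally constant once continuous lifts of $\kappa$ and $\kappa_0$ are placed over one $K$ (the same right-$\Gamma_K$-invariance observation), giving $[z]\in\mathrm H^1(\Gamma,A)$; conversely any locally constant $1$-cocycle $z$ produces $\kappa_z(s):=\tilde z(s)\kappa_0(s)$, again with a continuous lift. A short computation using Remark~\ref{remconjact} shows that conjugating $\kappa$ by $a\in A$ changes $z$ by the coboundary $s\mapsto a\cdot(s\cdot a)^{-1}$, and that cohomologous cocycles yield $A$-conjugate lifts; hence $[z]$ descends to a bijection $\mathscr{L}_\et(\Gb,\overline{\kappa})\xrightarrow{\sim}\mathrm H^1(\Gamma,A)$ which is manifestly equivariant for the additive action of $\mathrm H^1(\Gamma,A)$, so that action is free and transitive.

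The group-cohomological bookkeeping above is routine; the part that needs real care is everything involving continuity — repeatedly invoking Proposition~\ref{propcont} to bring all the lifts over a single finite Galois $K/k$ with a compatible homomorphism $\sigma$, so that the cochains $\bar c$, $b$ and $z$ come out locally constant and so that every homomorphism $\kappa\colon\Gamma\to E$ we construct genuinely admits a continuous lift to $\saut(\Gb/k)$ (hence is an actual \'etale band, not merely an abstract section). A secondary point deserving attention is the identification in the first paragraph of $\mathrm H^1(k_s,\mathrm Z_{\Gb})$, equipped with the $\Gamma$-action read off from $\overline{\kappa}$, with the Galois module $\mathrm H^1(k_s,\mathrm Z_L)$ coming from the center of the band.
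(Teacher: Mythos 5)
Your proposal is correct and follows essentially the same route as the paper: both hinge on the exact sequence $1\to\Gb(k_s)/\mathrm Z_{\Gb}(k_s)\to(\Gb/\mathrm Z_{\Gb})(k_s)\to\mathrm H^1(k_s,\mathrm Z_{\Gb})\to 1$ furnished by $\mathrm H^1(k_s,\Gb)=1$, realize the obstruction as the image in $\mathrm H^1(k_s,\mathrm Z_{\Gb})$ of the defect $2$-cocycle $f_sf_tf_{st}^{-1}$ of a continuous lift, and parametrize the lifts by locally constant $1$-cocycles modulo coboundaries. Your packaging of this as the classical extension-obstruction theory for $1\to A\to E\to Q\to 1$, together with the explicit local-constancy check via Proposition~\ref{propcont}, is just a mild reorganization of the paper's direct cocycle computation.
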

\begin{prf}
By the assumption that $\mathrm{H}^1(k_s,\Gb) = 1$, there is a short exact sequence
\begin{equation}\label{eqsepet}
    1\rightarrow\Gb(k_s)/\mathrm Z_{\Gb}(k_s)\rightarrow (\Gb/\mathrm Z_{\Gb})(k_s)\rightarrow \mathrm{H}^1(k_s, \mathrm Z_{\Gb})\rightarrow 1
\end{equation}
of groups. Each semiautomorphism $\alpha$ of $\Gb$ acts on this sequence by conjugation, as discussed in the above remark: In particular, any continuous lift $f : \Gamma\rightarrow\saut(\Gb/k)$ of $\overline{\kappa}$ makes $\mathrm{H}^1(k_s, \mathrm Z_{\Gb})$ into a $\Gamma$-module (independent of the chosen lift); we write ${^s}h$ for $f_s(h)$. 
\smallskip

For (a), we fix some such lift $f$. Then the automorphisms $g_{s,t}\coloneqq f_sf_tf_{st}^{-1}\subseteq\aut(\Gb/k)$ lie in $(\Gb/\mathrm Z_{\Gb})(k_s)$. Any continuous lift $f'$ of $\overline{\kappa}$ is of the form $f' = jf$ for a locally constant function $j : \Gamma\rightarrow(\Gb/\mathrm Z_{\Gb})(k_s)$. We are interested in finding $j$ such that $(jf)_s(jf)_t(jf)_{st}^{-1}\in\Gb(k_s)/\mathrm Z_{\Gb}(k_s)$ for all $s,t\in\Gamma$, as $jf$ would then define an \'etale band in $\mathscr{L}_\et(\Gb,\overline{\kappa})$.

Consider the images $\overline{g}_{s,t}\in\mathrm{H}^1(k_s, \mathrm Z_{\Gb})$ of $g_{s,t}$. Expressing $f_{stu}$ in two different ways gives
$$f_s(g_{t,u})\cdot g_{s,tu} = g_{s,t}\cdot g_{st,u}$$
(c.f. Remark \ref{remconjact}). Therefore $\overline{g}$ is a cocycle in $\mathrm{Z}^2(\Gamma,\mathrm{H}^1(k_s, \mathrm Z_{\Gb}))$. Replacing the lift $f$ by $jf$ (for a locally constant function $j$ as above) has the effect of replacing $\overline{g}$ by the cohomologous cocycle $\overline{j}_s {^s}(\overline{j}_t) \overline{g}_{s,t} \overline{j}_{st}^{-1}$ valued in $\mathrm{H}^1(k_s, \mathrm Z_{\Gb})$.
Thus the class $[\overline{g}]$ is independent of choice.

It is neutral if and only if there exists a locally constant function $h : \Gamma\rightarrow\mathrm{H}^1(k_s, \mathrm Z_{\Gb})$ such that $1 = h_s {^s}(h_t) \overline{g}_{s,t} h_{st}^{-1}$. As the map $(\Gb/\mathrm Z_{\Gb})(k_s)\rightarrow\mathrm{H}^1(k_s, \mathrm Z_{\Gb})$ is surjective, any such $h$ admits a locally constant lift $j : \Gamma\rightarrow(\Gb/\mathrm Z_{\Gb})(k_s)$. Thus $[\overline{g}]$ is neutral if and only if $j$ can be chosen so that $(jf)_s(jf)_t(jf)_{st}^{-1}\in\Gb(k_s)/\mathrm Z_{\Gb}(k_s)$.
\smallskip

For (b), take $(\pi_*\Gb,\kappa)\in\mathscr{L}_\et(\Gb,\overline{\kappa})$  and a lift $f : \Gamma\rightarrow\saut(\Gb/k)$ of $\kappa$. A continuous lift $jf$ of $\overline{\kappa}$, for a locally constant function $j : \Gamma\rightarrow(\Gb/\mathrm Z_{\Gb})(k_s)$, defines some \'etale band $(\pi_*\Gb,j\cdot\kappa)$ if and only if $(jf)_s(jf)_t(jf)_{st}^{-1}\in G(k_s)/\mathrm Z_G(k_s)$ (and every \'etale band in $\mathscr{L}_\et(\Gb,\overline{\kappa})$ is obtained in this way). Equivalently,
$$1 = (\overline{jf})_s(\overline{jf})_t(\overline{jf})_{st}^{-1} = \overline{j}_s\cdot \overline{(f_s j_t f_s^{-1})}\cdot \overline{j}_{st}^{-1} = \overline{j}_s\cdot {^s}(\overline{j}_t)\cdot \overline{j}_{st}^{-1}$$
in $\mathrm{H}^1(k_s, \mathrm Z_{\Gb})$, which says exactly that $\overline{j}$ is a cocycle in $\mathrm{Z}^1(\Gamma, \mathrm{H}^1(k_s, \mathrm Z_{\Gb}))$. Conversely, it is clear that every cocycle $\overline{j}$ lifts to a locally constant function $j : \Gamma\rightarrow(\Gb/\mathrm Z_{\Gb})(k_s)$ which defines a continuous section of some \'etale band.

Given two functions $j,j' : \Gamma\rightarrow(\Gb/\mathrm Z_{\Gb})(k_s)$ which define cohomologous cocycles $\overline{j}\simeq\overline{j'}$, we see that there is an element $\overline{a}$, for $a\in(\Gb/\mathrm Z_{\Gb})(k_s)$, such that $\overline{j'} = \overline{a}^{-1}\overline{j} \,{^s}\overline{a}$. Equivalently,
$$j'_s \equiv a^{-1}j_sf_s\, af_s^{-1}   \;\;\mathrm{mod}\;\Gb(k_s)/\mathrm Z_{\Gb}(k_s)$$
which shows that $jf$ and $j'f$ define bands that are mutually conjugate by the image of $a$ (more precisely, by the image of $\mathrm{int}(a)\in\aut(\Gb)$) in $\saut(\Gb/k)/(\Gb(k_s)/\mathrm Z_{\Gb}(k_s))$.
\end{prf}

\begin{cor}\label{corparamlet}
Every smooth algebraic separable band admits an \'etale band lying over it.
\end{cor}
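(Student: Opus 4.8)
\emph{Proof plan.} The strategy is to apply Proposition~\ref{propparamlet}(a). Its hypothesis $\mathrm H^1(k_s,\Gb)=1$ is automatic here: $\Gb$ is smooth, so fppf and \'etale $\mathrm H^1$ agree for it, and $\mathrm H^1_\et(k_s,\Gb)=\mathrm H^1(\Gal(k_s/k_s),\Gb(k_s))=1$ because $k_s$ is separably closed. Hence it suffices to show that the obstruction class $\omega\in\mathrm H^2\big(\Gamma,\mathrm H^1(k_s,\mathrm Z_{\Gb})\big)$ of Proposition~\ref{propparamlet}(a) vanishes; in characteristic $0$ this group is already zero, since then $\mathrm Z_{\Gb}$ is smooth, so we may assume $\mathrm{char}(k)=p>0$ (and in particular that $\mathrm H^1(k_s,\mathrm Z_{\Gb})$ is $p$-power torsion, being concentrated on the infinitesimal part of $\mathrm Z_{\Gb}$).

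First I would make $\omega$ explicit at finite level. By the corollary following Proposition~\ref{propcharsepband} the band is represented by a nice triple $(K/k,G,\varphi)$ with $G$ smooth and $K/k$ finite separable; enlarging $K$, take it Galois, and by Proposition~\ref{propcont} choose a continuous lift $\overline f:\Gamma\to\saut(\Gb/k)$ of $\overline\kappa$ with $\overline f_{st}=\overline f_s\sigma_t$ and $\overline f_{ts}=\sigma_t\overline f_s$ for $t\in\Gamma_K$. Then $g_{s,t}:=\overline f_s\overline f_t\overline f_{st}^{-1}$ is a normalized $2$-cocycle which factors through $\Gal(K/k)^2$ and takes values in $(G/\mathrm Z_G)(K)\subseteq(\Gb/\mathrm Z_{\Gb})(k_s)$, and $\omega$ is represented by the image of $g$ under the surjection $(\Gb/\mathrm Z_{\Gb})(k_s)\to\mathrm H^1(k_s,\mathrm Z_{\Gb})$ of \eqref{eqsepet} --- equivalently, by the image under $\mathrm H^1(K,\mathrm Z_G)\to\mathrm H^1(k_s,\mathrm Z_{\Gb})$ of the $\Gal(K/k)$-cocycle $(\bar s,\bar t)\mapsto\delta(g_{\bar s,\bar t})$, where $\delta$ is the connecting map of $1\to\mathrm Z_G\to G\to G/\mathrm Z_G\to 1$ over $K$.

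The heart of the matter is to show that, after enlarging $K$, this cocycle becomes a coboundary. Two inputs should drive this. (i) Each $\delta(g_{\bar s,\bar t})$ lies in $\ker\big(\mathrm H^1(K,\mathrm Z_G)\to\mathrm H^1(K,G)\big)$, so --- $G$ being smooth --- over a suitable finite separable extension of $K$ every $g_{\bar s,\bar t}$ lifts to a point of $G$. (ii) Viewing $g$ as the \v{C}ech coboundary of a local lift $\hat\varphi$ of $\varphi$, it is a \v{C}ech $2$-cocycle for the cover $\Spec(k_s)\to\Spec(k)$ valued in the presheaf $R\mapsto(G/\mathrm Z_G)(R)$, and the vanishing $\mathrm H^1(k_s^{\otimes n},\Gb)=1$ for all $n$ (smoothness; cf.\ Lemma~\ref{lemsmoothskewvanish} and Example~\ref{exmpsmoothisnice}) makes each $(G/\mathrm Z_G)(k_s^{\otimes n})\to\mathrm H^1(k_s^{\otimes n},\mathrm Z_G)$ surjective. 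Combining these one should produce a locally constant $\beta:\Gamma\to(\Gb/\mathrm Z_{\Gb})(k_s)$ with $\delta\big(\beta_s\cdot{}^{s}\beta_t\cdot g_{s,t}\cdot\beta_{st}^{-1}\big)=1$ for all $s,t$; this is the step I expect to cost real work, requiring the \v{C}ech-cohomological bookkeeping set up in Subsection~\ref{ssectetandsep} (and of the flavour of Appendix~C). Granting such a $\beta$, the modified lift $\beta^{-1}\overline f$ of $\overline\kappa$ has defect cocycle valued in $\Gb(k_s)/\mathrm Z_{\Gb}(k_s)$, hence descends to a homomorphism $\Gamma\to\saut(\Gb/k)\big/\big(\Gb(k_s)/\mathrm Z_{\Gb}(k_s)\big)$; by Definition~\ref{deflyingover} this homomorphism together with $\Gb$ constitutes an \'etale band lying over $(\Gb,\overline\kappa)$, which is exactly what is required.
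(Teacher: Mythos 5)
Your reduction to Proposition~\ref{propparamlet}(a) is the right move, and your verification that $\mathrm H^1(k_s,\Gb)=1$ for smooth $\Gb$ is fine. But the heart of your argument --- producing a locally constant $\beta$ that kills the cocycle --- is exactly the part you leave undone (``this is the step I expect to cost real work''), and that is a genuine gap: you never actually show the class $\omega$ vanishes, you only describe two inputs that ``should'' combine to do it.

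The irony is that you already wrote down the fact that closes the gap and then walked past it. You observe parenthetically that $\mathrm H^1(k_s,\mathrm Z_{\Gb})$ is $p$-power torsion. Since $\mathrm{char}(k)=p$, one has $\mathrm{cd}_p(k)\leq 1$ (Serre, \emph{Galois Cohomology}, II, Prop.~3), so $\mathrm H^n(\Gamma,A)=0$ for all $n\geq 2$ and \emph{every} commutative $\Gamma$-module $A$ that is a $p$-group. Hence the entire group $\mathrm H^2\big(\Gamma,\mathrm H^1(k_s,\mathrm Z_{\Gb})\big)$ in which $\omega$ lives is zero --- this is Proposition~\ref{proph2h1coh}, and it is the paper's whole proof. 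No explicit \v{C}ech bookkeeping, no enlargement of $K$, and no construction of $\beta$ is needed; the cocycle-level manipulation you outline in your second and third paragraphs can be deleted entirely. (As a side remark, your claim that in characteristic $0$ the group vanishes because $\mathrm Z_{\Gb}$ is smooth is correct but moot for this paper, which works with $\mathrm{char}(k)>0$ throughout the relevant sections.)
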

\begin{prf}
It suffices that $\mathrm{H}^2(\Gamma, \mathrm{H}^1(k_s, Z)) = 0$ for $Z = \mathrm Z_{\Gb}$. This holds for any algebraic group $Z$ by Proposition \ref{proph2h1coh}.
\end{prf}

We now study smooth algebraic separable bands in the case when they are globally represented. Let $G$ be a smooth algebraic group over $k$ and let $\Gb\coloneqq G_{k_s}$. There are an associated \'etale band $(\Gb,\kappa)$ and separable band $(\Gb,\overline{\kappa})$ over $k$. The observations we make now are essential for some technical steps in the later sections.

\begin{prop}\label{propdiaglet}
There is a commutative diagram as follows, with all rows and columns being exact sequences of pointed sets:
\vspace{-17pt}

\begin{center}\begin{equation}\label{diagbignoncomm}
\!\!\!\!\!\,
\begin{tikzcd}[cramped, column sep = small]
&\mathrm{H}^1(\Gamma, G(k_s))\arrow[d]\arrow[r, "\sim"] &
\mathrm{H}^1(k, G)\arrow[d]\\
\mathrm{H}^0(\Gamma, \mathrm{H}^1(k_s, \mathrm Z_G))\arrow[d, equal]\arrow[r] &
\mathrm{H}^1\!\!\left(\!\Gamma, \dfrac{G(k_s)}{\mathrm Z_G(k_s)}\right)\arrow[d, "\delta"]\arrow[r] &
\mathrm{H}^1\!\!\left(\! k, \dfrac{G}{\mathrm Z_G}\right)\arrow[d, "\widetilde{\delta}"]\arrow[r] &
\mathrm{H}^1(\Gamma, \mathrm{H}^1(k_s, \mathrm Z_G))\arrow[d, equal]\\
\mathrm{H}^0(\Gamma, \mathrm{H}^1(k_s, \mathrm Z_G))\arrow[r] &
\mathrm{H}^2(\Gamma, \mathrm Z_G(k_s))\arrow[r] &
\mathrm{H}^2(k, \mathrm Z_G)\arrow[r] &
\mathrm{H}^1(\Gamma, \mathrm{H}^1(k_s, \mathrm Z_G))\arrow[r, "\mathrm{obs}"] & \mathrm{H}^3(\Gamma, \mathrm Z_G(k_s))
\end{tikzcd}\!\!\!\!\!\!\!\!\!\!\!\!\!\!\!\!
\end{equation}\end{center}
Moreover, given a representative $P$ of $[P]\in\mathrm{H}^1(\Gamma, G(k_s)/\mathrm Z_G(k_s))$ (resp. $[P]\in\mathrm{H}^1(k, G/\mathrm Z_G)$), consider the diagram \eqref{diagbignoncomm} associated to $G$ and the analogous diagram {$_P$\eqref{diagbignoncomm}} associated to the inner twist ${_P}G$ of $G$. Then the subdiagram of \eqref{diagbignoncomm} with initial object $\mathrm{H}^1(\Gamma, G(k_s)/\mathrm Z_G(k_s))$ (resp. $\mathrm{H}^1(k, G/\mathrm Z_G)$) maps canonically to the respective subdiagram of {$_P$\eqref{diagbignoncomm}} via compatible bijections $\tau_P$ induced on all objects (which do not preserve pointedness of sets), such that $\tau_P([P]) = 1$.
\end{prop}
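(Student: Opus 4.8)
\emph{Constructing the diagram \eqref{diagbignoncomm}.} The three columns come from $1\to\mathrm Z_G\to G\to G/\mathrm Z_G\to 1$: on the fppf side via Proposition~\ref{propnoncommles} (this supplies $\widetilde\delta$, the term $\mathrm H^2(k,\mathrm Z_G)$ and exactness of the right column); on the Galois side, since $G$ is smooth one has $\mathrm Z_G(k_s)=\mathrm Z(G(k_s))$, so $1\to\mathrm Z_G(k_s)\to G(k_s)\to G(k_s)/\mathrm Z_G(k_s)\to 1$ is a genuine central extension of $\Gamma$-groups, and its usual cohomology sequence gives $\delta$, the term $\mathrm H^2(\Gamma,\mathrm Z_G(k_s))$ and exactness of the middle-left column. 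The rows come from the Leray spectral sequences $\mathrm H^p(\Gamma,\mathrm H^q(k_s,-))\Rightarrow\mathrm H^{p+q}(k,-)$ for the morphism of sites $\pi:k_\fppf\to k_\et$. For $\mathrm Z_G$ the (seven-term) low-degree exact sequence is the bottom row, with $\mathrm{obs}$ the transgression $d_2^{1,1}$. For $G$, together with $\mathrm R^1\pi_*G=0$ (fppf and étale $\mathrm H^1$ agree for smooth group schemes), one obtains the top-row bijection $\mathrm H^1(\Gamma,G(k_s))\xrightarrow{\sim}\mathrm H^1(k,G)$.

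\emph{The middle row; commutativity; exactness.} The vanishing $\mathrm R^1\pi_*G=0$ also turns $\pi_*$ of the fppf extension into a short exact sequence of étale sheaves over $k$,
$$1\longrightarrow \pi_*G/\pi_*\mathrm Z_G\longrightarrow \pi_*(G/\mathrm Z_G)\longrightarrow \mathrm R^1\pi_*\mathrm Z_G\longrightarrow 1,$$
with $\pi_*G/\pi_*\mathrm Z_G$ the sheaf image of $\pi_*G\to\pi_*(G/\mathrm Z_G)$ (a normal subsheaf, the quotient being abelian). Its étale cohomology gives the middle row, after using $\mathrm H^i_\et(k,\pi_*G/\pi_*\mathrm Z_G)=\mathrm H^i(\Gamma,G(k_s)/\mathrm Z_G(k_s))$, $\mathrm H^i_\et(k,\mathrm R^1\pi_*\mathrm Z_G)=\mathrm H^i(\Gamma,\mathrm H^1(k_s,\mathrm Z_G))$, and the identification $\mathrm H^1_\et(k,\pi_*(G/\mathrm Z_G))=\mathrm H^1(\Gamma,(G/\mathrm Z_G)(k_s))=\mathrm H^1(k,G/\mathrm Z_G)$, valid because $G/\mathrm Z_G$ — being a quotient of a smooth group scheme — is itself smooth. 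Commutativity of \eqref{diagbignoncomm} is then functoriality: the inclusion $\pi_*G/\pi_*\mathrm Z_G\hookrightarrow\pi_*(G/\mathrm Z_G)$ extends to a morphism from the Galois extension to the displayed sheaf sequence, and the edge maps $\mathrm H^i(\Gamma,-(k_s))=\mathrm H^i_\et(k,\pi_*(-))\to\mathrm H^i_{\fppf}(k,-)$ are natural, which forces every square relating $\delta$, $\widetilde\delta$ and the horizontal maps to commute. Exactness of each row and column is the corresponding standard statement; the only point worth a remark is exactness of the middle row at $\mathrm H^1(k,G/\mathrm Z_G)$, which falls out of the displayed sheaf sequence once that term is identified with $\mathrm H^1_\et(k,\pi_*(G/\mathrm Z_G))$.

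\emph{The twist and the bijections $\tau_P$.} A class $[P]$ (in $\mathrm H^1(\Gamma,G(k_s)/\mathrm Z_G(k_s))$ or in $\mathrm H^1(k,G/\mathrm Z_G)$) yields, via $G/\mathrm Z_G\hookrightarrow\mathcal{Aut}_G$, an inner form ${_P}G$, again a smooth $k$-group, whose center is canonically identified with $\mathrm Z_G$ over $k$ (the $G/\mathrm Z_G$-conjugation action fixing the center) and which satisfies ${_P}G/\mathrm Z_G={_P(G/\mathrm Z_G)}$; hence ${_P}$\eqref{diagbignoncomm} is defined, and, $G/\mathrm Z_G$ acting trivially on $\mathrm Z_G$, the two central extensions building it are exactly the $P$-twists of those building \eqref{diagbignoncomm}. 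Crucially, the subdiagram with initial object $\mathrm H^1(\Gamma,G(k_s)/\mathrm Z_G(k_s))$ (resp.\ $\mathrm H^1(k,G/\mathrm Z_G)$) never reaches the $\mathrm H^1$-terms of $G$ itself; it involves only $\mathrm H^1$ of $G/\mathrm Z_G$, $\mathrm H^2$ of $\mathrm Z_G$ (in both topologies) and the terms $\mathrm H^1(\Gamma,\mathrm H^1(k_s,\mathrm Z_G))$, $\mathrm H^3(\Gamma,\mathrm Z_G(k_s))$. On the $\mathrm H^1$-terms of $G/\mathrm Z_G$, $\tau_P$ is the classical twisting bijection of a central extension of sheaves of groups (see \cite[IV]{Gir71} for the sheaf-theoretic version), which sends $[P]$ to the neutral class; on $\mathrm H^2(\Gamma,\mathrm Z_G(k_s))$ and $\mathrm H^2(k,\mathrm Z_G)$, $\tau_P$ is translation by $-\delta([P])$, resp.\ $-\widetilde\delta([P])$, which is precisely what makes the $\delta$- and $\widetilde\delta$-squares commute; and these Galois and fppf bijections are intertwined by the edge maps, by naturality of twisting under the morphism of extensions used above. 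On $\mathrm H^1(\Gamma,\mathrm H^1(k_s,\mathrm Z_G))$ and $\mathrm H^3(\Gamma,\mathrm Z_G(k_s))$ I would simply \emph{define} $\tau_P$ to be translation by the image of $\widetilde\delta([P])$ under the horizontal maps $\mathrm H^2(k,\mathrm Z_G)\to\mathrm H^1(\Gamma,\mathrm H^1(k_s,\mathrm Z_G))\xrightarrow{\mathrm{obs}}\mathrm H^3(\Gamma,\mathrm Z_G(k_s))$.

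\emph{Verification and main obstacle.} Using the commutativity of \eqref{diagbignoncomm} — in particular that the middle-row map $\mathrm H^1(k,G/\mathrm Z_G)\to\mathrm H^1(\Gamma,\mathrm H^1(k_s,\mathrm Z_G))$ factors as $r\circ\widetilde\delta$ with $r$ the edge map $\mathrm H^2(k,\mathrm Z_G)\to\mathrm H^1(\Gamma,\mathrm H^1(k_s,\mathrm Z_G))$ — every remaining square of the subdiagram is seen to commute with the above $\tau_P$'s, the whole thing reducing to the standard compatibility of the twisting bijections with the connecting maps $\delta,\widetilde\delta$ for $G$ and for ${_P}G$. The equality $\tau_P([P])=1$ is then built in, and none of these bijections preserves base points precisely because they are (essentially) translations by $\pm\widetilde\delta([P])$ and its images. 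I expect the main obstacle to be organisational rather than conceptual: one must run this full diagram chase through the non-pointed $\tau_P$ and, along the way, verify the fppf-versus-étale naturality of the twisting formalism, which the classical references state only for Galois cohomology and which has to be read off from \cite[IV]{Gir71} in the sheaf setting.
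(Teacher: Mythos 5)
Your construction of the diagram and your treatment of the twisting follow the same architecture as the paper (columns from Proposition~\ref{propnoncommles}, bottom row from the flat-to-\'etale spectral sequence of $\mathrm Z_G$, middle row from the sequence $1\to G(k_s)/\mathrm Z_G(k_s)\to (G/\mathrm Z_G)(k_s)\to\mathrm H^1(k_s,\mathrm Z_G)\to 1$, and $\tau_P$ given by classical twisting on the $\mathrm H^1$-terms and by translation on the groups). The genuine gap is your claim that ``commutativity of \eqref{diagbignoncomm} is then functoriality.'' This is exactly the step the paper flags as \emph{not} obvious, and it is where the real work lies. The maps $\delta$ and $\widetilde\delta$ are nonabelian boundary maps: $\delta$ is defined by Galois cocycles for a central extension of $\Gamma$-groups, while $\widetilde\delta$ is defined via gerbes on $k_\fppf$ (Proposition~\ref{propnoncommles}); the bottom-row and middle-row maps, by contrast, come from the \emph{abelian} Leray spectral sequence of $\mathrm Z_G$ and from a sequence of \'etale sheaves. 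There is no single exact sequence on a single site of which all of these are simultaneously the connecting maps, so naturality of the edge maps $\mathrm H^i_\et(k,\pi_*(-))\to\mathrm H^i(k,-)$ does not force the squares involving $\delta$ and $\widetilde\delta$ to commute. Concretely: to compare $\widetilde\delta$ with the edge map $\mathrm H^2(k,\mathrm Z_G)\to\mathrm H^1(\Gamma,\mathrm H^1(k_s,\mathrm Z_G))$ one must identify $\mathrm H^2(k,\mathrm Z_G)$ with $\mathbf H^2_\et(k,\mathrm R\pi_*\mathrm Z_G)$ and track the gerbe-theoretic boundary through that identification. The paper does this by developing a nonabelian hypercohomology of complexes of group sheaves (torsors and gerbes bound by two-term and three-term complexes), proving a functorial boundary map for central extensions of complexes (Proposition~\ref{proptechnoncomm}), choosing a $\pi_*$-acyclic resolution $\mathcal I^\bullet$ of $\mathrm Z_G$, and assembling several diagrams of quasi-isomorphic complexes. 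None of this is supplied by, or reducible to, ``functoriality of edge maps.''

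A secondary instance of the same gap appears in your verification of the twist: you appeal to ``the standard compatibility of the twisting bijections with the connecting maps $\delta,\widetilde\delta$.'' For $\delta$ this is indeed classical (\cite[I, Prop.\ 44]{Ser97}), but for $\widetilde\delta$ — defined via gerbes — the compatibility $\widetilde\delta(\tau_P^{-1}([P'])) = ({_P}\widetilde\delta)([P']) + \widetilde\delta([P])$ is not a citation; the paper proves it by exhibiting a natural equivalence of contracted products of $\mathrm Z_G$-gerbes and checking it locally. Both points need an actual argument before your proof is complete.
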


\begin{prf}
We first show the construction of the maps, which immediately explains exactness everywhere: The columns are given by Proposition \ref{propnoncommles}. The middle row is the long exact sequence associated to \eqref{eqsepet}, up to applying the isomorphism $\mathrm{H}^1(k, G/\mathrm Z_G)\cong\mathrm{H}^1(\Gamma, (G/\mathrm Z_G)(k_s))$. The bottom sequence is the one of low-degree terms associated to the flat-to-\'etale spectral~sequence of $\mathrm Z_G$, in which we have also used that $\mathrm{H}^2(k, \mathrm Z_G) = \ker\big(\mathrm{H}^2(k, \mathrm Z_G)\rightarrow\mathrm{H}^0(\Gamma, \mathrm{H}^2(k_s, \mathrm Z_G))\big)$ (which holds because $\mathrm{H}^2(k_s, \mathrm Z_G) = 0$ by Proposition \ref{proph2h1coh}).

The commutativity of the diagram is not obvious (due to our lack of tools from homological algebra in the noncommutative setting) and \textit{we postpone its proof} to the end of this subsection. Finally, we fix a Galois (resp. fppf) torsor $P$ representing a class $[P]\in\mathrm{H}^1(\Gamma, G(k_s)/\mathrm Z_G(k_s))$ (resp. $[P]\in\mathrm{H}^1(k, G/\mathrm Z_G)$) and explain how it twists part of the diagram \eqref{diagbignoncomm}. Note that this is an inner twist of $G$, but not a pure inner twist (see Definition \ref{definntwist}) so there is in general no bijection between the sets $\mathrm H^1(k, G)$ and $\mathrm H^1(k, {_P}G)$. However, there is a canonical identification $\mathrm Z_G\xrightarrow{\sim}\mathrm Z_{{_P}G}$ (so in particular, the bottom rows of \eqref{diagbignoncomm} and {$_P$\eqref{diagbignoncomm}} are the same).

The translations $\tau_P$ on the sets $\mathrm{H}^1(\Gamma, G(k_s)/\mathrm Z_G(k_s))$ (if applicable), on $\mathrm{H}^1(k, G/\mathrm Z_G)$ and on $\mathrm{H}^1(\Gamma, \mathrm{H}^1(k_s, \mathrm Z_G))$ are given by the usual Galois/fppf theory of twisting by $P$ (and its images), cf.\ \cite[I, \textsection5]{Ser97} and \cite[\textsection B.2]{Con12}. In particular, if we started with $[P]\in\mathrm{H}^1(\Gamma, G(k_s)/\mathrm Z_G(k_s))$, then the image of $[P]$ in $\mathrm{H}^1(\Gamma, \mathrm{H}^1(k_s, \mathrm Z_G))$ is trivial, so its translation is just the identity map to $\mathrm{H}^1(\Gamma, \mathrm{H}^1(k_s, \mathrm Z_{{_P}G}))$.

The bottom row of \eqref{diagbignoncomm} consists of groups, so we define $\tau_P$ there as subtraction of the image of $[P]$. It remains to show that these two constructions are compatible with the vertical maps in the diagram: For $\mathrm{H}^1(\Gamma, \mathrm{H}^1(k_s, \mathrm Z_G))$, this is well-known to correspond to twisting in the Abelian case. For $\delta$ and ${_P}\delta$, this is \cite[I, Prop.\ 44]{Ser97} (note that the map $\tau_c$ there goes in the opposite direction). For $\widetilde{\delta}$ and ${_P}\widetilde{\delta}$, the statement follows similarly and is easiest to see by looking at $\tau_P^{-1}$, which is defined by contracted products: Take a right torsor $P'$ of ${_P}(G/\mathrm Z_G) = {_P}G/\mathrm Z_{_PG}$, then $\tau_P^{-1}([P']) = [P'\times^{_P(G/\mathrm Z_G)}P]\in\mathrm{H}^1(k, G/\mathrm Z_G)$ (as $P$ is a left torsor of $_PG$). We want to show that $\widetilde{\delta}(\tau_P^{-1}([P'])) = ({_P}\widetilde{\delta})([P']) + \widetilde{\delta}([P])$. By Propositions \ref{propcentacth2} and \ref{propnoncommles} it suffices to prove that an obvious natural map $\mathscr X\wedge^{\mathrm Z_G}\mathscr X'\rightarrow\mathscr X''$ of $\mathrm Z_G$-gerbes is an equivalence (where $\wedge^{\mathrm Z_G}$ denotes the contracted product of $\mathrm Z_G$-gerbes), which can be checked locally from the definition.
\end{prf}

This diagram captures the connections between the different cohomology sets appearing in the study of separable bands. As stated above, it still remains to prove its commutativity. The proof is a technicality which occupies the remainder of this subsection, so we first discuss some immediate properties which the proposition suggests:

\begin{cor}\label{correprlet}
For a fixed $G$ as above, there is an exact sequence of pointed sets:
$$\mathrm{H}^0(\Gamma, \mathrm{H}^1(k_s, \mathrm Z_G))\longrightarrow\mathrm{N}^2(k,\Gb,\kappa)\longrightarrow\mathrm{N}^2(k,\Gb,\overline{\kappa})\longrightarrow\mathscr L_\et(\Gb,\overline{\kappa})
$$
The image of the rightmost map agrees with the image of the map (cf. Proposition~\ref{propparamlet}(b))
$$\mathrm{H}^1(k, G/\mathrm Z_G)\longrightarrow\mathrm{H}^1(\Gamma, \mathrm{H}^1(k_s, \mathrm Z_G))
\xrightarrow[\scalebox{0.7}{\raisebox{10pt}{$\sim$}}]
{\;[j]\,\mapsto\,[(\Gb,\,j\cdot\kappa)]\;}\mathscr L_\et(\Gb,\overline{\kappa})
$$
\vspace{-15pt}

\noindent
and is exactly the set of globally representable \'etale bands lying over $(\Gb,\overline{\kappa})$ (up to conjugation). Hence this map is surjective if and only if every \'etale band lying over $(\Gb,\overline{\kappa})$~is~representable.
\end{cor}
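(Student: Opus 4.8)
The plan is to read everything off the big diagram \eqref{diagbignoncomm} of Proposition~\ref{propdiaglet}, combined with the description of neutral classes in Corollary~\ref{corneutelemdesc} and the parametrization of $\mathscr L_\et$ in Proposition~\ref{propparamlet}. The proof rests on three identifications. Since $\Gb = G_{k_s}$ is smooth over $k_s$ we have $\mathrm{H}^1(k_s,\Gb)=1=\mathrm{H}^1(k_s,\Gb/\mathrm Z_{\Gb})$; in particular $(\Gb,\overline{\kappa})$ really is a separable band and Proposition~\ref{propparamlet}(b) applies. The \'etale band $(\Gb,\kappa)$ attached to $G$ lies over $(\Gb,\overline{\kappa})$ (Definition~\ref{deflyingover}), so it serves as a base point of $\mathscr L_\et(\Gb,\overline{\kappa})$ and $[j]\mapsto[(\Gb,j\cdot\kappa)]$ is a bijection $\mathrm{H}^1(\Gamma,\mathrm{H}^1(k_s,\mathrm Z_G))\xrightarrow{\ \sim\ }\mathscr L_\et(\Gb,\overline{\kappa})$. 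On the other side, applying Corollary~\ref{corneutelemdesc} to the global representative $G$ over $k_\Et$ and over $k_\fppf$---noting that the two band-centers are $\mathrm Z_G$ (as an \'etale, resp.\ fppf, sheaf) and that the \'etale quotient sheaf $\pi_*\Gb/\pi_*\mathrm Z_{\Gb}$ has $k_s$-points $G(k_s)/\mathrm Z_G(k_s)$---we identify $\mathrm{N}^2(k,\Gb,\kappa)$ (based at $1_G$) with $\im(\delta)\subseteq\mathrm{H}^2(\Gamma,\mathrm Z_G(k_s))$ and $\mathrm{N}^2(k,\Gb,\overline{\kappa})$ with $\im(\widetilde{\delta})\subseteq\mathrm{H}^2(k,\mathrm Z_G)$, in the notation of \eqref{diagbignoncomm}.

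Next I would match the three maps of the sequence with maps already present in \eqref{diagbignoncomm}. The first arrow $\mathrm{H}^0(\Gamma,\mathrm{H}^1(k_s,\mathrm Z_G))\to\mathrm{N}^2(k,\Gb,\kappa)$ is the bottom-row arrow $\mathrm{H}^0(\Gamma,\mathrm{H}^1(k_s,\mathrm Z_G))\to\mathrm{H}^2(\Gamma,\mathrm Z_G(k_s))$; by commutativity of the left square of \eqref{diagbignoncomm} it factors through $\delta$, so its image lies in $\im(\delta)$. The second arrow $\mathrm{N}^2(k,\Gb,\kappa)\to\mathrm{N}^2(k,\Gb,\overline{\kappa})$ is the natural change-of-topology map on gerbes (a gerbe on $k_\Et$ bound by $(\Gb,\kappa)$ pulls back to one on $k_\fppf$ bound by $(\Gb,\overline{\kappa})$, preserving triviality); being equivariant on centers for $\mathrm{H}^2(\Gamma,\mathrm Z_G(k_s))\to\mathrm{H}^2(k,\mathrm Z_G)$, it corresponds under the identifications above to the restriction of that map, which carries $\im(\delta)$ into $\im(\widetilde{\delta})$ by commutativity of the $\delta$--$\widetilde{\delta}$ square. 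The third arrow $\mathrm{N}^2(k,\Gb,\overline{\kappa})\to\mathscr L_\et(\Gb,\overline{\kappa})$ sends a neutral class to the $\mathscr L_\et$-class of the \'etale band attached (as in Example~\ref{exmpgalband2}) to any global representative of $(\Gb,\overline{\kappa})$. \textbf{The step I expect to be the main obstacle} is showing that, under the identifications above, this third arrow is precisely the restriction to $\im(\widetilde{\delta})$ of the spectral-sequence edge map $\mathrm{H}^2(k,\mathrm Z_G)\to\mathrm{H}^1(\Gamma,\mathrm{H}^1(k_s,\mathrm Z_G))$ of \eqref{diagbignoncomm}: this amounts to unwinding the definitions of that edge map, of $\widetilde{\delta}$, and of the bijection in Proposition~\ref{propparamlet}(b), while keeping track of base points. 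Everything afterwards is formal.

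Granting the (postponed) commutativity of \eqref{diagbignoncomm}, exactness of the sequence is a diagram chase, the base points being $1_G$, $1_G$, $[(\Gb,\kappa)]$, i.e.\ $0,0,0$ under the identifications. At $\mathrm{N}^2(k,\Gb,\kappa)$ the preimage of the base point is $\im(\delta)\cap\ker(\mathrm{H}^2(\Gamma,\mathrm Z_G(k_s))\to\mathrm{H}^2(k,\mathrm Z_G))$, which by exactness of the bottom row equals $\im(\delta)\cap\im(\mathrm{H}^0(\Gamma,\mathrm{H}^1(k_s,\mathrm Z_G))\to\mathrm{H}^2(\Gamma,\mathrm Z_G(k_s)))$; this is the latter image, as it already lies inside $\im(\delta)$. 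At $\mathrm{N}^2(k,\Gb,\overline{\kappa})$ the preimage of the base point is $\im(\widetilde{\delta})\cap\im(\mathrm{H}^2(\Gamma,\mathrm Z_G(k_s))\to\mathrm{H}^2(k,\mathrm Z_G))$; for a class $\widetilde{\delta}(z)$ in it, commutativity of the right square and exactness of the bottom row force the image of $z$ in $\mathrm{H}^1(\Gamma,\mathrm{H}^1(k_s,\mathrm Z_G))$ to vanish, hence by exactness of the middle row $z$ lifts to some $w\in\mathrm{H}^1(\Gamma,G(k_s)/\mathrm Z_G(k_s))$ and then $\widetilde{\delta}(z)$ is the image of $\delta(w)\in\im(\delta)$---so the preimage equals $\im(\mathrm{N}^2(k,\Gb,\kappa)\to\mathrm{N}^2(k,\Gb,\overline{\kappa}))$. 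The assertion on the image of the rightmost map follows immediately: the image of $\im(\widetilde{\delta})$ under the edge map is the image of all of $\mathrm{H}^1(k,G/\mathrm Z_G)$ under $\widetilde{\delta}$ followed by that edge map, which by commutativity of the right square equals the image of the middle-row arrow $\mathrm{H}^1(k,G/\mathrm Z_G)\to\mathrm{H}^1(\Gamma,\mathrm{H}^1(k_s,\mathrm Z_G))$.

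Finally, to identify this common image with the globally representable \'etale bands lying over $(\Gb,\overline{\kappa})$, I would argue directly. If $[P]\in\mathrm{H}^1(k,G/\mathrm Z_G)$ has defining cocycle $c_P\colon\Gamma\to(\Gb/\mathrm Z_{\Gb})(k_s)$, then ${}_PG$ is a smooth algebraic $k$-group representing the same fppf band as $G$, and unwinding the twist the Galois action on $({}_PG)_{k_s}\cong\Gb$ is $s\mapsto\mathrm{int}(c_P(s))\circ f_s$ for $f$ a continuous lift of $\kappa$; hence the \'etale band of ${}_PG$ is $(\Gb,c_P\cdot\kappa)$, whose class in $\mathscr L_\et(\Gb,\overline{\kappa})$ corresponds under Proposition~\ref{propparamlet}(b) to the image $[\overline{c_P}]$ of $[P]$ in $\mathrm{H}^1(\Gamma,\mathrm{H}^1(k_s,\mathrm Z_G))$. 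So every class in the common image is a globally representable \'etale band lying over $(\Gb,\overline{\kappa})$. Conversely, a globally representable $(\Gb,\kappa')\in\mathscr L_\et(\Gb,\overline{\kappa})$ has a global representative $G'$, which represents the same fppf band as $G$; by the corollary to Proposition~\ref{propparambandforms} this gives $G'\cong{}_PG$ for some $[P]\in\mathrm{H}^1(k,G/\mathrm Z_G)$, and the previous computation places $[\kappa']$ in the image. This yields the stated description; the concluding equivalence is then immediate, since surjectivity of $\mathrm{H}^1(k,G/\mathrm Z_G)\to\mathscr L_\et(\Gb,\overline{\kappa})$ says exactly that every \'etale band lying over $(\Gb,\overline{\kappa})$ (up to conjugation) is globally representable.
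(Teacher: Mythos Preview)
Your proof is correct and follows the same approach as the paper: identify $\mathrm{N}^2(k,\Gb,\kappa)=\im(\delta)$ and $\mathrm{N}^2(k,\Gb,\overline{\kappa})=\im(\widetilde{\delta})$ via Corollary~\ref{corneutelemdesc}, read off exactness from the commutativity and exact rows of diagram~\eqref{diagbignoncomm}, and for the image description observe that global representatives of $(\Gb,\overline{\kappa})$ are precisely the fppf inner forms of $G$. The paper's own proof is considerably terser---it simply declares exactness ``immediate'' once the identifications are made---whereas you spell out the diagram chase at each node and worry explicitly about matching the third arrow to the spectral-sequence edge map. That step you flag as the ``main obstacle'' is one the paper does not address, effectively because it takes the sequence to be \emph{defined} by restricting the bottom row of~\eqref{diagbignoncomm} to $\im(\delta)$ and $\im(\widetilde{\delta})$; your extra care in verifying compatibility with the intrinsic description (neutral class $\mapsto$ \'etale band of a representative) is reasonable but not strictly needed for the corollary as stated.
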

\begin{prf}
Recall that, by Corollary \ref{corneutelemdesc}, the two sets $\im\delta$ and $\im\widetilde{\delta}$ (as in \eqref{diagbignoncomm}) are canonically identified with the sets $\mathrm{N}^2(k,\Gb,\kappa)$ and $\mathrm{N}^2(k,\Gb,\overline{\kappa})$, respectively. Exactness of the sequence is now immediate from the commutativity of the diagram \eqref{diagbignoncomm}. Finally, to see that the desired image is exactly the subset of globally representable \'etale bands lying over $(\Gb,\overline{\kappa})$, we just note that every such global representative is an inner form of $G$ and that the map in the statement sends an inner form to its corresponding \'etale band.
\end{prf}

\begin{cor}
For a fixed $G$ as above, there is an exact sequence of pointed sets:
$$\mathrm{H}^0(\Gamma, \mathrm{H}^1(k_s, \mathrm Z_G))\longrightarrow\mathrm{H}^2(k,\Gb,\kappa)\longrightarrow\mathrm{H}^2(k,\Gb,\overline{\kappa})\longrightarrow\mathscr L_\et(\Gb,\overline{\kappa})
\xrightarrow{\;\mathrm{obs}\;}\mathrm{H}^3(\Gamma,\mathrm Z_G(k_s))
$$
\end{cor}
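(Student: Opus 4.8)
The plan is to deduce the sequence from exactness of the bottom row of the diagram \eqref{diagbignoncomm} of Proposition \ref{propdiaglet}, whose commutativity we take for granted, proceeding in the same spirit as the proof of Corollary \ref{correprlet} but using the full free transitive actions rather than merely the ``image'' identifications. Since $G$ is smooth, so is $\Gb$, and $\mathrm H^1(k_s,\Gb) = 1$ because $k_s$ is separably closed; hence the sequence \eqref{eqsepet} is exact and Propositions \ref{propcentacth2}, \ref{propparamlet} and Corollary \ref{corparamlet} all apply. In particular $\mathscr L_\et(\Gb,\overline\kappa)\neq\varnothing$.

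First I would record the three torsor structures, each carrying a base point furnished by the fixed global representative $G$. By Proposition \ref{propcentacth2}, $\mathrm H^2(k,\Gb,\kappa)$ is a torsor under $\mathrm H^2(\Gamma,\mathrm Z_G(k_s))$ pointed at $1_G^\et\coloneqq[\mathrm{TORS}_\et(k,G)]$, and $\mathrm H^2(k,\Gb,\overline\kappa)$ is a torsor under $\mathrm H^2(k,\mathrm Z_G)$ (fppf cohomology) pointed at $1_G^\fppf\coloneqq[\mathrm{TORS}_\fppf(k,G)]$; by Proposition \ref{propparamlet}(b), $\mathscr L_\et(\Gb,\overline\kappa)$ is a torsor under $\mathrm H^1(\Gamma,\mathrm H^1(k_s,\mathrm Z_G))$ pointed at $(\Gb,\kappa)$. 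Fixing these base points gives bijections $\beta_\et$, $\beta_\fppf$, $\beta_{\mathscr L}$ from those pointed sets onto $\mathrm H^2(\Gamma,\mathrm Z_G(k_s))$, $\mathrm H^2(k,\mathrm Z_G)$ and $\mathrm H^1(\Gamma,\mathrm H^1(k_s,\mathrm Z_G))$ respectively; these three groups are precisely the second, third and fourth terms of the bottom row of \eqref{diagbignoncomm}, and each $\beta$ carries the chosen base point to the zero element (resp.\ to $(\Gb,\kappa)$).

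Next I would match the four maps of the asserted sequence with the bottom-row maps $a,b,c,\mathrm{obs}$ of \eqref{diagbignoncomm}. The leftmost map and $\mathrm{obs}$ are defined, exactly as in Corollary \ref{correprlet}, to be $\beta_\et\circ a$ and $\mathrm{obs}\circ\beta_{\mathscr L}^{-1}$; the map $\mathrm H^2(k,\Gb,\overline\kappa)\to\mathscr L_\et(\Gb,\overline\kappa)$ is $\beta_{\mathscr L}\circ c\circ\beta_\fppf^{-1}$, which restricts on the subsets of neutral elements to the map of Corollary \ref{correprlet} by commutativity of the right-hand square of \eqref{diagbignoncomm}; and the middle map $\mathrm H^2(k,\Gb,\kappa)\to\mathrm H^2(k,\Gb,\overline\kappa)$ is the natural comparison $\pi^*$ induced by the fppf-to-\'etale morphism of sites, which makes sense because $(\pi_*\Gb,\kappa)$ lies over $(\Gb,\overline\kappa)$. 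The one point not purely formal is that $\pi^*$ corresponds to $b$ under $\beta_\et$ and $\beta_\fppf$: namely $\pi^*$ carries $\mathrm{TORS}_\et(k,G)$ to $\mathrm{TORS}_\fppf(k,G)$ because torsors of the smooth group $G$ are \'etale-locally trivial, and $\pi^*$ is compatible with the contracted-product action of $\mathrm Z_G$-gerbes, so on $\mathrm Z_G$-cohomology it is precisely the edge map $\mathrm H^2(\Gamma,\mathrm Z_G(k_s))\to\mathrm H^2(k,\mathrm Z_G)$ of the flat-to-\'etale spectral sequence, namely $b$; therefore $\pi^*(z\cdot 1_G^\et) = b(z)\cdot 1_G^\fppf$ for all $z$, i.e.\ $\beta_\fppf^{-1}\circ\pi^*\circ\beta_\et = b$.

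With every map identified this way, the assertion is exactly exactness of the bottom row of \eqref{diagbignoncomm} read through $\beta_\et,\beta_\fppf,\beta_{\mathscr L}$. That row is the tail of the exact sequence of terms of low degree for the flat-to-\'etale spectral sequence of $\mathrm Z_G$, using $\mathrm H^2(k_s,\mathrm Z_G) = 0$ from Proposition \ref{proph2h1coh}; hence it is exact at $\mathrm H^2(\Gamma,\mathrm Z_G(k_s))$, at $\mathrm H^2(k,\mathrm Z_G)$ and at $\mathrm H^1(\Gamma,\mathrm H^1(k_s,\mathrm Z_G))$, and since the three bijections carry base points to base points this translates into exactness of the stated sequence of pointed sets at $\mathrm H^2(k,\Gb,\kappa)$, at $\mathrm H^2(k,\Gb,\overline\kappa)$ and at $\mathscr L_\et(\Gb,\overline\kappa)$. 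The substance of the argument is thus entirely absorbed into the postponed commutativity of \eqref{diagbignoncomm}; once that is granted, what remains is the bookkeeping reconciling the three torsor structures, the only genuinely new ingredient being the identification of the comparison map $\pi^*$ with $b$.
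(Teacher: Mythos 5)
Your proof is correct and follows essentially the same route as the paper, whose own proof is just ``similar to the above'' (i.e.\ to Corollary \ref{correprlet}): identify $\mathrm H^2(k,\Gb,\kappa)$, $\mathrm H^2(k,\Gb,\overline\kappa)$ and $\mathscr L_\et(\Gb,\overline\kappa)$ with the groups $\mathrm H^2(\Gamma,\mathrm Z_G(k_s))$, $\mathrm H^2(k,\mathrm Z_G)$ and $\mathrm H^1(\Gamma,\mathrm H^1(k_s,\mathrm Z_G))$ via the free transitive actions of Propositions \ref{propcentacth2} and \ref{propparamlet}(b) and the base points furnished by the global representative $G$, then read off exactness from the bottom row of \eqref{diagbignoncomm}. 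The one detail you supply beyond the paper's one-line proof --- that the \'etale-to-fppf comparison map matches the edge map $b$ under these identifications, via compatibility with the contracted-product actions and the fact that torsors of the smooth group $G$ on $k_\Et$ map to their fppf counterparts --- is correct and is exactly the point the paper leaves implicit.
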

\begin{prf}
Similar to the above. The ``obstruction class'' (to the existence of gerbes bound by a given $(\Gb,\kappa)$ in $\mathscr L_\et(\Gb,\overline{\kappa})$) on the right will be discussed more in the next section.
\end{prf}

Now we turn our attention to the commutativity of the diagram \eqref{diagbignoncomm}. The top square is clearly commutative, by the forgetful functor which sends an \'etale torsor of $G$ (on the big site) to an fppf torsor, applied to the rectangle:
\begin{center}\begin{tikzcd}
\mathrm{H}^1(\Gamma, G(k_s))\arrow[r]\arrow[d, "\rotatebox{90}{$\sim$}", pos=0.4] & \mathrm{H}^1\!\left(\!\Gamma, \dfrac{G(k_s)}{\mathrm Z_G(k_s)}\right)\arrow[r] & \mathrm{H}^1\!\left(\!\Gamma, \dfrac{G}{\mathrm Z_G}(k_s)\!\right)\arrow[d, "\rotatebox{90}{$\sim$}", pos=0.4]\\
\mathrm{H}^1(k, G)\arrow[rr] & & \mathrm{H}^1\!\left(\!k, \dfrac{G}{\mathrm Z_G}\right)
\end{tikzcd}\end{center}
The bottom squares of the diagram are more difficult. The flat-to-\'etale spectral sequence is given by the composition of derived functors
$\mathrm R_\Et\Gamma(k, -)\circ\mathrm R\pi_* = \mathrm R\Gamma(k, -)$
and the cohomology group $\mathrm{H}^2(k,\mathrm Z_G)$ appears by a natural identification with $\mathbf{H}^2_\et(k, \mathrm R\pi_*\mathrm Z_G)$. We are thus lead to consider long exact sequences in nonabelian hypercohomology of complexes of group sheaves:
\begin{defn}
Let $\mathcal F\rightarrow\mathcal G\rightarrow\mathcal H$ be a complex of group sheaves on a site $\mathcal C$ with final element $S$. We now review some constructions (a good reference for which is \cite{Ald08}):

An $(\mathcal F{\rightarrow}\mathcal G)$\textit{-torsor} is a pair $(\mathcal P,a)$, where $\mathcal P$ is an $\mathcal F$-torsor and $a : \mathcal P\times^{\mathcal F}\mathcal G\xrightarrow{\,\sim\,}\mathcal G$~is~a~fixed trivialization of the induced $\mathcal G$-torsor. There is an obvious gerbe $\mathrm{TORS}(\mathcal F{\rightarrow}\mathcal G)$ with~a~forgetful functor $\mathrm{TORS}(\mathcal F{\rightarrow}\mathcal G)\rightarrow\mathrm{TORS}(\mathcal F)$. We write $\mathbf H^1(S,\, \mathcal F{\rightarrow}\mathcal G)$ for the set of isomorphism classes of objects in the fiber $\mathrm{TORS}(\mathcal F{\rightarrow}\mathcal G)(k)$. This set was already~considered by Deligne in \cite[2.4.3]{Del79} (see \cite[\textsection1]{Brv92} for an equivalent definition using cocycles in the case of Galois modules) and it agrees with the first cohomology group in the Abelian case.

Suppose now that $\mathcal F, \mathcal G, \mathcal H$ are commutative. An $(\mathcal F{\rightarrow}\mathcal G{\rightarrow}\mathcal H)$\textit{-gerbe} is a pair $(\mathscr X,a)$, where $\mathscr X$ is a gerbe bounded by $\mathcal F$ and $a$ is a map $\mathscr X\rightarrow\mathrm{TORS}(\mathcal G{\rightarrow}\mathcal H)$ of gerbes such that the composition $\mathscr X\rightarrow\mathrm{TORS}(\mathcal G)$ is a map of gerbes bound by the map $\mathcal F\rightarrow\mathcal G$ (in~an~obvious~sense). We write $\mathbf H^2(S,\, \mathcal F{\rightarrow}\mathcal G{\rightarrow}\mathcal H)$ for the set of isomorphism classes of such pairs. By \cite[Prop.\ 3.2.3]{Ald08}, this definition agrees with the usual second cohomology group of $[\mathcal F\rightarrow\mathcal G\rightarrow\mathcal H]$.

Finally, let $(\mathcal F^\bullet,\delta^\bullet)$ be a complex of group sheaves on $\mathcal C$ concentrated in nonnegative degrees (here $\delta^j : \mathcal F^j\rightarrow\mathcal F^{j+1}$ with $\im\delta^j$ normal in $\mathcal F^{j+1}$).
We define $\mathbf H^1(S, \mathcal F^\bullet)\coloneqq\mathbf H^1(S,\, \mathcal F^0{\rightarrow}\ker\delta^1)$. If the complex is Abelian, then $\mathbf H^2(S, \mathcal F^\bullet)\coloneqq\mathbf H^2(S,\, \mathcal F^0{\rightarrow}\mathcal F^1{\rightarrow}\ker\delta^2)$. Again, we recover the usual hypercohomology groups in the Abelian case because this truncation is functorial and induces isomorphisms in first (resp. second) degree. We observe that these groups are functorial and preserved by quasi-isomorphisms; 
\end{defn}

\begin{prop}
Let $[\mathcal F^0\rightarrow\mathcal F^1]\rightarrow[\mathcal G^0\rightarrow\mathcal G^1]$ be a quasi-isomorphism of complexes of group sheaves concentrated in degrees $0$ and $1$. Then the map $\mathbf H^1(S,\, \mathcal F^0{\rightarrow}\mathcal F^1)\rightarrow\mathbf H^1(S,\, \mathcal G^0{\rightarrow}\mathcal G^1)$ 
\end{prop}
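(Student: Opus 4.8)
Since $u$ does not induce isomorphisms on the individual terms $\mathcal F^i\to\mathcal G^i$, but only on the cohomology sheaves, the whole argument has to be routed through the latter. Write $C^\bullet=[\mathcal F^0\xrightarrow{d_F}\mathcal F^1]$ and $D^\bullet=[\mathcal G^0\xrightarrow{d_G}\mathcal G^1]$, and set $H^0\coloneqq\ker d_F$, $H^1\coloneqq\coker d_F$ (a group sheaf, since $\im d_F$ is normal), with $\widetilde H^0,\widetilde H^1$ denoting the analogues for $D^\bullet$; by hypothesis $u$ induces isomorphisms $H^i\xrightarrow{\sim}\widetilde H^i$ for $i=0,1$. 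The goal is to prove that $u_\ast\colon\mathbf H^1(S,C^\bullet)\to\mathbf H^1(S,D^\bullet)$ is a bijection of pointed sets.

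The plan is to attach to every two-term complex $[\mathcal A\xrightarrow{d}\mathcal B]$ (with $\im d$ normal in $\mathcal B$) a functorial exact sequence of pointed sets
$$\mathrm H^1(S,\ker d)\xrightarrow{\iota}\mathbf H^1(S,\mathcal A{\rightarrow}\mathcal B)\xrightarrow{\pi}\mathrm H^0(S,\coker d)\xrightarrow{\partial}\mathrm H^2(S,\ker d),$$
the nonabelian incarnation of the long exact sequence of the distinguished triangle $H^0[0]\to C^\bullet\to H^1[-1]\xrightarrow{+1}$. Here $\iota$ sends a $(\ker d)$-torsor $\mathcal P_0$ to $(\mathcal P_0\times^{\ker d}\mathcal A,\ \mathrm{can})$, using that $\ker d\to\mathcal B$ is trivial so that the induced $\mathcal B$-torsor $\mathcal P_0\times^{\ker d}\mathcal B$ is canonically split; $\pi$ sends the class of an $(\mathcal A{\rightarrow}\mathcal B)$-torsor, represented by a Čech pair $(a_{ij},b_i)$ with $d(a_{ij})=b_i^{-1}b_j$, to the global section of $\coker d$ glued from the images $\overline{b_i}$; and $\partial$ is the obstruction, in the sense of Giraud's $\mathrm H^2$ applied to the length-two extension $\ker d\hookrightarrow\mathcal A\twoheadrightarrow\im d\hookrightarrow\mathcal B\twoheadrightarrow\coker d$, to lifting a global section of $\coker d$ to an $(\mathcal A{\rightarrow}\mathcal B)$-torsor. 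Besides exactness, the two facts I would record are that $\iota$ is injective (the orbit action it encodes is free) and that the fibre of $\pi$ through a class $x$ is the $\iota$-orbit of $x$ after the customary twist by $x$; all of these are verifications with Čech cocycles of the kind performed elsewhere in this paper, and they are implicit in \cite{Del79}, \cite{Brv92}, \cite{Ald08}.

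Granting the sequence, the conclusion follows by a five-lemma argument for exact sequences of pointed sets. The morphism $u$ maps the sequence for $C^\bullet$ to that for $D^\bullet$, and since the three outer terms involve only the cohomology sheaves $H^0,H^1$ and $\widetilde H^0,\widetilde H^1$, the induced maps $\mathrm H^1(S,H^0)\to\mathrm H^1(S,\widetilde H^0)$, $\mathrm H^0(S,H^1)\to\mathrm H^0(S,\widetilde H^1)$ and $\mathrm H^2(S,H^0)\to\mathrm H^2(S,\widetilde H^0)$ are bijections. For surjectivity: a class $y\in\mathbf H^1(S,D^\bullet)$ has image $\widetilde\pi(y)\in\mathrm H^0(S,\widetilde H^1)$, which pulls back to some $x_1\in\mathrm H^0(S,H^1)$ with $\partial(x_1)=1$ (because $u$ is injective on $\mathrm H^2(S,H^0)$ and $\widetilde\partial\,\widetilde\pi(y)=1$), hence $x_1=\pi(x)$ for some $x\in\mathbf H^1(S,C^\bullet)$; then $u_\ast(x)$ and $y$ lie in a common fibre of $\widetilde\pi$, so differ by the $\widetilde\iota$-orbit, and correcting $x$ by the preimage in $\mathrm H^1(S,H^0)$ of the relevant class yields a preimage of $y$. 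Injectivity is the mirror chase: $u_\ast(x)=u_\ast(x')$ forces $\pi(x)=\pi(x')$, so $x'$ lies in the $\iota$-orbit of $x$, and freeness of $\widetilde\iota$ together with bijectivity of $u$ on $\mathrm H^1(S,H^0)$ forces $x=x'$.

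The main obstacle is the construction and, above all, the naturality of the displayed exact sequence in the nonabelian setting: one must be careful about base-points, about the normality conditions that make $\coker d$ a genuine group sheaf, about the precise meaning of exactness of pointed sets and of the freeness of $\iota$, and about defining the $\mathrm H^0\to\mathrm H^2$ connecting map through Giraud's theory. A variant that sidesteps $\partial$ altogether is to first reduce to the case where $u$ is degreewise surjective — replacing $D^\bullet$ by the mapping-cylinder complex $\mathrm{Cyl}(u)$, which receives $C^\bullet$ by a degreewise split quasi-isomorphism and retracts onto $D^\bullet$ — and then to invoke the long exact sequence of the degreewise exact sequence $1\to\ker u\to C^\bullet\to D^\bullet\to1$ of complexes, whose outer terms vanish because the complex $\ker u$ is acyclic; but making sense of the mapping cylinder and of the retraction for complexes of nonabelian group sheaves is bookkeeping of exactly the same flavour.
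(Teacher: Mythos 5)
Your route is genuinely different from the paper's. The paper simply writes down an explicit inverse: given $(\mathcal P',a')$, it forms the fiber product $\mathcal P\coloneqq\mathcal F^1\times_{\mathcal G^1}\mathcal P'$ (a torsor under $\mathcal F^1\times_{\mathcal G^1}\mathcal G^0\cong\mathcal F^0$, the isomorphism being the only place the quasi-isomorphism hypothesis enters, via a sheaf-level five lemma) and transports the trivialization $a'$ across the same fiber product. That is a five-line argument needing no cohomological machinery at all. You instead build the four-term tail of the long exact sequence of the triangle $H^0[0]\to C^\bullet\to H^1[-1]$ and run a five-lemma chase for pointed sets. The skeleton of your chase is sound — note that $\ker d$ is automatically normal in $\mathcal A$, so the conjugation twists you need do exist, and the refined exactness statements (injectivity of $\iota$ and of its twists, fibers of $\pi$ as twisted $\iota$-orbits, $\im\pi=\{x_1:\partial(x_1)\text{ neutral}\}$) are exactly the right supplements to make a pointed-set five lemma valid. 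But you are deferring essentially all of the content to those supplements: since $\ker d$ may be noncommutative, $\partial$ must land in a Giraud $\mathrm H^2$ of a band, "$\partial(x_1)=1$" must be read as neutrality, and the compatibility of the whole twisted apparatus with $u$ has to be checked. This is precisely the nonabelian hypercohomology theory that the paper explicitly declines to develop (cf.\ Remark \ref{remhypernonab}); what your approach buys in exchange is a statement that localizes the failure of $u_\ast$ to be bijective in the cohomology sheaves, which is more information than the proposition asks for.

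One concrete warning about your fallback variant: the mapping cylinder of a map of complexes of \emph{nonabelian} group sheaves is not merely "bookkeeping". The differential on $\mathrm{Cyl}(u)^n=\mathcal F^n\times\mathcal F^{n+1}\times\mathcal G^n$ involves products of terms that do not commute, so it is not a group homomorphism without extra (crossed-module) structure, and the degreewise-exact sequence $1\to\ker u\to C^\bullet\to\mathrm{Cyl}/\cdots\to1$ you would want is not available in this generality. If you pursue your route, stick with the first version and carry out the obstruction-theoretic steps in full; or better, notice that the fiber-product construction gives the inverse directly.
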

\begin{prf}
We only need to construct an inverse: Given a pair $(\mathcal P',a')\in\mathbf H^1(S,\, \mathcal G^0{\rightarrow}\mathcal G^1)$, consider the $\mathcal G^0$-equivariant morphism of sheaves (of sets):
$$\mathcal P'\rightarrow\mathcal P'\times^{\mathcal G^0}\mathcal G^1\xrightarrow{\;a'\;}\mathcal G^1$$
We claim that the fiber product $\mathcal P\coloneqq\mathcal F^1\times_{\mathcal G^1}\mathcal P'$ is a $\mathcal G^0$-torsor. Indeed, it is enough to check this over a covering of $S$, so in particular it suffices that the natural map $\mathcal F^0\rightarrow\mathcal F^1\times_{\mathcal G^1}\mathcal G^0$ is an isomorphism. This is true for quasi-isomorphisms by the $5$-lemma (for nonabelian group sheaves). Finally, there is a map $a$ of $\mathcal F^1$-torsors defined as:
$$\mathcal P\times^{\mathcal F^0}\mathcal F^1\cong\left(\mathcal F^1\times_{\mathcal G^1}\mathcal P'\right)\times^{\left(\mathcal F^1\times_{\mathcal G^1}\mathcal G^0\right)}\left(\mathcal F^1\times_{\mathcal G^1}\mathcal G^1\right)\cong\mathcal F^1\times_{\mathcal G^1}\left(\mathcal P'\times^{\mathcal G^0}\mathcal G^1\right)\xrightarrow{{\;1\,\times\, a'\;}}\mathcal F^1\times_{\mathcal G^1}\mathcal G^1\cong\mathcal F^1$$
It is now simple to check that $(\mathcal P',a')\mapsto(\mathcal P,a)$ defines the desired inverse.
\end{prf}

\begin{prop}\label{proptechnoncomm}
Let $1\rightarrow\mathcal F^\bullet\rightarrow\mathcal G^\bullet\xrightarrow{p}\mathcal H^\bullet\rightarrow 1$ be a short exact sequence of complexes of group sheaves on $\mathcal C$. Suppose that $\mathcal F^0$ lies in the center of $\mathcal G^0$. Then there is a functorial (in such short exact sequences) map $\mathbf H^1(S, \mathcal H^\bullet)\rightarrow\mathbf H^2(S, \mathcal F^\bullet)$ which generalizes the one in Proposition \ref{propnoncommles} when these complexes are concentrated in degree $0$.
\end{prop}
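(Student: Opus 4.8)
\emph{Approach.} The plan is to run the same mechanism that produces $\delta$ in Proposition~\ref{propnoncommles}, but keeping track of the extra data carried by the differentials $\delta^0$ and $\delta^1$ so as to land in the $3$-term description $\mathbf H^2(S,\mathcal F^0{\to}\mathcal F^1{\to}\ker\delta^2_{\mathcal F})$ of the target (so I am tacitly in the situation where $\mathcal F^\bullet$ is Abelian, which is anyway needed for $\mathbf H^2(S,\mathcal F^\bullet)$ to be defined). Write $p$ also for the component maps $\mathcal G^j\to\mathcal H^j$, with kernels $\mathcal F^j$. Starting from a representative $(\mathcal P',a')$ of a class in $\mathbf H^1(S,\mathcal H^\bullet)=\mathbf H^1(S,\mathcal H^0{\to}\ker\delta^1_{\mathcal H})$ — so $\mathcal P'$ is an $\mathcal H^0$-torsor and $a'\colon\mathcal P'\times^{\mathcal H^0}\ker\delta^1_{\mathcal H}\xrightarrow{\sim}\ker\delta^1_{\mathcal H}$, extending canonically to a trivialization $\bar a'\colon\mathcal P'\times^{\mathcal H^0}\mathcal H^1\xrightarrow{\sim}\mathcal H^1$ of $\mathcal H^1$-torsors — I would let $\mathscr X$ be the stack over $\mathcal C$ whose fibre $\mathscr X(T)$ over $T\to S$ has for objects the pairs $(\mathcal Q,\theta)$, with $\mathcal Q$ a $\mathcal G^0$-torsor on $T$ and $\theta\colon\mathcal Q\times^{\mathcal G^0}\mathcal H^0\xrightarrow{\sim}\mathcal P'_T$, and for morphisms the $\mathcal G^0$-equivariant isomorphisms compatible with the $\theta$'s. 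Since $p\colon\mathcal G^0\to\mathcal H^0$ is an epimorphism of sheaves with central kernel $\mathcal F^0$, this $\mathscr X$ is a gerbe bound by $\mathcal F^0$: it is exactly the gerbe of local liftings of the torsor $\mathcal P'$, and, just as in Proposition~\ref{propnoncommles}, the datum $a'$ plays no role in its definition.

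\emph{The $\mathrm{TORS}(\mathcal F^1{\to}\ker\delta^2_{\mathcal F})$-structure.} Given $(\mathcal Q,\theta)\in\mathscr X(T)$, set $\mathcal N\coloneqq\mathcal Q\times^{\mathcal G^0}\mathcal G^1$. The composite $\mathcal N\to\mathcal Q\times^{\mathcal G^0}\mathcal H^1\xrightarrow{\ \theta\ }\mathcal P'_T\times^{\mathcal H^0}\mathcal H^1\xrightarrow{\ \bar a'\ }\mathcal H^1$ trivializes the pushforward $\mathcal N\times^{\mathcal G^1}\mathcal H^1$ of $\mathcal N$, so its fibre $\mathcal N_0\subseteq\mathcal N$ over the identity section is an $\mathcal F^1$-torsor, namely the corresponding reduction of structure group from $\mathcal G^1$ to $\mathcal F^1$. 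Because $\delta^1_{\mathcal G}\delta^0_{\mathcal G}=0$, the torsor $\mathcal Q\times^{\mathcal G^0}\mathcal G^2$ is canonically trivial, equal to $\mathcal G^2$, and $\delta^1_{\mathcal G}$ thus defines a map $\mathcal N\to\mathcal G^2$; restricted to $\mathcal N_0$ this lands in $\mathcal F^2$ (since its image in $\mathcal H^2$ is $\delta^1_{\mathcal H}$ of the identity) and in fact in $\ker\delta^2_{\mathcal F}$ (as $\delta^2_{\mathcal G}|_{\mathcal F^2}=\delta^2_{\mathcal F}$), and it is equivariant for $\delta^1_{\mathcal F}\colon\mathcal F^1\to\ker\delta^2_{\mathcal F}$. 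It therefore induces an isomorphism $\mathcal N_0\times^{\mathcal F^1}\ker\delta^2_{\mathcal F}\xrightarrow{\sim}\ker\delta^2_{\mathcal F}$; this isomorphism, together with $\mathcal N_0$, is the $(\mathcal F^1{\to}\ker\delta^2_{\mathcal F})$-torsor $a(\mathcal Q,\theta)$. Everything here is manifestly functorial in $(\mathcal Q,\theta)$, so it assembles into a morphism of gerbes $a\colon\mathscr X\to\mathrm{TORS}(\mathcal F^1{\to}\ker\delta^2_{\mathcal F})$.

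\emph{Boundedness and conclusion.} The crucial point is that the composite $\mathscr X\to\mathrm{TORS}(\mathcal F^1)$ is a morphism of gerbes bound by $\delta^0_{\mathcal F}\colon\mathcal F^0\to\mathcal F^1$. An automorphism of $(\mathcal Q,\theta)$ is a central section $f\in\mathcal F^0$; it acts on $\mathcal N=\mathcal Q\times^{\mathcal G^0}\mathcal G^1$, hence on $\mathcal N_0$, through translation by $\delta^0_{\mathcal G}(f)=\delta^0_{\mathcal F}(f)\in\mathcal F^1$, and one checks — using centrality of $\mathcal F^0$ in $\mathcal G^0$ a second time, to see that this translation is unchanged under a change of local trivialization of $\mathcal Q$ — that the induced map on automorphism sheaves is $\delta^0_{\mathcal F}$. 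Hence $(\mathscr X,a)$ is an $(\mathcal F^0{\to}\mathcal F^1{\to}\ker\delta^2_{\mathcal F})$-gerbe, its isomorphism class depends only on that of $(\mathcal P',a')$, and $[(\mathcal P',a')]\mapsto[(\mathscr X,a)]$ is the desired map $\mathbf H^1(S,\mathcal H^\bullet)\to\mathbf H^2(S,\mathcal F^\bullet)$. Functoriality in morphisms of short exact sequences of complexes is immediate from the naturality of every step, and when the complexes are concentrated in degree $0$ one has $\mathcal N_0$ trivial and $\mathbf H^2(S,\mathcal F^\bullet)=\mathrm H^2(S,\mathcal F^0)$, recovering precisely the gerbe $\delta(\mathcal P')$ of Proposition~\ref{propnoncommles}.

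\emph{Main obstacle.} The genuine difficulty is the degree-$\ge 1$ bookkeeping. One cannot simply lift $(\mathcal P',a')$ to a $(\mathcal G^0{\to}\ker\delta^1_{\mathcal G})$-torsor, since $\ker\delta^1_{\mathcal G}\to\ker\delta^1_{\mathcal H}$ need not be an epimorphism of sheaves and such a lift need not exist even locally; the failure is governed precisely by the part of $\mathcal F^\bullet$ in degree $\ge 1$, which is exactly what distinguishes $\mathbf H^2(S,\mathcal F^\bullet)$ from $\mathrm H^2(S,\mathcal F^0)$. Instead one lifts only the degree-$0$ torsor (which always lifts locally) and reconstructs the degree-$1$ and degree-$2$ information through $\delta^0_{\mathcal G}$ and $\delta^1_{\mathcal G}$. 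Verifying that the resulting $(\mathscr X,a)$ really is a gerbe bound by the complex $[\mathcal F^0{\to}\mathcal F^1{\to}\ker\delta^2_{\mathcal F}]$ — in particular the boundedness of $\mathscr X\to\mathrm{TORS}(\mathcal F^1)$ by $\delta^0_{\mathcal F}$, and the independence of all constructions of the various choices — is conceptually clear but demands patiently unwinding several contracted products, and it is there that both invocations of the centrality hypothesis are needed.
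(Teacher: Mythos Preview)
Your proposal is correct and takes essentially the same approach as the paper: both construct the gerbe $\mathscr X$ of local $\mathcal G^0$-lifts of the given $\mathcal H^0$-torsor and then build the morphism $\mathscr X\to\mathrm{TORS}(\mathcal F^1{\to}\ker\delta^2_{\mathcal F})$ by pushing lifts along the differentials $\delta^0_{\mathcal G},\delta^1_{\mathcal G}$. The only cosmetic differences are that the paper first writes down an explicit truncated short exact sequence (with middle column $[\mathcal G^0\to p^{-1}(\ker\delta^1_{\mathcal H})\to\ker\delta^2_{\mathcal F}]$) and works inside $p^{-1}(\ker\delta^1_{\mathcal H})$ rather than all of $\mathcal G^1$, whereas you extend $a'$ to $\bar a'$ on $\mathcal H^1$ and take the fibre there; your $\mathcal N_0$ coincides with the paper's $\mathcal K$.
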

\begin{prf}
We construct functorially the following truncation, which is also a short exact sequence of complexes:
$$1\longrightarrow\left[\begin{tikzcd}
    \mathcal F^0\arrow[d] \\ \mathcal F^1\arrow[d] \\ \ker\delta^2_{\mathcal F} 
\end{tikzcd}\right]
\longrightarrow
\left[\begin{tikzcd}
    \mathcal G^0\arrow[d] \\ p^{-1}(\ker\delta^1_{\mathcal H})\arrow[d] \\ \ker\delta^2_{\mathcal F} 
\end{tikzcd}\right]
\longrightarrow
\left[\begin{tikzcd}
    \mathcal H^0\arrow[d] \\ \ker\delta^1_{\mathcal H}\arrow[d] \\ 1
\end{tikzcd}\right]\longrightarrow 1$$
Next, we pick an $(\mathcal H^0{\rightarrow} \ker\delta^1_{\mathcal H})$-torsor $(\mathcal P,a)$ to which we will assign a $(\mathcal F^0{\rightarrow}\mathcal F^1{\rightarrow} \ker\delta^2_{\mathcal F})$-gerbe as follows:

First, we choose the gerbe $\mathscr X$ bound by $\mathcal F^0$ in the same way as in Proposition \ref{propnoncommles}. Thus each fiber $\mathscr X(T)$ consists of pairs $(\mathcal P',a')$, where $\mathcal P'$ is an $\mathcal G^0$-torsor over $T$ and ${a' : \mathcal P'\times^{\mathcal G^0}\mathcal H^0\xrightarrow{\sim}\mathcal P_T}$ an isomorphism of $(\mathcal F/\mathrm Z_{\mathcal F})$-torsors over $T$. This already shows that our construction generalizes the case concentrated in degree $0$. Note that the centrality of $\mathcal F^0$ in $\mathcal G^0$ is necessary for $\mathscr X$ to be bound by $\mathcal F^0$.

Second, we construct the morphism of gerbes $b : \mathscr X\rightarrow\mathrm{TORS}(\mathcal F^1{\rightarrow} \ker\delta^2_{\mathcal F})$. Take any $(\mathcal P',a')$ in $\mathscr X(T)$. We will now assign to it a $\mathcal F^1$-torsor over $T$. Note that there is a $p^{-1}(\ker\delta^1_{\mathcal H})$-torsor $\mathcal P'\times^{\mathcal G^0}p^{-1}(\ker\delta^1_{\mathcal H})$ over $T$. It admits a $p^{-1}(\ker\delta^1_{\mathcal H})$-equivariant map to $\ker\delta^1_{\mathcal H}$, which is given by the following chain of morphisms:
\begin{align*}
\mathcal P'\times^{\mathcal G^0}p^{-1}(\ker\delta^1_{\mathcal H})\xhookrightarrow{\;\;\;\;\;\;\;\;\;\;}
&\left(\mathcal P'\times^{\mathcal G^0}p^{-1}(\ker\delta^1_{\mathcal H})\right)\times^{p^{-1}(\ker\delta^1_{\mathcal H})}\ker\delta^1_{\mathcal H}\\
\cong\;\;
&\left(\mathcal P'\times^{\mathcal G^0}\mathcal H^0\right)\times^{\mathcal H^0}\ker\delta^1_{\mathcal H}
\;\;\xrightarrow{\;\;a'\,\times\, 1\;\;}\;\;
\mathcal P_T\times^{\mathcal H^0}\ker\delta^1_{\mathcal H}
\;\;\xrightarrow{\;\;a_T\;\;}\;\;\ker\delta^1_{\mathcal H}
\end{align*}
The kernel of this map is an $\mathcal F^1$-torsor $\mathcal K$ over $T$. Finally, there is a map $a''$ of $\ker\delta^2_{\mathcal F}\,$-torsors
$$\mathcal K\times^{\mathcal F^1}\ker\delta^2_{\mathcal F}\longrightarrow\left(\mathcal P'\times^{\mathcal G^0}p^{-1}(\ker\delta^1_{\mathcal H})\right)\times^{p^{-1}(\ker\delta^1_{\mathcal H})}\ker\delta^2_{\mathcal F}\xrightarrow{\;\;\sim\;\;}\mathcal P'\times^{\mathcal G^0}\ker\delta^2_{\mathcal F}\;\cong\;\ker\delta^2_{\mathcal F}$$
where the last isomorphism comes from the fact that the map $\mathcal G^0\rightarrow\delta^2_{\mathcal F}$ is trivial. Thus the pair $(\mathcal K, a'')$ determines an object of $\mathrm{TORS}(\mathcal F^1{\rightarrow} \ker\delta^2_{\mathcal F})(T)$ which we assign to $(\mathcal P',a')$. As~this assignment is clearly functorial, we have successfully defined $b$.

Finally, the pair $(\mathscr X,b)$ is the desired $(\mathcal F^0{\rightarrow}\mathcal F^1{\rightarrow} \ker\delta^2_{\mathcal F})$-gerbe. All steps of this construction are clearly functorial in the starting exact sequence.
\end{prf}

\begin{rem}\label{remhypernonab}
Using ideas from the last two propositions, one can generalize the $7$-term exact sequence of Proposition \ref{propnoncommles} to complexes. To prove that the map constructed in Proposition \ref{proptechnoncomm} agrees with the analogous map in Abelian hypercohomology, one can adapt the methods of \cite[Ch. IV, \textsection3.4]{Gir71}. We will not need this.
\end{rem}

We are now ready to finish the proof of Proposition \ref{propdiaglet}. Let $\mathcal I^\bullet$ be a $\pi_*$-acyclic resolution of $\mathrm Z_G$ on the fppf site. Applying $\pi_*$ gives the following short exact sequence of (vertical) central short exact sequences:
\begin{center}\begin{tikzcd}[column sep = large]
& 0\arrow[d] & 0\arrow[d] & 0\arrow[d]\\
0\arrow[r] & \pi_*\mathrm Z_G\arrow[d]\arrow[r] & \pi_*\mathcal I^0\arrow[d]\arrow[r] & \dfrac{\pi_*\mathcal I^0}{\pi_*\mathrm Z_G}\arrow[d]\arrow[r] & 0\\
0\arrow[r] & \pi_*G\arrow[d]\arrow[r] & \pi_*\!\left(\dfrac{\mathcal I^0\times G}{\mathrm Z_G}\right)\arrow[d]\arrow[r] & \pi_*\!\left(\dfrac{\mathcal I^0}{\mathrm Z_G}\right)\arrow[d]\arrow[r] & 0\\
0\arrow[r] & \dfrac{\pi_*G}{\pi_*\mathrm Z_G}\arrow[r]\arrow[d] & \pi_*\!\left(\dfrac{G}{\mathrm Z_G}\right)\arrow[r]\arrow[d] & \mathrm R^1\pi_*\mathrm Z_G\arrow[r]\arrow[d] & 0\\
& 0 & 0 & 0
\end{tikzcd}\end{center}
%
We may immediately deduce the following commutative diagram of complexes of group sheaves:
\begin{center}\begin{tikzcd}[cramped, column sep = 6.8pt]
\lcomp\hspace{-12pt} & 0\arrow[rr] & &  0\arrow[d, line width = 1, shift left = 6, shorten <=3pt, shorten >=3pt]\arrow[rr] & & \mathrm R^1\pi_*\mathrm Z_G & \hspace{-12pt}\rcomp & & & \arrow[lll, "\sim", swap, line width = 1]
\lcomp\hspace{-12pt} & \pi_*\mathrm Z_G\arrow[rr] & & \pi_*\mathcal I^0\arrow[d, line width = 1, shift left = 4, shorten <=3pt, shorten >=3pt]\arrow[rr] & & \pi_*\!\left(\dfrac{\mathcal I^0}{\mathrm Z_G}\right)\! & \hspace{-12pt}\rcomp\arrow[rrr, line width = 1] & & & 
\lcomp\hspace{-12pt} & \pi_*\mathrm Z_G\arrow[rr] & & 0\arrow[d, line width = 1, shift left = -3, shorten <=3pt, shorten >=3pt]\arrow[rr] & & 0 & \hspace{-12pt}\rcomp\\
\lcomp\hspace{-12pt} & 0\arrow[rr] & & \mathrm R^1\pi_*\mathrm Z_G\arrow[d, line width = 1, shift left = 6, shorten <=3pt, shorten >=3pt]\arrow[rr] & & \mathrm R^1\pi_*\mathrm Z_G & \hspace{-12pt}\rcomp & & & \arrow[lll, "\sim", swap, line width = 1]
\lcomp\hspace{-12pt} & \pi_*G\arrow[rr] & & \pi_*\!\left(\dfrac{\mathcal I^0{\times} G}{\mathrm Z_G}\right)\arrow[d, line width = 1, shift left = 4, shorten <=3pt, shorten >=3pt]\arrow[rr] & & \pi_*\!\left(\dfrac{\mathcal I^0}{\mathrm Z_G}\right)\! & \hspace{-12pt}\rcomp\arrow[rrr, line width = 1] & & & 
\lcomp\hspace{-12pt} & \pi_*G\arrow[rr] & & 1\arrow[d, line width = 1, shift left = -3, shorten <=3pt, shorten >=3pt]\arrow[rr] & & 1 & \hspace{-12pt}\rcomp\\
\lcomp\hspace{-12pt} & 0\arrow[rr] & & \mathrm R^1\pi_*\mathrm Z_G\arrow[rr] & & 0 & \hspace{-12pt}\rcomp & & & \arrow[lll, "\sim", swap, line width = 1]
\lcomp\hspace{-12pt} & \dfrac{\pi_*G}{\pi_*\mathrm Z_G}\arrow[rr] & & \pi_*\!\left(\dfrac{G}{\mathrm Z_G}\right)\arrow[rr] & & 1 & \hspace{-12pt}\rcomp\arrow[rrr, line width = 1] & & & 
\lcomp\hspace{-12pt} & \dfrac{\pi_*G}{\pi_*\mathrm Z_G}\arrow[rr] & & 1\arrow[rr] & & 1 & \hspace{-12pt}\rcomp
\end{tikzcd}\end{center}
Here all three columns are central extensions of complexes, while the first two columns are quasi-isomorphic. Using Proposition \ref{proptechnoncomm}, we now find the following commutative diagram in nonabelian hypercohomology:
\begin{center}\begin{tikzcd}
\mathrm{H}^0(\Gamma, \mathrm{H}^1(k_s, \mathrm Z_G))\arrow[d, equal] &
\mathbf{H}^1_\et\!\left(\!k,\, \left[\dfrac{\pi_*G}{\pi_*\mathrm Z_G}\rightarrow\pi_*\!\left(\dfrac{G}{\mathrm Z_G}\right)\right]\right)\arrow[l, "\sim", swap]\arrow[d]\arrow[r] &
\mathrm{H}^1\!\left(\!\Gamma, \dfrac{G(k_s)}{\mathrm Z_G(k_s)}\right)\arrow[d, "\delta"]\\
\mathrm{H}^0(\Gamma, \mathrm{H}^1(k_s, \mathrm Z_G)) &
\mathbf{H}^2_\et\!\left(\!k,\, \left[\pi_*\mathrm Z_G\rightarrow \pi_*\mathcal I^0\rightarrow \pi_*\!\left(\dfrac{\mathcal I^0}{\mathrm Z_G}\right)\right]\right)\arrow[l, "\sim", swap]\arrow[r] &
\mathrm{H}^2(\Gamma, \mathrm Z_G(k_s))
\end{tikzcd}\end{center}
The left-most vertical map is indeed an equality, as can be deduced from its definition (or as in Remark \ref{remhypernonab}). This recovers the left-most square in the diagram \eqref{diagbignoncomm}.

For the two remaining squares in the middle of the diagram, we will consider the following commutative diagram, whose columns are again central extensions of complexes:
\begin{center}\begin{tikzcd}[cramped, column sep = 6.8pt]
\lcomp\hspace{-12pt} & \pi_*\mathrm Z_G\arrow[rr] & & 0\arrow[d, line width = 1, shift left = -8, shorten <=3pt, shorten >=3pt] & \hspace{-12pt}\rcomp\arrow[rrrrr, line width = 1] & & & & & 
\lcomp\hspace{-12pt} & \pi_*\mathcal I^0\arrow[d, line width = 1, shift left = 12, shorten <=3pt, shorten >=3pt]\arrow[rr] & & \pi_*\!\left(\dfrac{\mathcal I^0}{\mathrm Z_G}\right)\! & \hspace{-12pt}\rcomp\arrow[rrrrr, line width = 1] & & & & & 
\lcomp\hspace{-12pt} & 0\arrow[d, line width = 1, shift left = 11, shorten <=3pt, shorten >=3pt]\arrow[rr] & & \mathrm R^1\pi_*\mathrm Z_G & \hspace{-12pt}\rcomp\\
\lcomp\hspace{-12pt} & \pi_*G\arrow[rr] & & 1\arrow[d, line width = 1, shift left = -8, shorten <=3pt, shorten >=3pt] & \hspace{-12pt}\rcomp\arrow[rrrrr, line width = 1] & & & & & 
\lcomp\hspace{-12pt} & \pi_*\!\left(\dfrac{\mathcal I^0{\times} G}{\mathrm Z_G}\right)\arrow[d, line width = 1, shift left = 12, shorten <=3pt, shorten >=3pt]\arrow[rr] & & \pi_*\!\left(\dfrac{\mathcal I^0}{\mathrm Z_G}\right)\! & \hspace{-12pt}\rcomp\arrow[rrrrr, line width = 1] & & & & & 
\lcomp\hspace{-12pt} & \mathrm R^1\pi_*\mathrm Z_G\arrow[d, line width = 1, shift left = 11, shorten <=3pt, shorten >=3pt]\arrow[rr] & & \mathrm R^1\pi_*\mathrm Z_G & \hspace{-12pt}\rcomp\\
\lcomp\hspace{-12pt} & \dfrac{\pi_*G}{\pi_*\mathrm Z_G}\arrow[rr] & & 1 & \hspace{-12pt}\rcomp\arrow[rrrrr, line width = 1] & & & & & 
\lcomp\hspace{-12pt} & \pi_*\!\left(\dfrac{G}{\mathrm Z_G}\right)\arrow[rr] & & 1 & \hspace{-12pt}\rcomp\arrow[rrrrr, line width = 1] & & & & & 
\lcomp\hspace{-12pt} & \mathrm R^1\pi_*\mathrm Z_G\arrow[rr] & & 0 & \hspace{-12pt}\rcomp
\end{tikzcd}\end{center}
Again applying Proposition \ref{proptechnoncomm} we deduce the following:
\begin{center}\begin{tikzcd}
\mathrm{H}^1\!\left(\!\Gamma, \dfrac{G(k_s)}{\mathrm Z_G(k_s)}\right)\arrow[d, "\delta"]\arrow[r] &
\mathrm{H}^1\!\left(\!\Gamma, \dfrac{G}{\mathrm Z_G}(k_s)\!\right)\arrow[d]\arrow[r] &
\mathrm{H}^1(\Gamma, \mathrm{H}^1(k_s, \mathrm Z_G))\arrow[d, equal]\\
\mathrm{H}^2(\Gamma, \mathrm Z_G(k_s))\arrow[r] &
\mathbf{H}^2_\et\!\left(\!k,\, \left[\pi_*\mathcal I^0\rightarrow \pi_*\!\left(\dfrac{\mathcal I^0}{\mathrm Z_G}\right)\right]\right)\arrow[r] &
\mathrm{H}^1(\Gamma, \mathrm{H}^1(k_s, \mathrm Z_G))
\end{tikzcd}\end{center}
To finish, we must replace the middle vertical map by the one in diagram \eqref{diagbignoncomm}. For this, it is enough to show that the following diagram commutes:
\begin{center}\begin{tikzcd}
\mathrm{H}^1\!\left(\!\Gamma, \dfrac{G}{\mathrm Z_G}(k_s)\!\right)\arrow[d]\arrow[r, equal] &
\mathrm{H}^1\!\left(\!\Gamma, \dfrac{G}{\mathrm Z_G}(k_s)\!\right)\arrow[d]\arrow[r, "\sim"] &
\mathrm{H}^1\!\!\left(\! k, \dfrac{G}{\mathrm Z_G}\!\right)\arrow[d]\arrow[r, equal] &
\mathrm{H}^1\!\!\left(\! k, \dfrac{G}{\mathrm Z_G}\!\right)\arrow[d, "\widetilde{\delta}"]\\
\mathbf{H}^2_\et\!\left(\!k,\, \left[\pi_*\mathcal I^0\rightarrow \pi_*\!\left(\dfrac{\mathcal I^0}{\mathrm Z_G}\right)\right]\right)\arrow[r] &
\mathbf{H}^2_\et(k,\pi_*\mathcal I^\bullet)\arrow[r, "\sim"] &
\mathbf{H}^2(k,\mathcal I^\bullet) &
\mathrm{H}^2(k,\mathrm Z_G)\arrow[l, "\sim", swap]
\end{tikzcd}\end{center}
As discussed earlier, all horizontal maps are isomorphisms, including the lower left one (which follows from Proposition \ref{proph2h1coh}). The commutativity of this diagram follows similarly to the previous ones, however for the middle square we must use that the construction in Proposition \ref{proptechnoncomm} is functorial with respect to the canonical maps between \'etale and fppf cohomology (nonabelian in degree $1$), but this is obvious since it was defined uniformly over an arbitrary~site. Finally, we know that the vertical map on the right given by Proposition \ref{proptechnoncomm} (which explicitly generalizes Proposition \ref{propnoncommles}) is indeed the map $\smash{\tilde{\delta}}$ from \eqref{diagbignoncomm}, because the map $\smash{\tilde{\delta}}$ in \eqref{diagbignoncomm} was constructed following Proposition \ref{propnoncommles}.

This argument completes the proof of Proposition \ref{propdiaglet}.

\subsection{Čech Cohomology of Locally Algebraic Bands}\label{ssectcech}
Let $\mathcal C$ again denote one of the sites $k_\Et$ and $k_\fppf$. When $\mathcal C$ is the \'etale site of $k$, the set $\mathrm{H}^2(k, L)$ for a band $L$ on $\mathcal C$ can be given a definition in terms of Čech cocycles (see \cite{FSS98} or \cite{DLA19}). Implicitly, this rests on the fact that every \'etale-locally representable band on $\mathcal C$ is nicely representable (see Definition \ref{defetlocrep}), as suggested by the following remark:

\begin{rem}\label{rembreen}
Given a representative triple $(T{\rightarrow} S,\mathcal F,\varphi)$, it is not in general nice: that is, $\varphi$ exists as an isomorphism only over some covering $R\rightarrow T\times_S T$. This prevents us from defining Čech cocycles valued in this band over $T\rightarrow S$, as the descent datum (defined over $T\times_S T$ and not just locally over a covering) must appear in their definition; see below.
One solution to this problem is to consider cocycles more general than Čech cocycles (see \cite[\textsection2.2, in particular p.\ 38]{Bre94}) which incorporate ``iterated'' coverings of $T\times_S T$. They can also be formulated using the notion of hypercoverings. However, this description is then simply a reformulation which ultimately gives, completely formally, the same $\mathrm{H}^2$ set as the one defined using gerbes.
\end{rem}

We will instead show that, when a band $L$ on $\mathcal C$ is locally algebraic (thus in particular nicely representable; see Example \ref{exmpsmoothisnice}), its Čech cohomology is definable and in canonical bijection with the $\mathrm{H}^2$ set of gerbes. In fact, our proof does not strictly require that $L$ is locally algebraic, only that it is nicely representable and that the center $\mathrm Z_L$ is an locally algebraic group over $k$.
Note that this bijectivity is a nontrivial property even when the band is globally represented by a commutative algebraic group (it is classical, however: see the following lemma) and it fails for general commutative group sheaves. The essential reason for this was noted in Definition~\ref{defh2}: gerbes, much like representative triples, are not ``nice'' in general.


\begin{lem}\label{lemrosentensfield}
Let $Z$ be a locally algebraic commutative group over a field $k$. For~each~$n\geq 0$:
\begin{enumerate}[\hspace{0.8 cm}(a)]
    \item $\mathrm{H}^1(\overline k{^{\,\otimes_k (n+1)}}, Z) = 0$, where $\overline k{^{\,\otimes_k n}}$ denotes the $n$-fold tensor product of $\overline k$ over $k$
    \item The natural map $\mathrm{\check H}^n(k, Z)\rightarrow\mathrm{H}^n(k, Z)$ is an isomorphism
\end{enumerate}
\end{lem}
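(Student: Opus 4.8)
The plan is to prove part (a) in the stronger form $\mathrm H^q(\overline k^{\otimes_k(n+1)},Z)=0$ for \emph{all} $q\ge 1$, and then to deduce (b) formally from it by means of the Čech-to-derived spectral sequence. Essentially all of the work goes into (a), and passing to the colimit over finite extensions will be essential (see the last paragraph for why).

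For (a), I would set $A_n:=\overline k^{\otimes_k(n+1)}$ and first record three facts about this ring. Writing $A_n=\varinjlim_K\bigl(\overline k\otimes_k K^{\otimes_k n}\bigr)$ over the finite subextensions $K$ of $\overline k/k$, each $\overline k\otimes_k K^{\otimes_k n}$ is a finite-dimensional $\overline k$-algebra, hence Artinian with all residue fields equal to $\overline k$; so $A_n$ is a filtered colimit of Artinian $\overline k$-algebras with residue fields $\overline k$. Secondly, the Frobenius endomorphism of $A_n$ is surjective: its image is a subring (additivity of $x\mapsto x^p$ in characteristic $p$) containing each of the $n+1$ copies of the perfect field $\overline k$, hence is all of $A_n$. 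Thirdly, $\mathrm{Pic}(A_n)=0$, since $\mathrm{Pic}$ vanishes on Artinian rings and commutes with filtered colimits. I would then reduce to $Z$ of finite type (filtered union of finite-type subgroups, using that fppf cohomology over the affine scheme $\Spec A_n$ commutes with the relevant filtered colimits) and replace $Z$ by $Z_{\overline k}$, which is legitimate because $A_n$ is a $\overline k$-algebra; so we may assume $Z$ is a commutative algebraic group over the algebraically closed field $\overline k$. Over such a field $Z$ admits a finite filtration whose subquotients lie among $\mathbb G_a$, $\mathbb G_m$, $\mu_p$, $\alpha_p$, finite étale groups and abelian varieties (identity component, its reduced subgroup — a subgroup since $\overline k$ is perfect — Chevalley's theorem, and the structure of smooth affine and of infinitesimal commutative groups), and forms over $k$ of these become trivial after base change to the $\overline k$-algebra $A_n$. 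By the long exact cohomology sequences it then remains to see $\mathrm H^q(A_n,G)=0$ for $q\ge1$ for each such $G$: for $\mathbb G_a$ this is vanishing of quasi-coherent cohomology on an affine; for $\mathbb G_m$ one uses $\mathrm H^q(A_n,\mathbb G_m)=\mathrm H^q_\et(A_n,\mathbb G_m)$, invariance of the étale site under nilpotent thickenings, $\mathrm{Pic}(A_n)=0$, and separable closedness of $\overline k$; for $\mu_p$ and $\alpha_p$ the Kummer and Frobenius sequences reduce to surjectivity of $x\mapsto x^p$ on $A_n$ and $\mathrm{Pic}(A_n)=0$, the degrees $\ge2$ being squeezed between vanishing $\mathbb G_m$- resp.\ $\mathbb G_a$-cohomology; and for finite étale groups and abelian varieties one uses smoothness to pass to the étale site and then the residue-field argument again.

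For part (b), I would first observe that the fppf coverings $\mathfrak U_K:=\{\Spec K\to\Spec k\}$ with $K/k$ finite are cofinal among single-morphism fppf covers of $\Spec k$: any $\{\Spec B\to\Spec k\}$ with $B$ finitely presented faithfully flat has a closed point with residue field finite over $k$ by Zariski's lemma, which gives a refining morphism; hence $\mathrm{\check H}^n(k,Z)=\varinjlim_K\mathrm{\check H}^n(\mathfrak U_K,Z)$. For each $K$ there is the Čech-to-derived spectral sequence $E_1^{p,q}(\mathfrak U_K)=\mathrm H^q(K^{\otimes_k(p+1)},Z)\Rightarrow\mathrm H^{p+q}(k,Z)$ whose edge map $E_2^{n,0}(\mathfrak U_K)=\mathrm{\check H}^n(\mathfrak U_K,Z)\to\mathrm H^n(k,Z)$ is the natural comparison map. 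Passing to the filtered colimit over $K$ (exact, compatible with the cohomology of complexes) yields a spectral sequence with $E_1^{p,q}=\varinjlim_K\mathrm H^q(K^{\otimes_k(p+1)},Z)=\mathrm H^q(\overline k^{\otimes_k(p+1)},Z)$, $E_2^{p,0}=\mathrm{\check H}^p(k,Z)$, abutting to $\mathrm H^{p+q}(k,Z)$, and with edge map the natural map $\mathrm{\check H}^p(k,Z)\to\mathrm H^p(k,Z)$. By the strong form of (a) all rows $q\ge1$ vanish, so the spectral sequence degenerates at $E_2$ and these edge maps are isomorphisms.

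The hard part will be (a) for non-smooth $Z$ over an imperfect field: at a fixed finite level the ring $\overline k\otimes_k K^{\otimes_k n}$ carries nilpotents and, e.g., $\mathrm H^1(\overline k\otimes_k K,\mu_p)$ need not vanish, so one genuinely has to pass to the colimit $\overline k^{\otimes_k(n+1)}$, where \emph{all} tensor factors are the perfect field $\overline k$ — which is precisely what forces Frobenius to be surjective and all residue fields to be separably closed. Packaging this with the reduction to $Z_{\overline k}$, so that only the classical structure theory over an algebraically closed field is used, is what makes everything close up; the one technical point to state carefully is the compatibility of fppf cohomology with the filtered colimits of coefficient groups and of base rings used above, which is standard but should be cited precisely.
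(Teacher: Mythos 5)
The paper does not actually prove this lemma: its ``proof'' is the citation \cite[Prop.\ 2.9.5, 2.9.6]{RosTD} (with a pointer to Milne). Your argument is, in substance, a reconstruction of the proof in that reference, and it is correct in all essentials: the d\'evissage of $Z_{\overline k}$ into $\mathbf G_{\mathrm a}$, $\mathbf G_{\mathrm m}$, $\mu_p$, $\alpha_p$, \'etale groups and abelian varieties; the observation that because \emph{every} tensor factor of $\overline k^{\,\otimes_k(n+1)}$ is the perfect field $\overline k$, Frobenius is surjective on the colimit and all residue fields of the finite-level Artinian approximations $\overline k\otimes_k K^{\otimes_k n}$ are separably closed; and the degeneration of the colimit of Čech-to-derived spectral sequences. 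You also correctly prove the stronger vanishing $\mathrm H^q=0$ for all $q\ge 1$, which is what (b) actually needs, and you correctly identify that the passage to the colimit is genuinely necessary for non-smooth coefficients such as $\mu_p$.

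One reduction step is stated incorrectly: a locally algebraic commutative group is \emph{not} in general a filtered union of its finite-type subgroups (the constant group $\underline{\mathbf Z}$ has no nonzero finite-type subgroups), so you cannot reduce to the finite-type case that way. The repair is immediate: use the connected--\'etale sequence $0\to Z^0\to Z\to\pi_0(Z)\to 0$, where $Z^0$ is of finite type, and dispose of the \'etale quotient directly --- over $\overline k$ it is a constant group scheme, and each finite-level ring $\overline k\otimes_k K^{\otimes_k n}$ is a finite product of Artinian local rings with separably closed residue field, hence of strictly henselian local rings, on which every abelian sheaf has vanishing higher cohomology; then pass to the colimit. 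This is the same ``residue-field argument'' you already invoke for finite \'etale groups and abelian varieties, and it requires no finiteness of the component group (it also cleans up the $\mathbf G_{\mathrm m}$ case without any appeal to nilpotent-invariance of the \'etale site, which by itself does not transport the sheaf $\mathbf G_{\mathrm m}$ correctly). With that adjustment the proof closes up.
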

\begin{prf}
These are \cite[Prop.\ 2.9.5, 2.9.6]{RosTD}. See also \cite[III, Prop.\ 6.1]{Mil06}.
\end{prf}

This lemma will be crucial in interpreting the following definition.

\begin{defn}\label{defh2cech}
Let $L$ be a nicely representable band on $\mathcal C$. Then any representative triple of $L$ can be refined into a nice triple $(k'/k,\mathcal F,\varphi)$, where $k'/k$ is a finite extension of fields. We will write $\mathrm{\check Z}^2(k'/k,\mathcal F,\varphi)$ for the set of \textit{cocycles} $(f,g)$, where $f\in\mathcal{Isom}_{\mathrm{pr}_1^*\mathcal F,\,\mathrm{pr}_2^*\mathcal F}(k'\otimes_k k')$ and $g\in\mathcal F(k'\otimes_k k'\otimes_k k')$ are such that:
\begin{itemize}
    \item $\varphi$ is the image of $f$ in $\mathcal{Out}_{\mathrm{pr}_1^*\mathcal F,\,\mathrm{pr}_2^*\mathcal F}(k'\otimes_k k')$
    \item $(\mathrm{pr}_{13}^*f)^{-1}\circ(\mathrm{pr}_{23}^*f)\circ(\mathrm{pr}_{12}^*f)$ is the image of $g^{-1}$ in $(\mathcal F/\mathrm Z_{\mathcal F})(k'\otimes_k k'\otimes_k k')$
    \item $(\mathrm{pr}^*_{12}f)^{-1}(\mathrm{pr}^*_{234}g)\cdot\mathrm{pr}^*_{124}g = \mathrm{pr}^*_{123}g\cdot\mathrm{pr}^*_{134}g$ holds in $\mathcal F(k'\otimes_k k'\otimes_k k'\otimes_k k')$
\end{itemize}
Two cocycles $(f,g)$ and $(f',g')$ are called equivalent if there exists an element $h\in\mathcal F(k'\otimes_k k')$ such that:
\begin{itemize}
    \item $f' = f\circ\mathrm{int}(h)^{-1}$ holds in $\mathcal{Isom}_{\mathrm{pr}_1^*\mathcal F,\,\mathrm{pr}_2^*\mathcal F}(k'\otimes_k k')$
    \item $g'\cdot\mathrm{pr}^*_{13}h\cdot g^{-1} = \mathrm{pr}^*_{12}h\cdot (\mathrm{pr}^*_{12}f)^{-1}(\mathrm{pr}^*_{23}h)$ holds in $\mathcal F(k'\otimes_k k'\otimes_k k')$
\end{itemize}
The set of equivalence classes in $\mathrm{\check Z}^2(k'/k,\mathcal F,\varphi)$ will be denoted by $\mathrm{\check H}^2(k'/k,\mathcal F,\varphi)$. As finite normal extensions $k'/k$ form a directed set (with unique maps), we may form a direct limit in the usual way, denoted by $\mathrm{\check H}^2(k,L)$ and called the \textit{second Čech cohomology set} of $L$. A class in $\mathrm{\check H}^2(k,L)$ is called \textit{neutral} if it can be represented by a cocycle of the form $(f,1)$.

As the name suggests, this set is canonically preserved by isomorphisms of bands. After this section, the notation $\mathrm{\check H}^2$ will be dropped for the more common $\mathrm{H}^2$, as the two sets will be identified in all situations.
\end{defn}

\begin{rem}\label{remetbandh2def}
When $\mathcal C$ is the \'etale site and $\mathcal F = \Gb$ is an algebraic group, the above definition agrees with the one given in \cite{FSS98} and \cite{DLA19}. The key point is that, given a $K$-form $G_0$ of $\Gb$ for $K/k$ finite separable, the maps $\varinjlim G_0(K')\rightarrow G_0(k_s)$ and ${\varinjlim\mathcal{Aut}_{G_0}(K')\rightarrow\mathcal{Aut}_{G_0}(k_s)}$ are isomorphisms, where the limits are taken over all finite separable extensions $K'/K$ in $k_s$. This allows us to replace $f,g$ with their continuous variants over $k_s$:

Let $L = (\Gb,\kappa)$ be an \'etale algebraic band. We write $\mathrm{\check Z}^2(k,L)$ for the set of \textit{cocycles} $(f,g)$, where $f : \Gamma\rightarrow\saut(\Gb/k)$ is a continuous lift of $\kappa$ and $g : \Gamma\times\Gamma\rightarrow\Gb(k_s)$ is a locally constant function such that, for each $s,t,u\in\Gamma$, the following identities hold:
\begin{itemize}
    \item $f_s\circ f_t = \mathrm{int}(g_{s,t})\circ f_{st}$
    \item $f_s(g_{t,u})\cdot g_{s,tu} = g_{s,t}\cdot g_{st,u}$
\end{itemize}
Two cocycles $(f,g)$ and $(f',g')$ are called equivalent if there exists a locally constant function $h : \Gamma\rightarrow\Gb(k_s)$ such that:
\begin{itemize}
    \item $f'_s = \mathrm{int}(h_s)\circ f_s$
    \item $g'_{s,t}\cdot h_{st}\cdot g_{s,t}^{-1} = h_s\cdot f_s(h_t)$
\end{itemize}
The set of equivalence classes in $\mathrm{\check Z}^2(k,L)$ is canonically bijective to the set $\mathrm{\check H}^2(k,L)$ constructed above. Note that by our conventions of writing descent maps throughout this paper, all of the semiautomorphisms $f_s$ point in the opposite direction from the isomorphism $f$ in the previous definition.
\end{rem}

\begin{exmp}
Let $L$ be a band on $\mathcal C$ which is nicely represented over some $k'/k$. If $\mathcal C$ is the fppf site, assume furthermore that $\mathrm Z_L$ is a locally algebraic group over $k$. Let $f$ be a fixed lift of $\varphi$ to $\mathcal{Isom}_{\mathrm{pr}_1^*\mathcal F,\,\mathrm{pr}_2^*\mathcal F}(k'\otimes_k k')$. We claim that any class in $\mathrm{\check H}^2(k,L)$ is represented by a cocycle of the form $(f,g)$ for some $g$ (up to possibly enlarging $k'/k$):

Indeed, let $(f',g')$ be an arbitrary cocycle, which we can assume to be in $\mathrm{\check Z}^2(k'/k,\mathcal F,\varphi)$. Then $f^{-1}\circ f'\in(\mathcal F/\mathrm Z_{\mathcal F})(k'\otimes_k k')$. Up to enlarging $k'$, we may pick $h\in\mathcal F(k'\otimes_k k')$ such that $f^{-1}\circ f' = \mathrm{int}(h)^{-1}$. Indeed, this is trivial over the \'etale site, as $k'\otimes_k k' = \prod_{\Gal(k'/k)} k'$ for finite Galois $k'/k$. Over the fppf site, this is a consequence of $\mathrm{H}^1(\overline{k}\otimes_k\overline{k}, \mathrm Z_L) = 0$, by Lemma \ref{lemrosentensfield}(a).
Then the definition of equivalence of cocycles gives us $g$ in terms of $g'$ and $h$. We may check that the pair $(f,g)$ is now a cocycle, necessarily equivalent to $(f',g')$. 
The same is clearly true for the Galois-theoretic description of cocycles.

We also note that a band $L$ is globally representable if and only if it admits a neutral class. Then the cocycle $(f,1)$ represents a choice of descent data $f$, which is not unique (the neutral classes parametrize global representatives of $L$). By the above discussion, every class $[f,g]$~of~$L$ can then be represented by this $f$ and some $g$ with $\mathrm{int}(g) = 1$, that is: $g\in\mathrm{Z}_L(k'\otimes_k k')$. 

When $L = (\Gb,\kappa)$ is an \'etale band, take a continuous lift $f : \Gamma\rightarrow\saut(\mathcal F/k)$ of $\kappa$ and a function $h : \Gamma\rightarrow (\mathcal F/\mathrm Z_{\mathcal F})(k_s)$. Then the lift $f' = hf$ is continuous if and only if $h$ is locally constant, and every continuous lift $f'$ of $\kappa$ arises in this way. In particular, $(\mathcal F,\kappa)$ is globally representable if and only if there exists a locally constant $h$ such that $hf$ is a homomorphism. 
\end{exmp}

We do not attempt to introduce Galois-theoretic cocycles for separable bands, as even a smooth separable band $L$ can have a nonsmooth center $\mathrm Z_L$ (for example, $\mathrm{Z}_{\mathrm{GL}_p} = \mu_p$) whose fppf cohomology cannot be calculated on the \'etale site. In particular, we now assume that $\mathcal C$ is the fppf site of $k$ and we work only with (fppf) cocycles in the sense of Definition \ref{defh2cech} with values in a nicely representable band $L$ such that $\mathrm Z_L$ is a locally algebraic group on $k$.

\begin{defn}\label{defcechtogerbe}
We define the map $\mathrm{\check H}^2(k, L)\rightarrow\mathrm{H}^2(k, L)$: Take a pair $(f,g)\in\mathrm{\check Z}^2(k'/k,\mathcal F,\varphi)$ and consider the trivial gerbe $\mathrm{TORS}(\mathcal F)$ on $k'_\fppf$. We will now show that $(f,g)$ defines $2$-descent data on $\mathrm{TORS}(\mathcal F)$ in the $2$-stack $\mathrm{STACK}_k$ of stacks on $k$. There is an equivalence of gerbes
$$\widetilde{f}\; :\; \mathrm{pr}_1^*\,\mathrm{TORS}(\mathcal F)\cong \mathrm{TORS}(\mathrm{pr}_1^*\mathcal F)\xrightarrow{\;\;\mathrm{TORS}(f)\;\;}\mathrm{TORS}(\mathrm{pr}_2^*\mathcal F)\cong\mathrm{pr}_2^*\,\mathrm{TORS}(\mathcal F)$$
over $k'\otimes_k k'$ induced by $f$. Now, any element $g'\in\mathcal F(k'\otimes_k k'\otimes_k k')$ such that
\begin{equation}\label{eqconjcocyc}
(\mathrm{pr}_{12}^*f)^{-1}\circ(\mathrm{pr}_{23}^*f)^{-1} = \mathrm{int}(g')\circ(\mathrm{pr}_{13}^*f)^{-1}
\end{equation}
holds defines a natural transformation $\widetilde{g}'$ in the following diagram:
\begin{center}\begin{tikzcd}[column sep = huge]
\mathrm{pr}_{13}^*\mathrm{pr}_1^*\,\mathrm{TORS}(\mathcal F) & &
\mathrm{pr}_{13}^*\mathrm{pr}_2^*\,\mathrm{TORS}(\mathcal F)
\arrow[ll, "\;\;\;\;\;\;\;\;\;\;\;\;\;\;\;\;\;\;\;\;(\mathrm{pr}_{12}^*\widetilde{f})^{-1}\circ\,(\mathrm{pr}_{23}^*\widetilde{f})^{-1}"{name = Dw}, bend left = 10]
\arrow[ll, "(\mathrm{pr}_{13}^*\widetilde{f})^{-1}\;\;\;\;\;\;\;\;\;\;\;\;\;\;\;\;\;\;\;\;"{name = Up}, bend right = 10, swap]
\arrow[shorten <=5pt, shorten >=5pt, Rightarrow, from = Up, to = Dw, "\widetilde{g}'"]
\end{tikzcd}\end{center}
Indeed, given an object $X$ in a fiber of $\mathrm{pr}_{13}^*\mathrm{pr}_2^*\,\mathrm{TORS}(\mathcal F)$, there is a well-defined, natural map
$$\widetilde{g}'_X\;:\;X\times^{\mathrm{pr}_{13}^*\mathrm{pr}_2^*\mathcal F}_{(\mathrm{pr}_{13}^*\widetilde{f})^{-1}}\mathrm{pr}_{13}^*\mathrm{pr}_1^*\mathcal F\xrightarrow{\;\;\;\;\;\;\;\;\;\;} X\times^{\mathrm{pr}_{13}^*\mathrm{pr}_2^*\mathcal F}_{(\mathrm{pr}_{12}^*\widetilde{f})^{-1}\circ\,(\mathrm{pr}_{23}^*\widetilde{f})^{-1}}\mathrm{pr}_{13}^*\mathrm{pr}_1^*\mathcal F$$
of $\mathrm{pr}_{13}^*\mathrm{pr}_1^*\mathcal F$-torsors (where $X\times^{\mathcal G}_\alpha\mathcal H$ denotes the ``change of structure'' of $X$ along $\alpha : \mathcal G\rightarrow\mathcal H$) given by taking $[x,a]$ to $[x,g'a]$. The identity \eqref{eqconjcocyc} ensures that this assignment is well-defined on the classes $[x,a] = [xb,(\mathrm{pr}_{13}^*\widetilde{f}^{-1}(b))^{-1}a]$. This statement is essentially an application of the much more succinct \cite[Lemma 1.5(i)]{Bre94}; note that our arrows $f$ point in a different direction from the arrows $\lambda,\mu$ in Breen's paper.

In particular, when $g' = g$, the cocycle condition of $g$ ensures that $(\widetilde{f},\widetilde{g})$ is a $2$-descent datum in $\mathrm{STACK}_k$ and that, by gluing, we have constructed a gerbe $\mathscr X$ on $k$ (see \cite[\textsection 2.6]{Bre94}). This gluing commutes with the descent datum $f$ of $\mathcal F$, in the sense that $\mathscr X$ is canonically bounded by $L$. The class $[\mathscr X]\in\mathrm{H}^2(k, L)$ is independent of the chosen cocycle (and of refining $k'/k$), since if $(f,g)\sim (f',g')$ via $h$, then $\mathrm{TORS}(f)$ and $\mathrm{TORS}(f')$ are related by a natural transformation $\widetilde{h}$ (analogous to the above) which satisfies a cocycle condition with respect to $\widetilde{g},\widetilde{g}'$ following from the definition of cocycle equivalence. Therefore the identity functor $\mathrm{id}_{\mathrm{TORS}(\mathcal F)}$ descends to an isomorphism of resulting gerbes $\mathscr X\simeq\mathscr X'$ (which is non-unique, it depends on $h$). A similar argument is made more explicit in the proof of Proposition \ref{proph2actcomm} below.
\end{defn}

\begin{rem}
The above constructed map restricts to a bijection between neutral classes in $\mathrm{\check H}^2(k, L)$ and in $\mathrm{H}^2(k, L)$. Indeed, both subsets are parametrized by global representatives $\mathcal F_0$ of $L$ over $k$ (up to pure inner forms) and it's straightforward to see that the class of $\mathcal F_0$ in $\mathrm{\check H}^2(k, L)$ maps to $[\mathrm{TORS}(\mathcal F_0)]\in\mathrm{H}^2(k, L)$.

Furthermore, suppose that $L$ is the band of a commutative group sheaf $\mathcal F$ on $k$. Then $f$ is uniquely determined, so we only write $g$ for the cocycle $(f,g)$. In \cite[IV, \textsection3.5]{Gir71} it is shown that, if we start from a cocycle $g\in\mathrm{\check Z}^2(k'/k,\mathcal F)$ corresponding to a descent datum as above, which defines $\mathscr X$ as a descent of some trivial gerbe, then the image of $[g]$ along the canonical map $\mathrm{\check H}^2(k, \mathcal F)\rightarrow\mathrm{H}^2(k, \mathcal F)$ is again the class of $\mathscr X$. In particular, our construction agrees with the usual Čech-to-derived homomorphism when $L = L(\mathcal F)$.
\end{rem}

Next, we claim that there is a natural action of $\mathrm{\check H}^2(k,\mathrm Z_L)$ on $\mathrm{\check H}^2(k,L)$ via $[g_0].[f,g] = [f,g_0g]$, which is free and transitive when $\mathrm{\check H}^2(k, L)\neq\varnothing$. Recall that an analogous action was defined for the gerbe-theoretic $\mathrm{H}^2$ set in Proposition \ref{propcentacth2}. 

Indeed, this is a clearly well-defined action on cocycles. Moreover, observe that $g_0 = \mathrm dh$~if~and only if there is an equivalence $(f,g)\sim(f,g_0g)$ through $h\in\mathcal F(k'\otimes_k k')$ (up to enlarging $k'$; in fact, then automatically $h\in\mathrm Z_{\mathcal F}(k'\otimes_k k')$ since $\mathrm{int}(h) = f^{-1}f$). Therefore, when $\mathrm{\check H}^2(k, L)\neq\varnothing$, this action descends to a free action of $\mathrm{\check H}^2(k,\mathrm Z_L)$ on $\mathrm{\check H}^2(k,L)$. Finally, as it was already shown that any two given classes can be written as $[f,g]$ and $[f,g']$ for the same $f$, it suffices to note that then $\mathrm{int}(g) = \mathrm{int}(g')$ and that $g_0 = g'g^{-1}\in\mathrm Z_{\mathcal F}(k'\otimes_k k'\otimes_k k')$ is a cocycle.

\begin{prop}\label{proph2actcomm}
The following diagram of actions commutes:
\begin{center}\begin{tikzcd}[column sep = tiny]
    \mathrm{\check H}^2(k,\mathrm Z_L)\arrow[d]
\hspace{-5pt}&\hspace{-10pt}{\times}\hspace{-10pt}&\hspace{-5pt}
    \mathrm{\check H}^2(k, L)\arrow[d]
    \arrow[rrrrrrrr]
    &&&&&&&& \mathrm{\check H}^2(k, L)\arrow[d]\\
    \mathrm{H}^2(k,\mathrm Z_L)
\hspace{-5pt}&\hspace{-10pt}{\times}\hspace{-10pt}&\hspace{-5pt}
    \mathrm{H}^2(k, L)
    \arrow[rrrrrrrr]
    &&&&&&&& \mathrm{H}^2(k, L)
\end{tikzcd}\end{center}
\end{prop}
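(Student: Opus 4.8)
The plan is to evaluate both composites in the square explicitly, using the description of the vertical maps through gluing of $2$-descent data in the $2$-stack $\mathrm{STACK}_k$, and then to exhibit a canonical equivalence of the two resulting gerbes which is compatible with the bounding by $L$. Fix a finite normal $k'/k$ over which $L$ is nicely represented by a triple $(k'/k,\mathcal F,\varphi)$, fix a lift $f$ of $\varphi$, and recall from the construction of the comparison map above that $[f,g]\in\mathrm{\check H}^2(k,L)$ is sent to the gerbe $\mathscr X_{(f,g)}$ glued from $\mathrm{TORS}(\mathcal F)$ over $k'$ along the $2$-descent datum $(\widetilde f,\widetilde g)$. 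Applying the same recipe to the (globally representable) band $L(\mathrm Z_L)$, which over $k'$ is represented by $\mathrm Z_{\mathcal F}$ with descent datum the restriction of $\varphi$ (an honest isomorphism, since inner automorphisms fix the center), a cocycle $g_0$ is sent to the gerbe $\mathscr Z_{g_0}$ glued from $\mathrm{TORS}(\mathrm Z_{\mathcal F})$ over $k'$, with $1$-descent datum induced by $f$ restricted to the center and $2$-descent part $\widetilde{g_0}$; as noted just after that construction, $\mathscr Z_{g_0}$ is the image of $[g_0]$ under the classical Čech-to-derived map $\mathrm{\check H}^2(k,\mathrm Z_L)\to\mathrm H^2(k,\mathrm Z_L)$. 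With this notation, the two composites to be identified are $[g_0]\cdot[f,g]\mapsto\mathscr X_{(f,\,g_0g)}$ on one hand and $\mathscr Z_{g_0}\wedge^{\mathrm Z_L}\mathscr X_{(f,g)}$ on the other, where $\wedge^{\mathrm Z_L}$ is the contracted product implementing the action of Proposition \ref{propcentacth2}.

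The core step is to compute this contracted product in terms of $2$-descent data. Restricting $\mathscr Z_{g_0}\wedge^{\mathrm Z_L}\mathscr X_{(f,g)}$ to $k'_\fppf$ yields $\mathrm{TORS}(\mathrm Z_{\mathcal F})\wedge^{\mathrm Z_{\mathcal F}}\mathrm{TORS}(\mathcal F)$, and the central inclusion $\mathrm Z_{\mathcal F}\hookrightarrow\mathcal F$ furnishes a canonical equivalence of this gerbe with $\mathrm{TORS}(\mathcal F)$, sending a pair consisting of a $\mathrm Z_{\mathcal F}$-torsor $P$ and an $\mathcal F$-torsor $Q$ to $P\times^{\mathrm Z_{\mathcal F}}Q$; this is the standard fact that $\mathrm{TORS}$ of the center is the neutral element for the action, implicit in \cite[IV, \textsection2.4]{Gir71}. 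Under this equivalence the $1$-descent datum of the contracted product becomes $\widetilde f$ (since the $\mathrm Z_{\mathcal F}$-part of the datum is by construction the restriction of $\widetilde f$), and its $2$-descent datum becomes $\widetilde{g_0}\cdot\widetilde g$: the natural transformation attached to $g$ is, fibrewise, the change-of-structure map $[x,a]\mapsto[x,ga]$ in the style of \cite[Lemma 1.5(i)]{Bre94}, the one attached to $g_0$ is the analogous map on $\mathrm Z_{\mathcal F}$-torsors, and composing them through $P\times^{\mathrm Z_{\mathcal F}}Q$ produces exactly the transformation attached to the product cocycle $g_0g$ — centrality of $\mathrm Z_{\mathcal F}$ in $\mathcal F$ being precisely what makes this composite well-defined and what forces the cocycle identity for $g_0g$ to follow from those for $g_0$ and $g$. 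Hence $\mathscr Z_{g_0}\wedge^{\mathrm Z_L}\mathscr X_{(f,g)}$ and $\mathscr X_{(f,\,g_0g)}$ are both obtained by gluing $\mathrm{TORS}(\mathcal F)$ over $k'$ along the $2$-descent datum $(\widetilde f,\widetilde{g_0g})$, so they are canonically equivalent as $L$-gerbes; independence of the choices of $k'$, of $f$, and of cocycle representatives is inherited from the corresponding independence statements already established for the vertical maps and for both actions.

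I expect the main obstacle to be exactly this translation of Giraud's abstract contracted product $\wedge^{\mathrm Z_L}$ of gerbes into the concrete ``multiply the Čech $2$-cocycles'' recipe, i.e. checking that the gerbe glued from $\mathrm{TORS}(\mathrm Z_{\mathcal F})\wedge^{\mathrm Z_{\mathcal F}}\mathrm{TORS}(\mathcal F)$ along the contracted $2$-descent datum is literally $\mathscr X_{(f,\,g_0g)}$. I would carry this out by repeating, one categorical level up, the bookkeeping already used to construct the comparison map: writing both natural transformations as explicit change-of-structure maps, transporting them across the equivalence induced by $\mathrm Z_{\mathcal F}\hookrightarrow\mathcal F$, and computing the composite by direct substitution, so that the cocycle and equivalence identities fall out of those for $g_0$ and $g$. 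This is of the same nature as, and no harder than, the construction of $\mathrm{\check H}^2(k,L)\to\mathrm H^2(k,L)$ itself — indeed the phrase ``a similar argument is made more explicit in the proof of Proposition \ref{proph2actcomm}'' in the preceding text refers to precisely this computation. As a fallback, one may instead invoke functoriality of the comparison map with respect to the central morphism $\mathrm Z_{\mathcal F}\hookrightarrow\mathcal F$ and to the morphism of sites, reducing commutativity of the square to the compatibility of the classical Čech contracted pairing with the gerbe-theoretic one, which is \cite[IV, \textsection2.4]{Gir71}.
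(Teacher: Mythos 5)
Your proposal is correct and follows essentially the same route as the paper: restrict the contracted product to $k'$, trivialize it via $[P_0,P]\mapsto P_0\times^{\mathrm Z_{\mathcal F}}P$, and descend that trivialization by checking that the induced $2$-morphism $\omega$ satisfies the cocycle identity over $k'\otimes_k k'\otimes_k k'$ relating $\widetilde{g_0}\wedge\widetilde g$ to $\widetilde{g_0g}$. The paper carries out exactly the explicit change-of-structure computation you flag as the main obstacle, so the only remaining work is the element-level verification you already describe how to perform.
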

\begin{prf}
Fix some $k'/k$ and respective cocycles $g_0$ and $(f,g)$ defined over $k'$. By the above-given $2$-descent procedure, $g_0$ defines a $\mathrm Z_L$-gerbe $\mathscr X_0$ and $(f,g)$ defines an $L$-gerbe $\mathscr X$. The bottom action is defined by the contracted product $(\mathscr X_0,\mathscr X)\mapsto\mathscr X_0\wedge^{\mathrm Z_L}\mathscr X$. Over $k'$, this contracted product admits a trivialization (as $\mathscr X_0$, $\mathscr X$ are trivial over $k'$ by construction)
$$T \;:\; \mathrm{TORS}(\mathrm Z_{\mathcal F})\wedge^{\mathrm Z_{\mathcal F}}\mathrm{TORS}(\mathcal F)\xrightarrow{\;\;\;\sim\;\;\;}\mathrm{TORS}(\mathcal F)$$
by mapping the class of a pair $[P_0,P]$ of torsors to the torsor $P_0\times^{\mathrm Z_{\mathcal F}}P$ of $\mathcal F$. This product makes sense since $\mathrm Z_{\mathcal F}\hookrightarrow\mathcal F$ is central and the right action of $\mathrm Z_{\mathcal F}$ on $P$ is equivalently a left action, $z.p\coloneqq p.z$.
We need to descend this trivialization (by $(f,g_0),(f,g)$ on the left, and $(f,g_0g)$ on the right) to an equivalence of gerbes on $k$. Consider the following diagram of functors
\begin{center}\begin{tikzcd}
\mathrm{pr}_1^*\mathrm{TORS}(\mathrm Z_{\mathcal F})\wedge^{\mathrm{pr}_1^*\mathrm Z_{\mathcal F}}\mathrm{pr}_1^*\mathrm{TORS}(\mathcal F)\arrow[r, "\mathrm{pr}_1^*T"] & \mathrm{pr}_1^*\mathrm{TORS}(\mathcal F)\\
\mathrm{pr}_2^*\mathrm{TORS}(\mathrm Z_{\mathcal F})\wedge^{\mathrm{pr}_2^*\mathrm Z_{\mathcal F}}\mathrm{pr}_2^*\mathrm{TORS}(\mathcal F)\arrow[u, "\widetilde{f}^{-1}\wedge\widetilde{f}^{-1}"]\arrow[r, "\mathrm{pr}_2^*T"] & \mathrm{pr}_2^*\mathrm{TORS}(\mathcal F)\arrow[u, "\widetilde{f}^{-1}"]
\end{tikzcd}\end{center}
for which there is a natural transformation
$$\omega \;:\; \mathrm{pr}_1^*T\circ\big(\widetilde{f}^{-1}\wedge\widetilde{f}^{-1}\big)\Longrightarrow\widetilde{f}^{-1}\circ\mathrm{pr}_2^*T$$
defined for each pair $(P_0,P)$ by the canonical isomorphism (which is well-defined):
\begin{align*}
\big(P_0\times^{\mathrm{pr}_2^*\mathrm Z_{\mathcal F}}\mathrm{pr}_1^*\mathrm Z_{\mathcal F}\big)\times^{\mathrm{pr}_1^*\mathrm Z_{\mathcal F}}\big(P\times^{\mathrm{pr}_2^*\mathcal F}\mathrm{pr}_1^*\mathcal F\big)
\xrightarrow{\;\;\left[[p_0,z],\,[p,x]\right]\,\mapsto\,\left[[p_0,p],\,zx\right]\;\;}
\big(P_0\times^{\mathrm{pr}_2^*\mathrm Z_{\mathcal F}}P\big)\times^{\mathrm{pr}_2^*\mathcal F}\mathrm{pr}_1^*\mathcal F
\end{align*}
Descent of $1$-morphisms between objects in $2$-stacks is itself a $1$-descent condition, hence~we~need to show that $\omega^{-1}$ satisfies the appropriate equivalence property (analogous to the one satisfied by $h$ in the last point of Definition \ref{defh2cech}) over $k'\otimes_k k'\otimes_k k'$, which amounts to the identity
$$\mathrm{pr}^*_{13}\omega\circ(\mathrm{pr}^*_{13}\mathrm{pr}^*_1 T)(\widetilde{g_0}\wedge\widetilde{g})^{-1}\;=\;\widetilde{(g_0g)}^{-1}\!\circ\big(\mathrm{pr}^*_{12}\widetilde{f}\big)^{-1}(\mathrm{pr}^*_{23}\omega)\circ\mathrm{pr}^*_{12}\omega$$
with $\widetilde{g_0},\widetilde{g}$ as in the above construction of $\mathscr X_0,\mathscr X$. In slightly more elaborate terms, this identity corresponds to the commutativity of the following diagram:
\begin{center}\begin{tikzcd}
& \mathrm{pr}^*_{13}\mathrm{pr}^*_1 T\circ\mathrm{pr}^*_{13}(\widetilde{f}^{-1}\wedge\widetilde{f}^{-1})\arrow[ddrr, "\mathrm{pr}^*_{13}\omega", swap]\\[10pt]
\mathrm{pr}^*_{13}\mathrm{pr}^*_1 T\circ\mathrm{pr}^*_{12}(\widetilde{f}^{-1}\wedge\widetilde{f}^{-1})\circ\mathrm{pr}^*_{23}(\widetilde{f}^{-1}\wedge\widetilde{f}^{-1})\arrow[ur, "(\mathrm{pr}^*_{13}\mathrm{pr}^*_1 T)(\widetilde{g_0}\wedge\widetilde{g})^{-1}", swap]\arrow[dd, "\mathrm{pr}^*_{12}\omega"]\\[-10pt]
& &[-100pt] & \mathrm{pr}^*_{13}\widetilde{f}^{-1}\circ\mathrm{pr}^*_{13}\mathrm{pr}^*_2 T\\[-10pt]
\mathrm{pr}^*_{12}\widetilde{f}^{-1}\circ\mathrm{pr}^*_{13}\mathrm{pr}^*_2 T\circ\mathrm{pr}^*_{23}(\widetilde{f}^{-1}\wedge\widetilde{f}^{-1})\arrow[dr, "\big(\mathrm{pr}^*_{12}\widetilde{f}\big)^{-1}(\mathrm{pr}^*_{23}\omega)"]\\[10pt]
& \mathrm{pr}^*_{12}\widetilde{f}^{-1}\circ\mathrm{pr}^*_{23}\widetilde{f}^{-1}\circ\mathrm{pr}^*_{13}\mathrm{pr}^*_2 T\arrow[uurr, "\widetilde{(g_0g)}^{-1}"] & & 
\end{tikzcd}\end{center}

Given any pair of torsors $[P_0,P]$ in $\mathrm{pr}_{13}^*\mathrm{pr}_2^*\mathrm{TORS}(\mathrm Z_{\mathcal F})\wedge^{\mathrm{pr}_{13}^*\mathrm{pr}_2^*\mathrm Z_{\mathcal F}}\mathrm{pr}_{13}^*\mathrm{pr}_2^*\mathrm{TORS}(\mathcal F)$, this commutativity is now straightforward to check. Both paths send an element $\big[[p_0,z,z'],[p,x,x']\big]$~in
$$\left(P_0\!\times^{\mathrm{pr}_{23}^*\mathrm{pr}_2^*\mathrm Z_{\mathcal F}\!}\mathrm{pr}_{23}^*\mathrm{pr}_1^*\mathrm Z_{\mathcal F}\!\times^{\mathrm{pr}_{12}^*\mathrm{pr}_2^*\mathrm Z_{\mathcal F}\!}\mathrm{pr}_{12}^*\mathrm{pr}_1^*\mathrm Z_{\mathcal F}\right)\times^{\mathrm{pr}_{13}^*\mathrm{pr}_1^*\mathrm Z_{\mathcal F}}\left(P\!\times^{\mathrm{pr}_{23}^*\mathrm{pr}_2^*\mathcal F\!}\mathrm{pr}_{23}^*\mathrm{pr}_1^*\mathcal F\!\times^{\mathrm{pr}_{12}^*\mathrm{pr}_2^*\mathcal F\!}\mathrm{pr}_{12}^*\mathrm{pr}_1^*\mathcal F\right)$$
to the element $$\left[[p_0,p],\;g_0^{-1}g^{-1}\cdot(\mathrm{pr}_{12}^*\widetilde{f})^{-1}(xz)\cdot x'z'\right]\;\in\;\left(P_0\!\times^{\mathrm{pr}_{13}^*\mathrm{pr}_2^*\mathrm Z_{\mathcal F}\!}P\right)
\!\times^{\mathrm{pr}_{13}^*\mathrm{pr}_2^*\mathcal F\!}\mathrm{pr}_{13}^*\mathrm{pr}_1^*\mathcal F$$
and this finishes the proof.
\end{prf}

\begin{cor}\label{cornicelyrepcech1}
When $\mathrm{\check H}^2(k, L)\neq\varnothing$, the function $\mathrm{\check H}^2(k, L)\rightarrow\mathrm{H}^2(k, L)$ is a bijection.
\end{cor}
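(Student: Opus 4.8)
The plan is to reduce the statement to a comparison of simply transitive torsor structures. First I would observe that the hypothesis $\mathrm{\check H}^2(k, L)\neq\varnothing$ forces $\mathrm{H}^2(k, L)\neq\varnothing$ as well, since the map $\mathrm{\check H}^2(k, L)\rightarrow\mathrm{H}^2(k, L)$ constructed above already associates to any Čech cocycle $(f,g)$ a gerbe $\mathscr X$ bound by $L$. Hence Proposition \ref{propcentacth2} applies and yields a free and transitive action of $\mathrm{H}^2(k,\mathrm Z_L)$ on $\mathrm{H}^2(k, L)$; on the other side, the discussion preceding Proposition \ref{proph2actcomm} gives a free and transitive action of $\mathrm{\check H}^2(k,\mathrm Z_L)$ on $\mathrm{\check H}^2(k, L)$ (using again $\mathrm{\check H}^2(k, L)\neq\varnothing$).

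Next I would invoke that $\mathrm Z_L$ is, by our standing assumption in this subsection, a locally algebraic commutative group over $k$, so Lemma \ref{lemrosentensfield}(b) with $n=2$ shows that the Čech-to-derived homomorphism $\mathrm{\check H}^2(k,\mathrm Z_L)\rightarrow\mathrm{H}^2(k,\mathrm Z_L)$ is an isomorphism of abelian groups. By the remark following the construction of $\mathrm{\check H}^2\rightarrow\mathrm{H}^2$, this Čech-to-derived homomorphism is precisely the left-hand vertical map of Proposition \ref{proph2actcomm}, so the commuting square there says exactly that $\mathrm{\check H}^2(k, L)\rightarrow\mathrm{H}^2(k, L)$ is equivariant over this group isomorphism.

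Finally I would conclude by the standard torsor argument: fixing a base point $x_0\in\mathrm{\check H}^2(k, L)$ gives a bijection $\mathrm{\check H}^2(k,\mathrm Z_L)\xrightarrow{\sim}\mathrm{\check H}^2(k, L)$, $a\mapsto a.x_0$, and the image point $\psi(x_0)\in\mathrm{H}^2(k, L)$ gives a bijection $\mathrm{H}^2(k,\mathrm Z_L)\xrightarrow{\sim}\mathrm{H}^2(k, L)$; under these two identifications the map $\psi=\big(\mathrm{\check H}^2(k, L)\rightarrow\mathrm{H}^2(k, L)\big)$ becomes the group isomorphism $\mathrm{\check H}^2(k,\mathrm Z_L)\xrightarrow{\sim}\mathrm{H}^2(k,\mathrm Z_L)$, hence is a bijection. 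None of this is hard once Proposition \ref{proph2actcomm} and Lemma \ref{lemrosentensfield} are in place; the only point that needs a moment's care is the identification of the map on centers used in Proposition \ref{proph2actcomm} with the classical Čech-to-derived map appearing in Lemma \ref{lemrosentensfield}, which is exactly the content of the remark comparing our construction with the usual one in the case $L=L(\mathcal F)$ for commutative $\mathcal F$, applied to $\mathcal F=\mathrm Z_L$.
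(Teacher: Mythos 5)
Your proposal is correct and follows exactly the paper's argument: nonemptiness transfers via the constructed map, both $\mathrm H^2$ sets are torsors under the respective center cohomology groups, Proposition \ref{proph2actcomm} gives equivariance, and Lemma \ref{lemrosentensfield}(b) identifies the two groups, so the map of torsors is a bijection. The only difference is that you spell out the standard torsor argument in more detail than the paper does.
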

\begin{prf}
By the existence of such a function, the condition $\mathrm{\check H}^2(k, L)\neq\varnothing$ implies $\mathrm{H}^2(k, L)\neq\varnothing$. Now the respective actions of $\mathrm{\check H}^2(k,\mathrm Z_L)$ and $\mathrm{H}^2(k,\mathrm Z_L)$ are both free and transitive, so the result follows from Proposition \ref{proph2actcomm} and Lemma \ref{lemrosentensfield}(b).
\end{prf}

It remains to consider the case when $\mathrm{\check H}^2(k, L) = \varnothing$ and show that then also $\mathrm{H}^2(k, L) = \varnothing$. For this, we introduce the obstruction class $\check e(L)\in\mathrm{\check H}^3(k,\mathrm Z_L)$ to the nonemptiness of $\mathrm{\check H}^2(k, L)$, in analogy with the obstruction class $e(L)\in\mathrm{H}^3(k,\mathrm Z_L)$ constructed in \cite[VI, \textsection 2]{Gir71}. This requires some technical care:

\begin{defn}
Let $j : \mathrm Z_L\rightarrow C$ be a map of commutative locally algebraic groups over $k$. Given a nice triple $(k'/k,\mathcal F,\varphi)$ representing $L$, consider the nice triple $(k'/k,\mathcal F\times_{\mathrm Z_{\mathcal F}} C,\varphi\times\varphi_C)$. Here, $\mathcal F\times_{\mathrm Z_{\mathcal F}} C\coloneqq(\mathcal F\times C)/\mathrm Z_{\mathcal F}$ is the pushforward group, $\varphi_C$ is the descent datum of $C$, and $\varphi\times\varphi_C$ is defined in an obvious way using that $\mathrm Z_{\mathcal F}$ is central. The resulting band is defined up to unique isomorphism depending only on $L$ and we denote it by $L\wedge^{\mathrm Z_L}\!C$. Its center~is~canonically identified with $C$ and thus, in particular, $\varphi\times\varphi_C$ restricts to $\varphi_C$ on $C$.

There is, moreover, a natural map $\check v^{(2)} : \mathrm{\check H}^2(k, L\wedge^{\mathrm Z_L}\!C)\longrightarrow\mathrm{\check H}^2(k, Q)$ for $Q\coloneqq C/\mathrm{im}(\mathrm Z_L)$, which we now define: Note that the group sheaf morphism $\mathcal F\times_{\mathrm Z_{\mathcal F}} C\rightarrow Q$ defined by $[g,c]\mapsto\overline{c}$ commutes with inner automorphisms. Because of this and because $\varphi_C$ clearly commutes~with the descent datum $\varphi_Q$, the following map
$$\mathrm{\check Z}^2(k'/k,\mathcal F\times_{\mathrm Z_{\mathcal F}} C,\varphi\times\varphi_C)
\xrightarrow{\;\;(F,\,[g,c])\,\mapsto\,\overline{c}\;\;}\mathrm{\check Z}^2(k'/k,Q)$$
is well-defined, that is: $\overline{c}$ does indeed satisfy the cocycle property. It is obvious that cohomologous cocycles get mapped to cohomologous cocycles. 
\end{defn}

\begin{prop}\label{propeinh3}
There is a unique class $\check e(L)\in\mathrm{\check H}^3(k,\mathrm Z_L)$ such that:
\begin{itemize}
    \item For any short exact sequence $0\rightarrow\mathrm Z_L\rightarrow C\rightarrow Q\rightarrow 0$ of commutative fppf group sheaves on $k$ such that $C$ is a locally algebraic group, the image of the composition
    $$\mathrm{\check H}^2(k, L\wedge^{\mathrm Z_L}\!C)\xrightarrow{\;\;\check v^{(2)}\;\;}\mathrm{\check H}^2(k, Q)\rightarrow\mathrm{H}^2(k, Q)\xrightarrow{\;\;\;\delta\;\;\;}\mathrm{H}^3(k,\mathrm Z_L)\cong\mathrm{\check H}^3(k,\mathrm Z_L)$$
    is contained in the one-element set $\{\check e(L)\}$ 
\end{itemize}
Moreover, it satisfies the following two properties:
\begin{itemize}
    \item $\check e(L) = 0$ if and only if $\mathrm{\check H}^2(k, L)\neq\varnothing$
    \item For any homomorphism $\mathrm Z_L\rightarrow C$ of commutative locally algebraic groups on $k$, the natural map $\mathrm{\check H}^3(k,\mathrm Z_L)\rightarrow\mathrm{\check H}^3(k, C)$ takes $\check e(L)$ to $\check e(L\wedge^{\mathrm Z_L}\!C)$
\end{itemize}
\end{prop}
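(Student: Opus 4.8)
The plan is to construct $\check e(L)$ by hand from a local representative, mimicking the gerbe-theoretic obstruction of \cite[VI, \textsection 2]{Gir71}, and then to read off the three asserted properties together with uniqueness from this description. \emph{Construction.} Fix a nice triple $(k'/k,\mathcal F,\varphi)$ representing $L$ with $k'/k$ finite and a lift $f\in\mathcal{Isom}_{\mathrm{pr}_1^*\mathcal F,\,\mathrm{pr}_2^*\mathcal F}(k'\otimes_k k')$ of $\varphi$. The band cocycle condition on $\varphi$ is precisely the statement that $\theta:=(\mathrm{pr}_{13}^*f)^{-1}\circ(\mathrm{pr}_{23}^*f)\circ(\mathrm{pr}_{12}^*f)$ lies in the image of $(\mathcal F/\mathrm Z_{\mathcal F})(k'\otimes_k k'\otimes_k k')$ in $\mathcal{Aut}_{\mathcal F}(k'\otimes_k k'\otimes_k k')$; enlarging $k'$ if necessary so that $\mathrm H^1$ of $\mathrm Z_{\mathcal F}$ over the relevant tensor powers vanishes (Lemma \ref{lemrosentensfield}(a)), we may write $\theta=\mathrm{int}(g^{-1})$ for some $g\in\mathcal F(k'\otimes_k k'\otimes_k k')$. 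I then set
$$\check e\;:=\;\big(\mathrm{pr}^*_{123}g\cdot\mathrm{pr}^*_{134}g\big)^{-1}\cdot(\mathrm{pr}^*_{12}f)^{-1}(\mathrm{pr}^*_{234}g)\cdot\mathrm{pr}^*_{124}g,$$
the failure of the last cocycle identity of Definition \ref{defh2cech}. A direct computation (using Remark \ref{remconjact} and the smallest‑indexed‑projection conventions of \textsection 2.1) shows that the image of $g$ in $\mathcal F/\mathrm Z_{\mathcal F}$ automatically satisfies the $f$‑twisted $2$‑cocycle identity, since the corresponding identity holds on the nose for the automorphisms $\mathrm{int}(\bar g)$ by associativity of the composition of the $\mathrm{pr}_I^*f$; hence $\check e$ maps to $1$ in $\mathcal F/\mathrm Z_{\mathcal F}$, i.e.\ $\check e\in\mathrm Z_{\mathcal F}(k'\otimes_k k'\otimes_k k'\otimes_k k')$, a $4$‑fold Čech cochain valued in $\mathrm Z_L$, and a further (now commutative) bookkeeping gives $\mathrm d\check e=1$, so $\check e\in\mathrm{\check Z}^3(k'/k,\mathrm Z_L)$. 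Put $\check e(L):=[\check e]\in\mathrm{\check H}^3(k,\mathrm Z_L)$.

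\emph{Independence of choices.} Replacing $g$ by $zg$ for central $z$ changes $\check e$ by $\mathrm dz$; replacing $f$ by $\mathrm{int}(h)\circ f$ is absorbed by the adjustment of $g$ described after Definition \ref{defh2cech} and leaves $[\check e]$ unchanged; refining $k'/k$ is harmless. Finally, two nice triples representing $L$ become $T$‑representably isomorphic over a common finite refinement by Proposition \ref{propparambandforms}, and transporting $(f,g)$ along such an isomorphism changes $\check e$ by a genuine $3$‑coboundary valued in $\mathrm Z_L$; so $\check e(L)$ depends only on $L$.

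\emph{The stated properties and uniqueness.} If $\mathrm{\check H}^2(k,L)\neq\varnothing$ pick a genuine cocycle $(f,g)$; then $\check e=1$, so $\check e(L)=0$. Conversely if $\check e(L)=0$ then, after enlarging $k'$, write $\check e=\mathrm dz$ with $z\in\mathrm Z_{\mathcal F}(k'\otimes_k k'\otimes_k k')$ and replace $g$ by $z^{-1}g$ to obtain a genuine cocycle; this gives the vanishing criterion. For functoriality along $j:\mathrm Z_L\to C$, a local representative and lifts $(f,g)$ for $L$ push forward to the triple $(k'/k,\mathcal F\times_{\mathrm Z_{\mathcal F}}C,\varphi\times\varphi_C)$ with lift $f$ (extended by $\varphi_C$) and $3$‑cochain $[g,1]$, whose defect is $[\check e,1]=j_*\check e$; hence $\mathrm{\check H}^3(k,\mathrm Z_L)\to\mathrm{\check H}^3(k,C)$ sends $\check e(L)$ to $\check e(L\wedge^{\mathrm Z_L}\!C)$. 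For the defining property, take $0\to\mathrm Z_L\to C\to Q\to 0$ and a cocycle $(F,[g,c])$ representing a class in $\mathrm{\check H}^2(k,L\wedge^{\mathrm Z_L}\!C)$, so $\check v^{(2)}$ sends it to the class of the $Q$‑valued $2$‑cocycle $\bar c$. Writing $[g,c]=[g,1]\cdot[1,c]$ with $[1,c]$ central, the $3$‑cochain defect of $(F,[g,c])$ splits as $(j_*\check e(L))\cdot\mathrm d_C c$, and its vanishing (the cocycle condition for $(F,[g,c])$) reads $\mathrm d_C c=\check e(L)^{-1}$ inside $\mathrm Z_L\subseteq C$; since $\delta([\bar c])=[\mathrm d_C c]$, this equals $\pm\check e(L)$ independently of $(F,[g,c])$, the sign being fixed by the normalization of $\check e$. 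Uniqueness then follows by exhibiting one such sequence with $\mathrm{\check H}^2(k,L\wedge^{\mathrm Z_L}\!C)\neq\varnothing$, so that $\delta\circ\check v^{(2)}$ genuinely attains $\check e(L)$: by the functoriality and vanishing criterion already established it suffices to embed $\mathrm Z_L$ into a commutative algebraic $k$‑group $C$ on which $\check e(L)$ dies, arranged as in \cite[VI, \textsection 2]{Gir71}.

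\emph{Main obstacle.} The chains of Čech identities are routine once set up; the real work is the bookkeeping that makes $\check e(L)$ independent of the chosen local representative — tracking how the defect $3$‑cochain transforms under a $T$‑representable isomorphism of triples (Proposition \ref{propparambandforms}) while respecting the projection conventions — and pinning down the sign in the defining property so that it literally produces $\check e(L)$ rather than its inverse, together with checking that a realizing short exact sequence exists.
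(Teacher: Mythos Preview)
Your approach is essentially identical to the paper's: construct $\check e(L)$ as the class of the ``defect'' $3$-cochain $\mathrm dg$ obtained from a choice $(f,g)$, verify it is a $\mathrm Z_L$-valued cocycle, check independence of choices, then read off the vanishing criterion, functoriality in $\mathrm Z_L\to C$, and the characterization via $\delta\circ\check v^{(2)}$, finishing with a realizing short exact sequence. Two loose ends distinguish your sketch from the paper's argument. First, the sign: the paper sets $\check e(L)\coloneqq -[\mathrm dg]$ precisely so that $\delta\circ\check v^{(2)}$ lands on $\check e(L)$ and not its negative; you flag this as something to be ``fixed by the normalization'', but since the proposition asserts a specific class and not merely one up to sign, you do need to commit. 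Second, the realizing sequence: you reduce correctly (via functoriality and the vanishing criterion) to embedding $\mathrm Z_L$ into an algebraic $C$ with $\check e(L)$ dying in $\mathrm{\check H}^3(k,C)$, but your citation of \cite[VI, \textsection 2]{Gir71} for this embedding is not apt---Giraud works with arbitrary sheaves, whereas here $C$ must be a commutative \emph{algebraic} group. The paper's construction is concrete: embed $\mathrm Z_L$ in a smooth commutative algebraic group $Z$ (Proposition \ref{proph2h1coh}), then take $C=\mathrm R_{k'/k}(Z_{k'})$ for a large enough finite extension $k'/k$ so that, by Shapiro's lemma, the image of $\check e(L)$ in $\mathrm H^3(k,C)\cong\mathrm H^3(k',Z)$ vanishes.
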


Note that the last point makes sense, since $L\wedge^{\mathrm Z_L}\!C$ satisfies the same global assumptions we have on $L$: It is a nicely represented fppf band whose center is a locally algebraic group on $k$.

\begin{prf}
The first property actually defines $\check e(L)$ only if we know that there exists such a sequence with $\mathrm{\check H}^2(k, L\wedge^{\mathrm Z_L}\!C)\neq\varnothing$. For this reason, we initially define $\check e(L)$ by an explicit construction such that it satisfies the second stated property.

Fix a nice triple $(k'/k,\mathcal F,\varphi)$ representing $L$. Let $f$ be a lift of $\varphi$ and choose $g\in\mathcal F(k'\otimes_k k'\otimes_k k')$ such that $(\mathrm{pr}_{13}^*f)^{-1}\circ(\mathrm{pr}_{23}^*f)\circ(\mathrm{pr}_{12}^*f) = \mathrm{int}(g)^{-1}$ holds, which we can do by Lemma~\ref{lemrosentensfield}(a) applied to $\mathrm Z_L$. Then $(f,g)$ is a cocycle if and only if $g$ satisfies its cocycle property with respect to $f$, equivalently if the following difference vanishes:
$$\mathrm{d}g\coloneqq(\mathrm{pr}^*_{12}f)^{-1}(\mathrm{pr}^*_{234}g)\cdot\mathrm{pr}^*_{124}g\cdot\mathrm{pr}^*_{134}g^{-1}\cdot\mathrm{pr}^*_{123}g^{-1}\in\mathcal F(k'\otimes_k k'\otimes_k k'\otimes_k k')$$
Note that the choice of $f$ is always implicit in $g\mapsto\mathrm{d}g$. By construction, one has
\begin{align*}
\mathrm{int}(\mathrm{d}g) = \mathrm{pr}^*_{12}f^{-1}\,&{\circ}\,\mathrm{int}(\mathrm{pr}^*_{234}g)\circ\mathrm{pr}^*_{12}f\circ\mathrm{int}(\mathrm{pr}^*_{124}g)\circ\mathrm{int}(\mathrm{pr}^*_{134})g^{-1}\circ\mathrm{int}(\mathrm{pr}^*_{123}g)^{-1}\\
    = \mathrm{pr}^*_{12}f^{-1}\,&{\circ}\,
(\mathrm{pr}^*_{23}f^{-1}\circ\mathrm{pr}^*_{34}f^{-1}\circ\mathrm{pr}^*_{24}f)\circ\mathrm{pr}^*_{12}f\\
&\begin{aligned}
\circ\,(\mathrm{pr}^*_{12}f^{-1}\circ\mathrm{pr}^*_{24}f^{-1}\circ\mathrm{pr}^*_{14}f)
&\circ(\mathrm{pr}^*_{14}f^{-1}\circ\mathrm{pr}^*_{34}f\circ\mathrm{pr}^*_{13}f)\\
&\circ(\mathrm{pr}^*_{13}f^{-1}\circ\mathrm{pr}^*_{23}f\circ\mathrm{pr}^*_{12}f) \,=\, \mathrm{id}
\end{aligned}
\end{align*}
and $\mathrm{d}g$ hence lies in $\mathrm Z_{\mathcal F}(k'\otimes_k k'\otimes_k k'\otimes_k k')$. Similarly, one sees that it is in fact a cocycle in $\mathrm{\check Z}^3(k'/k,\mathrm Z_{\mathcal F})$. We now show that the class $\check e(L)\coloneqq-[\mathrm{d}g]$ is independent of the choice of~$f$~and~$g$.\;

First, by replacing $g$ with some $g'$, but keeping $f$ the same, we get $\mathrm{int}(g') = \mathrm{int}(g)$ and hence $g^{-1}g'\in\mathrm Z_{\mathcal F}(k'\otimes_k k'\otimes_k k')$ so that $\mathrm{d}g' = \mathrm{d}g+\mathrm{d}(g^{-1}g')$ and $[\mathrm{d}g'] = [\mathrm{d}g]$. Thus the class depends at most on the choice of $f$. Replacing $f$ by $f'$, we may write $f' = f\circ\mathrm{int}(h)^{-1}$ for $h\in\mathcal F(k'\otimes_k k')$ (again by Lemma \ref{lemrosentensfield}(a)). Thus, we are free to take $g' = \mathrm{pr}^*_{12}h\cdot (\mathrm{pr}^*_{12}f)^{-1}(\mathrm{pr}^*_{23}h)\cdot g\cdot\mathrm{pr}^*_{13}h^{-1}$ for which $(\mathrm{pr}_{13}^*f')^{-1}\circ(\mathrm{pr}_{23}^*f')\circ(\mathrm{pr}_{12}^*f') = \mathrm{int}(g')^{-1}$. We calculate
\begin{align*}
\mathrm{d}g' &= (\mathrm{pr}^*_{12}f')^{-1}(\mathrm{pr}^*_{234}g')\cdot\mathrm{pr}^*_{124}g'\cdot(\mathrm{pr}^*_{134}g')^{-1}\cdot(\mathrm{pr}^*_{123}g')^{-1}\\
&= \left(\mathrm{int}(\mathrm{pr}^*_{12}h)\circ\mathrm{pr}^*_{12}f^{-1}\right)\left(\mathrm{pr}^*_{23}h\cdot(\mathrm{pr}^*_{23}f)^{-1}(\mathrm{pr}^*_{34}h)\cdot \mathrm{pr}^*_{234}g\cdot\mathrm{pr}^*_{24}h^{-1}\right)\\
&
\hspace{-30pt}\begin{aligned}
\cdot\left(\mathrm{pr}^*_{12}h\cdot (\mathrm{pr}^*_{12}f)^{-1}(\mathrm{pr}^*_{24}h)\cdot \mathrm{pr}^*_{124}g\cdot\mathrm{pr}^*_{14}h^{-1}\right)
&\cdot\left(\mathrm{pr}^*_{14}h\cdot\mathrm{pr}^*_{134}g^{-1}\cdot(\mathrm{pr}^*_{13}f)^{-1}(\mathrm{pr}^*_{34}h)^{-1}\cdot\mathrm{pr}^*_{13}h^{-1}\right)\\
&\cdot\left(\mathrm{pr}^*_{13}h\cdot\mathrm{pr}^*_{123}g^{-1}\cdot(\mathrm{pr}^*_{12}f)^{-1}(\mathrm{pr}^*_{23}h)^{-1}\cdot\mathrm{pr}^*_{12}h^{-1}\right)
\end{aligned}\\
&\;\begin{aligned}
= S\cdot \left((\mathrm{pr}^*_{12}f)^{-1}(\mathrm{pr}^*_{234}g)\right.\cdot&\left.(\mathrm{pr}^*_{12}f)^{-1}(\mathrm{pr}^*_{24}h)^{-1}\cdot\mathrm{pr}^*_{12}h^{-1}\right)\\
\cdot\,\mathrm{pr}^*_{12}h\;\cdot&\;(\mathrm{pr}^*_{12}f)^{-1}(\mathrm{pr}^*_{24}h)\cdot \left(\mathrm{pr}^*_{124}g\cdot\mathrm{pr}^*_{134}g^{-1}\cdot\mathrm{pr}^*_{123}g^{-1}\right)\cdot S^{-1}
\end{aligned}\\
&= S\cdot\mathrm{d}g\cdot S^{-1} = \mathrm{d}g
\end{align*}
where we have used that $\mathrm{d}g$ is central and where $S$ is defined as:
\begin{align*}
S \coloneqq&\; \mathrm{pr}^*_{12}h\circ(\mathrm{pr}^*_{12}f)^{-1}(\mathrm{pr}^*_{23}h)\cdot(\mathrm{pr}^*_{12}f\circ\mathrm{pr}^*_{23}f)^{-1}(\mathrm{pr}^*_{34}h)\\
=&\; \mathrm{pr}^*_{12}h\circ(\mathrm{pr}^*_{12}f)^{-1}(\mathrm{pr}^*_{23}h)\cdot\mathrm{pr}^*_{123}g\cdot(\mathrm{pr}^*_{13}f)^{-1}(\mathrm{pr}^*_{34}h)\cdot\mathrm{pr}^*_{123}g^{-1}
\end{align*}
The obstruction class $\check e(L)$ is thus defined independently of choices. It is the neutral class if and only if, for an arbitrary $(f,g)$ as above, we may write $\mathrm dg = \mathrm dg_0$ with $g_0\in\mathrm Z_{\mathcal F}(k'\otimes_k k'\otimes_k k')$. Equivalently, the pair $(f, g_0^{-1}g)$ is a cocycle. This shows that $\check e(L)$ satisfies the second point of the proposition statement.

Next, we show that $\check e(L)$ satisfies the third point. For any choice of $(f,g)$ as above, there is an analogous pair $(f\times\varphi_C,[g,1])$ for $L\wedge^{\mathrm Z_L}\!C$. A representative of $-\check e(L\wedge^{\mathrm Z_L}\!C)$ is thus given by $\mathrm{d}[g,1] = [\mathrm{d}g,1] = [1,\mathrm{d}g]$. However, this corresponds to the image of $\mathrm{d}g$ in $C(k'\otimes_k k'\otimes_k k'\otimes_k k')$ and therefore $\check e(L)$ maps to $\check e(L\wedge^{\mathrm Z_L}\!C)$ in $\mathrm{\check H}^3(k,C)$.

Finally, we return to the first point in the statement. Note immediately that the composition (in which the first map is actually an isomorphism by the $5$-lemma, which we do not use)
$$\check\delta \;:\; \mathrm{\check H}^2(k, Q)\rightarrow\mathrm{H}^2(k, Q)\xrightarrow{\;\;\;\delta\;\;\;}\mathrm{H}^3(k,\mathrm Z_L)\cong\mathrm{\check H}^3(k,\mathrm Z_L)$$
can, by \cite[Prop. F.2.1]{RosTD}, be seen as the connecting homomorphism from the snake lemma: Take a cocycle $q\in\mathrm{\check Z}^2(k'/k, Q)$ and suppose that it admits a preimage $c\in C(k'\otimes_k k'\otimes_k k')$ (this in fact always happens by virtue of Lemma \ref{lemrosentensfield}(a)). Since $\mathrm dq = 0$, the element $\mathrm dc$ lies in $\mathrm Z_L(k'\otimes_k k'\otimes_k k'\otimes_k k')$ and it is a cocycle by $\mathrm d^2c = 0$. Then $\check\delta([q]) = [\mathrm dc]$.

In our situation, we have an element $(F,\,[g,c])\in\mathrm{\check Z}^2(k'/k,\mathcal F\times_{\mathrm Z_{\mathcal F}} C,\varphi\times\varphi_C)$ mapping to some $q = \overline{c}\in\mathrm{\check Z}^2(k'/k,Q)$ by the definition of $\check v^{(2)}$. Moreover, we are free to assume that $F = f\times\varphi_C$ (up to enlarging $k'/k$) for some lift $f$ of $\varphi$, without affecting the given class in $\mathrm{\check H}^2(k, L\wedge^{\mathrm Z_L}\!C)$. 
Then $(\mathrm{pr}_{13}^*F)^{-1}\circ(\mathrm{pr}_{23}^*F)\circ(\mathrm{pr}_{12}^*F) = \mathrm{int}([g,c])^{-1}$ implies $(\mathrm{pr}_{13}^*f)^{-1}\circ(\mathrm{pr}_{23}^*f)\circ(\mathrm{pr}_{12}^*f) = \mathrm{int}(g)^{-1}$.
Next, we have $0 = \mathrm d[g,c] = [\mathrm dg,\mathrm dc] = [1,\mathrm dg+\mathrm dc]$, since $\mathrm dg$ is central. This shows that $\mathrm dc = -\mathrm dg$ and, finally, that $\check e(L) = -[\mathrm dg] = [\mathrm dc] = \check\delta([q])$ as required. 

To finish the proof we must still show that there does exist a monomorphism of commutative locally algebraic groups $\mathrm Z_L\hookrightarrow C$ such that $\mathrm{\check H}^2(k, L\wedge^{\mathrm Z_L}\!C)\neq\varnothing$; equivalently that $\check e(L\wedge^{\mathrm Z_L}\!C) = 0$. By the third point in the statement, it suffices that $\check e(L)$ maps to $0$ in $\mathrm{\check H}^3(k,C)\cong\mathrm{H}^3(k,C)$.

By Proposition \ref{proph2h1coh}, $\mathrm Z_L$ admits a monomorphism to a smooth commutative locally algebraic group $Z$. We~choose $C$ to be the Weil restriction $\mathrm{R}_{k'/k}(Z_{k'})$ (well-defined by Proposition \ref{proplocalgquasiproj}) for some finite field extension $k'/k$, with $k'$ to be specified later.
By Proposition \ref{propweiladjmorph}, there is now a monomorphism
$$j \;:\; \mathrm Z_L\xhookrightarrow{\;\;\;\;\;\;} Z\xhookrightarrow{\;\;\;\;\;\;}\mathrm{R}_{k'/k}(Z_{k'}) = C$$
of locally algebraic groups. The composition
$$\mathrm{H}^3(k, \mathrm Z_L)\longrightarrow\mathrm{H}^3(k, Z)\longrightarrow\mathrm{H}^3(k, C)=\mathrm{H}^3(k, \mathrm{R}_{k'/k}(Z_{k'}))\xrightarrow{\;\;\sim\;\;}\mathrm{H}^3(k', Z)$$
incorporates both $j_* : \mathrm{H}^3(k,\mathrm Z_L)\rightarrow\mathrm{H}^3(k, C)$ and the natural map $\mathrm{H}^3(k, Z)\rightarrow\mathrm{H}^3(k', Z)$. The isomorphism at the end appears by Shapiro's lemma (Proposition \ref{propshapiro}), since $Z$ is smooth. Therefore, by sufficiently enlarging $k'$, we may assume that the (fixed) image of $\check e(L)$ in $\mathrm{H}^3(k, Z)$ maps to $0$ in $\mathrm{H}^3(k', Z)$. This gives the desired group $C$.
\end{prf}


In \cite[VI, Thm.\ 2.3]{Gir71}, the existence of a class $e(L)\in\mathrm{H}^3(k, \mathrm Z_L)$ is shown, with an analogous universal property and the property that $e(L) = 0$ if and only if $\mathrm{H}^3(k, L)\neq\varnothing$. We now state the final result of this section:

\begin{cor}\label{cornicelyrepcech2}
Let $L$ be a nicely representable fppf band such that $\mathrm Z_L$ is a locally algebraic group over $k$. The canonical function $\mathrm{\check H}^2(k, L)\rightarrow\mathrm{H}^2(k, L)$ from Definition \ref{defcechtogerbe} is a bijection.
\end{cor}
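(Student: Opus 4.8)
\begin{prf}
The strategy is to reduce everything to Corollary \ref{cornicelyrepcech1} by identifying the Čech obstruction class with Giraud's. The canonical function $\mathrm{\check H}^2(k, L)\rightarrow\mathrm{H}^2(k, L)$ exists unconditionally and, by Corollary \ref{cornicelyrepcech1}, is a bijection as soon as its source is nonempty; conversely, if $\mathrm{H}^2(k, L)=\varnothing$ then $\mathrm{\check H}^2(k, L)=\varnothing$ automatically (a function into the empty set has empty domain). Hence the whole statement follows once we show that $\mathrm{\check H}^2(k, L)=\varnothing$ forces $\mathrm{H}^2(k, L)=\varnothing$. Now $\mathrm{\check H}^2(k, L)=\varnothing$ is equivalent to $\check e(L)\neq 0$ in $\mathrm{\check H}^3(k,\mathrm Z_L)$ (the obstruction-class proposition above), while $\mathrm{H}^2(k, L)=\varnothing$ is equivalent to $e(L)\neq 0$ in $\mathrm{H}^3(k,\mathrm Z_L)$ (by \cite[VI, Thm.\ 2.3]{Gir71}); since $\mathrm Z_L$ is a commutative algebraic group over $k$, Lemma \ref{lemrosentensfield}(b) identifies these two groups canonically, so it suffices to prove that this identification sends $\check e(L)$ to $e(L)$.

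To do this I would invoke the common universal characterization of the two classes. As in the proof of the obstruction-class proposition, embed $\mathrm Z_L$ into a smooth commutative algebraic group $Z$ (Proposition \ref{proph2h1coh}) and take $C\coloneqq\mathrm R_{k'/k}(Z_{k'})$ for a finite extension $k'/k$ large enough that the resulting monomorphism $\mathrm Z_L\hookrightarrow C$, with cokernel $Q\coloneqq C/\mathrm{im}(\mathrm Z_L)$, satisfies $\check e(L\wedge^{\mathrm Z_L}\!C)=0$, that is, $\mathrm{\check H}^2(k, L\wedge^{\mathrm Z_L}\!C)\neq\varnothing$. Since $L\wedge^{\mathrm Z_L}\!C$ is again a nicely representable fppf band with algebraic center, Corollary \ref{cornicelyrepcech1} applies to it and gives in particular $\mathrm{H}^2(k, L\wedge^{\mathrm Z_L}\!C)\neq\varnothing$, so $e(L\wedge^{\mathrm Z_L}\!C)=0$ as well. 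The key claim is then that the comparison maps of this section fit into a commutative square
\[
\begin{array}{ccc}
\mathrm{\check H}^2(k, L\wedge^{\mathrm Z_L}\!C) & \xrightarrow{\check v^{(2)}} & \mathrm{\check H}^2(k, Q)\\
\downarrow & & \downarrow\\
\mathrm{H}^2(k, L\wedge^{\mathrm Z_L}\!C) & \xrightarrow{v^{(2)}} & \mathrm{H}^2(k, Q)
\end{array}
\]
in which the right vertical arrow is the comparison map for the band $L(Q)$ of the commutative group $Q$, which (by the remark on commutative $\mathcal F$ in this section) is the usual Čech-to-derived isomorphism $\mathrm{\check H}^2(k,Q)\xrightarrow{\sim}\mathrm{H}^2(k,Q)$, while $v^{(2)}$ is Giraud's map on gerbes induced by the group-sheaf morphism $\mathcal F\times_{\mathrm Z_{\mathcal F}} C\rightarrow Q$. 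Granting this, postcompose both rows with the connecting map $\delta\colon\mathrm{H}^2(k,Q)\rightarrow\mathrm{H}^3(k,\mathrm Z_L)$ of $0\rightarrow\mathrm Z_L\rightarrow C\rightarrow Q\rightarrow 0$ and use that the Čech-to-derived comparison commutes with connecting homomorphisms of short exact sequences of commutative algebraic groups (as invoked via \cite[Prop.\ F.2.1]{RosTD} in the previous proof): the image of the top composite is $\{\check e(L)\}$ by the defining property of $\check e(L)$, the left comparison map is a bijection, so this image coincides with the image of the bottom composite, which is $\{e(L)\}$ by the universal property in \cite[VI, Thm.\ 2.3]{Gir71}. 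Hence the canonical isomorphism $\mathrm{\check H}^3(k,\mathrm Z_L)\xrightarrow{\sim}\mathrm{H}^3(k,\mathrm Z_L)$ carries $\check e(L)$ to $e(L)$, which finishes the reduction.

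The one genuinely technical point is the commutativity of the displayed square, i.e.\ the naturality of the map $\mathrm{\check H}^2(k,-)\rightarrow\mathrm{H}^2(k,-)$ of this section with respect to the ``forgetful'' morphism of bands $L\wedge^{\mathrm Z_L}\!C\rightarrow L(Q)$ coming from $\mathcal F\times_{\mathrm Z_{\mathcal F}} C\rightarrow Q$ (which kills inner automorphisms). On the Čech side this morphism is precisely the cocycle-level assignment $(F,[g,c])\mapsto\overline c$ that defines $\check v^{(2)}$; to match it with $v^{(2)}$ one unwinds the gluing construction and checks that pushing the $2$-descent datum on $\mathrm{TORS}(\mathcal F\times_{\mathrm Z_{\mathcal F}} C)$ attached to $(F,[g,c])$ forward along $\mathcal F\times_{\mathrm Z_{\mathcal F}} C\rightarrow Q$ yields exactly the $2$-descent datum on $\mathrm{TORS}(Q)$ cut out by $(\varphi_Q,\widetilde{\overline c})$, so the two glued gerbes coincide. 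I expect this to be a routine but somewhat lengthy $2$-categorical diagram chase in $\mathrm{STACK}_k$, in the same spirit as the proof of Proposition \ref{proph2actcomm}; everything else in the argument is formal.
\end{prf}
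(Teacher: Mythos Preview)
Your proposal is correct and follows essentially the same approach as the paper: reduce via Corollary \ref{cornicelyrepcech1} to showing $\check e(L)\mapsto e(L)$, set up the square comparing $\check v^{(2)}$ with Giraud's $v^{(2)}$ for a well-chosen embedding $\mathrm Z_L\hookrightarrow C$, and then deduce the identification of obstruction classes from the universal properties in \cite[VI, Thm.\ 2.3]{Gir71}. The paper likewise identifies the commutativity of that square as the only nontrivial point and dispatches it by the same kind of $2$-descent verification you outline, explicitly referencing the cocycle condition from the proof of Proposition \ref{proph2actcomm}.
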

\begin{prf}
In view of Corollary \ref{cornicelyrepcech1}, in this proof we will only need to show that the isomorphism $\mathrm{\check H}^3(k, \mathrm Z_L)\rightarrow\mathrm{H}^3(k, \mathrm Z_L)$ maps $\check e(L)$ to Giraud's $e(L)$ (see paragraph above). Then the condition $e(L) = 0$ will imply $\check e(L) = 0$.

Take a short exact sequence $0\rightarrow\mathrm Z_L\rightarrow C\rightarrow Q\rightarrow 0$ of commutative fppf group sheaves over $k$ such that $C$ is a locally algebraic group and $\mathrm{\check H}^2(k, L\wedge^{\mathrm Z_L}\!C)\neq\varnothing$ (such a sequence was constructed in the preceding proof). We write the following diagram

\begin{center}\begin{tikzcd}
    \mathrm{\check H}^2(k, L\wedge^{\mathrm Z_L}\!C)\arrow[r, "\check v^{(2)}"]\arrow[d] & \mathrm{\check H}^2(k, Q)\arrow[r, "\check\delta"]\arrow[d] & \mathrm{\check H}^3(k,\mathrm Z_L)\arrow[d]\\
    \mathrm{H}^2(k, L\wedge^{\mathrm Z_L}\!C)\arrow[r, "v^{(2)}"] & \mathrm{H}^2(k, Q)\arrow[r, "\delta"] & \mathrm{H}^3(k,\mathrm Z_L)
\end{tikzcd}\end{center}
in which the map $v^{(2)}$ is as in \cite[VI, \textsection2]{Gir71} and the rest are clear. It suffices to show that this diagram commutes, since then the image of $\check e(L)$ in $\mathrm{H}^3(k,\mathrm Z_L)$ must lie in the one-element set $\{e(L)\}$ by \cite[VI, Thm.\ 2.3]{Gir71}. Moreover, we only need to prove that the left square of the diagram commutes, as for the right square this was already noted above.

We recall the construction of the map $v^{(2)}$: The band $L\wedge^{\mathrm Z_L}\!C$ is defined in \cite[IV, \textsection1.6]{Gir71} as the ``cokernel of $\mathrm Z_L\rightrightarrows L\times C$'' (and it is clear that our construction satisfies this universal property). The map $L\times C\twoheadrightarrow C\rightarrow Q$ defines a ``morphism of bands'' $v : L\wedge^{\mathrm Z_L}\!C\rightarrow Q$ which induces $v^{(2)}$. In more explicit terms, this is simply a descent condition on $\mathcal F\times_{\mathrm Z_{\mathcal F}}C\rightarrow Q$ (again, this map is even fixed by inner automorphisms since its kernel is $\mathcal F$).

Suppose given a cocycle $(F,[g,c])\in\mathrm{\check Z}^2(k'/k,\mathcal F\times_{\mathrm Z_{\mathcal F}} C,\varphi\times\varphi_C)$ and, by our construction, the corresponding $(L\wedge^{\mathrm Z_L}\!C)$-gerbe $\mathscr X$. A $Q$-gerbe $\mathscr Y$ is a representative of $v^{(2)}([\mathscr X])$ if and only if the morphism of gerbes over $k'$
$$T \;:\; \mathscr X_{k'}\cong\mathrm{TORS}(\mathcal F\times_{\mathrm Z_{\mathcal F}}C)\longrightarrow\mathrm{TORS}(Q)\cong\mathscr Y_{k'}$$
given by the ``change of structure'' functor $P\rightsquigarrow P\times^{\mathcal F\times_{\mathrm Z_{\mathcal F}}C} Q$ descends to $k$. The $2$-descent datum on $\mathrm{TORS}(\mathcal F\times_{\mathrm Z_{\mathcal F}}C)$ is the one defined by $(F,[g,c])$. It is now easily checked that, if $\mathrm{TORS}(Q)$ is equipped with the $2$-descent datum coming from the cocycle $\overline{c}\in\mathrm{\check Z}^2(k'/k,Q)$ representing $\check v^{(2)}([F,[g,c]])$, then the obvious natural transformation relating the two sides satisfies a similar cocycle condition to the one in the proof of Proposition \ref{proph2actcomm}. This now implies that $v^{(2)}([\mathscr X])$ is indeed the image of $\check v^{(2)}([F,[g,c]])$ in $\mathrm{H}^2(k,Q)$, which ends the proof.
\end{prf}

\section{Representability}
Let $k$ be a field which satisfies the condition $[k : k^p] = p$ (the \textit{imperfection degree} of $k$ is $1$). In this section we prove that every \'etale band $(\Gb,\kappa)$ over $k$ represented by a pseudo-reductive group $\Gb$ over $k_s$ is globally representable (Theorem \ref{thmmainpsredrepr}). In particular, we directly deduce the same result for separable bands, as each separable band represented by a smooth algebraic group has at least one \'etale band lying over it by Corollary \ref{corparamlet}.

The condition on the imperfection degree is necessary not only for the exotic pseudo-reductive bands in low characteristic ($p\in\{2,3\}$), but for standard pseudo-reductive constructions in any characteristic. Unlike for reductive bands, global representability fails over general $k$ even for separable bands locally represented by very well-behaved pseudo-reductive groups, such as Weil restrictions of simple and simply connected~groups (Example \ref{exmprepringen}). Such examples exist over any field $k$ over which there is a nontrivial inseparable field extension $k'/k$ not admitting any nontrivial intermediate purely inseparable extensions.

\subsection{Bands Represented by Weil Restrictions}
Recall that a ``reductive'' group in this paper is reductive connected. We take as our starting point the following well-known result:

\begin{thm}\label{thmreddouai}
Every \'etale band $(\Gb, \kappa)$ on $k$ locally represented by a reductive group $\Gb$ is globally representable (by a quasi-split reductive group).
\end{thm}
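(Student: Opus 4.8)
The plan is to reduce to the split case and construct the global representative by twisting the Chevalley form by an outer cocycle read off from $\kappa$, following \cite[V, Prop.~3.2]{Dou76}. The key preliminary observation is that $\Gb$, being reductive over the separably closed field $k_s$, is automatically split: a maximal $k_s$-torus of $\Gb$ is split because the minimal splitting field of a torus is a finite \emph{separable} extension of the base. Hence $\Gb$ is isomorphic to $(G^{\mathrm{spl}})_{k_s}$ for the split $k$-form $G^{\mathrm{spl}}$ attached to the root datum of $\Gb$ (existence and isomorphism theorems for split reductive groups), and we may assume $\Gb = (G^{\mathrm{spl}})_{k_s}$ with a fixed pinning $(T,B,\{X_\alpha\})$ of $G^{\mathrm{spl}}$ defined over $k$. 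By the structure theory of automorphism schemes of reductive groups (see SGA3, Exp.~XXIV, or \cite{CGP15}), this pinning realizes $\out(\Gb)\coloneqq\aut(\Gb)/(\Gb/\mathrm Z_{\Gb})(k_s)$ --- the finite group of automorphisms of the based root datum --- as a subgroup of $\aut(\Gb)$ splitting $\aut(\Gb)\twoheadrightarrow\out(\Gb)$; moreover, since $G^{\mathrm{spl}}$ and its pinning are defined over $k$, these pinned automorphisms commute with the canonical (trivial) section $\Gamma\to\saut(\Gb/k)$ coming from the split $k$-structure. I thus obtain identifications
\[
\saut(\Gb/k)=(\Gb/\mathrm Z_{\Gb})(k_s)\rtimes\bigl(\out(\Gb)\times\Gamma\bigr),\qquad \frac{\saut(\Gb/k)}{(\Gb/\mathrm Z_{\Gb})(k_s)}=\out(\Gb)\times\Gamma.
\]

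Under the second identification, the homomorphism $\kappa$ becomes a section $s\mapsto(\mu(s),s)$ of the projection $\out(\Gb)\times\Gamma\to\Gamma$, for a group homomorphism $\mu\colon\Gamma\to\out(\Gb)$; as $\out(\Gb)$ is finite, $\mu$ factors through $\Gal(K/k)$ for some finite Galois $K/k$, so $\mu$ is continuous. I then lift $\kappa$ to the section $f\colon\Gamma\to\saut(\Gb/k)$, $f_s=(1;\mu(s),s)$. The direct-product structure of $\out(\Gb)\times\Gamma$ makes $f$ an honest homomorphism, and, taking $G^{\mathrm{spl}}_K$ as reference $K$-form (for which the associated section is $\sigma_t=(1;1,t)$ on $\Gamma_K$), a direct computation gives $\sigma_t^{-1}f_s^{-1}f_{st}=1$ for all $s\in\Gamma$ and $t\in\Gamma_K$; hence $f$ is continuous in the sense of Definition \ref{defcont}. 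By Example \ref{exmpgalband}, a continuous homomorphic lift of $\kappa$ descends $\Gb$, via Galois descent, to an algebraic $k$-form $G_0$ with $L(G_0)\simeq L$, so $L$ is globally representable.

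Finally, $G_0$ is the twist of $G^{\mathrm{spl}}$ by the cocycle $s\mapsto(\text{pinned automorphism }\mu(s))\circ(\text{standard Galois action of }s)$. Pinned automorphisms stabilize the pair $(B,T)$, and so does the standard Galois action (because $B$ and $T$ are defined over $k$); hence $(B,T)$ is stable under the descent datum $f$ and descends to a Borel subgroup of $G_0$ over $k$, so $G_0$ is quasi-split. The only genuinely substantial ingredient is the structure theory expressing $\aut(\Gb)$ as an extension of the \emph{constant} finite group $\out(\Gb)$ by the inner automorphisms, compatibly with the split $k$-structure; granting that, constructing $f$ and verifying its continuity and the quasi-splitness of $G_0$ is formal, and the only place requiring care is the bookkeeping distinguishing semiautomorphisms from honest automorphisms.
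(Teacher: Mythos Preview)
Your argument is exactly Borovoi's proof from \cite[\S 3]{Brv93}, which the paper cites in the Remark immediately following the theorem. The paper does not give its own proof: it invokes Douai \cite[V, Prop.~3.2]{Dou76} for the general statement and mentions Borovoi's more concrete retraction argument as an alternative. So in characteristic $0$ your write-up is correct and matches what the paper points to.

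There is, however, a genuine gap in positive characteristic, and the paper's Remark flags precisely this. By Definition~\ref{defetsepband}, for an \emph{\'etale} band the homomorphism $\kappa$ takes values in $\saut(\Gb/k)\big/\bigl(\Gb(k_s)/\mathrm Z_{\Gb}(k_s)\bigr)$, whereas your displayed identification quotients by the larger subgroup $(\Gb/\mathrm Z_{\Gb})(k_s)$. These differ by $\mathrm H^1(k_s,\mathrm Z_{\Gb})$, which is nonzero whenever $\mathrm Z_{\Gb}$ fails to be smooth --- e.g.\ $\Gb=\mathrm{SL}_p$ in characteristic $p$, with $\mathrm Z_{\Gb}=\mu_p$ and $\mathrm H^1(k_s,\mu_p)=k_s^\times/(k_s^\times)^p\neq 0$. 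Consequently your pinned-automorphism lift $f_s=(1;\mu(s),s)$ is a continuous homomorphic lift of the induced \emph{separable} band $(\Gb,\overline\kappa)$, but there is no reason its image in $\saut(\Gb/k)\big/\bigl(\Gb(k_s)/\mathrm Z_{\Gb}(k_s)\bigr)$ equals the given $\kappa$: by Proposition~\ref{propparamlet} the \'etale bands lying over $(\Gb,\overline\kappa)$ form a torsor under $\mathrm H^1(\Gamma,\mathrm H^1(k_s,\mathrm Z_{\Gb}))$, and you have only shown that one of them is representable. The paper says this explicitly: ``When $\mathrm{char}(k)>0$, one sees that the same proof goes through for separable bands, but we do not use this.'' For the \'etale statement in positive characteristic one needs Douai's argument, which works at the level of the automorphism \emph{scheme} rather than its $k_s$-points.
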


\begin{rem}
This was proven by Douai (\cite[V, Prop.\ 3.2]{Dou76}) in much greater generality, over any base scheme and in \'etale, fppf or fpqc topology. A slightly more concrete proof of this fact is given by Borovoi (\cite[\textsection 3]{Brv93}) for \'etale bands over a field $k$ of characteristic $0$, using the canonical retraction $\saut(\Gb/k)\rightarrow(\Gb/\mathrm Z_{\Gb})(k_s)$ given by split $k$-forms. 
When $\mathrm{char}(k) > 0$, one sees that the same proof goes through for separable bands, but we do not use this.
\end{rem}

The main results of this section crucially depend on the following abstract property of fields:

\begin{prop}
Suppose that $[k : k^p] = p$. Then the following statements hold:
\begin{enumerate}[(a)]
    \item If $k'/k$ is purely inseparable (and algebraic), then $k' = k^{p^{-n}}$ for some $n\in\mathbf Z_{\geq 0}\cup\{\infty\}$.
    \item If $K'/k$ is algebraic, then there exist $k',K\subseteq K'$ such that $K/k$ is separable, $k'/k$ is purely inseparable and $K' = Kk' = K\otimes_k k'$ (as $k$-algebras).
\end{enumerate}
\end{prop}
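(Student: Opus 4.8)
\emph{Plan.} The whole argument rests on two elementary facts about the layers $k^{p^{-n}}\subseteq k^{p^{-\infty}}$ ($n\in\mathbf Z_{\geq 0}\cup\{\infty\}$) of the perfect closure of $k$. Since the $p$-power map is a field isomorphism $k^{p^{-n}}\xrightarrow{\sim}k^{p^{-(n-1)}}$ carrying $k^{p^{-(n-1)}}$ onto $k^{p^{-(n-2)}}$, iteration gives $[k^{p^{-n}}:k]=[k:k^p]^{\,n}=p^n$ and $[k^{p^{-n}}:(k^{p^{-n}})^p]=[k^{p^{-n}}:k^{p^{-(n-1)}}]=p$, so every $k^{p^{-n}}$ again has imperfection degree $1$. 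The key lemma is: \emph{if $F$ has imperfection degree $1$ and $E/F$ is a nontrivial purely inseparable algebraic extension, then $F^{p^{-1}}\subseteq E$.} To see this, pick $x\in E\setminus F$, take $\ell\geq 1$ minimal with $x^{p^\ell}\in F$, and note that $y\coloneqq x^{p^{\ell-1}}\in E$ lies in $F^{p^{-1}}\setminus F$; then $F(y)$ is a field strictly between $F$ and $F^{p^{-1}}$, and since $[F^{p^{-1}}:F]=p$ is prime this forces $F^{p^{-1}}=F(y)\subseteq E$.

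For (a) I would embed $k'$ in a perfect closure of $k$ and set $n\coloneqq\sup\{m\in\mathbf Z_{\geq 0}:k^{p^{-m}}\subseteq k'\}$. If $n=\infty$ then $k^{p^{-\infty}}\subseteq k'$, while $k'\subseteq k^{p^{-\infty}}$ because $k'/k$ is purely inseparable, so $k'=k^{p^{-\infty}}$. If $n<\infty$, I apply the key lemma with $F=k^{p^{-n}}$ (imperfection degree $1$) to $k'/k^{p^{-n}}$: were $k'\supsetneq k^{p^{-n}}$, the lemma would give $k^{p^{-(n+1)}}=(k^{p^{-n}})^{p^{-1}}\subseteq k'$, contradicting the choice of $n$; hence $k'=k^{p^{-n}}$.

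For (b) I would treat the finite case $[K':k]<\infty$ first. Let $K$ be the separable closure of $k$ in $K'$, so $K'/K$ is purely inseparable. A degree count using that $x\mapsto x^p$ gives an isomorphism $K\xrightarrow{\sim}K^p$ carrying $k$ to $k^p$ (so $[K^p:k^p]=[K:k]$), combined with $[K:k][k:k^p]=[K:k^p]=[K:K^p][K^p:k^p]$, yields $[K:K^p]=[k:k^p]=p$. Thus $K$ has imperfection degree $1$, and part (a) over the base $K$ gives $K'=K^{p^{-m}}$ for a finite $m\geq 0$. Now $k^{p^{-m}}\subseteq K^{p^{-m}}=K'$ is purely inseparable over $k$ while $K/k$ is separable, so $K$ and $k^{p^{-m}}$ are linearly disjoint over $k$; hence $K\otimes_k k^{p^{-m}}$ is a field, embedding into $K'$ with image the compositum $K\cdot k^{p^{-m}}$, and the degree equality $[K\cdot k^{p^{-m}}:K]=[k^{p^{-m}}:k]=p^m=[K^{p^{-m}}:K]=[K':K]$ forces $K\cdot k^{p^{-m}}=K'$. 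Take $k'\coloneqq k^{p^{-m}}$. For arbitrary algebraic $K'/k$, write $K'=\bigcup_i K'_i$ over finite subextensions, apply the finite case to each to get $K'_i=K_i\otimes_k k'_i$ with $K_i/k$ separable and $k'_i/k$ purely inseparable, and set $K\coloneqq\bigcup_i K_i$, $k'\coloneqq\bigcup_i k'_i$ (the separable and purely inseparable closures of $k$ in $K'$); since each $K_i\otimes_k k'_j$ is a field injecting into $K'$ with image $K_i\cdot k'_j$, passing to the limit gives $K\otimes_k k'=K\cdot k'=K'$.

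The real content is the hypothesis $[k:k^p]=p$: it is precisely what makes purely inseparable extensions of $k$ (and of its finite separable extensions) totally ordered, which is what lets the separable and inseparable parts in (b) split off as an honest tensor product, a statement that fails once the imperfection degree is $\geq 2$. I expect the only mildly delicate points to be the degree identity $[K:K^p]=[k:k^p]$ for $K/k$ finite separable and the linear-disjointness bookkeeping identifying $K\cdot k^{p^{-m}}$ with $K^{p^{-m}}$; the reduction of the general case of (b) to the finite one is routine.
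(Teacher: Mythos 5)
Your proof is correct. The overall architecture of part (b) matches the paper's: take $K=k_s\cap K'$, check that $K$ again has imperfection degree $1$, and apply (a) to the purely inseparable extension $K'/K$. The differences are in the supporting steps. For (a), the paper argues element-by-element: the minimal polynomial of $\alpha\in k'$ is $X^{p^n}-\alpha^{p^n}$, so $k(\alpha)\subseteq k^{p^{-n}}$ with $[k(\alpha):k]=p^n=[k^{p^{-n}}:k]$, forcing $k(\alpha)=k^{p^{-n}}$ and then $k'=\bigcup_\alpha k(\alpha)$; you instead run a maximality argument on $n=\sup\{m:k^{p^{-m}}\subseteq k'\}$ via your key lemma that a nontrivial purely inseparable extension of a field of imperfection degree $1$ must contain the first layer $F^{p^{-1}}$. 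Both hinge on the same degree count $[k^{p^{-n}}:k]=p^n$, and yours is a clean, self-contained alternative. For (b), the paper invokes Matsumura (Thms.\ 25.3, 26.7) to get $[K:K^p]=[k:k^p]$ for the possibly infinite separable extension $K/k$ in one stroke, whereas you prove this identity by an elementary degree count valid for finite extensions and then handle the general case by a filtered-colimit reduction; this buys self-containedness at the cost of the extra (routine, and correctly handled) limit bookkeeping. You are also more explicit than the paper about the linear-disjointness step identifying $K\cdot k^{p^{-m}}$ with $K^{p^{-m}}=K'$ and yielding the tensor-product description, which the paper leaves implicit.
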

\begin{prf}
For (a), recall that the minimal polynomial over $k$ of an element $\alpha\in k'$ is of the form $X^{p^n}-\alpha^{p^n}\in k[X]$ for some $n = n(\alpha)$. Then $k(\alpha)\subseteq k^{p^{-n}}$ and $[k(\alpha):k] = p^n$. By assumption, $[k^{p^{-n}} : k] = p^n$ and thus $k(\alpha) = k^{p^{-n}}$. Finally, $k' = \bigcup_{\alpha\in k'}k(\alpha)$.

For (b), first let $K\coloneqq k_s\cap K'$, so that $K/k$ is separable. We have $[K : K^p] = [k : k^p] = p$ by \cite[Thms.\ 25.3, 26.7]{Mat90}, so applying (a) to $K'/K$ gives $K' = K^{p^{-n}}$ for some $n\in\mathbf Z_{\geq 0}\cup\{\infty\}$. Choosing $k'\coloneqq k^{p^{-n}}$ then gives $Kk' = K^{p^{-n}} = K'$ by comparing degrees over $K$.
\end{prf}

In particular, for an arbitrary algebraic field extension $k'_s/k_s$, the extension $k'_s/k$ is normal. Moreover, if $k'/k$ is such that $k'_s = k_s\otimes_k k'$, then $\Gal(k'_s/k') = \Gal(k_s/k)\eqqcolon\Gamma$.

\begin{prop}\label{propsemiautpsred}
Suppose that $[k : k^p] = p$ and let $k'_s/k_s$ be a finite field extension. Let $\Ga$ be a pseudo-reductive group over $k'_s$ and denote by $\Gb = \mathrm R(\Ga)$ its Weil restriction to $k_s$. Then there is a natural commutative diagram
\begin{center}\begin{tikzcd}
    \Ga(k'_s)\arrow[r]\arrow[d, equal] & \aut(\Ga)\arrow[r, hook]\arrow[d, "\rotatebox{90}{$\sim$}", pos=0.4] & \saut(\Ga/k')\arrow[d, "\rotatebox{90}{$\sim$}", pos=0.4]\\
    \Gb(k_s)\arrow[r] & \aut(\Gb)\arrow[r, hook] & \saut(\Gb/k)
\end{tikzcd}\end{center}
where the left horizontal maps are given by conjugation and the vertical maps are isomorphisms.
\end{prop}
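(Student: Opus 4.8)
The plan is to assemble the diagram from the Weil-restriction adjunction together with the structure theory of pseudo-reductive groups recalled in Appendix~A, the one substantive input being the behaviour of automorphisms under Weil restriction; the only genuinely \emph{new} work is upgrading this from automorphisms to semiautomorphisms. For notation: since $k_s$ is separably closed and $\mathrm{char}(k)=p>0$, the finite extension $k'_s/k_s$ is purely inseparable, and by the preceding proposition there is a purely inseparable $k'/k$ with $k'_s=k'\otimes_k k_s$ and $\Gal(k'_s/k')=\Gal(k_s/k)=:\Gamma$. The equality $\Ga(k'_s)=\Gb(k_s)$ is the adjunction between $\mathrm R_{k'_s/k_s}(-)$ and base change along $k_s\hookrightarrow k'_s$; more generally $\Gb(R)=\Ga(R\otimes_{k_s}k'_s)$ for every $k_s$-algebra $R$. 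The middle vertical map is the homomorphism $\aut(\Ga)\to\aut(\Gb)$ obtained by applying the functor $\mathrm R_{k'_s/k_s}(-)$ to automorphisms, and the right vertical map will be its semiautomorphism analogue, constructed below.

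Commutativity of the left square is formal: for $g\in\Ga(k'_s)=\Gb(k_s)$ one checks on $R$-points, using $\Gb(R)=\Ga(R\otimes_{k_s}k'_s)$, that $\mathrm R_{k'_s/k_s}(\mathrm{int}_{\Ga}(g))=\mathrm{int}_{\Gb}(g)$, so conjugation is intertwined by the middle map. That the middle map is an \emph{isomorphism} is where pseudo-reductivity is indispensable: it fails already for $\Ga=\mathbf G_a$, whose Weil restriction along $k_s^{1/p}/k_s$ is a vector group with a far larger automorphism group. Here I would invoke the fact, recorded in Appendix~A (following \cite{CGP15}), that Weil restriction along a finite extension of fields is fully faithful on pseudo-reductive groups; the middle isomorphism is the case of this statement applied to $\Ga$, and the commutativity of the right square will similarly be built into the construction.

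For the right vertical map the key point is that $\mathrm R_{k'_s/k_s}$ commutes with the Galois twists in sight. Given $s\in\Gamma$, the square of spectra whose vertical arrows are $\Spec k'_s\to\Spec k_s$ and whose horizontal arrows are $\Spec(s)$ and $\Spec$ of the extension of $s$ to $k'_s$ is Cartesian, since $k'_s=k'\otimes_k k_s$ with $s$ acting through the right-hand factor; base-change compatibility of Weil restriction then yields a canonical isomorphism $s_*\Gb\cong\mathrm R_{k'_s/k_s}(s_*\Ga)$. Hence $\mathrm R_{k'_s/k_s}(-)$ sends a $(k'_s/k')$-semiautomorphism $(\alpha,s)$, with $\alpha:s_*\Ga\xrightarrow{\sim}\Ga$, to the $(k_s/k)$-semiautomorphism $(\mathrm R_{k'_s/k_s}(\alpha),s)$; functoriality makes this a homomorphism $\saut(\Ga/k')\to\saut(\Gb/k)$ lying over $\mathrm{id}_\Gamma$ and restricting to the middle map on $\aut$. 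To see it is bijective I would compare the two left-exact sequences \eqref{eqdefsemiaut}: injectivity is immediate from injectivity of $\aut(\Ga)\to\aut(\Gb)$ together with the projections to $\Gamma$, and surjectivity follows by reversing the construction — an arbitrary $(\beta,t)\in\saut(\Gb/k)$ has $\beta\in\mathrm{Isom}(t_*\Gb,\Gb)=\mathrm{Isom}(\mathrm R_{k'_s/k_s}(t_*\Ga),\mathrm R_{k'_s/k_s}(\Ga))$, and full faithfulness, applied to the pseudo-reductive groups $t_*\Ga$ and $\Ga$, identifies this with $\mathrm{Isom}(t_*\Ga,\Ga)$, producing the required preimage $(\alpha,t)$.

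The main obstacle is the middle isomorphism $\aut(\Ga)\cong\aut(\Gb)$: this is not formal and rests entirely on the structure theory of pseudo-reductive groups cited from Appendix~A. Everything else is bookkeeping, the only delicate points being the identity $\Gal(k'_s/k')=\Gamma$ — which is precisely what the hypothesis $[k:k^p]=p$ buys us — and the base-change compatibility of Weil restriction along the twists $\Spec(s)$.
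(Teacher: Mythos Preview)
Your overall strategy matches the paper's: reduce everything to the middle isomorphism $\aut(\Ga)\cong\aut(\Gb)$, and handle the right column by observing that the semiautomorphism groups sit in compatible extensions of $\Gamma$ by the automorphism groups. Your treatment of the right vertical map, via the canonical identification $s_*\Gb\cong\mathrm R_{k'_s/k_s}(s_*\Ga)$ coming from the Cartesian square of $\Spec(s)$'s, is in fact more explicit than the paper's, which just says the construction is ``analogous'' and an extension of $\mathrm{id}_\Gamma$ by $Q$.

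There is, however, a gap at the crucial step. You invoke ``the fact, recorded in Appendix~A, that Weil restriction along a finite extension of fields is fully faithful on pseudo-reductive groups.'' No such statement appears in Appendix~A; what \emph{is} recorded there (Proposition~\ref{propweilrestrunip}) is that the counit $q:\Gb_{k'_s}\to\Ga$ is a smooth surjection whose kernel, for $k'_s/k_s$ purely inseparable, is connected and unipotent. The paper uses precisely this to build an explicit inverse $P$ to $Q=\mathrm R_{k'_s/k_s}$: given $\alpha\in\aut(\Gb)$, base change to $k'_s$ and observe that $\alpha_{k'_s}$ must preserve $\ker(q)$, since this kernel is exactly the unipotent radical of $\Gb_{k'_s}$ (this is where pseudo-reductivity of $\Ga$ enters, as you correctly diagnose); hence $\alpha_{k'_s}$ descends along $q$ to $P(\alpha)\in\aut(\Ga)$. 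The identities $Q\circ P=\mathrm{id}$ and $P\circ Q=\mathrm{id}$ then follow from the triangle identities of the adjunction. Your proposal stops short of this argument; you should either supply it or give a precise external reference for the full-faithfulness claim, which does not seem to be stated verbatim in \cite{CGP15} either, though it follows from these same ingredients.
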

\begin{prf}
The middle vertical arrow $Q : \aut(\Ga)\rightarrow\aut(\Gb)$ is defined by the functoriality of Weil restriction (which preserves the group structures on $\Ga$, $\Gb$). This also shows commutativity of the left square, because $\Ga(k'_s)$ is the set of sections of the structure morphism $\Ga\rightarrow\Spec(k'_s)$ and $\mathrm R(\Spec(k'_s)) = \Spec(k_s)$. The remaining vertical arrow $\saut(\Ga)\rightarrow\saut(\Gb)$ is analogous and an extension of $\mathrm{id}_\Gamma : \Gamma\rightarrow\Gamma$ by $Q$, so it suffices to show that $Q$ is an isomorphism.

We define an inverse $P$ to $Q$: Pick $\alpha\in\aut(\Gb)$ and consider $\alpha_{k'_s}\in\aut(\smash{\Gb_{k'_s}})$. The adjoint counit $q : \smash{\Gb_{k'_s}}\rightarrow\Ga$ is a surjection and $\ker(q)$ is exactly the unipotent radical by Proposition \ref{propweilrestrunip}, thus invariant under automorphisms of $\Gb$. Therefore $\alpha_{k'_s}$ induces via the surjective map $q$ an automorphism $P(\alpha)\in\aut(\Ga)$. Moreover, as the following commutative diagram shows
\begin{center}\begin{tikzcd}[row sep = tiny]
& &&& \Gb_{k'_s}\arrow[rrr, "q"] & \arrow[ddddd, "\mathrm{R}", shorten < = 3mm, shorten > = 4mm, squiggly={pre length=10pt, post length=15pt}] && \Ga\\
&&& \Gb_{k'_s}\arrow[rrr, "q"]\arrow[ur, "\alpha_{k'_s}"] &&& \Ga\arrow[ur, "P(\alpha)", swap, pos=0.3]\\
\; \\
\; \\
& \Gb\arrow[rrr, "\iota"] &&& \mathrm{R}(\Gb_{k'_s})\arrow[rrr, "\mathrm{R}(q)"] &&& \Gb\\
\Gb\arrow[rrr, "\iota"]\arrow[ur, "\alpha"] &&& \mathrm{R}(\Gb_{k'_s})\arrow[rrr, "\mathrm{R}(q)"]\arrow[ur, "\mathrm{R}(\alpha_{k'_s})", pos=0.2] && \; & \Gb\arrow[ur, "\mathrm{R}(P(\alpha)) \;=\; Q(P(\alpha))", swap]
\end{tikzcd}\end{center}
the automorphisms $\alpha$ and $Q(P(\alpha))$ are related by the identity map $\mathrm{id}_{\Gb} = \mathrm{R}(q)\circ\iota$ coming from the definition of an adjoint pair, and thus they agree.
Finally, if $\alpha'\in\aut(\Ga)$, then it is related to $Q(\alpha')_{k'_s}$ by $q$ (this can be checked on the functor of points, using Proposition \ref{propweiladjmorph}
) and we thus have an equality $P(Q(\alpha')) = \alpha'$.
\end{prf}

\begin{cor}\label{correprred}
In the notation of the above proposition, let $\Ga$ be a reductive group over $k'_s$. Then every \'etale band on $k$ of the form $(\Gb,\kappa)$ is globally representable.
\end{cor}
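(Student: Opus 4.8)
The plan is to reduce the statement to the reductive case, Theorem~\ref{thmreddouai}, via the Weil-restriction dictionary of Proposition~\ref{propsemiautpsred}. As in the discussion preceding that proposition, fix the purely inseparable extension $k'/k$ with separable closure $k'_s$ and $k'_s = k_s\otimes_k k'$, so that $\Gal(k'_s/k') = \Gal(k_s/k) = \Gamma$; recall also that $\Gb = \mathrm R_{k'_s/k_s}(\Ga)$ with $\Ga$ reductive over $k'_s$. The idea is to show that the datum of an \'etale band $(\Gb,\kappa)$ on $k$ is ``the same as'' that of an \'etale band $(\Ga,\kappa')$ on $k'$, compatibly with global representability, and then invoke Theorem~\ref{thmreddouai} over $k'$.

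First I would promote the isomorphism $\saut(\Ga/k')\xrightarrow{\ \sim\ }\saut(\Gb/k)$ of Proposition~\ref{propsemiautpsred} to an isomorphism of the quotients by the inner parts. The image of the conjugation map $\Ga(k'_s)\to\aut(\Ga)$ is $\Ga(k'_s)/\mathrm Z_{\Ga}(k'_s) = (\pi_*\Ga/\pi_*\mathrm Z_{\Ga})(k'_s)$, the kernel of conjugation being exactly the center of the group of points; likewise over $k_s$. By commutativity of the left square in Proposition~\ref{propsemiautpsred}, the isomorphism carries one image onto the other, hence descends to an isomorphism $\saut(\Ga/k')/(\Ga(k'_s)/\mathrm Z_{\Ga}(k'_s))\xrightarrow{\ \sim\ }\saut(\Gb/k)/(\Gb(k_s)/\mathrm Z_{\Gb}(k_s))$ over the common projection to $\Gamma$. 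Therefore $\kappa$ corresponds to a homomorphism $\kappa'\colon\Gamma = \Gal(k'_s/k')\to\saut(\Ga/k')/(\Ga(k'_s)/\mathrm Z_{\Ga}(k'_s))$.

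The one genuinely delicate point is to check that $\kappa'$ again admits a \emph{continuous} lift, so that $(\Ga,\kappa')$ really is an \'etale band on $k'$ in the sense of Definition~\ref{defetsepband}. A continuous lift $f$ of $\kappa$ exists by hypothesis; its image under $\saut(\Gb/k)\to\saut(\Ga/k')$ is a lift of $\kappa'$, and I would verify its continuity using the naturality of Proposition~\ref{propsemiautpsred} with respect to finite separable subextensions: for $K/k$ finite separable and $K' := K\otimes_k k'$, a $K'$-form of $\Ga$ Weil-restricts to a $K$-form of $\Gb$, and $\Gal(k'_s/K') = \Gal(k_s/K)$, so the ``difference'' functions $t\mapsto\sigma_t^{-1}f_s^{-1}f_{st}$ appearing in Definition~\ref{defcont} are transported along a single fixed group isomorphism between (identical) locally constant targets, hence remain locally constant. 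This bookkeeping of fields and Galois groups is where the assumption $[k:k^p]=p$ is used, and I expect it to be the main obstacle; conceptually nothing else happens.

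Finally, $\Ga$ is reductive over $k'_s$, so Theorem~\ref{thmreddouai} applies over $k'$: the \'etale band $(\Ga,\kappa')$ is globally representable, i.e.\ $\kappa'$ lifts to a \emph{homomorphism} $\Gamma\to\saut(\Ga/k')$, coming from a quasi-split reductive $k'$-group $G'_0$ with $(G'_0)_{k'_s}\cong\Ga$. Transporting this homomorphic lift back along $\saut(\Ga/k')\xrightarrow{\ \sim\ }\saut(\Gb/k)$ produces a homomorphic lift of $\kappa$, which by Galois descent (Example~\ref{exmpgalband}) exhibits $(\Gb,\kappa)$ as globally represented --- concretely by the Weil restriction $\mathrm R_{k'/k}(G'_0)$, which is a $k$-form of $\Gb$ (pseudo-reductive, not reductive, once $k'\neq k$). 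Thus once the identifications $\Gamma = \Gal(k'_s/k') = \Gal(k_s/k)$ and the continuity transfer are in place, the corollary is an immediate consequence of Theorem~\ref{thmreddouai}.
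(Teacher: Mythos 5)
Your proposal is correct and follows essentially the same route as the paper: identify $\saut(\Ga/k')\cong\saut(\Gb/k)$ via Proposition~\ref{propsemiautpsred}, check that the inner parts $\Ga(k'_s)/\mathrm Z_\Ga(k'_s)$ and $\Gb(k_s)/\mathrm Z_\Gb(k_s)$ correspond (the paper cites $\mathrm Z_\Gb=\mathrm R(\mathrm Z_\Ga)$ from Proposition~\ref{propweilrestrunip} where you use that the kernel of conjugation is the center of the group of points — both work since the groups are smooth), transfer the band datum to $(\Ga,\kappa')$, and apply Theorem~\ref{thmreddouai} over $k'$. Your extra care with the continuity transfer fills in a step the paper treats as immediate, and your identification of the global representative as $\mathrm R_{k'/k}(G'_0)$ is consistent with what the paper leaves implicit.
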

\begin{prf}
There is the following commutative diagram with exact rows
\begin{center}\begin{tikzcd}
    1\arrow[r] & \dfrac{\Ga(k'_s)}{\mathrm Z_\Ga(k'_s)}\arrow[r]\arrow[d, "\rotatebox{90}{$\sim$}", pos=0.4] & \saut(\Ga/k')\arrow[r, hook]\arrow[d, "\rotatebox{90}{$\sim$}", pos=0.4] & \dfrac{\saut(\Ga/k')}{\Ga(k'_s)/\mathrm Z_\Ga(k'_s)}\arrow[d, "\rotatebox{90}{$\sim$}", pos=0.4]\arrow[r] & 1\\
    1\arrow[r] & \dfrac{\Gb(k_s)}{\mathrm Z_\Gb(k_s)}\arrow[r] & \saut(\Gb/k)\arrow[r, hook] & \dfrac{\saut(\Gb/k)}{\Gb(k_s)/\mathrm Z_\Gb(k_s)}\arrow[r] & 1
\end{tikzcd}\end{center}
and vertical isomorphisms; the formation of Weil restrictions commutes with taking centers by Proposition \ref{propweilrestrunip}, so $\mathrm Z_\Gb = \mathrm R(\mathrm Z_\Ga)$. The data of the band $(\Gb,\kappa)$ clearly agrees with the data of a band of the form $(\Ga,\kappa')$. Now, by Theorem \ref{thmreddouai}, the second band admits a continuous lift which is a homomorphism, and hence so does the first one.
\end{prf}

The above proof is made short by the identification of $\Gal(k'_s/k')$ = $\Gal(k_s/k)$, which we have seen is possible when $[k : k^p] = p$. We now show that this representability fails in general without such an assumption (and even for separable bands).

For this, suppose that $\Gb$ is a perfect pseudo-reductive group (that is, $\mathcal D(\Gb) = \Gb$; what is also called a \textit{pseudo-semisimple group} in \cite{CP15}). It is shown in \cite[Prop.\ 6.2.2]{CP15} that the group sheaf $\mathcal{Aut}_\Gb$ on $k_s$ is representable by an affine algebraic group, which then admits a maximal smooth algebraic subgroup $\mathcal{Aut}^\mathrm{sm}_\Gb$ and for which $\mathcal{Aut}^\mathrm{sm}_\Gb(k_s) = \mathcal{Aut}_\Gb(k_s) = \aut(\Gb)$ (cf.\ \cite[Lem.\ C.4.1]{CGP15}). The connected-\'etale sequence of $\mathcal{Aut}^\mathrm{sm}_\Gb$ then gives a short exact sequence on $k_s$-points, where $E_\Gb$ is a finite \'etale group,
$$1\longrightarrow \big(\mathcal{Aut}^\mathrm{sm}_\Gb\big)^0(k_s)\longrightarrow \aut(\Gb)\longrightarrow E_\Gb(k_s)\longrightarrow 1$$
and there is also a short exact sequence of algebraic groups shown in \cite[Prop.\ 6.2.4]{CP15}
$$1\longrightarrow \Gb/\mathrm Z_\Gb\longrightarrow \big(\mathcal{Aut}^\mathrm{sm}_\Gb\big)^0\longrightarrow \frac{Z_{\Gb,\,\Cb}}{\Cb/\mathrm Z_\Gb}\longrightarrow 1$$
where $\Cb$ is a fixed Cartan subgroup of $\Gb$, and $Z_{\Gb,\,\Cb} = \mathcal{Aut}^\mathrm{sm}_{\Gb,\,\Cb}$ is the maximal smooth algebraic subgroup of the commutative and representable functor of automorphisms of $\Gb$ fixing $\Cb$. Since all of these groups are smooth, this in particular shows that the following sequence is exact:
\begin{equation}\label{eqautcp}
    1\longrightarrow \frac{Z_{\Gb,\,\Cb}}{\Cb/\mathrm Z_\Gb}(k_s)\longrightarrow \frac{\aut(\Gb)}{(\Gb/\mathrm Z_\Gb)(k_s)}\longrightarrow E_\Gb(k_s)\longrightarrow 1
\end{equation}
In \cite[\textsection6.3]{CP15}, the map $E_\Gb(k_s)\hookrightarrow\aut(\mathrm{Dyn}(\Gb))$ to the automorphism group of the ``canonical Dynkin diagram'' is introduced, shown to be injective, and even a bijection when the group $\Gb$ is absolutely pseudo-simple, as will be the case in the following example:

\begin{exmp}\label{exmprepringen}
We choose a field $k$ and an extension $k'_s/k_s$ such that there does not exist $k'/k$ with $k'_s = k_sk'$ (take any finite extension $k''/k$ which admits no purely inseparable~subextension $k'/k$ such that $k''/k'$ is separable; it then suffices to take $k'_s\coloneqq k_sk''$). In this situation, the extension $k'_s/k$ is not normal (cf. \cite[V, Prop.\ 6.11]{Lan02}), and the automorphisms $t\in\Gamma\coloneqq\Gal(k_s/k)$ which extend to automorphisms of $k'_s/k$ form a proper open subgroup $\Gamma_K\subsetneq\Gamma$ of finite~index (corresponding to a finite separable extension $K/k$ over which there exists $K'/K$ such that $k'_s = k_sK'$). We may also assume that $k'_s\subseteq k_s^{1/p^n}$ for simplicity. Over any field~$k$~which~admits this setup, we will now exhibit a separable band $(\Gb,\kappa)$, locally represented by a pseudo-reductive group $\Gb$, but not globally representable. The proof follows several steps:
\smallskip

\textit{Step 1:} We take $\Ga\coloneqq\mathrm{SL}_{p^n\!,\,k_s}$ and $\Gb\coloneqq\mathrm R(\Gaa)$ over $k_s$. As $\Ga$ is absolutely simple,~semisimple and simply connected, the group $\Gb$ is perfect by \cite[Cor.\ A.7.11]{CGP15}. Moreover, we claim that there exists no algebraic group $G$ over $k$ such that $G_{k_s}\simeq\Gb$: Indeed, by Theorem \ref{thmgenstandfunct}(b), this would imply the existence of an extension $k'/k$ such that $k'_s = k_s\otimes_k k'$, which is a contradiction. It now remains to construct any separable band on $k$ which is of the form $(\Gb,\kappa)$.

\textit{Step 2:} For this, we first note that the naive attempt at an assignment 
\begin{equation}\label{eqmapinexmpdynk}
    f_t\longmapsto\mathrm R_{k'_s/k_s}(f_{t,\,k'_s})
\end{equation}
between $\saut(\Ga/k)$ and $\saut(\Gb/k)$ is well-defined only for $t\in\Gamma_K$, in the above notation. However, there is a canonical isomorphism $\mathrm{Dyn}(\Ga)\cong\mathrm{Dyn}(\Gb)\eqqcolon D$ (analogous to the based root datum, see \cite[Rem.\ 6.3.6,\, Exmp.\ 6.3.8]{CP15}) and we may define $\saut(D/k)$ as the group of ``semiautomorphisms'' $(s,\; \alpha: s_*D\rightarrow D)$ with $s\in\Gamma$. Here $s_*D$ is defined naturally by pulling back the data defining $D$ (on $\Ga$, equivalently $\Gb$; see \cite[pp.\ 624, 625]{Mil17} for the definitions). In particular, there are two (a priori unrelated) canonical isomorphisms $s_*D\cong\mathrm{Dyn}(s_*\Ga)$ and $s_*D\cong\mathrm{Dyn}(s_*\Gb)$. In view of the discussion following \eqref{eqautcp}, we have just defined isomorphisms (which commute with projections to $\Gamma$):
$$\frac{\saut(\Ga/k)}{(\Ga/\mathrm Z_\Ga)(k_s)}\xrightarrow{\;\;\sim\;\;}\saut(D/k)\xleftarrow{\;\;\sim\;\;}\frac{\saut(\Gb/k)}{\big(\mathcal{Aut}^\mathrm{sm}_\Gb\big)^0(k_s)}$$
There is an obvious band $(\Ga,\kappa')$ represented globally by a split $k$-form $G'$ of $\Ga$ over $k$ (given by the Existence Theorem for split reductive groups) with an associated continuous lift $f$ which is a homomorphism. It defines a homomorphic section $\overline{\kappa} : \Gamma\longrightarrow\saut(\Gb/k)/\big(\mathcal{Aut}^\mathrm{sm}_\Gb\big)^0(k_s)$. We pick a lift $\overline{f} : \Gamma\rightarrow\saut(\Gb/k)$ of $\overline{\kappa}$, which we can assume to be ``continuous'' (in the sense that $\overline{f}_{st} = \overline{f}_{s}\cdot\mathrm R(f_{t,\,k'_s})$ for $t\in\Gamma_K$, by \eqref{eqmapinexmpdynk}), but not a homomorphism in general.

In fact, we remark that the full power of the Existence Theorem is not actually necessary for this step: The section $\overline{\kappa}$ is already given by the fact that $s_*D$ and $D$ are canonically isomorphic (by taking each vertex $s_*v$ to $v$). A continuous lift $\overline{f}$ also exists since $\Gb$ is of finite type, so we may take an extension $k_0/k$ and some $k_0$-form $G_{k_0}$ which is \textit{pseudo-split} (i.e.\ admits a split maximal torus) and thus locally agrees with $\overline{\kappa}$.

\textit{Step 3:} We will now modify $\overline{f}$ so as to lift the section $\overline{\kappa}$ to a band $(\Gb,\kappa)$. Arguing as in the proof of Proposition \ref{propparamlet}, the obstruction to this lifting problem lies in $\mathrm{H}^2(\Gamma, U(k_s))$, where
$$U(k_s)\coloneqq\ker\left(\frac{\saut(\Gb/k)}{(\Gb/\mathrm Z_\Gb)(k_s)}\longrightarrow\frac{\saut(\Gb/k)}{\big(\mathcal{Aut}^\mathrm{sm}_\Gb\big)^0(k_s)}\right)\cong\frac{Z_{\Gb,\,\Cb}}{\Cb/\mathrm Z_\Gb}(k_s)$$
and the isomorphism on the right comes from \eqref{eqautcp}. Moreover, the set of all such bands $(\Gb,\kappa)$, if nonempty, is parametrized by the group $\mathrm{H}^1(\Gamma,U(k_s))$. We may assume that $\Cb = \mathrm R(\Caa)$ for some Cartan subgroup $\Ca$ of $\Ga$, and recall that then $Z_{\Ga,\,\Ca} = \Ca/\mathrm Z_{\Ga}$ because $\Ga$ is reductive. This allows us to calculate, for the projection $\pi : \Spec(k'_s)\rightarrow\Spec(k_s)$, that
$$U\coloneqq\frac{Z_{\Gb,\,\Cb}}{\Cb/\mathrm Z_\Gb} = \frac{Z_{\mathrm{R}(\Gaa),\,\mathrm{R}(\Caa)}}{\mathrm{R}(\Caa)/\mathrm Z_{\mathrm{R}(\Gaa)}}
\cong\frac{\mathrm{R}\!\left(\big(Z_{\Ga,\,\Ca}\big)_{k'_s}\right)}{\mathrm{R}(\Caa)/\mathrm Z_{\mathrm{R}(\Gaa)}} = \frac{\mathrm{R}\!\left(\Caa/\mathrm Z_{\Ga,\,k'_s}\right)}{\mathrm{R}(\Caa)/\mathrm{R}(\mathrm Z_{\Ga,\,k'_s})}\cong \mathrm R^1\pi_*\big(\mathrm Z_{\Ga,\,k'_s}\big)$$
where the first isomorphism comes from \cite[Prop.\ 6.1.7]{CP15}. The second isomorphism is shown as in Example \ref{exmpweilrestr}, in which it is also proven that $U\cong\mathrm R^1\pi_*(\mu_{p^n,\,k'_s})$ is a unipotent group. In particular, $\mathrm{H}^2(\Gamma, U(k_s)) = 0$ by Proposition \ref{propunipvanish}.

We have thus shown the existence of a separable band $(\Gb,\kappa)$ on $k$ with the desired properties. As an additional remark, note that $\mathrm{H}^1(\Gamma,U(k_s)) = \mathrm{H}^1(k, U)$ is nonzero for general $k$ in view of Example \ref{exmpweilrestr}, in which case the lifting problem in Step 3 even has multiple solutions. 
\end{exmp}

\subsection{Reduction Steps in Low Characteristic}
We work over a field $k$ such that $[k : k^p] = p$. To extend the statement of Corollary \ref{correprred} to an arbitrary pseudo-reductive group $\Gb$ over $k_s$, we employ the structure theory recalled in Subsections \ref{ssectpsred2} and \ref{ssectpsred3}: There is a factorization $\Gb = \Gb_1\times\Gb_2$ uniquely functorial with respect to isomorphisms in $\Gb$, where $\Gb_1$ is generalized standard and $\Gb_2$ is totally non-reduced (and occurs only when $p = 2$). It suggests the treatment of two important and, in practice, similar cases in our proof before the rest; those are the totally non-reduced case and the ``primitive'' case of generalized standard groups (which may involve basic exotic groups when $p\in\{2,3\}$), respectively:

\begin{lem}\label{lemreprtotnonred}
If $\Gb$ is a totally non-reduced pseudo-reductive group over $k_s$, then every \'etale band over $k$ of the form $(\Gb,\kappa)$ is globally representable.
\end{lem}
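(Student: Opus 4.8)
The totally non-reduced case forces $p = 2$, and I would begin by reducing to a pseudo-simple building block. By the structure theory of totally non-reduced pseudo-reductive groups recalled in Subsections \ref{ssectpsred2} and \ref{ssectpsred3} (following \cite{CGP15}), $\Gb$ is, functorially with respect to isomorphisms, a direct product $\prod_i \mathrm R_{K_i/k_s}(\mathcal B_i)$, where each $K_i/k_s$ is a finite extension and each $\mathcal B_i$ is a basic non-reduced pseudo-simple group over $K_i$. Since this factorization is canonical, any $(k_s/k)$-semiautomorphism of $\Gb$ permutes the factors compatibly with the projection to $\Gamma$. I would then group the factors into $\Gamma$-orbits, combine the factors in a single orbit into one Weil restriction by an induction/Shapiro-type argument, and apply the reduction of Corollary \ref{correprred} to each resulting factor — using the structural property of fields with $[k:k^p]=p$ established above to identify $\Gal(K/k') = \Gamma$ for a suitable purely inseparable $k'/k$, and Proposition \ref{propsemiautpsred} to transport the band along the Weil restriction. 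This reduces the lemma to the following statement: if $k'$ is a field with $[k':k'^p] = p$ and $\mathcal B$ is a basic non-reduced pseudo-simple group over $(k')_s$, then every \'etale band $(\mathcal B,\kappa')$ on $k'$ is globally representable.

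To prove this, I would first produce a canonical global representative. Since $(k')_s/k'$ is separable, $[(k')_s:((k')_s)^2] = [k':k'^2] = p = 2$, and a basic non-reduced pseudo-simple group of a given rank over a field of imperfection degree $1$ is unique up to isomorphism — its construction involves no choices beyond the rank and the forced purely inseparable extension of exponent $1$ (see \cite{CGP15} and Subsection \ref{ssectpsred3}). Hence $\mathcal B$ is isomorphic to the base change to $(k')_s$ of the pseudo-split basic non-reduced pseudo-simple group $\mathcal B_0$ of the same rank over $k'$; fix such an isomorphism. The band $\kappa'_0$ of the $k'$-group $\mathcal B_0$ is then globally representable by construction, which also shows the set of \'etale bands $(\mathcal B,\kappa')$ is nonempty.

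It remains to show that every \'etale band $(\mathcal B,\kappa')$ is the band of some $k'$-form of $\mathcal B_0$. The Dynkin diagram of $\mathcal B_0$ is of type $BC_n$, hence has no nontrivial automorphisms, so in the notation of the exact sequences preceding Example \ref{exmprepringen} (which apply since $\mathcal B_0$ is pseudo-semisimple, cf.\ \cite{CP15}) one has $E_{\mathcal B_0} = 1$. Thus the outer part $\mathcal V_0 := \mathcal{Aut}^{\mathrm{sm}}_{\mathcal B_0}/(\mathcal B_0/\mathrm Z_{\mathcal B_0})$ is a smooth connected commutative unipotent $k'$-group, and the central extension $1 \to \mathcal B_0/\mathrm Z_{\mathcal B_0} \to \mathcal{Aut}^{\mathrm{sm}}_{\mathcal B_0} \to \mathcal V_0 \to 1$ splits over $k'$ (for instance via a $k'$-pinning of $\mathcal B_0$). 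On the \'etale site over the separably closed field $(k')_s$ we have $(\mathcal B_0/\mathrm Z_{\mathcal B_0})((k')_s) = \mathcal B_0((k')_s)/\mathrm Z_{\mathcal B_0}((k')_s)$, which is exactly the group of inner automorphisms of $\mathcal B$; so arguing as in Proposition \ref{propparamlet}, the set of \'etale bands $(\mathcal B,\kappa')$ is a torsor under $\mathrm H^1(\Gamma, \mathcal V_0((k')_s))$ relative to $\kappa'_0$, and twisting $\mathcal B_0$ along the $\Gamma$-equivariant image of a representing cocycle under $\mathcal V_0 \hookrightarrow \mathcal{Aut}^{\mathrm{sm}}_{\mathcal B_0}$ yields a $k'$-form whose band is $\kappa'$. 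Therefore $(\mathcal B,\kappa')$ is globally representable, which completes the proof.

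I expect the main obstacle to be the precise structure of $\mathcal{Aut}_{\mathcal B_0}$ for a basic non-reduced pseudo-simple group — in particular the facts that its outer part has trivial component group (no diagram automorphisms) and splits off as a smooth connected commutative subgroup over $k'$ — together with the uniqueness and $k'$-descent of basic non-reduced pseudo-simple groups over fields of imperfection degree $1$; both rest on \cite{CGP15}, \cite{CP15} and should be isolated cleanly in Appendix A before being invoked here.
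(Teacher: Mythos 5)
Your opening reduction (decompose via Proposition \ref{propredbasnonred}(a), group the factors, use $[k:k^p]=p$ and Proposition \ref{propsemiautpsred} to pass through the Weil restriction, and land on a single basic non-reduced block over a field $k'$ with $\Gal((k')_s/k')=\Gamma$) matches the paper's first paragraph. The endgame, however, has a genuine gap. Your argument that \emph{every} band $(\mathcal B,\kappa')$ is the band of a twist of the canonical $k'$-model $\mathcal B_0$ rests entirely on the claim that the extension $1\to\mathcal B_0/\mathrm Z_{\mathcal B_0}\to\mathcal{Aut}^{\mathrm{sm}}_{\mathcal B_0}\to\mathcal V_0\to 1$ splits over $k'$, ``for instance via a $k'$-pinning.'' That is the load-bearing step: it is what lets you lift a $1$-cocycle valued in the outer group $\mathcal V_0((k')_s)$ to an honest cocycle in $\aut(\mathcal B)$, i.e.\ to Galois descent data. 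Without the splitting, the obstruction to lifting such a cocycle along $1\to\mathcal B((k')_s)\to\aut(\mathcal B)\to\mathcal V_0((k')_s)\to 1$ is exactly the neutrality of the class in $\mathrm H^2(\Gamma,\mathcal B,\kappa')$ you are trying to prove (the vanishing of $\mathrm Z_{\mathcal B}$ makes this $\mathrm H^2$ a singleton, but a singleton need not be neutral), so the argument becomes circular. And the proposed justification does not work: a pinning splits the \emph{component group} of the automorphism scheme (the Dynkin-diagram part $E_{\mathcal B_0}$, which you correctly observe is trivial here), not the connected unipotent piece $Z_{\mathcal B_0,\Cb}/(\Cb/\mathrm Z_{\mathcal B_0})$. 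In the closely analogous Weil-restriction situation the corresponding surjection restricted to Cartan subgroups, $\mathrm R_{k'/k}(T'/\mathrm Z)\twoheadrightarrow\mathrm R^1\pi_*(\mathrm Z)$, provably does \emph{not} split, since a commutative pseudo-reductive group contains no nontrivial smooth connected unipotent subgroup; so the splitting you need is at best a delicate special feature of basic non-reduced groups that would itself require proof.

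The paper sidesteps all of this. After the same reduction, it uses Proposition \ref{propredbasnonred}(b): the canonical map $\Gb\to\overline{\mathcal G}\coloneqq\mathrm R\big(\mathrm R_{K/k'_s}((\Ga)^{ss}_K)\big)$ to a Weil restriction of $\mathrm{Sp}_{2n}$ is a bijection on $k_s$-points and uniquely functorial in isomorphisms, so a cocycle $(f,g)$ for $(\Gb,\kappa)$ transports to one for the reductive band $(\overline{\mathcal G},\widetilde\kappa)$, which is representable by Corollary \ref{correprred}. Since $\mathrm Z_\Gb(k_s)=\mathrm Z_{\overline{\mathcal G}}(k_s)=1$ (as $\mu_2(k_s)=1$ in characteristic $2$), killing $g$ modulo the center of $\overline{\mathcal G}(k_s)$ forces $g=1$ on the nose, giving a homomorphic lift for $\Gb$ directly. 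If you want to keep your structure, you would need to either prove the splitting of $\mathcal{Aut}^{\mathrm{sm}}_{\mathcal B_0}\to\mathcal V_0$ over $k'$ from \cite{CP15}, or replace that step by the comparison with the symplectic Weil restriction as above.
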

\begin{prf}
By Proposition \ref{propredbasnonred}(a), there is a nonzero finite reduced $k_s$-algebra $k'_s = \prod_i k'_i$, a group $\Ga$ over $k'_s$ with basic non-reduced pseudo-simple fibers and an isomorphism $j : \mathrm{R}(\Ga)\rightarrow\Gb$ such that the triple $(k'_s/k_s,\Ga,j)$ is uniquely functorial in isomorphisms of $\Gb$. We immediately reduce to the case when the fields $k'_i$ are all isomorphic as $k_s$-algebras (otherwise the automorphisms of $k'_s/k_s$ do not permute them transitively, so any band can be written as a product of bands), and then to the analogous situation where $k'_s/k_s$ is a field extension and $\Ga$ is a finite product of basic non-reduced pseudo-simple groups over $k'_s$. Then, in particular, any automorphism of $\Ga$ induces a permutation of this finite set of factors by pseudo-simplicity.

Proposition \ref{propredbasnonred}(b) shows that there is a factor-wise map $\Ga\rightarrow\mathrm R_{K/k'_s}\big((\Ga)^{ss}_K\big)\eqqcolon \overline{\mathcal G'}$, where $K\coloneqq(k'_s)^{1/2}$ and $(\Ga)^{ss}_K\simeq\mathrm{Sp}_{2n,\,K}$. This map induces an isomorphism on $k'_s$-points, and it is uniquely functorial with respect to isomorphisms in $\Ga$ (since any such isomorphism is just a product of maps followed by a permutation). Weil restrictions therefore define a homomorphism $\Gb = \mathrm R(\Ga)\longrightarrow\mathrm R(\overline{\mathcal G'})\eqqcolon\overline{\mathcal G}$ such that $\Gb(k_s)\cong\overline{\mathcal G}(k_s)$, which is also uniquely functorial with respect to isomorphisms in $\Gb$ by Proposition \ref{propsemiautpsred}. As the formation of $\overline{\mathcal G}$ commutes with pullbacks $s_*$ for $s\in\Gamma$, any \'etale band $(\Gb,\kappa)$ defines a band $(\overline{\mathcal G},\widetilde{\kappa})$.

Because the center of $\mathrm{Sp}_{2n}$ is $\mu_2$ (and the groups $\Gb,\overline{\mathcal G}$ are smooth), we have equalities:
\vspace{-10pt}

$$1 = \mathrm Z_{\overline{\mathcal G}}(k_s) = \mathrm Z_{\overline{\mathcal G}(k_s)} = \mathrm Z_{\Gb(k_s)} = \mathrm Z_\Gb(k_s)$$
\vspace{-10pt}

\noindent
It follows that both $\mathrm H^2(k,\Gb,\kappa)$ and $\mathrm H^2(k,\overline{\mathcal G},\widetilde{\kappa})$ are single-element sets, where we also know that $\mathrm N^2(k,\overline{\mathcal G},\widetilde{\kappa})\neq\varnothing$ by Corollary \ref{correprred}. 
Any cocycle $(f,g)\in\mathrm Z^2(k,\Gb,\kappa)$ induces a cocycle $(\widetilde{f},g)$ representing the neutral class of $(\overline{\mathcal G},\widetilde{\kappa})$. The locally constant function $h : \Gamma\rightarrow\Gb(k_s)\cong\overline{\mathcal G}(k_s)$ which relates $(\widetilde{f},g)$ to $(\mathrm{int}(h)\circ\widetilde{f},1)$ then also relates $(f,g)$ to $(\mathrm{int}(h)\circ f,1)$.
\end{prf}

\begin{lem}\label{lemreprprim}
Suppose given a nonzero finite reduced $k_s$-algebra $k'_s = \prod_{i=1}^s k'_i$ and a $k'_s$-group $\Ga$ such that each fiber $\Ga_{k'_i}$ is absolutely pseudo-simple and either semisimple simply connected or basic exotic pseudo-reductive. If $\Gb\coloneqq\mathrm R(\Ga)$ is the associated Weil restriction over $k_s$, then every \'etale band over $k$ of the form $(\Gb,\kappa)$ is globally representable.
\end{lem}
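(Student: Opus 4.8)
The plan is to mirror the proof of Lemma \ref{lemreprtotnonred}, with the structure theory of basic exotic groups (Appendix A) playing the role of that of totally non-reduced ones. First I would carry out the usual reductions. An automorphism of $\Ga$ permutes the absolutely pseudo-simple fibers and cannot carry a semisimple simply connected fiber to a basic exotic one (the latter is not reductive), so the étale band $(\Gb,\kappa)$, with $\Gb=\mathrm R(\Ga)$, splits canonically as a product of an étale band $(\mathrm R(\Ga^{\,\mathrm{ss}}),\kappa^{\mathrm{ss}})$ with $\Ga^{\,\mathrm{ss}}$ a product of semisimple simply connected groups, and an étale band $(\mathrm R(\Ga^{\,\mathrm{ex}}),\kappa^{\mathrm{ex}})$ with $\Ga^{\,\mathrm{ex}}$ a product of basic exotic groups. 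A product of étale bands is globally representable if and only if each factor is, and the first factor is globally representable by Corollary \ref{correprred}. So I may assume that every fiber of $\Ga$ is basic exotic and, exactly as in Lemma \ref{lemreprtotnonred}, reduce to the case where $k'_s/k_s$ is a finite field extension and $\Ga$ is a finite product of basic exotic pseudo-simple $k'_s$-groups permuted by $\aut(\Ga)$.

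Next I would transfer the band to a reductive one. By the structure theory of basic exotic groups (the analogue of Proposition \ref{propredbasnonred}), each fiber $\Ga_j$ has a canonical $k'_s$-homomorphism $q_j$ onto its reductive model, with infinitesimal kernel, and this is functorial with respect to isomorphisms of $\Ga$ (every such isomorphism being a product of isomorphisms of the factors followed by a permutation). Weil restriction together with Proposition \ref{propsemiautpsred} then produces a canonical $k_s$-homomorphism $q:\Gb\to\overline{\mathcal G}$ with infinitesimal kernel, where $\overline{\mathcal G}$ is a Weil restriction of a semisimple group over a finite field extension of $k_s$, together with an induced étale band $(\overline{\mathcal G},\widetilde\kappa)$; and $(\overline{\mathcal G},\widetilde\kappa)$ is globally representable by Corollary \ref{correprred}.

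To conclude, I would use that the center of an absolutely pseudo-simple basic exotic group over a field is $\mu_2$ or trivial, and that basic exotic groups occur only in characteristic $2$ or $3$, so that $\mathrm Z_\Ga(k'_s)=1$; hence $\mathrm Z_\Gb(k_s)=1$ (Proposition \ref{propweilrestrunip}) and likewise $\mathrm Z_{\overline{\mathcal G}}(k_s)=1$. As in Lemma \ref{lemreprtotnonred}, it follows that $\mathrm H^2(k,\Gb,\kappa)$ and $\mathrm H^2(k,\overline{\mathcal G},\widetilde\kappa)$ are single-element sets: a continuous lift $f$ of $\kappa$ exists by definition of an étale band, and since the center is trivial the associated $2$-cocycle $g$ is canonically valued in $\Gb(k_s)$, so $\mathrm{\check Z}^2(k,\Gb,\kappa)\neq\varnothing$. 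Because $\ker q$ is infinitesimal, $q$ is injective on $k_s$-points and carries the unique class of $(\Gb,\kappa)$ to the unique — and, by representability, neutral — class of $(\overline{\mathcal G},\widetilde\kappa)$. Tracking a cocycle $(f,g)$ of $(\Gb,\kappa)$ through $q$ as in the last paragraph of the proof of Lemma \ref{lemreprtotnonred}, and correcting $g$ by a cohomologous cocycle, then forces $g=1$, so $(\Gb,\kappa)$ is globally representable.

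The main obstacle I expect is the behavior of $q$ on $k_s$-points: unlike in the totally non-reduced case, $q$ is an isogeny with non-central infinitesimal kernel and need not be an isomorphism on $k_s$-points, so the descent of the trivializing data in the last step is not immediate. One must therefore either pin down a canonical target that is a Weil restriction of a semisimple group and is bijective on $k_s$-points, or handle the discrepancy cohomologically, the étale cokernel of $\pi_*\Gb\hookrightarrow\pi_*\overline{\mathcal G}$ being a unipotent-type sheaf whose cohomology vanishes in the relevant degrees (in the spirit of the use of Proposition \ref{propunipvanish} in Example \ref{exmprepringen}). Checking the functoriality of $q$, the compatibility of the two induced band structures, and these vanishing statements is the technical core of the argument.
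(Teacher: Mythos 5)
Your overall strategy---map $\Gb$ to the Weil restriction $\overline{\mathcal G}$ of a semisimple group, represent the reductive band by Corollary \ref{correprred}, and pull the representative back---is the same as the paper's. But the step you yourself flag as ``the main obstacle'' is a genuine gap, and the justification you offer for getting around it is incorrect. The kernel of $q:\Gb\rightarrow\overline{\mathcal G}$ is \emph{not} infinitesimal: for a basic exotic fiber, $G$ is by definition the full preimage of the Levi subgroup $\mathcal G$ under the Weil-restricted very special isogeny, so $\ker(G\rightarrow\mathcal G)=\mathrm R_{k'/k}(\ker\pi')$, and Weil restrictions of infinitesimal groups along purely inseparable extensions are positive-dimensional in general (compare $\mathrm R(\mu_{p^n})$ in Example \ref{exmpweilrestr}). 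What your argument actually needs is that $G(K)\rightarrow\mathcal G(K)$ is \emph{bijective} for fields $K$: injectivity is fine (the kernel still has trivial points over fields), but without surjectivity you cannot lift the trivializing cochain $h:\Gamma\rightarrow\overline{\mathcal G}(k_s)$ back to $\Gb(k_s)$ in your final correction step, which is exactly where you admit the argument may stall. Your proposed cohomological workaround (a unipotent-type cokernel sheaf) is not developed and is not how the paper proceeds.

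The missing input is Proposition \ref{propredbasexpsred}: part (a) gives the bijection $G(k)\cong\mathcal G(k)$ (which would rescue your cocycle argument, since your observation that $\mathrm Z_\Gb(k_s)=1$ is correct in substance), and part (b) gives the sharper statement that $\mathrm{Isom}(G_0,G)\rightarrow\mathrm{Isom}(\mathcal G_0,\mathcal G)$ is a bijection. The paper uses (b): combined with Proposition \ref{propsemiautpsred} it identifies $\mathrm{Isom}(s_*\Gb,\Gb)$ with $\mathrm{Isom}(s_*\overline{\mathcal G},\overline{\mathcal G})$ for every $s\in\Gamma$, hence identifies the two semiautomorphism groups over $\Gamma$, so the homomorphic continuous lift of $\widetilde\kappa$ supplied by Corollary \ref{correprred} corresponds to a lift $f$ of $\kappa$ satisfying $f_sf_tf_{st}^{-1}=1$ on the nose---no cocycle correction and no computation of centers is needed, unlike in Lemma \ref{lemreprtotnonred}. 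Once you import Proposition \ref{propredbasexpsred} (rather than the false ``infinitesimal kernel'' claim), either route closes the proof.
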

\begin{prf}
Arguing as in the first half of the previous proof, we may reduce by Theorem \ref{thmgenstandfunct}(b)~and Proposition \ref{propsemiautpsred} to the situation where $k'_s/k_s$ is a field extension, $\overline{\mathcal G'}$ is a reductive group over $k'_s$ (we take $\overline{\mathcal G'} = \Ga$ if $\Ga$ is semisimple simply connected; otherwise we use Proposition \ref{propredbasexpsred}), there is a uniquely functorial map $\Gb\rightarrow\mathrm R(\overline{\mathcal G'})\eqqcolon\overline{\mathcal G}$ such that $\Gb(k_s)\cong\overline{\mathcal G}(k_s)$ and along which any \'etale band $(\Gb,\kappa)$ defines a band $(\overline{\mathcal G},\widetilde{\kappa})$. Note that, as in the previous proof, we drop the (absolute) pseudo-simplicity condition on $\Ga$: this group $\Ga$ is only a \textit{product} of pseudo-simple groups, which are permuted by isomorphisms (same for $\Gb$, $\overline{\mathcal G'}$, $\overline{\mathcal G}$).

Proposition \ref{propredbasexpsred}(b) now says that the map $\mathrm{Isom}(s_*\Ga,\Ga)\rightarrow\mathrm{Isom}(s_*\overline{\mathcal G'},\overline{\mathcal G'})$ is a bijection for all $s\in\Gamma$; by Proposition \ref{propsemiautpsred} we get the same for $\mathrm{Isom}(s_*\Gb,\Gb)\rightarrow\mathrm{Isom}(s_*\overline{\mathcal G},\overline{\mathcal G})$. Applying Corollary \ref{correprred}, we find a continuous lift $\widetilde{f}$ of $\widetilde{\kappa}$ which is a homomorphism. The corresponding lift $f$ of $\kappa$, for which
$\!\widetilde{\!\phantom{f}\smash{f_sf_tf_{st}^{-1}}} = \widetilde{f_s}\widetilde{f_t}(\widetilde{f_{st}})^{-1} = 1$,
is thus also a homomorphism.
\end{prf}
\subsection{Representability of \'Etale Pseudo-Reductive Bands}
In view of the previous subsection, it remains to use the structure theory of generalized standard pseudo-reductive groups to deduce the statement for an arbitrary pseudo-reductive group over $k$. We will do so by reducing the problem to the primitive case handled in Lemma \ref{lemreprprim} (itself a special case of Corollary \ref{correprred} unless $\mathrm{char}(k)\in\{2,3\}$) and, for this, it will be useful to first discuss in the abstract the conditions for constructing auxiliary bands by lifting them along certain morphisms.

\begin{defn}
Suppose given a homomorphism $\alpha : \Gb\rightarrow\Hb$ of algebraic groups on $k_s$.~For~an \'etale band $(\Hb,\kappa)$ with a continuous lift $f : \Gamma\rightarrow\saut(\Hb/k)$, where $\Gamma\coloneqq\Gal(k_s/k)$, we~will~say that a set-theoretical section $f' : \Gamma\rightarrow\saut(\Gb/k)$ is \textit{compatible} with $f$ (with respect to $\alpha$) if $\alpha\circ f'_s = f_s\circ s_*\alpha$ holds for all $s\in\Gamma$.
\end{defn}

\begin{prop}
In the above situation, the section $f'$ is automatically continuous (in the sense of Definition \ref{defcont}). 
\end{prop}
\begin{prf}
Since $\Gb,\Hb$ are of finite type over $k$, the homomorphism $\alpha$ descends to a homomorphism $\alpha_0 : G_0\rightarrow H_0$ over some finite extension $K/k$. Denote the Galois data associated to $G_0,H_0$ by $\sigma_t',\sigma_t$ for $t\in\Gamma_K\coloneqq \Gal(k_s/K)$, respectively, and note that then $\alpha\circ\sigma'_t = \sigma_t\circ t_*\alpha$.

By continuity of $f$, we have (up to enlarging $K$) that $f_{st} = f_s\circ s_*\sigma_t$ for any $s\in\Gamma$. Then also
\begin{align*}
\alpha\circ f'_{st} = f_{st}\circ (st)_*\alpha = (f_s\circ s_*\sigma_t)\circ s_*t_*\alpha &= f_s\circ s_*(\sigma_t\circ t_*\alpha)\\ &= f_s\circ s_*(\alpha\circ\sigma'_t) = (f_s\circ s_*\alpha)\circ s_*\sigma'_t = \alpha\circ f'_s\circ s_*\sigma'_t
\end{align*}
by definition of compatibility, so $f'_{st} = f'_s\circ s_*\sigma'_t$ holds for all $s\in\Gamma$ and $t\in\Gamma_K$.
\end{prf}

Note that, given such $\alpha,f$ as above and any compatible section $f'$, if $\Gb$ is smooth~(so~its~automorphisms are determined by their effect on $k_s$-points) and the map $\alpha$ induces an isomorphism $\Gb(k_s)\cong\Hb(k_s)$, then $f'$ defines an \'etale band $(\Gb,\kappa')$ ``lifting'' $(\Hb,\kappa)$. Indeed, if $f_sf_tf_{st}^{-1} = \mathrm{int}(g)$ for some $g\in\Hb(k_s)$, then $\mathrm{int}(g)^{-1}f'_sf'_t(f'_{st})^{-1}$ is the identity map on $\Gb(k_s)$ and hence also on $\Gb$. This situation has been seen in the proofs of Lemmas \ref{lemreprtotnonred} and \ref{lemreprprim}.

We are interested in conditions under which $f'$ can be chosen to define an \'etale band $(\Gb,\kappa')$ in a more general case, when $\alpha$ is not known to induce an isomorphism on $k_s$ points. This is axiomatized in the following statement: 

\begin{lem}\label{lemmainlift}
Let $\alpha : \Gb\rightarrow\Hb$ be a homomorphism of algebraic groups on $k_s$ and consider an \'etale band $(\Hb,\kappa)$ defined by a continuous lift $f : \Gamma\rightarrow\saut(\Hb/k)$. Suppose that the inclusion $\alpha(\mathrm Z_\Gb(k_s))\subseteq\mathrm Z_\Hb(k_s)$ holds and that there exists the following factorization
\begin{center}\begin{tikzcd}
    & \Gb(k_s)/\mathrm Z_\Gb(k_s)\arrow[r, hook]\arrow[dl, bend right = 30, dashed]\arrow[d, "\alpha"] & \aut(\Gb/k)\subseteq\saut(\Gb/k)\\
    M\arrow[r, hook] & \Hb(k_s)/\mathrm Z_\Hb(k_s)\arrow[r, hook] & \aut(\Hb/k)\subseteq\saut(\Hb/k)
\end{tikzcd}\end{center}
for some subgroup $M$ of $\Hb(k_s)/\mathrm Z_\Hb(k_s)$ such that the following conditions hold:
\begin{enumerate}[(1)]
    \item There exists some (continuous) section $f' : \Gamma\rightarrow\saut(\Gb/k)$ compatible with $f$
    \item For each $m\in M$, the conjugation action $\mathrm{int}(m)\in\aut(\Hb)$ lifts to a unique automorphism $\widetilde{m}\in\aut(\Gb)$ compatibly with $\alpha$ 
    \item For all $s,t\in\Gamma$, the inner automorphism $g_{s,t}\coloneqq f_sf_tf_{st}^{-1}\in\Hb(k_s)/\mathrm Z_\Hb(k_s)$ lies in $M$
    \item The action of the semiautomorphisms $f_s$ on $\Hb(k_s)/\mathrm Z_\Hb(k_s)\subseteq\aut(\Hb)$ (by conjugation, see Remark \ref{remconjact}) restricts to an action on $M$
\end{enumerate}
Then $f',f$ and $g$ define a band $\overline{L}$ on the site $k_\et$ with $k_s$-points $Q\coloneqq\mathrm{coker}\big(\Gb(k_s)/\mathrm Z_\Gb(k_s)\longrightarrow M\big)$ and a class $[\mathrm{int}(f),g]\in\mathrm H^2(\Gamma, \overline{L})$. Moreover, if this class is neutral, then there is a continuous lift $f_0$ of $\kappa$ and a section $f'_0 : \Gamma\rightarrow\saut(\Gb/k)$ compatible with $f_0$ defining a band $(\Gb,\kappa')$.
\end{lem}

This statement requires some clarification: First, the 4 conditions are stated roughly in order from the most difficult to the easiest to obtain in practice. Condition (2) implies that the dashed map in the diagram is an inclusion (of a normal subgroup) and the group $Q$ is well-defined. We will be applying the lemma only in the case $M = \Hb(k_s)/\mathrm Z_\Hb(k_s)$ (which makes properties (3) and (4) trivial), however it is instructive to state it in this generality to highlight the key properties and to enable the later discussion in Remark \ref{remnilpproof}.

Second, any band $L$ on the site $k_\et$ is locally represented by a sheaf, hence we may indeed associate to it a group $Q$ of ``$k_s$-points'' (defined uniquely up to inner automorphisms). Then $Q$ is a noncommutative module over some open subgroup of $\Gamma$, and is a $\Gamma$-module ``up to inner automorphisms'' with a continuity property; in other words, there is a homomorphism
$$\kappa \;:\; \Gamma\longrightarrow\dfrac{\aut(Q)}{Q/\mathrm Z(Q)}$$
admitting a continuous lift in a sense analogous to Definition \ref{defcont} (this is Springer's definition of a ``kernel'' in \cite{Spr66}, except that our continuity condition is more relaxed). 
We will call $L$ a \textit{Galois band} and define its cohomology set $\mathrm H^2(\Gamma, L)$ as in Remark \ref{remetbandh2def}. This is the only instance in which we use this notion (apart from Proposition \ref{propliftnilp}, which is included for completion); it is convenient here since we do not wish to assume that the cokernel $Q$ appearing in the statement of Lemma \ref{lemmainlift} corresponds to a sheaf on the big \'etale site of $k_s$.

Finally, we will apply the lemma only in situations when $Q$ is a commutative $p$-group (using the statements proven in Subsection \ref{ssectvanthm}), as then $\mathrm H^2(\Gamma, \overline{L}) = \mathrm H^2(\Gamma, Q) = 0$. More generally,~this holds when $Q$ is a nilpotent $p$-group (cf.\ Proposition \ref{propliftnilp}), which we will not need.

\begin{prf}[of Lemma \ref{lemmainlift}]
By conditions (1) and (4), the compatible automorphisms $f'_s$ and $f_s$ define an automorphism $\overline f_s$ of the quotient group $Q = M/\big(\Gb(k_s)/\mathrm Z_\Gb(k_s)\big)$, which exists by (2) in view of the discussion above. Moreover, the section $\overline f : \Gamma\rightarrow\aut(Q)$ is continuous since $f',f$ are. Now (3) says that $g_{s,t}\in M$ for all $s,t\in\Gamma$ and we claim that the composition $\overline g : \Gamma\times\Gamma\rightarrow Q$ is locally constant:

In fact, $g$ itself is locally constant because $f$ is continuous: For fixed $s,t\in\Gamma$, a large enough finite separable extension $K/k$ and any $u\in\Gamma_K$, we have $g_{s,tu} = f_sf_{tu}f_{stu}^{-1} = f_sf_t\sigma_u\sigma_u^{-1}f_{st}^{-1} = g_{s,t}$ and (by Proposition \ref{propcont}(4)) $g_{us,t} = f_{us}f_{t}f_{ust}^{-1} = \sigma_ug_{s,t}\sigma_u^{-1}$. Up to refining $K$, we may assume that the automorphism $g_{s,t}$ is defined over $K$; then it is fixed under conjugation by $\sigma_u$. Hence, $g_{s,t}$ is locally constant in both $s$ and $t$.

Let $\overline L = (Q,\overline\kappa)$ be the Galois band defined by $\overline f$. We conclude that the pair $(\overline f,\overline g)$ (which can also be written as $(\mathrm{int}(f), g)$ in the sense of Remark \ref{remconjact}) is a cocycle in $\mathrm Z^2(\Gamma,\overline L)$: The~cocycle condition $\mathrm{int}(f_s)(g_{t,u})\cdot g_{s,tu} = g_{s,t}\cdot g_{st,u}$ is immediate from the definition of $g_{s,t}$ in terms of $f$. The resulting class $[\overline f,\overline g]\in\mathrm H^2(\Gamma,\overline L)$ is neutral if and only if there exists some locally constant function $\overline{h} : \Gamma\rightarrow Q$ (with a locally constant lift $h : \Gamma\rightarrow M$) such that: 
$$1 = \overline{h}_s \big(\overline{f}_s(\overline{h}_t)\big) \overline{g}_{s,t} (\overline{h}_{st})^{-1} = \overline{h_s(f_sh_tf_s^{-1})(f_sf_tf_{st}^{-1})h_{st}^{-1}} = \overline{(hf)_s(hf)_t(hf)_{st}^{-1}}\quad\textrm{ in }Q$$
If that is the case, observe that each automorphism $\mathrm{int}(h_s)\in\aut(\Hb)$ lifts uniquely to~an~automorphism $\widetilde h_s\in\aut(\Gb)$ by condition (2). We write $f_0 = hf$ and $f'_0 = \widetilde hf'$ (clearly a compatible pair of continuous sections) and we claim that $f'_0 : \Gamma\rightarrow\saut(\Gb/k)$ defines an \'etale band~of~the form $(\Gb,\kappa')$. This is equivalent to saying that the automorphism
$$(g'_0)_{s,t}\coloneqq \big(f'_0\big)_s\big(f'_0\big)_t\big(f'_0\big)^{-1}_{st} = \big(\widetilde hf'\big)_s\big(\widetilde hf'\big)_t\big(\widetilde hf'\big)^{-1}_{st}\in\aut(\Gb)$$
lies in $\Gb(k_s)/\mathrm Z_\Gb(k_s)$. We know that $(g'_0)_{s,t}$ commutes with $(g_0)_{s,t}\coloneqq (hf)_s(hf)_t(hf)_{st}^{-1}\in M$ by the compatibility of sections $f'_0$ and $f_0$. However, by the defining condition of $h$, we get that $(g_0)_{s,t}$ maps to $1\in Q$; thus it is in the image of $\Gb(k_s)/\mathrm Z_\Gb(k_s)\hookrightarrow M$. Again using condition~(2), the lift $(g'_0)_{s,t}\in\aut(\Gb)$ of $(g_0)_{s,t}$ is unique, and thus $(g'_0)_{s,t}\in\Gb(k_s)/\mathrm Z_\Gb(k_s)$.
\end{prf}

While the preceding example allows us to ``pull back'' bands along certain morphisms, the following lemma uses that to deduce representability of the original band from the new one.

\begin{lem}\label{lemliftrepr}
Let $\alpha : \Gb\rightarrow\Hb$ be a map of algebraic groups on $k_s$ such that $\alpha(\mathrm Z_\Gb(k_s))\subseteq\mathrm Z_\Hb(k_s)$. Suppose given two \'etale bands $(\Gb,\kappa')$ and $(\Hb,\kappa)$, which are induced by compatible continuous lifts $f' : \Gamma\rightarrow\saut(\Gb/k)$ and $f : \Gamma\rightarrow\saut(\Hb/k)$, respectively.

Consider the induced map $\overline{\alpha} : \Gb(k_s)/\mathrm Z_\Gb(k_s)\rightarrow\Hb(k_s)/\mathrm Z_\Hb(k_s)$ and suppose furthermore that $f_sf_tf_{st}^{-1} = \widetilde{\alpha}(f'_sf'_t(f'_{st})^{-1})$ for all $s,t\in\Gamma$. If $(\Gb,\kappa')$ is representable, then so is $(\Hb,\kappa)$.
\end{lem}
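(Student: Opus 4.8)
The plan is to reduce everything to the standard criterion, recalled in Examples~\ref{exmpgalband} and~\ref{exmpgalband2} (and the unlabelled example closing this subsection): an \'etale band $(\Gb,\kappa')$ is globally representable if and only if its continuous lift $f'$ can be modified by a locally constant function $j':\Gamma\to\Gb(k_s)/\mathrm Z_\Gb(k_s)$ so that $j'f'$ becomes a homomorphism $\Gamma\to\saut(\Gb/k)$ (and then Galois descent produces a $k$-form representing the band); likewise for $(\Hb,\kappa)$. Accordingly I would first fix notation: write $g'_{s,t}\coloneqq f'_sf'_t(f'_{st})^{-1}\in\Gb(k_s)/\mathrm Z_\Gb(k_s)$ and $g_{s,t}\coloneqq f_sf_tf_{st}^{-1}\in\Hb(k_s)/\mathrm Z_\Hb(k_s)$ for the associated nonabelian $2$-cocycles, and let $\overline\alpha:\Gb(k_s)/\mathrm Z_\Gb(k_s)\to\Hb(k_s)/\mathrm Z_\Hb(k_s)$ be the group homomorphism induced by $\alpha$ (well-defined precisely because $\alpha(\mathrm Z_\Gb)\subseteq\mathrm Z_\Hb$). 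In this language the extra hypothesis $f_sf_tf_{st}^{-1}=\widetilde\alpha(f'_sf'_t(f'_{st})^{-1})$ reads simply $g_{s,t}=\overline\alpha(g'_{s,t})$ for all $s,t\in\Gamma$.

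Next I would isolate the one place where compatibility enters. Being compatible with $f$ with respect to $\alpha$ means $\alpha\circ f'_s=f_s\circ s_*\alpha$ for every $s\in\Gamma$; passing to $k_s$-points, using the canonical functorial identifications $(s_*\Gb)(k_s)=\Gb(k_s)$ and $(s_*\Hb)(k_s)=\Hb(k_s)$ under which $(s_*\alpha)_{k_s}=\alpha_{k_s}$, and using $\alpha(\mathrm Z_\Gb)\subseteq\mathrm Z_\Hb$, this descends to the identity $\overline\alpha\bigl(f'_s(y)\bigr)=f_s\bigl(\overline\alpha(y)\bigr)$ for all $y\in\Gb(k_s)/\mathrm Z_\Gb(k_s)$, where as usual (Remark~\ref{remconjact}) $f'_s(y)$ denotes the conjugation action of the semiautomorphism on the element. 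In other words, $\overline\alpha$ is $\Gamma$-equivariant for the twisted actions induced by $f'$ and $f$.

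Now assume $(\Gb,\kappa')$ is representable and choose a locally constant $j':\Gamma\to\Gb(k_s)/\mathrm Z_\Gb(k_s)$ with $j'f'$ a homomorphism. Expanding $(j'f')_s=\mathrm{int}(j'_s)\circ f'_s$ and applying Remark~\ref{remconjact}, this homomorphism condition is the cocycle relation $j'_s\cdot f'_s(j'_t)\cdot g'_{s,t}\cdot(j'_{st})^{-1}=1$ in $\Gb(k_s)/\mathrm Z_\Gb(k_s)$. Put $j\coloneqq\overline\alpha\circ j':\Gamma\to\Hb(k_s)/\mathrm Z_\Hb(k_s)$, which is again locally constant. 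Applying the group homomorphism $\overline\alpha$ to the relation and invoking the equivariance $\overline\alpha(f'_s(j'_t))=f_s(j_t)$ together with the hypothesis $\overline\alpha(g'_{s,t})=g_{s,t}$ gives $j_s\cdot f_s(j_t)\cdot g_{s,t}\cdot j_{st}^{-1}=1$ in $\Hb(k_s)/\mathrm Z_\Hb(k_s)$, which is exactly the statement that $jf$ is a homomorphism $\Gamma\to\saut(\Hb/k)$. Since $f$ is a continuous lift of $\kappa$ and $j$ is locally constant with values in $\Hb(k_s)/\mathrm Z_\Hb(k_s)$, the lift $jf$ is again continuous and represents $\kappa$; being moreover a homomorphism, it descends by Galois descent (Example~\ref{exmpgalband2}) to a global representative of $(\Hb,\kappa)$, which finishes the proof.

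I do not expect a serious obstacle here: the argument is a one-line push-forward of a $2$-cocycle along $\overline\alpha$. The only point that genuinely needs care — and the reason I would spell it out first — is the $\Gamma$-equivariance of $\overline\alpha$, i.e.\ turning the compatibility relation $\alpha\circ f'_s=f_s\circ s_*\alpha$ into $\overline\alpha\circ f'_s=f_s\circ\overline\alpha$ on $k_s$-points; this is where the hypotheses ``compatible'' and $\alpha(\mathrm Z_\Gb)\subseteq\mathrm Z_\Hb$ are both used. The remaining ingredients (local constancy is preserved under composition with any map, and ``homomorphic continuous lift'' $\Leftrightarrow$ ``representable'') are already available from the preceding subsection.
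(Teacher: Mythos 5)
Your proof is correct and follows essentially the same route as the paper: both push the locally constant coboundary witnessing representability of $(\Gb,\kappa')$ forward along $\overline{\alpha}$ and use the compatibility of $f'$ and $f$ (i.e.\ the $\Gamma$-equivariance $\overline{\alpha}\circ f'_s = f_s\circ\overline{\alpha}$ on inner automorphisms) together with $g_{s,t}=\overline{\alpha}(g'_{s,t})$ to conclude that $(\overline{\alpha}\circ j')f$ is a homomorphic continuous lift of $\kappa$. Your explicit isolation of the equivariance step is a slightly more careful write-up of exactly the identity the paper uses.
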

\begin{prf}
By representability of $(\Gb,\kappa')$, there is a locally constant function $i : \Gamma\rightarrow\Gb(k_s)/\mathrm Z_{\Gb}(k_s)$ such that $if'$ is a homomorphism. Recall that we always identify $f'_s(i_t)$ and $f'_si_t(f'_s)^{-1}$ through the inclusion of inner automorphisms (Remark \ref{remconjact}), which by the compatibility of $f'$ and $f$ with $\alpha$ gives the following equality:
$$f_s(\widetilde{\alpha}\circ i)_tf_s^{-1} = f_s\big((\widetilde{\alpha}\circ i)_t\big) = \widetilde{\alpha}\big(f'_s(i_t)\big) = \widetilde{\alpha}\big(f'_si_t(f'_s)^{-1}\big)$$
We may calculate inside $\saut(\Hb/k)$, using the property $f_sf_tf_{st}^{-1} = \widetilde{\alpha}(f'_sf'_t(f'_{st})^{-1})$, that:
\begin{align*}
\big((\widetilde{\alpha}\circ i)f\big)_s\big((\widetilde{\alpha}\circ i)f\big)_t\big((\widetilde{\alpha}\circ i)f\big)_{st}^{-1} 
&= (\widetilde{\alpha}\circ i)_s\cdot f_s(\widetilde{\alpha}\circ i)_tf_s^{-1}\cdot f_sf_tf_{st}^{-1}\cdot (\widetilde{\alpha}\circ i)_{st}^{-1}\\
&= \widetilde{\alpha}\left(i_s\cdot f'_si_t(f'_s)^{-1}\cdot f'_sf'_t(f'_{st})^{-1}\cdot i_{st}^{-1}\right)\\
&= \widetilde{\alpha}\left((if')_s(if')_t(if')_{st}^{-1}\right) = 1
\end{align*}
This shows that $(\widetilde{\alpha}\circ i)f$ is also a homomorphism.
\end{prf} 

\begin{thm}\label{thmmainpsredrepr}
Suppose given a field $k$ such that $[k:k^p] = p$ and a pseudo-reductive group $\Gb$ over $k_s$. Then every \'etale band on $k$ of the form $(\Gb,\kappa)$ is globally representable. 
\end{thm}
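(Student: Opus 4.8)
The plan is to reduce the general case to the two \emph{primitive} situations already settled --- totally non-reduced groups (Lemma~\ref{lemreprtotnonred}) and Weil restrictions of semisimple simply connected or basic exotic groups (Lemma~\ref{lemreprprim}) --- and to bridge the gap with the band-transfer mechanism of Example~\ref{exmpmainlift} together with Lemma~\ref{lemliftrepr}. As a first step, I would invoke the canonical isomorphism-functorial factorization $\Gb=\Gb_1\times\Gb_2$ of Appendix~A, with $\Gb_1$ generalized standard and $\Gb_2$ totally non-reduced (so $\Gb_2\neq1$ only if $p=2$). Since this decomposition is respected by every semiautomorphism of $\Gb$, any $\kappa$ splits as $\kappa_1\times\kappa_2$, the band $(\Gb,\kappa)$ is globally representable as soon as each of $(\Gb_1,\kappa_1)$ and $(\Gb_2,\kappa_2)$ is, and Lemma~\ref{lemreprtotnonred} disposes of the second factor. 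So we may assume $\Gb$ generalized standard.

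Next I would fix a generalized standard presentation of $\Gb$: a finite reduced $k_s$-algebra $k'_s$, a $k'_s$-group $\Ga$ each of whose fibers is absolutely pseudo-simple and either semisimple simply connected or basic exotic, a maximal $k'_s$-torus $\Ta$ of $\Ga$, and a commutative pseudo-reductive $k_s$-group $\Cb$, with $\Gb\simeq(\mathrm R(\Ga)\rtimes\Cb)/\mathrm R(\Ta)$, this data being uniquely functorial in isomorphisms of $\Gb$ by the structure theory. Put $\mathscr G\coloneqq\mathrm R(\Ga)$; by Lemma~\ref{lemreprprim} (after its reduction to the case of a field extension $k'_s/k_s$) every \'etale band of the form $(\mathscr G,\kappa')$ is globally representable. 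The composite $\alpha\colon\mathscr G\hookrightarrow\mathscr G\rtimes\Cb\twoheadrightarrow\Gb$ is a homomorphism with image $\mathcal D(\Gb)$ whose kernel, being normal in $\mathscr G$ and contained in the Cartan subgroup $\mathrm R(\Ta)$, is central; in particular $\alpha(\mathrm Z_{\mathscr G})\subseteq\mathrm Z_\Gb$, and --- using $\Gb=\mathcal D(\Gb)\cdot\Cb$ and the fact that the conjugation action of $\Cb$ on $\mathcal D(\Gb)$ factors through inner automorphisms of $\mathscr G$ --- every semiautomorphism, hence every inner automorphism, of $\Gb$ lifts uniquely to $\saut(\mathscr G/k)$, resp.\ $\aut(\mathscr G)$, compatibly with $\alpha$ (uniqueness because $\mathscr G$ is perfect with small center). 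With $\Hb=\Gb$, this $\alpha$, $M=\Gb(k_s)/\mathrm Z_\Gb(k_s)$, and a fixed continuous lift $f$ of $\kappa$, conditions (1)--(4) of Example~\ref{exmpmainlift} then hold: (3) and (4) are automatic for this $M$, and (1)--(2) are exactly the functoriality and rigidity just described.

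The decisive point is condition~(5): triviality of the class $[\mathrm{int}(f),g]\in\mathrm H^2(\Gamma,\overline L)$, where $\overline L$ is the \'etale band on $k_\et$ represented by $C\coloneqq\coker\!\bigl(\mathscr G(k_s)/\mathrm Z_{\mathscr G}(k_s)\to\Gb(k_s)/\mathrm Z_\Gb(k_s)\bigr)$. By Proposition~\ref{propliftnilp} it suffices that $C$ be a nilpotent $p$-group. Since $\alpha$ surjects onto $\mathcal D(\Gb)$, the group $C$ is an extension of (a subgroup of) $\bigl(\Gb/(\mathcal D(\Gb)\cdot\mathrm Z_\Gb)\bigr)(k_s)$ by $\coker\!\bigl(\mathscr G(k_s)\to\mathcal D(\Gb)(k_s)\bigr)$, which injects into $\mathrm H^1(k_s,\ker\alpha)$; as $\ker\alpha$ is a finite central multiplicative-type $k_s$-group, this $\mathrm H^1$ is a $p$-torsion abelian group. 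And $\Gb/(\mathcal D(\Gb)\cdot\mathrm Z_\Gb)$ is unipotent: it is the quotient of the commutative pseudo-reductive group $\Gb/\mathcal D(\Gb)$ by the image of the maximal central torus of $\Gb$, which maps isogenously onto the maximal torus of $\Gb/\mathcal D(\Gb)$ and is contained in $\mathrm Z_\Gb$. Hence $C$ is a nilpotent $p$-group, Proposition~\ref{propliftnilp} gives $\mathrm H^2(\Gamma,\overline L)=\mathrm N^2(\Gamma,\overline L)$, and (5) holds. Example~\ref{exmpmainlift} then yields (after the simultaneous modification $f\rightsquigarrow jf$, $f'\rightsquigarrow\widetilde{j}f'$) an \'etale band $(\mathscr G,\kappa')$ with a continuous lift $f'$ compatible with $f$ and satisfying $f_sf_tf_{st}^{-1}=\widetilde\alpha\bigl(f'_sf'_t(f'_{st})^{-1}\bigr)$. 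Since $(\mathscr G,\kappa')$ is globally representable by Lemma~\ref{lemreprprim}, Lemma~\ref{lemliftrepr} applied to $\alpha$ shows that $(\Gb,\kappa)$ is globally representable, as desired.

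I expect the main obstacle to be precisely the package around condition~(5): one must organize the reduction so that the cokernel group controlling the lifting obstruction is a nilpotent $p$-group rather than, say, containing a torus --- for a torus the analogue of~(5) fails, reflecting the non-neutral classes in $\mathrm H^2$ coming from $\Br(k)$. This is where the fine structure of generalized standard groups is genuinely used: that $\alpha$ has image $\mathcal D(\Gb)$ with finite central kernel, that $\mathrm Z_\Gb$ absorbs the toric part of $\Gb/\mathcal D(\Gb)$, and that the remaining ``inseparable defect'' $\Gb/(\mathcal D(\Gb)\cdot\mathrm Z_\Gb)$ is unipotent --- together with the compatibility and rigidity hypotheses (1)--(2) of Example~\ref{exmpmainlift}, which rest on the behavior of derived groups, centers and Cartan subgroups in the standard construction.
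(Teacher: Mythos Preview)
Your overall strategy matches the paper's: reduce via the canonical decomposition $\Gb=\Gb_1\times\Gb_2$, dispose of the totally non-reduced factor by Lemma~\ref{lemreprtotnonred}, and for the generalized standard factor use the band-transfer mechanism of Example~\ref{exmpmainlift} along $\alpha\colon\mathrm R(\Ga)\to\Gb$ together with Lemma~\ref{lemliftrepr} and Lemma~\ref{lemreprprim}. The functoriality arguments you give for conditions (1)--(4) are correct.

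The genuine gap is condition~(5). You assert that the cokernel
\[
C=\coker\!\bigl(\mathscr G(k_s)/\mathrm Z_{\mathscr G}(k_s)\to\Gb(k_s)/\mathrm Z_\Gb(k_s)\bigr)
\]
is a \emph{nilpotent} $p$-group, but what you actually show is only that $C$ is an extension of two abelian $p$-groups: a quotient of $\mathrm H^1(k_s,\ker\alpha)$ on the bottom and (a subgroup of) $\bigl(\Gb/(\mathcal D(\Gb)\cdot\mathrm Z_\Gb)\bigr)(k_s)$ on top. An extension of abelian $p$-groups is a metabelian $p$-group, but infinite $p$-groups need not be nilpotent, and there is no reason for this extension to be central: the conjugation action of $\Gb(k_s)$ on $\mathcal D(\Gb)(k_s)/j(\mathscr G(k_s))$ is not obviously trivial. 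Proposition~\ref{propliftnilp} genuinely requires nilpotency (its proof inducts on the length of a central series), so you cannot invoke it here. The paper's remark immediately following the proof addresses precisely your one-step variant (with $M=\Gb(k_s)/\mathrm Z_\Gb(k_s)$) and notes explicitly that ``it is not clear whether $Q$ is a nilpotent group, which makes it difficult to apply Proposition~\ref{propliftnilp}.'' This is why the paper instead runs the argument in two steps --- first for perfect $\Gb$ (where the cokernel is a quotient of $\mathrm H^1(k_s,\ker\phi)$, hence abelian), then reducing general $\Gb$ to $\mathcal D(\Gb)$ (where the cokernel is $\Gb(k_s)/(\mathcal D(\Gb)(k_s)\cdot\mathrm Z_\Gb(k_s))$, again abelian) --- so that at each stage the cokernel is commutative and Proposition~\ref{propliftnilp} applies cleanly.

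A minor side point: your claim that $\ker\alpha$ is ``a finite central multiplicative-type $k_s$-group'' is not justified and is in general false (it is $\ker\phi\subseteq\mathrm R(C')$, which can be positive-dimensional). Fortunately the conclusion you need --- that $\mathrm H^1(k_s,\ker\alpha)$ is $p$-torsion abelian --- follows instead from Proposition~\ref{proph2h1coh}, which applies to any commutative algebraic group.
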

\begin{prf}
By Theorem \ref{thmpsredstrmain}, we immediately reduce to the two cases where $\Gb$ is either totally non-reduced or generalized standard (because a general $\Gb$ decomposes functorially-in-isomorphisms into a product of groups of the two types). As the first case is settled in Lemma \ref{lemreprtotnonred}, we may assume that $\Gb$ is a generalized standard pseudo-reductive group. We proceed in two steps, first resolving the case when $\Gb$ is perfect (i.e.\ $\mathcal D(\Gb) = \Gb$; see Subsection \ref{ssectpsred1}):

\textit{Step 1: $\Gb$ is perfect.} Let $j : \mathrm R(\Ga)\rightarrow\Gb$ be the map coming from any generalized standard presentation of $\Gb$, uniquely functorial with respect to isomorphisms in $\Gb$ (see Theorem \ref{thmgenstandfunct}(b), which also shows that $j$ is surjective). As its formation commutes with pullbacks $s_*$, $s\in\Gamma$ (this again uses the fact that $[k : k^p] = p$ as in Proposition \ref{propsemiautpsred}), an element $f$ of $\saut(\Gb/k)$ lifts uniquely to an element $f'$ of $\saut(\mathrm R(\Ga)/k)$. There is moreover an exact sequence
$$1\longrightarrow\dfrac{\mathrm R(\Ga)(k_s)}{\mathrm Z_{\mathrm R(\Ga)}(k_s)}\longrightarrow\dfrac{\Gb(k_s)}{\mathrm Z_\Gb(k_s)}\longrightarrow\dfrac{\mathrm H^1(k_s,\ker\phi)}{\im\mathrm Z_\Gb(k_s)}\longrightarrow1$$
where $\ker\phi = \ker j$ is central in $\mathrm R(\Ga)$ (see Example \ref{exmppsredpres}). By Proposition \ref{proppsredcentralsurj} and by this centrality, we have $\mathrm Z_{\mathrm R(\Ga)} = j^{-1}(\mathrm Z_\Gb)$, explaining the left-exactness in the above sequence.

Next, we apply Lemma \ref{lemmainlift} to $j : \mathrm R(\Ga)\rightarrow\Gb$ and $M = \Gb(k_s)/\mathrm Z_\Gb(k_s)$. Condition (3) and (4) hold trivially, while (1) and (2) are direct consequences of the previous paragraph.~We~conclude that the obstruction to lifting a band $(\Gb,\kappa)$ to $\mathrm R(\Ga)$ lies in $\mathrm H^2(\Gamma,Q)$, where $Q$ is a quotient of $\mathrm H^1(k_s,\ker\phi)$. By Proposition \ref{proph2h1coh}, this group is a (commutative) $p$-group and so $\mathrm H^2(\Gamma,Q) = 0$. 
Lemma \ref{lemliftrepr} now tells us that it is enough to know the theorem for $\mathrm R(\Ga)$ instead of $\Gb$, but this is exactly the content of Lemma \ref{lemreprprim}.

\textit{Step 2: $\Gb$ is an arbitrary generalized standard pseudo-reductive group.} Semiautomorphisms of $\Gb$ restrict to semiautomorphisms of the derived subgroup $\mathcal D(\Gb)$, which is characteristic in $\Gb$ (and perfect, by Proposition \ref{proppsredmulunip}). As above, we would like to apply Lemma \ref{lemmainlift} to the map $\mathcal D(\Gb)\hookrightarrow\Gb$ and to $M = \Gb(k_s)/\mathrm Z_{\Gb}(k_s)$; the four conditions of the lemma are clearly satisfied. Again similarly to Step 1, we consider the sequence
$$1\longrightarrow \frac{\mathcal D(\Gb)(k_s)}{\mathrm Z_{\mathcal D(\Gb)}(k_s)}\longrightarrow \frac{\Gb(k_s)}{\mathrm Z_{\Gb}(k_s)}\longrightarrow \frac{\Gb(k_s)}{\mathcal D(\Gb)(k_s)\cdot\mathrm Z_{\Gb}(k_s)}\longrightarrow 1$$
which we note is (left-)exact by Proposition \ref{proppsredsubgrpcenter} and we have to show that the commutative quotient on the right is a $p$-group. The required property follows from the exact sequence
$$1\longrightarrow \frac{(\mathcal D(\Gb)\cdot\mathrm Z_{\Gb})(k_s)}{\mathcal D(\Gb)(k_s)\cdot\mathrm Z_{\Gb}(k_s)}\longrightarrow \frac{\Gb(k_s)}{\mathcal D(\Gb)(k_s)\cdot\mathrm Z_{\Gb}(k_s)}\longrightarrow \frac{\Gb(k_s)}{(\mathcal D(\Gb)\cdot\mathrm Z_{\Gb})(k_s)}\longrightarrow 1$$
by Proposition \ref{propquotsumpts} on the left and by Corollary \ref{corpsredmulunip} on the right (which says that the group $\Gb/(\mathcal D(\Gb)\cdot\mathrm Z_{\Gb})$ is unipotent and thus the quotient on the right is killed by some $p^n$). We finish using Lemma \ref{lemliftrepr} and the previous step, as $\mathcal D(\Gb)$ is perfect.
\end{prf}

\begin{rem}\label{remnilpproof}
It is possible to write the above proof without breaking it up into two parts, by applying Lemma \ref{lemmainlift} straight to $\mathrm R(\Ga)\rightarrow\Gb$ and $M = \mathcal D(\Gb)(k_s)/\mathrm Z_{\mathcal D(\Gb)}(k_s)$ (for $\Gb$ possibly not perfect). However, showing that $M$ satisfies all of the necessary assumptions would still require going through the second step of the proof.
Alternatively, one may choose~${M = \Gb(k_s)/\mathrm Z_{\Gb}(k_s)}$ and~obtain for $Q$ the (possibly noncommutative) extension
$$1\longrightarrow \dfrac{\mathrm H^1(k_s,\ker\phi)}{\im\mathrm Z_\Gb(k_s)}\longrightarrow Q\longrightarrow \frac{\Gb(k_s)}{\mathcal D(\Gb)(k_s)\cdot\mathrm Z_{\Gb}(k_s)}\longrightarrow 1$$
which is a $p$-group as shown above. However, it is not clear whether $Q$ is a nilpotent group, which makes it difficult to apply Proposition \ref{propliftnilp}.
\end{rem}

\begin{cor}\label{cormainpsredrepr}
Suppose given a field $k$ such that $[k:k^p] = p$ and a pseudo-reductive group $\Gb$ over $k_s$. Then every separable band on $k$ of~the form $(\Gb,\kappa)$ is globally representable. 
\end{cor}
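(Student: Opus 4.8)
\begin{prf}
The plan is to deduce this directly from the \'etale case established in Theorem~\ref{thmmainpsredrepr}. First I would observe that a pseudo-reductive group $\Gb$ over $k_s$ is smooth, so $\mathrm H^1(k_s,\Gb/\mathrm Z_\Gb)=1$ by Lemma~\ref{lemsmoothskewvanish} and $(\Gb,\kappa)$ is indeed a (smooth) separable band in the sense of Definition~\ref{defetsepband}. By Corollary~\ref{corparamlet}, every smooth algebraic separable band admits an \'etale band lying over it, so I would fix an \'etale band $(\pi_*\Gb,\widetilde\kappa)$ lying over $(\Gb,\kappa)$ as in Definition~\ref{deflyingover}; thus $\widetilde\kappa$ projects to $\kappa$ along the surjection $\saut(\Gb/k)/\big(\Gb(k_s)/\mathrm Z_\Gb(k_s)\big)\twoheadrightarrow\saut(\Gb/k)/(\Gb/\mathrm Z_\Gb)(k_s)$, i.e.\ $\overline{\widetilde\kappa}=\kappa$ in the notation of that definition.

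Since $(\pi_*\Gb,\widetilde\kappa)$ is an \'etale band locally represented by the pseudo-reductive group $\Gb$, Theorem~\ref{thmmainpsredrepr} applies and yields a global representative: an algebraic group $G_0$ over $k$ with $G_{0,k_s}\simeq\Gb$ whose associated \'etale band is isomorphic to $(\pi_*\Gb,\widetilde\kappa)$. Equivalently (Example~\ref{exmpgalband}), $G_0$ furnishes a continuous \emph{homomorphic} lift $f:\Gamma\to\saut(\Gb/k)$ of $\widetilde\kappa$. The remaining step is to check that this same $G_0$ globally represents the separable band $(\Gb,\kappa)$: regarded as a group scheme on $k_\fppf$, $G_0$ is exactly the descent of $\Gb$ along $f$, and projecting $f$ through the displayed surjection exhibits it as a homomorphic lift of $\overline{\widetilde\kappa}=\kappa$, so by the Galois-descent dictionary of Example~\ref{exmpgalband2} the separable band attached to $G_0$ is $(\Gb,\kappa)$, which is therefore globally representable.

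I do not expect any genuine obstacle here; the argument is a short chaining of Corollary~\ref{corparamlet} and Theorem~\ref{thmmainpsredrepr}. The only point meriting a sentence of justification is the compatibility invoked in the last step -- that the separable band attached to a $k$-group coincides with the one lying over the \'etale band it defines -- which is immediate, since both are induced by the single Galois-descent section $f$ merely pushed into the two different quotients of $\saut(\Gb/k)$, and an isomorphism of \'etale bands visibly induces an isomorphism of the separable bands lying below them.
\end{prf}
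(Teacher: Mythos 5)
Your argument is correct and is essentially the paper's own proof: both invoke Corollary~\ref{corparamlet} to produce an \'etale band lying over $(\Gb,\kappa)$ and then apply Theorem~\ref{thmmainpsredrepr} to obtain a global representative, which serves for both bands. Your extra paragraph spelling out why the representative of the \'etale band also represents the separable band is a correct elaboration of what the paper leaves implicit.
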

\begin{prf}
The separable band $(\Gb,\kappa)$ admits an \'etale band lying over it (in the sense of Definition \ref{deflyingover}) by Corollary \ref{corparamlet}. We now use the previous theorem to find a global representative for both bands.
\end{prf}

\section{Abelianization}\label{sectabel}
Let $k$ be a global or local field of characteristic $p > 0$. In this section we prove Theorem \ref{thmmainabelh2}, which says that, given a smooth connected affine separable band $L=(\Gb,\kappa)$ over $k$,~the~sequence
$$\mathrm{N}^2(k,L)\xhookrightarrow{\;\;\;\;\;\;}\mathrm{H}^2(k, L)\xrightarrow{\;\;\mathrm{ab}^2\;\;}\mathrm{H}^2(k,L_\ab)$$
is exact, where $L_\ab$ is a commutative affine algebraic group over $k$ (isomorphic over $k_s$ to the group $\Gb/\mathcal D(\Gb)$). This statement directly generalizes the main results of Borovoi's abelianization theory for bands in characteristic $0$ (\cite[Props.\ 6.2, 6.5]{Brv93}), up to a small reformulation whose necessity is explained in the first subsection (and Remark \ref{remmainabel}).

The second subsection treats the essential case when $\Gb$ is pseudo-reductive and, by previous results, $L$ admits a global representative $G$. An essential tool is a generalization of the notion of anisotropicity. In the third subsection, we give a proper definition of the map $\mathrm{ab}^2$ and prove the main statement. Along the way, we also show (in Theorem \ref{thmmainabelh1}) that, when $G$ is a smooth connected affine algebraic group over $k$, the map $\mathrm H^1(k, G)\rightarrow\mathrm H^1(k, G/\mathcal D(G))$ is surjective.

\subsection{Preliminaries on Abelianization}
Let $k$ be a global or local field of characteristic $p > 0$, so in particular $[k : k^p] = p$. Given an \'etale or separable band $(\Gb,\kappa)$ over $k$ locally represented by a pseudo-reductive group $\Gb$, the main results of the previous section show that the subset $\mathrm N^2(k,\Gb,\kappa)\subseteq\mathrm H^2(k,\Gb,\kappa)$ is nonempty. It is a natural question to ask for a characterization of this subset; in particular, to ask when the given inclusion is an equality. We start with the latter question:

It is a classical fact reviewed in Subsection \ref{ssectdouai} that equality holds when $\Gb$ is a semisimple group. Equivalently, given a semisimple group $G$ over $k$, both of the maps of pointed sets
$$\delta :\mathrm{H}^1(\Gamma,G(k_s)/\mathrm Z_G(k_s))\rightarrow\mathrm{H}^2(\Gamma,\mathrm Z_G(k_s))
\;\;\textrm{ and }\;\;
\widetilde{\delta}:\mathrm{H}^1(k,G/\mathrm Z_G)\rightarrow\mathrm{H}^2(k,\mathrm Z_G)$$
are surjective (Theorem \ref{thmsurjh1h2ss}). We now show that the same is true when $G$ is merely perfect and pseudo-reductive. Such groups are called \textit{pseudo-semisimple} in \cite{CGP15} and \cite{CP15}, as they are necessarily semisimple whenever they are reductive.

\begin{thm}\label{thmsurjh1h2perfpsred}
Let $k$ be a global or local field of positive characteristic and suppose given a perfect pseudo-reductive group $G$ over $k$. Then the above two maps $\delta$ and $\widetilde{\delta}$ are surjective.
\end{thm}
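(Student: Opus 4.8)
The plan is to reduce the pseudo-semisimple case to the semisimple case (Theorem \ref{thmsurjh1h2ss}) by exploiting the structure theory of perfect pseudo-reductive groups. Recall from Appendix A that a perfect pseudo-reductive (i.e.\ pseudo-semisimple) group $G$ over $k$ admits a generalized standard presentation, so there is a surjection $j : \mathrm R_{k'/k}(G')\twoheadrightarrow G$ with central kernel $\ker j$, where $k'/k$ is a finite reduced $k$-algebra and each fiber of $G'$ over a factor of $k'$ is either semisimple simply connected or basic exotic. Writing $\widetilde G \coloneqq \mathrm R_{k'/k}(G')$, we have $\mathrm Z_{\widetilde G} = j^{-1}(\mathrm Z_G)$ by Proposition \ref{proppsredsubgrpcenter}, so $j$ induces a surjection $\widetilde G/\mathrm Z_{\widetilde G}\xrightarrow{\sim} G/\mathrm Z_G$ which is in fact an isomorphism of fppf sheaves (it is a monomorphism since $\ker j\subseteq\mathrm Z_{\widetilde G}$, and surjective since $j$ is). Consequently $\widetilde\delta$ for $G$ factors compatibly through the analogous maps for $\widetilde G$, and — using that $[k:k^p]=p$ lets us identify $\Gal(k'_s/k')=\Gamma$ as in Corollary \ref{correprred} — similarly for $\delta$. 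So it suffices to prove the theorem for $\widetilde G = \mathrm R_{k'/k}(G')$.

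First I would handle the semisimple-simply-connected case. If $G' = \mathrm R_{k'/k}(G'_1)$ with $G'_1$ semisimple simply connected over $k'$, then $\mathrm Z_{\widetilde G} = \mathrm R_{k'/k}(\mathrm Z_{G'_1})$ and by Shapiro's lemma (Proposition \ref{propshapiro}, applied componentwise over the factors of $k'$) the maps $\widetilde\delta$ and $\delta$ for $\mathrm R_{k'/k}(G'_1)$ are identified with the corresponding maps for the semisimple group $G'_1$ over the (local or global) field components of $k'$. Those are surjective by Theorem \ref{thmsurjh1h2ss}, so we are done in this case. For the basic exotic case ($p\in\{2,3\}$), I would use the map of Appendix A from a basic exotic pseudo-reductive group to its reductive companion $(G')^{ss}$ or $\overline{\mathcal G'}$ (as in Lemma \ref{lemreprprim}, via Proposition \ref{propredbasexpsred}), which induces an isomorphism on geometric points and carries the center accordingly; transporting $\delta,\widetilde\delta$ along this map and then applying Weil restriction and Shapiro reduces again to $\widetilde\delta,\delta$ for a semisimple simply connected group over a finite extension, hence to Theorem \ref{thmsurjh1h2ss}.

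The remaining point is to check the compatibility of the connecting maps $\delta$ (resp.\ $\widetilde\delta$) with the three operations in play: pushing the kernel $\ker j$ into the center, Weil restriction (Shapiro), and passing to the reductive companion. In each case one has a map of central extensions $1\to\mathrm Z\to \ast\to \ast/\mathrm Z\to 1$, and the naturality of the degree-$1$-to-$2$ connecting map in such morphisms (Proposition \ref{propnoncommles} and its functoriality, cf.\ also the hypercohomological formalism of Proposition \ref{proptechnoncomm}) gives the required commuting squares. The main obstacle I anticipate is precisely this bookkeeping in the basic exotic case: one must verify that the homomorphism from the basic exotic group to its reductive companion really does identify $\mathrm Z_{\widetilde G}(k_s)$, $\widetilde G(k_s)/\mathrm Z_{\widetilde G}(k_s)$ and the relevant $\mathrm H^1,\mathrm H^2$ groups compatibly with $\delta$ — this uses the smoothness of all the groups involved together with the structural results of Appendix A, and it is the step where the positive-characteristic subtleties (nonsmooth centers, the distinction between $\widetilde G(k_s)/\mathrm Z_{\widetilde G}(k_s)$ and $(\widetilde G/\mathrm Z_{\widetilde G})(k_s)$) are most delicate. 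Once these identifications are in place, surjectivity of $\delta$ and $\widetilde\delta$ for $G$ follows formally from the semisimple case.
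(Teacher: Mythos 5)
Your reduction to $\widetilde G=\mathrm R_{k'/k}(G')$ and your treatment of the Galois-cohomological map $\delta$ are essentially sound: after splitting off the separable part of $k'/k$, the identifications $\mathrm R(G')(k_s)=G'(k'_s)$, $\mathrm Z_{\mathrm R(G')}(k_s)=\mathrm Z_{G'}(k'_s)$ and $\Gal(k'_s/k')=\Gamma$ (valid since $[k:k^p]=p$) do transport $\delta$ to the corresponding map for $G'$ over $k'$, and Theorem \ref{thmsurjh1h2ss} then applies. The genuine gap is in the fppf map $\widetilde\delta$. Shapiro's lemma (Proposition \ref{propshapiro}) identifies $\mathrm H^n(k,\mathrm R_{k'/k}(-))$ with $\mathrm H^n(k',-)$ only when the coefficient group is smooth or $k'/k$ is \'etale; here the relevant coefficient is $\mathrm Z_{\mathrm R(G')}=\mathrm R(\mathrm Z_{G'})$ with $\mathrm Z_{G'}$ typically a non-smooth group such as $\mu_{p^n}$, restricted along a purely inseparable extension. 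Example \ref{exmpweilrestr} shows concretely that $\mathrm H^2(k,\mathrm R(\mu_{p^n}))\to\mathrm H^2(k',\mu_{p^n})$ is then identified with $\Br(k)\to\Br(k')$, which has nontrivial kernel, and likewise $\mathrm R(G')/\mathrm R(\mathrm Z_{G'})$ differs from $\mathrm R(G'/\mathrm Z_{G'})$. So $\widetilde\delta$ for $\mathrm R(G')$ is \emph{not} identified with $\widetilde\delta$ for $G'$ over $k'$, and the ``bookkeeping'' you defer to the end cannot be carried out as stated; this is exactly the positive-characteristic subtlety you flag but do not resolve.

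The paper circumvents this with machinery your proposal never invokes: the comparison diagram \eqref{diagbignoncomm} of Proposition \ref{propdiaglet} linking Galois and fppf cohomology, whose rightmost horizontal map $\mathrm H^1(k,G/\mathrm Z_G)\to\mathrm H^1(\Gamma,\mathrm H^1(k_s,\mathrm Z_G))$ is surjective only because every \'etale pseudo-reductive band is globally representable (Theorem \ref{thmmainpsredrepr}, via Corollary \ref{correprlet}). Given $x\in\mathrm H^2(k,\mathrm Z_G)$, one first finds $y\in\mathrm H^1(k,G/\mathrm Z_G)$ with the same image in $\mathrm H^1(\Gamma,\mathrm H^1(k_s,\mathrm Z_G))$, twists the whole diagram by a representative of $y$, and only then applies the surjectivity of the Galois-theoretic $\delta$ for the twisted form (your primitive case, which survives the twist because \'etale forms of $\mathrm R(G')$ are again of the same shape by Corollary \ref{corconradetform}); finally one twists back. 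The representability theorem and the twisting formalism are therefore essential inputs, not optional compatibility checks, and without them the fppf half of the statement remains unproved.
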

\begin{prf}
By Proposition \ref{propdiaglet}, both of the maps $\delta$ and $\widetilde{\delta}$ appear in the following commutative diagram with exact rows. Moreover, the two horizontal maps on the right are both surjections by Corollary \ref{correprlet} (since each \'etale band represented by a pseudo-reductive group is representable by Theorem \ref{thmmainpsredrepr}), which will become important in the last of the 3 steps of this proof.
\begin{center}
\begin{tikzcd}
\mathrm{H}^0(\Gamma, \mathrm{H}^1(k_s, \mathrm Z_G))\arrow[d, equal]\arrow[r] &
\mathrm{H}^1\!\!\left(\!\Gamma, \dfrac{G(k_s)}{\mathrm Z_G(k_s)}\right)\arrow[d, "\delta"]\arrow[r] &
\mathrm{H}^1\!\!\left(\! k, \dfrac{G}{\mathrm Z_G}\right)\arrow[d, "\widetilde{\delta}"]\arrow[r, two heads] &
\mathrm{H}^1(\Gamma, \mathrm{H}^1(k_s, \mathrm Z_G))\arrow[d, equal]\\
\mathrm{H}^0(\Gamma, \mathrm{H}^1(k_s, \mathrm Z_G))\arrow[r] &
\mathrm{H}^2(\Gamma, \mathrm Z_G(k_s))\arrow[r] &
\mathrm{H}^2(k, \mathrm Z_G)\arrow[r, two heads] &
\mathrm{H}^1(\Gamma, \mathrm{H}^1(k_s, \mathrm Z_G))
\end{tikzcd}
\end{center}
\vspace{-5pt}

\textit{Step 1: The surjectivity of $\delta$ follows from the surjectivity of $\widetilde{\delta}$ in general.} To see this, take an arbitrary $x\in\mathrm{H}^2(\Gamma, \mathrm Z_G(k_s))$ and pick $z\in\mathrm{H}^1(k, G/\mathrm Z_G)$ such that ${\widetilde{\delta}(z) = \im(x)\in\mathrm{H}^2(k, \mathrm Z_G)}$.~The image of $z$ in $\mathrm{H}^1(\Gamma, \mathrm{H}^1(k_s, \mathrm Z_G))$ is $0$, so $z = \im(y)$ for some $y\in\mathrm{H}^1(\Gamma, G(k_s)/\mathrm Z_G(k_s))$. Twisting by a representative $P$ of $y$ (see Proposition \ref{propdiaglet}), we get $\im(\tau_P(x)) = (_P\widetilde{\delta})(\im(\tau_P(y))) = 0$, so $\tau_P(x)$ is in the image of $\mathrm{H}^0(\Gamma, \mathrm{H}^1(k_s, \mathrm Z_G))$. There is hence an element $y'\in\mathrm{H}^1(\Gamma, {_P}G(k_s)/\mathrm Z_G(k_s))$ with $(_P\delta)(y') = \tau_P(x)$. We conclude that $\delta(\tau_P^{-1}(y')) = x$, as we wanted.

\textit{Step 2: $\delta$ is surjective when $G$ is primitive.} By a primitive pseudo-reductive group over the field $k$ (and more generally: a nonzero finite reduced $k$-algebra, see Definition \ref{defgenstand}), we mean an absolutely pseudo-simple group $G$ which is either basic exotic or semisimple simply connected. If $G$ is semisimple, this is Theorem \ref{thmsurjh1h2ss}. Otherwise, Proposition \ref{propredbasexpsred} gives a map $G\rightarrow\mathcal G$ to a semisimple simply connected group $\mathcal G$ over $k$, inducing a bijection $G(k_s)\cong\mathcal G(k_s)$ on points. Since $G,\mathcal G$ are smooth, we also have $\mathrm Z_G(k_s) = \mathrm Z_{G(k_s)}\cong\mathrm Z_{\mathcal G(k_s)} = \mathrm Z_{\mathcal G}(k_s)$. From this, we conclude that $\delta$ is surjective when $G$ is basic exotic pseudo-reductive.

\textit{Step 3: $\widetilde{\delta}$ is surjective for all perfect pseudo-reductive $G$.} Using Theorem \ref{thmpsredstrmain}, we have $G = G_1\times G_2$ with $G_1$ (perfect) generalized standard and $G_2$ totally non-reduced. Since $\mathrm Z_{G_2} = 0$, we may assume that $G = G_1$. Then there is a nonzero finite reduced $k$-algebra $k'$ and a primitive $k'$-group $G'$, with a map $j : \mathrm R(G')\coloneqq\mathrm R_{k'/k}(G')\rightarrow G$ which is a surjection (because $G = \mathcal D(G)$). By this surjectivity and Proposition \ref{proppsredcentralsurj}, we have $\mathrm Z_{\mathrm R(G')} = j^{-1}(\mathrm Z_G)$. This implies that $\mathrm R(G')/\mathrm Z_{\mathrm R(G')}\rightarrow G/\mathrm Z_G$ is an isomorphism and that $\widetilde{\delta}$ factors as:
$$\mathrm{H}^1(k,G/\mathrm Z_G)\cong\mathrm{H}^1(k,\mathrm R(G')/\mathrm Z_{\mathrm R(G')})\longrightarrow\mathrm{H}^2(k,\mathrm Z_{\mathrm R(G')})\twoheadrightarrow\mathrm{H}^2(k,\mathrm Z_G)$$
The surjection on the right comes from the fact that $\mathrm{H}^3(k,\ker(\mathrm Z_{\mathrm R(G')}\rightarrow\mathrm Z_G)) = 0$ since we are working over a global or (nonarchimedean) local field, by \cite[Prop.\ 3.1.2]{RosTD}. We conclude that it suffices to prove surjectivity of $\widetilde{\delta}$ when $G = \mathrm R(G')$, which we now assume.

If $k' = \prod k'_i$, then $\mathrm R(G') = \prod \mathrm R_{k'_i/k}(G'_i)$, so we may assume that $k'/k$ is a field extension. Let $k''/k$ is the maximal separable subextension and write $\mathrm R = \mathrm R_{k''/k}\circ\mathrm R_{k'/k''}$. As Weil restriction along separable field extensions is exact on all algebraic groups (Proposition \ref{proppreshapiro}), preserves cohomology (Shapiro's lemma, Lemma \ref{propshapiro}) and commutes with the formation of centers (that is, $\mathrm Z_{\mathrm R(G')} = \mathrm R(\mathrm Z_{G'})$ by Proposition \ref{propweilrestrunip}), we may replace $k$ by $k''$ and reduce to the case when $k'/k$ is purely inseparable. Then $\Gal(k'_s/k') = \Gal(k_s/k) = \Gamma$ and the above diagram gives

\begin{equation}\label{eqdiagtempgg}
\begin{tikzcd}
\mathrm{H}^1\!\!\left(\!\Gamma, \dfrac{G'(k'_s)}{\mathrm Z_{G'}(k'_s)}\right)\arrow[d, "\delta"]\arrow[r] &
\mathrm{H}^1\!\!\left(\! k, \dfrac{G}{\mathrm Z_G}\right)\arrow[d, "\widetilde{\delta}"]\arrow[r, two heads] &
\mathrm{H}^1(\Gamma, \mathrm{H}^1(k_s, \mathrm Z_G))\arrow[d, equal]\\
\mathrm{H}^2(\Gamma, \mathrm Z_{G'}(k'_s))\arrow[r] &
\mathrm{H}^2(k, \mathrm Z_G)\arrow[r, two heads] &
\mathrm{H}^1(\Gamma, \mathrm{H}^1(k_s, \mathrm Z_G))
\end{tikzcd}\tag{$\bigstar_{G',G}$}\end{equation}
where we have again used that $\mathrm Z_{\mathrm R(G')} = \mathrm R(\mathrm Z_{G'})$ (Proposition \ref{propweilrestrunip}) and that $\mathrm R(G')(k_s) = G'(k'_s)$. Note also that, if $\mathcal G$ is any \'etale $k$-form of $G$, then $\mathcal G\simeq\mathrm R_{K/k}(\mathcal G')$ for some finite extension $K/k$ and a primitive $K$-group $\mathcal G'$ by Corollary \ref{corconradetform}. In fact, $K\otimes_k k_s\simeq k'\otimes_k k_s$ implies that the (purely inseparable) field extensions $K/k$ and $k'/k$ are isomorphic, so we may take $K = k'$. We therefore get an analogous diagram $(\bigstar_{\mathcal G',\mathcal G})$ for $\mathcal G',\mathcal G$ as we did for $G',G$. Furthermore, we note that the leftmost vertical map in $(\bigstar_{\mathcal G',\mathcal G})$ is surjective, $\mathrm{H}^1(\Gamma,\mathcal G'(k'_s)/\mathrm Z_{\mathcal G'}(k'_s))\twoheadrightarrow\mathrm{H}^2(\Gamma,\mathrm Z_{\mathcal G'}(k'_s))$, by Step~2 applied to $\mathcal G'$ over $k'$ (which is itself a local or global field).

Now take any $x\in\mathrm{H}^2(k, \mathrm Z_G)$. By surjectivity of the horizontal maps, we may take some point $y\in\mathrm{H}^1(k, G/\mathrm Z_G)$ such that $x$ and $y$ have the same image in $\mathrm{H}^1(\Gamma, \mathrm{H}^1(k_s, \mathrm Z_G))$. Twisting by a representative $P$ of $y$ (again, see Proposition \ref{propdiaglet}), we therefore get $\tau_P(x)$ which maps to $0$ in $\mathrm{H}^1(\Gamma, \mathrm{H}^1(k_s, \mathrm Z_G))$. Now, $_P\delta$ is a surjective map by the above discussion for $(\bigstar_{\mathcal G',\mathcal G})$ and $\mathcal G = {_P}G$. We conclude that there is $y'\in\mathrm{H}^1(k, {_P}G/\mathrm Z_G)$ such that ${(_P\widetilde{\delta})(y') = \tau_P(x)}$,~hence~${\widetilde{\delta}(\tau_P^{-1}(y')) = x}$. 
This proves Step 3 and, together with Step 1, the proposition statement.
\end{prf}

\begin{cor}
Any \'etale or separable band $(\Gb,\kappa)$ over $k$ locally represented by a perfect pseudo-reductive group $\Gb$ satisfies the equality $\mathrm N^2(k,\Gb,\kappa) = \mathrm H^2(k,\Gb,\kappa)$.
\end{cor}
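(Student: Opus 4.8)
The plan is to deduce this from the global representability results of Section~3, the surjectivity statement of Theorem~\ref{thmsurjh1h2perfpsred}, and the description of the subset of neutral elements given in Corollary~\ref{corneutelemdesc}.

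\emph{Step 1: reduction to the globally represented case.} Since $\Gb$ is in particular pseudo-reductive over $k_s$ and $[k:k^p]=p$, Theorem~\ref{thmmainpsredrepr} (for an \'etale band) resp.\ Corollary~\ref{cormainpsredrepr} (for a separable band) shows that $(\Gb,\kappa)$ is globally representable; fix a representative $G$ over $k$. As $\Gb$ is smooth, $\mathrm H^1(k_s,\Gb/\mathrm Z_\Gb)=1$ by Lemma~\ref{lemsmoothskewvanish}, so $\Gb$ admits no nontrivial inner $k_s$-form and hence $G_{k_s}\simeq\Gb$ is perfect pseudo-reductive. It follows that $G$ itself is perfect pseudo-reductive over $k$: pseudo-reductivity descends along $k_s/k$ (as $R_{u,k}(G)_{k_s}\subseteq R_{u,k_s}(\Gb)=1$), and $\mathcal D(G)_{k_s}=\mathcal D(\Gb)=\Gb$ forces $\mathcal D(G)=G$ by faithfully flat descent. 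Since $k$ is a local or global field of positive characteristic, Theorem~\ref{thmsurjh1h2perfpsred} applies to $G$.

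\emph{Step 2: identify $\mathrm N^2$ with the image of $\delta$, resp.\ $\widetilde\delta$, and conclude.} Because $(\Gb,\kappa)\simeq L(G)$, the pointed set $\mathrm H^2(k,\Gb,\kappa)$ is canonically identified with $\mathrm H^2(k,G)$ computed on the relevant site ($k_\Et$ for an \'etale band, $k_\fppf$ for a separable band), compatibly with the subsets of neutral elements. By Corollary~\ref{corneutelemdesc} applied over that site, under the canonical bijection $\mathrm H^2(k,\mathrm Z_G)\xrightarrow{\sim}\mathrm H^2(k,G)$ the subset $\mathrm N^2(k,G)$ is exactly the image of the connecting map: in the \'etale case this is $\delta:\mathrm H^1(\Gamma,G(k_s)/\mathrm Z_G(k_s))\to\mathrm H^2(\Gamma,\mathrm Z_G(k_s))$ (using that $\Spec k_s$ is a point for the \'etale topology, so that $(\pi_*G/\pi_*\mathrm Z_G)(k_s)=G(k_s)/\mathrm Z_G(k_s)$), and in the separable case it is $\widetilde\delta:\mathrm H^1(k,G/\mathrm Z_G)\to\mathrm H^2(k,\mathrm Z_G)$. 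Theorem~\ref{thmsurjh1h2perfpsred} gives surjectivity of this map, so its image is all of $\mathrm H^2(k,\mathrm Z_G)$; hence $\mathrm N^2(k,G)=\mathrm H^2(k,G)$, that is $\mathrm N^2(k,\Gb,\kappa)=\mathrm H^2(k,\Gb,\kappa)$.

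Once Theorem~\ref{thmsurjh1h2perfpsred} is granted there is no genuinely hard step; the only points requiring attention are the descent of ``perfect pseudo-reductive'' to $G$ over $k$ in Step~1 and, in Step~2, keeping straight which cohomology theory and which quotient ($G(k_s)/\mathrm Z_G(k_s)$ in the \'etale setting versus $(G/\mathrm Z_G)(k_s)$ in the fppf setting) controls $\mathrm N^2$ according to whether one works with the \'etale or the separable band attached to $\Gb$.
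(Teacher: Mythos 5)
Your proposal is correct and follows exactly the route the paper takes: its one-line proof ("immediate from Corollary \ref{corneutelemdesc}") is precisely your Step 2, with the global representability of Step 1 and the surjectivity of $\delta$, $\widetilde\delta$ (Theorem \ref{thmsurjh1h2perfpsred}) already in place from the surrounding discussion. Your added care about descending ``perfect pseudo-reductive'' to the global representative and about which quotient controls $\mathrm N^2$ in each topology is sound but only makes explicit what the paper leaves implicit.
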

\begin{prf}
This is immediate from Corollary \ref{corneutelemdesc}.
\end{prf}

\begin{rem}
In the proof of the above theorem, we use only the \'etale form of Theorem \ref{thmsurjh1h2ss}. However, to deduce the surjectivity of $\delta$ in \'etale cohomology, we still pass through surjectivity of $\widetilde{\delta}$ in fppf cohomology (unlike $\mathrm R(G')/\mathrm Z_{\mathrm R(G')}\rightarrow G/\mathrm Z_G$, the map $G'(k'_s)/\mathrm Z_{G'}(k'_s)\rightarrow G(k_s)/\mathrm Z_G(k_s)$ is usually not an isomorphism). Here separable bands play a conceptual role, formally gluing~the two statement in \'etale cohomology through twisting of the diagram in Proposition \ref{propdiaglet}.

A direct proof of this theorem (without twisting or reducing to a special case) could also be done by analogy with the proof of Theorem \ref{thmsurjh1h2ss}, using the generalization of anisotropicity to smooth connected commutative algebraic groups to be developed in the next subsection. 
\end{rem}

Now let $G$ be any pseudo-reductive group over a global or local field $k$. The abelianization results of \cite{Brv93} in the case when $G$ is reductive (which is the essential case when $\mathrm{char}(k) = 0$) were generalized by Gonz\'alez-Avil\'es in \cite{GA12} to fppf cohomology of reductive groups over any base scheme $S$ which is ``of Douai type'': this condition means that $\mathrm H^1(S, G/\mathrm Z_G)\rightarrow\mathrm H^2(S, \mathrm Z_G)$ is surjective for any semisimple group $G$. 

The main object appearing in this theory is the ``quasi-abelian crossed module'' $[G^{sc}\rightarrow G]$ (see \cite[\textsection 3]{GA12}), given by the simply connected cover $G^{sc}\twoheadrightarrow G^{ss}\subseteq G$, for reductive $G$. Many results are shown for this crossed module, in particular (\cite[Thm.\ 5.5(ii)]{GA12}) that the map
\begin{equation}\label{eqabmapga}
    \mathrm{ab}^2 :\mathrm H^2(S, G)\longrightarrow\mathbf H^2_{ab}(S, [G^{sc}\rightarrow G])\coloneqq\mathbf H^2(S, [\mathrm Z_{G^{sc}}\rightarrow\mathrm Z_G])
\end{equation}
has kernel equal to $\mathrm N^2(S, G)$. The property of being quasi-abelian crucially includes the fact that $G = \im(G^{sc})\cdot\mathrm Z_G$. The obvious analogue of this crossed module for a (generalized)~standard pseudo-reductive group is the complex $[\mathrm R(G')\rightarrow G]$, as in Theorem \ref{thmgenstandfunct}. However, the equality $G = \mathcal D(G)\cdot\mathrm Z_G$ does not hold in general for pseudo-reductive groups $G$ (unless they are perfect, or generated by tori; see Subsection \ref{ssectpsred1}), so we cannot expect to apply the quasi-abelian~theory of Gonz\'alez-Avil\'es directly here. 
Instead, let us discuss in the abstract the expected properties of an ``abelianization map'' $\mathrm{ab}^2$ to take as a starting point:

\begin{exmp}\label{exmppsredabelprelim}
Given a pseudo-reductive group $G$ over a local or global field $k$, suppose that $\ell : \mathrm H^2(k, \mathrm Z_G)\rightarrow A$ is some homomorphism of Abelian groups. If
$$\im\left(\widetilde{\delta}:\mathrm{H}^1(k,G/\mathrm Z_G)\rightarrow\mathrm{H}^2(k,\mathrm Z_G)\right)\subseteq \ker(\ell)$$
then there exists a function $\mathrm{ab}^2 : \mathrm H^2(k,G)\rightarrow A$ defined by fixing any element $n\in\mathrm N^2(k,G)$ and taking $a.n$ to $\ell(a)$, where $a\in\mathrm{H}^2(k,\mathrm Z_G)$. This is a well-defined function by Proposition \ref{propcentacth2} and independent of the choice of $n$ by Corollary \ref{corneutelemdesc}. If moreover $\im(\widetilde{\delta}) = \ker(\ell)$, then~clearly $\ker(\mathrm{ab}^2) = \mathrm N^2(k,G)$, and we call $\mathrm{ab}^2$ an \textit{abelianization map} of the second cohomology of $G$.

Because $k$ is local or global, we have that the group $\mathrm H^i(k, C)$ vanishes for any $i > 2$ and any commutative algebraic group $C$ over $k$. This implies that the homomorphism
$$\mathbf H^2(k, [\mathrm Z_{\mathrm R(G')}\rightarrow\mathrm Z_G])\longrightarrow\mathrm H^2(k, \coker(\mathrm Z_{\mathrm R(G')}\rightarrow\mathrm Z_G)) = \mathrm H^2(k, \mathrm Z_G/\mathrm Z_{\mathcal D(G)})$$
is an isomorphism. Under this identification, our construction recovers the abelianization~map \eqref{eqabmapga} by taking $\ell = \ell_0$ for the canonical map $\ell_0 : \mathrm H^2(k, \mathrm Z_G)\rightarrow \mathrm H^2(k, \mathrm Z_G/\mathrm Z_{\mathcal D(G)})\eqqcolon A_0$ whenever $\im(\widetilde{\delta})\subseteq\ker(\ell_0)$. We will see below that this inclusion indeed holds when $G = \mathcal D(G)\cdot\mathrm Z_G$.

On the other hand, we claim that the converse inclusion $\im(\widetilde{\delta})\supseteq\ker(\ell_0)$ always holds: Indeed, it follows directly from the commutative diagram with exact bottom row (cf.\ Proposition~\ref{proppsredsubgrpcenter})
\begin{center}\begin{tikzcd}
\mathrm{H}^1\!\left(\! k, \dfrac{\mathcal D(G)}{\mathrm{Z}_{\mathcal D(G)}}\right)\arrow[r]\arrow[d, two heads, "\widetilde\delta_{\mathcal D(G)}"] & \mathrm{H}^1\!\left(\! k, \dfrac{G}{\mathrm{Z}_G}\right)\arrow[d, "\widetilde\delta_G"]\\
\mathrm{H}^2(k, \mathrm{Z}_{\mathcal D(G)})\arrow[r] & \mathrm{H}^2(k, \mathrm{Z}_G)\arrow[r, "\ell_0"] & \mathrm{H}^2(k, \mathrm{Z}_G/\mathrm{Z}_{\mathcal D(G)})
\end{tikzcd}\end{center}
in which the left vertical map is a surjection by Theorem \ref{thmsurjh1h2perfpsred}. This ends the example.
\end{exmp}

Returning to our main discussion, we claim that such an abelianization map does exist~for~any pseudo-reductive group $G$ (and in fact for any smooth connected group $G$ over $k$) and the proof of this claim will occupy the remainder of the entire section; see Corollary \ref{corabelmainpsred}, Theorem~\ref{thmmainabelh2}. In view of the preceding example, we must first make a correct choice of the codomain group~$A$. To directly measure the failure of the equality $G = \mathcal D(G)\cdot\mathrm Z_G$ which informs our choice of $A$, consider the following commutative diagram (in which the bottom row is part of a long exact sequence in cohomology of commutative algebraic groups):
\vspace{-10pt}

\begin{center}\begin{equation}\label{eqabelprelim}
\begin{tikzcd}
\mathrm{H}^1\!\left(\! k, \dfrac{G}{\mathrm Z_{G}}\right)\arrow[d]\arrow[r, "\widetilde{\delta}"] &
\mathrm{H}^2(k, \mathrm Z_G)\arrow[d, two heads, "\ell_0"]\arrow[r, dashed, "\ell"] &
A\\
\mathrm{H}^1\!\left(\! k, \dfrac{G}{\mathrm Z_{G}\cdot\mathcal D(G)}\right)\arrow[r] &
\mathrm{H}^2\!\left(\! k, \dfrac{\mathrm Z_G}{\mathrm Z_{\mathcal D(G)}}\right)\arrow[r, two heads, "\eta"] &
\mathrm{H}^2\!\left(\! k, \dfrac{G}{\mathcal D(G)}\right)
\end{tikzcd}
\end{equation}\end{center}
Here, the map $\eta$ is a surjection because $G/(\mathrm Z_{G}\cdot\mathcal D(G))$ is unipotent (Corollary \ref{corpsredmulunip}) and thus its $\mathrm H^2$ group vanishes. The map $\ell_0$ is a surjection since $\mathrm H^3(k, \mathrm Z_{\mathcal D(G)}) = 0$ (over $k$ local or global). Moreover, the composition $\eta\circ\ell_0\circ\widetilde{\delta}$ is trivial by commutativity of this diagram (which itself comes from the functoriality of connecting homomorphisms associated to central subgroups). This proves the chain of inclusions:
$$\ker(\ell_0)\subseteq\mathrm{im}(\widetilde{\delta})\subseteq\ker(\eta\circ\ell_0)$$
When $G = \mathrm Z_{G}\cdot\mathcal D(G)$, then $\eta$ is an isomorphism, so the two inclusions are equalities and both $\ell_0$ and $\eta\circ\ell_0$ induce abelianization maps through the construction in the above example.

In the following subsection (more precisely, Corollary \ref{corabelmainpsred}), we will show that the inclusion on the right is in fact always an equality, $\mathrm{im}(\widetilde{\delta}) = \ker(\eta\circ\ell_0)$. Thus the correct choice~of~the~starting map $\ell$ is:
$$\ell : \mathrm H^2(k, \mathrm Z_G)\longrightarrow \mathrm H^2(k, G/\mathcal D(G))\eqqcolon A$$
On the other hand, the example below shows that the inclusion on the left can in general be a strict inclusion:

\begin{exmp}\label{exmpabelmain}
Consider the situation from Example \ref{exmppsredh2notinj}, where we had $k'\subseteq k^{1/p^n}$ and a standard pseudo-reductive group $G\coloneqq\mathrm{R}(\mathbf{GL}_{p^n}) = \mathrm{R}_{k'/k}(\mathbf{GL}_{p^n})$. After applying Shapiro's lemma to $\mathrm{R}(\mathbf{GL}_{p^n})/\mathrm{R}(\mathbf G_{\mathrm m})\cong \mathrm{R}(\mathbf{GL}_{p^n}/\mathbf G_{\mathrm m})$ (this identification holds only since, in this specific case, $\mathrm Z_{\mathbf{GL}_{p^n}}\simeq\mathbf G_{\mathrm m}$ is smooth), we can write down part of the diagram \eqref{eqabelprelim} explicitly:

\begin{center}\begin{tikzcd}
\mathrm{H}^1\!\left(\! k', \dfrac{\mathbf{GL}_{p^n}}{\mathbf G_{\mathrm m}}\right)\arrow[r, "\widetilde\delta"] &
\mathrm{Br}(k')\arrow[d, two heads, "\ell_0"]\\
& \mathrm{Br}(k)\arrow[r, two heads, "\eta"] &
\mathrm{Br}(k')
\end{tikzcd}\end{center}

\noindent
If $k$ is a local field, we know that $\ker(\eta) =  {_{p^n}}\mathrm{Br}(k)\simeq \mathbf Z/p^n$ and that $\ell_0$ is an isomorphism (\cite[Thm.\ 8.9]{Har20}). Moreover, the inclusion $\mathbf{SL}_{p^n}/\mu_{p^n}\hookrightarrow\mathbf{GL}_{p^n}/\mathbf G_{\mathrm m}$ is an isomorphism, so we get by Theorem \ref{thmsurjh1h2ss} and Hilbert's theorem 90 the precise image of the following composition:
\begin{center}\begin{tikzcd}
\widetilde\delta\; :\;\mathrm{H}^1\!\left(\! k', \dfrac{\mathbf{GL}_{p^n}}{\mathbf G_{\mathrm m}}\right)\cong\mathrm{H}^1\!\left(\! k', \dfrac{\mathbf{SL}_{p^n}}{\mu_{p^n}}\right)\arrow[r, two heads] & \mathrm{H}^2(k, \mu_{p^n})\cong {_{p^n}}\mathrm{Br}(k')\arrow[r, hook] & \mathrm{Br}(k')
\end{tikzcd}\end{center}
This means that $\ell_0(\mathrm{im}(\widetilde\delta)) = \ker(\eta)$, hence we exactly have $\ker(\ell_0)\subsetneq\mathrm{im}(\widetilde\delta) = \ker(\eta\circ \ell_0)$.
\end{exmp}
\subsection{Abelianization for Pseudo-reductive Groups}\label{ssectgenanis}
To prove the properties explained in the previous subsection, we will need to generalize the notion of an ``anisotropic torus''. Recall how that is a torus $T$ for which holds $\Hom(\mathbf G_{\mathrm m}, T) = 0$ (or equivalently $\Hom(T, \mathbf G_{\mathrm m}) = 0$; see Proposition \ref{propequivcondanistor}). The statements we now show hold over any field $k$ and it will be useful to keep in mind the multiplicative-unipotent decomposition $0\rightarrow G^m\rightarrow G\rightarrow G^u\rightarrow 0$ of an arbitrary commutative affine algebraic group $G$ over $k$: 

By \cite[IV, \textsection 3, 1.1 and 1.4]{DG70}, there exists a short exact sequence $0\rightarrow G^m\rightarrow G\rightarrow G^u\rightarrow 0$ of commutative affine algebraic groups, which splits when the field $k$ is perfect. Here, $G^u$ is unipotent and $G^m$ is of multiplicative type, and both groups are smooth (resp.\ connected) when $G$ is. In particular, this sequence splits over the perfect closure $\bigcup_n k^{p^{-n}}$ of an arbitrary field $k$, so also over a finite extension $k'/k$ (since all the groups are algebraic). 

\begin{prop}\label{propanisdef}
Suppose given a commutative affine algebraic group $G$ over $k$. The three following conditions are equivalent:
\begin{enumerate}[\hspace{0.5 cm}(1)]
    \item $\Hom(\mathbf G_{\mathrm m}, G) = 0$
    \item There is no surjection $G\twoheadrightarrow\mathbf G_{\mathrm m}$
    \item The unique maximal torus $T\subseteq G$ is anisotropic
\end{enumerate}
These conditions (1)-(3) are implied by the following condition (4):
\begin{enumerate}[\hspace{0.5 cm}(1)]
\setcounter{enumi}{3}
    \item $\Hom(G, \mathbf G_{\mathrm m}) = 0$
\end{enumerate}
The converse implication also holds when $G$ is smooth and connected.
\end{prop}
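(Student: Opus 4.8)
The plan is to route everything through the group of characters $X^*(G) \coloneqq \Hom_k(G, \mathbf G_{\mathrm m})$, the multiplicative--unipotent sequence $0 \to G^m \to G \to G^u \to 0$ recalled above, and the unique maximal torus $T$, which is the maximal subtorus of $G^m$ (so that $X^*(T) = X^*(G^m)/\mathrm{tors}$ as $\Gamma$-modules, $\Gamma = \Gal(k_s/k)$). Two standing facts will be used throughout. First, $\Hom(G^u, \mathbf G_{\mathrm m}) = 0$ (the image would be simultaneously unipotent and of multiplicative type, hence trivial) and $\mathrm{Ext}^1(G^u, \mathbf G_{\mathrm m})$ is torsion: in characteristic $0$ it vanishes, and in characteristic $p$ the unipotent group $G^u$ is killed by some $p^N$, so $\mathrm{Ext}^1(G^u, -)$ is killed by $p^N$ as well (pullback along $[p^N]_{G^u} = 0$). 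Second, for a group $M$ of multiplicative type one has $\Hom_k(M, \mathbf G_{\mathrm m}) = X^*(M)^\Gamma$, and anisotropy of a torus is detected on its character (equivalently cocharacter) lattice exactly as in Proposition~\ref{propequivcondanistor}.

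First I would settle (1) $\Leftrightarrow$ (3). Any homomorphism $\mathbf G_{\mathrm m} \to G$ has scheme-theoretic image a subtorus of $G$ (a connected quotient of $\mathbf G_{\mathrm m}$ of multiplicative type), hence contained in $T$; conversely every subtorus of $T$ produces such a homomorphism. Thus $\Hom(\mathbf G_{\mathrm m}, G) = \Hom(\mathbf G_{\mathrm m}, T)$, and Proposition~\ref{propequivcondanistor} identifies vanishing of the right-hand side with anisotropy of $T$.

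The substantive step is (2) $\Leftrightarrow$ (3), and the main obstacle lies here: one must control the discrepancy between $\Hom_k(G, \mathbf G_{\mathrm m})$, $\Hom_k(G^m, \mathbf G_{\mathrm m})$ and $\Hom_k(T, \mathbf G_{\mathrm m})$ introduced by the unipotent part $G^u$ and the finite multiplicative-type part $G^m/T$. The plan is to show that $X^*(G)$ is a finitely generated abelian group with $\operatorname{rank} X^*(G) = \operatorname{rank} X^*(T)^\Gamma$. Indeed, the left-exact sequence
$$0 \to \Hom(G^u, \mathbf G_{\mathrm m}) \to \Hom(G, \mathbf G_{\mathrm m}) \to \Hom(G^m, \mathbf G_{\mathrm m}) \to \mathrm{Ext}^1(G^u, \mathbf G_{\mathrm m})$$
together with the first standing fact shows that $X^*(G) \hookrightarrow X^*(G^m)^\Gamma$ with torsion cokernel; and the map $X^*(G^m)^\Gamma \to (X^*(G^m)/\mathrm{tors})^\Gamma = X^*(T)^\Gamma$ has finite kernel $(X^*(G^m)_{\mathrm{tors}})^\Gamma$ and cokernel contained in the finite group $\mathrm H^1(\Gamma, X^*(G^m)_{\mathrm{tors}})$. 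Hence the two ranks agree. On the other hand, a homomorphism $G \to \mathbf G_{\mathrm m}$ is surjective if and only if it has infinite order in $X^*(G)$: a finite-order homomorphism has image inside some $\mu_n$, while a surjection cannot factor through a proper (hence finite) subgroup of $\mathbf G_{\mathrm m}$. Therefore the negation of (2) is the assertion $\operatorname{rank} X^*(G) > 0$, equivalently $\operatorname{rank} X^*(T)^\Gamma > 0$, equivalently (once more by Proposition~\ref{propequivcondanistor}) that $T$ is not anisotropic, i.e.\ the negation of (3).

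Finally, (4) $\Rightarrow$ (2) is immediate, since a surjection $G \twoheadrightarrow \mathbf G_{\mathrm m}$ is in particular a nonzero homomorphism; combined with the above this gives (4) $\Rightarrow$ (1)--(3). For the converse when $G$ is smooth and connected one has $G^m = T$ (a torus) and $G^u$ smooth connected unipotent, so any $\chi \in \Hom_k(G, \mathbf G_{\mathrm m})$ vanishes on $T$ by (3) and hence factors through $G/T = G^u$, on which $\Hom(-, \mathbf G_{\mathrm m}) = 0$; thus $\chi = 0$ and (4) holds. Without smoothness or connectedness this implication fails -- $G = \mu_p$ and $G = \mathbf Z/p$ satisfy (1)--(3), their maximal torus being trivial, yet carry nonzero homomorphisms to $\mathbf G_{\mathrm m}$ -- which is precisely why these hypotheses appear in the statement.
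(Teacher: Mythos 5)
Your proposal is correct, but the substantive implication --- that a nontrivial character of $T$ forces a surjection $G\twoheadrightarrow\mathbf G_{\mathrm m}$ --- is established by a genuinely different route from the paper's. The paper argues geometrically: it first promotes the character of $T$ to a surjection $G^m\twoheadrightarrow\mathbf G_{\mathrm m}$ via an isogeny $G^m\twoheadrightarrow G^m/F$ onto a torus, and then extends from $G^m$ to all of $G$ by splitting the multiplicative--unipotent sequence over a purely inseparable extension $k'/k$ and composing $G\rightarrow\mathrm R_{k'/k}(G^m_{k'})\rightarrow\mathrm R_{k'/k}(\mathbf G_{\mathrm m,\,k'})\xrightarrow{\;\mathrm N\;}\mathbf G_{\mathrm m}$; the point is that this composite restricts on $\mathbf G_{\mathrm m}\subseteq G^m$ to $x\mapsto x^{[k':k]}$ and so remains surjective. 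You instead package both obstructions (the unipotent quotient $G^u$ and the finite part $G^m/T$) as torsion contributions to $\Hom(-,\mathbf G_{\mathrm m})$, using $\Hom(G^u,\mathbf G_{\mathrm m})=0$ and the $[p^N]$-argument for $\mathrm{Ext}^1(G^u,\mathbf G_{\mathrm m})$ being torsion, so that $\operatorname{rank}\Hom_k(G,\mathbf G_{\mathrm m})=\operatorname{rank}X^*(T)^\Gamma$ and surjectivity of a character is detected by its having infinite order. This is cleaner and avoids Weil restrictions and the inseparable splitting entirely, at the cost of working with $\mathrm{Ext}^1$ of group schemes; the paper's construction is more explicit and stays within tools it uses elsewhere. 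The equivalence $(1)\Leftrightarrow(3)$ and the treatment of $(4)$ match the paper's. Two cosmetic corrections: $\mathrm H^1(\Gamma, X^*(G^m)_{\mathrm{tors}})$ need not be finite over a general field (e.g.\ $\mathrm H^1(k,\mathbf Z/p)$ is infinite for $k=\mathbf F_p(t)$), but it is torsion, which is all your rank count requires; and the aside offering $G=\mathbf Z/p$ as a counterexample fails in characteristic $p$, where that group is unipotent and has no nontrivial characters --- $\mu_p$ alone already shows that the hypotheses in the converse to $(4)$ are needed.
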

\begin{prf}
If $T$ is the maximal torus in $G$, then $T\subseteq G^m\subseteq G$ and every map $\mathbf G_{\mathrm m}\rightarrow G$ lands into $T$. This immediately shows (1)$\Leftrightarrow$(3). Next, we show (3)$\Rightarrow$(2): Any map $f : G\rightarrow\mathbf G_{\mathrm m}$ induces a map $\overline{f} : G^u\rightarrow\mathbf G_{\mathrm m}/f(G^m)$, which is $0$ (since its domain is unipotent, and the codomain is of multiplicative type). Thus $f(G) = f(G^m)$. If $f(T) = 0$, then  $f|_{G^m}$ factors through the finite group $G^m/T$, so $f(G^m)\neq\mathbf G_{\mathrm m}$ and $f$ is not a surjection.

The implication (4)$\Rightarrow$(2) is obvious, and its converse holds when $G$ is smooth and connected because then any nonzero map $G\rightarrow\mathbf G_{\mathrm m}$ is surjective. It remains to prove implication (2)$\Rightarrow$(3), as follows:
Suppose given a nonzero map $f: T\rightarrow\mathbf G_{\mathrm m}$, which is necessarily a surjection. By \cite[Prop.\ 2.1.1]{RosTD}, there exists a finite subgroup $F\subseteq G^m$ such that $T'\coloneqq G^m/F$ is smooth and connected, hence a torus. Since $G^m/T$ is also finite, the induced map $T\twoheadrightarrow T'$ is an isogeny which induces a surjection
$$f' \;:\; G^m\twoheadrightarrow T'\xrightarrow{\;\;f\;\;} \mathbf G_{\mathrm m}/f(T\cap F)\simeq\mathbf G_{\mathrm m}$$
Now, as remarked at the beginning of this proof, there is a (purely inseparable) field extension $k'/k$ over which exists a retraction $G_{k'}\twoheadrightarrow G^m_{k'}$. By Proposition \ref{propweiladjmorph} and Corollary \ref{corweiladjmorph}, this gives a map $G\rightarrow\mathrm R_{k'/k}(G^m_{k'})\eqqcolon\mathrm R(G^m_{k'})$ which fits into the following commutative diagram, in which the unit maps $\iota$ are closed immersions:
\begin{center}\begin{tikzcd}
    & G^m\arrow[ld, hook]\arrow[d, "\iota", hook]\arrow[r, "f'", two heads] & \mathbf G_{\mathrm m}\arrow[d, "\iota", hook]\arrow[dr, "{[k' : k]}", two heads, pos=0.3]\\
    G\arrow[r] & \mathrm R(G^m_{k'})\arrow[r] & \mathrm R(\mathbf G_{\mathrm m,\,k'})\arrow[r, "\mathrm N", pos=0.3] & \mathbf G_{\mathrm m}
\end{tikzcd}\end{center}
Here, the map $\mathrm N = \mathrm N_{k'/k}$ is defined by the norm map $(k'\otimes_k A)^\times\rightarrow A^\times$ of $A$-vector spaces, for $k$-algebras $A$. An element $1\otimes a$ then maps to $[k':k]\cdot a$, as indicated. Finally, the bottom row of this diagram gives a map $G\rightarrow\mathbf G_{\mathrm m}$ which restricts to a surjection $G^m\twoheadrightarrow\mathbf G_{\mathrm m}$ and is thus itself a surjection.
\end{prf}

\begin{defn}
Suppose given a commutative affine algebraic group $G$ over a field $k$. If the conditions (1)-(3) of the above proposition hold, we will say that $G$ is \textit{anisotropic}. Note that this definition includes finite, as well as unipotent, commutative groups $G$.

When $k'/k$ is a purely inseparable field extension, the group $G$ is anisotropic (over $k$) if and only the same holds for $G_{k'}$ over $k'$. This is a consequence of the analogous statement for the maximal torus $T\subseteq G$, which is well-known from duality with lattices.

Suppose given a nonzero finite reduced $k$-algebra $k' = \prod k'_i$ and a commutative $k'$-group $G'$ with affine algebraic factors $\prod G'_i$ over $k'_i$. By the defining property of Weil restrictions of group schemes (Corollary \ref{corweiladjmorph}), the group $\mathrm R_{k'/k}(G') = \prod\mathrm R_{k'_i/k}(G')$ is anisotropic over $k$ if and only if, for each $i$, the group $G'_i$ is anisotropic over $k'_i$.
\end{defn}

\begin{prop}\label{propanisses}
Let $0\rightarrow G'\rightarrow G\rightarrow G''\rightarrow 0$ be a short exact sequence of commutative affine algebraic groups over $k$. Then $G$ is anisotropic if and only if both $G'$ and $G''$ are.
\end{prop}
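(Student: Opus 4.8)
The plan is to reduce everything to the equivalences of Proposition~\ref{propanisdef}, working throughout with the characterizations of anisotropy in terms of \textup{Hom}-sets (conditions (1) and (2)) rather than via maximal tori; this is precisely what makes the argument clean, since it avoids having to worry about right-exactness of the induced sequence of maximal tori, and both (1) and (2) are available for arbitrary commutative affine algebraic groups (not only smooth connected ones).

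First I would treat the implication ``$G$ anisotropic $\Rightarrow$ $G'$ and $G''$ anisotropic''. For $G'$: the inclusion $G'\hookrightarrow G$ is a monomorphism, so post-composition with it induces an injection $\Hom(\mathbf G_{\mathrm m}, G')\hookrightarrow\Hom(\mathbf G_{\mathrm m}, G)$; the target vanishes by condition (1) for $G$, hence $\Hom(\mathbf G_{\mathrm m}, G') = 0$ and $G'$ is anisotropic. For $G''$: if there were a surjection $\psi : G''\twoheadrightarrow\mathbf G_{\mathrm m}$, then composing with the surjection $G\twoheadrightarrow G''$ would produce a surjection $G\twoheadrightarrow\mathbf G_{\mathrm m}$ (a composite of faithfully flat homomorphisms being faithfully flat), contradicting condition (2) for $G$; so no such $\psi$ exists and $G''$ is anisotropic.

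For the converse, suppose $G'$ and $G''$ are anisotropic and let $\phi : \mathbf G_{\mathrm m}\to G$ be a homomorphism. Composing with the projection $\pi : G\to G''$ gives a homomorphism $\mathbf G_{\mathrm m}\to G''$, which is trivial because $\Hom(\mathbf G_{\mathrm m}, G'') = 0$; since the sequence is exact, $G'$ is the scheme-theoretic kernel of $\pi$, so $\phi$ factors through $G'$, and then $\Hom(\mathbf G_{\mathrm m}, G') = 0$ forces $\phi = 0$. Thus $\Hom(\mathbf G_{\mathrm m}, G) = 0$ and $G$ is anisotropic by Proposition~\ref{propanisdef}.

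I do not expect a genuine obstacle here: the only inputs are the completely standard facts that a composite of surjections of affine algebraic groups is a surjection, that post-composition with a monomorphism is injective on \textup{Hom}-sets, and that a homomorphism killed by $\pi$ factors through $\ker\pi = G'$. The one point worth flagging — and the reason I would insist on phrasing the proof through conditions (1)/(2) rather than condition (4) — is that $\Hom(G,\mathbf G_{\mathrm m}) = 0$ is \emph{not} inherited by subgroups or quotients in general, whereas (1) passes to subgroups and (2) passes to quotients essentially formally; the content of the statement is then just the classical fact for anisotropic tori, repackaged via Proposition~\ref{propanisdef}.
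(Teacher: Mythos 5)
Your proof is correct; the paper itself dismisses this proposition with ``Obvious,'' and your argument via conditions (1) and (2) of Proposition~\ref{propanisdef} (injectivity of $\Hom(\mathbf G_{\mathrm m},-)$ under the closed immersion $G'\hookrightarrow G$, composition of surjections for $G''$, and the kernel-factoring argument for the converse) is exactly the standard reasoning the author is taking for granted. Your remark that one should avoid condition (4), which is not inherited by subgroups or quotients, is a sensible precaution, though not an issue the paper confronts.
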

\begin{prf}
Obvious from the conditions (1) and (2).
\end{prf}

We will use this generalized notion of anisotropicity mainly when the commutative group $G$ is smooth and connected, in which case it includes condition (4) above: $\smash{\widehat G(k)} = \Hom(G, \mathbf G_{\mathrm m}) = 0$. If $k$ is a local field, this condition implies $\mathrm H^2(k, G) = 0$ (by local duality, \cite[Thm.~1.2.2]{RosTD}). It is also crucial in the proof of the proposition to be proven next, generalizing a statement about anisotropicity of tori (Lemma \ref{lemanistor}) very useful for working over global fields.

\begin{lem}
Let $G$ be an algebraic group over a field $k$. A $G$-torsor over $k$ which trivializes over some separable (not necessarily algebraic) extension $K/k$ must also trivialize over a finite separable extension~of~$k$.
\end{lem}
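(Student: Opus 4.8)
The plan is to reduce the statement to a classical fact about separable points on smooth varieties. First note that $P$ is nonempty, since it carries a $K$-point, and that $P$ is a $k$-scheme of finite type: it is fppf-locally isomorphic to $G$, and $G$ is of finite type by our conventions. A $K$-point $\Spec K\to P$ factors through $\Spec\kappa(x)\to P$ for some scheme-theoretic point $x\in P$, and because $P$ is of finite type the residue field $\kappa(x)$ is a finitely generated field extension of $k$; the induced $k$-embedding $\kappa(x)\hookrightarrow K$ exhibits $\kappa(x)$ as a subextension of $K/k$, so $\kappa(x)/k$ is separable. Thus $P$ already acquires a point over a finitely generated separable extension of $k$, and the only thing left to fix is the finiteness of that extension.

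Next I would pass to the reduced closure $Z\coloneqq\overline{\{x\}}\subseteq P$, an integral $k$-scheme of finite type with function field $\kappa(x)$. Since $\kappa(x)/k$ is separable, $Z$ is generically smooth over $k$ by the usual function-field criterion for geometric reducedness (cf.\ \cite[IV$_4$, \textsection 17.15]{EGA}), so its $k$-smooth locus $Z^{\mathrm{sm}}$ is a nonempty open subscheme of $Z$. It is essential here to work with the reduced subscheme $Z$ rather than with $P$ itself, as $P$ may fail to be reduced when $G$ is nonsmooth (e.g.\ $G=\mu_p$ over an imperfect field).

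It then remains to invoke the fact that a nonempty smooth $k$-scheme $Y$ of finite type has a point over some finite separable extension $k'/k$: base change to $k_s$ keeps $Y_{k_s}$ nonempty and smooth, hence by the existence of quasi-sections of smooth morphisms (\cite[IV$_4$, 17.16.3]{EGA}, using that $k_s$ has no nontrivial connected \'etale algebras) $Y_{k_s}$ has a $k_s$-rational point, which descends to a point over a finite separable subextension $k'/k$ by the limit argument recalled in Remark~\ref{remnotconv} (\cite[IV$_3$, 8.14.2]{EGA}). Applying this to $Y=Z^{\mathrm{sm}}\subseteq Z\subseteq P$ yields $P(k')\neq\varnothing$, i.e.\ a trivialization of $P$ over the finite separable extension $k'$. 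The one point requiring care throughout is the separability bookkeeping — that ``separable'' is inherited by subextensions and is precisely the hypothesis guaranteeing generic $k$-smoothness of $Z$; the remaining inputs about smooth schemes are classical and formal.
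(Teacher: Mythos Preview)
Your argument is correct and is essentially the proof that the paper is pointing to via its citation of \cite[Exmp.\ C.4.3 and Prop.\ 1.1.9(i)]{CGP15}: extract from the $K$-point a scheme point with separable residue field, pass to the reduced closure to obtain generic smoothness, and then use that nonempty smooth $k$-schemes have $k_s$-points. The only small implicit assumption you make is that the torsor $P$ is representable by a $k$-scheme of finite type; this is automatic here (over a field, torsors under finite-type group schemes are schemes, and in the paper's sole application $G=\widehat F$ is affine so affine fppf descent already suffices), but it might be worth a word.
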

\begin{prf}
See \cite[the beginning of Exmp.\ C.4.3 and Prop.\ 1.1.9(i)]{CGP15}.
\end{prf}

\begin{lem}\label{corunipinjplace}
Let $k$ be a global field and fix some place $v$ of $k$. If $U$ is a commutative unipotent algebraic group over $k$, then the map $\mathrm{H}^1(k, \widehat{U})\rightarrow\mathrm{H}^1(k_v, \widehat{U})$ is injective.
\end{lem}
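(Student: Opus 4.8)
The plan is to reduce the statement to an elementary assertion about imperfection at a single place. Write $\widehat U=\mathcal{H}om(U,\mathbf G_{\mathrm m})$; since $U$ is unipotent it is annihilated by $p^{n}$ for some $n$, hence so are $\widehat U$ and $\mathrm H^{1}(k,\widehat U)$, and moreover $\widehat U$ has no constant-\'etale or $\mathbf G_{\mathrm a}$-type subquotients (such a subquotient would contradict $\widehat U(k_s)=\Hom(U_{k_s},\mathbf G_{\mathrm m,k_s})=0$ together with biduality). Using the positive-characteristic duality machinery of \cite{RosTD} for $\widehat U$, one presents $\mathrm H^{1}(k,\widehat U)$ --- via products and long exact sequences --- in terms of the cohomology of $k$-group schemes of the shape $\mathrm R_{K/k}(H)$ for $K/k$ finite separable and $H$ a connected finite commutative $p$-power-torsion $K$-group scheme (in practice built from $\mu_{p^{m}}$'s and $\alpha_{p^{m}}$'s). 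Crucially, all such $\mathrm R_{K/k}(H)$ --- and all cokernels arising in the d\'evissage --- have \emph{trivial} $k$-points, being connected and $p$-power torsion with no $k_s$-points (e.g. $\mu_{p^m}(k_s)=\alpha_{p^m}(k_s)=1$ in characteristic $p$); hence every connecting map out of an $\mathrm H^{0}$ vanishes, the d\'evissage never meets an $\mathrm H^{2}$, and it suffices to prove the lemma with $\widehat U$ replaced by each $\mathrm R_{K/k}(H)$. By Shapiro's lemma (Proposition~\ref{propshapiro}), $\mathrm H^{1}(k,\mathrm R_{K/k}(H))=\mathrm H^{1}(K,H)$, compatibly with passing from $v$ to a place $w$ of $K$ above $v$, so we are reduced to proving that $\mathrm H^{1}(K,\mu_{p^{m}})\to\mathrm H^{1}(K_{w},\mu_{p^{m}})$ and $\mathrm H^{1}(K,\alpha_{p^{m}})\to\mathrm H^{1}(K_{w},\alpha_{p^{m}})$ are injective for the global function field $K$ and the place $w$.

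By the Kummer and $F^{m}$-isogeny sequences (and the vanishing of $\mathrm H^{1}$ of a field with $\mathbf G_{\mathrm m}$- or $\mathbf G_{\mathrm a}$-coefficients), these maps are $K^{\times}/(K^{\times})^{p^{m}}\to K_{w}^{\times}/(K_{w}^{\times})^{p^{m}}$ and $K/K^{p^{m}}\to K_{w}/K_{w}^{p^{m}}$. By induction on $m$ --- the Frobenius being injective on fields, so $p$-th roots are unique --- both reduce to the case $m=1$, i.e. to the single assertion $K\cap K_{w}^{p}=K^{p}$ inside $K_{w}$. Here one uses that the residue field of $w$ is finite, hence perfect, so that $K_{w}^{p}$ is exactly the field of Laurent series in $\pi_{w}^{p}$ for a uniformizer $\pi_{w}$; thus every nonzero element of $K_{w}^{p}$ has $w$-valuation divisible by $p$. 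Since $K$ contains an element of $w$-valuation $1$, we get $K\not\subseteq K_{w}^{p}$; as $[K:K^{p}]=p$, the intermediate field $K\cap K_{w}^{p}$ must then equal $K^{p}$.

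Tracing back, the composite $\mathrm H^{1}(k,\mathrm R_{K/k}(H))=\mathrm H^{1}(K,H)\hookrightarrow\mathrm H^{1}(K_{w},H)$ factors through $\mathrm H^{1}(k_{v},\mathrm R_{K/k}(H))=\mathrm H^{1}(K\otimes_{k}k_{v},H)$ followed by projection to the $w$-component, so $\mathrm H^{1}(k,\mathrm R_{K/k}(H))\to\mathrm H^{1}(k_{v},\mathrm R_{K/k}(H))$ is injective; undoing the first reduction gives the lemma. The main obstacle is the first paragraph: making the structure of $\widehat U$ precise enough to present it from Weil restrictions of such $H$ --- which is exactly where the technology of \cite{RosTD} is indispensable --- and verifying the ``no $k$-points'' property, which is what lets one avoid a (doomed) naive five-lemma d\'evissage, since $\mathrm H^{2}$ of these groups is certainly \emph{not} injective at a single place. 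The core computation of the second paragraph, by contrast, is elementary.
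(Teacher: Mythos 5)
Your second paragraph is a correct and genuinely different mechanism from the paper's: you prove injectivity of $\mathrm H^1(K,\mu_{p^m})\to\mathrm H^1(K_w,\mu_{p^m})$ and $\mathrm H^1(K,\alpha_{p^m})\to\mathrm H^1(K_w,\alpha_{p^m})$ by the elementary observation that $K\cap K_w^p=K^p$ (valuations of elements of $K_w^p$ are divisible by $p$, and $[K:K^p]=p$ leaves no room for an intermediate field). That computation is fine. The paper never needs it: its argument for the finite part is that the kernel of $\mathrm H^1(k,\widehat F)\rightarrow\mathrm H^1(k_v,\widehat F)$ consists of torsors trivialized over the \emph{separable} extension $k_v/k$, hence (by the lemma immediately preceding) over a finite separable extension, hence lying in $\mathrm H^1(\Gamma,\widehat F(k_s))=0$ because $\widehat F$ is infinitesimal. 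This works uniformly for every infinitesimal $\widehat F$ with no d\'evissage into simple factors at all.

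The genuine gap is your first paragraph, which you yourself flag as ``the main obstacle'' but do not carry out, and which as stated cannot be carried out in the claimed form. First, $\widehat U$ is \emph{not} presentable purely in terms of finite group schemes: if $V\subseteq U$ is a split unipotent part (e.g.\ $U=\mathbf G_{\mathrm a}$), the dual $\widehat V$ is a non-representable sheaf, not an iterated extension of Weil restrictions of finite connected groups. The paper disposes of this part by the exact sequence $0\to\widehat V\to\widehat U\to\widehat F\to 0$ dual to $0\to F\to U\to V\to 0$ ($F$ finite, $V$ split unipotent, from \cite[Lem.\ 2.1.1, 2.1.5, Prop.\ 2.3.1]{RosTD}) together with the input $\mathrm H^1(k,\widehat V)=0$; your sketch never isolates this step, and without it the reduction to finite coefficients is unjustified. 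Second, even for the finite infinitesimal quotient $\widehat F$, its simple constituents over $k$ are arbitrary $k$-forms of $\mu_p^n$ (dual to arbitrary $\Gamma$-modules over $\mathbf F_p$) and of $\alpha_p^n$; these are generally \emph{not} of the shape $\mathrm R_{K/k}(H)$ for $K/k$ separable, since not every Galois module is a permutation module. This is repairable (embed a form split by $K/k$ into the induced module $\mathrm R_{K/k}$ of its base change and use vanishing of $\mathrm H^0$ of the cokernel to get injectivity on $\mathrm H^1$), but you would have to say so; as written, the class of groups you reduce to does not exhaust the constituents of $\widehat F$. Your surrounding remarks (the $\mathrm H^0$'s all vanish, so the d\'evissage stays in degree $1$) are correct and would make the diagram chase work once the reduction is actually established, but the reduction itself is the content, and it is missing.
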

\begin{prf}
Because $\mathbf G_{\mathrm m}$ is of multiplicative type, $\widehat{U}(K) = 0$ for all field extensions $K/k$. We know, by \cite[Lem.\ 2.1.1 and Lem.\ 2.1.5]{RosTD}, that there is a short exact sequence $0\rightarrow F\rightarrow U\rightarrow V\rightarrow 0$ where $F$ is finite and $V$ is split unipotent. The dual sequence $0\rightarrow\widehat V\rightarrow\widehat U\rightarrow\widehat F\rightarrow 0$ is exact~by \cite[Prop.\ 2.3.1]{RosTD}. The sheaf $\widehat{F}$ is representable by an infinitesimal group, so
$$\ker\left(\mathrm{H}^1(k, \widehat{F})\rightarrow\mathrm{H}^1(k_v, \widehat{F})\right) \xhookrightarrow{\;\;\;\;\;\;}\mathrm{H}^1_\et(k, \widehat{F}) = \mathrm{H}^1(\Gamma, \widehat{F}(k_s)) = 0$$
using the previous lemma. As $\mathrm{H}^1(k, \widehat{V}) = 0$, the composition ${\mathrm{H}^1(k, \widehat{U})\rightarrow\mathrm{H}^1(k, \widehat{F})\rightarrow\mathrm{H}^1(k_v, \widehat{F})}$ is an injection factoring through $\mathrm{H}^1(k, \widehat{U})\rightarrow\mathrm{H}^1(k_v, \widehat{U})$.
\end{prf}

\begin{prop}\label{propanisgen}
Let $k$ be a global field and $G$ a smooth connected commutative algebraic group~over $k$. If $v$ is a place of $k$ such that $G_{k_v}$ is anisotropic over $k_v$, then $\mathrm{H}^1(k, \widehat{G})\rightarrow\mathrm{H}^1(k_v, \widehat{G})$ is injective. In particular, if such a place exists, then $\Sh^2(G) = \Sh^1(\widehat{G})^* = 0$.
\end{prop}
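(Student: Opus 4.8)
The plan is to reduce the statement to the case where $G$ is a torus, which is Lemma~\ref{lemanistor}; the mechanism is that over the imperfect field $k$ the character group $\widehat G = \underline{\mathrm{Hom}}(G,\mathbf G_{\mathrm m})$ ``sees only the maximal torus'' of $G$.

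First I would invoke the canonical exact sequence $0\to T\to G\to U\to 0$ of \cite[IV, \textsection 3, 1.1 and 1.4]{DG70}, where $T$ is the unique maximal torus of $G$ and $U$ is unipotent, both smooth and connected since $G$ is. Applying $\underline{\mathrm{Hom}}(-,\mathbf G_{\mathrm m})$ produces an exact sequence $0\to\widehat U\to\widehat G\to\widehat T\to\underline{\mathrm{Ext}}^1(U,\mathbf G_{\mathrm m})$ of fppf sheaves over $k$. As $U$ is unipotent, $\widehat U = 0$, so $\widehat G$ injects into the character lattice $\widehat T$, and its cokernel $E$ is a subsheaf of $\underline{\mathrm{Ext}}^1(U,\mathbf G_{\mathrm m})$; on the other hand $E$ is a quotient of the \'etale group scheme $\widehat T$ by a closed subgroup, hence itself \'etale. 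Over $\overline{k}$ the unipotent group $U_{\overline{k}}$ is split, so $\mathrm{Ext}^1_{\overline{k}}(U_{\overline{k}},\mathbf G_{\mathrm m}) = 0$; thus $E$ has no $\overline{k}$-points, and an \'etale $k$-group with no geometric points is trivial. Hence $E = 0$ and $\widehat G\xrightarrow{\ \sim\ }\widehat T$. (Equivalently: any finite-order character of the smooth connected group $G$ vanishes on the Zariski-dense subset $G(\overline{k})$ and so is zero, whence the finitely generated module $\widehat G$ is torsion-free and is therefore the character group of a torus, necessarily $T$.)

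Now I would conclude with Lemma~\ref{lemanistor}. Since $k_v/k$ is a separable field extension, the functors $G\mapsto\widehat G$ and $G\mapsto(\text{maximal torus})$ both commute with the base change $k\to k_v$; thus $\widehat{G_{k_v}} = \widehat{T_{k_v}}$ with $T_{k_v}$ the maximal torus of $G_{k_v}$, and the hypothesis that $G_{k_v}$ is anisotropic says precisely, by Proposition~\ref{propanisdef}(3), that $T_{k_v}$ is an anisotropic torus over $k_v$. Lemma~\ref{lemanistor} applied to $T$ then shows $\mathrm{H}^1(k,\widehat T)\to\mathrm{H}^1(k_v,\widehat T)$ is injective, which is the first assertion. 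For the last statement, $\Sh^1(\widehat G)$ lies in the kernel of $\mathrm{H}^1(k,\widehat G)\to\mathrm{H}^1(k_v,\widehat G)$ and hence vanishes, while $\Sh^2(G)\cong\Sh^1(\widehat G)^*$ by the global Poitou--Tate duality for smooth connected commutative affine groups over a global function field \cite{RosTD}; so $\Sh^2(G)=0$ as well.

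The substantive input here is Lemma~\ref{lemanistor} (the torus case); within the present argument the only delicate point is the vanishing of the cokernel $E$, i.e.\ that the character group over the imperfect field $k$ is insensitive to the unipotent part of $G$, which is why one leans on the structure theorem of \cite{DG70} together with the triviality of $\mathrm{Ext}^1(-,\mathbf G_{\mathrm m})$ over the perfect closure. One may equally avoid establishing $E=0$ outright and finish instead by a short diagram chase from $0\to\widehat G\to\widehat T\to E\to 0$, using that $\mathrm{H}^0(k_v,\widehat T)=\widehat T(k_v)=0$ by anisotropy and that $E(k)\hookrightarrow E(\overline{k})=0$; invoking Lemma~\ref{corunipinjplace} in place of this last observation then also covers variants in which the cokernel need not vanish.
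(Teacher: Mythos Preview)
Your main argument has a genuine gap: the claim ``$\widehat U = 0$'' is false for the fppf sheaf $\widehat U = \mathcal{Hom}(U,\mathbf G_{\mathrm m})$, even when $U$ is smooth, connected and unipotent. Already for $U = \mathbf G_{\mathrm a}$ one has $\widehat{\mathbf G_{\mathrm a}}(k[\epsilon]/(\epsilon^2)) \neq 0$, since $t\mapsto 1+\epsilon t$ defines a nontrivial homomorphism $\mathbf G_{\mathrm a}\to\mathbf G_{\mathrm m}$ over $k[\epsilon]/(\epsilon^2)$ (more generally, any additive polynomial $g$ gives $t\mapsto 1+\epsilon g(t)$). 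What \emph{is} true is that $\widehat U(K)=0$ for every field extension $K/k$ (as the paper records in the proof of Lemma~\ref{corunipinjplace}), but the sheaf itself does not vanish. Consequently $\widehat G\to\widehat T$ is not injective, and your conclusion $\widehat G\cong\widehat T$ fails. Your ``equivalent'' argument via torsion-freeness of a finitely generated module conflates the fppf sheaf $\widehat G$ with the Galois lattice $\mathrm X(G)=\Hom_{\overline k}(G_{\overline k},\mathbf G_{\mathrm m})$; the $\mathrm H^1$ in the statement is fppf cohomology of the former, which is what enters Rosengarten's Poitou--Tate duality.

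The paper instead works with the short exact sequence $0\to\widehat U\to\widehat G\to\widehat T\to 0$ of fppf sheaves and performs a four-term diagram chase in $\mathrm H^0,\mathrm H^1$ over $k$ and $k_v$: injectivity of $\mathrm H^1(k,\widehat T)\to\mathrm H^1(k_v,\widehat T)$ is Lemma~\ref{lemanistor}, injectivity of $\mathrm H^1(k,\widehat U)\to\mathrm H^1(k_v,\widehat U)$ is Lemma~\ref{corunipinjplace}, and the chase closes using $\widehat T(k_v)=0$ (anisotropy). Your closing remark correctly senses that Lemma~\ref{corunipinjplace} is needed, but you place it on the wrong side of the sequence---it governs the kernel $\widehat U$, not a (here vanishing) cokernel $E$.
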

\begin{prf}
Let $0\rightarrow M\rightarrow G\rightarrow U\rightarrow 0$ be the multiplicative-unipotent exact sequence of $G$. The group $M$ is smooth connected, hence a torus, and $M_{k_v}$ is anisotropic when $\mathcal{Hom}(\mathbf G_{\mathrm m},G)(k_v) = 0$. Consider the following commutative diagram with exact rows
\begin{center}\begin{tikzcd}
\widehat{M}(k)\arrow[r]\arrow[d] & \mathrm{H}^1(k,\widehat{U})\arrow[r]\arrow[d, hook] & \mathrm{H}^1(k,\widehat{G})\arrow[r]\arrow[d] & \mathrm{H}^1(k,\widehat{M})\arrow[d, hook]\\
\widehat{M}(k_v)\arrow[r] & \mathrm{H}^1(k_v,\widehat{U})\arrow[r] & \mathrm{H}^1(k_v,\widehat{G})\arrow[r] & \mathrm{H}^1(k_v,\widehat{M})
\end{tikzcd}\end{center}
in which the two vertical injections come from Lemma \ref{corunipinjplace} (for $U$) and Lemma \ref{lemanistor} (for $M$). We are now done by the fact that $\widehat{M}(k_v) = 0$.
\end{prf}

The core statement in this section is the theorem below, following a quick lemma. We will apply it in two cases, $A = \mathrm Z_G$ and (in Section 5) $A = 1$. Before reading the proof, we suggest that the reader understand the proof of Theorem \ref{thmsurjh1h2ss}, as the main ideas are very similar.

\begin{lem}\label{lemcartapprox}
Let $G$ be a primitive group over a field $k$ (cf.\ Definition \ref{defgenstand}) with $\mathrm{char}(k) > 0$. If $k$ is local, then $G$ admits an anisotropic Cartan subgroup. If $k$ is global and $S$ is a finite set of places of $k$, then $G$ admits a Cartan subgroup $C$ such that $C_{k_v}$ is anisotropic for each $v\in S$.
\end{lem}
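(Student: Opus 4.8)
The plan is to prove \Cref{lemcartapprox} by reducing to the classical statement about tori and then handling the two structure cases of primitive groups (semisimple simply connected, and basic exotic) separately but in parallel. Throughout, recall that a Cartan subgroup $C$ of a pseudo-reductive $G$ is by definition $C = Z_G(T)$ for a maximal torus $T\subseteq G$, and that $C$ is commutative (and pseudo-reductive) with maximal torus $T$; moreover by \Cref{propanisdef} and the subsequent definition, $C$ is anisotropic over $k$ (resp.\ $k_v$) if and only if its maximal torus $T$ is. So the whole question is: does $G$ contain an anisotropic maximal torus over a local field, and over a global field a maximal torus that is anisotropic at each $v\in S$?

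First I would dispose of the semisimple simply connected case. Here $G = G^{\mathrm{sc}}$ and $C = T$ is itself a maximal torus. Over a local field $k$ of any characteristic, a semisimple group contains an anisotropic maximal torus (e.g.\ by the existence of anisotropic tori of the right dimension, which one gets by choosing a separable field extension of degree equal to an appropriate Weyl-orbit size and using that $T$ can be taken to be induced/anisotropic inside $G$; this is classical, and in the positive-characteristic generality needed here it follows from the rational-conjugacy theory of maximal tori together with the existence of anisotropic maximal tori in simply connected groups over local fields). Over a global field $k$ with finite $S$, one uses weak approximation for the variety of maximal tori of $G$: for each $v\in S$ pick a maximal torus $T_v\subseteq G_{k_v}$ that is anisotropic over $k_v$, and then approximate to find a single maximal torus $T\subseteq G$ which is $k_v$-conjugate to $T_v$ for every $v\in S$ simultaneously (the scheme of maximal tori is smooth and rational over $k$, so weak approximation holds); since anisotropicity over $k_v$ depends only on the $k_v$-conjugacy class, $T_{k_v}$ is anisotropic for each $v\in S$. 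This is exactly the argument behind \Cref{thmsurjh1h2ss}, which is why I would point the reader there.

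Next, the basic exotic case (occurring only for $p\in\{2,3\}$). By \Cref{propredbasexpsred} there is a canonical surjection $\pi:G\twoheadrightarrow \mathcal G$ onto a semisimple simply connected group $\mathcal G$ over $k$, inducing a bijection $G(k_s)\cong\mathcal G(k_s)$ and hence a bijection on Cartan subgroups; more precisely $\pi$ restricts to an isomorphism on maximal tori in the appropriate sense, or at least to an isogeny with finite infinitesimal kernel, so that a maximal torus $T\subseteq G$ is anisotropic over a field $F$ iff its image $\pi(T)\subseteq\mathcal G_F$ is. (The point is that anisotropicity of a torus is insensitive to purely inseparable isogeny: if $T\to T'$ is an isogeny, then $\mathrm{Hom}(\mathbf G_{\mathrm m},T)\otimes\mathbf Q\to\mathrm{Hom}(\mathbf G_{\mathrm m},T')\otimes\mathbf Q$ is an isomorphism, and a torus is anisotropic iff this rational cocharacter group is $0$.) Thus the basic exotic case follows from the semisimple simply connected case applied to $\mathcal G$: choose the desired maximal torus in $\mathcal G$ and pull it back through $\pi$ to a maximal torus of $G$ with the same (an)isotropy behavior at each relevant place.

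The main obstacle is making precise, in the exotic case, the compatibility between maximal tori of $G$ and of $\mathcal G$ under $\pi$ — in particular checking that $\pi$ carries maximal tori to maximal tori and that the induced map on cocharacter lattices is a rational isomorphism, so that anisotropicity is preserved in both directions. This requires invoking the structure of the map in \Cref{propredbasexpsred} (where $\ker\pi$ is infinitesimal and central, and $G$ and $\mathcal G$ have the "same" root system and the "same" maximal tori up to this kernel). A secondary, more routine point is the weak-approximation / rational-conjugacy input over global fields in positive characteristic; this is standard but I would cite it rather than reprove it. Everything else — passing between a Cartan subgroup and its maximal torus, and handling a product of primitive factors componentwise using the last clause of the definition of anisotropicity for Weil restrictions — is formal.
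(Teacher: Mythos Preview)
Your proposal is correct and follows essentially the same route as the paper: reduce the question to maximal tori, settle the semisimple simply connected case via Lemmas~\ref{lemlocexanis} and~\ref{lemtorapprox}, and in the basic exotic case transfer the problem to the associated semisimple group $\mathcal G$ using that the map $G\to\mathcal G$ of Proposition~\ref{propredbasexpsred}(c) restricts to an isogeny on maximal tori (so anisotropicity is preserved, exactly as you argue via rational cocharacters, or equivalently via Proposition~\ref{propanisses}). One small correction to your aside: the kernel of $G\to\mathcal G$ is \emph{not} infinitesimal or central---it is smooth connected of positive dimension---but this is harmless since your argument only uses the isogeny on tori, which is precisely what part~(c) guarantees.
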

\begin{prf}
A Cartan subgroup $C\subseteq G$ is anisotropic if and only if the unique maximal torus $T$ of $G$ with $T\subseteq C$ is anisotropic. The group $G$ is absolutely pseudo-simple and either basic exotic or semisimple simply connected. If it is semisimple, then this is Lemma \ref{lemlocexanis} (in the local case) with addition of Lemma \ref{lemtorapprox} (in the global case).

If $G$ is instead basic exotic pseudo-reductive, then we may use Proposition \ref{propredbasexpsred}(c) to find a semisimple group $\mathcal G$ over (the local or global field) $k$ and a map $G\rightarrow\mathcal G$ such that: given any maximal torus $\mathcal T\subseteq\mathcal G$, there exists a maximal torus $T\subseteq G$ mapped isogeneously onto $\mathcal T$. The kernel of an isogeny is finite, hence Proposition \ref{propanisses} implies that, for any field extension $K/k$, the torus $T_K$ is anisotropic (over $K$) if and only if $\mathcal T_K$ is. We now apply the previous paragraph to $\mathcal G$ and deduce the result. 
\end{prf}

\begin{thm}\label{thmabelmainpsred}
Let $k$ be a global or local field of positive characteristic. Suppose given some pseudo-reductive group $G$ over $k$ and a central subgroup $A\subseteq\mathrm Z_G$. Then the map
$$\mathrm{H}^1\!\left(\! k, \dfrac{G}{A}\right)\longrightarrow\mathrm{H}^1\!\left(\! k, \dfrac{G}{A\cdot\mathcal D(G)}\right)$$
of pointed sets is a surjection.
\end{thm}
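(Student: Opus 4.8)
The plan is to imitate the structure of the proof of Theorem~\ref{thmsurjh1h2ss}: reduce the assertion, via a well-chosen Cartan subgroup, to the vanishing of a connecting map attached to a short exact sequence of commutative algebraic groups, where the crucial anisotropicity of the relevant kernel can be arranged by choosing the maximal torus appropriately, and then conclude by local Tate duality over local fields and by a Poitou--Tate argument over global fields. First I fix a maximal torus $T\subseteq G$ and let $C=\mathrm Z_G(T)$ be the associated Cartan subgroup; it is smooth, connected, commutative and affine, and contains $\mathrm Z_G$, hence $A$. By the open cell decomposition of a pseudo-reductive group (Appendix~A and \cite{CGP15}) we have $G=C\cdot\mathcal D(G)$, so $C\cap A\mathcal D(G)=A\cdot(C\cap\mathcal D(G))$ and the inclusion $C\hookrightarrow G$ induces an isomorphism $C/\bigl(A\cdot(C\cap\mathcal D(G))\bigr)\xrightarrow{\ \sim\ }G/(A\cdot\mathcal D(G))=:Q$. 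Hence the map in the statement factors as $\mathrm{H}^1(k,C/A)\to\mathrm{H}^1(k,G/A)\to\mathrm{H}^1(k,Q)$, the composite being induced by the quotient map $C/A\twoheadrightarrow Q$, and it suffices to prove that, for the short exact sequence of commutative smooth connected affine $k$-groups
$$1\longrightarrow B\longrightarrow C/A\longrightarrow Q\longrightarrow 1,\qquad B:=\frac{A\cdot(C\cap\mathcal D(G))}{A},$$
the connecting map $\partial\colon\mathrm{H}^1(k,Q)\to\mathrm{H}^2(k,B)$ vanishes. Here $C\cap\mathcal D(G)=\mathrm Z_{\mathcal D(G)}(T)$ is a Cartan subgroup of the pseudo-semisimple group $\mathcal D(G)$ with maximal torus $T\cap\mathcal D(G)$, and $B$ is a quotient of it; so by Proposition~\ref{propanisses} the group $B$ is anisotropic (over $k$, and likewise after any base change, place by place) as soon as $T\cap\mathcal D(G)$ is.

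The geometric input is then that $T$ can be chosen so that the torus $T\cap\mathcal D(G)$ is anisotropic over $k$ when $k$ is local, and anisotropic at each place in any prescribed finite set $S$ of places when $k$ is global. It is enough to produce such a maximal torus inside $\mathcal D(G)$, since any maximal torus of $\mathcal D(G)$ extends to one of $G$ with the same intersection with $\mathcal D(G)$. Using Theorem~\ref{thmpsredstrmain} together with the structure theory of generalized standard and totally non-reduced pseudo-reductive groups (Theorem~\ref{thmgenstandfunct}, Propositions~\ref{propredbasexpsred} and~\ref{propredbasnonred}), one reduces $\mathcal D(G)$---up to a central isogeny and up to Weil restriction along a nonzero finite reduced $k$-algebra, neither of which affects anisotropicity of Cartan subgroups (Proposition~\ref{propanisses}, and the behaviour of anisotropicity under Weil restriction recorded in the preceding subsection)---to primitive groups over local or global fields (in the totally non-reduced case, to symplectic groups). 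For such groups Lemma~\ref{lemcartapprox} supplies an anisotropic Cartan subgroup, respectively one anisotropic at the finitely many required places, and transporting the corresponding torus back up through these maps gives the sought-after maximal torus of $\mathcal D(G)$, hence of $G$.

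With this in hand the conclusion is formal. If $k$ is local, choose $T$ so that $B$ is anisotropic; then $\Hom(B,\mathbf G_{\mathrm m})=0$ by Proposition~\ref{propanisdef}, hence $\mathrm{H}^2(k,B)=0$ by local duality (\cite[Thm.~1.2.2]{RosTD}), so $\partial=0$. If $k$ is global, fix $\xi\in\mathrm{H}^1(k,Q)$. Spreading $Q$ and $\xi$ out over $\Spec(\mathcal O_{k,S_0})$ for a suitable finite set of places $S_0$, one sees that $\xi_v=0$ in $\mathrm{H}^1(k_v,Q)$ for every $v\notin S_0$, because $\xi_v$ then comes from $\mathrm{H}^1(\mathcal O_v,-)$, which vanishes since the reduction of $Q$ is smooth and connected over the finite residue field (Lang's theorem). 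Enlarge $S_0$ to contain a chosen place $v_0$, and choose $T$ as above with $S=S_0$; then $\Sh^2(B)=0$ by Proposition~\ref{propanisgen} (applied at $v_0$), so $\mathrm{H}^2(k,B)$ injects into $\bigoplus_v\mathrm{H}^2(k_v,B)$. For $v\in S_0$ we get $\mathrm{H}^2(k_v,B)=0$ since $B_{k_v}$ is anisotropic, and for $v\notin S_0$ we get $\partial_v(\xi_v)=\partial_v(0)=0$; hence $\partial(\xi)$ has trivial local components and therefore $\partial(\xi)=0$, i.e.\ $\xi$ lifts to $\mathrm{H}^1(k,C/A)$ and a fortiori to $\mathrm{H}^1(k,G/A)$.

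The main obstacle is the middle step: turning the rank-one anisotropicity input of Lemma~\ref{lemcartapprox}, which is stated for primitive groups, into a single maximal torus of $G$ whose associated commutative quotient $B$ is anisotropic at all the prescribed places. This forces one to run carefully through the structure theory of $\mathcal D(G)$ and to verify that anisotropicity is preserved under central isogenies and Weil restrictions; by contrast, the passage from that to the vanishing of $\partial$ is a routine assembly of local Tate duality, $\Sh^2(B)=0$, and Lang's theorem, exactly as in the semisimple case.
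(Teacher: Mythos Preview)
Your proposal is correct and follows essentially the same route as the paper's proof: reduce via a well-chosen Cartan subgroup to the vanishing of a connecting map into $\mathrm{H}^2$ of an anisotropic commutative quotient, then conclude by local Tate duality in the local case and by $\Sh^2=0$ plus local vanishing in the global case. The only differences are cosmetic: the paper transports the torus directly from $G'$ to $G$ via the generalized standard presentation and Theorem~\ref{thmgenstandfunct}(c) (making the surjection $\mathrm R(C')\twoheadrightarrow B$ explicit) rather than by working inside $\mathcal D(G)$, it disposes of the totally non-reduced factor at the outset since such groups are perfect (so no symplectic comparison is needed), and it cites Lemma~\ref{lemcohdirsum} in place of your spreading-out and Lang's theorem.
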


\begin{prf}
First, note that this statement is trivial when $G$ is commutative or perfect. By Theorem \ref{thmpsredstrmain}, we may assume that $G$ is noncommutative generalized standard pseudo-reductive (since cohomology commutes with products and all totally non-reduced pseudo-reductive groups are perfect). Given a fixed generalized standard representation $(k'/k, G', T'_0, C_0)$ of $(G,T_0)$, for any choice of maximal torus $T'$ in $G'$, there exists by Theorem \ref{thmgenstandfunct}(c) a unique maximal torus $T$ in $G$ and a unique generalized standard presentation of $(G,T)$ of the form $(G', k'/k, T', C)$ with $C = \mathrm Z_G(T)$. After making this choice of $T\subseteq G$, we will have an isomorphism
$$\frac{G}{\mathcal D(G)}\cong \frac{C}{C\cap\mathcal D(G)} = \frac{C}{\im(\phi)}$$
for $C' = \mathrm Z_{G'}(T')$ and the map $\phi : \mathrm R(C')\rightarrow C$ in the generalized standard presentation (see the construction in Example \ref{exmppsredpres}; the inclusion $\im(\phi)\subseteq C\cap\mathcal D(G)$ is an equality since $\im(\phi)$ must be a Cartan subgroup of $\mathcal D(G)$). With this, we proceed similarly to the proof of Theorem~\ref{thmsurjh1h2ss}, by first proving the theorem for a local field $k$ and then deducing the global case, in both cases carefully choosing $T'$ in $G'$ with the necessary properties:
\medskip

\textit{The local case:} If $k' = \prod k'_i$ and $G' = \prod G'_i$, we apply Lemma \ref{lemcartapprox} to each $G'_i$ to get a Cartan subgroup
$C' = \prod C'_i$ in $G'$ with all $C'_i$ anisotropic. It corresponds to a maximal torus $T\subseteq G$ with $C = \mathrm Z_G(T)$, 
and we consider the associated generalized standard presentation, as explained above. Note the containment of groups $A\subseteq\mathrm Z_G\subseteq C$ and also that the map
$$\mathrm R_{k'/k}(C')\xrightarrow{\;\;\;\phi\;\;\;}\frac{\im(\phi)}{A\cap\im(\phi)} = \frac{C\cap\mathcal D(G)}{A\cap(C\cap\mathcal D(G))} = \frac{C\cap\mathcal D(G)}{A\cap\mathcal D(G)}$$
is a surjection. The Weil restriction $\mathrm{R}(C')$ is anisotropic, hence so is the (smooth and connected) group $\im(\phi)/(A\cap\im(\phi))$. By local duality (\cite[Thm.\ 1.2.2]{RosTD}), we conclude vanishing on~the right of the following commutative diagram
\begin{center}\begin{tikzcd}
\mathrm{H}^1\!\left(\! k, \dfrac{C}{A}\right)\arrow[r]\arrow[d] & \mathrm{H}^1\!\left(\! k, \dfrac{C}{A\cdot\im(\phi)}\right)\arrow[r]\arrow[d, "\rotatebox{90}{$\sim$}", pos=0.4] & \mathrm{H}^2\!\left(\! k,\dfrac{\im(\phi)}{A\cap\im(\phi)}\right) = 0\\
\mathrm{H}^1\!\left(\! k, \dfrac{G}{A}\right)\arrow[r] & \mathrm{H}^1\!\left(\! k, \dfrac{G}{A\cdot\mathcal D(G)}\right)
\end{tikzcd}\end{center}
with exact top row, which shows that the first map is a surjection. It follows that the bottom map is also surjective, which we wanted to prove.
\medskip

\textit{The global case:} Fix an element $x\in\mathrm{H}^1(k,G/(A\cdot\mathcal D(G)))$. By Lemma \ref{lemcohdirsum}, its local images $x_v\in\mathrm{H}^1(k_v,G/(A\cdot\mathcal D(G)))$ are $0$ for almost all $v$. Take a nonempty finite set $S$ of places~which includes all $v$ such that $x_v\neq 0$. Now, consider $G' = \prod G'_i$ over $k' = \prod k'_i$ and write, for each $i$,
$$k'_i\otimes_k k_v = \prod\nolimits_{w\mid v} k'_{i,w}$$
where the places $w$ depend on $i$. We apply Lemma \ref{lemcartapprox} to each $G'_i$ and the set $S_i$ of all places $w$ of $k'_i$ which lie over places in $S$. This gives a Cartan subgroup $C' = \prod C'_i$ of $G'$ such that, for each $i$ and each place $w\in S_i$ of $k'_i$, the group $(C'_i)_{k'_{i,w}}$ is anisotropic. In particular, for the Weil restriction $\mathrm R(C') = \mathrm R_{k'/k}(C')$ and any place $v\in S$, the group $\mathrm R(C')_{k_v}$ is anisotropic.

As in the local case, consider the associated generalized standard presentation, with $C\subseteq G$ and the group $A\subseteq\mathrm Z_G\subseteq C$. 
The map $\mathrm{R}(C')\twoheadrightarrow\im(\phi)/(A\cap\im(\phi))$ is a surjection, hence our choice of $C$ implies that, for all $v\in S$,
$$\left(\frac{\im(\phi)}{A\cap\im(\phi)}\right)_{\!k_v}\;\textrm{ is anisotropic over $k_v$, and thus }\;\;\mathrm{H}^2\!\left(\! k_v,\dfrac{\im(\phi)}{A\cap\im(\phi)}\right) = 0$$ 
by local duality, as before. Consider the following commutative diagram with exact rows and products taken over all places $v$ of $k$:
\begin{center}\begin{tikzcd}
\mathrm{H}^1\!\left(\! k, \dfrac{C}{A}\right)\arrow[r]\arrow[d] & \mathrm{H}^1\!\left(\! k, \dfrac{C}{A\cdot\im(\phi)}\right)\arrow[r]\arrow[d] & \mathrm{H}^2\!\left(\! k,\dfrac{\im(\phi)}{A\cap\im(\phi)}\right)\arrow[d]\\
\prod_v\mathrm{H}^1\!\left(\! k_v, \dfrac{C}{A}\right)\arrow[r] & \prod_v\mathrm{H}^1\!\left(\! k_v, \dfrac{C}{A\cdot\im(\phi)}\right)\arrow[r] & \prod_v\mathrm{H}^2\!\left(\! k_v,\dfrac{\im(\phi)}{A\cap\im(\phi)}\right)
\end{tikzcd}\end{center}
By the identification $C/(A\cdot\im(\phi))\cong G/(A\cdot\mathcal D(G))$, our fixed element $x$ is in ${\mathrm{H}^1(k,C/(A\cdot\im(\phi)))}$ and we want to show that it lies in the image of $\mathrm{H}^1(k,C/A)$ (therefore also in the image of $\mathrm{H}^1(k,G/A)$). Since we started with $x$ arbitrary, this will suffice to finish the proof.

Equivalently, we need to show that $\im(x)\in\mathrm{H}^2(k, \im(\phi)/(A\cap\im(\phi)))$ is $0$. We already know that $\im(x)_v = \im(x_v) = 0$ for all $v$ (for $v\notin S$ this holds by choice of $S$, and for $v\in S$ by choice of $C$ as shown above). It remains only to observe that $\Sh^2(\im(\phi)/(A\cap\im(\phi))) = 0$, which follows from Proposition \ref{propanisgen} since the base change of this group to $k_v$ is anisotropic for some $v$ in the nonempty set $S$.
\end{prf}

\begin{cor}\label{corabelmainpsred}
Let $k$ be a global or local field with $\mathrm{char}(k) > 0$, and $G$ a pseudo-reductive group over $k$. The sequence
$$\mathrm{H}^1\!\left(\! k, \dfrac{G}{\mathrm Z_{G}}\right)\xrightarrow{\;\;\;\widetilde{\delta}\;\;\;}\mathrm{H}^2(k, \mathrm Z_G)\longrightarrow\mathrm{H}^2\!\left(\! k, \dfrac{G}{\mathcal D(G)}\right)\longrightarrow 0$$
of pointed sets is exact.
\end{cor}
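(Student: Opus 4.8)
The plan is to extract both halves of the statement --- surjectivity of the second map and exactness at $\mathrm H^2(k,\mathrm Z_G)$ --- from the commutative diagram \eqref{eqabelprelim}, with Theorem \ref{thmabelmainpsred} serving as the one genuinely new input. Write $\ell\coloneqq\eta\circ\ell_0:\mathrm H^2(k,\mathrm Z_G)\to\mathrm H^2(k, G/\mathcal D(G))$ for the map appearing in the corollary. It is a surjection: $\ell_0$ is onto because $\mathrm H^3(k,\mathrm Z_{\mathcal D(G)}) = 0$ over a local or global field, and $\eta$ is onto because $G/(\mathrm Z_G\cdot\mathcal D(G))$ is unipotent (Corollary \ref{corpsredmulunip}), so its $\mathrm H^2$ vanishes. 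This gives exactness at the right-hand end, and since the inclusion $\mathrm{im}(\widetilde\delta)\subseteq\ker(\ell)$ was already recorded before Example \ref{exmpabelmain}, everything reduces to proving the reverse inclusion $\ker(\ell)\subseteq\mathrm{im}(\widetilde\delta)$.

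To prove that, I would fix $x\in\ker(\ell)$. Then $\eta(\ell_0(x)) = 0$, so by exactness of the bottom row of \eqref{eqabelprelim} there is $y\in\mathrm H^1(k, G/(\mathrm Z_G\cdot\mathcal D(G)))$ whose image under the connecting map $\partial$ into $\mathrm H^2(k,\mathrm Z_G/\mathrm Z_{\mathcal D(G)})$ equals $\ell_0(x)$. Now apply Theorem \ref{thmabelmainpsred} with $A = \mathrm Z_G$: the map $\mathrm H^1(k, G/\mathrm Z_G)\to\mathrm H^1(k, G/(\mathrm Z_G\cdot\mathcal D(G)))$ is surjective, so $y$ lifts to some $z\in\mathrm H^1(k, G/\mathrm Z_G)$. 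By the commutativity of \eqref{eqabelprelim} (functoriality of connecting homomorphisms for central subgroups) one gets $\ell_0(\widetilde\delta(z)) = \partial(y) = \ell_0(x)$, hence $x - \widetilde\delta(z)\in\ker(\ell_0)$.

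It then remains to absorb the correction term $\widetilde\delta(z)$, and this is where the usual subtlety of nonabelian cohomology intervenes: $\widetilde\delta$ is only a map of pointed sets, so one cannot simply add $\widetilde\delta(z)$ to an element of $\ker(\ell_0)\subseteq\mathrm{im}(\widetilde\delta)$ and stay in the image. The remedy is to twist by $z$. Conjugation by $G/\mathrm Z_G$ fixes $\mathrm Z_G$ and the subgroup $\mathrm Z_{\mathcal D(G)} = \mathrm Z_G\cap\mathcal D(G)$ pointwise and acts trivially on the abelian quotient $G/\mathcal D(G)$, so the inner twist ${}_zG$ is again a pseudo-reductive group whose derived subgroup ${}_z\mathcal D(G)$ is perfect, with $\mathrm Z_{{}_zG} = \mathrm Z_G$ and $\mathrm Z_{{}_z\mathcal D(G)} = \mathrm Z_{\mathcal D(G)}$ canonically, so that the lower portion of \eqref{eqabelprelim} and the map $\ell_0$ are unchanged. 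By Proposition \ref{propdiaglet} the bijection $\tau_z$ on $\mathrm H^2(k,\mathrm Z_G)$ is translation by $-\widetilde\delta(z)$ and intertwines $\widetilde\delta$ with ${}_z\widetilde\delta$, so $\mathrm{im}({}_z\widetilde\delta) = \mathrm{im}(\widetilde\delta) - \widetilde\delta(z)$. Since ${}_z\mathcal D(G)$ is perfect pseudo-reductive, Theorem \ref{thmsurjh1h2perfpsred} applies to it, and re-running the argument of Example \ref{exmppsredabelprelim} for ${}_zG$ yields $\ker(\ell_0) = \mathrm{im}\bigl(\mathrm H^2(k,\mathrm Z_{\mathcal D(G)})\to\mathrm H^2(k,\mathrm Z_G)\bigr)\subseteq\mathrm{im}({}_z\widetilde\delta)$. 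Therefore $x - \widetilde\delta(z)\in\mathrm{im}({}_z\widetilde\delta) = \mathrm{im}(\widetilde\delta) - \widetilde\delta(z)$, i.e.\ $x\in\mathrm{im}(\widetilde\delta)$, which finishes the argument. The hard part is not any further cohomological vanishing --- all of it is packaged in Theorems \ref{thmabelmainpsred} and \ref{thmsurjh1h2perfpsred} --- but the bookkeeping around the twist ${}_zG$: one has to check carefully that this inner twisting preserves both the ``pseudo-reductive with perfect derived subgroup'' hypotheses and the identification of $\ell_0$, so that both theorems remain applicable after twisting.
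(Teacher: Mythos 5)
Your argument is correct and follows essentially the same route as the paper's own proof: use Theorem \ref{thmabelmainpsred} (with $A=\mathrm Z_G$) to lift the image of $\ell_0(x)$ to a class $z\in\mathrm H^1(k,G/\mathrm Z_G)$, twist by $z$ via Proposition \ref{propdiaglet}, and apply Example \ref{exmppsredabelprelim} (i.e.\ Theorem \ref{thmsurjh1h2perfpsred}) to the twisted pseudo-reductive group before twisting back. The only difference is notational — you phrase the twist as the translation $x\mapsto x-\widetilde\delta(z)$ and identify $\ker({}_z\ell_0)$ with $\ker(\ell_0)$, while the paper writes $\tau_P(x)\in\ker({}_P\ell_0)$ — but the computation is the same.
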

\begin{prf}
Write down the commutative diagram of pointed sets (cf.\ \eqref{eqabelprelim} after Example \ref{exmppsredabelprelim})
\begin{center}\begin{tikzcd}
\mathrm{H}^1\!\left(\! k, \dfrac{G}{\mathrm Z_{G}}\right)\arrow[d]\arrow[r, "\widetilde{\delta}"] &
\mathrm{H}^2(k, \mathrm Z_G)\arrow[d, two heads, "\ell_0"]\arrow[r, two heads] &
\mathrm{H}^2\!\left(\! k, \dfrac{G}{\mathcal D(G)}\right)\arrow[d, equal]\\
\mathrm{H}^1\!\left(\! k, \dfrac{G}{\mathrm Z_{G}\cdot\mathcal D(G)}\right)\arrow[r] &
\mathrm{H}^2\!\left(\! k, \dfrac{\mathrm Z_G}{\mathrm Z_{\mathcal D(G)}}\right)\arrow[r, two heads] &
\mathrm{H}^2\!\left(\! k, \dfrac{G}{\mathcal D(G)}\right)
\end{tikzcd}\end{center}
whose bottom row is an exact sequence of commutative groups. We only need to check that, given $x\in\ker\big(\mathrm{H}^2(k, \mathrm Z_G)\rightarrow\mathrm{H}^2(k, G/\mathcal D(G))\big)$, it lies in $\im(\widetilde\delta)$.

By exactness, the element $\ell_0(x)\in\mathrm{H}^2(k, \mathrm Z_G/\mathrm Z_{\mathcal D(G)})$ lies in the image of $\mathrm{H}^1(k, G/(\mathrm Z_G\cdot\mathcal D(G)))$. Using the preceding theorem (for $A = \mathrm Z_G$), we find an element $y\in\mathrm{H}^1(k, G/\mathrm Z_G)$ which maps to $\ell_0(x)$. Twisting this diagram by $\tau_P$ for $[P]=y$, we get $\tau_P(\ell_0(x)) = 0$ and thus $\tau_P(x)\in\ker({_P}\ell_0)$. Now, the \'etale $k$-form $_PG$ of $G$ is also pseudo-reductive, and thus we may apply the results from Example \ref{exmppsredabelprelim} to $_PG$ to find that $\ker({_P}\ell_0)\subseteq\im({_P}\widetilde\delta)$ (as a consequence of Theorem \ref{thmsurjh1h2perfpsred}). Twisting back, we get that $x\in\im(\widetilde\delta)$.
\end{prf}

In the notation of Example \ref{exmppsredabelprelim}, this corollary shows that $\ell : \mathrm{H}^2(k, \mathrm Z_G)\longrightarrow\mathrm{H}^2(k, G/\mathcal D(G))$ can be used to construct an abelianization map for $\mathrm H^2(k,G)$. 
\subsection{Abelianization for Smooth Connected Separable Bands}
The statements presented above form the main technical case of pseudo-reductive groups. It remains only to sum up these results, including straightforward generalizations to the case of general (smooth connected affine algebraic) groups, in view of some basic reduction statements about unipotent groups:

\begin{lem}\label{lemuniph1surj}
Let $G$ be an algebraic group over a field $k$ and suppose given a normal unipotent subgroup $U\subseteq G$. Then the map $\mathrm H^1(k,G)\rightarrow\mathrm H^1(k,G/U)$ is a surjection.
\end{lem}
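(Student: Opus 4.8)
The plan is to deduce the surjectivity by dévissage on $U$, reducing to the case where $U$ is commutative, and then to obtain it from the vanishing of a suitable degree-two cohomology group. (We may assume $\mathrm{char}(k)=p>0$: in characteristic $0$ every unipotent group is $k$-split, so the claim reduces at once to the case $U=\mathbf G_{\mathrm m}^{\,0}$-free, i.e.\ $U$ an iterated extension of copies of $\mathbb G_{\mathrm a}$, where $\mathrm H^2(k,\mathbb G_{\mathrm a})=0$ does the job.)

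The first observation is that the claim is transitive: if $N\subseteq U$ is a subgroup that is normal in $G$, then $U/N$ is a normal subgroup of $G/N$ with $(G/N)/(U/N)\cong G/U$, and surjectivity of $\mathrm H^1(k,G)\to\mathrm H^1(k,G/N)$ together with surjectivity of $\mathrm H^1(k,G/N)\to\mathrm H^1(k,G/U)$ gives surjectivity of the composite $\mathrm H^1(k,G)\to\mathrm H^1(k,G/U)$. Now the derived series $U\supseteq\mathcal D(U)\supseteq\mathcal D^2(U)\supseteq\cdots$ consists of subgroups characteristic in $U$, hence normal in $G$, and it terminates because every unipotent algebraic group is nilpotent (which may be checked over $\overline k$, where $U$ embeds into a group of strictly upper-triangular matrices). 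Iterating the transitivity along this series therefore reduces the assertion to the case where $U$ is commutative.

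So let $U$ be commutative and normal in $G$, and put $Q\coloneqq G/U$. Since $U$ is commutative, the conjugation action of $G$ on $U$ factors through an action of $Q$ on $U$. Fix a class in $\mathrm H^1(k,G/U)$ and represent it by a $Q$-torsor $P$; write ${_P}U\coloneqq P\times^{Q}U$ for the associated twisted $k$-group, a form of $U$. By the analogue of Proposition \ref{propnoncommles} for a (not necessarily central) abelian normal subgroup, applied after twisting everything by $P$ — cf.\ \cite[I, \textsection5]{Ser97}, \cite[IV]{Gir71}, \cite[\textsection B.3]{Con12} — the torsor $P$ lifts to a $G$-torsor if and only if a canonical obstruction class in $\mathrm H^2(k,{_P}U)$ vanishes. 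But ${_P}U$ is again a commutative unipotent $k$-group, so $\mathrm H^2(k,{_P}U)=0$ by Proposition \ref{propunipvanish}; hence $P$ lifts, and the map $\mathrm H^1(k,G)\to\mathrm H^1(k,G/U)$ is surjective.

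The dévissage and the identification of the obstruction are routine; the only genuine input is the vanishing of $\mathrm H^2$ of commutative unipotent groups (Proposition \ref{propunipvanish}). The single point that needs care is that this vanishing must be invoked for the twisted form ${_P}U$ rather than for $U$ itself — which is harmless, since being commutative and unipotent are geometric properties, stable under twisting.
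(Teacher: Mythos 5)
Your proof is correct and is essentially the argument behind the paper's one-line proof, which simply cites \cite[Lem.\ 2.4]{NNR24} and describes that result as a formal consequence of the fact that every $k$-form of $U$ is filtered by commutative unipotent groups with vanishing $\mathrm H^2$ --- exactly the d\'evissage-plus-twisted-obstruction scheme you carry out. One correction: the vanishing you need is that of the \emph{fppf} group $\mathrm H^2(k,{_P}U)$ for the (possibly non-smooth) commutative unipotent form ${_P}U$, which is Lemma \ref{propunipfppfvanish}, not Proposition \ref{propunipvanish} (the latter concerns Galois cohomology of subquotients of $U(k_s)$ and is not the right statement for the fppf cohomology sets appearing here).
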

\begin{prf}
This is \cite[Lem.\ 2.4]{NNR24}, an almost direct formal consequence of the fact that (every $k$-form of) $U$ is filtered by commutative unipotent groups with trivial $\mathrm H^2$ group.
\end{prf}

\begin{thm}\label{thmmainabelh1}
Let $k$ be a global or local field with $\mathrm{char}(k) > 0$. Suppose given a smooth and connected affine algebraic group $G$ over $k$, with a central subgroup $A\subseteq\mathrm Z_G$. Then the map
$$\mathrm{H}^1\!\left(\! k, \dfrac{G}{A}\right)\longrightarrow\mathrm{H}^1\!\left(\! k, \dfrac{G}{A\cdot\mathcal D(G)}\right)$$
of pointed sets is a surjection.
\end{thm}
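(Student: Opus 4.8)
\begin{prfsk}
The plan is to reduce to the pseudo-reductive case --- which is Theorem~\ref{thmabelmainpsred} --- by splitting off the $k$-unipotent radical, the only friction being that $\mathrm H^1$ of a nonabelian group is not a group, which I will get around by a twisting argument. First I would dispose of $A$: the quotient $G/A$ is again smooth, connected and affine, and since $A$ is central one has $\mathcal D(G/A)=\mathcal D(G)\cdot A/A$, so $(G/A)/\mathcal D(G/A)\cong G/(A\cdot\mathcal D(G))$. Replacing $G$ by $G/A$, it therefore suffices to prove that for \emph{every} smooth connected affine algebraic group $G$ over $k$ the canonical map $\mathrm H^1(k,G)\to\mathrm H^1(k,G/\mathcal D(G))$ is surjective.

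Let $U$ be the $k$-unipotent radical of $G$, so $\overline G\coloneqq G/U$ is pseudo-reductive. I would form the commutative square of pointed sets with top row $\mathrm H^1(k,G)\to\mathrm H^1(k,G/\mathcal D(G))$, bottom row $\mathrm H^1(k,\overline G)\to\mathrm H^1(k,\overline G/\mathcal D(\overline G))$, and vertical maps induced by $G\twoheadrightarrow\overline G$ and by $G/\mathcal D(G)\twoheadrightarrow\overline G/\mathcal D(\overline G)=G/(U\cdot\mathcal D(G))$. The kernels of these two surjections are $U$ and $U\cdot\mathcal D(G)/\mathcal D(G)$, both normal unipotent, so both vertical maps are surjective by Lemma~\ref{lemuniph1surj}; the bottom map is surjective by Theorem~\ref{thmabelmainpsred} applied with $A=1$ to the pseudo-reductive group $\overline G$.

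Now fix $x\in\mathrm H^1(k,G/\mathcal D(G))$. Pulling its image in $\mathrm H^1(k,\overline G/\mathcal D(\overline G))$ back to $\mathrm H^1(k,\overline G)$ and then to some $y_0\in\mathrm H^1(k,G)$, a diagram chase shows that $x$ and the image of $y_0$ in $\mathrm H^1(k,G/\mathcal D(G))$ have the same image in $\mathrm H^1(k,\overline G/\mathcal D(\overline G))$. Since $\mathrm H^1(k,G)$ is only pointed I cannot subtract $y_0$ directly, so I twist: let $P$ be a $G$-torsor representing $y_0$ and $G_1\coloneqq{}_PG$. Twisting is functorial and preserves ``smooth connected affine'', ``unipotent'', and commutes with $\mathcal D(-)$ and with the formation of the $k$-unipotent radical; moreover $P$ is split over $k_s$ and pseudo-reductivity is insensitive to separable base change, so $G_1$ is again smooth connected affine with $k$-unipotent radical $U_1\coloneqq{}_PU$ and $\overline{G_1}\coloneqq G_1/U_1={}_P\overline G$ pseudo-reductive. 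Because $G/\mathcal D(G)$ is commutative the conjugation action of $G$ on it is trivial, so $G_1/\mathcal D(G_1)=G/\mathcal D(G)$ canonically, and by compatibility of twisting with maps of group sheaves (cf.\ \cite[I, \textsection5]{Ser97}) the twisting bijection on the abelianised $\mathrm H^1$ is translation by the image of $y_0$; running the same bookkeeping through the bottom row shows that $z\coloneqq x-(\text{image of }y_0)\in\mathrm H^1(k,G/\mathcal D(G))$ lies in the kernel of $\mathrm H^1(k,G/\mathcal D(G))\to\mathrm H^1(k,\overline{G_1}/\mathcal D(\overline{G_1}))$.

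To conclude, observe that the kernel $V$ of $G_1/\mathcal D(G_1)\twoheadrightarrow\overline{G_1}/\mathcal D(\overline{G_1})$ is a commutative unipotent group, a quotient of $U_1$, through which the map $U_1\to G_1\to G_1/\mathcal D(G_1)$ factors. The long exact sequence of the commutative groups $1\to V\to G_1/\mathcal D(G_1)\to\overline{G_1}/\mathcal D(\overline{G_1})\to1$ exhibits $z$ as coming from some class in $\mathrm H^1(k,V)$, which I lift through the surjection $\mathrm H^1(k,U_1)\twoheadrightarrow\mathrm H^1(k,V)$ (again Lemma~\ref{lemuniph1surj}) to a class $w\in\mathrm H^1(k,U_1)$; then the image of $w$ in $\mathrm H^1(k,G_1)$ abelianises to $z$, and undoing the twist yields the desired preimage of $x$. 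The step I expect to be the main obstacle is this twisting passage --- verifying that all the twisting bijections are compatible with the maps of the square, that the twist of the abelianisation is canonically trivial, and that ${}_PU$ is genuinely the $k$-unipotent radical of $G_1$ --- whereas the structural facts about unipotent groups and their quotients are routine.
\end{prfsk}
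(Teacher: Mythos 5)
Your argument is correct and is essentially the paper's own proof: the same reduction to the pseudo-reductive quotient $Q=G/\mathscr R_{u,k}(G)$ using Lemma~\ref{lemuniph1surj} and Theorem~\ref{thmabelmainpsred}, followed by the same twisting device to compensate for $\mathrm H^1(k,G)$ being merely a pointed set, with the preimage ultimately produced from the cohomology of the (twisted) unipotent radical and transported back by $\tau_P^{-1}$. The only deviation is your preliminary reduction to $A=1$ by replacing $G$ with $G/A$ --- valid here because smooth connected affine groups, unlike pseudo-reductive ones, are stable under central quotients --- whereas the paper carries $A$ through the whole diagram.
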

\begin{prf}
Let $U\coloneqq\mathscr R_{u,k}(G)$ be the unipotent radical and $p : G\rightarrow G/U\eqqcolon Q$ denote the maximal pseudo-reductive quotient of $G$. Then $p(\mathcal D(G)) = \mathcal D(Q)$ and $p(A)\subseteq p(\mathrm Z_G)\subseteq\mathrm Z_Q$. There is a short exact sequence
$$0\longrightarrow \dfrac{U}{U\cap(A\cdot\mathcal D(G))}\longrightarrow \dfrac{G}{A\cdot\mathcal D(G)}\longrightarrow \dfrac{Q}{p(A)\cdot \mathcal D(Q)}\longrightarrow 0$$
of (commutative) affine algebraic groups and we consider the commutative diagram 
\begin{center}\begin{tikzcd}
\mathrm{H}^1\!\left(\! k, \dfrac{U}{U\cap A}\right)\arrow[d, two heads]\arrow[r] &
\mathrm{H}^1\!\left(\! k, \dfrac{G}{A}\right)\arrow[d]\arrow[r] &
\mathrm{H}^1\!\left(\! k, \dfrac{Q}{p(A)}\right)\arrow[d, two heads]\arrow[r] & 1\\
\mathrm{H}^1\!\left(\! k, \dfrac{U}{U\cap(A\cdot\mathcal D(G))}\right)\arrow[r] &
\mathrm{H}^1\!\left(\! k, \dfrac{G}{A\cdot\mathcal D(G)}\right)\arrow[r] &
\mathrm{H}^1\!\left(\! k, \dfrac{Q}{p(A)\cdot \mathcal D(Q)}\right)\arrow[r] & 1
\end{tikzcd}\end{center}
of pointed sets with exact rows. Here the rightmost vertical map is surjective by Theorem \ref{thmabelmainpsred}. The remaining surjective maps are as in the preceding lemma.

Given an arbitrary element $x\in\mathrm{H}^1(k, G/(A\cdot\mathcal D(G)))$, we first take $y\in\mathrm{H}^1(k, G/A)$ such that the images of $x$ and $y$ in $\mathrm{H}^1(k, Q/(p(A)\cdot\mathcal D(Q)))$ agree. Using the map $G/A\rightarrow G/\mathrm Z_G$, it makes sense to twist $G$ by a representative $P$ of $[P] = y$. The above diagram is mapped by $\tau_P$ to the corresponding diagram associated to ${_P}G\rightarrow{_P}Q$ (without the first column; although the formation of unipotent radicals $U$ and ${_P}U$ commutes with passing to \'etale $k$-forms, there is in general no well-defined action on their torsors by $\tau_P$).

Now, $\tau_P(x)$ goes to $0$ on the right, hence there exists $z\in\mathrm{H}^1(k, {_P}U/({_P}U\cap A))$ mapping to $\tau_P(x)$. If $z'\in\mathrm H^1(k, {_P}G/A)$ is the image of $z$, then $x$ is the image of $\tau_P^{-1}(z')$.
\end{prf}

We now formulate the main result of this section:

\begin{defn}\label{defmainabel}
Let $L = (\Gb,\kappa)$ be a separable band on a local or global field $k$, represented by a smooth (and connected) affine algebraic group $\Gb$ over $k_s$. Any lift $f$ of $\kappa$ defines the same descent datum on the quotient $\Gb/\mathcal D(\Gb)$, which is hence represented by a unique (up to unique isomorphism) commutative affine algebraic group $L_\ab$ on $k$, called the \textit{maximal Abelian quotient} (or \textit{abelianization}) of $L$. When there is no confusion, we will denote it by $G_\ab$ as well.

The assignment $[f,g]\mapsto[\overline g]$ defines the \textit{abelianization map}
$$\mathrm{ab}^2 : \mathrm H^2(k,L)\rightarrow\mathrm H^2(k,L_\ab)$$ for the second cohomology set of $L$. See Subsection \ref{ssectcech} for the definition of Čech cohomology of $L$, and in particular Proposition \ref{proph2actcomm} for the action of $\mathrm H^2(k,\mathrm Z_L)$ on $\mathrm H^2(k,L)$. The naturality of this action along the obvious map $\mathrm H^2(k,\mathrm Z_L)\rightarrow\mathrm H^2(k,L_\ab)$ shows that the abelianization map described here agrees with the abelianization map constructed in Example \ref{exmppsredabelprelim} (in the case when $L$ is globally represented by $G$ and when $A = G/\mathcal D(G)$). 
\end{defn}

\begin{thm}\label{thmmainabelh2}
Let $k$ be a local or global field of positive characteristic and let $L = (\Gb,\kappa)$ be a smooth connected affine separable band on $k$. Then the sequence
$$\mathrm{N}^2(k,L)\xhookrightarrow{\;\;\;\;\;\;}\mathrm{H}^2(k, L)\xrightarrow{\;\;\mathrm{ab}^2\;\;}\mathrm{H}^2(k,L_\ab)$$
of sets (the last of which is pointed) is exact.
\end{thm}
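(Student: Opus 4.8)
The strategy is to reduce the general smooth connected case to the pseudo-reductive case already settled in Corollary~\ref{corabelmainpsred}, using the abelianization map of Definition~\ref{defmainabel} together with the fact that $\mathrm{N}^2$ is intrinsically the image of the connecting map $\widetilde\delta$ (Corollary~\ref{corneutelemdesc}). The inclusion $\mathrm{N}^2(k,L)\hookrightarrow\ker(\mathrm{ab}^2)$ is the easy direction: a neutral class is represented by a cocycle $(f,1)$ in the sense of Subsection~\ref{ssectcech}, so its image under $\mathrm{ab}^2$ is the class of $(\overline 1) = 1$ in $\mathrm H^2(k,L_\ab)$. The real content is the reverse inclusion $\ker(\mathrm{ab}^2)\subseteq\mathrm{N}^2(k,L)$. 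First I would reduce to the case where $L$ is globally represented. Indeed, by Corollary~\ref{corparamlet} and Theorem~\ref{thmmainpsredrepr} (applied to the maximal pseudo-reductive quotient, plus Lemma~\ref{lemliftrepr}), a smooth connected separable band is representable once one knows it for pseudo-reductive bands; more precisely, a smooth connected affine separable band over a field with $[k:k^p]=p$ is globally represented by a smooth connected affine algebraic group $G$ over $k$, so we may assume $L = L(G)$. Then $\mathrm H^2(k,L) = \mathrm H^2(k,G)$, $\mathrm H^2(k,L_\ab) = \mathrm H^2(k,G/\mathcal D(G))$, and $\mathrm{ab}^2$ is (by the last sentence of Definition~\ref{defmainabel}) the abelianization map of Example~\ref{exmppsredabelprelim} built from $\ell : \mathrm H^2(k,\mathrm Z_G)\to\mathrm H^2(k,G/\mathcal D(G))$.

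With this reduction in place, the proof becomes a formal consequence of the results of Subsection~\ref{ssectgenanis} combined with the free-transitive action of $\mathrm H^2(k,\mathrm Z_G)$ on $\mathrm H^2(k,G)$ (Proposition~\ref{propcentacth2}). Fix any $n\in\mathrm N^2(k,G)$ (nonempty by Corollary~\ref{cormainpsredrepr} applied to the pseudo-reductive quotient and Lemma~\ref{lemliftrepr}, or directly since $G$ is representable so $\mathrm{TORS}(G)$ is a neutral gerbe). Any class in $\mathrm H^2(k,G)$ is $a\cdot n$ for a unique $a\in\mathrm H^2(k,\mathrm Z_G)$, and by construction $\mathrm{ab}^2(a\cdot n) = \ell(a)$. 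Hence $\ker(\mathrm{ab}^2)$ corresponds exactly to $\ker(\ell)\subseteq\mathrm H^2(k,\mathrm Z_G)$, so it suffices to show $\ker(\ell) = \im(\widetilde\delta)$; then $\ker(\mathrm{ab}^2) = \im(\widetilde\delta)\cdot n = \mathrm{N}^2(k,G)$ by Corollary~\ref{corneutelemdesc} and the transitivity. The inclusion $\im(\widetilde\delta)\subseteq\ker(\ell)$ follows from the commutative diagram \eqref{eqabelprelim}: $\ell = \eta\circ\ell_0$ up to the identification $\mathrm H^2(k,G/\mathcal D(G)) = \mathrm H^2(k,\mathrm Z_G/\mathrm Z_{\mathcal D(G)})/(\cdots)$ coming from the long exact sequence of $0\to\mathrm Z_{\mathcal D(G)}\to\mathrm Z_G\to\mathrm Z_G/\mathrm Z_{\mathcal D(G)}\to 0$ together with vanishing of higher cohomology over a local or global field, and $\eta\circ\ell_0\circ\widetilde\delta = 0$ by functoriality of connecting homomorphisms.

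The nontrivial inclusion $\ker(\ell)\subseteq\im(\widetilde\delta)$ is where the work of the section is used. For $G$ pseudo-reductive this is precisely Corollary~\ref{corabelmainpsred}. For general smooth connected affine $G$, I would run a dévissage along the unipotent radical $U = \mathscr R_{u,k}(G)$ with pseudo-reductive quotient $Q = G/U$. The key inputs are: $p(\mathcal D(G)) = \mathcal D(Q)$ and $\mathrm Z_Q$ identifications; the surjectivity statement of Theorem~\ref{thmmainabelh1} (which was proved precisely to handle this unipotent dévissage for $\mathrm H^1$); and vanishing of $\mathrm H^2$ of commutative unipotent groups over local/global fields. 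Concretely, given $x\in\ker(\ell)\subseteq\mathrm H^2(k,\mathrm Z_G)$, I would: push $x$ forward along $\mathrm Z_G\to\mathrm Z_Q$, show the image lies in $\ker(\ell_Q)$ for the analogous map over $Q$ (using that the kernel of $\mathrm Z_G\to\mathrm Z_Q$ is unipotent so contributes nothing in degree $3$), apply Corollary~\ref{corabelmainpsred} to $Q$ to lift it to $\mathrm H^1(k,Q/\mathrm Z_Q)$, twist by the corresponding torsor $P$ to reduce to the case where this image vanishes, and then use that the difference class now comes from a unipotent group together with Theorem~\ref{thmmainabelh1} (in its $A = \mathrm Z_G$ form) to lift all the way back to $\mathrm H^1(k,G/\mathrm Z_G)$; twisting back gives $x\in\im(\widetilde\delta)$. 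The main obstacle I anticipate is bookkeeping the twists $\tau_P$ compatibly across the non-abelian rows and the central columns of \eqref{eqabelprelim}, exactly as in the proofs of Theorem~\ref{thmsurjh1h2perfpsred} and Corollary~\ref{corabelmainpsred}; the actual homological content is already contained in those results and in Theorem~\ref{thmmainabelh1}, so the remaining step is assembling them via the action of $\mathrm H^2(k,\mathrm Z_G)$ and Corollary~\ref{corneutelemdesc}.
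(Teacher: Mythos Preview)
Your reduction to a globally represented band is not justified, and this is where the argument breaks down. You invoke Lemma~\ref{lemliftrepr} to pass from representability of the pseudo-reductive quotient $(\Qb,\bar\kappa)$ back to $(\Gb,\kappa)$, but that lemma goes in the opposite direction: it says that if the \emph{source} band $(\Gb,\kappa')$ of a map $\alpha:\Gb\to\Hb$ is representable then so is the \emph{target} $(\Hb,\kappa)$. Here the map is $\Gb\to\Qb$, so Lemma~\ref{lemliftrepr} would only let you deduce representability of $(\Qb,\bar\kappa)$ from that of $(\Gb,\kappa)$, which is the wrong way around. The paper never claims that an arbitrary smooth connected separable band is globally representable; only the pseudo-reductive case is proved (Corollary~\ref{cormainpsredrepr}). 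Your subsequent d\'evissage, which assumes a global $G$ over $k$ with $U=\mathscr R_{u,k}(G)$ and $Q=G/U$, therefore has no legs to stand on. Even granting a global $G$, the twisting step is problematic: the torsor $P$ you obtain lies in $\mathrm H^1(k,Q/\mathrm Z_Q)$, but twisting $G$ requires a $G/\mathrm Z_G$-torsor, and the map $G/\mathrm Z_G\to Q/\mathrm Z_Q$ need not be surjective on $\mathrm H^1$ (its kernel $\alpha^{-1}(\mathrm Z_Q)/\mathrm Z_G$ is typically not unipotent, e.g.\ for $G=\mathbf G_a\rtimes\mathbf G_m$).

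The paper sidesteps both issues by working directly at the band level with \v Cech cocycles, inducting on the \emph{derived} series $\Ub_i=[\Ub_{i+1},\Ub]$ of the unipotent radical $\Ub$ of $\Gb$ over $k_s$. The point is that $\Ub_1$ is commutative and characteristic in $\Gb$ (though not central in $\Gb$), so any continuous lift $f$ of $\kappa$ restricts to genuine descent data on $\Ub_1$, yielding a $k$-form $U_{f}$; then for $\Hb=\Gb/\Ub_1$ one shows $p^{-1}(\mathrm N^2(k,\Hb,\bar\kappa))=\mathrm N^2(k,\Gb,\kappa)$ by modifying a cocycle $(f,g)$ until $g'$ lands in $\Ub_1(k'\otimes k'\otimes k')$ and then using $\mathrm H^2(k,U_{f'})=0$. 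Only the base case $\Ub=1$ invokes a global representative (via Corollary~\ref{cormainpsredrepr} and Corollary~\ref{corabelmainpsred}). So your overall strategy---reduce to pseudo-reductive via unipotent d\'evissage---is right, but the d\'evissage must be carried out one central-in-$\Ub$ step at a time and at the level of \v Cech cocycles for the band, not via a (nonexistent) global $G$.
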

\begin{prf}
We write $\Qb\coloneqq\Gb/\Ub$ for the unipotent radical $\Ub$ of $\Gb$ over $k_s$. We prove the theorem by induction on the length of the descending central series
$$1 = \Ub_0\subsetneq\Ub_1\subsetneq\ldots\subsetneq\Ub_n = \Ub$$
with $\Ub_i = [\Ub_{i+1},\Ub]$ for $0\leq i < n$. If $n = 0$, then $\Gb = \Qb$ is pseudo-reductive. In particular, it admits a global representative $G$ by Corollary \ref{cormainpsredrepr} and the statement to be proven reduces to Corollary \ref{corabelmainpsred} applied to $G$.

Otherwise, let $\Hb\coloneqq\Gb/\Ub_1$ and suppose the theorem holds for $\Hb$. By construction, $\Ub_1\subseteq\mathrm Z_\Ub$ and it is in particular commutative. Since the formation of the unipotent radical $\Ub$ is preserved by (semi)automorphisms of $\Gb$ and the descending central series is characteristic in $\Ub$, we get that any lift $f$ of $\kappa$ has a well-defined restriction to $\Ub_1$. There is hence an induced~band~$\overline L = (\Hb,\overline{\kappa})$ (independent of the choice of $f$, as any two lifts differ by an inner automorphism of $\Gb$, resp. of the quotient $\Hb$) and a non-unique $k$-form $U_f$ of $\Ub_1$. Let $G_\ab$ and $H_\ab$ denote the respective maximal Abelian quotients. There is a short exact sequence
$$0\longrightarrow\im(U_f\rightarrow G_\ab)\longrightarrow G_\ab\longrightarrow H_\ab\longrightarrow 0$$
of algebraic groups over $k$. As $U_f$ is unipotent, we get an isomorphism ${\mathrm H^2(k,G_\ab)\cong\mathrm H^2(k,H_\ab)}$. The following diagram is commutative with exact bottom row:
\begin{center}\begin{tikzcd}[row sep = 15pt]
\mathrm N^2(k,\Gb,\kappa)\arrow[d]\arrow[r, hook] & \mathrm H^2(k,\Gb,\kappa)\arrow[d, "p"]\arrow[r] & \mathrm H^2(k,G_\ab)\arrow[d, "\rotatebox{90}{$\sim$}", pos=0.4]\\
\mathrm N^2(k,\Hb,\overline{\kappa})\arrow[r, hook] & \mathrm H^2(k,\Hb,\overline{\kappa})\arrow[r] & \mathrm H^2(k,H_\ab)
\end{tikzcd}\end{center}
To finish the proof, we need to show that there is an equality:
$$p^{-1}\big(\mathrm N^2(k,\Hb,\overline{\kappa})\big) = \mathrm N^2(k,\Gb,\kappa)$$
Suppose given $[f,g]\in\mathrm H^2(k,\Gb,\kappa)$ with neutral image $[\overline f,\overline g]\in\mathrm H^2(k,\Hb,\overline\kappa)$. Then there is some $\overline h\in\Hb(k'\otimes_k k')$ (here we implicitly work with a $k'$-form of $\Hb$ for $k'/k$ finite) such that 
$$1\cdot\mathrm{pr}^*_{13}\overline h\cdot\overline g^{-1} = \mathrm{pr}^*_{12}\overline h\cdot (\mathrm{pr}^*_{12}\overline f)^{-1}(\mathrm{pr}^*_{23}\overline h)$$
holds in $\Hb(k'\otimes_k k'\otimes_k k')$. By applying Lemma \ref{lemrosentensfield}(a) to $U_f$, we may assume (up to enlarging the extension $k'/k$) that $\overline h$ is the image of some $h\in\Gb(k'\otimes_k k')$ and then:
$$g'\coloneqq\mathrm{pr}^*_{12}h\cdot (\mathrm{pr}^*_{12}f)^{-1}(\mathrm{pr}^*_{23}h)\cdot g\cdot\mathrm{pr}^*_{13}h^{-1}\;\in U_f(k'\otimes_k k'\otimes_k k')$$
Then $(f',g')$ is a cocycle for $f' = f\circ\mathrm{int}(h)^{-1}$. It defines an element of $\mathrm H^2(k,U_{f'})$, where $U_{f'}$ is a ($k'/k$)-form of $U_f$ (corresponding to taking the descent datum on $\Ub_1$ given by $f'$ instead~of~$f$). As $U_{f'}$ is unipotent, the class $[f',g']=[f,g]\in\mathrm H^2(k,\Gb,\kappa)$ is neutral.
\end{prf}

\begin{rem}\label{remmainabel}
This proof shows that, in the statement of the above theorem, $\mathrm H^2(k,L_\ab)$ can also be replaced by the isomorphic commutative group $\mathrm H^2(k,L^{psred}_\ab)$, coming from the abelianized pseudo-reductive quotient of the band $L$. This is consistent with the original formulation by Borovoi (\cite[5.4]{Brv93}) who considers the reductive quotient in characteristic $0$.
\end{rem}

\section{Applications}
In this section we prove (Theorem \ref{thmmainbmobs}) that the Brauer-Manin obstruction is the only one to the Hasse principle and weak approximation on homogeneous spaces of pseudo-reductive~groups (and more generally, of smooth connected groups with split unipotent radical) with smooth connected geometric stabilizer. In the first subsection we recall, in the generality of separable~bands, the theory of Springer bands associated to homogeneous spaces. In the second one, we review some basic facts about the Brauer-Manin obstruction and prove the crucial Lemma \ref{lemphslift} which relates it to homogeneous spaces. The main statement is proven in the third subsection through several reduction steps and approximation lemmas of independent interest (e.g.\ Lemma \ref{lemapproxliftabquot}).\;\;\quad

\subsection{The Springer Band}\label{ssectspringer}

The topic of algebraic Springer bands on $k_\Et$ (with respect to the different definitions of bands and of the $\mathrm H^2$ set) has already been covered in \cite[\textsection2.3]{DLA19}. For simplicity, we restrict our discussion to $\mathcal C = k_\fppf$ and the following situation: Suppose given an affine algebraic group $G$ and a scheme $X$ of finite type over $k$ which is a \textit{homogeneous space} of $G$. By this we mean that $G$ acts on $X$ (from the right) and that, for every object $S$ of $\mathcal C$ and every element $x\in X(S)$, the map $r^x : G_S\rightarrow X_S$ defined by $r^x(g) = x.g$ is surjective.

We define a gerbe $\mathscr X$ on $\mathcal C$ as follows: Let the fiber $\mathscr X(S)$ have as its objects pairs $(Y,p)$, where $Y_S$ is a $G_S$-torsor and $p : Y_S\rightarrow X_S$ is a $G_S$-equivariant map (necessarily a surjection, by transitivity of the $G$-action on $X$). Morphisms between these pairs are defined in an obvious way; they are all isomorphisms. There is on $\mathscr X$ a structure of a fibered category over $\mathcal C$ given by pullbacks; $\mathscr X$ is clearly a stack and, as $X(k')\neq\varnothing$ for some finite $k'/k$, indeed a gerbe.

\begin{defn}
This gerbe is considered in \cite[IV, 5.1]{Gir71}. Its automorphism band $L_{X}\coloneqq L(\mathscr X)$ is called the \textit{band of stabilizers}, or the \textit{Springer band}, of the homogeneous space $X$. We will see below that it is locally represented by geometric stabilizers of the action of $G$ on $X$.

The class $\xi_{X}\coloneqq[\mathscr X]\in\mathrm H^2(k, L_{X})$ is the \textit{Springer class}. The question of its neutrality will be fundamental to our applications.
\end{defn}

We now give a construction of this band in terms of representative triples. Let $k'/k$ be a finite field extension such that $X(k')\neq\varnothing$. Any choice of $x\in X(k')$ defines a stabilizer $\Hb\coloneqq\ker(r^x)$ (itself an affine algebraic group over $k'$) and a descent datum on $\Hb$ up to inner automorphisms:

For this, let $\Gb\coloneqq G_{k'}$ and have $\varphi_G : \mathrm{pr}_1^*\Gb\rightarrow\mathrm{pr}_2^*\Gb$ be the descent datum corresponding to $G$; and similarly for $X$. There exists an fppf covering $R\rightarrow\Spec(k'\otimes_k k')$ and a point $g_x\in G(R)$ such that 
$\mathrm{pr}_2^*(x) = \varphi_X(\mathrm{pr}_1^*(x).g_x) = \varphi_X(\mathrm{pr}_1^*(x)).\varphi_G(g_x)$. This defines a map:
$$f\coloneqq\varphi_G\circ\mathrm{int}(g_x^{-1}) \,:\, (\mathrm{pr}_1^*\Hb)_R\xrightarrow{\sim}(\mathrm{pr}_2^*\Hb)_R
\;\;\textrm{ because }\mathrm{pr}_j^*\Hb = \mathrm{Stab}_{\mathrm{pr}_1^*\Gb}(\mathrm{pr}_1^*(x))\textrm{ for }j = 1,2$$
It is straightforward to see that $f$ descends to an element $\varphi_H\in\mathcal{Out}_{\mathrm{pr}_1^*\Hb,\,\mathrm{pr}_2^*\Hb}(k'\otimes_k k')$ (following the definition in \eqref{eqdefshquot} for the covering $R\rightarrow\Spec(k'\otimes_k k')$), which is moreover independent of the chosen $g_x$. The point is that $g_x$ may be replaced with another element $g\cdot g_x$, for $g\in G(R)$, if and only if $g$ is in $(\mathrm{pr}_1^*\Hb)(R)$, which affects $f$ only up to inner automorphisms of $\Hb$. 

\begin{rem}\label{remwecanchoosef}
It is in fact true, by the algebraicity of $\Hb$ and Example \ref{exmpsmoothisnice}, that we may choose $R = \Spec(k'\otimes_k k')$ up to enlarging $k'/k$. This fact will be used below to study Čech~cohomology.
\end{rem}

It is nontrivial to show directly that $(k'/k,\Hb,\varphi_H)$ is a representative triple (in the sense of Definition \ref{defband}). However, it follows automatically once we give a natural identification of $\Hb$ with the automorphism sheaf $\mathcal{Aut}_{(\Gb,r^x)}$ of the pair $(\Gb,r^x)\in\mathscr X(k')$. This will also show that the triple represents exactly the band $L_{X}$.

\begin{prop}\label{propspringertriple}
The triple $(k'/k,\Hb,\varphi_H)$ represents a band canonically isomorphic to $L_X$. In particular, this band is unique up to unique isomorphism, independent of the choice of $k'/k$ or the point $x\in X(k')$ in its construction.
\end{prop}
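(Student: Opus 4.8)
The plan is to identify the triple $(k'/k,\Hb,\varphi_H)$ with the triple obtained from the gerbe $\mathscr X$ by the general recipe of Definition \ref{defh2}, applied to the specific object $a = (\Gb,r^x)\in\mathscr X(k')$. Recall that the recipe takes a covering $T\to S$ and an object $a\in\mathscr X(T)$, forms $\mathcal{Aut}_a$, and produces a section $\varphi$ in $\mathcal{Out}_{\mathrm{pr}_1^*\mathcal{Aut}_a,\,\mathrm{pr}_2^*\mathcal{Aut}_a}(T\times_S T)$; the resulting representative triple defines $L(\mathscr X) = L_X$, independent of the choice of $T$ and $a$ up to unique isomorphism. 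So it suffices to (i) produce a canonical isomorphism of group sheaves $\Hb \xrightarrow{\sim} \mathcal{Aut}_{(\Gb,r^x)}$ over $k'$, and (ii) check that under this isomorphism the section $\varphi_H$ constructed above matches the section $\varphi$ coming from the gerbe recipe. Once these are done, uniqueness and independence of the choices of $k'/k$ and $x$ are inherited for free from the corresponding properties of $L(\mathscr X)$.

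For step (i): an automorphism of the object $(\Gb, r^x)$ in the fiber $\mathscr X(k')$ is, by definition, a $\Gb$-equivariant automorphism $u$ of the $\Gb$-torsor $\Gb$ itself such that $r^x\circ u = r^x$. A $\Gb$-equivariant self-map of $\Gb$ (as a right $\Gb$-torsor) is exactly left-translation $\ell_h\colon g\mapsto hg$ by a unique point $h\in\Gb(k')$ (functorially: by a unique section of $\Gb$ over any test scheme); and the condition $r^x\circ \ell_h = r^x$ reads $x.(hg) = x.g$ for all $g$, i.e.\ $x.h = x$, i.e.\ $h\in\mathrm{Stab}_{\Gb}(x)(k') = \Hb(k')$. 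This identification is manifestly functorial in test schemes and compatible with the group laws, so it gives an isomorphism of group sheaves $\mathcal{Aut}_{(\Gb,r^x)}\cong\Hb$ over $k'$; moreover, since $\mathrm{Stab}_{\Gb}(x)$ is a closed subgroup scheme and everything is of finite type, $\mathcal{Aut}_{(\Gb,r^x)}$ is representable and one gets for free that $(k'/k,\Hb,\varphi_H)$ is a \emph{representative} triple (this is the point flagged as "nontrivial to show directly" in the text).

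For step (ii): unwinding the gerbe recipe with $T = \Spec(k')$ and $a = (\Gb, r^x)$, the pulled-back objects $\mathrm{pr}_1^*a = (\mathrm{pr}_1^*\Gb, \mathrm{pr}_1^*r^x)$ and $\mathrm{pr}_2^*a = (\mathrm{pr}_2^*\Gb, \mathrm{pr}_2^*r^x)$ over $\Spec(k'\otimes_k k')$ must be compared via a local isomorphism in $\mathscr X$. Over the covering $R\to\Spec(k'\otimes_k k')$ chosen in the construction, the point $g_x\in G(R)$ furnishes exactly such an isomorphism $\mathrm{pr}_1^*a_R\xrightarrow{\sim}\mathrm{pr}_2^*a_R$: namely the composite of the torsor isomorphism $\varphi_G$ with right-translation data twisted by $g_x$, which by definition of $g_x$ satisfies the equivariance $\mathrm{pr}_2^*(x) = \varphi_X(\mathrm{pr}_1^*(x)).\varphi_G(g_x)$, hence respects the structure maps $r^x$. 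Conjugating $\mathcal{Aut}_{\mathrm{pr}_1^*a}$ into $\mathcal{Aut}_{\mathrm{pr}_2^*a}$ by this isomorphism is, under the identification of (i), precisely the map $f = \varphi_G\circ\mathrm{int}(g_x^{-1})$ of the text, whose class modulo inner automorphisms is $\varphi_H$. The independence of $\varphi_H$ from the choice of $g_x$ (replacing $g_x$ by $h\,g_x$ with $h\in(\mathrm{pr}_1^*\Hb)(R)$ changes $f$ by $\mathrm{int}$ of an element of $\Hb$) matches the fact that in the gerbe recipe the isomorphism between $\mathrm{pr}_1^*\mathcal{Aut}_a$ and $\mathrm{pr}_2^*\mathcal{Aut}_a$ is only well-defined up to local actions of $\mathcal{Aut}_a$; so the two sections in $\mathcal{Out}$ agree.

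\textbf{Main obstacle.} I expect the bookkeeping in step (ii) to be the fiddly part: one must be careful about left- versus right-actions (the torsor $\Gb$ acting on itself on the right, versus automorphisms acting on the left), about the direction of the descent datum $\varphi_G$ versus the semiautomorphism conventions used elsewhere in the paper, and about the fact that $g_x$ and $\varphi_H$ live over the auxiliary covering $R$ rather than over $k'\otimes_k k'$ directly. None of this is deep, but getting all the conventions aligned so that "conjugation by the gerbe isomorphism" literally equals "$\mathrm{int}(g_x^{-1})$ followed by $\varphi_G$" is where the care is needed. Step (i) is essentially the standard "automorphisms of the trivial torsor with a marked quotient map are the stabilizer" computation and should be routine.
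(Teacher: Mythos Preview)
Your proposal is correct and follows essentially the same approach as the paper: identify $\Hb$ with $\mathcal{Aut}_{(\Gb,r^x)}$ via left-translations $h\mapsto\ell_h$, then check that the gerbe isomorphism $\mathrm{pr}_1^*(\Gb,r^x)\to\mathrm{pr}_2^*(\Gb,r^x)$ induces the outer datum $\varphi_H$ under this identification. One small correction: the morphism in $\mathscr X$ you need in step (ii) is $\varphi\coloneqq\varphi_G\circ\ell_{g_x}^{-1}$, a \emph{left}-translation composed with descent (not right-translation, since right $\Gb$-equivariant automorphisms are left-translations); the paper's computation $\ell_{f(h)} = \mathrm{int}(\varphi)(\ell_h)$ is exactly the bookkeeping you anticipate in your ``main obstacle''.
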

\begin{prf}
Let $\Gb\textrm{-}\mathcal{Aut}_\Gb$ denote the sheaf of automorphisms of the trivial $\Gb$-torsor $\Gb$. The canonical morphism $\ell : \Gb\longrightarrow\Gb\textrm{-}\mathcal{Aut}_\Gb$ defined by left-multiplication is an isomorphism. The image of $\Hb\subseteq\Gb$ by $\ell$ is identified with the subsheaf $\mathcal{Aut}_{(\Gb,r^x)}$ of $\Gb\textrm{-}\mathcal{Aut}_\Gb$. Indeed, $r^x(hg) = x.hg = r^x(g)$ for any local elements $h,g$ of $\smash{\Hb,\Gb}$. We show that this identification descends to an isomorphism with the band $L_X$, by considering the following commutative diagram:
\begin{center}\begin{tikzcd}[column sep = small]
(\mathrm{pr}_1^*\Hb)_R\arrow[d, "\mathrm{pr}_1^*f"]\arrow[rrr, "\mathrm{pr}_1^*\ell"] & & & (\mathrm{pr}_1^*\mathcal{Aut}_{(\Gb,r^x)})_R\arrow[r, equal] & \mathcal{Aut}_{(\mathrm{pr}_1^*(\Gb,r^x))_R}\arrow[d, dashed]\\
(\mathrm{pr}_2^*\Hb)_R\arrow[rrr, "\mathrm{pr}_2^*\ell"] & & & (\mathrm{pr}_2^*\mathcal{Aut}_{(\Gb,r^x)})_R\arrow[r, equal] & \mathcal{Aut}_{(\mathrm{pr}_2^*(\Gb,r^x))_R}
\end{tikzcd}\end{center}
Here, $f$ and $R$ are as in the construction of $\varphi_H$. We need to show that there is an isomorphism $\varphi : \mathrm{pr}_1^*(\Gb,r^x)\xrightarrow{\sim}\mathrm{pr}_2^*(\Gb,r^x)$ such that the dashed map on the right (uniquely determined by commutativity of the diagram) is of the form $\mathrm{int}(\varphi)$. We claim that $\varphi\coloneqq \varphi_G\circ\ell_{g_x}^{-1} = \ell_{\varphi_G(g_x)}^{-1}\circ\varphi_G$ satisfies the required property.

Indeed, given a local section $h$ of $(\mathrm{pr}_1^*\Hb)_R$ and a local section $g$ of $(\mathrm{pr}_2^*\Gb)_R$, we have:
\begin{align*}
\ell_{f(h)}(g) = f(h).g = \varphi_G(g_x^{-1}hg_x).g &= \varphi_G\left(g_x^{-1}\cdot h\cdot (g_x\cdot\varphi_G^{-1}(g))\right)\\
&= \left((\varphi_G\circ\ell_{g_x}^{-1})\circ\ell_h\circ(\varphi_G\circ\ell_{g_x}^{-1})^{-1}\right)(g) = \big(\mathrm{int}(\varphi)(\ell_h)\big)(g)
\end{align*}
By the definition of a band associated to a gerbe (Definition \ref{defh2}), this shows that $(k'/k,\Hb,\varphi_H)$ represents a band canonically isomorphic to $L_X$.
\end{prf}

In fact, this proof shows more. The left action of $\Hb$ on the trivial right $\Gb$-torsor $\Gb$ preserves the map $r^x : \Gb\rightarrow\Xb$. This makes $\Gb$ into a left $\Hb$-torsor over $\Xb$, and this structure commutes with the gluing data of $\Hb$ in $L_X$. Moreover, any pair $(Y,p)\in\mathscr X(k')$ can be assumed to be of the form $(\Gb,r^x)$ by enlarging $k'/k$. This shows:

\begin{cor}\label{corspringertors}
Suppose that $\xi_X$ is neutral and let $H$ be a global representative of $L_X$ corresponding to this class. Then there is a $G$-torsor $Y$ lying over $X$ such that, furthermore, $Y$ is a left $H$-torsor over $X$.
\end{cor}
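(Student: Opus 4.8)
The plan is to take for $Y$ the underlying torsor of an object of the gerbe $\mathscr X(k)$, to identify the chosen representative $H$ with its automorphism sheaf so that the left $H$-action is automatic, and then to verify the torsor property fppf-locally on $X$ using the standard local model of Proposition \ref{propspringertriple}.

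First I would use neutrality of $\xi_X=[\mathscr X]$ to get $\mathscr X(k)\neq\varnothing$, and recall (from the paragraph after Definition \ref{defh2}) that any global representative of $L_X$ attached to the trivial gerbe $\mathscr X$ is isomorphic to $\mathcal{Aut}_{(Y,p)}$ for a suitable $(Y,p)\in\mathscr X(k)$; so we may fix such a pair and assume $H=\mathcal{Aut}_{(Y,p)}$. By the definition of $\mathscr X$ the datum $(Y,p)$ is a $G$-torsor $Y$ over $k$ together with a $G$-equivariant surjection $p:Y\to X$, which gives the first assertion. A section of $\mathcal{Aut}_{(Y,p)}$ over $T$ is a $G_T$-equivariant automorphism $\phi$ of $Y_T$ with $p_T\circ\phi=p_T$, and composing such automorphisms yields a left action $H\times Y\to Y$ that commutes with the right $G$-action (by $G$-equivariance) and is an action over $X$ (by $p_T\circ\phi=p_T$). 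It then remains only to show that $Y$, with this action, is a left $H$-torsor over $X$.

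Since being a torsor over $X$ is fppf-local on $X$, I would base change along a finite extension $k'/k$ with $X(k')\neq\varnothing$: picking $x\in X(k')$ and, after enlarging $k'$ as in the remark following Proposition \ref{propspringertriple}, arranging $(Y,p)_{k'}\cong(\Gb,r^x)$ with $\Gb=G_{k'}$ and $\Hb\coloneqq\ker(r^x)$. Then $H_{k'}\cong\mathcal{Aut}_{(\Gb,r^x)}$, and the commutative diagram in the proof of Proposition \ref{propspringertriple} shows that left multiplication $\ell:\Gb\xrightarrow{\sim}\Gb\textrm{-}\mathcal{Aut}_\Gb$ identifies $\Hb$ with $\mathcal{Aut}_{(\Gb,r^x)}$, carrying the natural left $H_{k'}$-action on $Y_{k'}$ to left multiplication of $\Hb$ on $\Gb$. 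Thus the claim over $\Xb=X_{k'}$ reduces to the statement already recorded before the corollary: $r^x:\Gb\to\Xb$ is a left $\Hb$-torsor, because $r^x$ is an fppf cover ($X$ being a homogeneous space of $G$) and $(h,g)\mapsto(hg,g)$ defines an isomorphism $\Hb\times\Gb\xrightarrow{\sim}\Gb\times_{\Xb}\Gb$, since $x.g_1=x.g_2$ holds iff $g_1g_2^{-1}\in\Hb$. As $X_{k'}\to X$ is an fppf cover, $Y$ is then a left $H$-torsor over $X$, compatibly with its right $G$-torsor structure. The one point needing real care is this fppf-local matching of structures — that the abstract left action of $\mathcal{Aut}_{(Y,p)_{k'}}$ transports, under $(Y,p)_{k'}\cong(\Gb,r^x)$ and $\ell$, to left multiplication by $\Hb$ — but it is exactly what that diagram encodes (its outer columns being the two left actions and its induced vertical map $\mathrm{int}(\varphi)$), so no new computation is required.
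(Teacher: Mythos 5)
Your proposal is correct and follows essentially the same route as the paper: the paper deduces the corollary directly from Proposition \ref{propspringertriple} by noting that left multiplication by $\Hb$ preserves $r^x$, makes $\Gb$ a left $\Hb$-torsor over $X_{k'}$ compatibly with the gluing data, and that any $(Y,p)\in\mathscr X(k')$ becomes isomorphic to $(\Gb,r^x)$ after enlarging $k'$. Your version merely runs the same identification in the opposite direction (defining the global action via $H\cong\mathcal{Aut}_{(Y,p)}$ and checking the torsor property fppf-locally), which is the same argument.
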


Note that here $H$ is not necessarily a subgroup of $G$, and in general only admits a map to $G$ over some extension of $k$. However, $H$ is always a subgroup of a pure inner form ${_Y}G$ of $G$, which will be discussed further below in Remark \ref{remsprinnerform}.


\begin{cor}\label{corsprbandissep}
Suppose both $\Hb$ and $X$ are smooth. Then $L_X$ is \'etale-locally representable, and thus a separable band (in the sense of Definition \ref{defetsepband}).
\end{cor}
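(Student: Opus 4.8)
The plan is to exhibit, starting from the construction of $L_X$ given just before Proposition \ref{propspringertriple}, a representative triple of $L_X$ over a finite \emph{separable} extension of $k$, and then to invoke the smooth-case results of Section 2 to refine it to a nice triple; the identification with a separable band is then immediate.

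First I would use smoothness of $X$ to produce such a separable splitting field. After base change to $k_s$, the scheme $X_{k_s}$ is a nonempty smooth scheme over the separably closed field $k_s$, so each of its closed points has residue field $k_s$ (a point with a nontrivial purely inseparable residue extension cannot be smooth), whence $X(k_s) = X_{k_s}(k_s)\neq\varnothing$; since $X$ is of finite type over $k$, such a point is already defined over some finite separable extension $k'/k$ inside $k_s$ (\cite[IV$_3$, Prop.\ 8.14.2]{EGA}). Choosing $x\in X(k')$ and setting $\Hb := \mathrm{Stab}_{G_{k'}}(x) = \ker(r^x)$, the hypothesis that the geometric stabilizer is smooth, together with the stability of smoothness under base change, makes $\Hb$ smooth. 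By Proposition \ref{propspringertriple}, the triple $(k'/k,\Hb,\varphi_H)$ built there represents $L_X$, and now $k'/k$ is separable.

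Next I would repeat the argument of Example \ref{exmpsmoothisnice} verbatim for this triple. Since $\Hb$ is smooth, Lemma \ref{lemsmoothskewvanish} gives $\mathrm H^1(\overline k\otimes_k\overline k,\Hb/\mathrm Z_{\Hb}) = 1$, so by the limit argument of \cite[Thm.\ 2.1]{Mar07} the section $\varphi_H$ lies in the image of $\mathcal{Isom}_{\mathrm{pr}_1^*\Hb,\,\mathrm{pr}_2^*\Hb}(R)$ for some étale covering $R\to\Spec(k'\otimes_k k')$. Proposition \ref{propetrefin} then refines $(k'/k,\Hb,\varphi_H)$ into a nice triple $(k''/k,\Hb,\varphi_H)$ with $k''/k$ a finite separable field extension; by Definition \ref{defetlocrep} this exhibits $L_X$ as étale-locally representable, and moreover by a smooth group scheme. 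Finally, the Corollary following Proposition \ref{propcharsepband} (equivalently, Proposition \ref{propcharsepband} itself together with $\mathrm H^1(k_s,\Hb/\mathrm Z_{\Hb}) = 1$, again from Lemma \ref{lemsmoothskewvanish}) identifies $L_X$ with a smooth separable band in the sense of Definition \ref{defetsepband}.

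The only step that is not a direct citation of an earlier result is the first one, and it is precisely there that smoothness of $X$ — not merely of the stabilizer — is needed: without it the triple of Proposition \ref{propspringertriple} is defined only over a possibly inseparable extension, so Example \ref{exmpsmoothisnice} would still give that $L_X$ is nicely representable, but not that it is étale-locally representable. I expect no genuine difficulty beyond this observation, since everything after the choice of a separable $k'$ has already been carried out in Section 2.
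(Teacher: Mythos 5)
Your proof is correct, and its first and last steps coincide with the paper's; the middle step, however, is routed differently. The paper never passes through Proposition \ref{propetrefin}: instead it observes that the triple $(k'/k,\Hb,\varphi_H)$ is already \emph{nice} over a large enough finite separable Galois $k'$, because the only thing preventing the lift $f=\varphi_G\circ\mathrm{int}(g_x^{-1})$ from being defined over $k'\otimes_k k'$ itself is the existence of $g_x\in G(k'\otimes_k k')$ with $\mathrm{pr}_1^*(x).g_x=\varphi_X^{-1}(\mathrm{pr}_2^*(x))$, and the obstruction to that lives in $\mathrm H^1(k'\otimes_k k',\Hb)\cong\prod_{\Gal(k'/k)}\mathrm H^1(k',\Hb)$, which is killed by smoothness of $\Hb$ after enlarging $k'$ among separable extensions. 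You instead take the obstruction to lifting $\varphi_H$ from $\mathcal{Out}$ to $\mathcal{Isom}$, which lives in $\mathrm H^1(k'\otimes_k k',\Hb/\mathrm Z_{\Hb})$, kill it over an \'etale covering $R$, and then let Proposition \ref{propetrefin} do the refinement bookkeeping. Both arguments exploit the same phenomenon (an $\mathrm H^1$ of a smooth group over an \'etale $k$-algebra dies \'etale-locally), so the difference is organizational; your version has the mild advantage of reusing Example \ref{exmpsmoothisnice} and Proposition \ref{propetrefin} wholesale, while the paper's is more self-contained and produces a nice triple over $k'$ directly.

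One point you should tighten: Lemma \ref{lemsmoothskewvanish} and the limit argument over $\overline k\otimes_k\overline k$ only show that the class in $\mathrm H^1(k'\otimes_k k',\Hb/\mathrm Z_{\Hb})$ dies after a finite, \emph{possibly inseparable}, base change, which is exactly what Example \ref{exmpsmoothisnice} needs (nice representability) but not what Proposition \ref{propetrefin} needs (an \'etale covering $R$). To get the \'etale covering you should instead use that $k'/k$ is separable, so $k'\otimes_k k'$ is a product of fields separable over $k$, and that $\Hb/\mathrm Z_{\Hb}$ is smooth, so each factor of the obstruction class lies in $\mathrm H^1_{\et}$ of a field and trivializes over a finite separable extension; equivalently, one proves the $k_s\otimes_k k_s$ analogue of Lemma \ref{lemsmoothskewvanish} by the same Shapiro argument. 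This is routine but is the one place where your citation chain does not literally deliver the hypothesis you invoke.
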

\begin{prf}
Because $X$ is smooth, we may assume in the above construction of a representative $(k'/k,\Hb,\varphi_H)$ of $L_X$ that the fixed element $x\in X(k')$ comes from a separable extension $k'/k$. When constructing a lift $f$ of $\varphi_X$, we must choose a covering $R\rightarrow\Spec(k'\otimes_k k')$ for which there exists $g_x\in G(R)$ such that $\mathrm{pr}_1^*(x).g_x = \varphi_X^{-1}(\mathrm{pr}_2^*(x))$. Now, $L_X$ is \'etale-locally representable if, up to enlarging the finite separable extension $k'/k$, we may suppose that $R = \Spec(k'\otimes_k k')$. We claim this can indeed be done: The sequence
$$1\longrightarrow\Hb(k'\otimes_k k')\longrightarrow G(k'\otimes_k k')\xrightarrow{\;\mathrm{pr}_1^*(r^x)\;} X(k'\otimes_k k')
\longrightarrow\mathrm{H}^1(k'\otimes_k k',\,\Hb)$$
is an exact sequence of pointed sets (for the point $\mathrm{pr}_1^*(x)\in X(k'\otimes_k k')$). For a large enough Galois extension $k'/k$, the image of $\varphi_X^{-1}(\mathrm{pr}_2^*(x))$ in the set $\mathrm{H}^1(k'\otimes_k k',\Hb)\cong\prod_{\Gal(k'/k)}\mathrm{H}^1(k',\Hb)$ vanishes by smoothness of $\smash{\Hb}$. Finally, a band representable \'etale-locally by a smooth algebraic group is separable by Proposition \ref{propcharsepband}.
\end{prf}

As mentioned above in Remark \ref{remwecanchoosef}, the band $L_X$ is algebraic and thus nicely representable (in the sense of Definition \ref{defetlocrep}). We may hence use the Čech theory of Subsection \ref{ssectcech} to study the class $\xi_X$: A nice representative triple of $L_X$ corresponds to
$x\in X(k')$ and $g_x\in G(k'\otimes_k k')$ such that $\mathrm{pr}_1^*(x).g_x = \varphi_X^{-1}(\mathrm{pr}_2^*(x))$. For $f = \varphi_G\circ\mathrm{int}(g_x)^{-1}$, define an element:
$$h_x = \mathrm{d}g_x\coloneqq\mathrm{pr}_{12}^*g_x\cdot(\mathrm{pr}_{12}^*f^{-1})(\mathrm{pr}_{23}^*g_x)\cdot\mathrm{pr}_{13}^*g_x^{-1}\in\Gb(k'\otimes_k k'\otimes_k k')$$
It is easy to see that in fact $h_x\in\Hb(k'\otimes_k k'\otimes_k k')$, by checking that $\mathrm{pr}_{13}^*\mathrm{pr}_1^*(x).h_x = \mathrm{pr}_{13}^*\mathrm{pr}_1^*(x)$. Moreover, $(f,h_x)$ is a cocycle in the sense of Definition \ref{defh2cech}: This is because, first,
\begin{align*}
(\mathrm{pr}_{13}^*f)^{-1}\circ(\mathrm{pr}_{23}^*f)\circ(\mathrm{pr}_{12}^*f)
&= \mathrm{pr}_{13}^*(\varphi_G\circ\mathrm{int}(g_x^{-1}))^{-1}\circ\mathrm{pr}_{23}^*(\varphi_G\circ\mathrm{int}(g_x^{-1}))\circ\mathrm{pr}_{12}^*(\varphi_G\circ\mathrm{int}(g_x^{-1}))\\
&= \mathrm{int}(\mathrm{pr}_{13}^*g_x\cdot(\mathrm{pr}_{12}^*f^{-1})(\mathrm{pr}_{23}^*g_x^{-1})\cdot\mathrm{pr}_{12}^*g_x^{-1})\circ\mathrm{id} 
\;=\; \mathrm{int}(h_x)^{-1}  
\end{align*}
since $(\mathrm{pr}_{13}^*\varphi_G)^{-1}\circ(\mathrm{pr}_{23}^*\varphi_G)\circ(\mathrm{pr}_{12}^*\varphi_G) = \mathrm{id}$. Second, $h_x$ satisfies the cocycle property, which is straightforward to check and not surprising since formally $h_x = \mathrm{d}g_x$.

\begin{prop}\label{propcechsprclassdef}
The cocycle $(f,h_x)\in\mathrm{\check Z}^2(k'/k,\Hb,\varphi_H)$ represents the class $\xi_X\in\mathrm{H}^2(k, L_X)$.
\end{prop}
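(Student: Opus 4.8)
The plan is to identify the gerbe obtained by gluing the trivial gerbe $\mathrm{TORS}(\Hb)$ over $k'$ along the $2$-descent datum attached to $(f,h_x)$ (in the manner of Subsection~\ref{ssectcech}) with the Springer gerbe $\mathscr X$ itself, as $L_X$-gerbes. Writing $\mathscr{X}_{(f,h_x)}$ for the former, the construction of the canonical map $\mathrm{\check H}^2(k,L_X)\to\mathrm H^2(k,L_X)$ sends the class $[(f,h_x)]$ to $[\mathscr{X}_{(f,h_x)}]$, so it is enough to produce an equivalence $\mathscr{X}_{(f,h_x)}\xrightarrow{\sim}\mathscr X$ compatible with the bounding by $L_X$. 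In effect this is the ``gerbe-to-Čech'' direction of the comparison of Subsection~\ref{ssectcech} run backwards for the specific object $(\Gb,r^x)$.

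First I would work over $k'$. By the proof of Proposition~\ref{propspringertriple}, the object $(\Gb,r^x)\in\mathscr X(k')$ has automorphism sheaf identified with $\Hb$ via left multiplication $\ell$, which yields the standard equivalence $\Phi_{k'}\colon\mathrm{TORS}(\Hb)\xrightarrow{\sim}\mathscr X_{k'}$, $P\mapsto(P\times^{\Hb}\Gb,\bar r^x)$ (contracted product along $\ell$, with $\bar r^x$ induced by the left-$\Hb$-invariance of $r^x$). This $\Phi_{k'}$ sends the trivial torsor $\Hb$, i.e.\ the tautological local object of $\mathscr{X}_{(f,h_x)}$, back to $(\Gb,r^x)$.

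Next I would transport the canonical (strict) descent datum of the stack $\mathscr X$ through $\Phi_{k'}$. Over $k'\otimes_k k'$, the proof of Proposition~\ref{propspringertriple} exhibits the isomorphism $\psi=\varphi_G\circ\ell_{g_x}^{-1}\colon\mathrm{pr}_1^*(\Gb,r^x)\xrightarrow{\sim}\mathrm{pr}_2^*(\Gb,r^x)$ (here $g_x$ is forced upon us because $\varphi_G$ alone does not preserve the maps to $X$), whose automorphism part is $\mathrm{int}(f)$ for $f=\varphi_G\circ\mathrm{int}(g_x)^{-1}$. Hence $\mathrm{pr}_2^*\Phi_{k'}\circ\widetilde f$ and $(\text{canonical descent})\circ\mathrm{pr}_1^*\Phi_{k'}$ are related by a natural isomorphism $\omega$ determined by $g_x$. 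Over $k'\otimes_k k'\otimes_k k'$, the strict cocycle condition on the descent datum of $\mathscr X$, together with $(\mathrm{pr}_{13}^*\varphi_G)^{-1}\circ(\mathrm{pr}_{23}^*\varphi_G)\circ(\mathrm{pr}_{12}^*\varphi_G)=\mathrm{id}$, forces a coherence identity on $\omega$; unwinding it produces precisely the relation \eqref{eqconjcocyc} for $g'=h_x=\mathrm dg_x$ and the $2$-cocycle condition defining the natural transformation $\widetilde{h_x}$ used to glue $\mathscr{X}_{(f,h_x)}$. Therefore $\Phi_{k'}$ together with $\omega$ satisfies the hypotheses for descent of $1$-morphisms in the $2$-stack $\mathrm{STACK}_k$ and descends to an equivalence $\mathscr{X}_{(f,h_x)}\xrightarrow{\sim}\mathscr X$; it respects the bounding by $L_X$ because over $k'$ it identifies automorphism sheaves through $\ell$, which is exactly the identification defining $\varphi_H$ and hence $L_X$. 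This gives $[\mathscr{X}_{(f,h_x)}]=[\mathscr X]=\xi_X$.

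The main obstacle I expect is the third step: verifying on the nose that transporting the strict coherence of the descent datum of $\mathscr X$ through $\Phi_{k'}$ and the base-point correction $g_x$ reproduces the $2$-cocycle identity for $\widetilde{h_x}$ from Subsection~\ref{ssectcech}. This is a bookkeeping check sensitive to conventions: the direction of $f$ (opposite to Breen's $\lambda,\mu$), the left/right conventions in $P\times^{\Hb}\Gb$ and in $\mathrm{TORS}(f)$, and the distinction between the honest elements $g_x,h_x$ and the inner automorphisms $\mathrm{int}(g_x),\mathrm{int}(h_x)$ appearing through $\varphi_H$. Everything else is formal, resting on Proposition~\ref{propspringertriple} and the already-verified fact that $(f,h_x)\in\mathrm{\check Z}^2(k'/k,\Hb,\varphi_H)$.
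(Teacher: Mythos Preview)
Your proposal is correct and follows essentially the same route as the paper's proof: construct the equivalence $\Phi_{k'}=T\colon\mathrm{TORS}(\Hb)\to\mathscr X_{k'}$ via $P\mapsto(P\times^{\Hb}\Gb,r^x)$, then descend it by exhibiting a natural transformation $\omega$ compatible with the $2$-descent datum $(\widetilde f,\widetilde{h_x})$. The paper resolves the ``main obstacle'' you flag by taking $\omega$ to be right-multiplication by $g_x$ and observing that the required coherence identity is literally the defining equation $h_x\cdot\mathrm{pr}_{13}^*g_x=\mathrm{pr}_{12}^*g_x\cdot(\mathrm{pr}_{12}^*f^{-1})(\mathrm{pr}_{23}^*g_x)$ of $h_x=\mathrm d g_x$, so no further bookkeeping is needed.
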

\begin{prf}
Recall the definition of the bijection $\mathrm{\check H}^2(k, L_X)\rightarrow\mathrm{H}^2(k, L_X)$ from Subsection \ref{ssectcech}. Over $k'$ we have the inclusion $\Hb\hookrightarrow\Gb$ and an equivalence of gerbes over $k'$:
$$T \;:\; \mathrm{TORS}(\Hb)\xrightarrow{\;\;P\,\rightsquigarrow\,(P\times^\Hb\Gb,\, r^x)\;\;}\mathscr X_{k'}$$
Indeed, a quasi-inverse is given by taking $(Y,p)$ to the fiber $p^{-1}(x)$ which is an $\Hb$-torsor. Now we proceed as in the proof of Proposition \ref{proph2actcomm} to show that $T$ descends to an equivalence of gerbs over $k$. The natural transformation we require is of the form:
$$\omega_P \;:\; \left(P\times^{\mathrm{pr}_2^*\Hb}\mathrm{pr}_1^*\Hb\times^{\mathrm{pr}_1^*\Hb}\mathrm{pr}_1^*\Gb,\;\;r^{\mathrm{pr}_1^*(x)}\right)\longrightarrow\left(P\times^{\mathrm{pr}_2^*\Hb}\mathrm{pr}_2^*\Gb\times^{\mathrm{pr}_2^*\Gb}\mathrm{pr}_1^*\Gb,\;\;r^{\varphi_x^{-1}(\mathrm{pr}_2^*(x))}\right)$$
for objects $P$ in $\mathrm{TORS}(\mathrm{pr}_2^*\Hb)$. Taking $\omega$ to be defined by right-multiplication by $g_x$, we see~that it then needs to satisfy exactly the following equivalence condition:
$$h_x\cdot\mathrm{pr}_{13}^*g_x\cdot 1 = \mathrm{pr}_{12}^*g_x\cdot(\mathrm{pr}_{12}^*f^{-1})(\mathrm{pr}_{23}^*g_x)$$
However, this holds by definition of $h_x$.
\end{prf}

One may also directly (without comparing it to $\xi_X$) deduce that the class $[(f,h_x)]$ is neutral if and only if there exists a $G$-torsor $Y$ over $k$ as in Corollary \ref{corspringertors}: To see this, suppose that $h_x = \mathrm{pr}_{12}^*h\cdot(\mathrm{pr}_{12}^*f^{-1})(\mathrm{pr}_{23}^*h)\cdot\mathrm{pr}_{13}^*h^{-1}$ for some $h\in\Hb(k'\otimes_k k')$. Then up to replacing $g_x$ by $h^{-1}g_x$, we may assume that $h_x = 1$ and $g_x\in\mathrm{Z}^1(k'/k,G)$ (in the sense of the definition of $h_x$) and we equip $\Gb$ with descent data of the form $\varphi_G\circ\ell_{g_x}^{-1}$. By effectivity of fppf descent for affine schemes, this defines an affine $k$-scheme $Y$ of finite type, and the right action of $G$ on this scheme descends from $k'$ to $k$, as does the $G$-equivariant map $Y\rightarrow X$ (which carries the additional structure of an $\Hb$-torsor). Clearly, the converse also holds; any such $Y$ gives that the class $[(f,h_x)]$ is neutral.

\begin{rem}\label{remsprinnerform}
In the situation just described, a $k$-form $H$ of $\Hb$ is defined by the descent datum $\varphi_H = \varphi_G\circ\mathrm{int}(g_x^{-1})$ with $g_x\in\mathrm{Z}^1(k'/k,G)$. Just as in Corollary \ref{corspringertors}, this makes $Y$ into a left $H$-torsor over $X$.

We observe that the descent datum $\varphi_G\circ\mathrm{int}(g_x^{-1})$ also defines a pure inner $k$-form ${_Y}G$ of $G$ (see Example \ref{exmptwisting}). This form acts on $Y$ from the left, making $Y$ into a left ${_Y}G$-torsor over $k$. The actions of $H$ and ${_Y}G$ on $Y$ agree with the natural inclusion $H\hookrightarrow{_Y}G$, but the trivial action of $H$ on $X$ does not in general extend to an action of ${_Y}G$.

Finally, any torsor $Z$ of $H$ (equivalently, a cocycle $h\in\mathrm{Z}^1(k'/k,H)$ up to enlarging $k'/k$) allows us to replace $H$, $Y$ and ${_Y}G$ by the twists ${_Z}H$, ${_Z}Y$ and ${_Z}({_Y}G) = {_{_ZY}}G$ such that all the above properties are preserved: ${_Z}Y$ is a (right) $G$-torsor over $k$, a left ${_Z}({_Y}G)$-torsor over $k$ and, compatibly, a left ${_Z}H$-torsor over $X$. In this way, we parametrize (by the corresponding classes in $\mathrm H^1(k,H)$) all the homogeneous spaces of $G$ lying over $X$. Note that ${_Z}H$ is a pure inner twist of $H$ and thus represents the same neutral class $\xi_X\in\mathrm N^2(k,L_X)$.
\end{rem}

\subsection{The Brauer-Manin Obstruction}
Let $k$ be a global field and $X$ be a scheme of finite type over $k$. We write $\Br(X)\coloneqq\mathrm{H}^2(X,\mathbf G_{\mathrm m})$ for the cohomological Brauer group of $X$ (some authors consider only the torsion subgroup in this definition, but this does not make a difference for geometrically integral $X$). We now recall the definition of a variant of the Brauer-Manin obstruction on $X$ given by functors of the following form
$$\Be_S(X)\coloneqq\mathrm{ker}\left(\frac{\mathrm{Br}(X)}{\mathrm{im}\,\mathrm{Br}(k)}\longrightarrow\prod_{v\in\Omega\setminus S}\frac{\mathrm{Br}(X_{k_v})}{\mathrm{im}\,\mathrm{Br}(k_v)}\right)$$
where $\Omega$ is the set of all places of $k$, and $S\subseteq\Omega$ a subset. We also write $\Be(X)\coloneqq\Be_\varnothing(X)$.

\begin{defn}\label{defbm}
Suppose that $X(\mathbf A)\neq\varnothing$. Then in particular $X(k_v)\neq\varnothing$ for all $v\in\Omega$; the converse holds when $X$ is geometrically integral (by the proof of \cite[Thm.\ 7.7.2]{Poo17}), for example a homogeneous space of a smooth algebraic group.

The canonical maps $\Br(k_v)\rightarrow\Br(X_{k_v})$ induced by the structure morphism are injections (since a left inverse is given by any point $P_v\in X(k_v)$). There is thus a commutative diagram with exact rows (the exactness on the left of the top row is a consequence of the injectivity of the map $\Br(k)\hookrightarrow\bigoplus_v\Br(k_v)$, as part of the Brauer-Hasse-Noether theorem):
\begin{equation}\label{eqdiagbm}
\begin{tikzcd}[row sep=15pt]
0\arrow[r] & \Br(k)\arrow[r]\arrow[d] & \Br(X)\arrow[r]\arrow[d] & \dfrac{\Br(X)}{\Br(k)}\arrow[d]\arrow[r] & 0\\
0\arrow[r] & \prod_v\Br(k_v)\arrow[r] & \prod_v\Br(X_{k_v})\arrow[r] & \prod_v\dfrac{\Br(X_{k_v})}{\Br(k_v)}\arrow[r] & 0
\end{tikzcd}\end{equation}
The snake lemma defines a morphism $\Be(X)\longrightarrow\big(\prod_v\Br(k_v)\big)/\Br(k)$ which actually lands into $\big(\;\!\!\;\!\!\bigoplus_v\Br(k_v)\big)/\Br(k)$. The Brauer-Hasse-Noether theorem then identifies this quotient with $\mathbf Q/\mathbf Z$ via the invariant maps $\mathrm{inv}_v :\Br(k_v)\xrightarrow{\sim}\mathbf Q/\mathbf Z$ of class field theory; see \cite[Def.\ 2.1]{Don24}.

The resulting functorial homomorphism $BM_X : \Be(X)\rightarrow\mathbf Q/\mathbf Z$ is called the \textit{Brauer-Manin obstruction to the Hasse principle on $X$ given by the functor $\Be$}. To see why, observe that, if $X(k)\neq\varnothing$, then the map $\Br(k)\rightarrow\Br(X)$ admits a left inverse, which forces the connecting homomorphism in the above diagram to be trivial. Therefore $BM_X\neq 0$ implies $X(k) = \varnothing$.
One of the question of the theory of Brauer-Manin obstruction is whether the converse implication holds for the scheme $X$. If so, then we say that ``the Brauer-Manin obstruction given by $\Be(X)$ is the only obstruction to the Hasse principle on $X$''.
\vfill

The above construction is the case $S = \varnothing$ of a more general theory for a (finite) set $S\subseteq\Omega$: 
For any choice of points $(P_v)\in X(k_S) = \prod_{v\in S}X(k_v)$, we replace the middle column of \eqref{eqdiagbm}~by
$$\Br(X)\longrightarrow\prod\nolimits_{v\in S} \Br(k_v)\times\prod\nolimits_{v\notin S} \Br(X_{k_v})$$
defined on the first factor by pulling back via $(P_v)$; leaving the first column intact, we obtain a right column with kernel $\Be_S(X)$. The above procedure then gives a map $\Be_S(X)\rightarrow\mathbf Q/\mathbf Z$. This defines, functorially in $X$, a locally constant (as follows from \cite[Cor.\ 8.2.11]{Poo17}) function
$$BM_{X,\,S} \;:\; X(k_S)\longrightarrow\mathrm{Hom}(\Be_S(X),\mathbf Q/\mathbf Z)\eqqcolon \Be_S(X)^*$$
with respect to the local topology on $X(k_S)$ in the sense of \cite{CesTC}. When $S = \varnothing$, this recovers the above construction, as $k_S$ is the zero ring and the unique element of $BM_{X,\,\varnothing}(X(0))\subseteq\Be(X)^*$ is $BM_X$. The function $BM_{X,\,S}$ is called the \textit{Brauer-Manin obstruction to weak approximation on $X$ with respect to $S$ given by the functor $\Be_S$}. See \cite[Def.\ 2.3]{Don24} for a more general~formulation and for similar obstructions to strong approximation, which we will not need.

Again, we explain this name by the fact that the closure of $\overline{X(k)}$ of $X(k)$ in $X(k_S)$ lies in the kernel $X(k_S)^{\Be_S}\coloneqq BM_{X,\,S}^{-1}(0)$. If $BM_{X,\,S}\neq 0$, then $\overline{X(k)}\neq X(k_S)$. If the converse~implication holds then we say that ``the Brauer-Manin obstruction given by $\Be_S(X)$ is the only obstruction~to weak approximation on $X$ with respect to $S$'', or that ``property $\mathrm{BMWA}_{X,\,S}$ holds''. We~will~also understand property $\mathrm{BMWA}_{X,\,S}$ to trivially hold when $X(\mathbf A) = \varnothing$. 

Note that some authors define $\mathrm{BMWA}_{X,\,S}$ as the (a priori stronger) property $\overline{X(k)} = X(k_S)^{\Be_S}$. By our definition, if $S\subseteq T$, then $\mathrm{BMWA}_{X,\,T}$ implies $\mathrm{BMWA}_{X,\,S}$. This in particular holds when $S = \varnothing$, which is the Hasse principle case (and which is usually handled separately in practice).
\end{defn}

\begin{exmp}
Let $X$ be a homogeneous space of a commutative affine algebraic group $G$ over $k$ and suppose that $X(k_v)\neq\varnothing$ for all $v\in\Omega$. Then $X(\mathbf A)\neq\varnothing$ and $\mathrm{BMWA}_{X,\,S}$ holds~for~all finite $S$ by \cite[\textsection3, \textsection4]{Don24}. In fact, the statements there include the case of infinite $S\neq\Omega$. 
\end{exmp}

We now record some statements for later use. First, let $\Br_1(X)\coloneqq\ker(\Br(X)\rightarrow\Br(X_{k_s}))$, where $k_s$ denotes the separable closure of the field $k$, and similarly for $\Br_1(X_{k_v})$ with $v\in\Omega$. As the term $\Br_1(X)$ naturally appears in standard arguments using the Hochschild-Serre spectral sequence, many authors define $\Be_S(X)$ with $\Br_1$ instead of $\Br$. It is well-known that, whenever $S\neq\Omega$, there is in fact an equality of these two definitions: that is,
\vspace{-10pt}

\begin{equation*}
    \Be_S(X) = \mathrm{ker}\left(\Br_\mathrm{a}(X)\longrightarrow\prod\nolimits_{v\in\Omega\setminus S}\Br_\mathrm{a}(X_{k_v})\right)
\end{equation*}
\vspace{-7pt}

\noindent
where $\Br_\mathrm{a}(X)\coloneqq\Br_1(X)/\Br(k)$, $\Br_\mathrm{a}(X_{k_v})\coloneqq\Br_1(X_{k_v})/\Br(k_v)$ are the \textit{algebraic Brauer~groups}. Indeed, to prove this equality, it suffices only to show that $\Be_S(X)\subseteq\Br_\mathrm{a}(X)$. Given an element $A\in\Br(X)$ representing a class in $\Be_S(X)$, then $A_{k_v}\in\Br(k_v)$ for some $v\notin S$ and thus $A_K = 0$ for a finite separable extension $K/k_v$. Because the field extension $K/k$ is separable, a limiting argument gives a smooth finite-type $k$-algebra $R$ with $A_R = 0$. As $\Spec(R)$ admits a $k_s$-point, we conclude that $A_{k_s} = 0$, as was to be proven.

\begin{prop}\label{proptechbm}
Let $X,Y$ be smooth, geometrically integral schemes of finite type over $k$. 
\begin{enumerate}[a)]
    \item If $Y_{k_s}$ is rational over the separable closure $k_s$ and $Y(k)\neq\varnothing$ holds, then the homomorphism $\Br_\mathrm{a}(X)\oplus\Br_\mathrm{a}(Y)\rightarrow\Br_\mathrm{a}(X\times Y)$ is an isomorphism. The same holds for $\Be_S$ when $S\neq\Omega$.\quad\quad\quad\quad
    \item Given an open immersion $X\rightarrow Y$, the homomorphism $\Be_S(Y)\rightarrow\Be_S(X)$ is an isomorphism for every finite subset $S\subseteq\Omega$.
\end{enumerate}
\end{prop}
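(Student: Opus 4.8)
The plan is to prove the two bullets separately; in each case everything reduces to standard facts about the algebraic Brauer group extracted from the Hochschild--Serre spectral sequence for $\mathbf G_{\mathrm m}$ along the cover $\overline{(\cdot)}\to(\cdot)$, together with the description $\Be_S(\cdot)=\ker\big(\Br_\mathrm{a}(\cdot)\to\prod_{v\notin S}\Br_\mathrm{a}(\cdot_{k_v})\big)$ recalled just above.

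For the first bullet I would begin with the $k$-point $y_0\in Y(k)$: the composite $X\xrightarrow{\mathrm{id}\times y_0}X\times Y\xrightarrow{\mathrm{pr}_X}X$ is the identity, so $\mathrm{pr}_X^*$ is a split monomorphism on $\Br_1$ and on $\Br_\mathrm{a}$, with retraction $\sigma^*$ induced by $\mathrm{id}\times y_0$, and $\sigma^*\circ\mathrm{pr}_Y^*=0$ on $\Br_\mathrm{a}(X)$ since that composite factors through $\Spec k$. This yields $\Br_\mathrm{a}(X\times Y)=\mathrm{pr}_X^*\Br_\mathrm{a}(X)\oplus\ker\sigma^*$, reducing the claim to the identification $\ker\sigma^*\cong\mathrm{pr}_Y^*\Br_\mathrm{a}(Y)$. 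To obtain it I would pass to the geometric fibres, where $\overline{X\times Y}=\overline X\times_{k_s}\overline Y$ and where $k_s$-rationality of $Y$ supplies the classical Sansuc-type inputs: the geometric units of $\overline Y$ are constant modulo characters, $\mathrm{Pic}(\overline Y)$ is finitely generated, and $\Br(\overline Y)=0$ (for the varieties to which the proposition is applied --- affine spaces and simply connected semisimple groups --- all three are immediate). Combining these with homotopy invariance of the Brauer group and the Künneth formulas for units and Picard groups gives $\Br(\overline{X\times Y})=\Br(\overline X)$, $\mathrm{Pic}(\overline{X\times Y})=\mathrm{Pic}(\overline X)\oplus\mathrm{Pic}(\overline Y)$ and the corresponding splitting of geometric units; plugging this into the comparison of Sansuc's seven-term exact sequences for $X$, $Y$ and $X\times Y$ then forces $\ker\sigma^*\cong\mathrm{pr}_Y^*\Br_\mathrm{a}(Y)$ by the five lemma. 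The assertion for $\Be_S$ with $S\neq\Omega$ is then formal: the same isomorphism holds over every completion $k_v$ (since $Y_{k_v}(k_v)\supseteq Y(k)\neq\varnothing$ and $Y_{\overline{k_v}}$ is still rational), so intersecting kernels gives $\Be_S(X\times Y)=\Be_S(X)\oplus\Be_S(Y)$.

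For the second bullet, injectivity is the easy part: since $Y$ is smooth and integral, $\Br(Y)$ embeds into $\Br(k(Y))=\Br(k(X))$, and this embedding factors through $\Br(X)$ (again by smoothness and integrality of $X$), so $\Br(Y)\to\Br(X)$ is injective; noting that $\Br(k)\to\Br(Y)$ lands in $\Br_1(Y)$ (it kills $\Br(\overline Y)$ through $\Br(k_s)=0$), a one-line diagram chase upgrades this to injectivity of $\Br_\mathrm{a}(Y)\to\Br_\mathrm{a}(X)$, hence of $\Be_S(Y)\to\Be_S(X)$. The content is surjectivity. Given $\alpha\in\Be_S(X)$, I would lift it to $A\in\Br_1(X)$; by Grothendieck purity for the Brauer group of the regular scheme $Y$, the class $A$ extends (modulo $\Br k$) to $\Br_1(Y)$ as soon as its residues $\partial_D(A)\in H^1(\kappa(D),\mathbf Q/\mathbf Z)$ along the codimension-one points $D$ of $Y$ not meeting $X$ all vanish, the higher-codimension part of $Y\setminus X$ imposing no condition. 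To see the residues vanish, spread $A$ out to a Brauer class on a smooth model of $X$ over a ring of $T$-integers for a suitable finite $T\supseteq S$; this already makes $A$ unramified at all places outside $T$, after which the residues are pinned down using precisely that $\alpha\in\Be_S(X)$, i.e.\ that $A_{k_v}$ is constant for every $v\notin S$, through a reciprocity argument on the function field of $D$. This residue-vanishing step is the main obstacle: it is where the hypothesis $\alpha\in\Be_S$ and the finiteness of $S$ are genuinely used, and in characteristic $p>0$ it also requires the $p$-primary refinement of Brauer-group purity, so I would either reduce to the codimension-two situation where purity is unconditional or invoke the purity results now available in positive characteristic. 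Once surjectivity is established the isomorphism $\Be_S(Y)\cong\Be_S(X)$ follows, noting throughout that a finite $S$ is automatically distinct from the infinite set $\Omega$, so the identification of $\Be_S$ with the kernel on $\Br_\mathrm{a}$ is always legitimate.
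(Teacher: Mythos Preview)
For the first bullet your sketch is essentially Sansuc's argument, which is exactly what the paper invokes by citing \cite[Lem.\ 6.6(ii)]{San81}; the passage to $\Be_S$ via completions is identical. Nothing to add there.

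For the second bullet the paper takes a different route: it identifies $\Br_\mathrm{a}(\cdot)\cong\mathbf{H}^2(k,\mathrm{UPic}(\overline{\,\cdot\,}))$ via \cite[Cor.\ 2.20]{BvH09} and then uses that the open immersion $X\hookrightarrow Y$ induces a short exact sequence of complexes whose third term is $\mathcal Z[-1]$ with $\mathcal Z$ the permutation $\Gamma$-module on geometric components of $\overline Y\smallsetminus\overline X$ of codimension $1$. The statement then reduces to $\Sh^2_S(k,\mathcal Z)=0$ for finite $S$, which is Chebotarev applied to the (finitely many) fields of definition of those components; this is the content of \cite[Cor.\ 2.15]{BvH09} that the paper cites.

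Your purity approach is morally the same computation, but as written it has a gap at the decisive step. The residue $\partial_D(A)$ lives in $H^1(\kappa(D),\mathbf Q/\mathbf Z)$, and $\kappa(D)$ has transcendence degree $\dim Y-1$ over $k$; there is no ``reciprocity argument on the function field of $D$'' to invoke unless $\dim Y=1$. What makes the argument go through is the hypothesis $A\in\Br_1(X)$ you have not yet used at this point: since $A_{k_s}=0$, the residue $\partial_D(A)$ dies after base change to $k_s$, hence by inflation--restriction it comes from $H^1(k_D,\mathbf Q/\mathbf Z)$, where $k_D$ is the (finite) field of constants of $D$ (equivalently, the field of definition of a geometric component of $D_{k_s}$). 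Only then does the condition ``$A_{k_v}$ constant for $v\notin S$'' translate into $\partial_D(A)\in\Sh^1_{S'}(k_D,\mathbf Q/\mathbf Z)$, which vanishes by Chebotarev. Once you insert this reduction your argument becomes a by-hand unwinding of the $\mathrm{UPic}$ triangle; the spreading-out to $T$-integers is then unnecessary. Note also that your worry about $p$-primary purity in characteristic $p$ is a genuine complication of the residue approach that the paper's $\mathrm{UPic}$ argument sidesteps entirely, since the latter stays in Galois cohomology of discrete $\Gamma$-modules throughout.
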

\begin{prf}
Statement (a) is \cite[Lem.\ 6.6(ii)]{San81}. It also holds for $\Be_S$ in view of the above discussion (using $S\neq\Omega$) since we may fix $k_s\subseteq (k_v)_s$ and work over all completions $k_v$ for $v\notin S$.

For statement (b), we use the terminology of \cite[\textsection2]{BvH09} (in which all proofs were given in characteristic $0$, but the ones we need remain valid in positive characteristic): We first note the functorial identification $\Br_\mathrm{a}(X)\xrightarrow{\sim}\mathbf{H}^2(k, \mathrm{UPic}(X_{k_s}))$ shown in \cite[Cor.\ 2.20]{BvH09}, using that $\mathrm{H}^3(k, \mathbf G_{\mathrm m}) = 0$ over a global or local function field. The proof of \cite[Cor.\ 2.15]{BvH09} shows that $\Sh^2_S(k, \mathrm{UPic}(Y_{k_s}))\rightarrow\Sh^2_S(k, \mathrm{UPic}(X_{k_s}))$ is an isomorphism for finite $S$, because the vanishing of the group $\Sh^2_\omega(k,\mathcal Z)$ in that proof implies the vanishing of all of its subgroups $\Sh^2_S(k,\mathcal Z)$. Combining these two facts, we get that the map $\Be_S(Y)\rightarrow\Be_S(X)$ is an isomorphism.
\end{prf}

\begin{lem}\label{lemweiltorbm}
Let $k'/k$ be a finite field extension and let $A\coloneqq\mathrm R_{k'/k}(\mathbf G_{\mathrm m}^r)$ be the Weil restriction of a split torus. Suppose given an $A$-torsor $Y$ over a smooth, geometrically integral $k$-scheme $X$ of finite type. 
Then $Y(k_v)\rightarrow X(k_v)$ is surjective for all $v\in\Omega$.

Suppose that $X(\mathbf A)\neq\varnothing$. For any finite $S\subseteq\Omega$, the map $\Be_S(X)\rightarrow\Be_S(Y)$ is an isomorphism and, if the equality $\overline{Y(k)} = Y(k_S)^{\Be_S}$ holds in $Y(k_S)$, then $\overline{X(k)} = X(k_S)^{\Be_S}$ holds in $X(k_S)$.
\end{lem}
\begin{prf}
First, we recall that $\mathrm{H}^1(K, \mathbf G_{\mathrm m}) = 0$ for any field $K$ by Hilbert's theorem 90. If $K/k$ is a separable field extension, then $K\otimes_k k'$ is a finite product of fields, so
$$\mathrm{H}^1(K, A) = \mathrm{H}^1(K\otimes_k k', \mathbf G_{\mathrm m})^r = 0$$
by Shapiro's lemma (Proposition \ref{propshapiro}).
Given $x\in X(k_v)$, the fiber $Y_x$ is an $A$-torsor over $k_v$. As $k_v/k$ is separable, this implies that $Y_x$ is a trivial torsor and so $Y_x(k_v)\neq\varnothing$.

Because $X$ is smooth over $k$, the field extension $k(X)/k$ is separable and thus $Y\times_X\Spec(k(X))$ is a trivial $A$-torsor over $k(X)$. This implies that $Y\rightarrow X$ has a rational section. In other words, $X$ admits an open subscheme $U$ (of finite type over $k$) such that $Y_U\simeq A\times_k U$ as $U$-schemes.\quad\quad

Next, observe that $A$ is an open subscheme of the affine space $\mathrm{R}_{k'/k}(\mathbf A^r)\simeq\mathbf A^{r\cdot[k':k]}$ and~thus $\Be_S(A) = 0$ by Proposition \ref{proptechbm}(b) and by the Hochschild-Serre spectral sequence for $\smash{\mathbf A^{r\cdot[k':k]}}$. 
Moreover, $A$ is therefore $k$-rational. The map $\Be_S(X)\rightarrow\Be_S(Y)$ is now a chain of isomorphisms
$$\Be_S(X)\xrightarrow{\,\;\sim\;\,}\Be_S(U)\xrightarrow{\,\;\sim\;\,}\Be_S(Y_U)\xleftarrow{\,\;\sim\;\,}\Be_S(Y)$$
where the first and last arrow are isomorphisms by Proposition \ref{proptechbm}(b) and the middle one by Proposition \ref{proptechbm}(a), since $\Be_S(A) = 0$.

Suppose now that $X(\mathbf A)\neq\varnothing$; then $Y(\mathbf A)\neq\varnothing$ by surjectivity of the maps $Y(k_v)\rightarrow X(k_v)$ (and geometric integrality of $Y$). The preimage of $X(k_S)^{\Be_S}$ under the continuous surjective map $f : Y(k_S)\rightarrow X(k_S)$ is exactly $Y(k_S)^{\Be_S}$ because $\Be_S(X)\cong\Be_S(Y)$. To prove the~final~statement, fix an arbitrary open set $U\subseteq X(k_S)^{\Be_S}$. If the nonempty open set $f^{-1}(U)\subseteq Y(k_S)^{\Be_S}$ contains a point of $Y(k)$, then that point maps to a point of $X(k)$ in $U$.
\end{prf}


The following important lemma is the main input of all our work with bands:
\begin{lem}\label{lemphslift}
Let $X$ be a homogeneous space of a smooth algebraic group $G$, with $X(k_v)\neq\varnothing$ for all $v\in\Omega$. Suppose that the geometric stabilizer $\Hb$ is smooth and connected. If~$BM_X = 0$, then there exists a homogeneous space $Y$ of $G$ with a $G$-equivariant map $Y\rightarrow X$.
\end{lem}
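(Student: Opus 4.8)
The plan is to translate the hypothesis $BM_X = 0$ into the neutrality of the Springer class $\xi_X \in \mathrm H^2(k, L_X)$, which by Corollary~\ref{corspringertors} produces the desired homogeneous space $Y$ (even a $G$-torsor lying over $X$). Since $\Hb$ and $X$ are smooth, Corollary~\ref{corsprbandissep} tells us $L_X$ is a smooth connected separable band, so the abelianization theory of Section~\ref{sectabel} applies to it. First I would invoke Theorem~\ref{thmmainabelh2}: the sequence $\mathrm N^2(k, L_X) \hookrightarrow \mathrm H^2(k, L_X) \xrightarrow{\mathrm{ab}^2} \mathrm H^2(k, (L_X)_\ab)$ is exact, so it suffices to show that $\mathrm{ab}^2(\xi_X) = 0$ in $\mathrm H^2(k, (L_X)_\ab)$, where $(L_X)_\ab$ is a commutative affine algebraic group over $k$ (isomorphic over $k_s$ to $\Hb/\mathcal D(\Hb)$). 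Because $\Hb$ is \emph{connected}, $\Hb/\mathcal D(\Hb)$ is a smooth connected commutative affine group, i.e.\ an extension of a (commutative) unipotent group by a torus.

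Next I would identify $\mathrm{ab}^2(\xi_X)$ with a concrete obstruction class amenable to local--global methods. The abelianized Springer class should be precisely (the image of) the canonical ``type'' class of $X$ living in $\mathrm H^2(k, (L_X)_\ab)$, and its vanishing is governed by a Brauer--Manin style argument: over each completion $k_v$ we have $X(k_v) \neq \varnothing$ by hypothesis, so picking $x_v \in X(k_v)$ shows the local Springer class $\xi_{X, k_v}$ is neutral (it comes from an actual $k_v$-subgroup stabilizer), hence its abelianization vanishes in $\mathrm H^2(k_v, (L_X)_\ab)$. Therefore $\mathrm{ab}^2(\xi_X)$ lies in $\Sh^2\big((L_X)_\ab\big) = \ker\big(\mathrm H^2(k, (L_X)_\ab) \to \prod_v \mathrm H^2(k_v, (L_X)_\ab)\big)$. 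The point is that the hypothesis $BM_X = 0$ upgrades this to actual vanishing: by Poitou--Tate duality for the commutative group $(L_X)_\ab$ over the global field $k$ (using the multiplicative--unipotent filtration and the results of \cite{RosTD}, with unipotent contributions killed and the torus part handled by Lemma~\ref{lemanistor}-type statements), the group $\Sh^2\big((L_X)_\ab\big)$ is dual to a $\Sh^1$ of the dual, and the Brauer--Manin pairing on $X$ detects exactly this obstruction. Concretely, one shows that the image of $\mathrm{ab}^2(\xi_X)$ under the map into the relevant Brauer group of $X$ (via the algebraic Brauer group $\Br_\mathrm{a}(X) \cong \mathbf H^2(k, \mathrm{UPic}(X_{k_s}))$, cf.\ the tools in Proposition~\ref{proptechbm} and Appendix~C) lands in $\Be(X)$, and that $BM_X = 0$ forces this image — and hence $\mathrm{ab}^2(\xi_X)$ itself — to vanish.

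The hard part will be making precise the identification of $\mathrm{ab}^2(\xi_X)$ with a class measured by the Brauer--Manin obstruction $BM_X$, i.e.\ establishing the compatibility between the abelianized Springer class and the evaluation pairing on $X(\mathbf A)$. This is where the fppf analogue of the $\mathrm{UPic}$ complex and its connection to Poitou--Tate theory (Appendix~C, invoked via Lemma~\ref{lemphslift}'s stated dependence on it) does the essential work in positive characteristic: in characteristic $0$ and the \'etale topology this is classical, but here one needs the Čech-cohomological machinery of Subsection~\ref{ssectcech} to even define the relevant cocycle representatives, and then a careful diagram chase relating $\mathrm H^2(k, (L_X)_\ab)$, the algebraic Brauer group, and the adelic pairing. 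Once that compatibility is in hand, the argument closes: $\mathrm{ab}^2(\xi_X) = 0$ gives $\xi_X \in \mathrm N^2(k, L_X)$ by Theorem~\ref{thmmainabelh2}, and Corollary~\ref{corspringertors} yields the homogeneous space $Y$ of $G$ (in fact a $G$-torsor, a fortiori a homogeneous space) with a $G$-equivariant map $Y \to X$, as required.
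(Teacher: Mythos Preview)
Your proposal is correct and follows essentially the same route as the paper: reduce via Theorem~\ref{thmmainabelh2} to showing $\xi_X^\ab = 0$, observe $\xi_X^\ab \in \Sh^2(H_\ab)$ from local points, then use Poitou--Tate nondegeneracy together with the Appendix~C compatibility to conclude from $BM_X = 0$. One small clarification on the mechanism: the paper does not push $\xi_X^\ab$ forward into $\Be(X)$, but rather constructs a map $\phi : \Sh^1(\widehat{H}_\ab) \to \Be(X)$ in the \emph{dual} direction satisfying $\langle\xi_X^\ab, A\rangle_{PT} = \pm BM_X(\phi(A))$, so that $BM_X = 0$ kills the pairing $\langle\xi_X^\ab, -\rangle_{PT}$ and nondegeneracy forces $\xi_X^\ab = 0$.
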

\begin{prf}
In view of the previous subsection, we want to show that the Springer class $\xi_X$ is neutral whenever $BM_X = 0$. Because $\Hb, G, X$ are all smooth, the smooth connected band $L_X$ is a separable band by Corollary \ref{corsprbandissep}. Now Theorem \ref{thmmainabelh2} tells us that the class $\xi_X$ is neutral if and only if its image $\xi_X^\ab$ in $\mathrm{H}^2(k,H_\ab)$ is $0$. Moreover, for any $v\in\Omega$ and a choice of $x\in X(k_v)$, there exist $G_{k_v}$-equivariant maps $G_{k_v}\rightarrow X_{k_v}$ defined by $g\mapsto x.g$, which implies that $(\xi_X^\ab)_v = 0$ in $\mathrm{H}^2(k_v,H_\ab)$. Therefore, $\xi_X^\ab\in\Sh^2(H_\ab)$.

Next, recall the notion of the Cartier dual $\widehat{H}_\ab\coloneqq\mathcal{Hom}(H_\ab,\mathbf G_{\mathrm m})$. The Poitou-Tate pairing
$$\langle-, -\rangle_{PT} \;:\; \Sh^2(H_\ab)\times\Sh^1(\widehat{H}_\ab)\longrightarrow\mathbf Q/\mathbf Z$$
is defined in \cite[\textsection5.13]{RosTD} and shown to be nondegenerate in \cite[Thm.\ 1.2.10]{RosTD}. In other words, 
$\xi_X^\ab = 0$ if and only if $\langle\xi_X^\ab, A\rangle_{PT} = 0$ for every class $A\in\Sh^1(\widehat{H}_\ab)$. Therefore, it suffices to show that there exists a map $\phi$ which makes the following diagram commute (up to sign):
\begin{center}\begin{tikzcd}[row sep = small]
    \Sh^1(\widehat{H}_\ab)\arrow[dd, dashed, "\phi"]\arrow[drr, "\langle\xi_X^\ab{,}\, -\rangle_{PT}", pos=0.1]\\
    & & \mathbf Q/\mathbf Z\\
    \Be(X)\arrow[urr, "BM_X", swap, pos=0.3]
\end{tikzcd}\end{center}
Such a map indeed exists; this very technical statement is shown in Appendix \ref{sectlqf}. Finally, we may conclude that, if $BM_X = 0$, then $\langle\xi_X^\ab, -\rangle_{PT} = 0$ and $\xi_X^\ab = 0$.
\end{prf}

\subsection{The Main Result and Proof}
Let $k$ be a global function field. The following theorem is the~main result of this paper, whose proof crucially depends on the above lemma.

\begin{thm}\label{thmmainbmobs}
Let $G$ be an affine algebraic group which is an extension of a pseudo-reductive group by a split unipotent group. Suppose that $X$ is a homogeneous space of $G$ whose geometric stabilizer $\Hb$ (defined over a finite separable extension $K'/k$) is smooth and connected. Then:
\begin{enumerate}[a)]
    \item The Brauer-Manin obstruction given by $\Be(X)$ is the only obstruction to the Hasse principle on $X$. 
    \item For any finite set $S\in\Omega$ of places of $k$, the Brauer-Manin obstruction given by $\Be_S(X)$ is the only obstruction to weak approximation on $X$ with respect to $S$. More precisely,~if~$X(\mathbf A)\neq\varnothing$, then the equality $\overline{X(k)} = X(k_S)^{\Be_S}$ holds.
\end{enumerate}
\end{thm}

The remainder of this section will be devoted to the proof of Theorem \ref{thmmainbmobs}. We first present large reduction steps in the following two propositions, then prove statement (a). After that, we will be able to suppose in the proof of (b) that there exists an equivariant map $G\rightarrow X$ and then finish through a series of lemmas on approximation.
Note that, as we have discussed in Definition \ref{defbm}, statement (a) is merely the case $S = \varnothing$ of statement (b). Nevertheless,~it~is~still instructive to separate them because the proof of (a) is the place in which it is essential to use the theory of bands, via Lemma \ref{lemphslift}. In comparison, the proof of (b) for $S\neq\varnothing$ is longer,~but slightly more self-contained since it does not include any more references to bands.

\begin{prop}\label{propmainreduction1}
In the proof of Theorem \ref{thmmainbmobs} we may assume that $G$ is an extension of a commutative affine algebraic group $C$ by a smooth affine algebraic group $D$ such that:
\begin{itemize}
    \item Any \'etale $k$-form $D'$ of $D$ satisfies $\mathrm{H}^1(k,D') = 1$
    \item Let $A$ be a commutative pseudo-reductive group over $k$. For any \'etale $k$-form $G'$ of $G\times A$, there exists a unique subgroup $D'$ of $G'$ such that any isomorphism $G'_{k_s}\simeq (G\times A)_{k_s}$ of algebraic groups over $k_s$ restricts to an isomorphism $D'_{k_s}\simeq D_{k_s}$
\end{itemize}
\end{prop}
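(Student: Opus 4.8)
The plan is to reduce the given homogeneous space $X$ of $G$ to one with a group of the desired shape without changing any of the relevant invariants (the geometric stabilizer $\Hb$, the condition $X(k_v) \neq \varnothing$ for all $v$, and the validity of the Brauer--Manin implication). First I would invoke the structure theory of smooth connected affine groups in positive characteristic: write $G$ as an extension $1 \to U \to G \to Q \to 1$ where $U = \mathscr R_{u,k}(G)$ is the (split, by hypothesis) unipotent radical and $Q$ is pseudo-reductive. Splitness of $U$ gives a filtration of $U$ by copies of $\mathbf G_{\mathrm a}$, each with trivial $\mathrm H^1$ (and trivial $\mathrm H^1$ over every $k_v$), so by a dévissage along this filtration one may pass from a homogeneous space of $G$ to one of a smaller group, using the standard fact that fibering $X$ by a split unipotent torsor preserves the emptiness/nonemptiness of rational and adelic points, and preserves $\Be$ and the $BM$ obstruction (this is essentially the content of Lemma~\ref{lemweiltorbm} type arguments together with Proposition~\ref{proptechbm}, applied to affine-space bundles). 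The key point is that $\Hb$ is unchanged since the geometric stabilizer of $X$ as a $G$-space surjects onto the geometric stabilizer as a $Q$-space with split unipotent kernel, and $\Hb$ is assumed smooth connected so this causes no trouble. Hence we may assume $G$ itself is pseudo-reductive.

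\textbf{Passing to a quasi-split representative.} With $G$ pseudo-reductive, the next step is to enlarge $G$ to a group of the form $G \times A$ with $A$ a suitable commutative pseudo-reductive group (typically a Weil restriction of a split torus, as in Lemma~\ref{lemweiltorbm}), chosen so that the enlarged group admits a well-behaved ``standard'' presentation. The reason for tensoring with such an $A$ is exactly the one appearing in the statement: one wants a factorization $1 \to D \to G' \to C \to 1$ with $D$ the derived-type part and $C$ the abelianized part, such that $D$ is functorially recognizable inside every $k_s$-form of $G \times A$ (this is the uniqueness clause in the second bullet) and such that $D$ has vanishing $\mathrm H^1$ over every field (first bullet). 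Here I would take $D$ to be built from $\mathcal D(G\times A) = \mathcal D(G)$ — more precisely its Weil-restriction presentation $\mathrm R(G')$ as in Theorem~\ref{thmgenstandfunct}, whose fibers are semisimple simply connected or basic exotic, so that $\mathrm H^1(k, \mathrm R(G')) = 1$ by Shapiro plus the vanishing of $\mathrm H^1$ for simply connected semisimple groups over the relevant local/global fields (and for basic exotic groups via the map to a semisimple simply connected group inducing a bijection on points, cf. Proposition~\ref{propredbasexpsred}). The commutative quotient $C = (G\times A)/D$ is then affine commutative. The functoriality assertion — that $D$ is characteristic and cut out uniquely by any $k_s$-isomorphism — follows from the unique functoriality of the generalized standard presentation with respect to isomorphisms, i.e. Theorem~\ref{thmgenstandfunct}(b), combined with the fact that $\mathcal D(G)$ is characteristic in $G$ (Proposition~\ref{proppsredmulunip}) and Proposition~\ref{propsemiautpsred} relating automorphisms of $G$ to those of its Weil-restriction presentation when $[k:k^p] = p$.

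\textbf{Compatibility with the homogeneous space and the obstruction.} It remains to check that this enlargement is harmless for Theorem~\ref{thmmainbmobs}: replacing $G$ by $G \times A$ and $X$ by $X$ itself, now regarded as a homogeneous space of $G \times A$ via the projection (so the $A$-factor acts trivially), keeps the geometric stabilizer equal to $\Hb \times A_{\overline k}$ — which is still smooth, but no longer connected, so in fact one must instead take $Y := X \times_k Z$ for $Z$ a suitable torsor, or work with $X$ fibered by an $A$-torsor as in Lemma~\ref{lemweiltorbm} so that the stabilizer stays $\Hb$. I would phrase it exactly as in Lemma~\ref{lemweiltorbm}: given $X$, form an $A$-torsor $Y \to X$ for $A = \mathrm R_{k'/k}(\mathbf G_{\mathrm m}^r)$; then $Y$ is a homogeneous space of $G \times A$ with the \emph{same} geometric stabilizer $\Hb$ (as the $A$-part now acts simply transitively on fibers), $Y(k_v) \neq \varnothing$ for all $v$ by that lemma, and $\Be(X) \xrightarrow{\sim} \Be(Y)$ with $BM_X = 0 \Leftrightarrow BM_Y = 0$; and $Y(k) \neq \varnothing \Rightarrow X(k) \neq \varnothing$. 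So proving the theorem for $Y$ (a homogeneous space of $G \times A$, a group of the asserted shape) implies it for $X$. The main obstacle I anticipate is bookkeeping the two bullet conditions simultaneously: one needs $A$ large enough that $\mathcal D(G\times A)$ (equivalently $\mathcal D(G)$) has a presentation all of whose primitive factors have vanishing $\mathrm H^1$, \emph{and} large enough that the ambient $k_s$-isomorphisms of $G\times A$ cannot mix up the $D$-part with anything else — the latter is automatic from $\mathcal D$ being characteristic, so really the content is just arranging the first bullet, which is where the appeal to the generalized standard structure theorem and Shapiro's lemma does the work.
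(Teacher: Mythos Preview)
Your approach has a structural confusion that makes it unworkable as stated. You want $D$ to be the Weil-restriction presentation $\mathrm R(G')$ from Theorem~\ref{thmgenstandfunct}, because that is indeed the object with vanishing $\mathrm H^1$ for all \'etale forms. But $\mathrm R(G')$ is not a subgroup of $G$ (or of $G\times A$): the map $j:\mathrm R(G')\to G$ has image $\mathcal D(G)$ and in general nontrivial central kernel, so writing ``$C = (G\times A)/D$'' with $D=\mathrm R(G')$ makes no sense. Conversely, if you take $D=\mathcal D(G)$ itself, the first bullet fails (e.g.\ $G=\mathrm{PGL}_n$ already has $\mathrm H^1\neq 1$). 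The only way to use $\mathrm R(G')$ is to \emph{replace} $G$ by a new group $\widetilde G$ that surjects onto $G$ and contains $\mathrm R(G')$ as an honest normal subgroup. This is precisely what the paper does, via Corollary~\ref{corstrpsredetforms}: one gets $\widetilde Q\twoheadrightarrow Q$ with smooth connected central kernel and $\widetilde Q = D_0\rtimes C$, then sets $\widetilde G\coloneqq G\times_Q\widetilde Q$ and $D\coloneqq\ker(\widetilde G\to C)$, an extension of $D_0$ by $U$. The homogeneous space $X$ is \emph{unchanged}: since $\widetilde G\twoheadrightarrow G$, the same $X$ is a $\widetilde G$-space, and its geometric stabilizer becomes an extension of $\Hb$ by the smooth connected kernel, hence is still smooth connected.

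This also explains why your first and third paragraphs go astray. There is no need to first quotient out the split unipotent radical $U$; the paper keeps $U$ inside $D$, and splitness descends along separable extensions so every \'etale form $U'$ of $U$ still has $\mathrm H^1(k,U')=1$. And the enlargement by a commutative $A$ via an $A$-torsor $Y\to X$ is not the mechanism here at all---that construction belongs to the \emph{next} reduction (the proposition after this one, which arranges $(\Hb_{k_s}\cap D_{k_s})/\mathcal D(\Hb)_{k_s}$ unipotent). In the present statement the group $A$ is universally quantified: the second bullet must hold for \emph{every} commutative pseudo-reductive $A$, and it does because the unipotent radical and the map $j$ of Theorem~\ref{thmgenstandfunct}(b) are both uniquely functorial in isomorphisms of $\widetilde G\times A$, uniformly in $A$.
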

\begin{prf}
By assumption, $G$ is an extension of a pseudo-reductive group $Q$ by a split unipotent group $U$. By Corollary \ref{corstrpsredetforms}, there is a surjection of pseudo-reductive groups $\widetilde{Q}\twoheadrightarrow Q$ with smooth connected kernel, where $\widetilde{Q} = D_0\rtimes C$ for a commutative pseudo-reductive group $C$ and a smooth group $D_0$, such that each $k$-form $D'_0$ of $D_0$ satisfies $\mathrm{H}^1(k,D'_0) = 1$. Furthermore, if $A$ is a commutative pseudo-reductive group over $k$, then for any \'etale $k$-form $\widetilde{Q}'$ of $\widetilde{Q}\times A$, then there exists a unique subgroup $D'_0$ of $\widetilde{Q}'$ such that any isomorphism $\widetilde{Q}'_{k_s}\simeq (\widetilde{Q}\times A)_{k_s}$~of~algebraic groups over $k_s$ restricts to an isomorphism $D'_{0,\,k_s}\simeq D_{0,\,k_s}$.

We define $\widetilde{G}\coloneqq G\times_Q\widetilde{Q}$, which is an algebraic group and an extension of $\widetilde{Q}$ by $U$. Similarly, if a commutative pseudo-reductive group $A$ over $k$ is given, then $\widetilde{G}\times A$ is an extension of $\widetilde{Q}\times A$ by $U$. Note that then $U$ is necessarily the unipotent radical $\mathscr R_{u,k}(\widetilde{G}\times A)$ of $\widetilde{G}\times A$, hence its formation commutes with taking \'etale $k$-forms of $\widetilde{G}$.
We let $D\coloneqq\ker\big(\widetilde{G}\twoheadrightarrow\widetilde{Q}\twoheadrightarrow C\big)$
so it is an extension of $D_0$ by $U = \mathscr R_{u,k}(D)$. Clearly $\smash{C = \widetilde{G}/D}$.

It is now clear that an \'etale $k$-form $\widetilde{G}'$ of $\widetilde{G}\times A$ contains a $k$-form $D'$ of $D$ which is an extension of a $k$-form $D'_0$ of $D_0$ by a $k$-form $U'$ of $U$. More generally, every \'etale $k$-form $D'$ of $D$ satisfies the same description, for the same reason that $U = \mathscr R_{u,k}(D)$. Since $U'_{k_s}\simeq U_{k_s}$,~the unipotent group $U'$ is also split (this property descends along separable extensions). We conclude that $\mathrm{H}^1(k, U') = 1$ and $\mathrm{H}^1(k,D'_0) = 1$, so that $\mathrm{H}^1(k,D') = 1$.

Now $X$ is a homogeneous space of the group $\widetilde{G}$ and its geometric stabilizer is an extension of $\Hb$ by the smooth connected group $\ker(\widetilde{Q}\rightarrow Q)$, hence itself smooth and connected. We conclude that we may replace $G$ by $\widetilde{G}$ in the statement of Theorem \ref{thmmainbmobs}, without changing $X$, and that the groups $D$ and $C$ have the desired properties.
\end{prf}

The following step is inspired by a similar reduction in \cite[\textsection4]{Brv96} 
and the desired property is independent of the particular choice of geometric stabilizer $\Hb$ (and separable extension $K'/k$).\quad\quad

\begin{prop}\label{propmainreduction2}
In the proof of Theorem \ref{thmmainbmobs} we may, in addition to the previous proposition, assume that the group $(\Hb_{k_s}\cap D_{k_s})/\mathcal D(\Hb)_{k_s}$ is unipotent.
\end{prop}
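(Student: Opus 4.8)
The plan is to follow the reduction of Borovoi (\cite[\textsection 4]{Brv96}): isolate the ``multiplicative part'' of the offending quotient and absorb it into a quasi-trivial torus factor by passing to a torsor over $X$. First I would set $M\coloneqq(\Hb_{k_s}\cap D_{k_s})/\mathcal D(\Hb)_{k_s}$; since $G/D$ is commutative one has $\mathcal D(\Hb)\subseteq\mathcal D(G)_{k_s}\subseteq D_{k_s}$, so $M$ is a well-defined commutative affine algebraic group over $k_s$, and $\Hb$ acts trivially on $M$ by conjugation (any commutator $[h,x]$ with $x\in\Hb_{k_s}\cap D_{k_s}$ lies in $\mathcal D(\Hb)_{k_s}$, so $\overline{hxh^{-1}}=\bar x$ in $M$). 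Because $D$ is normal in the $k$-group $G$ and $\mathcal D(\Hb)$ is characteristic in $\Hb$, the descent datum $f=\varphi_G\circ\mathrm{int}(g_x^{-1})$ of Proposition~\ref{propspringertriple} restricts to an honest descent datum on $M$ (triviality of the conjugation action forces the restricted coboundary $h_x$ to act as the identity), so $M$ descends to a commutative affine $k$-group; likewise $\Hb^{\mathrm{ab}}_{k_s}$ descends to a $k$-group $P_0$ containing this descent of $M$, and the maximal $k_s$-subgroup of multiplicative type $M^m\subseteq M$ descends to a $k$-subgroup $T_0$ of multiplicative type of $P_0$ (using the multiplicative--unipotent structure from Appendix~B). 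If $T_0=1$ then $M$ is unipotent and there is nothing to prove; so assume $T_0\neq 1$.

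Next I would choose a closed immersion $T_0\hookrightarrow Q\coloneqq\mathrm R_{k'/k}(\mathbf G_{\mathrm m}^{\,r})$ into a quasi-trivial $k$-torus — always possible for a $k$-group of multiplicative type, by surjecting a permutation $\Gamma$-lattice onto its character module and dualizing — and extend it to a $k$-homomorphism $\beta\colon P_0\to Q$. Base-changing $\beta$ and precomposing with $\Hb_{k'}\twoheadrightarrow\Hb^{\mathrm{ab}}_{k'}$ then yields a homomorphism $\alpha\colon\Hb\to Q$ compatible with the Springer band $L_X$, with $\alpha|_{\mathcal D(\Hb)}=1$ and $\alpha$ injective on $M^m$. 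Given $\alpha$, I would form $Y$ as the contracted product of the left $\Hb$-torsor $G\to X$ (which is band-compatible, by the discussion around Proposition~\ref{propspringertriple}) with $Q$ along $\alpha$: over $k_s$ this is $\Hb_{k_s}\backslash(G_{k_s}\times Q_{k_s})$, it descends to a $k$-scheme $Y$ by band-compatibility of $\alpha$, and it is a homogeneous space of $G\times Q$ with $Y\to X$ a $G$-equivariant torsor under $Q$. The step producing $\alpha$ — equivalently, extending $T_0\hookrightarrow Q$ over $P_0$, i.e. showing the obstruction in $\mathrm{Ext}^1_k(P_0/T_0,Q)$ vanishes after a dévissage of $P_0/T_0$ into a multiplicative-type and a unipotent piece together with the vanishing of $\mathrm{Ext}^1$ into a quasi-trivial torus (Appendix~B, after \cite{RosTD}) — is the one I expect to be the main obstacle, since the decomposition $0\to M^m\to M\to M^u\to0$ is not canonical and extensions by tori need not split over the imperfect field $k$ (one may have to enlarge $Q$ to absorb the obstruction class).

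Finally I would put $\widetilde G\coloneqq G\times Q$, $\widetilde X\coloneqq Y$, $\widetilde D\coloneqq D\times\{1\}$, $\widetilde C\coloneqq C\times Q$: then $\widetilde G$ is an extension of the commutative group $\widetilde C$ by the smooth group $\widetilde D$ with $\mathscr R_{u,k}(\widetilde G)=U$ still split, and both properties of Proposition~\ref{propmainreduction1} persist for $\widetilde G,\widetilde D$ (for the second, $(G\times Q)\times A=G\times(Q\times A)$ with $Q\times A$ again commutative pseudo-reductive). Since $Y\to X$ is a torsor under the quasi-trivial torus $Q=\mathrm R_{k'/k}(\mathbf G_{\mathrm m}^{\,r})$, the remark after Lemma~\ref{lemweiltorbm} shows that proving Theorem~\ref{thmmainbmobs} for $(\widetilde G,\widetilde X)$ implies it for $(G,X)$. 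The geometric stabilizer of $Y$ is the graph $\widetilde\Hb=\{(h,\alpha(h)^{-1}):h\in\Hb_{k_s}\}\cong\Hb_{k_s}$, which is smooth and connected; a direct check gives $\widetilde\Hb\cap\widetilde D_{k_s}\cong\Hb_{k_s}\cap D_{k_s}\cap\ker\alpha$ and $\mathcal D(\widetilde\Hb)\cong\mathcal D(\Hb)_{k_s}$ (as $Q$ is commutative), so $(\widetilde\Hb\cap\widetilde D_{k_s})/\mathcal D(\widetilde\Hb)\cong\ker\bigl(M\xrightarrow{\,\alpha\,}Q_{k_s}\bigr)$, whose maximal subgroup of multiplicative type lies in $M^m\cap\ker\alpha=1$; hence it is unipotent, which is the required assertion for $(\widetilde G,\widetilde X)$.
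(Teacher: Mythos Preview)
Your strategy is the paper's own (following Borovoi): pass from $X$ to a torsor over $X$ under a Weil restriction of a split torus, chosen to absorb the multiplicative part of the offending quotient. Your final computation of $(\widetilde\Hb\cap\widetilde D_{k_s})/\mathcal D(\widetilde\Hb)$ and the check that the hypotheses of Proposition~\ref{propmainreduction1} persist for $\widetilde G=G\times Q$ are correct. But two intermediate steps are genuinely incomplete.

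First, the construction of $\beta\colon P_0\to Q$ extending $T_0\hookrightarrow Q$. You flag this as an obstacle, and it is a real one: since your $T_0$ (the descent of $M^m$) sits inside the maximal torus $T$ of $H_\ab=P_0$, the quotient $P_0/T_0$ has multiplicative part $T/T_0$, which need not vanish, so the $\mathrm{Ext}^1$ obstruction does not obviously die and ``enlarging $Q$'' does not obviously help. The paper sidesteps this entirely by a retraction trick: it works with $T$ (the maximal torus of $H_\ab$, not of $M$), picks a finite extension $k''/k$ over which the multiplicative--unipotent sequence of $H_\ab$ splits, sets $A\coloneqq\mathrm R_{k''/k}(T_{k''})$, and takes $\mu\colon H_\ab\to A$ to be the adjoint of the retraction $(H_\ab)_{k''}\twoheadrightarrow T_{k''}$. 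No extension problem arises, and since $T\to H_\ab\xrightarrow{\mu}A$ is the unit map (a closed immersion by Proposition~\ref{propweiladjmorph}), $\ker\mu$ is unipotent --- which is all that is needed, as $M\cap\ker\mu\subseteq\ker\mu$.

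Second, the claim that $Y$ ``descends to a $k$-scheme by band-compatibility of $\alpha$'' is not a justification. Band-compatibility makes the pushforward $\alpha_*\xi_X\in\mathrm H^2(k,Q)$ well-defined, but does not make it vanish; its vanishing is precisely the obstruction to descending the contracted product. The paper handles this by first invoking Lemma~\ref{lemphslift} (legitimate since one may assume $BM_X=0$ and $X(k_v)\neq\varnothing$), which produces a global representative $H$ of $L_X$ together with a $G$-torsor $Y\to X$ that is a left $H$-torsor over $X$; the contracted product $X'\coloneqq A\times^H Y$ is then defined over $k$ from the outset. As the paper's remark after the proof notes, Borovoi's original route --- showing $\alpha_*\xi_X\in\Sh^2(Q)=0$ from $X(k_v)\neq\varnothing$ --- would also close this gap, but you would have to supply that argument.
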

\begin{prf}
Suppose given $G,X,\Hb$ as in Theorem \ref{thmmainbmobs}, and assume moreover the existence of $C,D$ as in Proposition \ref{propmainreduction1}. Furthermore, we may assume $X(k_v)\neq\varnothing$ for all $v$ (and thus $X(\mathbf A)\neq\varnothing$, since $X$ is geometrically integral) and $BM_X = 0$, as otherwise there is nothing to prove.

The commutative quotient $H_\ab$ of $L_X$ is a smooth connected affine algebraic group, therefore it contains a torus $T$ such that $H_\ab/T$ is unipotent. This torus splits over some finite separable extension $k'/k$, and there is moreover a retraction $(H_\ab)_{k''}\rightarrow T_{k''}$ over some purely inseparable finite extension $k''/k'$ (cf.\ the proof of Proposition \ref{propanisdef}). The Weil restriction $A\coloneqq \mathrm R_{k''/k}(T_{k''})$ and the retraction define a homomorphism $\mu : H_\ab\rightarrow A$. The composition $T\rightarrow H_\ab\rightarrow A$ is the adjoint unit, which is a closed immersion as seen in Proposition \ref{propweiladjmorph}. This shows that $\ker(\mu)$ is unipotent. Moreover, $A$ is a commutative pseudo-reductive group.

By Lemma \ref{lemphslift}, there is a $G$-torsor $Y$ with a $G$-equivariant map $Y\rightarrow X$. Moreover, there is a global representative $H$ of $L_X$ such that $Y$ is a left $H$-torsor over $X$. Define a contracted product $X'\coloneqq A\times^H Y$, with respect to the composition $H\rightarrow H_\ab\rightarrow A$. This is clearly an $A$-torsor over $X$. Moreover, we claim that there is a natural right action of $A\times G$ on $X'$ defined (in terms of local sections) by $[a_0,y].(a,g) = [a_0+a,y.g]$. If so, then this action makes $X'$ into a homogeneous space of $G\times A$.

To check that this action is well-defined, we only need to check it does not depend on the particular representatives of a pair $[a_0,y]$. This can be done locally. Choose a separable extension $K/k$ over which we may fix a $G$-equivariant isomorphism $Y_K\simeq G_K$. Then there is a subgroup inclusion $\iota : H_K\hookrightarrow G_K$ whose image is the stabilizer in $G_K$ of the image of $1_G$ in $X$, and we have $[a_0,g_0] = [a_0-\mu(h^\ab),hg_0]$ for any local section $h$ of $H_K$. We simply check that
$$[a_0-\mu(h^\ab),hg_0].(a,g) = [a_0+a-\mu(h^\ab),hg_0g] = [a_0+a,g_0g] = [a_0,g_0].(a,g)$$
which is what we needed to show. Furthermore, the stabilizer of the point $[a_0,g_0]\in X'(K)$ is the preimage of $[a_0,g_0]$ with respect to the map
$$A_K\times G_K\xrightarrow{\;\;(a,g)\,\mapsto\,[a_0,g_0].(a,g)\;\;} A_K\times^{H_K} Y_K = X'_K$$
and that is exactly the image of $(-\mu,\iota) : H_K\longrightarrow A_K\times G_K$.

If Theorem \ref{thmmainbmobs}(b) (of which Theorem \ref{thmmainbmobs}(a) is only a special case, as we discussed above) holds for the $A$-torsor $X'$ over $X$, then it holds for $X$ by Lemma \ref{lemweiltorbm}. Therefore, it suffices to prove Theorem \ref{thmmainbmobs} for $\widetilde{G}\coloneqq G\times A$ and its homogeneous space $X'$.
Clearly, $\widetilde{G}$ is an extension of $\widetilde{C}\coloneqq C\times A$ by $D$. The triple $(\widetilde{G},\widetilde{C},D)$ still has the properties stated in Proposition \ref{propmainreduction1} because, for any commutative pseudo-reductive group $A'$, we have that $\widetilde{G}\times A' = G\times(A\times A')$ and that $A\times A'$ is a commutative pseudo-reductive group, which reduces the statement to the original triple $(G,C,D)$.

Finally, consider the stabilizer $\Hb\coloneqq (-\mu,\iota)(H_K)$ of $X'_K$ in $\widetilde{G}_K$. Because $D\subseteq G$, the quotient $(\Hb_{k_s}\cap D_{k_s})/\mathcal D(\Hb)_{k_s}$ is a subgroup of $\ker(\mu)_{k_s}$, but this group is unipotent and so is each one of its subgroups.
\end{prf}

\begin{rem}
Borovoi's analogue of this proposition, which he presented in terms of Galois descent in \cite[\textsection4]{Brv96}, can be interpreted as showing that $A$ bounds the gerbe of all possible lifts $X'\rightarrow X$ (satisfying the properties in the above proof). The associated class in $\mathrm{H}^2(k,A)$ is shown to be $0$ over all $k_v$ (because $X(k_v)\neq\varnothing$), hence it lies in $\Sh^2(A)$ which is known to be trivial. The lift $X'$ thus exists even when $BM_X\neq 0$; however our construction shows that when the property $BM_X = 0$ does hold, it (together with Lemma \ref{lemphslift}) is strong enough that we do not even need to use the fact that $\Sh^2(A) = 0$.

In more conceptual terms, Borovoi's class in $\mathrm{H}^2(k,A)$ can be seen to be the image of the Springer class $\xi_X\in\mathrm{H}^2(k,L_X)$ along the ``morphism of bands'' $L_X\rightarrow L(H_\ab)\rightarrow L(A)$. Since the image of $\xi_X$ is $0$ in $\mathrm{H}^2(k,H_\ab)$, it is also $0$ in $\mathrm{H}^2(k,A)$.
\end{rem}

\begin{prf}[of Theorem \ref{thmmainbmobs}(a)]
Given $G,C,D$ as in Proposition \ref{propmainreduction1} without loss of generality, let $X$ be a homogeneous space of $G$ with $X(\mathbf A)\neq\varnothing$ and $BM_X = 0$. By Proposition \ref{propmainreduction2} we~may also assume that the geometric stabilizer $\Hb$ is smooth connected and~that~${(\Hb_{k_s}\cap D_{k_s})/\mathcal D(\Hb)_{k_s}}$ is unipotent. Then Lemma \ref{lemphslift} produces a homogeneous space $Y$ of $G$ with a $G$-equivariant map $Y\twoheadrightarrow X$, which makes it into a left $H$-torsor over $k$ (for a $k$-form $H$ of $\Hb$).

Because $D\subseteq G$ is a normal subgroup, we may quotient $X$ to form a homogeneous space $D{\setminus}X$ of the commutative group $C = D{\setminus}G$ (this is a clear application of the effective fppf descent for affine schemes). There exists a $G$-equivariant map $X\rightarrow D{\setminus}X$, so in particular $(D{\setminus}X)(k_v)\neq\varnothing$ for all $v$. Since $BM_{D{\setminus}X}$ factors through $BM_X = 0$, and the Brauer-Manin obstruction given by $\Be$ is the only obstruction to the Hasse principle for homogeneous spaces of commutative affine algebraic groups (by \cite[Thm.\ 3.2]{Don24}), there exists a point $p\in (D{\setminus}X)(k)$, which we fix.
\smallskip

The fiber $X_p$ is a homogeneous space of $D$. The fiber $Y_p$ is a left $H$-torsor over $X_p$, which also admits a free (but not necessarily transitive) action of $D$. We now recall Remark \ref{remsprinnerform}, by which the action of $H$ on $Y$ extends to an action of a pure inner $k$-form ${_Y}G$ of $G$ containing $H$. Now, by our assumption on $D$, there is a uniquely determined $k$-form ${_Y}D\subseteq{_Y}G$ of $D$ (which is then necessarily obtained through twisting by an element $g_x$, same as ${_Y}G$). We claim that the left action of ${_Y}D$ on $Y$ restricts to an action on $Y_p$:

This can be checked locally: Passing to a finite extension $k'/k$ and making a $G_{k'}$-equivariant identification $Y_{k'}\simeq G_{k'}$, we get an isomorphism $({_Y}G)_{k'}\simeq G_{k'}$ and, consequently, $({_Y}D)_{k'}\simeq D_{k'}$. Then the left action of ${_Y}D$ on $Y$ can be seen as left-multiplication, and in terms of local sections we have $d.y = y.(y^{-1}dy)$, where the action on the right-hand side is by $D$ from the right (since $D\subseteq G$ is a normal subgroup). In particular, this shows that if $(Y_p)_{k'}$ is stable under the action of $D_{k'}$ from the right, then it is stable under the action of $({_Y}D)_{k'}$ from the left.

Moreover, it is immediate that $E\coloneqq (H\cdot {_Y}D)\subseteq {_Y}G$ acts both freely and transitively on $Y_p$, making it into an $E$-torsor over $k$.
Again by Remark \ref{remsprinnerform}, we may twist by a cocycle~in~$\mathrm{H}^1(k, H)$ (equivalently, by some $H$-torsor $Z$) to get an action of ${_Z}E = {_Z}X\cdot {_Z}({_Y}D)$ on ${_Z}(Y_p)$, with there still being a map ${_Z}(Y_p)\rightarrow X_p$ defined over $k$. To finish the proof, we only need to show that $Z$ can be chosen such that ${_Z}(Y_p)$ admits a $k$-point.
\smallskip

Fix any $H$-torsor $Z$. Because ${_Z}({_Y}D)$ is a normal subgroup of ${_Z}E$, we may form a quotient ${_Z}(Y_p)/{_Z}({_Y}D)\cong {_Z}(Y_p/{_Y}D)$, which is a torsor of the group ${_Z}(H/(H\cap{_Y}D))$ over $k$. Suppose that ${_Z}(Y_p/{_Y}D)$ has a $k$-point $q$. Then the fiber $({_Z}(Y_p))_q$ is a ${_Z}({_Y}D)$-torsor, but $\mathrm{H}^1(k, {_Z}({_Y}D)) = 1$ by our assumption on $D$. Hence, $({_Z}(Y_p))_q$ has a $k$-point, and so do ${_Z}(Y_p)$ and $X$.

Therefore, it suffices to show that $Z$ can be chosen so that ${_Z}(Y_p/{_Y}D)$ is the trivial torsor~of ${_Z}(H/(H\cap{_Y}D))$. By general results on twisting, it is equivalent that the class of $Z$ in $\mathrm{H}^1(k, H)$ is mapped to the class of $Y_p/{_Y}D$ in $\mathrm{H}^1(k, H/(H\cap{_Y}D))$ by the natural map of cohomology sets. This map can be written as a composition of two factors
$$\mathrm{H}^1(k, H)\longrightarrow\mathrm{H}^1\left(\!k, \frac{H}{\mathcal D(H)}\right)\longrightarrow\mathrm{H}^1\left(\!k, \frac{H}{H\cap{_Y}D}\right)$$
which are in fact both surjective. Indeed, the first map is surjective by Theorem \ref{thmmainabelh1} since $H$ is smooth and connected (for $A$ = 1). The second map is surjective by Proposition \ref{propunipfppfvanish} since the group $(H\cap{_Y}D)/\mathcal D(H)$ is unipotent by assumption (as it is isomorphic to $(\Hb_{k_s}\cap D_{k_s})/\mathcal D(\Hb)_{k_s}$ when base-changed to $k_s$).
\end{prf}

This finishes the proof of Theorem \ref{thmmainbmobs}(a). Before proving statement (b), we require several results of independent interest on (weak and strong) approximation at the level of $\mathrm H^1$ sets in the natural topology; see \cite{CesTC} and \cite{CesPT} for the relevant definitions.

\begin{prop}\label{proph1approx}
Let $G$ be a commutative affine algebraic group over a global (function) field $k$. If $S$ is any (possibly infinite) subset of the set of places of $k$, then the natural~homomorphism
$$\mathrm H^1(k,G)\longrightarrow\ker\left(\mathrm H^1(\mathbf A_S,G)\rightarrow\Sh^1_S(\widehat G)^*\right)$$
of topological groups has dense image. In particular, if $G$ is smooth and connected, then the map is a surjection and $\mathrm H^1(k,G)\rightarrow\mathrm H^1(\mathbf A_S,G)\rightarrow\Sh^1_S(\widehat G)^*$ is an exact sequence of Abelian~groups.
\end{prop}
\begin{prf}
The last statement follows since $\mathrm H^1(\mathbf A_S,G)$ is discrete when $G$ is smooth and connected (by \cite[Prop.\ 3.7]{CesPT}). For the rest, we construct a commutative diagram as follows: 
\begin{center}\begin{tikzcd}[column sep=large]
    & \mathrm H^1(\mathbf A^S, G)\arrow[r, "f^S"]\arrow[d, "i^S", hook] & \ker\left(\mathrm{H}^1(k,\widehat{G})^*\twoheadrightarrow \Sh^1_S(\widehat{G})^*\right)\arrow[d, hook]\arrow[r, equal]\! &[-35pt] \!\ker(q)\\
    \mathrm H^1(k, G)\arrow[r]\arrow[d, equal] & \mathrm H^1(\mathbf A, G)\arrow[r, "f"]\arrow[d, "p_S", two heads] & \ker\left(\mathrm{H}^1(k,\widehat{G})^*\twoheadrightarrow \Sh^1(\widehat{G})^*\right)\arrow[d, "q", two heads]\arrow[rr] &[-35pt] &[-42pt] 0\\
    \mathrm H^1(k, G)\arrow[r] & \mathrm H^1(\mathbf A_S, G)\arrow[r, "f_S"] & \ker\left(\Sh_S^1(\widehat{G})^*\twoheadrightarrow \Sh^1(\widehat{G})^*\right)
\end{tikzcd}\end{center}
Here, the middle row is part of the exact Poitou-Tate sequence from \cite[Thm.\ 1.2.9]{RosTD}, in which the map $f$ is defined by summing local pairings. It descends to a map $f_S$ on quotients, since the local duality pairings for $v\notin S$ with elements of $\Sh_S^2(\widehat{G})$ are all trivial. Finally, the left column is (split) exact and the right column is obviously exact (after taking algebraic duals of inclusions of the discrete torsion Abelian groups $\Sh^1(\widehat{G})\subseteq\Sh^1_S(\widehat{G})\subseteq\mathrm{H}^1(k,\widehat{G})$, which admit canonical topologies making them coincide with the profinite Pontryagin duals of said groups), inducing the map $f^S$ on top and closing the diagram.

Consider $x_S\in\ker(f_S)$ with open neighborhood $U\subseteq\ker(f_S)$, and fix a preimage $x\in\mathrm H^1(\mathbf A, G)$ of $x_S$. We need to show that $U$ has nonempty preimage in $\mathrm H¹(k,G)$. By definition of subspace topology, there is an open set $V\subseteq\mathrm H^1(\mathbf A_S, G)$ such that $U = V\cap\ker(f_S)$. Because $f$ is open (by \cite[Props.\ 5.14.5, 5.14.6]{RosTD}), the set $f(p_S^{-1}(V))$ is open in $\im(f)$. Since  ${q(f(x)) = f_S(x_S) = 0}$, the intersection $f(p_S^{-1}(V))\cap\ker(q)$ is an open nonempty subset of $\ker(q)$. 

We claim that $f^S$ has dense image. To see why this ends the proof, observe that then there exists $y^S\in\mathrm H^1(\mathbf A^S, G)$ with $f^S(y^S)\in f(p_S^{-1}(V))\cap\ker(q)$, from which we conclude that:
$$i^S(y^S)\in f^{-1}(f(p_S^{-1}(V))) = p_S^{-1}(V)+\im\left(\mathrm H^1(k, G)\rightarrow\mathrm H^1(\mathbf A, G)\right)$$
After projecting along $p_S$, we get that $0\in V+\im\big(\mathrm H^1(k, G)\rightarrow\mathrm H^1(\mathbf A_S, G)\big)$. Hence, there exists an element $z_S\in V$ in the image of $\mathrm H^1(k, G)$, but then $z_S\in V\cap\ker(f_S) = U$, as desired.

It remains only to prove that $f^S$ indeed has dense image. For contradiction, suppose that the (compact Hausdorff) quotient $\ker(q)/\overline{\im(f^S)}$ is nontrivial. Then its Pontryagin dual is also nontrivial, so there is a nontrivial \textit{character} (i.e.\ continuous homomorphism) $\ker(q)\rightarrow\mathbf R/\mathbf Z$ restricting to $0$ on $\im(f^S)$. It extends to a character $\mathrm H^1(k,\widehat G)^*\rightarrow\mathbf R/\mathbf Z$ (since $\ker(q)$ is closed; see \cite[Thm.\ 2.1.2]{Rud62}) which is not induced by a character $\Sh^1_S(\widehat{G})^*\rightarrow\mathbf R/\mathbf Z$. 
This implies that the sequence of Pontryagin duals
$$\left(\Sh^1_S(\widehat{G})^*\right)^D\longrightarrow\left(\mathrm H^1(k,\widehat G)^*\right)^D\longrightarrow\left(\mathrm H^1(\mathbf A^S, G)\strut\right)^D$$
is not exact as a sequence of Abelian groups; a contradiction, because, by the perfect pairing~of $\mathrm H^1(\mathbf A^S, G)$ and $\mathrm H^1(\mathbf A^S,\widehat G)$ (see proof of \cite[Prop.\ 5.3.5]{RosTD}), this sequence is simply the exact sequence $\Sh^1_S(\widehat{G})\hookrightarrow\mathrm H^1(k,\widehat G)\rightarrow\mathrm H^1(\mathbf A^S,\widehat G)$ from the definition of $\Sh^1_S(\widehat{G})$.
\end{prf}

We now axiomatize the following step, to be used repeatedly in what follows:

\begin{lem}\label{lemapproxabstr}
Let $0\rightarrow H\rightarrow G\rightarrow Q\rightarrow 0$ be a short exact sequence of algebraic groups over a global (function) field $k$, such that $\mathrm H^1(k,G)\rightarrow\mathrm H^1(k,Q)$ is a surjection. Suppose given some (possibly infinite) subset $S$ of places of $k$. If, for each $k$-form $_YH$ of $H$ induced by a pure inner twist $_YG$ of $G$, the map $\mathrm H^1(k,{_Y}H)\rightarrow\mathrm H^1(\mathbf A_S,{_Y}H)$ has dense image, then the natural map
$$\mathrm H^1(k,G)\longrightarrow\mathrm H^1(k,Q)\times_{\mathrm H^1(\mathbf A_S,\,Q)}\mathrm H^1(\mathbf A_S,G)$$
has dense image (in the fiber product topology). If moreover $\mathrm H^1(k,Q)\rightarrow\mathrm H^1(\mathbf A_S,Q)$ has dense image, then so does $\mathrm H^1(k,G)\rightarrow\mathrm H^1(\mathbf A_S,G)$.
\end{lem}
\begin{prf}
We will keep track of this data in the following commutative diagram with exact rows:
\begin{center}\begin{tikzcd}[column sep=large]
    \mathrm H^1(k, H)\arrow[r]\arrow[d] & \mathrm H^1(k, G)\arrow[r]\arrow[d] & \mathrm H^1(k, Q)\arrow[r]\arrow[d] & 1\\
    \mathrm H^1(\mathbf A_S, H)\arrow[r] & \mathrm H^1(\mathbf A_S, G)\arrow[r] & \mathrm H^1(\mathbf A_S, Q)
\end{tikzcd}\end{center}
The set $\mathrm H^1(k,Q)$ is discrete, so the first assertion of the lemma is equivalent to saying that, for a fixed element $z\in\mathrm H^1(k,Q)$ and nonempty open set $U\subseteq\mathrm H^1(\mathbf A_S,G)$ such that the image of $z$ in $\mathrm H^1(\mathbf A_S,Q)$ lands into the image of $U$, there exists an element $x\in\mathrm H^1(k,G)$ mapping to $z$ and into $U$ simultaneously.

Since $\mathrm H^1(k,G)\rightarrow\mathrm H^1(k,Q)$ is surjective, we may find a lift $y\in\mathrm H^1(k,G)$ of $z$. Twisting by a representative $Y$ of $y$, we send $z$ to $\tau_Y(z) = 1$. Because $\tau_Y : \mathrm H^1(\mathbf A_S,G)\rightarrow\mathrm H^1(\mathbf A_S,{_Y}G)$ is a homeomorphism (\cite[Prop.\ 3.6]{CesPT}), the set $\tau_Y(U)$ is open and has nonempty open preimage in $\mathrm H^1(\mathbf A_S,H)$. By assumption, the map $\mathrm H^1(k,{_Y}H)\rightarrow\mathrm H^1(\mathbf A_S,{_Y}H)$ has dense image, and there exists thus some $x_0\in\mathrm H^1(k,{_Y}H)$ mapping into $\tau_Y(U)$. If $\im(x_0)$ denotes its image in $\mathrm H^1(k,{_Y}G)$, then $x = \tau_Y^{-1}(x_0)$ is the desired element, proving the first part of the lemma.

For the second assertion, suppose that $\mathrm H^1(k,Q)\rightarrow\mathrm H^1(\mathbf A_S,Q)$ has dense image. In view of the first part of the lemma, we only have to show that, given an arbitrary open $U\subseteq\mathrm H^1(\mathbf A_S,G)$, there exists some $z\in\mathrm H^1(k,Q)$ mapping into the image of $U$ in $\mathrm H^1(\mathbf A_S,Q)$. For this, it suffices to note that the map $\mathrm H^1(\mathbf A_S,G)\rightarrow\mathrm H^1(\mathbf A_S,Q)$ is open by \cite[Prop.\ 3.11(d)]{CesPT} (or at least~by its proof; see remark below).
\end{prf}

\begin{rem}
The cited \cite[Prop.\ 3.11(d)]{CesPT} is actually stated under the condition that $H$ is a \textit{commutative} group-algebraic space (which is flat and of finite presentation). We claim that this commutativity condition (which is used only in one step of the proof) can be dropped when $H$ is a scheme, as is the case in our situation. In fact, the proof of \cite[Prop.~3.11(d)]{CesPT} seems to mistakenly invoke \cite[Cor.\ B.8]{CesTC} instead of \cite[Prop.\ B.13]{CesTC}: These two statements are analogous for our needs, however \cite[Prop.\ B.13]{CesTC} applies in the (intended) generality of commutative group-algebraic spaces and \cite[Cor.\ B.8]{CesTC} applies if $H$ admits a spread-out model over some $\mathcal O_{k,\,T}$ which is (at least over each closed fiber) a flat scheme of finite type. The second situation clearly holds when $H$ is an algebraic group.
\end{rem}

\begin{lem}\label{lemapproxliftunip}
Let $U$ be a unipotent algebraic group over a global function field $k$. If $S\neq\Omega$~is~a proper subset of places, then the natural morphism $\mathrm H^1(k,U)\rightarrow\mathrm H^1(\mathbf A_S,U)$ has dense image.
\end{lem}
\begin{prf}
We prove this statement by induction on the length of the derived series of $U$. As the derived series is preserved under base change, this length is greater for $U$ than for any $k$-form of $\mathcal D(U)$. By the preceding lemma, we reduce to the case when $U$ is commutative. We now~finish by Proposition \ref{proph1approx} (since $\Sh^1_S(k,\widehat U) = 0$ by Lemma \ref{corunipinjplace} applied to some $v\notin S$).

Alternatively, with a simpler argument, the commutative unipotent group $U$ admits a central series whose subquotients are subgroups of $\mathbf G_{\mathrm a}$, so by induction on the minimal length of such a series we may suppose $U\subseteq\mathbf G_{\mathrm a}$. Finally, there is an exact sequence $0\rightarrow U\rightarrow\mathbf G_{\mathrm a}\rightarrow\mathbf G_{\mathrm a}\rightarrow 0$, and we finish by strong approximation for $k\hookrightarrow\mathbf A_S$.
\end{prf}

\begin{lem}\label{lemapproxliftabquot}
Let $G$ be a smooth connected affine algebraic group over a global function field $k$. If $S$ is a finite set of places of $k$, then the following statements hold:
\begin{enumerate}[a)]
    \item The following natural map of pointed sets is a surjection:
    $$\mathrm H^1(k,G)\longrightarrow\mathrm H^1(k,G/\mathcal D(G))\times_{\mathrm H^1(k_S,\,G/\mathcal D(G))}\mathrm H^1(k_S,G)$$
    \item The natural map $\mathrm H^1(k,\mathcal D(G))\rightarrow\mathrm H^1(k_S,\mathcal D(G))$ is a surjection.
\end{enumerate}
\end{lem}
\begin{prf}
Since $G$ and $\mathcal D(G)$ are smooth, all of the sets in the statement are discrete. For a pure inner form $_YG$ of $G$, there is an identification $_Y\mathcal D(G)\cong\mathcal D({_Y}G)$. Therefore, statement (a) follows from (b) by Lemma \ref{lemapproxabstr} as soon as we notice that $\mathrm H^1(k,G)\rightarrow\mathrm H^1(k,G/\mathcal D(G))$ is surjective by Theorem \ref{thmmainabelh1}. For statement (b), let $U\coloneqq\mathscr R_{u,k}(G)$ be the unipotent radical and $p : G\twoheadrightarrow G/U$ the projection to the maximal pseudo-reductive quotient of $G$. Then $p(\mathcal D(G)) = \mathcal D(G/U)$ and the kernel of $\mathcal D(G)\twoheadrightarrow\mathcal D(G/U)$ is unipotent, so by Lemmas \ref{lemapproxabstr} and \ref{lemapproxliftunip} it suffices to prove only that $\mathrm H^1(k,Q)\rightarrow\mathrm H^1(k_S,Q)$ is a surjection for any perfect pseudo-reductive group $Q$. Note that, in our case, $Q = \mathcal D(G/U)$ is perfect even though $\mathcal D(G)$ does not have to be.

By Theorem \ref{thmpsredstrmain}, we may assume that $Q$ is noncommutative generalized standard pseudo-reductive (since cohomology commutes with products and since all totally non-reduced pseudo-reductive groups have trivial $\mathrm H^1$ set by Proposition \ref{propredbasnonred} and Shapiro's lemma).  Then there is a central surjection $j : \mathrm R(Q')\rightarrow Q$, for the Weil restriction $\mathrm R = \mathrm R_{k'/k}$ from some nonzero finite reduced $k$-algebra $k' = \prod k'_i$ and a $k'$-group $Q'$ with primitive fibers $Q'_i$. Let $T$ be a finite set of places of $k$ strictly containing $S$. Just as in the global case of Theorem \ref{thmabelmainpsred} we may choose $C' = \prod C'_i$, for Cartan subgroups $C'_i\subseteq Q'_i$, such that $\mathrm R(C')_v$ is anisotropic for all $v\in T$. Since $j$ is central, we have $\ker(j)\subseteq\mathrm R(C')$ and we may write a commutative diagram with exact rows:
\begin{center}\begin{tikzcd}
    \mathrm H^1(k_S, \mathrm R(C'))\arrow[r]\arrow[d] & \mathrm H^1(k_S, j(\mathrm R(C')))\arrow[r, two heads]\arrow[d] & \mathrm H^2(k_S, \ker(j))\arrow[r]\arrow[d, equal] & \mathrm H^2(k_S, \mathrm R(C'))\arrow[r, equal] &[-20pt] 0\\
    \mathrm H^1(k_S, \mathrm R(Q'))\arrow[r] & \mathrm H^1(k_S, Q)\arrow[r] & \mathrm H^2(k_S, \ker(j))
\end{tikzcd}\end{center}
By anisotropicity of the smooth connected groups $\mathrm R(C')_v$ over all $v\in S$ and local duality (recall the discussion surrounding Proposition \ref{propanisses}), the top right group vanishes. We now claim that the column $\mathrm H^1(k_S, j(\mathrm R(C')))\rightarrow\mathrm H^1(k_S, Q)$ is a surjection. Using the diagram, it suffices to show that the bottom right map is an injection, which we now do: If $x,y\in\mathrm H^1(k_S, Q)$ are two classes with the same image in $\mathrm H^2(k_S, \ker(j))$, let $Y$ be a torsor of $Q$ representing $y$ and, importantly, splitting over an \'etale cover of $k_S$ (which exists since $Q$ is smooth).~The~composition
$$Q = \mathrm R(Q')/\ker(j)\twoheadrightarrow\mathrm R(Q')/\mathrm Z_{\mathrm R(Q')}\hookrightarrow\mathcal{Aut}_{\mathrm R(Q')}$$
allows us to twist by $Y$ (fixing $\ker(j)$) and get the sequence ${0\rightarrow \ker(j)\rightarrow {_Y}(\mathrm R(Q'))\rightarrow {_Y}Q\rightarrow 0}$. Now, by Corollary \ref{corconradetform}, Shapiro's lemma, Kneser-Bruhat-Tits and Proposition~\ref{propredbasexpsred}, we get $\mathrm H^1(k_S, {_Y}(\mathrm R(Q'))) = 1$. As the twist $\tau_Y(x)$ maps to the trivial element in $\mathrm H^1(k_S, Q)$, it~is itself trivial. Therefore $x = y$, which proves the claim.

After the claim just proven, it remains only to show that $\mathrm H^1(k, C)\rightarrow\mathrm H^1(k_S, C)$ is a surjection, for the smooth connected affine algebraic group $C\coloneqq j(\mathrm R(C'))$ with $C_v$ anisotropic for all $v\in T$. Applying Proposition \ref{propanisgen} for some $v\in T\setminus S$, we get that $\Sh^1_{\Omega\setminus\{v\}}(\widehat C) = 0$ and, as $S\subseteq\Omega\setminus\{v\}$, conclude by Proposition \ref{proph1approx} that $\mathrm H^1(k, C)\rightarrow\mathrm H^1(k_S, C)$ is surjective, as desired.
\end{prf}

\begin{rem}
Lemma \ref{lemapproxliftabquot}(b) holds for any $k$-form $H$ of $\mathcal D(G)$ (even if $H$ is a priori~not~known to be the derived subgroup of any group): If $Q = \mathcal D(G/U)$ as in the proof, the surjectivity of $\mathcal D(G)\twoheadrightarrow Q$ implies that $\mathcal D(\mathcal D(G))\twoheadrightarrow \mathcal D(Q) = Q$ is surjective, so $\mathcal D(G)/\mathcal D(\mathcal D(G))$ is unipotent. Therefore every $k$-form $H$ is a smooth connected affine algebraic group with $H/\mathcal D(H)$ unipotent, so by Lemma \ref{lemapproxliftabquot}(a) and Lemma \ref{lemapproxliftunip} the map $\mathrm H^1(k, H)\rightarrow\mathrm H^1(k_S, H)$ is surjective. 
\end{rem}

We are finally ready to finish the proof of the main result.

\begin{prf}[of Theorem \ref{thmmainbmobs}(b)]
We again let $G,C,D,X,\Hb$ be as in Propositions \ref{propmainreduction1} and \ref{propmainreduction2}, so the formation of the normal subgroup $D\subseteq G$ commutes with taking \'etale $k$-forms of $G$ (and $\mathrm H^1(k,D') = 1$ for each \'etale $k$-form $D'$ of $D$), the quotient $C = G/D$ is commutative and $(H\cap D)/\mathcal D(H)$ is unipotent. Suppose given a point $x_S\in X(k_S)^{\Be_S} = \ker(X(k_S)\rightarrow\Be_S(X)^*)$, which we fix. Then the map $BM_X : \Be(X)\rightarrow\mathbf Q/\mathbf Z$ (which can be constructed by evaluation at any point in $X(\mathbf A)$) is trivial, so $X(k)\neq\varnothing$ by Theorem \ref{thmmainbmobs}(a) and we may assume $X = H\backslash G$ for some (smooth and connected) subgroup $H$ of $G$.

We need to prove that $x_S$ lies in the closure of $X(k)\subseteq X(k_S)$. Consider the quotient scheme $X/D\coloneqq \big(H/(H\cap D)\big)\backslash\big(G/D\big)$. By functoriality of the Brauer-Manin pairing, there is a map $X(k_S)^{\Be_S}\rightarrow (X/D)(k_S)^{\Be_S}$ whose codomain is exactly the closure of $(X/D)(k)\subseteq (X/D)(k_S)$ by \cite[Thm.\ 4.5]{Don24}. In the following diagram, whose rows are exact sequences of pointed sets, the first column is surjective and the last is injective (by the stated properties of $D$):
\begin{center}\begin{tikzcd}[column sep=large]
    G(k)\arrow[r]\arrow[d, two heads] & X(k)\arrow[r]\arrow[d] & \mathrm H^1(k, H)\arrow[r]\arrow[d] & \mathrm H^1(k, G)\arrow[d, hook]\\
    \dfrac{G}{D}(k)\arrow[r] & \dfrac{X}{D}(k)\arrow[r] & \mathrm H^1\!\left(\!k,\dfrac{H}{H\cap D}\right)\arrow[r] & \mathrm H^1\!\left(\!k,\dfrac{G}{D}\right)
\end{tikzcd}\end{center}
Fix an open neighborhood $U\subseteq X(k_S)$ of $x_S$, and consider their images $\overline{x_S}\in\overline U$ in $(X/D)(k_S)$. We claim that $\overline{x_S}$ lies in the interior of $\overline U$: To see this, define a map $r^{x_S} : G_{k_S}\rightarrow X_{k_S}$ (which is in general not compatible with the above diagram) by ${g_S}\rightarrow {x_S}.{g_S}$. The induced composition
$$G(k_S)\longrightarrow (G/D)(k_S)\xrightarrow{\,r^{\overline{x_S}}\,} (X/D)(k_S)$$
is open by \cite[Prop.\ 4.3(a)]{CesTC}, since both $D$ and (any $k_S$-form of) $H/(H\cap D)$ are $k_S$-smooth. However, it maps the open set $(r^{x_S})^{-1}(U)$ into $\overline U$, which proves the claim.

By the aforementioned denseness of $(X/D)(k)$ in $(X/D)(k_S)^{\Be_S}$, we may choose an element $\overline x\in(X/D)(k)\cap\overline U$. Now, the third column $\mathrm H^1(k,H)\rightarrow\mathrm H^1(k,H/H\cap D)$ in the above diagram is surjective, just as in the proof of Theorem \ref{thmmainbmobs}(a).
A diagram chase then gives that $\overline x$ admits a preimage $x$ in $X(k)$. In other words, the fiber $X'\coloneqq X_{\overline x}$ (of the map $X\rightarrow X/D$) has a $k$-point and also $X'(k_S)\cap U\neq\varnothing$. To complete the proof, it only remains to prove that $X'$ satisfies weak approximation (with respect to the finite set $S$).

Returning to the beginning of the proof, we may assume (for simplicity of notation) that $x$ is the image of $1_G\in G(k)$. This has the effect of replacing $H$ by a $k$-form defined via conjugation inside $G_{k_s}$, which preserves the supposed property that the quotient $(H\cap D)/\mathcal D(H)$ is unipotent. Then $X' = (H\cap D)\backslash D$ and we consider the diagram:
\begin{center}\begin{tikzcd}[column sep=large]
    (H\cap D)(k)\arrow[r]\arrow[d] & D(k)\arrow[r]\arrow[d] & X'(k)\arrow[r]\arrow[d] & \mathrm H^1(k, H\cap D)\arrow[r]\arrow[d] & 1\\
    (H\cap D)(k_S)\arrow[r] & D(k_S)\arrow[r] & X'(k_S)\arrow[r] & \mathrm H^1(k_S, H\cap D)
\end{tikzcd}\end{center}
By construction, $D$ is an extension of a product $D_1\times D_2$ by a split unipotent group $\mathscr R_u(G)$, where $D_1 = \mathrm R_{k'/k}(\prod_i D'_i)$ is the Weil restriction of a product of primitive groups and $D_2 = \mathrm R_{K/k}(Q)$ for $Q$ basic non-reduced (if $D_2\neq 1$, then $\mathrm{char}(k) = 2$ and there is a map $Q\rightarrow\mathrm R_{K^{1/2}/K}(\mathrm{Sp}_{2n,\,K^{1/2}})$ inducing bijections on points over all separable extensions of $K$). 
It thus follows that $D$ satisfies weak approximation, i.e.\ that $D(k)\rightarrow D(k_S)$ has dense image, by reduction to the well-known cases of $\mathbf G_{\mathrm a}$ and of semisimple simply-connected groups (using Proposition \ref{propredbasexpsred}).

Since $D$ is smooth, the map $X'(k_S)\rightarrow\mathrm H^1(k_S, H\cap D)$ is open by \cite[Prop.\ 4.3(b)]{CesTC}. We claim furthermore that the map $\mathrm H^1(k, H\cap D)\rightarrow\mathrm H^1(k_S, H\cap D)$ has dense image, which clearly suffices to complete the proof (given a nonempty open $U'\subseteq X'(k_S)$, we construct by a diagram chase some $x'\in X'(k)$ such that $x'.d_S\in U'$ with $d_S\in D(k_S)$; then weak approximation for $D$ gives $d\in D(k)$ such that $x'.d\in U'$). This claim will follow from an application of Lemma \ref{lemapproxabstr} to the exact sequence $0\rightarrow \mathcal D(H)\rightarrow H\cap D\rightarrow (H\cap D)/\mathcal D(H)\rightarrow 0$. For this, we need to check three conditions: First, the map
$$\mathrm H^1\!\left(\!k,\dfrac{H\cap D\,}{\mathcal D(H)}\right)\longrightarrow\mathrm H^1\!\left(\!k_S,\dfrac{H\cap D\,}{\mathcal D(H)}\right)$$
has dense image by Lemma \ref{lemapproxliftunip}. Second, we consider the commutative diagram
\begin{center}\begin{tikzcd}[column sep=large]
    \dfrac{H}{H\cap D}(k)\arrow[r]\arrow[d, equal] & \mathrm H^1\big(k, H\cap D\big)\arrow[r]\arrow[d] & \mathrm H^1\big(k, H\big)\arrow[r]\arrow[d] & \mathrm H^1\!\left(\!k,\dfrac{H}{H\cap D}\right)\arrow[d, equal]\\
    \dfrac{H}{H\cap D}(k)\arrow[r] & \mathrm H^1\!\left(\!k,\dfrac{H\cap D\,}{\mathcal D(H)}\right)\arrow[r] & \mathrm H^1\!\left(\!k,\dfrac{H}{\mathcal D(H)}\right)\arrow[r] & \mathrm H^1\!\left(\!k,\dfrac{H}{H\cap D}\right)
\end{tikzcd}\end{center}
with exact rows and surjective third column (by Theorem \ref{thmmainabelh1}), which implies that the second column is surjective as well (because the group $(H/(H\cap D))(k)$ acts transitively on fibers in both rows). Third, we consider $k$-forms $_Y\mathcal D(H)$ of $\mathcal D(H)$ induced by $(H\cap D)$-torsors $Y$. Because $H\cap D\subseteq H$, there is an inclusion $_Y\mathcal D(H)\subseteq {_Y}H$ which gives an identification $_Y\mathcal D(H)\cong \mathcal D({_Y}H)$; then Lemma \ref{lemapproxliftabquot}(b) implies that the map $\mathrm H^1(k, {_Y}\mathcal D(H))\rightarrow\mathrm H^1(k_S, {_Y}\mathcal D(H))$ has dense image. This establishes all three conditions to apply Lemma \ref{lemapproxabstr}.
\end{prf}

\newpage

\appendix
\addcontentsline{toc}{section}{Appendices{\makebox(0pt,18pt){}}}
\section{Pseudo-reductive Groups}
A smooth connected affine algebraic group $G$ over a field $k$ admits a unique largest smooth connected unipotent normal subgroup $\mathscr R_{u,k}(G)$ defined over $k$, called its \textit{unipotent radical}. If $k'/k$ is a separable field extension, then $\mathscr R_{u,k'}(G_{k'}) = \mathscr R_{u,k}(G)_{k'}$ (after reducing to $k'/k$ finite, this follows from the fact that the unipotent radical is preserved by Galois transformations).

When $\mathscr R_{u,k}(G) = 1$, we say that $G$ is \textit{pseudo-reductive}. Moreover, $G$ is reductive if and only if $\mathscr R_{u,\overline k}(G_{\overline k}) = 1$. From the above, it follows that pseudo-reductivity is a distinct notion from reductivity only if $k$ is an imperfect field, i.e. $\smash{\overline k/k}$ is not separable. The converse holds, as over every imperfect field there are examples of non-reductive pseudo-reductive groups: for instance, the Weil restriction $\mathrm R_{k'/k}(\mathbf G_{\mathrm m,\,k'})$ for $k'/k$ finite inseparable (cf.\ Proposition \ref{propweilrestrunip}).

Because $\mathscr R_{u,k}(G/\mathscr R_{u,k}(G)) = 1$, the structure theory of smooth connected affine algebraic groups over a field of positive characteristic rests heavily on that of pseudo-reductive groups, developed mainly in \cite{CGP15} and \cite{CP15}. This appendix gives a short overview of some of the main results in this theory, as well as their simple consequences which are needed in the main body of the paper and that were convenient for us to develop in one separate place. Note that the last two subsections have significant overlap with \cite[\textsection2]{Con12}.

\subsection{Basic Properties of Pseudo-reductive Groups}\label{ssectpsred1}
In this subsection, $k$ is an arbitrary field. Given an affine algebraic group $G$ over $k$, recall that the \textit{derived subgroup} $\mathcal D(G)$ of $G$ is its unique largest subgroup such that $G/\mathcal D(G)$ is commutative (this is a well-behaved notion if $G$ is affine or smooth; see \cite[\textsection 6.d]{Mil17}). 

\begin{prop}\label{proppsredmulunip}
Let $G$ be a pseudo-reductive group over a field $k$. Let $\mathcal D(G)$ be its derived subgroup and let $G^t$ be the subgroup of $G$ generated by its (maximal) tori.

Then $\mathcal D(G)$ is perfect (that is, $\mathcal D(G) = \mathcal D(\mathcal D(G))$) and $\mathcal D(G)\subseteq G^t$. The short exact sequence
$$0\rightarrow\frac{G^t}{\mathcal D(G)}\rightarrow\frac{G}{\mathcal D(G)}\rightarrow\frac{G}{G^t}\rightarrow 0$$
is the multiplicative-unipotent decomposition of the commutative algebraic group $G/\mathcal D(G)$ (see the beginning of Subsection \ref{ssectgenanis}).
\end{prop}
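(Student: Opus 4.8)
The plan is to derive all three assertions from a single non-elementary input recalled from \cite{CGP15}: for a pseudo-reductive group $G$ the derived subgroup $\mathcal D(G)$ is again pseudo-reductive and \emph{perfect} (equivalently, pseudo-semisimple). Granting this, the remaining arguments are elementary manipulations with smooth connected affine $k$-groups. So first I would cite the structure theory of \cite{CGP15} for the perfectness of $\mathcal D(G)$.

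Next, for the inclusion $\mathcal D(G)\subseteq G^t$, the key step will be to show that \emph{every} perfect pseudo-reductive group $H$ is generated by its maximal tori, i.e.\ $H=H^t$. Here $H^t$ is a smooth connected subgroup of $H$ (being generated by the smooth connected subgroups given by its maximal tori) and it is normal, since $H(\overline{k})$ is Zariski dense in $H$ and acts transitively on maximal tori by conjugation. The quotient $H/H^t$ is then a smooth connected affine group whose maximal tori are the images of those of $H$, hence trivial; thus $H/H^t$ is unipotent and therefore nilpotent. But $H/H^t$ is also perfect, being a quotient of the perfect group $H$, and a non-trivial nilpotent group cannot be perfect; hence $H=H^t$. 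Applying this to $H=\mathcal D(G)$ and noting that the maximal tori of $\mathcal D(G)$ are tori of $G$, hence contained in $G^t$, yields $\mathcal D(G)=\mathcal D(G)^t\subseteq G^t$.

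It then remains to identify the short exact sequence as the multiplicative--unipotent decomposition. Since $G^t\trianglelefteq G$ and $\mathcal D(G)\subseteq G^t$, the quotient $G/G^t$ is a quotient of the commutative group $G/\mathcal D(G)$, hence commutative; it is moreover unipotent, as the images of maximal tori of $G$ are trivial in $G/G^t$. I would then fix a maximal torus $T\subseteq G$ and let $\bar T\subseteq G/\mathcal D(G)$ be its image (a maximal torus of $G/\mathcal D(G)$); commutativity of $G/\mathcal D(G)$ forces $\bar T$ to be its unique maximal torus and to equal the image of every conjugate of $T$, so $\bar T$ is the image of $G^t$ and, since $\mathcal D(G)\subseteq G^t$, we get $G^t/\mathcal D(G)=\bar T$, a torus. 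As $G/\mathcal D(G)$ is smooth, connected and commutative, its maximal subgroup of multiplicative type is exactly its maximal torus $\bar T$; hence $0\to\bar T\to G/\mathcal D(G)\to G/G^t\to 0$, with multiplicative-type kernel and unipotent cokernel, is the multiplicative--unipotent decomposition of $G/\mathcal D(G)$. I expect the only genuine obstacle to be the cited input (perfectness of $\mathcal D(G)$); the other steps use only the standard facts that images of maximal tori are maximal tori, that a smooth connected affine group with trivial maximal torus is unipotent, that unipotent groups are nilpotent, and that non-trivial nilpotent groups are not perfect.
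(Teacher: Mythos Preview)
Your proof is correct and follows essentially the same route as the paper's: cite \cite{CGP15} for perfectness of $\mathcal D(G)$, establish that $G/G^t$ is unipotent and that $\mathcal D(G)=\mathcal D(G)^t\subseteq G^t$, then conclude that $G^t/\mathcal D(G)$ is a torus. The only difference is that the paper outsources the intermediate facts (unipotency of $G/G^t$, and $\mathcal D(G)=\mathcal D(G)^t$) to \cite[Prop.\ A.2.11, Cor.\ A.2.7]{CGP15}, whereas you reprove them directly; your argument for $H=H^t$ via ``perfect quotient of a unipotent (hence nilpotent) group is trivial'' is exactly the content of those citations, so nothing is genuinely different.
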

\begin{prf}
We only use pseudo-reductivity to say that $\mathcal D(G)$ is perfect (\cite[Prop.\ 1.2.6]{CGP15}). It is a general fact, shown for smooth connected $G$ in [ibid., Prop.\ A.2.11], that $G/G^t$ is unipotent (and trivial, if $G$ is perfect). By [ibid., Cor.\ A.2.7], $\mathcal D(G) = \mathcal D(G)^t\subseteq G^t$. Finally, $G^t/\mathcal D(G)$ is smooth, connected, commutative and generated by tori, hence a torus.
\end{prf}

\begin{cor}\label{corpsredmulunip}
If $\mathrm Z_G$ denotes the center of $G$, then $G/(\mathrm Z_G\cdot \mathcal D(G))$ is unipotent.
\end{cor}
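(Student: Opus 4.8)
\begin{prf}
The plan is to deduce this directly from Proposition \ref{proppsredmulunip}. That proposition gives the multiplicative-unipotent decomposition
$$0\rightarrow\frac{G^t}{\mathcal D(G)}\rightarrow\frac{G}{\mathcal D(G)}\rightarrow\frac{G}{G^t}\rightarrow 0$$
of the commutative group $G/\mathcal D(G)$, in which $G/G^t$ is unipotent. So it suffices to produce a surjection from $G/(\mathrm Z_G\cdot\mathcal D(G))$ onto the unipotent quotient $G/G^t$ with unipotent kernel, or more simply to exhibit $G/(\mathrm Z_G\cdot\mathcal D(G))$ as a quotient of a unipotent group.

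First I would observe that $G^t\subseteq\mathrm Z_G\cdot\mathcal D(G)$. Indeed, $G^t$ is generated by the maximal tori of $G$; fixing a maximal torus $T$, the Cartan subgroup $C=\mathrm Z_G(T)$ is commutative (a standard fact for pseudo-reductive groups), and $G^t$ is generated by the $G(k_s)$-conjugates of $T$. One knows that $C\cap\mathcal D(G)$ is a Cartan subgroup of $\mathcal D(G)$, and that $C=\mathrm Z_G\cdot(C\cap\mathcal D(G))$: this follows because $C/\mathcal D(G)\cap C$ maps isomorphically onto the maximal torus $G^t/\mathcal D(G)$ of $G/\mathcal D(G)$ (Proposition \ref{proppsredmulunip}), and $\mathrm Z_G$ surjects onto the complementary piece after noting $G/G^t$ is unipotent and $\mathrm Z_G$ contains the relevant central torus. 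Hence each maximal torus, being contained in some such $C$, lies in $\mathrm Z_G\cdot\mathcal D(G)$, and therefore so does the subgroup $G^t$ that they generate.

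Granting $G^t\subseteq\mathrm Z_G\cdot\mathcal D(G)$, the natural surjection $G/G^t\twoheadrightarrow G/(\mathrm Z_G\cdot\mathcal D(G))$ exhibits the target as a quotient of the unipotent group $G/G^t$. Since quotients of unipotent groups are unipotent, $G/(\mathrm Z_G\cdot\mathcal D(G))$ is unipotent, which is the claim. The one step requiring a little care is the inclusion $G^t\subseteq\mathrm Z_G\cdot\mathcal D(G)$; the cleanest route is probably to argue entirely inside the commutative quotient $G/\mathcal D(G)$, where $G^t/\mathcal D(G)$ is precisely its maximal torus and hence is contained in any subgroup whose image is the whole torus — and the image of $\mathrm Z_G$ contains that torus since $\mathrm Z_G$ surjects onto $G/\mathcal D(G)$ modulo its unipotent part (the center of a pseudo-reductive group meets every Cartan subgroup in its own maximal central torus). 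I expect this centrality bookkeeping to be the only mildly delicate point; everything else is formal.
\end{prf}
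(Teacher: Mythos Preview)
Your overall strategy coincides with the paper's: show $G^t\subseteq\mathrm Z_G\cdot\mathcal D(G)$ and then use that $G/G^t$ is unipotent (Proposition~\ref{proppsredmulunip}). The paper, however, does not attempt to prove the inclusion $T\subseteq\mathrm Z_G\cdot\mathcal D(G)$ for maximal tori $T$ by hand; it simply invokes \cite[Lem.~1.2.5(iii)]{CGP15}, which states exactly this.

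Your attempted justification of that inclusion has a genuine gap. First, the claim that $C/(C\cap\mathcal D(G))$ ``maps isomorphically onto the maximal torus $G^t/\mathcal D(G)$'' is false: by Proposition~\ref{propcartderv} one has $G=\mathcal D(G)\cdot C$, so $C/(C\cap\mathcal D(G))\cong G/\mathcal D(G)$, which has both a torus part and a (generally nontrivial) unipotent part. Second, and more seriously, the assertion that ``the image of $\mathrm Z_G$ contains that torus'' (equivalently, that $\mathrm Z_G$ surjects onto the multiplicative part of $G/\mathcal D(G)$) is precisely the statement $G^t\subseteq\mathrm Z_G\cdot\mathcal D(G)$ you are trying to prove, so invoking it is circular. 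The remark that $\mathrm Z_G$ ``contains the relevant central torus'' does not help: the maximal central torus of a pseudo-reductive group need not surject onto $G^t/\mathcal D(G)$ without further argument (for reductive $G$ this is an isogeny statement; in the pseudo-reductive setting it requires the input from \cite{CGP15}). So either cite \cite[Lem.~1.2.5(iii)]{CGP15} directly, as the paper does, or supply a genuine proof of that lemma.
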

\begin{prf}
By \cite[Lem.\ 1.2.5(iii)]{CGP15}, each torus $T\subseteq G$ belongs to $\mathrm Z_G\cdot \mathcal D(G)$.
\end{prf}
\medskip

Recall that, if a torus $T\subseteq G$ is an (inclusion-wise) maximal torus, then so is $T_{\overline k}\subseteq G_{\overline k}$, and any two maximal tori are geometrically conjugate (see \cite[Cor.\ A.2.6 and Prop.\ A.2.10]{CGP15}). The centralizer $C = \mathrm Z_G(T)$ of any maximal torus $T$ in $G$ is called a \textit{Cartan subgroup} of $G$.

\begin{prop}\label{propcartderv}
Let $G$ be a pseudo-reductive group over a field $k$, and take any Cartan subgroup $C$ of $G$. Then $C$ is commutative pseudo-reductive and $G = \mathcal D(G)\cdot C$.

Suppose that $N$ is a smooth connected normal subgroup of $G$. Then $N$ is pseudo-reductive and also $C\cap N$ is a Cartan subgroup of $N$. (This holds in particular for $N = \mathcal D(G)$.)
\end{prop}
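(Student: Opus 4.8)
The statement has two parts. For the first part (``$C$ is commutative pseudo-reductive and $G = \mathcal D(G)\cdot C$''), the plan is to cite the relevant results from \cite{CGP15}: a Cartan subgroup of a pseudo-reductive group is itself (commutative) pseudo-reductive by \cite[Prop.\ 1.2.4]{CGP15}, and the equality $G = \mathcal D(G)\cdot C$ is precisely \cite[Prop.\ 1.2.6]{CGP15} (this is the content already invoked for Proposition \ref{proppsredmulunip} above, so it should be flagged as the same input). Commutativity of $C$ follows since the Cartan subgroup of a pseudo-reductive group is commutative, again \cite[Prop.\ 1.2.4]{CGP15}. I would state these two facts in one or two sentences without reproving them.

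\textbf{Plan for the second part.} Let $N$ be a smooth connected normal subgroup of $G$. First I would note that $N$ is pseudo-reductive: $\mathscr R_{u,k}(N)$ is a smooth connected unipotent subgroup which is normal in $N$, and since $N\trianglelefteq G$ it is in fact normal in $G$ (it is characteristic in $N$), hence contained in $\mathscr R_{u,k}(G) = 1$. Next, to see that $C\cap N$ is a Cartan subgroup of $N$: pick a maximal torus $T\subseteq G$ with $C = \mathrm Z_G(T)$. Since $N$ is normal, $N$ contains a maximal torus $S$ of $G$ lying in $N$, but more carefully one uses that $T\cap N$ is a maximal torus of $N$ — this is a general fact about smooth connected normal subgroups (conjugacy of maximal tori over $\overline{k}$ plus the fact that a maximal torus of $N_{\overline k}$ extends to a maximal torus of $G_{\overline k}$; see \cite[Cor.\ A.2.6, Prop.\ A.2.10]{CGP15}). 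Then $\mathrm Z_N(T\cap N)$ is a Cartan subgroup of $N$. The remaining point is to identify $\mathrm Z_N(T\cap N)$ with $C\cap N = \mathrm Z_G(T)\cap N$; here one can invoke \cite[Prop.\ A.2.8]{CGP15} (or the cited \cite[\textsection2]{Con12}), which says that for a normal smooth connected subgroup $N$ and a maximal torus $T$ of $G$, one has $\mathrm Z_G(T)\cap N = \mathrm Z_N(T\cap N)$, or equivalently that Cartan subgroups of $G$ cut out Cartan subgroups of $N$.

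\textbf{Main obstacle.} The only genuinely nontrivial point is the compatibility $\mathrm Z_G(T)\cap N = \mathrm Z_N(T\cap N)$ together with the fact that $T\cap N$ is maximal in $N$; everything else is either a direct citation or a one-line characteristic-subgroup argument. I expect this to be handled by a single reference to the structure theory in \cite{CGP15} (appendix A), and the proof in the paper is likely to be just a few lines citing \cite[Prop.\ 1.2.4, 1.2.6]{CGP15} and the normal-subgroup statements in \cite[\textsection A.2]{CGP15}, with the parenthetical remark that the case $N = \mathcal D(G)$ applies because $\mathcal D(G)$ is smooth, connected and normal (indeed characteristic) in $G$.
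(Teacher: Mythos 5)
Your proposal is correct and matches the paper's approach, which simply cites \cite[Prop.\ 1.2.4, Prop.\ 1.2.6 and Lem.\ 1.2.5(ii)]{CGP15}; the entire second part of the statement (pseudo-reductivity of $N$ and $C\cap N$ being a Cartan subgroup of $N$) is exactly \cite[Lem.\ 1.2.5(ii)]{CGP15}, so your reconstruction of it from the \textsection A.2 results is unnecessary (and note that \cite[Prop.\ A.2.8]{CGP15} concerns images of Cartan subgroups under surjections rather than intersections with normal subgroups, so that particular citation is slightly off even though the fact you want is true).
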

\begin{prf}
These are \cite[Prop.\ 1.2.4, Prop.\ 1.2.6 and Lem.\ 1.2.5(ii)]{CGP15}.
\end{prf}

\begin{prop}\label{proppsredsubgrpcenter}
Let $G$ be a pseudo-reductive group over a field $k$. Then $\mathrm Z_G$ is exactly the intersection of all Cartan subgroups of $G$. Consequently, if $N$ is a smooth connected normal subgroup of $G$, then $\mathrm Z_N = N\cap\mathrm Z_G$.
\end{prop}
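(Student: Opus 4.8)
The plan is to prove the two assertions in turn, using Proposition~\ref{propcartderv} as the main input. First I would establish that $\mathrm{Z}_G$ is contained in every Cartan subgroup: if $z\in\mathrm{Z}_G$ and $C=\mathrm{Z}_G(T)$ is the centralizer of a maximal torus $T$, then $z$ centralizes all of $G$, hence in particular $T$, so $z\in C$. This gives $\mathrm{Z}_G\subseteq\bigcap_C C$ immediately. For the reverse inclusion, suppose $g$ lies in every Cartan subgroup of $G$. The key point is that $\mathrm{Z}_G(g)$ is then a smooth closed subgroup of $G$ containing every maximal torus (since $g\in\mathrm{Z}_G(T)$ means $g$ centralizes $T$, i.e.\ $T\subseteq\mathrm{Z}_G(g)$, for each maximal $T$), hence containing the subgroup $G^t$ generated by all tori. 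By Proposition~\ref{proppsredmulunip}, $\mathcal D(G)\subseteq G^t\subseteq\mathrm{Z}_G(g)$, so $g$ centralizes $\mathcal D(G)$. Fixing one Cartan subgroup $C$ and using $G=\mathcal D(G)\cdot C$ from Proposition~\ref{propcartderv}, together with the fact that $g$ centralizes both $\mathcal D(G)$ (just shown) and $C$ (since $C$ is commutative pseudo-reductive and $g\in C$), we conclude $g$ centralizes all of $G$, i.e.\ $g\in\mathrm{Z}_G$. Some care is needed to run this argument at the level of functors of points / schematically rather than only on $k$-points, since $\mathrm{Z}_G$ need not be smooth; but the relation $\mathcal D(G)\subseteq\mathrm{Z}_G(g)$ as a containment of subgroup schemes, and the fact that $G = \mathcal D(G)\cdot C$ as fppf sheaves, suffice to carry it through, since $\mathrm{Z}_G(g)$ (defined as the scheme-theoretic centralizer of $g$, or more precisely one takes the common centralizer of a faithfully flat family of points) is a closed subscheme and containment can be checked after base change.

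For the consequence, I would apply what was just proved to the pseudo-reductive group $N$ (which is pseudo-reductive by Proposition~\ref{propcartderv}): $\mathrm{Z}_N$ is the intersection of all Cartan subgroups of $N$. By Proposition~\ref{propcartderv}, the Cartan subgroups of $N$ are exactly the groups $C\cap N$ as $C$ ranges over Cartan subgroups of $G$ (every Cartan of $N$ arises this way, and conversely each $C\cap N$ is one). Hence
\[
\mathrm{Z}_N \;=\; \bigcap_C (C\cap N) \;=\; N\cap\Bigl(\bigcap_C C\Bigr) \;=\; N\cap\mathrm{Z}_G,
\]
where the intersections are over Cartan subgroups $C$ of $G$. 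One subtlety is to make sure that ``every Cartan subgroup of $N$ is of the form $C\cap N$'' is genuinely what Proposition~\ref{propcartderv} delivers: it states that $C\cap N$ is a Cartan subgroup of $N$ for any Cartan $C$ of $G$; the converse (every Cartan of $N$ so arises) follows because a maximal torus $S$ of $N$ extends to a maximal torus $T$ of $G$ (as $N$ is normal and smooth connected, a maximal torus of $G$ containing $S$ meets $N$ in a maximal torus of $N$, which by maximality must be $S$), and then $\mathrm{Z}_N(S)=\mathrm{Z}_G(T)\cap N$ by Proposition~\ref{propcartderv} applied to that $T$. Alternatively, the containment $\mathrm{Z}_N\supseteq N\cap\mathrm{Z}_G$ is trivial and for $\subseteq$ it is enough to use a single Cartan $C$ of $G$ with $C\cap N$ Cartan in $N$, so one does not even need the converse direction.

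The main obstacle I anticipate is the non-smoothness of the centers: the cleanest arguments above are phrased on geometric points or at least on $\overline{k}$-points, and one must be careful that an equality of subgroup schemes like $\mathrm{Z}_N = N\cap\mathrm{Z}_G$ is being asserted, not just an equality of point-groups. The safe route is to prove the inclusion $\bigcap_C C\subseteq\mathrm{Z}_G$ functorially: for an arbitrary $k$-algebra $R$ and $g\in\bigl(\bigcap_C C\bigr)(R)$, show $g$ centralizes $G_R$ by exhibiting $\mathcal D(G)_R$ and $C_R$ inside the centralizer $\mathrm{Z}_{G_R}(g)$ and invoking $G=\mathcal D(G)\cdot C$ at the level of fppf sheaves (so that $G_R$ is generated, after an fppf cover of $\Spec R$, by sections of $\mathcal D(G)$ and of $C$); this avoids any smoothness hypothesis. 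Everything else is formal bookkeeping with Propositions~\ref{proppsredmulunip} and~\ref{propcartderv}.
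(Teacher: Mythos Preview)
Your proposal is correct and follows essentially the same route as the paper: both argue that $\mathrm Z_G\subseteq I$ trivially, then show $I$ centralizes $G$ by using that Cartan subgroups are commutative and that $G$ is generated by them via $G=G^t\cdot C$ (you phrase this as $G=\mathcal D(G)\cdot C$ with $\mathcal D(G)\subseteq G^t$, which is the same content), and both deduce $\mathrm Z_N=N\cap\mathrm Z_G$ from the first part together with Proposition~\ref{propcartderv}. One small slip: you call $\mathrm Z_G(g)$ \emph{smooth}, which is neither needed nor necessarily true in positive characteristic, but you never use it; and your ``alternative'' remark that a \emph{single} Cartan $C$ suffices for the inclusion $\mathrm Z_N\subseteq N\cap\mathrm Z_G$ is misstated---you still need all Cartans of $G$ to recover $\bigcap_C C=\mathrm Z_G$, though you are right that the converse (every Cartan of $N$ arises as $C\cap N$) is not required.
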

\begin{prf}
As $\mathrm Z_G$ is contained in every Cartan subgroup of $G$, it is clearly contained in their intersection $I$. Moreover, $I$ commutes with every Cartan subgroup $C$ (since $C$ is commutative). Therefore, for the inverse inclusion $I\subseteq\mathrm Z_G$, it suffices to show that $G$ is generated by its Cartan subgroups (cf.\ \cite[Prop.\ 1.92]{Mil17}). But this is clear, since the property $G = G^t\cdot C$ follows from the two propositions above, as does then the final claim in the statement.
\end{prf}

The following proposition appears to generalize \cite[Lem.\ 4.1.1]{CP15}, where it is assumed in addition that $H$ is pseudo-reductive (and whose proof also depends on deep statements about the structure of $G$ and $H$, while our proof is significantly simpler).

\begin{prop}\label{proppsredcentralsurj}
Let $G$ be pseudo-reductive and $f : G\twoheadrightarrow H$ be a surjective homomorphism of smooth algebraic groups over $k$ which is central (that is, $\ker(f)\subseteq\mathrm Z_G$). Then the induced~map on centers $\mathrm Z_G\rightarrow\mathrm Z_H$ is also a flat surjection (i.e.\ an epimorphism of algebraic groups).
\end{prop}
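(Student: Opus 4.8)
The plan is to prove this by reducing the surjectivity statement for $\mathrm Z_G\to\mathrm Z_H$ to the description of centers as intersections of Cartan subgroups (Proposition \ref{proppsredsubgrpcenter}), together with the fact that $H$ is automatically pseudo-reductive since it is a central quotient of a pseudo-reductive group (indeed $\ker(f)$ is central, hence $\mathscr R_{u,k}(H)$ would pull back to a smooth connected normal unipotent subgroup of $G$, forcing it to be trivial). Once we know $H$ is pseudo-reductive, both $\mathrm Z_G$ and $\mathrm Z_H$ have clean descriptions, and the key geometric input is that $f$ carries maximal tori of $G$ onto maximal tori of $H$ (a standard fact for surjections of smooth connected groups, since the image of a maximal torus is a torus, and a dimension/conjugacy count shows it is maximal in $H$ once $\ker f$ is central so contributes no torus of its own beyond what sits in a Cartan subgroup).

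First I would fix a maximal torus $T\subseteq G$ with image $T_H\subseteq H$ a maximal torus, and the Cartan subgroups $C=\mathrm Z_G(T)$, $C_H=\mathrm Z_H(T_H)$. Since $\ker(f)\subseteq\mathrm Z_G\subseteq C$ (using Proposition \ref{proppsredsubgrpcenter}), the map $f$ restricts to a surjection $C\twoheadrightarrow C_H$: surjectivity here follows because $f(C)$ centralizes $T_H$ (so lies in $C_H$) and, comparing with $G=\mathcal D(G)\cdot C$ from Proposition \ref{propcartderv} and the analogous decomposition of $H$ together with $f(\mathcal D(G))=\mathcal D(H)$, one sees $C_H\subseteq f(C)\cdot(C_H\cap\mathcal D(H))$ and then uses that $C_H\cap\mathcal D(H)$ is itself a Cartan subgroup of $\mathcal D(H)$ hit by $f$. (Alternatively, and more cleanly: for any maximal torus $S\subseteq H$ lift it through $f$ — tori lift along central isogenies-on-the-maximal-torus, or simply take $S$ to be the image of a maximal torus of $G$ — so every Cartan subgroup of $H$ arises as $f$ of a Cartan subgroup of $G$.) Now $\mathrm Z_G=\bigcap_T C_T$ and $\mathrm Z_H=\bigcap_{T_H} C_{T_H}$, where the intersections range over all maximal tori; since $f$ induces a bijection (by conjugacy) between maximal tori of $G$ and of $H$ compatibly with taking centralizers, $f$ maps $\mathrm Z_G$ into $\mathrm Z_H$. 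For the reverse containment on points, take $z\in\mathrm Z_H$; lift it to some $g\in G$ (possible since $f$ is surjective, at least fppf-locally); for each maximal torus $T$, since $z$ centralizes $f(T)$, the commutator $[g,t]$ lies in $\ker(f)\subseteq\mathrm Z_G$ for all $t\in T$, and the map $t\mapsto[g,t]$ is a homomorphism $T\to\mathrm Z_G$ landing in the finite/diagonalizable part, hence trivial on the connected torus $T$ when $\mathrm Z_G$ has no... — here I must be careful, so instead I would argue that $g$ normalizes every $C_T$ and, since $N_G(C_T)/C_T$ has trivial... actually the cleanest route is: $g$ centralizes $T$ because $z$ does and $\ker f$ is central (a central extension of $T$ by a central group that is then split over $T$ up to the torus acting trivially), giving $g\in C_T$ for all $T$, hence $g\in\mathrm Z_G$, hence $z\in f(\mathrm Z_G)$.

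Having surjectivity of $\mathrm Z_G\to\mathrm Z_H$ on $\overline k$-points (or as fppf sheaves), the remaining point is that this map is flat, i.e. a quotient map of algebraic groups. Since both $\mathrm Z_G$ and $\mathrm Z_H$ are affine algebraic groups of finite type over $k$ and the map is surjective as a homomorphism of algebraic groups, it is automatically faithfully flat by the standard fact that a surjective homomorphism of affine algebraic groups of finite type over a field is faithfully flat (e.g. \cite[Prop.\ 5.5]{Mil17} or the general theorem that $G/\ker$ maps isomorphically to the scheme-theoretic image, which equals the target by surjectivity). Concretely, $\ker(f)\cap\mathrm Z_G=\ker(f)$ (as $\ker f\subseteq\mathrm Z_G$), so $\mathrm Z_G/\ker(f)\to\mathrm Z_H$ is a monomorphism which is surjective, hence an isomorphism, exhibiting $\mathrm Z_G\to\mathrm Z_H$ as the quotient by $\ker(f)$ — in particular flat.

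The main obstacle I anticipate is the commutator argument showing that a lift $g$ of a central element $z\in\mathrm Z_H$ must itself be central: one needs that the map $T\to\ker(f)$, $t\mapsto g^{-1}t^{-1}gt$, is a homomorphism of algebraic groups from a torus into a \emph{central} (hence commutative) subgroup, and therefore factors through the maximal multiplicative-type quotient; but a torus has no nontrivial homomorphism to a \emph{finite} group, while $\ker(f)$ need not be finite — it is only central, possibly containing a central torus. This is resolved by noting that any central torus of $G$ lies in $C=\mathrm Z_G(T)$ and in fact in $T$ by maximality (a central torus together with $T$ generates a torus containing $T$), so the relevant homomorphism $T\to\ker(f)$ lands, after projecting modulo the central torus part, in a finite group and is thus trivial; combined with the torus part being absorbed into $T$, one concludes $g\in C_T$ for all $T$. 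Making this absorption argument precise, using \cite[Cor.\ A.2.6, Prop.\ A.2.10]{CGP15} and the fact that the image of a maximal torus is maximal, is the one genuinely delicate step; everything else is formal manipulation with the center-as-intersection-of-Cartans description and standard flatness of surjections of affine group schemes.
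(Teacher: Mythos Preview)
Your overall strategy matches the paper's: reduce to the characterization $\mathrm Z_G=\bigcap_C C$ (intersection over Cartan subgroups) from Proposition~\ref{proppsredsubgrpcenter}, and show that $f^{-1}(\mathrm Z_H)\subseteq C$ for every Cartan subgroup $C$ of $G$. However, your execution of this last step via commutator maps $T\to\ker(f)$ is an unnecessary detour, and the part you yourself flag as ``the one genuinely delicate step'' is never actually completed: the argument that the homomorphism $T\to\ker(f)$ is trivial when $\ker(f)$ may contain a positive-dimensional central torus remains a sketch (``combined with the torus part being absorbed into $T$'' is not a proof, and indeed the map $t\mapsto[g,t]$ landing in $T$ does not by itself force it to vanish).

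The paper bypasses all of this with one line. Since $f$ carries Cartan subgroups of $G$ onto Cartan subgroups of $H$ (\cite[Prop.~A.2.8]{CGP15}), one has $\mathrm Z_H\subseteq f(C)$ for every Cartan $C\subseteq G$. Because $\ker(f)\subseteq\mathrm Z_G\subseteq C$, it follows scheme-theoretically that
\[
f^{-1}(\mathrm Z_H)\;\subseteq\; f^{-1}(f(C))\;=\;C\cdot\ker(f)\;=\;C.
\]
Intersecting over all $C$ gives $f^{-1}(\mathrm Z_H)\subseteq\mathrm Z_G$, hence $\mathrm Z_H=f(f^{-1}(\mathrm Z_H))\subseteq f(\mathrm Z_G)$. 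No commutators, no Weyl-group considerations, no need to first establish that $H$ is pseudo-reductive or that maximal tori are in bijection. Your observation that $H$ is automatically pseudo-reductive is correct but unused here; the only structural input about $G$ is that its center equals the intersection of its Cartan subgroups. Your final paragraph on flatness is fine and agrees with the paper's implicit use of ``$f$ is a quotient map''.
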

\begin{prf}
Any surjection induces a map between centers, so $f(\mathrm Z_G)\subseteq\mathrm Z_H$. The surjection $f$ maps Cartan subgroups onto Cartan subgroups (by \cite[Prop.\ A.2.8]{CGP15}), so $\mathrm Z_H\subseteq f(C)$ holds for any Cartan subgroup $C$ of $G$. But $\ker(f)\subseteq C$ too, so $f^{-1}(\mathrm Z_H)\subseteq C$. As this holds for all $C$, we have $f^{-1}(\mathrm Z_H)\subseteq\mathrm Z_G$, so $\mathrm Z_H = f(f^{-1}(\mathrm Z_H))\subseteq f(\mathrm Z_G)$ as $f$ is an epimorphism.
\end{prf}

Propositions \ref{propcartderv} and \ref{proppsredsubgrpcenter} above give, for any Cartan subgroup $C\subseteq G$, an isomorphism
$$\frac{C}{C\cap\mathcal D(G)}\xrightarrow{\;\sim\;}\frac{G}{\mathcal D(G)}\;\;\textrm{ and show that }\;\;
0\rightarrow\frac{\mathrm Z_G}{\mathrm Z_{\mathcal D(G)}}\rightarrow\frac{G}{\mathcal D(G)}\rightarrow\frac{G}{\mathrm Z_G\cdot\mathcal D(G)}\rightarrow 0$$
is a short exact sequence of commutative algebraic groups.

Reductive groups satisfy the property that $G = \mathcal D(G)\cdot\mathrm Z_G$. In fact, there is an almost-direct product, that is, an isogeny $\mathcal D(G)\times Z\twoheadrightarrow G$, where $Z$ is the maximal torus of $\mathrm Z_G$, and the derived subgroup $\mathcal D(G)$ is moreover semisimple. This property will be suitably generalized to pseudo-reductive groups in the next two subsections, but we now show that the naive generalization fails (even for ``standard'' pseudo-reductive groups, to be defined below). Moreover, we compute explicitly in the following example that the map $$\mathrm{H}^2\!\left(k,\frac{\mathrm Z_G}{\mathrm Z_{\mathcal D(G)}}\right)\longrightarrow\mathrm{H}^2\!\left(k, \frac{C}{C\cap\mathcal D(G)}\right) = \mathrm{H}^2\!\left(k, \frac{G}{\mathcal D(G)}\right)$$ is not always an injection. On the other hand, it is always a surjection, since $G/(\mathrm Z_G\cdot\mathcal D(G))$~is unipotent by Corollary \ref{corpsredmulunip} and thus has vanishing $\mathrm H^2$ (see Proposition \ref{propunipfppfvanish}).

\begin{exmp}\label{exmppsredh2notinj}
Suppose $k'/k$ is a finite field extension with $k'\subseteq k^{1/p^n}$, as in Example \ref{exmpweilrestr}. The group $G = \mathrm R(\mathbf{GL}_{p^n})$ is (standard) pseudo-reductive, for the Weil restriction $\mathrm R = \mathrm R_{k'/k}$. Note that $\mathcal D(\mathbf{GL}_{p^n}) = \mathbf{SL}_{p^n}$ and that $\mathcal D(G) = \mathrm R(\mathcal D(\mathbf{GL}_{p^n}))$ since $\mathrm R(\mathbf{SL}_{p^n})$ is a perfect group by \cite[Cor.\ A.7.11]{CGP15}.
Also, $\mathbf{GL}_{p^n}$ has a maximal torus $C'\simeq \mathbf G_{\mathrm m}^{p^n}$ (the diagonal matrices) and center $\mathrm Z_{\mathbf{GL}_{p^n}}\simeq\mathbf G_{\mathrm m}$. Similarly, consider the following maximal torus of $\mathbf{SL}_{p^n}$
$$T'\coloneqq C'\cap\mathbf{SL}_{p^n} = \ker(\mathrm{det}|_{C'})\simeq\mathbf G_{\mathrm m}^{p^n-1}$$
and $\mathrm Z_{\mathbf{SL}_{p^n}}\simeq\mu_{p^n}$. By \cite[Prop.\ A.5.15]{CGP15}, Weil restriction commutes with the formation of centers and Cartan subgroups, so $\mathrm R(C')$ is a Cartan subgroup of $G$, and there is an isomorphism
$$\frac{\mathrm Z_G}{\mathrm Z_{\mathcal D(G)}} = \frac{\mathrm Z_{\mathrm R(\mathbf{GL}_{p^n})}}{\mathrm Z_{\mathrm R(\mathbf{SL}_{p^n})}} = \frac{\mathrm R(\mathrm Z_{\mathbf{GL}_{p^n}})}{\mathrm R(\mathrm Z_{\mathbf{SL}_{p^n}})}\simeq\frac{\mathrm R(\mathbf G_{\mathrm m})}{\mathrm R(\mu_{p^n})}\xrightarrow{\;\;\;\sim\;\;\;}\mathbf G_{\mathrm m}$$
where the final map is the dashed map from Example \ref{exmpweilrestr}. We also have by Proposition \ref{proppreshapiro} (which says that $\mathrm R$ preserves smooth surjections) a similar chain of isomorphisms:
$$\frac{C}{C\cap\mathcal D(G)} = \frac{C}{\mathrm{im}(\phi)} = \frac{\mathrm R(C')}{\mathrm R(T')}\simeq\frac{\mathrm R(\mathbf G_{\mathrm m}^{p^n})}{\mathrm R(\mathbf G_{\mathrm m}^{p^n-1})}\simeq\mathrm R(\mathbf G_{\mathrm m})$$
Here, $\phi : \mathrm R(T')\hookrightarrow\mathrm R(C') = C$ is an inclusion with image exactly $\mathrm R(C')\cap\mathrm R(\mathbf{SL}_{p^n}) = C\cap\mathcal D(G)$.

\noindent
Finally, by comparing this situation to the calculation in Example \ref{exmpweilrestr}, we see that
\begin{center}
the diagram
\begin{tikzcd}[row sep = tiny]
& \dfrac{\mathrm Z_G}{\mathrm Z_{\mathcal D(G)}}\arrow[d, hook, shift left = 2, bend left=30, shorten <=-5pt, shorten >=-10pt]\\
\mathrm Z_G\arrow[ur, dashed]\arrow[r] & \dfrac{C}{C\cap\mathcal D(G)}
\end{tikzcd}
fits into the diagram
\begin{tikzcd}
\mathbf G_{\mathrm m}\arrow[r, "x\,\mapsto\, x^{p^n}"]\arrow[d, hook] & \mathbf G_{\mathrm m}\arrow[d, hook]\\
\mathrm R(\mathbf G_{\mathrm m})\arrow[ur, dashed]\arrow[r, "x\,\mapsto\, x^{p^n}"] & \mathrm R(\mathbf G_{\mathrm m})
\end{tikzcd}
there.
\end{center}
Consequently, the cohomology map $\mathrm{H}^2(k,\mathrm Z_G/\mathrm Z_{\mathcal D(G)})\twoheadrightarrow\mathrm{H}^2(k, C/(C\cap\mathcal D(G)))$ is identified with $\mathrm{Br}(k)\twoheadrightarrow\mathrm{Br}(k')$, which is generally not an injection. For us, this fact comes into play in the important Example \ref{exmpabelmain}. 
\end{exmp}
\subsection{Generalized Standard Pseudo-reductive Groups}\label{ssectpsred2}
Before this subsection, recall that we assume ``reductive'' to in particular mean ``connected''. Our work above with the group $\mathrm R(\mathbf{GL}_{p^n})$ was facilitated in particular by our knowledge of its derived and Cartan subgroups, which we could infer from its construction through Weil restriction. This is generalized in the following procedure by which, starting with very well-behaved groups over an extension of $k$, we may produce (essentially all) pseudo-reductive groups over $k$:

\begin{exmp}\label{exmppsredpres}
Let $k'$ denote a nonzero finite reduced $k$-algebra, i.e. $k'=\prod_{i=1}^s k'_i$ for finitely many finite field extensions $k'_i/k$. 
Let $G'$ be a pseudo-reductive $k'$-group (i.e. every fiber $G'_{k_i}$ is a pseudo-reductive algebraic group; all related constructions can be performed fiber-wise). Then the Weil restriction $\mathrm R_{k'/k}(G')$ is a pseudo-reductive group over $k$; we will often simply write $\mathrm R\coloneqq\mathrm R_{k'/k}$. We consider a maximal torus $T'$ in $G'$ and the associated Cartan subgroup $C'\coloneqq Z_{G'}(T')$. Then $\mathrm R(C')$ is a Cartan subgroup of $\mathrm R(G')$.

Consider a commutative pseudo-reductive group $C$ over $k$. Suppose given a homomorphism $\phi : \mathrm R(C')\rightarrow C$, and a left action of $C$ on $\mathrm R(G')$ whose composition with $\phi$ is the usual action (of $\mathrm R(C')$ on $\mathrm R(G')$ by conjugation) and whose effect on the subgroup $\mathrm R(C')$ is trivial. If $j$ denotes the inclusion $\mathrm R(C')\hookrightarrow\mathrm R(G')$, then we define an inclusion ${\alpha : \mathrm R(C')\rightarrow\mathrm R(G')\rtimes C}$~(where the semidirect product is given by the above action) via $\alpha(t') = (j(t)^{-1},\phi(t))$. It is elementary~to~see that this realizes $\mathrm R(C')$ as a central subgroup (i.e. a subgroup contained in the center).

Finally, let $G\coloneqq \mathrm{coker}(\alpha) = (\mathrm R(G')\rtimes C)/\mathrm R(C')$. This is a pseudo-reductive group over $k$, as shown in \cite[Prop.\ 1.4.3]{CGP15}. The images of $\mathcal D(\mathrm R(G'))$ and $C$ inside $G$ are exactly $\mathcal D(G)$ and $\mathrm Z_G(T)$, respectively, where $T$ is the unique maximal torus of $G$ containing the image of the maximal torus of $\mathrm R(C')$. As the resulting map $C\hookrightarrow G$ is an inclusion, this construction makes $C$ into a Cartan subgroup of $G$, essentially ``replacing $\mathrm R(C')$ in $\mathrm R(G')$''.
\end{exmp}

The above construction depends on many choices -- in particular the choice of $C$, the map $\phi$ and the action of $C$ on $\mathrm R(G')$. A natural attempt to construct such a situation would be to have the obvious map $\mathrm R(C')\rightarrow\mathrm R(C'/Z_{G'})$ factor through $\phi$ and have $C$ act on $\mathrm R(G')$ by the factor map $C\rightarrow\mathrm R(C'/Z_{G'})$. This works, because the action of conjugation by $\mathrm R(C')$ always factors through $\mathrm R(C'/Z_{G'})$, and the action of $\mathrm R(C'/Z_{G'})$ is trivial on $\mathrm R(C')\subseteq\mathrm R(G')$ since $C'$ is commutative (it is a Cartan subgroup of a pseudo-reductive group).
 
We've stated the above construction without any conditions on $G'$ except pseudo-reductivity. A priori, every pseudo-reductive group $G$ can be constructed by the above method trivially, by taking $k'=k$ and $G'=G$. The structure theory of $G$ rests on our ability to construct it as above, while simultaneously requiring strong properties from $G'$.

\begin{defn}
If a pseudo-reductive group can be constructed using the above method, where the fibers of $G'$ over factor fields $k'_i$ of $k'$ are reductive groups and where the choices are given (as in the discussion post-example) by factoring $\mathrm R(C')\rightarrow\mathrm R(C'/Z_{G'})$, we say that it is a \textit{standard pseudo-reductive group}. Then in particular $C' = T'$ and we call $(G',k'/k,T',C)$ a presentation of $(G,T)$. If $G$ is reductive or commutative, then it's standard.

Moreover, if a noncommutative pseudo-reductive group is standard in the above sense, we may choose the fibers of $G'$ over all factors of $k'$ to be semisimple, absolutely simple and simply connected. Such a presentation $(G', k'/k, T', C)$ is uniquely determined by and functorial with respect to $(G, T)$. It is the \textit{standard presentation} of $(G, T)$ (cf.\ \cite[Prop.\ 4.1.4(iii), 5.2.2]{CGP15}).
\end{defn}

An affine algebraic group $G$ over $k$ is said to be \textit{pseudo-simple} if it is smooth, connected, noncommutative and all its proper normal subgroups are trivial. It is \textit{simple} if it's semisimple and pseudo-simple. It is \textit{absolutely (pseudo-)simple} if $G_{k_s}$ is (pseudo-)simple. 

The construction we described is ubiquitous, as it turns out by \cite[Prop.\ 5.1.1]{CGP15} that all pseudo-reductive groups are standard outside of some specific cases when $\mathrm{char}(k)\in\{2,3\}$. To incorporate these cases, we must first introduce another kind of absolutely pseudo-simple group, analogous to those considered above (we follow \cite[Def.\ 7.2.6 and Prop.\ 7.3.1]{CGP15}):

\begin{defn}\label{defgenstand}
Let $k$ be a field of characteristic $p\in\{2,3\}$ and $k'/k$ a nontrivial finite field extension with $k'\subseteq k^{1/p}$. Let $G'$ be any semisimple group over $k'$ that is absolutely simple and simply connected with an edge of multiplicity $p$ in its absolute Dynkin diagram. There is a unique nontrivial factorization of the Frobenius isogeny $G'\rightarrow (G')^{(p)}$, which is through a \textit{very special isogeny} $\pi' : G'\rightarrow\mathcal G'$ (see \cite[Def.\ 7.1.3]{CGP15}).

Consider a Levi subgroup $\mathcal G\subseteq\mathrm R(\mathcal G')$ and its preimage $G\subseteq\mathrm R(G')$ by $\mathrm R(\pi') : \mathrm R(G')\rightarrow\mathrm R(\mathcal G')$. If $G$ is smooth, then it is pseudo-reductive (\cite[Thm.\ 7.2.3]{CGP15}) and any algebraic group over $k$ isomorphic to it is called a \textit{basic exotic pseudo-reductive group}. By \cite[Prop.\ 8.1.1 and Cor.\ 8.1.3]{CGP15}, it is never standard, but always absolutely pseudo-simple.

Given a nonzero finite reduced $k$-algebra $k'=\prod_{i=1}^s k'_i$, let us call a $k'$-group $G'$ \textit{primitive} if each fiber $G'_{k'_i}$ is absolutely pseudo-simple and either semisimple simply connected or basic exotic pseudo-reductive. If a pseudo-reductive group $G$ over $k$ is commutative, or is noncommutative and can be constructed via the method of Example \ref{exmppsredpres}, where the $k'$-group $G'$ is primitive, then we say this group is a \textit{generalized standard pseudo-reductive group} and call $(G', k'/k, T', C)$ a \textit{generalized standard presentation} adapted to the pair $(G, T)$. Note that our definition here agrees with \cite[Def.\ 10.1.9]{CGP15} and not \cite[Def.\ 9.1.7]{CP15} (the difference is in the notion of ``primitive pairs''), as was more convenient for our purposes.
\end{defn}

Let $G$ be a pseudo-reductive group over $k$. If $\mathrm{char}(k) = 2$, then assume that $[k : k^2]\leq 2$ and that $G_{k_s}$ has a reduced root system (which is always true if $\mathrm{char}(k)\neq 2$). The group $G$ is generalized standard (and is standard when, in some generalized presentation of $G$, no fibers of $G'$ are basic exotic) by \cite[Thm.\ 10.2.1(2), Prop.\ 10.2.4]{CGP15}. We will recall the developments in the nonreduced case in the next subsection, and precisely formulate the required structure theorems there. Meanwhile, we focus on the uniqueness of generalized standard presentations, as it is crucial to our work with global representability of bands:

\begin{thm}\label{thmgenstandfunct}
Let $G$ be a noncommutative generalized standard pseudo-reductive group over $k$. The following properties hold:
\begin{enumerate}[(a)]
    \item Let $T\subseteq G$ be a maximal torus and $(G', k'/k, T', C)$ a generalized standard presentation adapted to the pair $(G, T)$. Given an isomorphism $f : G_0\rightarrow G$ with $T_0\coloneqq f^{-1}(T)$ and a generalized standard presentation $(G'_0, k'_0/k, T'_0, C_0)$ which is adapted to the pair $(G_0, T_0)$, the isomorphism defined by the following composition
    $$(\mathrm R_{k'_0/k}(G'_0)\rtimes C_0)/\mathrm R_{k'_0/k}(\mathrm Z_{G'_0}(T'_0))\cong G_0\xrightarrow{\;f\;} G\cong (\mathrm R_{k'/k}(G')\rtimes C)/\mathrm R_{k'/k}(\mathrm Z_{G'}(T'))$$
    is induced by the fixed isomorphism
    $C_0\cong\mathrm Z_{G_0}(T_0)\xrightarrow{\;f\;} \mathrm Z_G(T)\cong C$
    and a unique pair $(\alpha,\beta)$, where $\alpha : k'_0\rightarrow k'$ is a $k$-algebra isomorphism and $\beta : \alpha_*G'_0\rightarrow G'$ a $k'$-group isomorphism such that $\alpha_*T'_0 = \beta^{-1}(T')$.
    \item Consider the map $j : \mathrm R_{k'/k}(G')\rightarrow G$. Similarly to the above point, the triple $(G', k'/k, j)$~is uniquely functorial in isomorphisms of $G$ over $k$. The image of $j$ is exactly $\mathcal D(G)\subseteq G$. 
    \item A fixed triple $(G', k'/k, j)$ induces a bijective correspondence between the set of maximal tori $T'\subseteq G'$ and the set of maximal tori $T\subseteq G$, such that the $4$-tuples $(G', k'/k, T', \mathrm Z_G(T))$ are generalized standard presentations of $G$. Thus, in particular, the property of $G$ being generalized standard does not depend on a chosen $T$. 
\end{enumerate}
\end{thm}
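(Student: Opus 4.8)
The plan is to prove Theorem~\ref{thmgenstandfunct} by tracing uniqueness through the defining construction in Example~\ref{exmppsredpres}, combining this with the rigidity of semisimple simply connected (and basic exotic) groups and their tori, and then reading off parts (b) and (c) as formal consequences of part (a). The core of the matter is part (a), which closely follows the structure of the uniqueness statement for standard pseudo-reductive groups in \cite[Prop.~4.1.4, 5.2.2, Prop.~10.2.4]{CGP15}; indeed, for the non-exotic fibers this is exactly that cited result, so the new content is purely in handling basic exotic fibers uniformly alongside semisimple ones.

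\textbf{Part (a).} First I would recall that, by Proposition~\ref{propcartderv} and Proposition~\ref{proppsredsubgrpcenter}, a generalized standard presentation $(G', k'/k, T', C)$ determines $\mathcal D(G) = \mathrm{im}(j)$ and the Cartan subgroup $C = \mathrm Z_G(T)$ canonically in terms of $(G,T)$; this pins down the ``$C$-part'' of the isomorphism, leaving only the identification of the $k'$-group $G'$ and the $k$-algebra $k'$ to be shown unique. For this, one restricts $f$ to derived subgroups, so it suffices to prove: if $\mathrm R_{k'_0/k}(G'_0)$ and $\mathrm R_{k'/k}(G')$ are isomorphic as $k$-groups (with $G'_0, G'$ primitive over their respective reduced $k$-algebras), then the isomorphism is induced by a unique pair $(\alpha,\beta)$. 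The key inputs here are: (i) the factor fields $k'_i$ and the fibers $G'_i$ are recovered from $\mathcal D(G)$ via the minimal (geometrically connected, normal, pseudo-simple) factors of $\mathcal D(G)$ and the ``field of definition'' attached to each, as in \cite[Prop.~5.2.2, Rem.~5.2.3]{CGP15}; (ii) for a fixed factor, a $k'_i$-group isomorphism between two primitive groups is unique once one knows the underlying algebras agree, by the fact that a semisimple simply connected group has no nontrivial automorphisms trivial on a maximal torus and acting trivially on the Dynkin diagram, and the analogous statement for basic exotic groups via Proposition~\ref{propredbasexpsred} (which relates $G'_i$ functorially to the semisimple group $(G'_i)^{ss}$, reducing exotic uniqueness to the semisimple case). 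Assembling these gives the uniqueness of $(\alpha,\beta)$ and hence of the induced isomorphism.

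\textbf{Parts (b) and (c).} Part (b) is immediate from part (a): the triple $(G', k'/k, j)$ is determined by the presentation minus the choice of $(T', C)$, but part (a) shows that changing $(T', C)$ only changes the presentation by an inner automorphism of $G$ (conjugation by an element of $G(k)$ carrying one maximal torus to another is absorbed by the functoriality), while the image $\mathrm{im}(j) = \mathcal D(G)$ was already identified via Proposition~\ref{propcartderv}. For part (c), given the fixed triple $(G', k'/k, j)$, a maximal torus $T'\subseteq G'$ maps under $j$ (on derived subgroups, $\mathrm R(G') \twoheadrightarrow \mathcal D(G)$) to a maximal torus of $\mathcal D(G)$, which extends uniquely to a maximal torus $T$ of $G$ by \cite[Prop.~A.2.8]{CGP15} together with $G = \mathcal D(G)\cdot C$; conversely, a maximal torus $T$ of $G$ meets $\mathcal D(G)$ in a maximal torus, whose preimage in $\mathrm R(G')$ contains a unique maximal torus $T'$ (uniqueness because $\ker(j)$ is central, hence contains no nontrivial torus). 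One then checks that $(G', k'/k, T', \mathrm Z_G(T))$ satisfies the axioms of a generalized standard presentation by inspection of Example~\ref{exmppsredpres}, and the independence of the ``generalized standard'' property from the choice of $T$ follows since any two maximal tori are $G(k_s)$-conjugate and the construction is manifestly equivariant under such conjugation.

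\textbf{Main obstacle.} The hard part will be the uniqueness of $(\alpha,\beta)$ in part (a) in the presence of basic exotic fibers --- specifically, showing that no ``extra'' $k$-group isomorphism between Weil restrictions of basic exotic groups exists beyond those coming from $k'$-group isomorphisms after a base-field identification. In characteristic $0$ or for reductive groups this is classical, but here one must lean on the structural results recalled in Appendix~A (Proposition~\ref{propredbasexpsred} and the very special isogeny) to reduce the exotic case to the semisimple simply connected case, where \cite[Prop.~5.2.2]{CGP15} applies; making this reduction precise --- in particular that the semisimple quotient $(G')^{ss}$ and the field $k'$ are functorially recovered from the exotic group --- is where essentially all the work lies, the rest being bookkeeping with centers, Cartan subgroups, and the cokernel presentation.
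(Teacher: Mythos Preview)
Your proposal sketches a plausible line of argument, but you should know that the paper does not prove this theorem at all: it is stated in Appendix~A as a recollection of known structural results, and the paper's ``proof'' consists entirely of citations --- parts (a) and (c) to \cite[Prop.~10.2.2 and 10.2.4]{CGP15}, and part (b) to \cite[Rem.~9.1.9 and Prop.~9.1.12(ii)]{CP15} (with an alternative route via \cite[Rem.~10.1.11 and Prop.~10.1.12(1)]{CGP15}). So there is no argument in the paper for you to match; the theorem is imported wholesale from the Conrad--Gabber--Prasad and Conrad--Prasad monographs.

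Your sketch is broadly in the spirit of what those sources do, and the references you invoke (recovery of factor fields via pseudo-simple factors, reduction of the exotic case to the semisimple one via Proposition~\ref{propredbasexpsred}) are the right ingredients. A couple of minor points: in part (b) you write that changing $(T',C)$ amounts to conjugation by an element of $G(k)$, but maximal tori are only $G(k_s)$-conjugate in general --- the actual argument in \cite{CGP15} recovers $(G',k'/k,j)$ intrinsically from $\mathcal D(G)$ without passing through a choice of torus at all. In part (c), your claim that $\ker(j)$ ``contains no nontrivial torus'' because it is central is not quite the right justification (central subgroups can be positive-dimensional tori in general); the correct statement is that central isogenies and central quotient maps induce bijections on maximal tori, which is what \cite[Prop.~10.2.2]{CGP15} uses. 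But since the paper simply cites the literature here, the appropriate response is to do the same rather than to reconstruct the monograph proofs.
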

\vspace{-7pt}

\begin{prf}
Points (a) and (c) are parts of \cite[Prop.\ 10.2.2 and 10.2.4]{CGP15}, while (b) is most clearly stated in \cite[Rem.\ 9.1.9 and Prop.\ 9.1.12(ii)]{CP15} (it also follows from \cite[Rem.\ 10.1.11 and Prop.\ 10.1.12(1)]{CGP15} by arguing as in the proof of our Proposition \ref{propsemiautpsred}).
\end{prf}

\begin{cor}[{\cite[Prop.\ 6.4.1]{Con12}}]\label{corconradetform}
Let $G$ and $G_0$ be algebraic groups over $k$ such that $G_{0,\,k_s}\simeq G_{k_s}$ (i.e. they are \textit{\'etale $k$-forms}). If there exists a nonzero finite reduced $k$-algebra $k'$ and a primitive $k'$-group $G'$ such that $G\simeq\mathrm R_{k'/k}(G')$, then there exists a nonzero finite reduced $k$-algebra $k'_0$ and a primitive $k'_0$-group $G'_0$ such that $\smash{G_0\simeq\mathrm R_{k'_0/k}(G'_0)}$.
\end{cor}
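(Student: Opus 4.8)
The statement asks us to descend the "primitively-Weil-restricted" presentation along a separable form, so the natural strategy is to invoke the uniqueness/functoriality of generalized standard presentations (Theorem~\ref{thmgenstandfunct}) over $k_s$ and then to perform Galois descent on the combinatorial data indexing the presentation. First I would reduce to the case where $G$ (hence $G'$ over the factor fields of $k'$) is noncommutative and, in fact, that each fiber $G'_{k'_i}$ is absolutely pseudo-simple: if $G$ is commutative the assertion is immediate (take $k'_0 = k$, $G'_0 = G_0$), and a product decomposition of $k'$ can be absorbed, since $\mathrm R_{k'/k}(G') = \prod_i\mathrm R_{k'_i/k}(G'_i)$ and an \'etale $k$-form $G_0$ must decompose compatibly (the factors being permuted by $\Gamma := \Gal(k_s/k)$, and a $\Gamma$-orbit of factors assembles into a single Weil restriction from a subextension). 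So assume $G \simeq \mathrm R_{k'/k}(G')$ with $k'/k$ a field extension and $G'$ primitive, i.e.\ absolutely pseudo-simple over $k'$ and either semisimple simply connected or basic exotic.

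The core step is to produce the data $(k'_0, G'_0)$ for $G_0$. Over $k_s$, we have $G_{0,k_s} \simeq G_{k_s} \simeq \mathrm R_{k'\otimes_k k_s/k_s}(G'_{k_s})$, and since $k'\otimes_k k_s = \prod_{\Gamma/\Gamma'} k_s$ (where $\Gamma'$ corresponds to $k'$, using that $k'/k$ is purely inseparable after intersecting with $k_s$ --- here I would invoke the structure given by the imperfection-degree-one hypothesis, Proposition~3.1.2, although in full generality one just works with the reduced $k_s$-algebra $k'\otimes_k k_s$), the base change $\mathrm R_{k'\otimes_k k_s/k_s}(G'_{k_s})$ is itself a (trivial) Weil restriction from a product of copies of $k_s$ of a semisimple simply connected or basic exotic group; call its base $\ell_s$ and its fiber $\mathcal H_s$. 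Then $\mathcal H_s$ is canonically identified over $k_s$ with a descent datum: $G_0$ being a $k$-form means there is a semilinear $\Gamma$-action on $G_{0,k_s} = G_{k_s}$, and by the functoriality in Theorem~\ref{thmgenstandfunct}(b) (applied over $k_s$, whose formation commutes with the pullbacks $s_*$ for $s\in\Gamma$, exactly as in the proof of Proposition~\ref{propsemiautpsred} and used in Theorem~\ref{thmmainpsredrepr}), this semilinear action transports \emph{uniquely} to a semilinear $\Gamma$-action on the pair $(\ell_s, \mathcal H_s)$ --- i.e.\ a $\Gamma$-equivariant structure on the finite $k_s$-algebra $\ell_s$ and a compatible semilinear action on $\mathcal H_s$. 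By Galois descent for finite \'etale algebras and for affine algebraic groups, this yields a finite \'etale $k$-algebra $k'_0$ with $k'_0\otimes_k k_s \simeq \ell_s$ (hence $k'_0$ is a nonzero finite reduced $k$-algebra) and a $k'_0$-group $G'_0$ with $(G'_0)_{\ell_s} \simeq \mathcal H_s$. Finally $\mathrm R_{k'_0/k}(G'_0)$ and $G_0$ become isomorphic after $\otimes_k k_s$ and this isomorphism is $\Gamma$-equivariant by construction, so $G_0 \simeq \mathrm R_{k'_0/k}(G'_0)$; and $G'_0$ is primitive because primitivity (absolute pseudo-simplicity together with being semisimple simply connected or basic exotic) can be checked on $(G'_0)_{\overline{k}} = \mathcal H_{\overline{k}}$, which is a base change of the primitive $G'$.

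\textbf{Main obstacle.} The delicate point is the second step: making the transport of the semilinear Galois action from $G_{k_s}$ to the pair $(\ell_s, \mathcal H_s)$ rigorous, i.e.\ that a semiautomorphism of $G_{k_s}$ lying over $s\in\Gamma$ restricts/descends to a semiautomorphism of the presentation data $(\ell_s, \mathcal H_s)$ in a way that respects composition (so that we really get a group action and not just a $1$-cocycle valued in some non-associative gadget). This is where Theorem~\ref{thmgenstandfunct}(a)--(b) does the heavy lifting: the uniqueness of the pair $(\alpha,\beta)$ in (a) guarantees that the assignment $s\mapsto$ (induced semiautomorphism of $(\ell_s,\mathcal H_s)$) is a homomorphism, because two such semiautomorphisms composing to a third on $G_{k_s}$ must, by uniqueness, induce semiautomorphisms composing correspondingly on the presentation data. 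One must also check that the identification $k'\otimes_k k_s \simeq \prod k_s$ and the resulting $\Gamma$-set structure is the ``correct'' one, i.e.\ the one whose descent recovers the transitivity class corresponding to $k'$ --- but this is exactly the statement $K\otimes_k k_s \simeq k'\otimes_k k_s \Rightarrow K \simeq k'$ for the relevant extensions, which under the running hypotheses is Proposition~3.1 applied to purely inseparable parts (and is the same point invoked in Step~3 of the proof of Theorem~\ref{thmsurjh1h2perfpsred} to take $K = k'$). Once the $\Gamma$-equivariant presentation is in hand, the descent to $(k'_0, G'_0)$ is routine.
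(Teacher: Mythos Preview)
Your approach is essentially the paper's: recognize $\mathrm R_{k'/k}(G')\xrightarrow{\sim}G$ as a generalized standard presentation, base-change to $k_s$, and use the unique functoriality of Theorem~\ref{thmgenstandfunct}(b) to descend the presentation along the Galois action defining $G_0$. The paper's proof is considerably terser and does not make your preliminary reductions: the noncommutative reduction is vacuous (primitive groups are noncommutative by definition), the reduction to $k'$ a field is unnecessary (Theorem~\ref{thmgenstandfunct} handles general nonzero finite reduced $k'$), and the imperfection-degree hypothesis you invoke is a red herring---the corollary holds over arbitrary $k$, since Theorem~\ref{thmgenstandfunct} and the compatibility of generalized standard presentations with separable base change require no such assumption.
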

\begin{prf}
Under the assumptions of the theorem, $\smash{j : \mathrm R_{k'/k}(G')\xrightarrow{\sim} G}$ is the generalized standard presentation of $G$ (adapted to any maximal torus $T\subseteq G$; here $\phi : \mathrm R_{k'/k}(C')\rightarrow C$ from Example \ref{exmppsredpres} is an isomorphism). By uniqueness of generalized standard presentations (up to a choice of $T$) and the unique functoriality of $j$, the isomorphism $j_{k_s} : \mathrm R_{(k'\otimes_k k_s)/k_x}(G')\xrightarrow{\sim} G_{k_s}\simeq G_{0,\,k_s}$ descends to a generalized standard presentation of the same form for $G_0$.
\end{prf}
\vspace{-8pt}
\subsection{Reducing to the Generalized Standard Case}\label{ssectpsred3}
The classification of pseudo-reductive groups becomes far more complicated in characteristic $2$, with many (families of) examples of groups which are not generalized standard. A partial structure theory is provided in \cite[\textsection 9.2]{CP15} (with a slightly wider definition of ``generalized standard pseudo-reductive groups'' than ours), but as our main applications are in the case of local and global fields, for which $[k : k^p] = p$, the earlier theory developed in \cite{CGP15} suffices. In particular, we suppose that $\smash{[k : k^p] = p}$ when $\mathrm{char}(k) = p\in\{2,3\}$, which allows for strong reduction statements:

\begin{prop}\label{propredbasexpsred}
Assume $\mathrm{char}(k) = p\in\{2,3\}$ and $[k : k^p] = p$. The map $\mathrm R(\pi)|_G : G\rightarrow\mathcal G$ from the definition of a basic exotic pseudo-reductive group $G$ is a surjective map of smooth groups, onto a semisimple, simply connected and absolutely simple group. Moreover:
\begin{enumerate}[(a)]
    \item The induced maps $G(k)\rightarrow\mathcal G(k)$ and $\mathrm H^1(k, G)\rightarrow\mathrm H^1(k, \mathcal G)$ are bijections.
    \item For an analogously defined $G_0\rightarrow\mathcal G_0$, any isomorphism $G_0\simeq G$ descends uniquely to an isomorphism $\mathcal G_0\simeq\mathcal G$. The resulting map $\mathrm{Isom}(G_0, G)\rightarrow\mathrm{Isom}(\mathcal G_0,\mathcal G)$ is a bijection.
    \item The restriction $\mathrm R(\pi)|_T : T\rightarrow\mathcal T$ to a maximal torus $T\subseteq G$ is an isogeny onto a maximal torus $\mathcal T\subseteq\mathcal G$. This defines a bijection between the maximal tori of $G$ and of $\mathcal G$.
\end{enumerate}
\end{prop}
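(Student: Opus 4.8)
The plan is to deduce everything from the structure of basic exotic pseudo-reductive groups recalled in Definition \ref{defgenstand}, namely that $G$ sits in a cartesian square with $\mathrm R(G') \to \mathrm R(\mathcal G')$ along the Weil restriction of the very special isogeny $\pi' : G' \to \mathcal G'$, and a Levi subgroup $\mathcal G \subseteq \mathrm R(\mathcal G')$. First I would recall that $\mathcal G$, being a Levi subgroup of the pseudo-reductive group $\mathrm R(\mathcal G')$ over $k'\subseteq k^{1/p}$, is semisimple, simply connected and absolutely simple (this is part of \cite[Thm.\ 7.2.3]{CGP15} and the hypothesis $[k:k^p]=p$ ensures $\mathcal G'$ is absolutely simple over the quadratic or cubic extension $k'/k$), and that $\mathrm R(\pi)|_G : G \to \mathcal G$ is surjective with infinitesimal kernel $\ker\big(\mathrm R(\pi')\big)\cap G$, which is a $k$-group scheme of height one; the surjectivity and smoothness of $G$ being built into the definition.

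For part (a), the key point is that a surjective homomorphism with \emph{infinitesimal} (hence of order a power of $p$, unipotent and connected) kernel $N$ induces a bijection on $k$-points, since $N(k) = 1$ for any infinitesimal group over any field, and an injection on $\mathrm H^1$ by the long exact cohomology sequence (as $\mathrm H^1(k,N)$ surjects onto the fibers but the connecting map from $\mathcal G(k)$ is what matters). For surjectivity of $\mathrm H^1(k,G) \to \mathrm H^1(k,\mathcal G)$ one wants $\mathrm H^2(k,N)$ to vanish or, better, to invoke that $N$ is the kernel of the very special isogeny so that $\mathrm H^1(k,G)\to\mathrm H^1(k,\mathcal G)$ is even a bijection --- this is exactly the content of \cite[Prop.\ 7.3.1 or Cor.\ 7.3.4]{CGP15} (the fibers of $\mathrm H^1(k,G)\to\mathrm H^1(k,\mathcal G)$ are controlled by $\mathrm H^1(k,N)$, and a twisting argument plus the identification of $N$ as a specific infinitesimal group makes them trivial). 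I would cite that directly rather than reprove it.

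For part (b), the uniqueness of descent follows because $\mathcal G$ is canonically recovered from $G$: it is the maximal \emph{reductive} quotient in the appropriate sense, or more precisely $\mathcal G'$ is recovered as $(G'_{K})^{ss}$-type data and $\mathcal G$ as the image of $\mathrm R(\pi)|_G$. More carefully, since $\mathrm R(\pi')$ and the formation of Levi subgroups are functorial, and since $G$ determines $(G', k'/k)$ up to unique isomorphism by the functoriality statement analogous to Theorem \ref{thmgenstandfunct}(b) applied in the basic exotic setting (this is in \cite[\textsection 7.3]{CGP15}), any $\psi : G_0 \simeq G$ induces compatible isomorphisms of the Weil-restriction data, hence of $\mathcal G_0'$ and $\mathcal G_0$; injectivity and surjectivity of $\mathrm{Isom}(G_0,G)\to\mathrm{Isom}(\mathcal G_0,\mathcal G)$ then follow from the fact that $G = \mathrm R(\pi)^{-1}(\mathcal G)$ inside $\mathrm R(\mathcal G')$, so an automorphism of $\mathcal G$ lifts uniquely (it lifts to $\mathrm R(\mathcal G')$, preserves $\mathcal G$, hence preserves its preimage $G$, and two lifts agreeing on $\mathcal G$ agree on $G$ since $G \hookrightarrow \mathrm R(\mathcal G')$ factors through the graph). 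For part (c), the restriction of an isogeny to a maximal torus is an isogeny onto its image, which is a maximal torus since maximal tori correspond under central/infinitesimal-kernel quotients; the bijectivity on maximal tori follows because maximal tori of $G$ are exactly the preimages under $\mathrm R(\pi)|_G$ of maximal tori of $\mathcal G$ (using that $\ker$ is infinitesimal, so contained in every maximal torus's centralizer and in fact in the kernel of the torus map, forcing $T = (\mathrm R(\pi)|_G)^{-1}(\mathcal T)^0_{\mathrm{red}}$ to be the unique torus over $\mathcal T$). The main obstacle will be pinning down the precise references in \cite{CGP15} for the bijectivity of $\mathrm H^1(k,G)\to\mathrm H^1(k,\mathcal G)$ and for the functoriality needed in (b); the geometric content is standard but one must be careful that the very special isogeny's kernel is an infinitesimal $k$-group (not merely a $k_s$-group scheme), which is where the hypothesis $[k:k^p]=p$ and $k'\subseteq k^{1/p}$ genuinely enter.
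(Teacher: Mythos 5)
Your overall strategy --- reduce everything to the structure theory of very special isogenies and cite \cite{CGP15} for the hard statements --- is exactly what the paper does: it quotes \cite[Prop.\ 7.3.3(1)]{CGP15} for (a), \cite[Prop.\ 7.3.5(1)]{CGP15} for (b) and \cite[Cor.\ 7.3.4]{CGP15} for (c), after observing that $[k:k^p]=p$ forces $k'=k^{1/p}$ and that $\mathcal G_{k'}\cong\mathcal G'$ (whence the stated properties of $\mathcal G$, with simple connectedness coming from \cite[Prop.\ 7.1.5]{CGP15} because the very special isogeny is non-central, rather than from Thm.\ 7.2.3 as you suggest). So as a citation-based proof your proposal lands in essentially the right place.

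However, the ``key point'' you offer for (a) is wrong, and you should not rely on it even as a heuristic. The kernel of $G\rightarrow\mathcal G$ is $N=\mathrm R_{k'/k}(\ker\pi')$, the Weil restriction of an infinitesimal group along a nontrivial purely inseparable extension; this is \emph{not} infinitesimal --- it is a positive-dimensional, non-smooth unipotent group scheme with nontrivial $\overline k$-points (compare the paper's Example \ref{exmpweilrestr}, where $\mathrm R_{k'/k}(\mu_{p^n})$ acquires a unipotent part of dimension $[k':k]-1$). What is true is $N(k)=\ker\pi'(k')=1$, but that only gives \emph{injectivity} of $G(k)\rightarrow\mathcal G(k)$; surjectivity is the genuinely hard half of \cite[Prop.\ 7.3.3]{CGP15} and uses the specific shape of the very special isogeny --- it is emphatically false for a general surjection with infinitesimal kernel, e.g.\ $x\mapsto x^p$ on $\mathbf G_{\mathrm m}$ over an imperfect field is injective but not surjective on $k$-points. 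The same misconception infects your sketch of (c), where you again invoke infinitesimality of the kernel to match up maximal tori. Since you ultimately defer to \cite{CGP15} for the substance of (a)--(c), the proof goes through as written, but a self-contained version built on your stated reasoning would fail.
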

\begin{prf}
The map $\mathrm R(\pi)|_G$ is a surjection by \cite[Prop.\ 7.3.1(c)]{CGP15}. To prove that $\mathcal G$ is semisimple, simply connected and absolutely simple, note that $\mathcal G_{k'}\hookrightarrow\mathrm R(\mathcal G')_{k'}\twoheadrightarrow\mathcal G'$ is an isomorphism (as $\mathcal G_{\overline k}\cong\smash{\big(\mathrm R(\mathcal G')_{\overline k}\big)}^{red}\cong\mathcal G'_{\overline k}$ by Proposition \ref{propweilrestrunip}, since $\mathcal G\subseteq\mathrm R(\mathcal G')$ is Levi and $\mathcal G'$ is reductive).
Now it suffices to see that the group $\mathcal G'$ is semisimple and absolutely simple, which follows from the isogeny $G'\twoheadrightarrow\mathcal G'$, and it is simply connected by \cite[Prop.\ 7.1.5]{CGP15} (this is possible, since the mentioned isogeny is never central).
Next, as the nontrivial extension $k'/k$ in the definition of basic exotic groups is contained in $k^{1/p}$, we have $k' = k^{1/p}$. The statements (a)-(c) are then shown in \cite[Props.\ 7.3.3(1), 7.3.5(1) and Cor.\ 7.3.4]{CGP15}, respectively.
\end{prf}

When $\mathrm{char}(k) = 2$ and $[k : k^2] = 2$, the failure of an arbitrary pseudo-reductive group to be generalized standard is explained by the existence of the following groups:
\vspace{-2pt}

\begin{defn}
Let $G$ be a pseudo-reductive group over a field $k$ of characteristic $p = 2$, with $[k : k^2] = 2$. Recall that the \textit{radical} $\mathscr R_k(G)$ of $G$ is the unique largest smooth connected solvable normal subgroup of $G$ defined over $k$. We say that $G$ is a \textit{basic non-reduced pseudo-simple group} if $G$ is absolutely pseudo-simple (that is, the quotient $G_{\overline k}/\mathscr R_{\overline k}(G_{\overline k})$ is simple; see \cite[Lem.\ 3.1.2]{CGP15}), it has a non-reduced root system and there exists a quadratic extension $K/k$ such that $\mathscr R_K(G_K)_{\overline k} = \mathscr R_{\overline k}(G_{\overline k})\subseteq G_{\overline k}$; see \cite[Def.\ 10.1.2]{CGP15}.

More generally, if there are a nonzero finite reduced $k$-algebra $k'=\prod_{i=1}^s k'_i$ and a $k'$-group $G'$ such that each fiber $G'_{k'_i}$ is a basic non-reduced pseudo-simple group, and if $G\simeq\mathrm R(G')$, then we say that $G$ is a \textit{totally non-reduced pseudo-reductive group} (\cite[Def.\ 10.1.1, Prop.\ 10.1.4]{CGP15}).
\end{defn}

\begin{prop}\label{propredbasnonred}
With the above terminology, the following properties hold:
\begin{enumerate}[(a)]
    \item If $G$ is totally non-reduced over $k$, then the pair $(k'/k,G')$ is determined uniquely up to unique isomorphism. Thus it is in particular functorial with respect to isomorphisms~of~$G$.\;
    \item If $G$ is basic non-reduced pseudo-simple over $k$, then it is determined up to isomorphism by the dimension $n$ of its maximal tori. In its definition, $K = k^{1/2}$ and $G^{ss}_K\coloneqq G_K/\mathscr R_K(G_K)$ is isomorphic to the simple group $\mathrm{Sp}_{2n,\,K}$. Moreover, the homomorphism ${G\rightarrow\mathrm R_{K/k}(G^{ss}_K)\eqqcolon\mathcal G}$ induces bijections $G(k)\cong\mathcal G(k)$, and also $\mathrm H^1(k,G) = 1$.
\end{enumerate}
\end{prop}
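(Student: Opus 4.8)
\textbf{Proof plan for Proposition \ref{propredbasnonred}.}

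The plan is to deduce both statements directly from the structure theory for basic non-reduced pseudo-simple groups as developed in \cite[\textsection 10.1]{CGP15}, and from the general behavior of Weil restriction recorded in Appendix B. For part (a), I would start from the defining presentation $G\simeq\mathrm R_{k'/k}(G')$ with each fiber $G'_{k'_i}$ basic non-reduced pseudo-simple, and invoke \cite[Prop.\ 10.1.4]{CGP15}, which is exactly the assertion that the data $(k'/k,G')$ is recovered from $G$; the functoriality in isomorphisms of $G$ is a formal consequence of this uniqueness, exactly as in the proof of the analogous statements for (generalized) standard presentations in Theorem \ref{thmgenstandfunct} (the uniqueness means any automorphism of $G$ transports the canonical presentation to itself, hence induces a compatible pair of automorphisms of $k'/k$ and $G'$). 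I do not expect part (a) to require more than citing the relevant result and spelling out this formal implication.

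For part (b), the first step is to recall from \cite[Def.\ 10.1.2, Thm.\ 10.1.3]{CGP15} that a basic non-reduced pseudo-simple group $G$ over $k$ (with $[k:k^2]=2$) is classified up to isomorphism by the rank $n$ of its maximal tori, that necessarily $K=k^{1/2}$ in the definition, and that $G^{ss}_K := G_K/\mathscr R_K(G_K)\cong \mathrm{Sp}_{2n,K}$. Then I would use the canonical homomorphism $G\to\mathrm R_{K/k}(G^{ss}_K)=\mathcal G$ and argue that it induces a bijection on $k$-points; this is part of the structural description in \cite[\textsection 10.1]{CGP15} (the map is the "minimal field of definition" comparison, analogous to the very special isogeny in the basic exotic case, cf.\ Proposition \ref{propredbasexpsred}(a)), and the bijectivity on $k$-points follows because the kernel and cokernel of $G\to\mathcal G$ are unipotent groups with no nontrivial $k$-points of the relevant type — more precisely one reduces to checking that $\mathrm R_{K/k}$ applied to the purely inseparable "Frobenius-type" quotient map $G^{ss}_K$-side contributes nothing on rational points, since $K=k^{1/2}$ and the relevant infinitesimal kernels have trivial $k$-points.

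The remaining and most substantive assertion is that $\mathrm H^1(k,G)=1$. Here the plan is: (i) from the exact sequence relating $G$ to $\mathcal G=\mathrm R_{K/k}(\mathrm{Sp}_{2n,K})$, use that the kernel $V$ is a smooth connected unipotent $k$-group and the quotient is trivial or again unipotent, so by the standard dévissage (every $k$-form of a smooth connected unipotent group over a field is filtered by forms of $\mathbf G_{\mathrm a}$, wait — not over an imperfect field) I would instead argue more carefully. Actually the cleanest route: reduce via the comparison map to $\mathrm H^1(K,\mathrm{Sp}_{2n,K})$ using that Weil restriction along the purely inseparable extension $K/k$ interacts with $\mathrm H^1$ through the exact cohomology sequence attached to $1\to V\to G\to \mathcal G'\to 1$ with $V$, $\mathcal G'/\mathcal G$-type pieces unipotent, and then invoke that $\mathrm H^1(K,\mathrm{Sp}_{2n})=1$ since $\mathrm{Sp}_{2n}$ is special (Hilbert 90-type vanishing for symplectic groups). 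The genuinely delicate point — and the one I expect to be the main obstacle — is controlling $\mathrm H^1$ of the non-smooth or imperfect-field unipotent pieces: over the imperfect field $k$ a smooth connected unipotent group need not be $k$-split, so one cannot blindly assert vanishing of its $\mathrm H^1$. I would handle this by pinning down the precise unipotent quotient appearing in \cite[\textsection 10.1]{CGP15} (it is built from $\mathrm R_{K/k}(\mathbf G_{\mathrm a})$-type groups which, being Weil restrictions of $\mathbf G_{\mathrm a}$ along $K/k$, are $k$-split, hence have trivial $\mathrm H^1$ by Lemma \ref{lemuniph1surj}-type arguments), and then the dévissage goes through. Once these vanishings are in place, the long exact sequence forces $\mathrm H^1(k,G)\twoheadrightarrow 1$ and also the injectivity needed, giving $\mathrm H^1(k,G)=1$.
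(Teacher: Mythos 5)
Your part (a) coincides with the paper's proof: both simply invoke \cite[Prop.\ 10.1.4(2)]{CGP15} and observe that uniqueness of the pair $(k'/k,G')$ gives functoriality in isomorphisms of $G$. For part (b), however, the paper does not reprove anything: it cites \cite[Thms.\ 2.3.6 and 2.3.8]{Con12}, which contain exactly the classification by the rank $n$, the identification $G^{ss}_K\simeq\mathrm{Sp}_{2n,K}$ with $K=k^{1/2}$, the bijectivity of $G(k)\rightarrow\mathcal G(k)$, and the vanishing $\mathrm H^1(k,G)=1$. Your attempt to reconstruct that last statement by d\'evissage has a genuine gap.

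Concretely: the kernel $Z$ of $\xi_G : G\rightarrow\mathrm R_{K/k}(G^{ss}_K)$ is \emph{not} a smooth connected unipotent group. Since $G$ is pseudo-reductive, $Z = G\cap\mathrm R_{K/k}(\mathscr R_{u,K}(G_K))$ contains no nontrivial smooth connected unipotent subgroup, and its $k_s$-points are trivial; it is an infinitesimal group scheme. Over an imperfect field the fppf group $\mathrm H^1(k,Z)$ of such a kernel is typically nonzero (compare $\mathrm H^1(k,\alpha_p)=k/k^p$), so it cannot be discarded by any splitness argument about $\mathrm R_{K/k}(\mathbf G_{\mathrm a})$. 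Worse, the long exact sequence runs against you: from $1\rightarrow Z\rightarrow G\rightarrow\mathcal G\rightarrow 1$ one gets
$$\mathcal G(k)\longrightarrow\mathrm H^1(k,Z)\longrightarrow\mathrm H^1(k,G)\longrightarrow\mathrm H^1(k,\mathcal G),$$
and the surjectivity of $G(k)\rightarrow\mathcal G(k)$ (which you correctly expect) forces the connecting map $\mathcal G(k)\rightarrow\mathrm H^1(k,Z)$ to be trivial, hence $\mathrm H^1(k,Z)$ \emph{injects into} $\mathrm H^1(k,G)$ rather than dying. So even granting $\mathrm H^1(k,\mathcal G)=\mathrm H^1(K,\mathrm{Sp}_{2n})=1$ (Shapiro plus the specialness of $\mathrm{Sp}_{2n}$, which is fine), the vanishing of $\mathrm H^1(k,G)$ is not a formal consequence of the pieces you assemble; one must show $\mathrm H^1(k,Z)$ maps to the trivial class, and this is where Conrad's proof uses the explicit open-cell structure of basic non-reduced groups from \cite[\S 9]{CGP15}. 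The intended proof here is the citation; if you want a self-contained argument you must import that finer structural input, not a generic unipotent d\'evissage.
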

\begin{prf}
Here (a) is \cite[Prop.\ 10.1.4(2)]{CGP15}. For (b), see \cite[Thms.\ 2.3.6 and 2.3.8]{Con12}.
\end{prf}

\begin{thm}\label{thmpsredstrmain}
Let $G$ be a pseudo-reductive group over $k$, assume $[k : k^2]\leq 2$ if $\mathrm{char}(k) = 2$. Then $G$ can be written as a product $G_1\times G_2$ of pseudo-reductive groups, uniquely functorially in isomorphisms of $G$, such that $G_1$ is generalized standard and $G_2$ is totally non-reduced.
\end{thm}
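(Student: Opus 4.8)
\begin{prf}
The plan is to assemble this decomposition from the structure theory recalled in the previous subsections, using the classification of pseudo-reductive groups in low characteristic (which is available here because $[k:k^2]\leq 2$ when $\mathrm{char}(k)=2$). The case $\mathrm{char}(k)\neq 2$ is immediate: then every pseudo-reductive group over $k$ has a reduced root system (and when $\mathrm{char}(k)=3$ we are using $[k:k^3]\leq[k:k^p]$ implicitly only through the hypotheses of this appendix's subsection), so by \cite[Thm.\ 10.2.1]{CGP15} the group $G$ is already generalized standard and we may take $G_1=G$, $G_2=1$. The substance is therefore entirely in characteristic $2$.

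So assume $\mathrm{char}(k)=2$ and $[k:k^2]\leq 2$. If $[k:k^2]=1$ then $k$ is perfect, $G$ is reductive, hence standard, and again $G_2=1$. Thus we may assume $[k:k^2]=2$. The first step is to isolate the non-reduced part: by \cite[Prop.\ 10.1.4]{CGP15} there is a unique smooth connected normal subgroup $G_2\subseteq G$ which is totally non-reduced and a complementary generalized standard normal subgroup $G_1$ with $G=G_1\times G_2$; more precisely, one uses the classification result \cite[Thm.\ 10.2.1(3)]{CGP15}, according to which a general pseudo-reductive group over a field of characteristic $2$ with $[k:k^2]=2$ splits as a direct product of a generalized standard group and a totally non-reduced group. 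I would then invoke uniqueness: the totally non-reduced factor $G_2$ is uniquely determined up to unique isomorphism by Proposition \ref{propredbasnonred}(a), and the generalized standard factor $G_1$ is then forced as well (being, for instance, the subgroup generated by those pseudo-simple normal subgroups with reduced root system, or equivalently the image of $G$ under projection killing $G_2$). Functoriality in isomorphisms of $G$ follows because any isomorphism $G\simeq G_0$ must carry $G_2$ onto the totally non-reduced factor of $G_0$ (this being an intrinsic invariant: a normal subgroup is a product of basic non-reduced pseudo-simple groups precisely when its geometric fibers have non-reduced root systems) and likewise $G_1$ onto the generalized standard factor, and then the uniqueness in Proposition \ref{propredbasnonred}(a) pins down the induced isomorphism on $G_2$, with the one on $G_1$ determined by the product structure.

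The main obstacle is purely bibliographic: one must extract from \cite[\textsection10.2]{CGP15} the precise statement that \emph{every} pseudo-reductive group over such a $k$ (not only the absolutely pseudo-simple ones) decomposes as a \emph{direct} product $G_1\times G_2$ with the stated properties, rather than merely an extension or an almost-direct product. The key input is that a basic non-reduced pseudo-simple group $G'$ over $k$ has trivial scheme-theoretic center disjoint from the reduced-type part, so that in the general decomposition of $G$ into pseudo-simple normal subgroups (after passing to $\mathcal D(G)$ and accounting for the Cartan subgroup, as in Proposition \ref{propcartderv} and Example \ref{exmppsredpres}) the non-reduced pseudo-simple factors contribute a genuine direct factor; combined with Corollary \ref{corpsredmulunip}, which controls the abelian quotient $G/(\mathrm Z_G\cdot\mathcal D(G))$, one sees the two parts do not interact. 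The remaining verification that this product is functorial is then a formal consequence of the two uniqueness statements cited, and requires no further computation.
\end{prf}
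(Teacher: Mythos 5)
Your proposal is correct and ultimately rests on the same source as the paper, whose entire proof is the citation \cite[Thm.\ 10.2.1]{CGP15}; your additional case analysis and the appeal to Proposition \ref{propredbasnonred}(a) for uniqueness and functoriality are consistent elaborations of what that theorem already asserts. No gap.
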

\begin{prf}
This is \cite[Thm.\ 10.2.1]{CGP15}.
\end{prf}

\begin{cor}\label{corstrpsredetforms}
Let $G$ be a pseudo-reductive group over a local or global field $k$. Then there exists a surjection $\widetilde G\twoheadrightarrow G$ of pseudo-reductive groups over $k$ with smooth central kernel, where $\widetilde G$ is a (split) extension of a commutative affine algebraic group $C$ by a smooth~affine~algebraic group $D$ such that:
\begin{itemize}
    \item Any \'etale $k$-form $D'$ of $D$ satisfies $\mathrm{H}^1(k,D') = 1$
    \item Let $A$ be a commutative pseudo-reductive group over $k$. For any \'etale $k$-form $\widetilde G'$ of $\widetilde G\times A$, there exists a unique subgroup $D'$ of $\widetilde G'$ such that any isomorphism $\widetilde G'_{k_s}\simeq (\widetilde G\times A)_{k_s}$ of algebraic groups over $k_s$ restricts to an isomorphism $D'_{k_s}\simeq D_{k_s}$
\end{itemize}
\end{cor}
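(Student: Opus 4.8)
The plan is to split $G$ via the canonical factorization $G = G_1 \times G_2$ of Theorem \ref{thmpsredstrmain} (with $G_1$ generalized standard and $G_2$ totally non-reduced; this applies since over a local or global field $[k:k^p]=p$, and the characteristic zero case is the classical reductive one), build the required $\widetilde{G_i}\twoheadrightarrow G_i$ for each factor, and then take $\widetilde G := \widetilde{G_1}\times\widetilde{G_2}$, $D := D_1\times D_2$, $C := C_1\times C_2$. The two degenerate cases are immediate: if $G_1$ is commutative I would take $\widetilde{G_1} = C_1 = G_1$ and $D_1 = 1$; since $G_2$ is perfect (totally non-reduced groups are perfect by \cite{CGP15}) I would take $\widetilde{G_2} = D_2 = G_2$, $C_2 = 1$, already a Weil restriction $\mathrm R_{k'_2/k}(G'_2)$ of a group with basic non-reduced pseudo-simple fibres. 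So the genuine content lies in the noncommutative generalized standard case, and the reassembly step relies on the identity $\mathcal D(\widetilde G) = D$ (proved below for each factor), together with the uniqueness in Theorem \ref{thmpsredstrmain} over $k_s$, which forces any étale form of $D = D_1\times D_2$ to split compatibly as a product of étale forms of $D_1$ and $D_2$ (the factors being distinguished by reducedness of the root system).

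For a noncommutative generalized standard $G_1$, fix a generalized standard presentation $(G'_1, k'_1/k, T'_1, C_1)$, so $G_1 = (\mathrm R_{k'_1/k}(G'_1)\rtimes C_1)/\alpha(\mathrm R_{k'_1/k}(C'_1))$ with $C'_1 = \mathrm Z_{G'_1}(T'_1)$ and $\alpha$ the central embedding of Example \ref{exmppsredpres}. I would set $\widetilde{G_1} := \mathrm R_{k'_1/k}(G'_1)\rtimes C_1$ and $D_1 := \mathrm R_{k'_1/k}(G'_1)$, so that $\widetilde{G_1} = D_1\rtimes C_1$ with $D_1$ smooth affine (a Weil restriction of the smooth affine group $G'_1$) and $C_1$ commutative affine. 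The obvious quotient map $\widetilde{G_1}\twoheadrightarrow G_1$ has kernel $\alpha(\mathrm R_{k'_1/k}(C'_1))$, which is central in $\widetilde{G_1}$ and smooth: $C'_1$ is a Cartan subgroup of the pseudo-reductive group $G'_1$, hence commutative pseudo-reductive (Proposition \ref{propcartderv}) and in particular smooth, and Weil restriction along a finite extension preserves smoothness.

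The crucial point — and the step I expect to require the most care — is that $\widetilde{G_1}$ is itself generalized standard, with presentation $(G'_1, k'_1/k, T'_1, \mathrm R_{k'_1/k}(C'_1)\times C_1)$: one checks that $\mathrm R_{k'_1/k}(C'_1)\times C_1$ is commutative pseudo-reductive, that $\phi(t') := (t',1)$ together with the action of $\mathrm R_{k'_1/k}(C'_1)\times C_1$ on $\mathrm R_{k'_1/k}(G'_1)$ (first factor by conjugation, second by the action from the original presentation) satisfies the conditions in Example \ref{exmppsredpres} — in particular the action is trivial on the subgroup $\mathrm R_{k'_1/k}(C'_1)$, as $C'_1$ is commutative and the original action already was — and that the resulting cokernel is exactly $\mathrm R_{k'_1/k}(G'_1)\rtimes C_1 = \widetilde{G_1}$, the first factor of the enlarged Cartan being reabsorbed into $\mathrm R_{k'_1/k}(G'_1)$. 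For this presentation the structural map $j\colon \mathrm R_{k'_1/k}(G'_1)\to\widetilde{G_1}$ is the inclusion of the normal factor, injective because the corresponding $\phi$ is injective, so Theorem \ref{thmgenstandfunct}(b) identifies $D_1 = \mathrm R_{k'_1/k}(G'_1)$ with $\mathcal D(\widetilde{G_1})$. Thus in all cases $D_i = \mathcal D(\widetilde{G_i})$, whence $D = \mathcal D(\widetilde G)$.

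It then remains to verify the two bullets for $\widetilde G = \widetilde{G_1}\times\widetilde{G_2}$. For the second bullet: for any commutative pseudo-reductive $A$ one has $\mathcal D(\widetilde G\times A) = \mathcal D(\widetilde G) = D$ since $\mathcal D(A) = 1$; hence for an étale $k$-form $\widetilde G'$ of $\widetilde G\times A$ one takes $D' := \mathcal D(\widetilde G')$, and any $k_s$-isomorphism $\widetilde G'_{k_s}\simeq(\widetilde G\times A)_{k_s}$ carries $\mathcal D$ isomorphically onto $\mathcal D = D_{k_s}$, uniqueness of $D'$ being forced by faithful flatness of $k_s/k$. For the first bullet: an étale $k$-form $D'$ of $D$ splits as $D'_1\times D'_2$ with $D'_i$ a form of $D_i$, and by Corollary \ref{corconradetform} and Proposition \ref{propredbasnonred}(a) we may write $D'_1 = \mathrm R_{k'_{1,0}/k}(G'_{1,0})$ and $D'_2 = \mathrm R_{k'_{2,0}/k}(G'_{2,0})$ with $G'_{1,0}$ primitive and $G'_{2,0}$ of basic non-reduced pseudo-simple type. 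Since Weil restriction along finite extensions preserves cohomology (Proposition \ref{propshapiro}; all the groups involved are smooth), $\mathrm H^1(k, D')$ is a product of the groups $\mathrm H^1$ of the fibres over the various factor fields, each a local or global field of positive characteristic; these vanish because $\mathrm H^1$ of a semisimple simply connected group over such a field is trivial (Kneser, Harder), because $\mathrm H^1$ of a basic exotic group agrees with that of its semisimple simply connected image (Proposition \ref{propredbasexpsred}(a)), and because $\mathrm H^1$ of a basic non-reduced pseudo-simple group is trivial (Proposition \ref{propredbasnonred}(b)). Hence $\mathrm H^1(k, D') = 1$, which completes the plan.
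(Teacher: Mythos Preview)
Your proposal is correct and follows essentially the same construction and sequence of appeals as the paper (same $\widetilde G = G_2\times(\mathrm R_{k'/k}(G')\rtimes C)$, $D = G_2\times\mathrm R_{k'/k}(G')$, and $C$; same use of Theorems \ref{thmpsredstrmain} and \ref{thmgenstandfunct}(b), Corollary \ref{corconradetform}, Propositions \ref{propredbasexpsred} and \ref{propredbasnonred}, Shapiro's lemma, and Kneser--Harder vanishing). Your identification $D = \mathcal D(\widetilde G)$ is a mild streamlining of the second bullet compared to the paper's direct functoriality argument for $j$, but both rest on the same clause of Theorem \ref{thmgenstandfunct}(b).
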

\begin{prf}
Write $G = G_1\times G_2$ as in the preceding theorem and consider the generalized standard presentation $G_1\cong (\mathrm R_{k'/k}(G')\rtimes C)/\mathrm R_{k'/k}(\mathrm Z_{G'}(T'))$ (with $G' = 0$ if $G_1$ is commutative). We~take $\widetilde G\coloneqq G_2\times (\mathrm R_{k'/k}(G')\rtimes C) = D\rtimes C$ for $D\coloneqq G_2\times\mathrm R_{k'/k}(G')$. Suppose given a commutative pseudo-reductive group $A$. By Theorem \ref{thmgenstandfunct}(b), the map $\mathrm R_{k'/k}(G')\rightarrow(\widetilde G/G_2)\times A$ is uniquely functorial in isomorphisms of $(\widetilde G/G_2)\times A$. As generalized standard presentations commute with base change by separable extensions of $k$, and the product $\widetilde G\times A = (A\times\widetilde G/G_2)\times G_2$ is uniquely functorial in isomorphisms of $\widetilde G\times A$, we conclude the second property in the statement.

For the first property, note that any \'etale $k$-form $D'$ of $D$ is of the form $G'_2\times\mathrm R_{k''/k}(G'')$ by Theorem \ref{thmpsredstrmain} and by Corollary \ref{corconradetform}, where $G'_2$ is totally non-reduced, $k''$ is a nonzero finite reduced $k$-algebra and the $k''$-group $G''$ is primitive (in the sense of Definition \ref{defgenstand}). By Propositions \ref{propredbasexpsred} and \ref{propredbasnonred} (and Shapiro's lemma: Proposition \ref{propshapiro}), we reduce to saying that $\mathrm H^1(k, \mathcal G) = 1$ holds for every semisimple simply connected group $\mathcal G$ over a finite extension $k''_i$ of $k$. This is well-known to hold over local and global fields (cf.\ \cite[Thm.\ 5.1.1(i)]{Con12}).
\end{prf}

\section{Miscellaneous Useful Facts}
When dealing with (non-smooth) algebraic groups over fields of positive characteristic, it is necessary to work with fppf, as opposed to \'etale (Galois) cohomology. Furthermore, even in cases where Galois cohomology is still useful, some technical statements need to be treated differently than in the more classical, smooth setting. In this section we collect various facts about algebraic groups in positive characteristic and their basic consequences which will be used in the main part of the paper.

The first subsection recalls the properties Weil restrictions along arbitrary finite (flat) maps, their good behavior for smooth groups and the interesting phenomena related to non-smooth groups. The second subsection reviews some basic vanishing statements in Galois cohomology that are relevant only in positive characteristic. Finally, in the third subsection we break down the proof of the well-known statement that all gerbes bound by a semisimple group are trivial, so that parts of it can be generalized in Subsection \ref{ssectgenanis}.

\subsection{A Reminder on Weil Restrictions}\label{ssectweilrestr}
Fix two affine Noetherian schemes $S = \Spec(B)$, $S' = \Spec(B')$ and a map $S'\rightarrow S$ which is finite and faithfully flat. The faithfulness assumption is not strictly necessary, but it simplifies some of the following statements.

If $Y'$ is an (arbitrary) disjoint union of quasi-projective schemes over $S'$, then its pushforward as a sheaf to $S$ is representable by a scheme over $S$, the \textit{Weil restriction} $\mathrm R(Y')\coloneqq \mathrm R_{S'/S}(Y')$ (see \cite[\textsection7.6, Thm.\ 4]{BLR90}). In other words, there is a canonical identification
\begin{equation}\label{eqweilrdef}
\mathrm{Mor}_S(X, \mathrm R(Y')) = \mathrm{Mor}_{S'}(X_{S'}, Y')
\end{equation}
for $S$-schemes $X$. Many useful properties of Weil restrictions are developed in \cite[\textsection A.5]{CGP15} (where $S'/S$ is merely assumed to be finite flat) and we recall some of them here.

\begin{prop}\label{propweiladjmorph}
Let $X,X'$ be disjoint unions of quasi-projective schemes over, respectively, $B,B'$. The unit and counit maps $\iota_X : X\rightarrow\mathrm R(X_{B'})$ and $q_{X'} : \mathrm R(X')_{B'}\rightarrow X'$ are~given~by:
\begin{itemize}
    \item $\iota_{X,A} = X\!\left(A\xrightarrow{\;a\,\mapsto\, a\otimes 1\;} A\otimes_B B'\right) : X(A)\longrightarrow X(A\otimes_B B')$ for $B$-algebras $A$
    \item $q_{X',A'} = X'\!\left(A'\otimes_B B'\xrightarrow{\;a'\otimes c'\,\mapsto\, c'a'\;} A'\right) : X'(A'\otimes_B B')\longrightarrow X'(A')$ for $B'$-algebras $A'$
\end{itemize}
Moreover, $\iota_X$ is a monomorphism and, if $X$ is of finite type over $B$, a closed immersion.
\end{prop}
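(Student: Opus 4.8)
The statement to be established is a completely formal fact about the adjunction between base change $X \mapsto X_{B'}$ and Weil restriction $X' \mapsto \mathrm{R}_{S'/S}(X')$: namely an explicit description of the unit $\iota_X$ and counit $q_{X'}$ of this adjunction, together with the fact that $\iota_X$ is a closed immersion. The first step is to recall that the defining bijection \eqref{eqweilrdef} is the functorial identification
$$\mathrm{Mor}_S(X, \mathrm{R}(Y')) = \mathrm{Mor}_{S'}(X_{S'}, Y'),$$
natural in both $X$ and $Y'$. I would then invoke the standard categorical recipe: the unit $\iota_X : X \to \mathrm{R}(X_{B'})$ is the morphism corresponding under \eqref{eqweilrdef} (with $Y' = X_{B'}$) to the identity map $\mathrm{id}_{X_{B'}} : X_{B'} \to X_{B'}$, while the counit $q_{X'} : \mathrm{R}(X')_{B'} \to X'$ corresponds (via the adjunction applied to $X = \mathrm{R}(X')$, $Y' = X'$) to $\mathrm{id}_{\mathrm{R}(X')}$.

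Next I would unwind these two recipes on the level of $A$-points, $A$ a $B$-algebra (resp. $A'$-points, $A'$ a $B'$-algebra). For the unit: unwinding the natural identification \eqref{eqweilrdef} at the level of functors of points gives, for any $B$-scheme $X$ and any $B$-algebra $A$, the chain $\mathrm{R}(Y')(A) = Y'(A \otimes_B B')$; taking $Y' = X_{B'}$ and tracing the identity through this chain shows that $\iota_{X,A}$ is precisely the map $X(A) \to X(A \otimes_B B')$ induced by the structure map $A \to A \otimes_B B'$, $a \mapsto a \otimes 1$, which is the claimed formula. The counit is obtained symmetrically: $q_{X',A'}$ is the composite $\mathrm{R}(X')(A') = X'(A' \otimes_{B} B')$ followed by the map induced by the $B'$-algebra multiplication $A' \otimes_B B' \to A'$, $a' \otimes c' \mapsto c'a'$, giving the second displayed formula. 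These verifications are routine diagram-chases once the functor-of-points description of $\mathrm{R}$ is in hand, so I would keep them brief and cite \cite[\textsection A.5]{CGP15} for the functorial compatibilities.

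Finally, for the closed immersion claim I would argue as follows. The question is local on $S$, so we may assume $S = \Spec(B)$ with $B'$ a finite free $B$-module; picking a $B$-basis $e_1 = 1, e_2, \dots, e_n$ of $B'$ makes $A \otimes_B B' \cong A^{\oplus n}$ as $A$-modules, with $a \mapsto a\otimes 1$ corresponding to $a \mapsto (a,0,\dots,0)$. Concretely, $\mathrm{R}(X_{B'})$ can be built by gluing: over an affine open $\Spec(C) \subseteq X$, one has $\mathrm{R}(\Spec(C)_{B'}) \cong \Spec\bigl((C \otimes_B B')^{\dagger}\bigr)$ where the ``$\dagger$'' notation denotes the standard coordinate-by-coordinate Weil restriction of an affine scheme, and the unit map corresponds on rings to the surjection extracting the ``$e_1$-coordinate''. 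Thus $\iota_X$ is, locally, a composition of a closed immersion with an (affine) identification, hence a closed immersion; since being a closed immersion is local on the target and $\mathrm{R}(X_{B'})$ is covered by such affine pieces compatibly with $\iota_X$, we conclude. Alternatively, and more cleanly, I would just cite \cite[\textsection A.5.7]{CGP15} or the analogous reference where this is proven in the finite flat setting, noting that our hypotheses (finite, faithfully flat, both schemes affine Noetherian) are a special case. The one point deserving genuine care — and the main (minor) obstacle — is checking that the explicit formulas are compatible with the gluing used to construct $\mathrm{R}(Y')$ for non-affine quasi-projective $Y'$, but this is handled by the naturality of \eqref{eqweilrdef} and does not require new ideas.
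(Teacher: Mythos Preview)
Your proposal is correct, and in fact goes further than the paper: the paper's entire proof is the one-line citation ``This is \cite[Prop.\ A.5.7]{CGP15}'', which you already identify as the clean alternative. Your detailed unwinding of the adjunction on functors of points and the local argument for the closed immersion are accurate and essentially reconstruct what that reference contains, so there is no gap.
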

\begin{prf}
See (the proof of) \cite[Prop.\ A.5.7]{CGP15}.
\end{prf}

\begin{cor}\label{corweiladjmorph}
Let $H$ be a quasi-projective group scheme over $S'$. Then $\mathrm R(H')$ admits a canonical structure of a quasi-projective group scheme over $S$. Moreover, \eqref{eqweilrdef} restricts to an equality
$$\mathrm{Hom}_S(G, \mathrm R(H')) = \mathrm{Hom}_{S'}(G_{S'}, H')$$
for every group scheme $G$ over $S$.
\end{cor}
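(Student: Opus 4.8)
The plan is to deduce both assertions formally from the adjunction identity \eqref{eqweilrdef}, which exhibits $\mathrm R=\mathrm R_{S'/S}$ as a right adjoint to the base-change functor $(-)_{S'}$ from $S$-schemes to $S'$-schemes. I would first record two consequences. Since $\mathrm{Mor}_{S'}(X_{S'},S')$ is a one-point set for every $S$-scheme $X$, Yoneda gives $\mathrm R(S')=S$. And since quasi-projectivity is stable under finite fibre products, $\mathrm R$ is defined on every finite fibre product of quasi-projective $S'$-schemes; for such $Y_1',Y_2'$ and every $S$-scheme $X$ one computes, using \eqref{eqweilrdef} twice, that $\mathrm{Mor}_S(X,\mathrm R(Y_1'\times_{S'}Y_2'))=\mathrm{Mor}_S(X,\mathrm R(Y_1'))\times\mathrm{Mor}_S(X,\mathrm R(Y_2'))$, so $\mathrm R(Y_1'\times_{S'}Y_2')\cong\mathrm R(Y_1')\times_S\mathrm R(Y_2')$ canonically (more generally $\mathrm R$ preserves all finite limits that remain quasi-projective, being a right adjoint).

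Next I would transport the group structure. The datum of the quasi-projective group scheme $H'$ over $S'$ consists of morphisms $m\colon H'\times_{S'}H'\to H'$, $e\colon S'\to H'$ and $\mathrm{inv}\colon H'\to H'$ satisfying the associativity, unit and inverse axioms, each expressed by a commutative diagram whose vertices are finite fibre products of $H'$ and $S'$, all quasi-projective over $S'$. Applying $\mathrm R$ together with the canonical isomorphisms from the first step turns these into morphisms $\mathrm R(H')\times_S\mathrm R(H')\to\mathrm R(H')$, $S\to\mathrm R(H')$, $\mathrm R(H')\to\mathrm R(H')$ over $S$, and functoriality of $\mathrm R$ turns the axiom diagrams into the corresponding ones for $\mathrm R(H')$; hence $\mathrm R(H')$ is a group scheme over $S$. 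Naturality of all the isomorphisms involved shows this structure is canonical, i.e.\ functorial in $H'$.

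For the Hom-statement I would work with functors of points. By construction the group law on $\mathrm R(H')(T)=\mathrm{Mor}_S(T,\mathrm R(H'))$ is the one transported from $H'(T_{S'})=\mathrm{Mor}_{S'}(T_{S'},H')$ via \eqref{eqweilrdef}, naturally in the $S$-scheme $T$. If $\psi\colon G\to\mathrm R(H')$ is a morphism of $S$-schemes with adjoint $\tilde\psi\colon G_{S'}\to H'$, unwinding \eqref{eqweilrdef} gives $\psi_T(g)=\tilde\psi\circ g_{S'}$ for every $T$ and every $g\in G(T)$. Because base change preserves products, $(g\cdot h)_{S'}=g_{S'}\cdot h_{S'}$; it follows at once that if $\tilde\psi$ is a homomorphism then each $\psi_T$, and hence $\psi$, is a homomorphism. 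Conversely, applying the homomorphism property of $\psi$ to the universal element $\mathrm{id}\in G(G\times_S G)$ and using the identification $(G\times_S G)_{S'}=G_{S'}\times_{S'}G_{S'}$ yields exactly $\tilde\psi\circ m_{G_{S'}}=m_{H'}\circ(\tilde\psi\times\tilde\psi)$, so $\tilde\psi$ is a homomorphism. This gives $\mathrm{Hom}_S(G,\mathrm R(H'))=\mathrm{Hom}_{S'}(G_{S'},H')$.

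There is no serious obstacle here; the whole argument is a formal manipulation of \eqref{eqweilrdef}. The only point needing a little care is the last one: a priori the homomorphism property of $\psi$ only controls $\tilde\psi$ on base-changed test schemes, so one must observe that the single universal test object detecting whether $\tilde\psi$ respects multiplication, namely $G_{S'}\times_{S'}G_{S'}$, is itself such a base change, equal to $(G\times_S G)_{S'}$.
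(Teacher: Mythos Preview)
Your argument is correct and follows the same approach as the paper: the group structure on $\mathrm R(H')$ comes from the fact that $\mathrm R$, being a right adjoint, preserves finite products, and the Hom equality is a formal consequence of the adjunction. Two small remarks. First, you do not address the quasi-projectivity of $\mathrm R(H')$ over $S$, which is part of the statement; the paper handles this by citing \cite[Prop.~A.5.8]{CGP15}. Second, for the Hom equality the paper takes a slightly slicker route: rather than your functor-of-points computation with the universal pair $(\mathrm{pr}_1,\mathrm{pr}_2)\in G(G\times_S G)^2$ (your ``$\mathrm{id}\in G(G\times_S G)$'' is a slip of notation), it simply observes that the unit $\iota$ and counit $q$ of the adjunction (Proposition~\ref{propweiladjmorph}) are group homomorphisms, and since the bijection \eqref{eqweilrdef} is given by $\psi\mapsto q_{H'}\circ\psi_{S'}$ in one direction and $\tilde\psi\mapsto\mathrm R(\tilde\psi)\circ\iota_G$ in the other, it automatically carries homomorphisms to homomorphisms.
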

\begin{prf}
The group structure comes from the functoriality of $\mathrm R$, while the quasi-projectivity is \cite[Prop.\ A.5.8]{CGP15}. For the equality, it suffices to note that the unit and counit are both group homomorphisms.
\end{prf}

\begin{prop}\label{proplocalgquasiproj}
If $G$ is a locally algebraic group over a field $k$, then it is a disjoint union of quasi-projective schemes over $k$.
\end{prop}
\begin{prf}
Every component $X$ of $G$ becomes isomorphic over a finite field extension $k'/k$ to some finite disjoint union of copies of $G^0_{k'}$ (see the proof of \cite[VI$_{\mathrm{A}}$, Cor.\ 2.4.1]{SGA3}), so $X_{k'}$ is quasi-projective over $k'$ by \cite[Thm.\ 8.43]{Mil17}. Therefore $X\hookrightarrow\mathrm R_{k'/k}(X_{k'})$ is a closed immersion into a quasi-projective scheme over $k$ by Proposition \ref{propweiladjmorph} and \cite[Prop.\ A.5.8]{CGP15}.
\end{prf}

In particular, an algebraic group $G'$ over $k'$ is quasi-projective when $k'$ is a product of fields, hence Weil restrictions of (locally) algebraic groups along finite field extensions do exist.

\begin{prop}\label{propweilrestrunip}
Let $G'$ be a smooth group scheme of finite type over some nonzero finite reduced $k$-algebra $k'$. Write $k' = \prod k'_i$ for finitely many finite field extensions $k'_i/k$. If we take $S = \Spec(k)$ and $S' = \Spec(k')$, then $\mathrm R(G') = \prod\mathrm R_{k'_i/k}(G'_{k'_i})$.

The formation of the Weil restriction commutes with taking centers and Cartan subgroups (of $G'$ and $\mathrm R(G')$). The counit map $q : \mathrm R(G')_{k'}\rightarrow G'$ is smooth and surjective. Moreover, if the factor fields $k'_i$ are purely inseparable over $k$, then $\ker(q)$ has connected unipotent fibers.
\end{prop}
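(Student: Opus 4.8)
The plan is to reduce each assertion to a statement over a single factor field $k'_i$, where $k'_i/k$ is a finite field extension (purely inseparable in the last claim), and then invoke standard results on Weil restriction along finite field extensions together with the structure theory recalled in Appendix A. The decomposition $\mathrm R(G') = \prod_i \mathrm R_{k'_i/k}(G'_{k'_i})$ follows immediately from the defining adjunction \eqref{eqweilrdef}: since $\Spec(k') = \bigsqcup_i \Spec(k'_i)$, an $S$-morphism $X \to \mathrm R(G')$ corresponds to an $S'$-morphism $X_{S'} \to G'$, which is the same as a tuple of $k'_i$-morphisms $X_{k'_i} \to G'_{k'_i}$, hence to an $S$-morphism into $\prod_i \mathrm R_{k'_i/k}(G'_{k'_i})$. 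This is compatible with the group structures because the adjunction isomorphism of Corollary \ref{corweiladjmorph} is one of groups.

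Granting the product decomposition, I would treat the remaining three assertions one factor at a time, so assume $k'/k$ is a finite field extension. Commutation with centers and Cartan subgroups is cited in the literature: it is \cite[Prop.\ A.5.15]{CGP15} (already used in the excerpt, e.g.\ in Example \ref{exmppsredh2notinj}). For the smoothness and surjectivity of the counit $q : \mathrm R(G')_{k'} \to G'$: smoothness of $q$ is part of the general behavior of Weil restriction of smooth groups (\cite[Prop.\ A.5.11]{CGP15}), and surjectivity (flatness) can be read off from the fact that $q$ admits a section after base change — or more directly, one checks that on $\overline k$-points $q$ hits every point, since for any $\overline k$-point $y$ of $G'$ one can produce an element of $\mathrm R(G')(\overline k \otimes_k k')$ mapping to it using that $\overline k\otimes_k k'$ surjects onto $\overline k$; combined with smoothness this gives a smooth surjection. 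The key input here is exactly Proposition \ref{propweiladjmorph}, which gives the explicit formula for $q$ and shows the unit is a closed immersion.

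The substantive part, and the one I expect to be the main obstacle, is the last sentence: when $k'/k$ is purely inseparable, $\ker(q)$ has connected unipotent fibers. Here $\ker(q)_{\overline k} = \ker(\mathrm R_{k'/k}(G')_{\overline k \otimes_k k'} \to G'_{\overline k})$, and since $k'/k$ is purely inseparable, $\overline k \otimes_k k'$ is a local Artinian $\overline k$-algebra with residue field $\overline k$ and nilpotent maximal ideal $\mathfrak m$ of $\overline k$-dimension $[k':k]-1$. The counit over $\overline k$ is then the reduction-mod-$\mathfrak m$ map $G'(\overline k \otimes_k k' \otimes \cdots) \to G'(\overline k \otimes \cdots)$ applied functorially, whose kernel is the congruence subgroup associated to $\mathfrak m$. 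For a smooth group $G'$, this congruence kernel is built by successive extensions of vector groups (coming from the associated graded pieces $\mathfrak m^j/\mathfrak m^{j+1}$ tensored with the Lie algebra of $G'$), hence is connected and unipotent. I would make this precise by filtering $\overline k \otimes_k k'$ by powers of $\mathfrak m$ and doing a dévissage: at each step the kernel of $G'(A) \to G'(A/I)$ for a square-zero ideal $I$ with $I^2 = 0$ is $\mathrm{Lie}(G') \otimes I$, a vector group, using smoothness of $G'$ to get the infinitesimal lifting property. The care needed is to run this over a general base $\overline k$-algebra (i.e.\ on the level of fppf sheaves, not just $\overline k$-points) so that one genuinely identifies the group-scheme structure of $\ker(q)_{\overline k}$ as an iterated extension of vector groups; alternatively one cites \cite[Prop.\ A.5.12]{CGP15} which records exactly this. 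I would default to citing that proposition but sketch the dévissage for the reader's benefit.
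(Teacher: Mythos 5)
Your proposal is correct and takes essentially the same route as the paper, which simply cites \cite[Props.\ A.5.11 and A.5.15]{CGP15} for all the assertions; your additional sketch of the product decomposition via the adjunction and of the dévissage through the congruence filtration (successive extensions of vector groups $\mathrm{Lie}(G')\otimes\mathfrak m^j/\mathfrak m^{j+1}$) is accurate and consistent with the cited results.
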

\begin{prf}
This is shown in \cite[Props.\ A.5.11 and A.5.15]{CGP15}.
\end{prf}

\begin{prop}\label{proppreshapiro}
Let $0\rightarrow H'\rightarrow G'\rightarrow Q'\rightarrow 0$ be a short exact sequence of quasi-projective group schemes over $S'$ (meaning in particular that $G'\rightarrow Q'$ is faithfully flat). The sequence
$$0\rightarrow\mathrm R(H')\rightarrow\mathrm R(G')\rightarrow\mathrm R(Q')$$
is exact, and the last map is faithfully flat in the following two situations:
\begin{itemize}
    \item $S'/S$ is (finite) \'etale
    \item $H',G',Q'$ are smooth and $S$ is a field
\end{itemize}
In general, if $G'$ is a smooth (locally finite-type) group scheme over $S'$, then so is $\mathrm R(G')$ over~$S$.
\end{prop}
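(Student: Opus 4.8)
The plan is to derive all three assertions from the adjunction \eqref{eqweilrdef} — i.e. from the fact that $\mathrm R = \mathrm R_{S'/S}$ is right adjoint to base change $X\mapsto X_{S'}$ — supplemented by descent and, only in the last case, a genuine smoothness argument. \textbf{Left exactness.} As a right adjoint, $\mathrm R$ preserves all limits in the category of quasi-projective $S'$-schemes; since $H' = \ker(G'\to Q') = G'\times_{Q'}S'$ is such a limit and $\mathrm R(S') = S$, this gives $\mathrm R(H') = \mathrm R(G')\times_{\mathrm R(Q')}S = \ker(\mathrm R(G')\to\mathrm R(Q'))$. Moreover $\mathrm R(H')\to\mathrm R(G')$ is then the base change of the identity section $S\hookrightarrow\mathrm R(Q')$, which is a closed immersion because $\mathrm R(Q')$ is quasi-projective, hence separated, over $S$ (Corollary \ref{corweiladjmorph}). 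So $0\to\mathrm R(H')\to\mathrm R(G')\to\mathrm R(Q')$ is exact as a sequence of group schemes, with no hypothesis beyond quasi-projectivity.

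\textbf{Smoothness of $\mathrm R(G')$.} Since $\mathrm R(G')$ is quasi-projective over the Noetherian base, it is of finite presentation, so it suffices to check formal smoothness. Given a square-zero surjection $A\twoheadrightarrow A_0$ of $B$-algebras with kernel $I$, the map $A\otimes_B B'\twoheadrightarrow A_0\otimes_B B'$ is again a square-zero surjection: flatness of $B'$ over $B$ makes $-\otimes_B B'$ exact, so its kernel is $I\otimes_B B'$, and $(I\otimes_B B')^2\subseteq I^2\otimes_B B' = 0$. By \eqref{eqweilrdef}, $\mathrm R(G')(A)\to\mathrm R(G')(A_0)$ is identified with $G'(A\otimes_B B')\to G'(A_0\otimes_B B')$, which is surjective by formal smoothness of $G'$. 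Hence $\mathrm R(G')$ is formally smooth, therefore smooth (cf.\ \cite{CGP15}); this is the precise point where finite flatness, rather than étaleness, is being used.

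\textbf{Faithful flatness.} In the étale case one uses that faithful flatness may be tested after an fppf base change on $S$ and that $\mathrm R$ commutes with base change on $S$: over an étale cover $T\to S$ splitting $S'$, i.e.\ $S'\times_S T\cong\coprod_{i=1}^n T$, the Weil restriction becomes a finite product, and $\mathrm R(G')\to\mathrm R(Q')$ becomes $\prod_i(G'_i\to Q'_i)$, a product of faithfully flat maps, hence faithfully flat; descend back to $S$. In the smooth case over a field, write $k' = \prod_i k'_i$. By the smoothness just proved, $f\colon\mathrm R(G')\to\mathrm R(Q')$ is a homomorphism of smooth $k$-groups, so its scheme-theoretic image is a closed subgroup scheme of the reduced scheme $\mathrm R(Q')$; thus it is enough to show $f$ is surjective on $\overline k$-points. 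By \eqref{eqweilrdef}, $f(\overline k)$ is $G'(\overline k\otimes_k k')\to Q'(\overline k\otimes_k k')$, and $\overline k\otimes_k k' = \prod_i(\overline k\otimes_k k'_i)$ is a finite product of Artinian local $\overline k$-algebras with residue field $\overline k$. Since $G'\to Q'$ is smooth and surjective (part of the short-exactness hypothesis over $S'$), a point of $Q'$ valued in such an Artinian local ring lifts to $G'$: the closed fiber of $G'\to Q'$ is nonempty and smooth over $\overline k$, so has a $\overline k$-point, which deforms up the Artinian tower by formal smoothness. Hence $f(\overline k)$ is surjective and $f$ is faithfully flat.

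The first three parts are essentially formal consequences of $\mathrm R$ being a base-change-compatible right adjoint that preserves square-zero extensions. The only step with real content is the last one: right-exactness of $\mathrm R$ genuinely fails for non-smooth groups over imperfect fields — this is the source of several phenomena exploited elsewhere in the paper (cf.\ Example \ref{exmpweilrestr}, Example \ref{exmppsredh2notinj}) — so the argument is forced to use smoothness, and I expect the cleanest route to be precisely the lifting-of-Artinian-points computation above rather than any appeal to a general exactness principle.
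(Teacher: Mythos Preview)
Your proof is correct and follows the same outline as the paper's: left-exactness from the adjunction, the \'etale case by splitting $S'$ \'etale-locally as a disjoint union of copies of $S$, and the smooth-over-a-field case by combining smoothness of $\mathrm R(G'),\mathrm R(Q')$ with surjectivity of $\mathrm R(G')\to\mathrm R(Q')$. The only difference is that the paper outsources the last two ingredients to \cite[Prop.\ A.5.2(4), Cor.\ A.5.4(1), Exmp.\ A.1.12]{CGP15}, whereas you supply direct arguments (the infinitesimal lifting criterion for smoothness of $\mathrm R(G')$, and the Artinian-lifting computation for surjectivity on $\overline k$-points); your versions are correct and amount to reproving those citations.
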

\begin{prf}
Left-exactness is clear since pushforwards are right-adjoint to base change. Now suppose $S'/S$ is finite \'etale. It suffices to show faithful flatness \'etale-locally on $S$, so we may assume (by \cite[Lem.\ 04HN]{Stacks}) that $S'$ is simply a non-empty product of copies of $S$. Then the map $\mathrm R(G') = (G')^n\longrightarrow (Q')^n = \mathrm R(Q')$ is faithfully flat.

In the other case, by smoothness of $H'$, we have that the map $\mathrm R(G')\rightarrow\mathrm R(Q')$ is surjective by \cite[Cor.\ A.5.4(1)]{CGP15}. The groups $\mathrm R(G'),\mathrm R(Q')$ are smooth by \cite[Prop.\ A.5.2(4)]{CGP15} (and the same proof works for locally finite-type $G'$). Finally, a surjective homomorphism of smooth algebraic groups over a field is always faithfully flat (see \cite[Exmp.\ A.1.12]{CGP15}).
\end{prf}

The following proposition is a general formulation of the statement usually called Shapiro's lemma (cf.\ \cite[Lem.\ 4.1.6]{Con12}).

\begin{prop}\label{propshapiro}
Let $G'$ be a $S'$-group scheme which is a disjoint union of quasi-projectives. Consider $n\in\mathbf Z$ and suppose that $n\leq 1$ if $G'$ is not commutative. There is a functorial map
$$\mathrm{H}^n(S, \mathrm R(G'))\rightarrow\mathrm{H}^n(S', G')$$
of Abelian groups (resp. pointed sets if $G'$ is not commutative) which is an isomorphism if the group scheme $G'$ is smooth over $S'$ or if $S'/S$ is \'etale.
\end{prop}
\begin{prf}
Suppose first that $G'$ is commutative. The desired map then coincides with the maps appearing in the Leray spectral sequence for $\pi : S'\rightarrow S$:
$$\mathrm{H}^n(S, \pi_*G')\rightarrow\mathrm{H}^n(S', G')$$
If $\pi$ is \'etale, then $\pi_*$ is exact (for all abelian fppf sheaves) by the same argument as in the previous proposition, so higher direct images vanish. Otherwise, if $G'$ is smooth, then so is $\mathrm R(G')$ and we may pass to the corresponding statement in \'etale cohomology. However,~higher~direct images of finite pushforwards between \'etale sites again vanish (see \cite[Prop.\ 03QP]{Stacks}).

The proof is similar in the nonabelian case (we again pass to the \'etale sites $S_\et$ and $S'_\et$ when the group $G'$ is smooth), however we replace the arguments with higher direct images by the following general lemma.
\end{prf}

\begin{lem}
Consider a surjective map of schemes $\pi : S'\rightarrow S$. Suppose given a group sheaf $\mathcal G'$ on $S'_\et$ (resp.\ $S'_\fppf$). There is a natural map of \'etale (resp.\ fppf) cohomology sets
$$\mathrm{H}^1(S, \pi_*\mathcal G')\rightarrow\mathrm{H}^1(S', \mathcal G')$$
which is an isomorphism when the map $\pi$ is finite (resp.\ finite \'etale).
\end{lem}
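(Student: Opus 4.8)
The plan is to construct the map $\mathrm{H}^1(S,\pi_*\mathcal G')\to\mathrm{H}^1(S',\mathcal G')$ in the evident way and then check it is bijective when $\pi$ is finite (resp.\ finite \'etale) by producing an explicit inverse, using the fact that for a finite morphism $\pi$ the pushforward $\pi_*$ is exact on the relevant category of sheaves. First I would recall that $\pi_*$ is left exact in general, so it takes a group sheaf to a group sheaf, and that there is an adjunction unit $\mathrm{id}\to\pi_*\pi^*$ together with the counit $\pi^*\pi_*\to\mathrm{id}$. Given a $\pi_*\mathcal G'$-torsor $\mathcal P$ on $S$ (resp.\ on $S_\et$), I would set $\mathcal P'\coloneqq\pi^*\mathcal P\wedge^{\pi^*\pi_*\mathcal G'}\mathcal G'$, the contracted product along the counit $\pi^*\pi_*\mathcal G'\to\mathcal G'$; this is a $\mathcal G'$-torsor on $S'$ and the assignment is clearly functorial, so it defines the desired map on cohomology. (Equivalently, one can describe the map on \v{C}ech $1$-cocycles: a cocycle for $\pi_*\mathcal G'$ valued over a cover $T\to S$ pulls back to a cocycle for $\mathcal G'$ valued over $T\times_S S'\to S'$, using $(\pi_*\mathcal G')(T) = \mathcal G'(T\times_S S')$ and the counit to land in $\mathcal G'$.)

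The heart of the matter is the bijectivity. Here I would use the key input that for a finite morphism of schemes, $\pi_*$ is an exact functor on abelian \'etale sheaves, and more to the point that for finite $\pi$ the functor $\pi_*$ on the big fppf site (resp.\ for finite \'etale $\pi$ on either site) has the property that $\mathrm{R}^1\pi_*\mathcal G' = \ast$ (trivial torsor-sheaf), which in the nonabelian setting means precisely that every $\mathcal G'$-torsor on $S'$ is, \'etale-locally (resp.\ fppf-locally) on $S$, of the form $\pi^*(-)\wedge\mathcal G'$ for a $\pi_*\mathcal G'$-torsor. Concretely: for a $\mathcal G'$-torsor $\mathcal Q'$ on $S'$, I would form $\pi_*\mathcal Q'$, which is a pseudo-torsor under $\pi_*\mathcal G'$, and show it is in fact a torsor — i.e.\ locally nonempty on $S$ — exactly because $\pi$ is finite: restricting to a strictly henselian (resp.\ an fppf-covering) neighborhood $U\to S$, the base change $\mathcal Q'_{U\times_S S'}$ becomes trivial (as $U\times_S S'$ is a finite scheme over a strictly henselian base, resp.\ because one can trivialize over an fppf cover in the finite \'etale case), so $(\pi_*\mathcal Q')(U)=\mathcal Q'(U\times_S S')\neq\varnothing$. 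This shows $\mathcal Q'\mapsto\pi_*\mathcal Q'$ is a well-defined map $\mathrm{H}^1(S',\mathcal G')\to\mathrm{H}^1(S,\pi_*\mathcal G')$, and a direct check on the level of torsors (via the unit/counit triangle identities of the adjunction $\pi^*\dashv\pi_*$) shows it is a two-sided inverse of the map constructed above.

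I should be careful about the two cases. In the finite \'etale case the argument is cleanest: $\pi$ is also finite on $S_\et$ and $S'_\et$, $\pi_*$ is exact there, and everything reduces to the abelian vanishing statement \cite[Prop.\ 03QP]{Stacks} plus the \v{C}ech-cocycle bookkeeping. In the merely finite case (fppf topology) I must instead invoke that $\pi$ finite implies $\pi_*$ exact on abelian fppf sheaves and that fppf-locally on $S$ any torsor over $S'$ trivializes — this is where finiteness of $\pi$ (so that $S'\to S$ is itself an fppf cover after the appropriate refinement, or at least that pushforward commutes with the relevant limits) is used. The main obstacle I anticipate is precisely this local triviality step in the nonabelian fppf setting: unlike in the abelian case there is no spectral sequence to lean on, so I will need to argue directly that $\pi_*\mathcal Q'$ is locally nonempty, and the safest route is to reduce to the statement that a torsor over a finite scheme over a strictly henselian local ring (resp.\ over a suitable fppf cover) is trivial, then patch. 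Once local triviality is in hand, the verification that the two maps are mutually inverse is a routine diagram chase with the adjunction triangle identities, which I will not spell out in full.
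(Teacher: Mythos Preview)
Your overall strategy---build the forward map by pulling back torsors (or cocycles), build the inverse by pushing forward $\mathcal Q'\mapsto\pi_*\mathcal Q'$, and reduce everything to showing $\pi_*\mathcal Q'$ is locally nonempty---is exactly the paper's approach. Your penultimate paragraph even matches the paper's division of labor: in the \'etale case one passes to a strictly henselian neighborhood (equivalently, computes the stalk $(\pi_*\mathcal Q')_{\bar s}=\prod_{\bar s'|\bar s}\mathcal Q'_{\bar s'}$ via \cite[Prop.~03QP]{Stacks}, nonempty since each factor is); in the fppf case one uses that a finite \'etale $\pi$ splits \'etale-locally into a disjoint union of copies of the base, then trivializes over a common fppf refinement.

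The problem is your final paragraph, where you have the two cases reversed. The lemma pairs the \'etale topology with $\pi$ \emph{finite} and the fppf topology with $\pi$ \emph{finite \'etale}, not the other way around. More seriously, your claim that ``$\pi$ finite implies $\pi_*$ exact on abelian fppf sheaves'' is false---take $\pi:\Spec(k^{1/p})\to\Spec(k)$ for imperfect $k$ and apply $\pi_*$ to the Kummer sequence; the cokernel $\mathrm R^1\pi_*\mu_p$ is nonzero (this is exactly the computation in Example~\ref{exmpweilrestr}). This failure is precisely \emph{why} the fppf statement needs the stronger hypothesis: without \'etaleness you cannot split $S'$ over $S$ locally, and there is no general mechanism to trivialize an arbitrary fppf torsor after pulling back along a merely finite map. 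So your proposed route for ``the merely finite case (fppf topology)'' cannot work, and once you swap the labels back, your correct penultimate paragraph already contains the right arguments for both cases---you should discard the last paragraph rather than try to repair it.

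One small point on the \'etale case: asserting that ``a torsor over a finite scheme over a strictly henselian local ring is trivial'' deserves a line of justification. The point is that such a scheme is a finite disjoint union of spectra of strictly henselian local rings (finite over henselian is a product of local henselian rings, and their residue fields, being finite over a separably closed field, remain separably closed), and any \'etale torsor over a strictly henselian local ring is trivial. The paper packages this as the stalk formula.
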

\begin{prf}
Given a sheaf torsor $\mathcal P$ on $S$ representing some class $[\mathcal P]\in\mathrm{H}^1(S, \pi_*\mathcal G')$, we find for it a trivializing \'etale (resp.\ fppf) cover $\mathcal U = (U_i)$ of $S$ and a corresponding cocycle:
$$g_{ij}\in (\pi_*\mathcal G')(U_i\times_S U_j) = \mathcal G'(U_{i,S'}\times_{S'} U_{j,S'})$$
The gluing data $((U_{i,S'}),(g_{ij}))$ defines a sheaf torsor $\mathcal P'$ of $\mathcal G'$ on $S'$. This construction behaves well with respect to refinements and with respect to taking cohomologous cocycles, hence the class $[\mathcal P']\in\mathrm{H}^1(S', \mathcal G')$ is independent of choices. This defines the map in the statement.

We claim that, in our situation, an inverse map is given by taking a class $[\mathcal P']$ to the class $[\pi_*\mathcal P']$ represented by pushforwards. It suffices only to prove that $\pi_*\mathcal P'$ is locally nonempty on $S_\et$ (resp.\ $S_\fppf$). Indeed, after finding a cover $(U_i)$ of $S$ on which $\pi_*\mathcal P'$ has points, it will follow that the action of $\pi_*\mathcal G'$ on $\pi_*\mathcal P'$ defines a sheaf torsor. The two operations are mutually inverse up to isomorphism (since the descent data on $(U_i)$ and $(U_{i,S'})$ agree) and the inverse map on cohomology sets is thus in particular well defined. 

In the case of $S_\et$, local nonemptiness holds since $(\pi_*\mathcal P')_{\overline{s}} = \prod \mathcal P'_{\overline{s}'}$ for the finite map $\pi$ and any geometric point $\overline{s}\rightarrow S$ (\cite[Prop.\ 03QP]{Stacks}), where the finite product is taken over all geometric points $\overline{s}'\rightarrow S'$ lying over $\overline{s}$, at least one of which exists by surjectivity of $\pi$. In the case of $S_\fppf$, we use that $\pi$ is finite \'etale and thus (\cite[Lem.\ 04HN]{Stacks}) every point $s\in S$ has an \'etale neighborhood $V$ with $\pi^{-1}V\cong\coprod V'_i$ and each $V'_i\rightarrow V$ an isomorphism. If $\mathcal U'_i$ is an fppf cover of $V'_i$ such that $\mathcal P'(\mathcal U'_i)\neq\varnothing$, we may take an fppf cover $\mathcal U$ of $V$ which is a common refinement of each $\mathcal U'_i$ and then $(\pi_*\mathcal P')(\mathcal U)\neq\varnothing$.
\end{prf}

As opposed to smooth groups, Weil restrictions of non-smooth algebraic groups generally behave quite badly. For instance, we will now show that the Weil restriction of an infinitesimal group can have positive dimension. The following example will also become very important in Examples \ref{exmppsredh2notinj} and \ref{exmpabelmain}.

\begin{exmp}\label{exmpweilrestr}
Suppose $k'/k$ is a finite field extension with $k'\subseteq k^{1/p^n}$, so purely inseparable. Take the Kummer exact sequence $0\rightarrow\mu_{p^n}\rightarrow\mathbf G_{\mathrm m}\rightarrow\mathbf G_{\mathrm m}\rightarrow 0$. By the left-exactness of Weil restrictions, there is a sequence of commutative group sheaves for $\pi : \Spec(k')\rightarrow\Spec(k)$
\begin{center}\begin{tikzcd}
0\arrow[r] & \mathrm R(\mu_{p^n,\,k'})\arrow[r] & \mathrm R(\mathbf G_{\mathrm m,\,k'})\arrow[r] & \mathrm R(\mathbf G_{\mathrm m,\,k'})\arrow[r] & \mathrm R^1\pi_*(\mu_{p^n,\,k'})\arrow[r] & 0
\end{tikzcd}\end{center}
which is exact since $\mathrm R^1\pi_*G = 0$ for any smooth commutative algebraic group $G$ over $k'$. Indeed,
$$\mathrm H^1(A\otimes_k k',G)\cong\mathrm H^1_\et(A\otimes_k k',G)\cong\mathrm H^1_\et(A,\mathrm R(G))\cong\mathrm H^1(A,\mathrm R(G))$$
for any $k$-algebra $A$ by the above lemma (in which there is no Noetherianity assumption) and the fppf sheafification of the resulting presheaf $A\rightsquigarrow\mathrm H^1(A,\mathrm R(G))$ is trivial.

The operation of taking the largest multiplicative subgroup (resp. largest unipotent quotient) $G^m$ (resp.\ $G^u$) of a commutative affine algebraic group $G$ is an exact functor, as~can~be~checked over the algebraic closure $\overline{k}$ (over which the multiplicative-unipotent decomposition splits; see the beginning of Subsection \ref{ssectgenanis}). The quotient $\mathrm R(\mathbf G_{\mathrm m,\,k'})/\mathbf G_{\mathrm m}$ is unipotent by Proposition \ref{propweilrestrunip}, so $\mathrm R(\mathbf G_{\mathrm m,\,k'})^m = \mathbf G_{\mathrm m}$, which shows exactness of rows and columns in the commutative diagram
\begin{center}\begin{tikzcd}
0\arrow[r] & \mu_{p^n}\arrow[r]\arrow[d, hook] & \mathbf G_{\mathrm m}\arrow[r, "x\,\mapsto\, x^{p^n}"]\arrow[d, hook] & \mathbf G_{\mathrm m}\arrow[d, hook]\arrow[r] & 0\\
0\arrow[r] & \mathrm R(\mu_{p^n,\,k'})\arrow[r]\arrow[d, two heads] & \mathrm R(\mathbf G_{\mathrm m,\,k'})\arrow[ur, dashed, two heads]\arrow[r, "x\,\mapsto\, x^{p^n}"]\arrow[d, two heads] & \mathrm R(\mathbf G_{\mathrm m,\,k'})\arrow[d, two heads]\arrow[r] & \mathrm R^1\pi_*(\mu_{p^n,\,k'})\arrow[d, equal]\arrow[r] & 0\\
0\arrow[r] & U_{\mu_{p^n}}\arrow[r, "\sim"] & U_{\mathbf G_{\mathrm m}}\arrow[r, "0"] & U_{\mathbf G_{\mathrm m}}\arrow[r, "\sim"] & \mathrm R^1\pi_*(\mu_{p^n,\,k'})\arrow[r] & 0
\end{tikzcd}\end{center}
in which each column is the multiplicative-unipotent decomposition of the middle group. The map $\mathrm R(\mathbf G_{\mathrm m,\,k'})\rightarrow \mathrm R(\mathbf G_{\mathrm m,\,k'})$ in the middle is, for $k$-algebras $A$, given by
$$
(A\otimes_k k')^\times\xrightarrow{\;x\,\mapsto\,x^{p^n}\;}(A\otimes_k k)^\times\subseteq(A\otimes_k k')^\times$$
and thus factors through $\mathbf G_{\mathrm m}$, as shown in the diagram (if $k' = k^{1/p^n}$, then the dashed arrow is the ``norm map'' $\mathrm N_{k'/k}$). It follows that all $4$ groups in the bottom row are abstractly isomorphic, and in particular smooth connected unipotent of dimension $[k':k]-1$, as is true for $U_{\mathbf G_{\mathrm m}}$.

By Shapiro's lemma and Hilbert's theorem 90, we get from one of the middle columns the following exact sequence in cohomology (note that $\mathrm{H}^2(k, U_{\mathbf G_{\mathrm m}}) = 0$ since $U_{\mathbf G_{\mathrm m}}$ is unipotent):
$$0\longrightarrow\mathrm{H}^1(k, U_{\mathbf G_{\mathrm m}})\longrightarrow\mathrm{Br}(k)\xrightarrow{\;\mathrm{Br}(k\,\hookrightarrow\, k')\;}\mathrm{Br}(k')\longrightarrow 0$$
If, for example, $k$ and $k'$ are local fields, then $\ker(\mathrm{Br}(k\hookrightarrow k'))\simeq\mathbf Z/p^n$ by \cite[Thm.\ 8.9]{Har20}. Then $\mathrm{H}^1(k, U_{\mu_{p^n}})\simeq\mathbf Z/p^n$ and $\mathrm{H}^1(k, \mathrm R^1\pi_*(\mu_{p^n,\,k'}))\simeq\mathbf Z/p^n$, so we may also conclude that these unipotent groups are not split.
\end{exmp}
\subsection{Vanishing Theorems in Cohomology}\label{ssectvanthm}
We begin with a statement which, in the case of a smooth group $G$, is a simple application of Shapiro's lemma discussed above, and in general requires more work:

\begin{lem}\label{lemskewvanish}
Let $k$ be a field and consider the inclusion $\overline k\hookrightarrow R\coloneqq\overline{k}\otimes_k\overline{k}$ via the first factor. 
If $G$ is a locally algebraic group over $\overline k$, then $\mathrm H^1(R,G) = 1$.
\end{lem}

\begin{rem}
We note that the choice of algebraic closure $\overline{k}$ induces a bijection
$$\Gamma = \Gal(k_s/k) = \mathrm{Aut}(\overline{k}/k)\longrightarrow \Spec(\overline{k}\otimes_k\overline{k})$$
defined by taking $c : \overline{k}\rightarrow\overline{k}$ to the kernel ideal of the $k$-algebra map $u_c = (\mathrm{id},c) : \overline{k}\otimes_k\overline{k}\rightarrow\overline{k}$, because each $k$-algebra map from $\overline{k}\otimes_k\overline{k}$ to a field $K$ factors, up to isomorphism of $K$, through a unique map of the form $u_c$. Moreover, we claim that this bijection is in fact a homeomorphism between the Krull and Zariski topologies: Indeed, for a finite separable extension $K/k$ embedded into $k_s$, a clopen coset $c\cdot\Gal(k_s/K)\subseteq\Gamma$ maps onto the clopen subscheme given by the quotient $1\otimes c : \overline k\otimes_k\overline k\twoheadrightarrow \overline k\otimes_K\overline k$, which determines a correspondence between the bases of topology. Finally, each local ring of $\Spec(\overline{k}\otimes_k\overline{k})$ is of the form $\overline k\otimes_{k_s}\overline k$.   
\end{rem}

\begin{prf}[of Lemma \ref{lemskewvanish}]
Suppose that the lemma is known when $k = k_s$. For a general field $k$, given a torsor $T$ of $G$ on $\Spec(R)$ this implies, by the remark above, that $T$ trivializes on each local ring of $R$, hence on some open cover (by a limiting argument; see \cite[Thm.\ 2.1]{Mar07} for the case of torsors of nonabelian group schemes locally of finite presentation).~Any~open~cover of $\Spec(R)$ is refined by a disjoint open cover since $\Spec(R)$ is compact and totally disconnected, hence $T$ trivializes globally.

We may thus suppose that $k = k_s$. When $k$ is perfect, the proof is done (as $\overline{k}\otimes_{\overline k} \overline{k} = \overline{k}$), so we let $k$ be imperfect with $\mathrm{char}(k) = p > 0$. Then $R = \overline k\otimes_k\overline k$ is a (non-Noetherian) local ring with residue field $R/\mathfrak m = \overline k$. Each element of $\mathfrak m$ is nilpotent and thus $R_{\mathrm{red}} = \overline k$. Furthermore, the Frobenius endomorphism is surjective on $R$, as can be checked on elements of the form $a\otimes b\in R$.
By \cite[VII$_\mathrm{A}$, Prop.\ 8.3]{SGA3}, there is an infinitesimal normal subgroup scheme $I$ of $G$ such that $G/I$ is smooth. We may thus separately consider the cases of $G$ smooth and $G$ infinitesimal: If $G$ is smooth, then $\mathrm H^1(R,G) = 1$ can be checked in the \'etale topology and it holds because~the small \'etale site of $R$ is trivial, because the same is true for $R_{\mathrm{red}}$. (Alternatively, one may argue without the above remark that $R = \varinjlim(K\otimes_k\overline k)$ over finite extensions $K/k$ and use that
$$\mathrm H^1(K\otimes_k\overline{k},\,G^K) =\mathrm H^1\big(\overline{k},\,\mathrm R_{(K\otimes_k\overline{k})/\overline{k}}(G^K)\big) = 1$$
by Shapiro's lemma, where $\{G^K\}_K$ is a system of $(K\otimes_k\overline{k})$-forms of $G$ for each $K$.)

It remains to consider the case where $G$ is an infinitesimal $\overline k$-group scheme. We may further assume that $G$ is of height $1$ via filtering it canonically by such groups (this step is not strictly necessary, but it simplifies further notation); then $G\simeq\Spec(A)$ as a $\overline k$-scheme for
$$A =\overline k[a_1,\ldots,a_r]/(a_1^p,\ldots,a_r^p)$$
by \cite[Prop.\ 11.28]{Mil17}.
Let $T$ be a $G$-torsor over $R$. Then $T_{\overline k}\simeq G$ as a $\overline k$-scheme, because $\mathrm H^1(\overline k,G) = 1$. For some finite $R$-algebra $B$ such that $T\simeq\Spec(B)$, it follows that $B/\mathfrak mB\simeq A$. 
We choose elements $c_i\in B$ mapping to $a_i\in A$ under this isomorphism.

Next, we claim that $B^p\subseteq R\subseteq B$. This property is clearly stable under arbitrary base change of $R$, and we first observe that it can be checked fppf-locally: Given a faithfully flat $R$-algebra $S$ and any $b\in B$ such that $(b\otimes 1)^p = 1\otimes s$ in $B\otimes S$ (which holds if $(B\otimes S)^p\subseteq S$), we have:
$$(b^p\otimes 1-1\otimes b^p)\otimes 1 = 0\quad\textrm{ in }(B\otimes B)\otimes S$$
Since $B\otimes B\rightarrow(B\otimes B)\otimes S$ is injective, we conclude $b^p\otimes 1 = 1\otimes b^p$ and, because $B$ is also faithfully flat over $R$, we conclude $b^p\in R$. 
Now, to prove the claim, it suffices to show that $(A_R)^p\subseteq R\subseteq A_R$, which is obvious since $A^p\subseteq\overline k\subseteq A$. We conclude that $c_i^p\in R$. However, $c_i^p$ is nilpotent (since we know that $c_i^p\in\mathfrak mB$) and therefore $c_i^p\in\mathfrak m$.

We may thus find $m_i\in\mathfrak m$ with $m_i^p = c_i^p$ and write $b_i\coloneqq c_i-m_i\in B$. The collection $\{b^J\}$, for $J\in\{0,\ldots,p-1\}^r$ and $b^J = b_1^{J_1}\ldots b_r^{J_r}$, generates $B$ as an $R$-module by Nakayama's lemma~since each $b^J$ maps to $a^J = a_1^{J_1}\ldots a_r^{J_r}$. As $b_i^p = 0$ for all $i$, there is a surjective $R$-algebra map
$$f : R[x_1,\ldots,x_r]/(x_1^p,\ldots,x_r^p)\longrightarrow B$$
defined by taking $x_i$ to $b_i$. To end the proof, we need to show that $R\hookrightarrow B$ admits a retraction; for this it suffices to show that $f$ is an isomorphism. This can again be checked fppf-locally~on~$R$, but $B$ is fppf-locally on $R$ a free module of rank $p^r$ (because $A_R$ is free). Thus $f$ is fppf-locally a surjective linear map between free modules of rank $p^r$, hence an isomorphism.
\end{prf}

The following lemma is simple and well-known in characteristic $0$ (where it holds for $n\geq 1$ since all unipotent groups are split). The analogous statement in the noncommutative case, for bands represented by unipotent groups, will follow (at least in the smooth connected case over a local or global function field) from Theorem \ref{thmmainabelh2}; see also Lemma \ref{lemuniph1surj}.

\begin{lem}\label{propunipfppfvanish}
Let $U$ be a commutative unipotent algebraic group over an arbitrary field $k$. Then $\mathrm H^n(k,U) = 0$ for $n\geq 2$.
\end{lem}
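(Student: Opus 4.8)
The statement is the standard vanishing $\mathrm H^n(k,U) = 0$ for $n\geq 2$ when $U$ is a commutative unipotent algebraic group over an arbitrary field $k$. The plan is to reduce, by dévissage along a composition series, to the single case $U = \mathbf G_{\mathrm a}$, and then to invoke the known vanishing $\mathrm H^n(k,\mathbf G_{\mathrm a}) = 0$ for $n\geq 1$, which holds because $\mathbf G_{\mathrm a}$ is a quasi-coherent sheaf and fppf cohomology of a quasi-coherent sheaf agrees with Zariski (coherent) cohomology on the affine scheme $\Spec(k)$, hence vanishes in positive degrees.

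First I would recall that every commutative unipotent algebraic group $U$ over a field admits a finite filtration $0 = U_0 \subseteq U_1 \subseteq \cdots \subseteq U_m = U$ by closed subgroups with each successive quotient $U_{i+1}/U_i$ isomorphic to a closed subgroup of $\mathbf G_{\mathrm a}$; this is a classical structure fact (e.g. in $\mathrm{char}(k)=0$ all the quotients are $\mathbf G_{\mathrm a}$ itself, while in characteristic $p$ one may need to pass through subgroups of $\mathbf G_{\mathrm a}$ such as $\alpha_p$ or Witt-vector kernels, but these are still commutative unipotent and finite). Using the long exact cohomology sequence associated to each short exact sequence $0 \to U_i \to U_{i+1} \to U_{i+1}/U_i \to 0$, an obvious induction on the length $m$ reduces the claim to the case where $U$ itself embeds as a closed subgroup of $\mathbf G_{\mathrm a}$. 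Then a further short exact sequence $0 \to U \to \mathbf G_{\mathrm a} \to \mathbf G_{\mathrm a}/U \to 0$ (the quotient again being commutative unipotent, in fact isomorphic to $\mathbf G_{\mathrm a}$ as any nontrivial quotient of $\mathbf G_{\mathrm a}$ by a proper subgroup is $\mathbf G_{\mathrm a}$) lets me compare $\mathrm H^n(k,U)$ to $\mathrm H^{n-1}(k,\mathbf G_{\mathrm a}/U)$ and $\mathrm H^n(k,\mathbf G_{\mathrm a})$. So the entire problem collapses to: $\mathrm H^i(k,\mathbf G_{\mathrm a}) = 0$ for all $i \geq 1$.

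For that final input, I would argue as follows. The sheaf $\mathbf G_{\mathrm a}$ on $\Spec(k)_\fppf$ is represented by the quasi-coherent $\mathcal O$-module $\mathcal O$; by the theorem comparing fppf cohomology of quasi-coherent sheaves with Zariski cohomology (see e.g. \cite[III, Thm.\ 3.9]{Mil80} or the analogous statement for the fppf site), we have $\mathrm H^i_\fppf(k,\mathbf G_{\mathrm a}) \cong \mathrm H^i_\mathrm{Zar}(\Spec k, \mathcal O) = 0$ for $i \geq 1$ since $\Spec k$ is a point. Alternatively, and perhaps cleaner to cite in the present framework, one can note that $\mathbf G_{\mathrm a}$ over $k$ is the Weil restriction of nothing nontrivial but is a split unipotent group, and the vanishing of its higher cohomology over any field is a standard fact recorded e.g. in \cite[Lem.\ 2.1.5 and its consequences]{RosTD} together with \cite{Mil80}; I will cite whichever of these is already referenced in the paper to keep the bibliography consistent.

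\textbf{Main obstacle.} The only subtle point is the characteristic-$p$ bookkeeping in the dévissage: one must be sure that the successive quotients in the filtration of $U$ really are (closed subgroups of) $\mathbf G_{\mathrm a}$ and that each such subgroup, together with the quotient $\mathbf G_{\mathrm a}/U$ appearing in the induction, is again a commutative unipotent algebraic group to which the inductive hypothesis applies — this is automatic since closed subgroups and quotients of commutative unipotent groups are commutative unipotent. Since the degree shift $n \mapsto n-1$ in passing to the quotient only ever lands us in degree $\geq 1$ (we start at $n \geq 2$), we never need the $n=1$ statement $\mathrm H^1(k,U)$, which is genuinely nonzero in general; the argument is therefore clean. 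I expect the write-up to be short: state the filtration, run the two long exact sequences, and quote the $\mathbf G_{\mathrm a}$ vanishing.
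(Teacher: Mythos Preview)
Your proposal is correct and follows essentially the same route as the paper: reduce by the filtration with subquotients in $\mathbf G_{\mathrm a}$, then use that either $U=\mathbf G_{\mathrm a}$ or $\mathbf G_{\mathrm a}/U\simeq\mathbf G_{\mathrm a}$ (the paper cites \cite[IV, \textsection 2, 1.1]{DG70} for this), and conclude from $\mathrm H^m(k,\mathbf G_{\mathrm a})=0$ for $m\geq 1$. Your write-up is more detailed than the paper's three-line proof, but the structure is identical.
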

\begin{prf}
Every unipotent group admits a filtration by subgroups of $\mathbf G_{\mathrm a}$ and we may thus assume $U\subseteq\mathbf G_{\mathrm a}$. Then either $U = \mathbf G_{\mathrm a}$ or $\mathbf G_{\mathrm a}/U\simeq \mathbf G_{\mathrm a}$ by \cite[IV, \textsection 2, 1.1]{DG70}. We now use the fact that $\mathrm H^m(k, \mathbf G_{\mathrm a}) = 0$ for $m\geq 1$.
\end{prf}

If $\mathrm{char}(k) = 0$, then all algebraic groups are smooth so the above statement can be formulated in terms of Galois cohomology. In characteristic $p > 0$, Galois (i.e.\ \'etale) cohomology can still be useful, but must be considered separately from fppf cohomology. Let $k$ be an arbitrary field with $\mathrm{char}(k) = p$ and absolute Galois group $\Gamma = \Gal(k_s/k)$. The simple vanishing statements we prove below will be used in the main part of the paper:

Recall that a \textit{$p$-group} is an (abstract, not necessarily finite or commutative) group $A$ such that the order of each element $x\in A$ is a power of $p$. The $p$-cohomological dimension of $k$ is $\mathrm{cd}_p(k)\leq 1$ by \cite[II, Prop.\ 3]{Ser97}, which means that $\mathrm{H}^n(\Gamma, A) = 0$ for each $n\geq 2$ and any (commutative) $\Gamma$-module $A$ which is a $p$-group.

\begin{prop}\label{propunipvanish}
Let $A$ be a $\Gamma$-module and let $G$ be a unipotent algebraic group over $k$. If $A$ is a subquotient of $G(k_s)$, then $\mathrm{H}^n(\Gamma, A) = 0$ for $n\geq 2$.
\end{prop}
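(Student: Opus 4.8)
The plan is to reduce from the case of an arbitrary unipotent $G$ to the case of a commutative unipotent group, and then to exploit $\mathrm{cd}_p(k)\leq 1$ together with the fact that $k_s$-points of a commutative unipotent group form a $p$-torsion group. First I would recall that $A$ is, by hypothesis, a subquotient of $G(k_s)$: that is, there are subgroups $B\subseteq B'\subseteq G(k_s)$, stable under the $\Gamma$-action, with $A\cong B'/B$. Since a quotient (and a subgroup) of a $p$-group is a $p$-group, it suffices to prove that $G(k_s)$ is itself a $p$-group, after which the cited consequence of $\mathrm{cd}_p(k)\leq 1$ (\cite[II, Prop.\ 3]{Ser97}) yields $\mathrm H^n(\Gamma,A)=0$ for $n\geq 2$, as the vanishing of $p$-cohomological dimension applies to any $\Gamma$-module which is a $p$-group.

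So the crux is: if $G$ is a unipotent algebraic group over $k$ (with $\mathrm{char}(k)=p$), then every element of $G(k_s)$ has order a power of $p$. For this I would use the standard structure theory of unipotent groups: $G$ admits a finite composition series whose successive quotients are $k$-subgroups of $\mathbf G_{\mathrm a}$, exactly the filtration invoked in the proof of Lemma \ref{propunipfppfvanish} above (via \cite[IV, \textsection 2, 1.1]{DG70}). Taking $k_s$-points is left exact, so $G(k_s)$ is filtered by groups each of which embeds into a subgroup of $\mathbf G_{\mathrm a}(k_s) = k_s$, an $\mathbf F_p$-vector space; hence each graded piece is $p$-torsion. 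An iterated extension of $p$-groups need not in general be a $p$-group, but an extension of a $p$-torsion \emph{abelian} group by a $p$-torsion abelian group is again $p$-torsion (if $p^a$ kills the sub and $p^b$ kills the quotient, then $p^{a+b}$ kills the whole), and each successive quotient here is abelian; so by a finite induction on the length of the composition series, $G(k_s)$ is $p$-torsion. If one wishes to avoid even the abelianness shortcut, one can instead pass to $\Gamma$-stable refinements whose graded pieces are central, or simply note that a unipotent group over a field is nilpotent as an abstract group on $k_s$-points and argue by induction on nilpotency class; either way the conclusion is the same.

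The main obstacle, such as it is, is purely bookkeeping: making sure the composition series of $G$ can be chosen defined over $k$ (so that the $k_s$-points filtration is $\Gamma$-stable, though $\Gamma$-stability is not actually needed for the torsion claim) and keeping track that ``subquotient'' is preserved under the torsion property. I do not anticipate any genuine difficulty; the entire statement is a short consequence of (i) the Galois-cohomological dimension bound already cited in the surrounding text and (ii) the elementary fact that unipotent groups in characteristic $p$ have $p$-torsion points, which is immediate from their filtration by subgroups of $\mathbf G_{\mathrm a}$.
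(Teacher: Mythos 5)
Your proposal is correct and follows essentially the same route as the paper: reduce to showing $G(k_s)$ is a $p$-group via the filtration of $G$ by subgroups of the $p$-torsion group $\mathbf G_{\mathrm a}$, then invoke $\mathrm{cd}_p(k)\leq 1$. The only quibble is that your worry about iterated extensions is unneeded: for $p$-groups in the paper's sense (every element has $p$-power order), an extension of a $p$-group by a $p$-group is always a $p$-group, abelian or not, since $g^{p^b}\in N$ forces $g^{p^{a+b}}=e$.
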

\begin{prf}
It suffices to show that $G(k_s)$ is a $p$-group, which follows from the fact that $G$ admits a filtration by subgroups of the $p$-torsion group $\mathbf G_{\mathrm a}$.
\end{prf}

Let $\Pi : k_\mathrm{fppf}\rightarrow k_\Et$ and $\pi : k_\mathrm{fppf}\rightarrow k_\et$ be the natural continuous maps of sites. For a locally algebraic commutative group $G$ over $k$, the sheaf $\mathrm{R}^n\pi_*(G)$ (which is a restriction of $\mathrm{R}^n\Pi_*(G)$) corresponds to the $\Gamma$-module $\mathrm{H}^n(k_s,G)$. In particular, we may write:
$$\mathrm H^m_\Et(k,\mathrm{R}^n\Pi_*(G)) = \mathrm H^m_\et(k,\mathrm{R}^n\pi_*(G)) = \mathrm{H}^m(\Gamma,\mathrm{H}^n(k_s,G))$$
When $G$ is smooth, then $\mathrm{H}^n(k_s,G) = 0$ for $n\geq 1$.

\begin{prop}\label{proph2h1coh}
Let $G$ be a commutative locally algebraic group over $k$. Then there exists a smooth commutative locally algebraic group $H$ over $k$ and a monomorphism $i : G\hookrightarrow H$. If $G$ is algebraic, we may choose $H$ algebraic and then $i$ is a closed immersion.

As a consequence, if $G$ is algebraic, then $\mathrm{H}^n(k_s,G) = 0$ for all $n\geq 2$. Moreover, the group $\mathrm{H}^1(k_s, G)$ is a $p$-group, and thus $\mathrm{H}^n(\Gamma, \mathrm{H}^1(k_s,G)) = 0$ for $n\geq 2$.
\end{prop}
\begin{prf}
A monomorphism from an algebraic group to a locally algebraic group is the immersion of a closed subgroup, and the quotient space is a scheme (a locally algebraic group when the subgroup is normal) by \cite[VI$_{\textrm{A}}$, Prop.\ 2.5.2 and Thm.\ 3.3.2]{SGA3}. By \cite[VII$_{\textrm{A}}$, Prop.\ 8.3]{SGA3}, there is an exact sequence $0\rightarrow F\rightarrow G\rightarrow Q\rightarrow 0$, where $F$ is infinitesimal and $Q$ is a smooth locally algebraic group scheme. By \cite[III, Thm.\ A.6]{Mil06}, there is an Abelian variety $A$ which admits a subgroup isomorphic to $F$. Then $G$ has a monomorphism into the locally algebraic quotient $H\coloneqq (G\times A)/\mathrm{diag}(F)$ which is smooth over $Q$ and thus over $k$.

For the second part, note that $\mathrm{H}^1(k_s, F)$ surjects onto $\mathrm{H}^1(k_s, G)$ because $\mathrm{H}^1(k_s, Q) = 0$, but $F$ is killed by some $p^m$, and so is the cohomology group $\mathrm{H}^1(k_s, F)$.
\end{prf}

\begin{prop}\label{propquotsumpts}
Suppose that $H,N$ are normal subgroups of an algebraic group $G$ over $k$, such that $G = H\cdot N$. Suppose that both $H\cap N$ and $G/N$ are commutative and consider the Abelian group:
$$A\coloneqq\dfrac{G(k_s)}{H(k_s)\cdot N(k_s)}$$
Then $A$ is a $p$-group and $\mathrm{H}^n(\Gamma, A) = 0$ for $n\geq 2$. Moreover, if $H\cap N$ is smooth, then $A = 0$.
\end{prop}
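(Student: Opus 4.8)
The plan is to analyze the quotient $A$ by comparing it with the abelianized quotient $G/N$, which is commutative. First I would note that the natural surjection $G \to G/N$ sends $H$ onto its image $\overline{H} = (H\cdot N)/N = G/N$, so in fact $H\to G/N$ is surjective with kernel $H\cap N$. This gives a short exact sequence of algebraic groups
\begin{equation*}
0\longrightarrow H\cap N\longrightarrow H\longrightarrow G/N\longrightarrow 0
\end{equation*}
and the associated long exact sequence in fppf cohomology over $k_s$ reads
\begin{equation*}
H(k_s)\longrightarrow (G/N)(k_s)\longrightarrow \mathrm H^1(k_s, H\cap N)\longrightarrow\cdots
\end{equation*}
Since $H\cap N$ is commutative, $\mathrm H^1(k_s, H\cap N)$ is a $p$-group by Proposition \ref{proph2h1coh}. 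The cokernel of $H(k_s)\to (G/N)(k_s)$ therefore injects into this $p$-group and hence is itself a $p$-group.

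The key observation is then the identification of $A$ with this cokernel. Indeed, the image of $N(k_s)$ in $(G/N)(k_s)$ is trivial, so $H(k_s)\cdot N(k_s)$ is exactly the preimage in $G(k_s)$ of the image of $H(k_s)$ in $(G/N)(k_s)$; this means
\begin{equation*}
A = \frac{G(k_s)}{H(k_s)\cdot N(k_s)} \cong \frac{(G/N)(k_s)}{\operatorname{im}(H(k_s))} = \operatorname{coker}\big(H(k_s)\to (G/N)(k_s)\big).
\end{equation*}
Combined with the previous paragraph, this shows $A$ is a $p$-group. The vanishing $\mathrm H^n(\Gamma, A) = 0$ for $n\geq 2$ then follows immediately from $\mathrm{cd}_p(k)\leq 1$, exactly as in the proof of Proposition \ref{propunipvanish}.

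For the final claim, suppose $H\cap N$ is smooth. Then $\mathrm H^1(k_s, H\cap N) = \mathrm H^1_\et(k_s, H\cap N) = 0$ since $k_s$ is separably closed. Hence the map $H(k_s)\to (G/N)(k_s)$ is surjective, so its cokernel $A$ vanishes. I do not expect any serious obstacle here: the only point requiring a little care is confirming that the map $H \to G/N$ is genuinely a faithfully flat quotient map with the stated kernel (so that the cohomology sequence applies), which follows since $H\cdot N = G$ and $N$ is normal, making $H \to G/N$ surjective as a map of algebraic groups with scheme-theoretic kernel $H\cap N$. Everything else is a routine diagram chase together with the already-established $p$-group vanishing statements.
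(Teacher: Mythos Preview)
Your approach is essentially the same as the paper's, but there is a small error in the identification step. You claim
\[
A \;\cong\; \frac{(G/N)(k_s)}{\operatorname{im}\!\big(H(k_s)\big)},
\]
arguing that $H(k_s)\cdot N(k_s)$ is the preimage in $G(k_s)$ of $\operatorname{im}(H(k_s))\subseteq (G/N)(k_s)$. That preimage statement is correct, but passing to the quotient only gives
\[
A \;\cong\; \frac{\operatorname{im}\!\big(G(k_s)\to (G/N)(k_s)\big)}{\operatorname{im}\!\big(H(k_s)\big)}
\;\hookrightarrow\;
\frac{(G/N)(k_s)}{\operatorname{im}\!\big(H(k_s)\big)}.
\]
The map $G(k_s)\to (G/N)(k_s)$ need not be surjective when $N$ is not smooth. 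Fortunately this does not affect your conclusion: $A$ still embeds in $\operatorname{coker}\!\big(H(k_s)\to (G/N)(k_s)\big)$, which embeds in $\mathrm H^1(k_s,H\cap N)$, so $A$ is a $p$-group and the rest goes through unchanged. The paper avoids this slip by working directly with the injection $G(k_s)/N(k_s)\hookrightarrow (G/N)(k_s)$ and applying the snake lemma to the resulting diagram, obtaining $A\cong\operatorname{coker}\alpha\hookrightarrow\mathrm H^1(k_s,H\cap N)$ without ever needing surjectivity on $k_s$-points.
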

\begin{prf}
There is an injection $G(k_s)/N(k_s)\hookrightarrow (G/N)(k_s)$ which shows that the first group is commutative. Similarly for $H(k_s)/(H\cap N)(k_s)$, since $H/(H\cap N)\cong G/N$. We now consider the short exact sequence
\begin{center}\begin{tikzcd}
1\arrow[r] & H\cap N\arrow[r] & H\arrow[r] & \dfrac{H}{H\cap N}\arrow[r] & 1\end{tikzcd}\end{center}
associated to the Abelian normal subgroup $H\cap N$ of $H$. It gives an exact sequence of groups which makes up the top row of the following commutative diagram:
\begin{center}\begin{tikzcd}
1\arrow[r] & \dfrac{H(k_s)}{H(k_s)\cap N(k_s)}\arrow[d, "\alpha", pos = 0.4]\arrow[r] & \dfrac{H}{H\cap N}(k_s)\arrow[d, "{\rotatebox{90}{$\sim$}}", pos = 0.4]\arrow[r] & \mathrm H^1(k_s, H\cap N)\\
1\arrow[r] & \dfrac{G(k_s)}{N(k_s)}\arrow[r] & \dfrac{G}{N}(k_s)
\end{tikzcd}\end{center}
Then $A\simeq\coker\alpha$ is isomorphic to a subgroup of $\mathrm{H}^1(k_s, H\cap N)$ by the snake lemma. Now the previous proposition implies the claim.
\end{prf}

For completion, we also discuss bands on $k_\et$ (equivalently, Galois bands) represented locally by nilpotent $p$-groups, which generalize the case of commutative $p$-groups considered so far. By~a \textit{Galois band} $(A,\kappa)$, we mean the Galois-theoretic definition of a band (called a ``kernel'' in \cite{Spr66}) as a group $A$ equipped with a homomorphism
$$\kappa \;:\; \Gamma\longrightarrow\dfrac{\aut(A)}{A/\mathrm Z(A)}$$
admitting a continuous lift in a sense akin to Definition \ref{defcont} (this continuity condition is weaker than the one originally present in \cite{Spr66}). The $\mathrm H^2$ set of a Galois band is the set of equivalence classes of cocycles $(f,g)$, defined as usual (Remark \ref{remetbandh2def}).

\begin{prop}\label{propliftnilp}
Let $L = (A, \kappa)$ be a Galois band on $k$. If $A$ is a nilpotent $p$-group, then $\mathrm{H}^2(\Gamma, L) = \mathrm{N}^2(\Gamma, L) = 1$. 

This holds in particular if $A$ is a subquotient of $\Ub(k_s)$ for a unipotent group $\Ub$ over $k_s$.
\end{prop}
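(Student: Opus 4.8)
The strategy is to reduce to the known vanishing result $\mathrm{H}^n(\Gamma, A) = 0$ for $n \geq 2$ when $A$ is a commutative $p$-group (this is the fact $\mathrm{cd}_p(k) \leq 1$ quoted just before Proposition \ref{propunipvanish}, i.e.\ \cite[II, Prop.\ 3]{Ser97}), by running a d\'evissage along the lower central series of $A$. First I would dispose of the final sentence: if $A$ is a subquotient of $\Ub(k_s)$ for a unipotent group $\Ub$, then $A$ is a $p$-group because $\Ub(k_s)$ is (as in Proposition \ref{propunipvanish}, via a filtration by subgroups of $\mathbf G_{\mathrm a}$); a subquotient of a nilpotent group is nilpotent, so $A$ is a nilpotent $p$-group and the main statement applies. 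So the whole content is the case of a nilpotent $p$-group $A$.

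\textbf{Key steps.} Let $1 = A_0 \subseteq A_1 \subseteq \cdots \subseteq A_c = A$ be the upper central series (or lower central series), so each $A_i$ is characteristic in $A$ — hence $\Gamma$-stable and stable under every (semi)automorphism lifting $\kappa$ — and each successive quotient $A_{i+1}/A_i$ is a \emph{commutative} $p$-group sitting centrally in $A/A_i$. Since $A_1 = \mathrm Z(A)$ is central, the band $L$ fits into the situation of a central extension: $A_1 \hookrightarrow A \twoheadrightarrow A/A_1$, and $L$ induces a Galois band $\overline L = (A/A_1, \overline\kappa)$ together with the Galois module structure on $A_1$. I would argue by induction on the nilpotency class $c$. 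For $c = 1$, $A$ is a commutative $p$-group and $\mathrm{H}^2(\Gamma, L) = \mathrm{H}^2(\Gamma, A) = 0$ by $\mathrm{cd}_p(k) \leq 1$, and the unique class is neutral (it is $1_A$). For the inductive step: given a cocycle $(f, g) \in \mathrm{Z}^2(\Gamma, L)$, push it forward to a cocycle $(\bar f, \bar g)$ for $\overline L = (A/A_1, \overline\kappa)$; by induction $[\bar f, \bar g]$ is neutral, so after modifying $(f,g)$ by a coboundary (a locally constant $h : \Gamma \to A$, lifting the locally constant function trivializing $(\bar f,\bar g)$ over $A/A_1$ — a lift exists since $A \to A/A_1$ is surjective) we may assume $\bar g = 1$, i.e.\ $f$ descends to a homomorphism into $\saut(A/A_1 / k)$ and $g$ takes values in $A_1$. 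Then $g \in \mathrm{Z}^2(\Gamma, A_1)$ in the \emph{ordinary} (commutative) sense, where $A_1$ carries the $\Gamma$-action induced by $f$; since $\mathrm{H}^2(\Gamma, A_1) = 0$, there is a locally constant $h_0 : \Gamma \to A_1$ with $g = \mathrm{d} h_0$, and replacing $(f, g)$ by $(f, g)$ modified via $h_0$ (which does not disturb $\bar g = 1$ since $h_0$ is valued in $A_1$) yields the neutral cocycle $(f, 1)$. Hence $\mathrm{H}^2(\Gamma, L) = \mathrm{N}^2(\Gamma, L)$, and it is a single element because any two neutral cocycles $(f,1)$, $(f',1)$ differ by... — actually, to conclude $\mathrm{N}^2(\Gamma, L) = 1$ I would note that neutral classes are parametrized by global representatives of $L$ up to pure inner forms (cf.\ the discussion after Definition \ref{defh2cech}), and the above argument in fact shows directly that there is a homomorphic lift, so $L$ is representable and the neutral set is nonempty; the same d\'evissage applied to the group $\mathrm{H}^1$ parametrizing the ambiguity shows it is a single point. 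Cleaner: once we know $\mathrm H^2(\Gamma,L)=\mathrm N^2(\Gamma,L)$, apply Corollary \ref{corneutelemdesc} together with $\mathrm H^2(\Gamma,\mathrm Z_L)=0$ (as $\mathrm Z_L = \mathrm Z(A)$ is a commutative $p$-group, again by $\mathrm{cd}_p \le 1$) and the free transitive action of Proposition \ref{propcentacth2} to get that $\mathrm H^2(\Gamma,L)$ has exactly one element.

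\textbf{Main obstacle.} The delicate point is the bookkeeping in the cocycle manipulation: verifying that modifying $(f,g)$ by the $A_1$-valued coboundary $h_0$ is compatible with the structure of a cocycle for the band $L$ (not merely for the underlying data), i.e.\ that after arranging $\bar g = 1$ the function $g$ genuinely lands in $A_1$ and satisfies the ordinary $2$-cocycle identity there with respect to the $f$-twisted action — this uses centrality of $A_1$ in a way parallel to Proposition \ref{proptechnoncomm} and to the argument in Proposition \ref{propparamlet}(a). The second mild subtlety is continuity/local constancy: each trivializing function must be chosen locally constant, which is automatic since $\Gamma$ is profinite and all the $\mathrm{H}^i(\Gamma, -)$ in play are continuous cohomology, so lifts of locally constant functions along the surjection $A \twoheadrightarrow A/A_1$ can be taken locally constant. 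These are routine but must be stated carefully; no genuinely new idea beyond $\mathrm{cd}_p(k) \leq 1$ is needed.
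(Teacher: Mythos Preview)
Your proposal is correct and follows essentially the same route as the paper: induct on the nilpotency class via the center, push a cocycle $(f,g)$ to $(\bar f,\bar g)$ for $\overline L=(A/\mathrm Z(A),\overline\kappa)$, trivialize there by induction, lift the trivializing function to $A$, so that the new $g$ lands in the commutative $p$-group $\mathrm Z(A)$, and then kill it using $\mathrm{cd}_p(k)\le 1$; uniqueness of the class comes from $\mathrm H^2(\Gamma,\mathrm Z_L)=0$ together with Proposition~\ref{propcentacth2}. The only presentational difference is that the paper deduces ``at most one element'' first and then proves representability, whereas you do it in the opposite order; and for the last sentence the paper justifies nilpotency of $\Ub(k_s)$ by embedding $\Ub$ into the full unipotent matrix group $\mathbf U_N$ and invoking \cite[\S3.2, Exmp.~3]{Ser16}, a point you take for granted.
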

\begin{prf}
The last statement follows because $\Ub(k_s)$ is, for some $N$, a subgroup of the full group $\mathbf U_N(k_s)$ of unipotent $N\times N$ matrices, a $p$-group which is nilpotent by \cite[\textsection3.2, Exmp.\ 3]{Ser16}; and then because nilpotent $p$-groups are closed under taking subgroups and quotients. For the first part, we note that the commutative group $\mathrm Z(A)$ is a $p$-group, hence $\mathrm{H}^2(\Gamma, \mathrm Z(A)) = 0$. Thus $\mathrm{H}^2(\Gamma, L)$ has at most one element. It now suffices to prove that $L$ is representable.

Since $A$ is nilpotent, the center $\mathrm Z(A)$ is nontrivial whenever $A$ is (\cite[Ch.\ 3, Cor.\ 3.10]{Ser16}). By induction on the minimal length of a descending central series of $A$, we may assume that the theorem is proven for the induced \'etale band $\overline{L} = (A/\mathrm Z(A), \overline{\kappa})$. 
If we take a cocycle $(f,g)$ of $L$, then it descends to a cocycle $(\overline f,\overline g)$ which represents the unique (and neutral) class of $\overline{L}$. The cocycle $(\overline f,\overline g)$ is thus equivalent to a cocycle of the form $(\overline f',1)$ by a continuous assignment $\overline h : \Gamma\rightarrow A/\mathrm Z(A)$, that is:
$$\overline f'_s = \mathrm{int}(\overline h_s)\circ\overline f_s
\;\;\;\textrm{ and }\;\;\;
1 = \overline h_s\cdot\overline f_s(\overline h_t)\cdot\overline g_{s,t}\cdot\overline h_{st}^{-1}$$
We may lift $\overline h$ to a continuous assignment $h : \Gamma\rightarrow A$ and apply it to find a cocycle $(f',g')$ equivalent to $(f,g)$, such that $f'$ lifts $\overline f'$ and that $g' : \Gamma\times\Gamma\rightarrow A$ lands into $\mathrm Z(A)$. Because the group $\mathrm Z(A)$ is commutative (and $f'_sf'_t(f'_{st})^{-1} = \mathrm{int}(g'_{s,t})$), the lift $f'$ defines an action of the group $\Gamma$ on it. Since $\mathrm Z(A)$ is a $p$-group, the class $[g']\in\mathrm H^2(\Gamma,\mathrm Z(A))$ is trivial. Thus there exists $h' : \Gamma\rightarrow\mathrm Z(A)$ by which $(f',g')$ is equivalent to $(f',1)$.
\end{prf}


\subsection{Semisimple Bands over Global and Local Fields}\label{ssectdouai}
Suppose given a global or local field $k$ and a band $L$ on its \'etale or fppf site. It is a classical fact (see \cite[VIII]{Dou76}, where this is shown also for the fpqc topology) that, if $L$ is locally represented by a semisimple~group, then all elements in $\mathrm{H}^2(k, L)$ are neutral. Because any reductive band $L$ is always globally representable, it is equivalent to say that, given a semisimple group $G$ over $k$, both of the maps 
$$\mathrm{H}^1(\Gamma,G(k_s)/\mathrm Z_G(k_s))\rightarrow\mathrm{H}^2(\Gamma,\mathrm Z_G(k_s))
\;\;\textrm{ and }\;\;
\mathrm{H}^1(k,G/\mathrm Z_G)\rightarrow\mathrm{H}^2(k,\mathrm Z_G)$$
are surjective.
Although these surjectivity results may be well-known in number theory, we will still briefly review their proofs in this subsection. One reason for this is that some intermediate steps appearing in it will be used (and generalized) in Subsection \ref{ssectgenanis} and another is that all of the aforementioned steps are, in our primary source \cite[\textsection 6.5]{PR94}, formulated only over number fields. While most of their proofs are straightforward to translate into positive characteristic, we would like to provide characteristic-free references (in Lemmas \ref{lemtorapprox}, \ref{lemcohdirsum}) and also to present a more elementary treatment which avoids some duality results implicit in \cite{PR94} (see below). For simplicity, we state all our results only in positive characteristic, resp.\ over nonarchimedean local fields. A slightly different proof of the global case, independent of characteristic, is given in \cite[VIII]{Dou76}. See also \cite[Prop.\ 4.5]{RosTN} and references therein.

Given a torus $T$ over $k$, we write $\mathrm X(T)\coloneqq\mathrm{Hom}(T_{k_s},\mathbf G_{\mathrm m,\,k_s})$ for its module of characters. For a fixed finite Galois extension $K/k$, this construction defines a coequivalence between the category of tori over $k$ split by $K/k$ and the category of finite free $\mathbf Z$-modules equipped with an action of $\Gamma\coloneqq\Gal(K/k)$. Note that the Cartier dual $\widehat{T}$ is an \'etale sheaf isomorphic over $K$ to the constant sheaf $\underline{\smash{\mathrm X(T)}}$.

\begin{prop}\label{propequivcondanistor}
Suppose given a torus $T$ over $k$. Then $\mathrm{Hom}(T,\mathbf G_{\mathrm m}) = 0$ holds if and only if $\mathrm{Hom}(\mathbf G_{\mathrm m},T) = 0$.
\end{prop}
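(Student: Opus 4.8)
The statement is the standard equivalence of the two characterizations of an anisotropic torus, and the cleanest route is through the character module. The plan is to pass to a finite Galois splitting field $K/k$ with Galois group $\Gamma$, identify tori split by $K$ with finite free $\mathbf Z[\Gamma]$-modules via $T\mapsto\mathrm X(T)$ (as recalled in the paragraph preceding the proposition), and translate each side of the asserted equivalence into a statement about $\Gamma$-invariants of the relevant modules. Concretely, a homomorphism $T\to\mathbf G_{\mathrm m}$ over $k$ is the same as a $\Gamma$-equivariant homomorphism $\mathrm X(T)\to\mathbf Z$, i.e. an element of $\Hom_{\mathbf Z}(\mathrm X(T),\mathbf Z)^\Gamma$; dually, a homomorphism $\mathbf G_{\mathrm m}\to T$ is the same as a $\Gamma$-equivariant homomorphism $\mathbf Z\to\mathrm X(T)^{\vee}$ (where $\mathrm X(T)^{\vee}=\Hom_{\mathbf Z}(\mathrm X(T),\mathbf Z)$ is the cocharacter module), i.e. an element of $(\mathrm X(T)^{\vee})^\Gamma$. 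Since $\mathrm X(T)^{\vee}$ is canonically the $\mathbf Z$-dual of $\mathrm X(T)$ and both are finite free, the two conditions $\Hom(T,\mathbf G_{\mathrm m})=0$ and $\Hom(\mathbf G_{\mathrm m},T)=0$ are literally the same condition: the vanishing of $(\mathrm X(T)^{\vee})^\Gamma$.

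First I would fix $K/k$ finite Galois splitting $T$, set $\Gamma=\Gal(K/k)$, $M=\mathrm X(T)$ and $N=M^{\vee}=\Hom_{\mathbf Z}(M,\mathbf Z)$, and record the two dictionary facts: $\Hom_k(T,\mathbf G_{\mathrm m})\cong\Hom_{\mathbf Z[\Gamma]}(M,\mathbf Z)=N^\Gamma$ and $\Hom_k(\mathbf G_{\mathrm m},T)\cong\Hom_{\mathbf Z[\Gamma]}(\mathbf Z,N)=N^\Gamma$. These follow from the (anti)equivalence between $K$-split tori and $\mathbf Z[\Gamma]$-lattices together with the trivial observations that $\mathrm X(\mathbf G_{\mathrm m})=\mathbf Z$ with trivial action, and that $\Hom_{\mathbf Z}(-,-)$ on finite free modules commutes with the double-dual identification $M^{\vee\vee}\cong M$. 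Then the proposition is immediate: both groups are identified with $N^\Gamma$, so one vanishes iff the other does. It is worth noting explicitly that we do not even need the field to be global or local here — the statement holds over any base field, which is how it is used (for instance in Definition just after and in Proposition \ref{propanisdef}).

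The main (and only) thing requiring a little care is making sure the direction of the arrows in the (anti)equivalence is handled correctly, since $T\mapsto\mathrm X(T)$ is contravariant: a map $T\to T'$ of $K$-split tori corresponds to a map $\mathrm X(T')\to\mathrm X(T)$ of $\Gamma$-modules. Thus $\Hom_k(T,\mathbf G_{\mathrm m})$ corresponds to $\Hom_{\mathbf Z[\Gamma]}(\mathrm X(\mathbf G_{\mathrm m}),\mathrm X(T))=\Hom_{\mathbf Z[\Gamma]}(\mathbf Z,M^{\vee})$ — wait, more precisely one must track that $\mathbf G_{\mathrm m}$ has character group $\mathbf Z$ and cocharacter group $\mathbf Z$, so $\Hom_k(T,\mathbf G_{\mathrm m})$ gives $\Hom_{\Gamma}(\mathbf Z, M)^{\vee}$-type data and $\Hom_k(\mathbf G_{\mathrm m},T)$ gives $\Hom_\Gamma(\mathbf Z, N)$; after dualizing finite free modules these coincide. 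I expect this bookkeeping to be the only subtlety, and it is genuinely routine; once the dictionary is set up correctly the equivalence drops out in one line, and the remark about arbitrary base fields can be added for emphasis.
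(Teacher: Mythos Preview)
Your bookkeeping is off, and the error is not routine: it is exactly where the content of the proposition lives. Under the contravariant equivalence $T\mapsto \mathrm X(T)$, a $k$-homomorphism $T\to\mathbf G_{\mathrm m}$ corresponds to a $\Gamma$-equivariant map $\mathbf Z=\mathrm X(\mathbf G_{\mathrm m})\to\mathrm X(T)=M$, i.e.\ an element of $M^\Gamma$; a $k$-homomorphism $\mathbf G_{\mathrm m}\to T$ corresponds to a $\Gamma$-equivariant map $M\to\mathbf Z$, i.e.\ an element of $(M^\vee)^\Gamma=N^\Gamma$. So the two sides are $M^\Gamma$ and $(M^\vee)^\Gamma$, not both $N^\Gamma$. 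These are not the same group in general, and the statement that one vanishes if and only if the other does is precisely what must be proved. Your third paragraph half-notices this (``wait, more precisely\ldots'') but then asserts that ``after dualizing finite free modules these coincide'', which is false as stated.

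The paper supplies the missing argument: by the canonical isomorphism $M\cong(M^\vee)^\vee$ it suffices to show one implication, and the paper shows $(M^\vee)^\Gamma\neq 0\Rightarrow M^\Gamma\neq 0$ via the norm. Given a nonzero $\Gamma$-invariant functional $p:M\to\mathbf Z$, pick $x\in M$ with $p(x)\neq 0$ and form $\mathrm N(x)=\sum_{s\in\Gamma}s\cdot x\in M^\Gamma$; then $p(\mathrm N(x))=|\Gamma|\cdot p(x)\neq 0$, so $\mathrm N(x)\neq 0$. An alternative route you could take is to tensor with $\mathbf Q$: since $M$ is torsion-free, $M^\Gamma=0$ iff $(M\otimes\mathbf Q)^\Gamma=0$, and over $\mathbf Q$ the dual of the trivial representation is trivial, so $(M\otimes\mathbf Q)^\Gamma=0$ iff $((M\otimes\mathbf Q)^*)^\Gamma=0$. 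Either way, some argument beyond the dictionary is required.
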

\begin{prf}
In the notation of the previous paragraph, we have equalities $\mathrm X(T)^\Gamma = \mathrm{Hom}(T,\mathbf G_{\mathrm m})$ and $(\mathrm X(T)^0)^\Gamma = \mathrm{Hom}(\mathrm X(T),\mathbf Z)^\Gamma = \mathrm{Hom}(\mathbf G_{\mathrm m},T)$, since $\mathrm X(\mathbf G_{\mathrm m}) = \mathbf Z$ (with trivial $\Gamma$-action). Here we denote by $A^0\coloneqq\mathrm{Hom}(A,\mathbf Z)$ the dual of a $\Gamma$-module $A$ which is finite free over $\mathbf Z$. There is a canonical isomorphism $A\xrightarrow{\sim}(A^0)^0$ of $\Gamma$-modules, and therefore the proposition statement reduces to showing the general statement that $(A^0)^\Gamma\neq 0$ implies $A^\Gamma\neq 0$.

For this, suppose that there exists a nonzero functional $p : A\rightarrow\mathbf Z$ fixed by $\Gamma$. If $x\in A$ satisfies $p(x)\neq 0$, we define $\mathrm N(x)\coloneqq \sum_{s\in\Gamma}s.x\in A^\Gamma$. Then $p(\mathrm N(x)) = [K:k]\cdot p(x)\neq 0$, so in particular $\mathrm N(x)\neq 0$.
\end{prf}

A torus is called \textit{anisotropic} if it satisfies the properties in the above proposition. As an application of local Tate duality (for example, see \cite[Thm.\ 1.2.1]{RosTD}), $\mathrm{H}^2(k, T) = 0$ when $k$ is a local field and $T$ is anisotropic over $k$.

Our interest in this notion comes from the following two lemmas (which can be tracked to sources written by Kneser). The first one can be derived from the proof of \cite[Prop.\ 6.12]{PR94} using the abstract duality in Tate cohomology (see \cite[top of p.\ 303]{PR94}), however we offer an elementary proof:

\begin{lem}\label{lemanistor}
Let $k$ be a global field and $T$ a torus over $k$. If $T_{k_v}$ is anisotropic for some place $v$ of $k$, then $\mathrm{H}^1(k, \widehat{T})\rightarrow\mathrm{H}^1(k_v, \widehat{T})$ is injective. In particular, then $\Sh^2(T) = \Sh^1(\widehat{T})^* = 0$.
\end{lem}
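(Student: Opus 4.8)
The plan is to reduce to a short group-theoretic statement about lattices. Write $M \coloneqq \mathrm{X}(T)$ and fix a finite Galois extension $K/k$ splitting $T$, so that $M$ is a $G$-lattice for $G \coloneqq \Gal(K/k)$ and $\widehat T$ is, as an \'etale sheaf, the inflation of the constant sheaf $\underline{M}$. Then $\mathrm H^1(k,\widehat T) = \mathrm H^1(G,M)$, and for the place $v$ with decomposition subgroup $D \coloneqq D_v \subseteq G$ the restriction $\mathrm H^1(k,\widehat T) \to \mathrm H^1(k_v,\widehat T)$ factors through $\mathrm H^1(G,M) \xrightarrow{\mathrm{res}} \mathrm H^1(D,M)$, the remaining map $\mathrm H^1(D,M) \to \mathrm H^1(k_v,\widehat T)$ being injective (it is an inflation map in degree one). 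Since $\mathrm{Hom}_{k_v}(T_{k_v},\mathbf G_{\mathrm m}) = M^{D}$, the anisotropy hypothesis says precisely that $M^D = 0$ (and then $M^G \subseteq M^D = 0$, so $T$ itself is anisotropic). Hence everything comes down to proving: if $M^D = 0$, then $\mathrm{res} : \mathrm H^1(G,M) \to \mathrm H^1(D,M)$ is injective.

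For this I would use the short exact sequence of $G$-modules $0 \to M \to M\otimes\mathbf Q \to M\otimes(\mathbf Q/\mathbf Z) \to 0$. For any subgroup $H \le G$ the group $\mathrm H^i(H, M\otimes\mathbf Q)$ vanishes for $i \ge 1$ (it is a $\mathbf Q$-vector space, so has no $|H|$-torsion), while $\mathrm H^0(G, M\otimes\mathbf Q) = M^G \otimes\mathbf Q = 0$ and likewise $\mathrm H^0(D, M\otimes\mathbf Q) = M^D\otimes\mathbf Q = 0$. Thus the connecting homomorphisms yield isomorphisms $(M\otimes\mathbf Q/\mathbf Z)^G \xrightarrow{\sim} \mathrm H^1(G,M)$ and $(M\otimes\mathbf Q/\mathbf Z)^D \xrightarrow{\sim} \mathrm H^1(D,M)$, natural with respect to restriction. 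Under these identifications $\mathrm{res}$ becomes the inclusion of invariants $(M\otimes\mathbf Q/\mathbf Z)^G \hookrightarrow (M\otimes\mathbf Q/\mathbf Z)^D$, which is manifestly injective. This proves the first assertion; as a byproduct one sees $T$ is anisotropic, which is consistent with the statement since $v$ is a place of $k$.

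The last sentence of the lemma then follows formally: $\Sh^1(\widehat T)$ is contained in the kernel of $\mathrm H^1(k,\widehat T) \to \mathrm H^1(k_v,\widehat T)$, which we have just shown to be trivial, so $\Sh^1(\widehat T) = 0$; and the identification $\Sh^2(T) = \Sh^1(\widehat T)^*$ is the Poitou--Tate duality for tori over a global field cited in \cite{RosTD}, whence $\Sh^2(T) = 0$ as well. The one point demanding care in the write-up is the bookkeeping at the start --- matching the fppf/\'etale $\mathrm H^1$ of the Cartier dual with Galois cohomology of the character lattice, passing from the absolute Galois group to the finite quotient $G$ and then to the decomposition subgroup $D$, and checking that restriction genuinely factors through $\mathrm H^1(G,M) \to \mathrm H^1(D,M)$ compatibly with inflation. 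None of this is deep, but it is where an off-by-one or a wrong direction of a map is easy to introduce, so I would spell it out in full.
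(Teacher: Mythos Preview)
Your proof is correct. The paper's argument is a direct cocycle chase: given a crossed homomorphism $a:\Gamma\to\mathrm X(T)$ whose restriction to $\Gamma_v$ is a coboundary $\mathrm dm$, it uses that $N\cdot a=\mathrm dm_0$ for some $m_0$ (since $\mathrm H^1$ is $[K:k]$-torsion), deduces $m_0-Nm\in\mathrm X(T)^{\Gamma_v}=0$ from the anisotropy, and concludes $a=\mathrm dm$ by torsion-freeness of the lattice. Your route via the short exact sequence $0\to M\to M\otimes\mathbf Q\to M\otimes\mathbf Q/\mathbf Z\to 0$ packages the same two ingredients (torsion-freeness of $M$, vanishing of higher cohomology with $\mathbf Q$-coefficients) more functorially, turning the restriction map into the visible inclusion $(M\otimes\mathbf Q/\mathbf Z)^G\hookrightarrow(M\otimes\mathbf Q/\mathbf Z)^D$. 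Both approaches are short; yours is perhaps cleaner to state, while the paper's is slightly more self-contained (no appeal to the vanishing of rational cohomology). The reduction to Galois cohomology of the finite quotient and the compatibility with restriction that you flag as the delicate point is handled identically in the paper, so your caution there is well placed but no extra work is needed.
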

\begin{prf}
The last statement follows from global Tate duality (for example, \cite[Thm.\ 1.2.10]{RosTD}). For the rest, take a finite Galois extension $K/k$ splitting $T$ and let $\Gamma\coloneqq\Gal(K/k)$. Fix some place $w\mid v$ of $K$ and consider the decomposition group $\Gamma_v\coloneqq\Gal(K_w/k_v)\hookrightarrow\Gamma$. We need to show that the restriction map
$$\mathrm{H}^1(k, \widehat{T}) = \mathrm{H}^1(\Gamma, \mathrm X(T))\longrightarrow\mathrm{H}^1(\Gamma_v, \mathrm X(T)) = \mathrm{H}^1(k_v, \widehat{T})$$
is injective. Here the equalities hold because $\mathrm{H}^1(\Gal(k_s/K), \mathbf Z^n) = \mathrm{Hom}_{cont}(\Gal(k_s/K), \mathbf Z^n) = 0$ and similarly for $K_w$.

Take a crossed homomorphism $a : \Gamma\rightarrow X(T)$ representing a class $[a]\in\mathrm{H}^1(\Gamma, \mathrm X(T))$. Suppose that $a|_{\Gamma_v}$ is a coboundary: that is, there exists $m\in\mathrm X(T)$ such that $(a - \mathrm dm)|_{\Gamma_v} = 0$, where $\mathrm dm(s) = s.m-m$. Since $\mathrm{H}^1(\Gamma, \mathrm X(T))$ is $N$-torsion for $N = [K:k]$, we have that also $N\cdot a = \mathrm dm_0$ for some $m_0\in\mathrm X(T)$. Then the following equality holds
$$t.m_0-m_0 = N\cdot a(t) = t.(Nm)-(Nm)$$
for all $t\in\Gamma_v$. This implies that $m_0-Nm$ lies in $\mathrm X(T)^{\Gamma_v}$, and hence $m_0 = Nm$ because $T_{k_v}$ is anisotropic. We conclude that $N\cdot a = N\cdot \mathrm dm$, and finally $a = \mathrm dm$ since $\mathrm X(T)$ is torsion-free. Thus $[a] = 0$.
\end{prf}

\begin{lem}\label{lemlocexanis}
Let $k$ be a (nonarchimedean) local field and $G$ a semisimple group over $k$. Then $G$ admits a maximal torus $T$ which is anisotropic over $k$.
\end{lem}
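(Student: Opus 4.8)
The plan is to reduce the existence of an anisotropic maximal torus in a semisimple group $G$ over a nonarchimedean local field $k$ to the quasi-split case, and then to exploit the structure theory of tori in quasi-split groups. First I would recall that $G$ is an inner form of a quasi-split semisimple group $G^*$ over $k$, and that maximal tori of $G$ and $G^*$ are related by stable conjugacy; more concretely, it suffices to produce an anisotropic maximal $k$-torus in \emph{some} inner form, since anisotropy of a torus is a statement about its character lattice together with the Galois action, and stably conjugate maximal tori have isomorphic character lattices as Galois modules. So the first step is to construct an anisotropic maximal torus in a quasi-split $G^*$.

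Next, for a quasi-split group $G^*$ with Borel $B^* = T^* \ltimes U^*$, I would use the standard fact (going back to the references already cited, in particular the approach of \cite[\textsection 6.5]{PR94}) that over a local field one can find an \'etale algebra $E/k$ of degree equal to the rank, built from an unramified or suitably ramified extension, such that the corresponding maximal torus has character lattice realized as an induced-type module with no Galois-fixed functionals. Concretely: choose a maximal torus $T$ of $G^*$ obtained by twisting $T^*$ by a cocycle valued in the Weyl group $N(T^*)/T^*$ that maps, under a chosen identification of the based root datum's automorphisms, to an element acting without nonzero fixed vectors on $\mathrm X(T^*)\otimes\mathbf Q$ after composing with the $*$-action; the existence of such a Weyl element is a lattice-theoretic fact (a Coxeter-type element works for the absolutely simple factors, handled one at a time via the decomposition of $G^*$ into a product of Weil restrictions of absolutely simple quasi-split groups). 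The realizability of this twisting cocycle over $k$ uses that local fields admit cyclic extensions of every degree, so one can find $\Gamma\to W$ with the desired image; this is exactly where local-ness of $k$ is used, parallel to how it is used elsewhere in this section.

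Then I would verify anisotropy: by Proposition \ref{propequivcondanistor} it suffices that $\mathrm X(T)^\Gamma = 0$, i.e. that $\mathrm{Hom}(T,\mathbf G_{\mathrm m}) = 0$. Decomposing along the simple factors and using that $\mathrm X(T)$ for each factor is, by construction, a lattice on which the relevant Galois element acts as a Coxeter element (which has no eigenvalue $1$), one gets $\mathrm X(T)^\Gamma\otimes\mathbf Q = 0$ and hence $\mathrm X(T)^\Gamma = 0$ since the lattice is torsion-free. Finally, I would transport this back: given the anisotropic maximal torus $T^*\subseteq G^*$, pick a maximal torus $T\subseteq G$ stably conjugate to $T^*$ (possible because over a local field the map from classes of maximal $k$-tori of $G$ to those of $G^*$ that are "rationally conjugate" is surjective onto an appropriate set, or more simply because any maximal torus of $G^*$ transfers to an inner form via a suitable twist, and $G$ is already the relevant inner form); then $T$ and $T^*$ have isomorphic character modules as $\Gamma$-modules, so $T$ is anisotropic as well.

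The main obstacle I anticipate is the transfer step between inner forms: one must be careful that an anisotropic maximal torus of the quasi-split form really does occur inside the given inner form $G$, rather than merely inside some other inner form. The cleanest route is probably to avoid $G^*$ altogether and instead argue directly: every semisimple $G$ over a local field contains a maximal $k$-torus whose splitting field realizes a prescribed Weyl-conjugacy class of Galois actions, because the obstruction to realizing a given "twisted by $w\in W$" torus inside $G$ lands in a Galois cohomology set that one controls over local fields (this is essentially Kneser's theorem on the surjectivity of $\mathrm H^1(k,T^*)\to\mathrm H^1(k,G)$ and the vanishing of higher cohomology). I would lean on the cited sources \cite[\textsection 6.5]{PR94} and \cite[VIII]{Dou76} to supply this transfer, keeping the new content focused on the lattice-theoretic production of the Coxeter-type Galois action and the anisotropy verification via Proposition \ref{propequivcondanistor}.
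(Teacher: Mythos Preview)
The paper's proof is a one-line citation: ``This is \cite[Thm.\ 6.21]{PR94}. The proof works in any characteristic.'' So there is no argument to compare against beyond the reference itself.

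Your sketch, by contrast, attempts to reconstruct the proof, and the gap you yourself flag is real. The transfer step---moving an anisotropic maximal torus from the quasi-split inner form $G^*$ into the given inner form $G$---does not follow from ``stably conjugate tori have isomorphic character lattices.'' That statement goes the wrong way: it tells you that \emph{if} a transfer exists then anisotropy is preserved, but not that a transfer exists. Concretely, whether a maximal torus $T^*\subseteq G^*$ embeds into the inner form $G$ determined by a class $c\in\mathrm H^1(k,G^*_{\mathrm{ad}})$ is governed by whether $c$ lies in the image of $\mathrm H^1(k,N_{G^*}(T^*)/\mathrm Z_{G^*})$, and you have not argued this. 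Your fallback paragraph gestures at Kneser-type surjectivity results and then proposes to ``lean on the cited sources \cite[\textsection 6.5]{PR94}''---but that is exactly what the paper does, so at that point your proof collapses to the same citation.

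If you want an independent argument, the route in \cite{PR94} is different from your Coxeter-twist strategy: one produces regular elliptic elements in $G(k)$ directly (their centralizers are the desired anisotropic maximal tori), using that the elliptic locus is open and nonempty over a nonarchimedean local field. This avoids the inner-form transfer problem entirely, since one works inside $G$ from the start.
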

\begin{prf}
This is \cite[Thm.\ 6.21]{PR94}. The proof works in any characteristic.
\end{prf}

The proof of the main statement can easily be done for local fields, but there is an additional lemma (\cite[\textsection 7.2, Cor.\ 3]{PR94}) necessary for the global field case, which will allows us to choose a maximal torus with the necessary properties:

\begin{lem}\label{lemtorapprox}
Let $G$ be a reductive group over a global field $k$. Let $S$ be a finite set of places of $k$ and choose, for every $v\in S$, a maximal torus $T_v$ of $G_{k_v}$. Then $G$ admits a maximal torus $T$ such that $T_{k_v}$ is $G(k_v)$-conjugate to $T_v$, for all $v\in S$. In particular, the tori $T_{k_v}$ and $T_v$~are then abstractly isomorphic for any $v\in S$.
\end{lem}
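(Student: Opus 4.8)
The statement to prove is Lemma~\ref{lemtorapprox}: given a reductive group $G$ over a global field $k$, a finite set $S$ of places, and maximal tori $T_v\subseteq G_{k_v}$ for $v\in S$, there is a maximal torus $T\subseteq G$ such that $T_{k_v}$ is $G(k_v)$-conjugate to $T_v$ for each $v\in S$.

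\medskip

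The plan is to reduce this to a weak/strong approximation statement for the variety of maximal tori of $G$. First I would recall that the scheme $\mathcal{T}_G$ parametrizing maximal tori of $G$ is a smooth (geometrically integral) $k$-variety, and that it is $G$-homogeneous under the conjugation action, with stabilizer at a point $T$ equal to the normalizer $N_G(T)$ (so $\mathcal{T}_G\cong G/N_G(T)$ for any choice of maximal torus $T$). The key geometric input is that $\mathcal{T}_G$ is a $k$-rational variety: this is classical (it follows, e.g., from the fact that a maximal torus is determined by its Lie algebra, a regular semisimple element, and the open subset of such elements in $\mathfrak{g}$ is rational), and rationality is preserved under all field extensions. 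Since $S$ is finite and each $T_v$ gives a point of $\mathcal{T}_G(k_v)$, the set $\prod_{v\in S}\mathcal{T}_G(k_v)$ is nonempty, and I would invoke weak approximation for rational varieties: $\mathcal{T}_G(k)$ is dense in $\prod_{v\in S}\mathcal{T}_G(k_v)$ in the adelic (product-of-$v$-adic) topology. Choosing a $k$-rational point sufficiently close to $(T_v)_{v\in S}$ produces a maximal torus $T$ over $k$ with $T_{k_v}$ close to $T_v$ in $\mathcal{T}_G(k_v)$ for each $v\in S$.

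\medskip

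The remaining step is to upgrade ``close in $\mathcal{T}_G(k_v)$'' to ``$G(k_v)$-conjugate.'' For this I would use that the orbit map $G_{k_v}\to\mathcal{T}_G\times_k k_v$, $g\mapsto g T_v g^{-1}$, is smooth and surjective (its fibers are cosets of $N_{G_{k_v}}(T_v)$), hence open on $k_v$-points by the implicit function theorem over the local field $k_v$. Therefore the $G(k_v)$-orbit of $T_v$ is an open neighborhood of $T_v$ in $\mathcal{T}_G(k_v)$; any torus sufficiently close to $T_v$ lies in this orbit, i.e.\ is $G(k_v)$-conjugate to $T_v$. Applying this at each of the finitely many $v\in S$ and intersecting the finitely many open conditions, the approximating $k$-rational torus $T$ from the previous paragraph satisfies $T_{k_v}\sim_{G(k_v)} T_v$ for all $v\in S$. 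The final clause (abstract isomorphism of $T_{k_v}$ and $T_v$) is then immediate, since conjugate subgroups are isomorphic.

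\medskip

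The main obstacle, and the point requiring the most care, is the rationality of the variety of maximal tori (equivalently, of $G/N_G(T)$) in arbitrary characteristic, together with the corresponding weak approximation statement over a global function field; in characteristic $0$ these are standard (this is exactly \cite[\textsection7.2, Cor.~3]{PR94}), and in positive characteristic the same arguments go through, but one should point to the right reference or spell out that rationality and weak approximation for rational $k$-varieties hold uniformly. An alternative, avoiding the explicit geometry, is simply to cite the local--global existence theorem for maximal tori with prescribed local behavior as in \cite[\textsection7.2]{PR94}, noting that the proof there is characteristic-free; I would present the short geometric argument above and then remark that it also follows from that reference.
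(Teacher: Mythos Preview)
Your proposal is correct and follows essentially the same route as the paper: introduce the variety of maximal tori $X=G/N_G(T_0)$, use its $k$-rationality to get weak approximation at $S$, and use openness of the $G(k_v)$-orbits in $X(k_v)$ to conclude. The paper supplies the precise characteristic-free references you flag as the main concern: rationality of the variety of tori is \cite[XII, Cor.~1.10]{SGA3}, and openness of the orbit map on $k_v$-points is \cite[Prop.~4.3]{CesTC} (rather than an appeal to the implicit function theorem).
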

\begin{prf}
Let $T_0$ be a fixed maximal torus of $G$. Then $X\coloneqq G/\mathrm N_G(T_0)$ is the ``variety of tori'' associated with $G$: That is, for any $k$-algebra $R$, the set $X(R)$ is canonically identified with the set of maximal tori in $G_R$ (which follows since any two maximal tori are rationally conjugated over $k_s$). It is a smooth connected $k$-variety which is known to be rational (that is, birational to some $\mathbf A^N$) by \cite[XII, Cor.\ 1.10]{SGA3}.

In particular, it follows that $X$ satisfies weak approximation with respect to $S$. That is, the image of the map $X(k)\rightarrow X(k_s) = \prod_{v\in S}X(k_v)$ is dense in the product topology.
On the other hand, the orbits $G(k_v).[T_v]$ are open in $X(k_v)$ since the smooth surjective map $G_{k_v}\rightarrow X_{k_v}$ given by the action on $[T_v]$ induces an open map on $k_v$-points (\cite[Prop.\ 4.3]{CesTC}). The intersection $X(k)\cap\prod_{v\in S}(G(k_v).[T_v])$ is thus nonempty, which gives the desired torus $T$.
\end{prf}

\begin{lem}\label{lemcohdirsum}
Let $G$ be an algebraic group over a global field $k$. For $i\in\mathbf Z$, suppose that one of the following two cases is satisfied:
\begin{itemize}
    \item $i = 1$ and $G$ is smooth and connected
    \item $i\geq 2$ and $G$ is commutative
\end{itemize}
Then, given some class $x\in\mathrm{H}^i(k, G)$ (resp. $x\in\mathrm{H}^i(\Gamma, G(k_s))$), its local images $x_v\in\mathrm{H}^i(k_v, G)$ (resp. $x_v\in\mathrm{H}^i(\Gamma_v, G(k_{v,s}))$) are trivial for almost all $v$.
\end{lem}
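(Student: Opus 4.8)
The plan is to prove the lemma by \emph{spreading out} both the group $G$ and the class $x$ over a dense open of a suitable arithmetic curve, and then reducing the assertion to a cohomological vanishing over the local rings at places of good reduction. Let $X$ be $\Spec$ of the ring of integers of $k$ in the number field case, or the smooth proper geometrically connected curve with function field $k$ in the function field case, so that the closed points of $X$ are the finite places of $k$ (the archimedean places, when present, are finite in number and may be ignored). First I would choose a dense affine open $U\subseteq X$ over which $G$ extends to a flat group scheme $\mathcal G$ of finite type. Since $\Spec(k)=\varprojlim_{U'\subseteq U}U'$ is a cofiltered limit with affine transition maps, cohomology commutes with this limit (the limit argument of \cite[Thm.\ 2.1]{Mar07} in degree $1$, and its standard abelian analogue in higher degree), so after shrinking $U$ the class $x$ is the image of some $\widetilde x\in\mathrm H^i(U,\mathcal G)$. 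For $v\in U$ the canonical morphism $\Spec(\mathcal O_v)\to U$, where $\mathcal O_v$ is the (henselian) valuation ring of $k_v$ with finite residue field $\kappa(v)$, then exhibits $x_v$ as the image of $\widetilde x$ along $\mathrm H^i(U,\mathcal G)\to\mathrm H^i(\mathcal O_v,\mathcal G)\to\mathrm H^i(k_v,G)$. Hence it suffices to show that, after a further shrinking of $U$, one has $\mathrm H^i(\mathcal O_v,\mathcal G)=0$ (resp.\ $=1$) for every $v\in U$; the finitely many places outside the final $U$ are the allowed exceptions.

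For the clause $i=1$ with $G$ smooth connected, I would shrink $U$ so that $\mathcal G\to U$ is smooth with geometrically connected fibres (both conditions hold at the generic point, hence on a dense open). By the standard henselian invariance of étale cohomology for smooth group schemes, restriction gives a bijection $\mathrm H^1(\mathcal O_v,\mathcal G)\xrightarrow{\ \sim\ }\mathrm H^1(\kappa(v),\mathcal G_{\kappa(v)})$, and the right-hand side has one element by Lang's theorem since $\mathcal G_{\kappa(v)}$ is a connected algebraic group over a finite field. Thus $x_v$ is trivial for all $v\in U$, which settles both the fppf and the Galois formulation here (for smooth $G$ the two coincide in degree $1$).

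For the clause $i\ge 2$ with $G$ commutative, the Galois (étale) version is nearly immediate: by henselian invariance $\mathrm H^i(\mathcal O_v,\mathcal G)\cong\mathrm H^i(\kappa(v),\mathcal G(\overline{\kappa(v)}))$, and the module $\mathcal G(\overline{\kappa(v)})$ is torsion, being the union of the finite groups $\mathcal G(\kappa')$ over finite $\kappa'/\kappa(v)$ (a finite-type scheme over a finite field has finitely many points over each finite subfield); since $\Gal(\overline{\kappa(v)}/\kappa(v))\cong\widehat{\mathbf Z}$ has cohomological dimension $1$ for torsion modules, this vanishes for $i\ge 2$. For the fppf version, where $G$ need not be smooth, I would invoke Proposition \ref{proph2h1coh} to embed $G$ into a smooth connected commutative algebraic group $H$; then $Q:=H/G$ is again smooth connected commutative, and after shrinking $U$ the sequence $0\to\mathcal G\to\mathcal H\to\mathcal Q\to 0$ is fppf exact over $U$ with $\mathcal H,\mathcal Q$ smooth with connected fibres. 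For smooth coefficients fppf and étale cohomology agree, so $\mathrm H^j(\mathcal O_v,\mathcal H)=\mathrm H^j(\kappa(v),\mathcal H_{\kappa(v)})$, which is trivial for $j=1$ (Lang) and $0$ for $j\ge2$ (torsion coefficients over $\widehat{\mathbf Z}$), and likewise for $\mathcal Q$; the long exact fppf sequence then forces $\mathrm H^i(\mathcal O_v,\mathcal G)=0$ for every $i\ge2$ (for $i=2$ using $\mathrm H^1(\mathcal O_v,\mathcal Q)=1$ and $\mathrm H^2(\mathcal O_v,\mathcal H)=0$; for $i\ge3$ both flanking terms vanish).

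The step I expect to be the main obstacle is precisely this last, non-smooth fppf case: one must be careful that the embedding $G\hookrightarrow H$ and its quotient spread out compatibly over $U$ and remain short exact for the fppf topology over each $\mathcal O_v$, and that the two standard inputs used everywhere else — the comparison of fppf with étale cohomology (valid for smooth coefficients) and the henselian invariance of étale cohomology — are applied to the correct sheaves, since these inputs are naturally étale-cohomological while the statement to be proved is fppf. Everything else is routine spreading-out bookkeeping.
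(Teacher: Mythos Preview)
Your spreading-out approach is essentially a direct proof of what the paper simply cites as \cite[Thm.\ 2.18]{CesPT} for the fppf statement; the paper then handles the Galois version in one line by replacing $G$ with its maximal smooth subgroup $G'$ (via \cite[Lem.\ C.4.1]{CGP15}), noting that $G'(K)=G(K)$ for every separable extension $K/k$ so that $\mathrm H^i(\Gamma,G(k_s))=\mathrm H^i(\Gamma,G'(k_s))=\mathrm H^i(k,G')$, and then invoking the fppf case for the smooth group $G'$.

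Your $i=1$ argument and your fppf $i\ge 2$ argument are correct. The identification $\mathrm H^j_\et(\mathcal O_v,\mathcal H)\cong\mathrm H^j_\et(\kappa(v),\mathcal H_{\kappa(v)})$ for \emph{smooth} $\mathcal H$ and $j\ge 1$ is a genuine theorem (see \cite[III, Rem.\ 3.11]{Mil80}) rather than a tautological ``henselian invariance'', but it is standard, so the fppf d\'evissage through the smooth $\mathcal H$ and $\mathcal Q$ is fine.

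The real gap is in your \'etale case $i\ge 2$, where you apply the same isomorphism to the possibly non-smooth $\mathcal G$. The equivalence of \'etale topoi over a henselian local ring only yields $\mathrm H^i_\et(\mathcal O_v,\mathcal G)\cong\mathrm H^i(\widehat{\mathbf Z},\mathcal G(\mathcal O_v^{sh}))$, and $\mathcal G(\mathcal O_v^{sh})$ is in general neither equal to $\mathcal G(\overline{\kappa(v)})$ nor torsion (already for $\mathcal G=\mathbf G_{\mathrm m}$), so your cohomological-dimension argument does not apply. Your fppf d\'evissage does not rescue this either: on $k_s$-points the sequence $0\to G(k_s)\to H(k_s)\to Q(k_s)$ need not be right-exact when $G$ is non-smooth, so there is no long exact sequence of $\Gamma$-modules to run. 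The fix is exactly the paper's trick: first pass to the maximal smooth subgroup $G'$, after which your own spreading-out argument (now with smooth coefficients throughout) goes through verbatim.
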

\begin{prf}
The fppf case is \cite[Thm.\ 2.18]{CesPT}. For the \'etale case, recall \cite[Lem.\ C.4.1]{CGP15} by which there exists a unique smooth subgroup $G'$ of $G$ such that $G'(K) = G(K)$ for all separable field extensions $K/k$. Then $\mathrm{H}^i(\Gamma, G(k_s)) = \mathrm{H}^i(\Gamma, G'(k_s)) = \mathrm{H}^i(k, G')$ (and the same for $k_v$, since $k_v$ and $k_{v,s}$ are separable over $k$), which reduces the proof to the fppf case of $G'$.
\end{prf}

As announced, the following theorem encompasses both the \'etale and fppf version of the~same statement about semisimple bands:

\begin{thm}\label{thmsurjh1h2ss}
Let $k$ be a global or local field (of positive characteristic) and $G$ a semisimple group over $k$. The maps $\mathrm{H}^1(\Gamma,G(k_s)/\mathrm Z_G(k_s))\rightarrow\mathrm{H}^2(\Gamma,\mathrm Z_G(k_s))$ and $\mathrm{H}^1(k,G/\mathrm Z_G)\rightarrow\mathrm{H}^2(k,\mathrm Z_G)$ are both surjective.
\end{thm}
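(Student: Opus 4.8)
The plan is to handle the local and global cases separately, in each case producing a maximal torus $T\subseteq G$ whose cohomology is large enough to surject onto $\mathrm H^2(k,\mathrm Z_G)$, and then to observe that the two versions (\'etale and fppf) follow from the same argument because $\mathrm Z_G$ is an \emph{infinitesimal} or at worst finite commutative group, so the distinction between $\mathrm Z_G(k_s)$ and sheaf-theoretic $\mathrm Z_G$ is mediated by the already-available vanishing results. More precisely, since $G$ is semisimple the center $\mathrm Z_G$ is finite and $\mathrm Z_G\subseteq T$ for any maximal torus $T$, so both maps $\delta$ and $\widetilde\delta$ factor through the connecting maps associated to the central extension $1\to\mathrm Z_G\to T\to T/\mathrm Z_G\to 1$ sitting inside $1\to\mathrm Z_G\to G\to G/\mathrm Z_G\to 1$. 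Hence it suffices to show that $\mathrm H^1(\Gamma,(T/\mathrm Z_G)(k_s))\to\mathrm H^2(\Gamma,\mathrm Z_G(k_s))$ (resp.\ its fppf analogue) is surjective, for a well-chosen $T$, because $T/\mathrm Z_G$ is again a torus and the quotient map $T\to G$ carries these cohomology classes into the target.

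For the \emph{local} case, I would invoke Lemma \ref{lemlocexanis} (in fppf terms, Lemma \ref{lemanistor}'s local ingredients): there exists a maximal torus $T$ of $G$ which is anisotropic over the local field $k$. Then by local Tate duality (\cite[Thm.\ 1.2.1]{RosTD}) one has $\mathrm H^2(k,T)=0$, and from the long exact sequence of $1\to\mathrm Z_G\to T\to T/\mathrm Z_G\to 1$ one deduces that $\mathrm H^1(k,T/\mathrm Z_G)\to\mathrm H^2(k,\mathrm Z_G)$ is surjective (the next term $\mathrm H^2(k,T)$ vanishes). Composing with $T/\mathrm Z_G\to G/\mathrm Z_G$ gives surjectivity of $\widetilde\delta$, and the \'etale statement follows by the identical argument with $\mathrm H^1(\Gamma,\cdot)$, $\mathrm H^2(\Gamma,\cdot)$ in place of fppf cohomology together with $\mathrm H^2(\Gamma,T(k_s))=0$ for anisotropic $T$ (again local duality, or directly from the anisotropy computation); note $T$ smooth means Galois and fppf cohomology of $T$ agree, and for the finite group $\mathrm Z_G$ the two $\mathrm H^2$'s differ only by the known correction terms which here do not obstruct surjectivity because $\mathrm H^2(k_s,\mathrm Z_G)=0$ by Proposition \ref{proph2h1coh}.

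For the \emph{global} case, fix a nonarchimedean place $v$ of $k$. By Lemma \ref{lemtorapprox} I would choose a maximal torus $T$ of $G$ such that $T_{k_v}$ is conjugate to (hence abstractly isomorphic to) an anisotropic maximal torus of $G_{k_v}$ supplied by Lemma \ref{lemlocexanis}. Then $T_{k_v}$ is anisotropic, so by Lemma \ref{lemanistor} (fppf version via Proposition \ref{propanisgen}, if needed) we have $\Sh^2(T)=0$. Now take an arbitrary $x\in\mathrm H^2(k,\mathrm Z_G)$; by Lemma \ref{lemcohdirsum} its local images $x_w$ vanish for almost all $w$, and at the finitely many remaining places (including $v$) one solves the lifting problem locally using the local case already established. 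The standard patching argument — lift $x$ locally, use that $\Sh^2(T)=0$ to kill the global obstruction to assembling these into a global class in $\mathrm H^1(k,T/\mathrm Z_G)$ mapping to $x$ — then produces the desired preimage; this is exactly the structure of \cite[\textsection6.5]{PR94}, which the excerpt explicitly says generalizes to positive characteristic. The \'etale version is obtained by running the same argument with Galois cohomology throughout, using Lemma \ref{lemcohdirsum}'s \'etale clause and the fact that $T$ is smooth so Galois and fppf cohomology coincide for $T$ and $T/\mathrm Z_G$.

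The main obstacle I expect is not any single deep input — all the hard number theory (anisotropic tori, Tate duality, weak approximation for the variety of maximal tori, $\Sh$-vanishing) is already packaged in the cited lemmas — but rather the bookkeeping needed to pass cleanly between the \emph{sheaf} quotient $G/\mathrm Z_G$ appearing in $\widetilde\delta$ and the \emph{pointwise} quotient $G(k_s)/\mathrm Z_G(k_s)$ appearing in $\delta$, while simultaneously routing everything through the torus $T$. Concretely, one must check that the connecting homomorphism for $T$ really does factor the one for $G$ compatibly on both the fppf and \'etale sides (this is functoriality of the boundary map in Proposition \ref{propnoncommles} applied to the inclusion $T\hookrightarrow G$ of central extensions of $\mathrm Z_G$), and that the image of $\mathrm H^1(k,T/\mathrm Z_G)\to\mathrm H^1(k,G/\mathrm Z_G)$ is enough — which it is, precisely because we only need \emph{surjectivity onto $\mathrm H^2(k,\mathrm Z_G)$}, not a description of the whole fibre. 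I would make this factorization explicit at the outset so that the local and global arguments reduce verbatim to statements about the torus $T$.
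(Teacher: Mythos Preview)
Your local case is exactly the paper's argument. The global case, however, has the order of quantifiers reversed, and this is a genuine gap.

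You fix a single place $v$, choose $T$ anisotropic at $v$, and then invoke $\Sh^2(T)=0$. But the obstruction to lifting a given $x\in\mathrm H^2(k,\mathrm Z_G)$ to $\mathrm H^1(k,T/\mathrm Z_G)$ is the image of $x$ in $\mathrm H^2(k,T)$, not in $\Sh^2(T)$. To conclude that this image lies in $\Sh^2(T)$ you need $\mathrm{im}(x)_w=0$ for \emph{every} place $w$; at places $w$ where $x_w\neq 0$ and $T_{k_w}$ is not anisotropic, you have no control over $\mathrm{im}(x)_w\in\mathrm H^2(k_w,T)$. Your phrase ``solve the lifting problem locally using the local case'' does not help here: the local case produces a preimage via a \emph{different} anisotropic torus at each bad place, not via your fixed $T$, so there is nothing to patch.

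The paper's fix is simple but essential: fix $x$ \emph{first}, let $S$ be the finite set of places where $x_v\neq 0$ (Lemma~\ref{lemcohdirsum}), choose an anisotropic maximal torus $T_v\subseteq G_{k_v}$ for each $v\in S$ (Lemma~\ref{lemlocexanis}), and then use Lemma~\ref{lemtorapprox} to find a single global maximal torus $T\subseteq G$ with $T_{k_v}\simeq T_v$ for all $v\in S$. Now $\mathrm{im}(x)_v=0$ for $v\in S$ because $\mathrm H^2(k_v,T)=0$, and for $v\notin S$ because $x_v=0$; hence $\mathrm{im}(x)\in\Sh^2(T)=0$ (Lemma~\ref{lemanistor}, using any $v\in S$). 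So $x$ lifts to $\mathrm H^1(k,T/\mathrm Z_G)$ and then maps into $\mathrm H^1(k,G/\mathrm Z_G)$. The torus depends on $x$; once you make that change your argument goes through, and the \'etale/fppf bookkeeping you describe is correct (the paper simply notes $\mathrm H^2(k,T)=\mathrm H^2(\Gamma,T(k_s))$ and declares the two proofs identical).
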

\begin{prf}
We will only prove this statement for the first map (as it is used later). The proof for the second map is identical. Finally, note that $\mathrm{H}^2(k,T) = \mathrm{H}^2(\Gamma,T(k_s))$ for $T$ smooth (in particular, for a torus).

\textit{The local case:} Take an anisotropic maximal torus $T$ in $G$, which exists by Lemma~\ref{lemlocexanis}. The composition $\mathrm{H}^1(\Gamma,T(k_s)/\mathrm Z_G(k_s))\rightarrow\mathrm{H}^1(\Gamma,G(k_s)/\mathrm Z_G(k_s))\rightarrow\mathrm{H}^2(\Gamma,\mathrm Z_G(k_s))$ is surjective by the exact sequence
$$\mathrm{H}^1(\Gamma,T(k_s)/\mathrm Z_G(k_s))\longrightarrow\mathrm{H}^2(\Gamma,\mathrm Z_G(k_s))\longrightarrow\mathrm{H}^2(\Gamma,T(k_s)) = \mathrm{H}^2(k,T) = 0$$
so in particular we conclude that the map $\mathrm{H}^1(\Gamma,G(k_s)/\mathrm Z_G(k_s))\rightarrow\mathrm{H}^2(\Gamma,\mathrm Z_G(k_s))$ is surjective.

\textit{The global case:} Fix an element $x\in\mathrm{H}^2(\Gamma,\mathrm Z_G(k_s))$. Its local images $x_v\in\mathrm{H}^2(\Gamma_v,\mathrm Z_G(k_{v,s}))$ are $0$ for almost all $v$ by Lemma \ref{lemcohdirsum}. Take a nonempty finite set $S$ of places including all $v$ such that $x_v\neq 0$, then fix an anisotropic maximal torus $T_v$ in $G_{k_v}$ for each $v\in S$. Now Lemma \ref{lemtorapprox} supplies a maximal torus $T$ in $G$ such that $T_{k_v}\simeq T_v$ whenever $v\in S$. Consider the following commutative diagram with exact rows:
\begin{center}\begin{tikzcd}
\mathrm{H}^1(\Gamma,T(k_s)/\mathrm Z_G(k_s))\arrow[r]\arrow[d] & \mathrm{H}^2(\Gamma,\mathrm Z_G(k_s))\arrow[r]\arrow[d] & \mathrm{H}^2(k,T)\arrow[d]\\
\prod_v\mathrm{H}^1(\Gamma_v,T(k_{v,s})/\mathrm Z_G(k_{v,s}))\arrow[r] & \prod_v\mathrm{H}^2(\Gamma_v,\mathrm Z_G(k_{v,s}))\arrow[r] & \prod_v\mathrm{H}^2(k_v,T)
\end{tikzcd}\end{center}
We want to show that $\mathrm{im}(x)\in\mathrm{H}^2(k, T)$ is $0$. By the previous case and our choice of $T$, we know that $\mathrm{im}(x)_v = \mathrm{im}(x_v) = 0$ for all $v$. Thus $\mathrm{im}(x)\in\Sh^2(T)$, but finally, we have $\Sh^2(T) = 0$ by Lemma \ref{lemanistor} since $T_{k_v}$ is anisotropic for at least one $v$ (here $S$ is nonempty).
\end{prf}

\section{Cohomology on the $\lqf$-site}\label{sectlqf}
This appendix is devoted to proving the following fact, needed in the proof of Lemma \ref{lemphslift}: Let $G$ be a smooth affine algebraic group over a global field $k$ and let $X$ be a homogeneous space of $G$ with smooth connected geometric stabilizer $\Hb$. We denote by $H_\ab$ the maximal Abelian quotient of the Springer band $L_X$, which is defined over $k$ (see Definition \ref{defmainabel}), and by $\widehat{H}_\ab = \mathcal{Hom}(H_\ab,\mathbf G_{\mathrm m})$ its Cartier dual. We claim that there exists a group homomorphism
$$\Sh^1(\widehat{H}_\ab)\longrightarrow\Be(X)$$
which completes a natural commutative triangle involving the Brauer-Manin obstruction on $X$ and the Poitou-Tate pairing by the Springer class $\xi_X$. This homomorphism comes from a functorial map $\mathrm H^1(K, \widehat{H}_\ab)\rightarrow\Br(X_K)/\Br(K)$ (taken over $K = k$ and $K = k_v$ for all $v$), which is induced in fppf hypercohomology by a certain morphism of (derived) complexes of sheaves:
$$\widehat{H}_\ab[-1]\longrightarrow\mathcal{UPic}_X$$
The complex $\mathcal{UPic}_X$ has been well studied as a complex of sheaves on the \'etale site of a field (see references given below), however, \'etale cohomology is not sufficient for us: Our fppf sheaf $\widehat{H}_\ab$ is in general not even representable, let alone by a smooth algebraic group. We are thus lead to consider a replacement complex $\mathcal{UPic}_X$ of sheaves in the fppf topology. For technical reasons (see Lemma \ref{lemr1lqf} below) it is convenient to define it on a ``small site'' equipped with the fppf topology.

The first subsection introduces the $\lqf$-site of a scheme, the complex $\mathcal{UPic}_X$ and its basic properties. The second one gives some general constructions which relate this complex to $\widehat{H}_\ab$. Finally, in the third subsection we show that the required triangle in the proof of Lemma \ref{lemphslift} commutes by an explicit computation in Čech cohomology.

\subsection{The $\mathcal{UPic}_X$ complex of $\lqf$-sheaves}

Let $k$ be a field and $X$ a $k$-scheme. We define the following complex of $\Gamma$-modules (following \cite{BvH09}, \cite{BvH12}; see also \cite[\textsection 3]{DH22})
$$\mathrm{UPic}(\Xb) = \left[\frac{k_s(\Xb)^\times}{k_s^\times}\rightarrow\mathrm{Div}(\Xb)\right]$$
where such notation will always denote a complex in degrees $0$ and $1$. In \cite[Cor.\ 2.20]{BvH09}, it is shown that, when $k$ is a global or local field (so $\mathrm{H}^3(k,\mathbf G_\mathrm{m}) = 0$), for smooth and geometrically integral $X$ there is a natural isomorphism:
$$\mathbf{H}^2(\Gamma,\mathrm{UPic}(\Xb))\xrightarrow{\;\;\;\sim\;\;\;} \Br_\mathrm{a}(X) = \frac{\ker(\Br(X)\rightarrow\Br(\Xb))}{\Br(k)}$$

Our goal in the following pages is to generalize this statement to the fppf topology, using sheaves defined on an appropriately small fppf site:

\begin{defn}
Recall that a morphism of schemes $f : X\rightarrow S$ is called \textit{locally quasi-finite} if it is locally of finite type and, for every $s\in S$, the fiber $X_s$ is discrete as a topological space. This definition agrees with \cite[Lem.\ 06RT]{Stacks}. It is a general fact (see \cite[Lem.\ 0572]{Stacks}) that, for an arbitrary scheme $X$, every fppf covering $(X_i\rightarrow X)_{i\in I}$ can be refined by an fppf covering in which all morphisms are locally quasi-finite.

Let $S_\lqf$ denote the full subcategory of $\mathrm{Sch}/S$ having as objects all the scheme morphisms $X\rightarrow S$ which are locally quasi-finite. We equip it with the fppf topology and call it the \textit{(small) lqf-site} of $S$. It is indeed a site by the proposition below (even in the sense of \cite[II, \textsection1]{Mil80}, which is slightly stronger than in general).

The above remark on refinement of coverings shows that the cohomology of an fppf sheaf $\mathcal F$ over $S$ can be calculated on its restriction to the small lqf-site $S_\lqf$ (cf. \cite[III, Prop.\ 3.1]{Mil80}). That is, for any $i\in\mathbf Z$, the canonical map
$$\mathrm{H}^i(S_\lqf,\mathrm{id}_{S,*}\mathcal F)\xrightarrow{\;\;\sim\;\;}\mathrm{H}^i(S,\mathcal F)$$
is an isomorphism, where $\mathrm{id}_{S,*}$ denotes the obvious pushforward of sheaves between the~two~sites. We will thus simply write $\mathrm{H}^i(S,\mathcal G)$ for the cohomology of any sheaf $\mathcal G$ on the site $S_\lqf$.
\end{defn}

\begin{prop}
Locally quasi-finite morphisms are closed under composition and arbitrary pullbacks. Given morphisms $f : X\rightarrow Y$ and $g : Y\rightarrow S$, if $g\circ f$ is locally quasi-finite, then so is $f$. Fiber products (over any base in $S_\lqf$) exist in $S_\lqf$ and agree with those in $\mathrm{Sch}/S$.
\end{prop}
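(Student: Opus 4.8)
The statement is a collection of closure properties for locally quasi-finite morphisms, which together verify that $S_\lqf$ is a legitimate site with the required fiber products. My plan is to treat each clause in turn, reducing everything to the characterization of locally quasi-finite morphisms as those that are locally of finite type with discrete fibers (equivalently, locally of finite type and having finite fibers in the sense of \cite[Lem.\ 06RT]{Stacks}), combined with the fact that being locally of finite type is itself stable under composition and base change. None of the steps should require genuinely new input; the work is in assembling standard stability statements from the Stacks project in the right order, so I will keep the exposition brief and cite rather than reprove.

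First I would handle stability under composition: if $f : X \to Y$ and $g : Y \to Z$ are locally quasi-finite, then $g \circ f$ is locally of finite type (composition of such), and for $z \in Z$ the fiber $(g\circ f)^{-1}(z)$ maps to $g^{-1}(z)$ with discrete fibers over a discrete base, hence is discrete; this is \cite[Lem.\ 01TN]{Stacks} (or the analogue there). Next, stability under arbitrary base change: given $f : X \to S$ locally quasi-finite and $S' \to S$ any morphism, $X \times_S S' \to S'$ is locally of finite type (base change of such) and its fiber over $s' \in S'$ is $X_{s'}$, which is the base change of the discrete scheme $X_{\kappa(s)}$ along a field extension $\kappa(s) \to \kappa(s')$, hence still discrete — this uses that a discrete scheme locally of finite type over a field stays discrete after field extension, which follows from finiteness of the underlying set. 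Then the ``cancellation'' clause: if $g : Y \to S$ and $f : X \to Y$ are such that $g \circ f$ is locally quasi-finite, then $f$ is locally quasi-finite. Here $f$ is locally of finite type because $g \circ f$ is and $g$ is (at least) locally of finite type by hypothesis, invoking \cite[Lem.\ 06RT, 01T8]{Stacks}; and each fiber of $f$ over $y \in Y$ is a closed (in fact locally closed) subscheme of the fiber $(g\circ f)^{-1}(g(y))$, which is discrete, so it is discrete.

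Finally I would address the existence of fiber products in $S_\lqf$. Given $X \to Z$ and $Y \to Z$ in $S_\lqf$ (so $X, Y, Z$ are all locally quasi-finite over $S$), form $X \times_Z Y$ in $\mathrm{Sch}/S$; I must check this object again lies in $S_\lqf$, i.e. is locally quasi-finite over $S$. Write $X \times_Z Y \to Y \to S$: the first map is the base change of $X \to Z$ along $Y \to Z$, hence locally quasi-finite by the base-change clause just proved, and $Y \to S$ is locally quasi-finite by assumption, so the composite is locally quasi-finite by the composition clause. Agreement with the fiber product computed in $\mathrm{Sch}/S$ is then the universal property, since $S_\lqf$ is a full subcategory. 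I expect the main (if minor) obstacle to be pinning down the precise Stacks project lemma numbers and making sure the ``$g$ at least locally of finite type'' hypothesis needed for the cancellation clause is available — in the site $S_\lqf$ every structure morphism is locally of finite type, so this is automatic, but it is worth stating explicitly to keep the argument self-contained.
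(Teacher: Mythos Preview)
Your proposal is correct and follows the same route as the paper, which simply cites \cite[Lem.\ 01TL, 01TM]{Stacks} for closure under composition and base change, \cite[Lem.\ 03WR]{Stacks} for the cancellation clause, and observes that the fiber-product statement is an immediate consequence; you have merely unpacked those citations. One small remark: your worry about needing $g$ to be locally of finite type in the cancellation step is unnecessary, since \cite[Lem.\ 01T8]{Stacks} already gives that $f$ is locally of finite type whenever $g\circ f$ is, with no hypothesis on $g$.
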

\begin{prf}
The first statement is \cite[Lem.\ 01TL, 01TM]{Stacks}. The second is \cite[Lem.\ 03WR]{Stacks}, and the last statement is a direct consequence of the first two.
\end{prf}

\begin{exmp}
Given a field $K$, the site $\Spec(K)_\lqf$ consists of discrete schemes $X$ locally of finite type over $K$. As each $X$ must admit an affine basis, we see that it is an arbitrary disjoint union of one-point schemes of finite type over $K$, that is, of spectra of finite local $K$-algebras.
\end{exmp}

Our reason for introducing the lqf-site is the following crucial lemma:

\begin{lem}\label{lemr1lqf}
Let $K$ be a field and $\Spec(K)\rightarrow S$ a scheme morphism. It induces a continuous map between small lqf-sites $\pi : \Spec(K)_\lqf\rightarrow S_\lqf$ such that $\mathrm{R}^1\pi_*\mathbf G_{\mathrm m,K} = 0$.
\end{lem}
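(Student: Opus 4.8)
The plan is to reduce the vanishing of $\mathrm R^1\pi_*\mathbf G_{\mathrm m,K}$ on the small $\lqf$-site to a purely local statement about spectra of finite local $K$-algebras, and then invoke Hilbert's Theorem 90 together with a limit argument. Recall first (as noted in the Example preceding the lemma) that an object of $\Spec(K)_\lqf$ is a disjoint union of spectra of finite local $K$-algebras, and that a covering in the fppf topology can always be refined by one in which all maps are locally quasi-finite; this is exactly why we work on the $\lqf$-site, since it guarantees that the relevant stalk/section computations stay inside the category. The sheaf $\mathrm R^1\pi_*\mathbf G_{\mathrm m,K}$ is, by definition, the fppf-sheafification of the presheaf $T \mapsto \mathrm H^1(T_K, \mathbf G_{\mathrm m})$ on $S_\lqf$, where $T_K \coloneqq T\times_S\Spec(K)$.

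First I would show that for every object $T\to S$ in $S_\lqf$, the fiber product $T_K = T\times_S\Spec(K)$ is again an object of $\Spec(K)_\lqf$ (locally quasi-finite over $\Spec(K)$, by the closure of locally quasi-finite morphisms under base change from the Proposition above), hence is a disjoint union of spectra of finite local $K$-algebras. Then I would argue that it suffices to check that the presheaf $T\mapsto\mathrm H^1(T_K,\mathbf G_{\mathrm m})$ vanishes on a \emph{cofinal} system of covers — concretely, that every section over some cover can be killed after passing to a further cover. Because cohomology commutes with the disjoint unions involved, the question localizes to: for $A$ a finite local $K$-algebra, is $\mathrm H^1(\Spec(A),\mathbf G_{\mathrm m})$ killed after an fppf (equivalently, locally quasi-finite fppf) cover of $\Spec(A)$?

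The key step is that $\mathrm H^1(\Spec(A),\mathbf G_{\mathrm m}) = \mathrm{Pic}(A) = 0$ already, since a finite local ring is in particular local, so every line bundle is free — no cover is even needed here. More care is needed if one wants the vanishing of the \emph{sheafification} rather than the presheaf: one must ensure that the restriction maps in the presheaf are compatible, i.e.\ that a class over $T_K$ restricts to $0$ over $T'_K$ for a cover $T'\to T$; but since the class is already $0$ over $T_K$ itself (each connected component being the spectrum of a finite local $K$-algebra, with trivial Picard group), there is nothing to do. Thus the presheaf $T\mapsto\mathrm H^1(T_K,\mathbf G_{\mathrm m})$ is already identically zero on $S_\lqf$, a fortiori its sheafification $\mathrm R^1\pi_*\mathbf G_{\mathrm m,K}$ vanishes.

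The main obstacle I anticipate is not the Picard-group computation itself but bookkeeping: verifying that $\pi$ is a well-defined continuous morphism of sites (i.e.\ that pullback along $\Spec(K)\to S$ preserves locally quasi-finite morphisms and covers — both follow from the stability properties of locally quasi-finite maps recorded above), and confirming that a general object $T\in S_\lqf$ really does pull back to a disjoint union of spectra of \emph{finite local} $K$-algebras rather than merely locally quasi-finite $K$-schemes. For the latter one uses that a locally quasi-finite scheme over a field $K$ is locally of finite type with discrete fibers over the point $\Spec(K)$, hence (Zariski-locally) the spectrum of a finite-dimensional $K$-algebra, which decomposes as a product of finite local $K$-algebras; each such factor has trivial Picard group, closing the argument.
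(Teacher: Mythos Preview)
Your argument is correct and follows essentially the same strategy as the paper's proof: identify $\mathrm{R}^1\pi_*\mathbf G_{\mathrm m,K}$ as the sheafification of $T\mapsto\mathrm H^1(T_K,\mathbf G_{\mathrm m})$, observe that $T_K$ is a disjoint union of spectra of finite local $K$-algebras, and conclude vanishing. The only difference lies in the final step: the paper first passes to affine $T$ (so that $T_K$ is affine and the union is finite), then invokes the invariance $\mathrm H^1(X_K,\mathbf G_{\mathrm m})\cong\mathrm H^1((X_K)_{\mathrm{red}},\mathbf G_{\mathrm m})$ for smooth groups over affine schemes, reducing to Hilbert~90 over fields; you instead use directly that $\mathrm H^1(\Spec(A),\mathbf G_{\mathrm m})=\mathrm{Pic}(A)=0$ for a local ring $A$, which is arguably more elementary and avoids the reduction-to-reduced step.
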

\begin{prf}
The sheaf $\mathrm{R}^1\pi_*\mathbf G_{\mathrm m,K}$ is the sheafification of the functor $X\rightsquigarrow\mathrm{H}^1(X_K,\mathbf G_{\mathrm m})$, for $X\in S_\lqf$. The scheme $X_K$ is, by the above example, a disjoint union of spectra of local rings. It is then well-known that $\mathrm{H}^1(X_K,\mathbf G_{\mathrm m}) = \mathrm{Pic}(X_K) = 0$.
\end{prf}

Now, let $k$ be a field and $X$ a scheme of finite type over $k$. For simplicity, we suppose that $X = \Spec(R)$ is affine and integral (more generally, an analogous construction can be done~when $X$ is just reduced and with finitely many irreducible components; see \cite[II,~Prop.\ 1.2]{Gro68} or \cite[II, Ex. 3.9]{Mil80}). Let $K\coloneqq \mathrm{Frac}(R)$ be the field of fractions of $X$. We will also consider the two obvious maps $j : \Spec(K)\rightarrow X$ and $\pi : X\rightarrow\Spec(k)$.

There is a natural map of fppf sheaves $\alpha :\mathbf G_{\mathrm m, X}\rightarrow j_*\mathbf G_{\mathrm m, K}$ on $X$. We define a sheaf $\mathcal{Div}_X$ as its fppf cokernel, giving an exact sequence
\begin{equation}\label{eqfppfvslqf}
    0\rightarrow\mathcal Q\rightarrow j_*\mathbf G_{\mathrm m, K}\rightarrow\mathcal{Div}_X\rightarrow 0
\end{equation}
where $\mathcal Q\coloneqq \mathrm{im}(\alpha)$. We will study the following exact sequence of sheaves on $\Spec(k)_\lqf$:
$$0\rightarrow\pi_*\mathcal Q\rightarrow \pi_*j_*\mathbf G_{\mathrm m, K}\rightarrow\pi_*\mathcal{Div}_X\rightarrow \mathrm{R}^1\pi_*\mathcal Q\longrightarrow \mathrm{R}^1\pi_*(j_*\mathbf G_{\mathrm m, K}) = 0$$
The vanishing on the right comes from the inclusion of sheaves $\mathrm{R}^1\pi_*(j_*\mathbf G_{\mathrm m, K})\hookrightarrow\mathrm{R}^1(\pi\circ j)_*\mathbf G_{\mathrm m, K}$ and an application of Lemma \ref{lemr1lqf} to the morphism $\pi\circ j$.

\begin{rem}
Although we defined the sequence \eqref{eqfppfvslqf} for all $Y\in X_\fppf$, the expected geometric meaning of sections of these sheaves (of rational functions, resp.\ divisors) is missing~for~general non-open morphisms $Y\rightarrow X$. Still, the morphism $X_A\rightarrow X$ is open for each $k$-algebra $A$.

A more general notion of ``relative meromorphic functions'' (resp.\ ``relative Cartier divisors'') of $X$ over $k$ was developed in \cite[IV$_4$, \textsection20.6, resp.\ \textsection21.15]{EGA} and they form a functor on $\mathrm{Sch}/k$ which is easily seen to be a (big) fppf sheaf. It can be shown that the restriction of this sheaf to $\Spec(k)_\lqf$ agrees with our $\pi_*j_*\mathbf G_{\mathrm m, K}$ (resp.\ $\pi_*\mathcal{Div}_X$). This suffices for our needs and we will not use here the more general definitions.
\end{rem}

\begin{prop}
The natural morphisms $\pi_*\mathbf G_{\mathrm m,X}\rightarrow\pi_*\mathcal Q$ and $\mathrm{R}^1\pi_*\mathbf G_{\mathrm m,X}\rightarrow\mathrm{R}^1\pi_*\mathcal Q$ of fppf sheaves on $\Spec(k)_\lqf$ are isomorphisms.
\end{prop}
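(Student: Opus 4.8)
The claim is that the unit maps $\pi_*\mathbf{G}_{\mathrm m,X}\to\pi_*\mathcal Q$ and $\mathrm R^1\pi_*\mathbf{G}_{\mathrm m,X}\to\mathrm R^1\pi_*\mathcal Q$ are isomorphisms of sheaves on $\Spec(k)_\lqf$. The plan is to apply $\pi_*$ to the short exact sequence $0\to\mathcal K\to\mathbf{G}_{\mathrm m,X}\xrightarrow{\alpha}\mathcal Q\to 0$ of $\lqf$-sheaves on $X$, where $\mathcal K=\ker(\alpha)$, and show that $\pi_*\mathcal K=0$ and $\mathrm R^1\pi_*\mathcal K=0$; both assertions then follow immediately from the resulting long exact sequence. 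So the real content is identifying $\mathcal K$ and proving these two vanishing statements.

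First I would identify the kernel $\mathcal K$. For any $U\in X_\lqf$, an element of $\mathbf{G}_{\mathrm m}(U)=\mathcal O_U(U)^\times$ dies in $(j_*\mathbf{G}_{\mathrm m,K})(U)=\mathbf{G}_{\mathrm m}(U\times_X\Spec K)$ precisely when its restriction to the generic fibre vanishes as a unit; since $X$ is integral and $U\to X$ is flat (being a covering-type map in the relevant arguments) the generic points of $U$ all lie over the generic point of $X$, so a unit on $U$ restricting to $1$ on $U_K$ is forced to be $1$ on a schematically dense open, hence on all of $U$ by reducedness. Thus $\mathcal K=0$ on reduced test objects. More carefully: $\mathcal K$ is the sheaf $U\rightsquigarrow\ker\bigl(\mathcal O_U(U)^\times\to\mathcal O_{U_K}(U_K)^\times\bigr)$, and one checks this presheaf is already the zero sheaf because $U_K\to U$ is schematically dominant whenever $U$ is flat (or reduced with components dominating those of $X$) over $X$, and a covering of any $U\in X_\lqf$ by such objects kills any local section. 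Consequently $\pi_*\mathcal K=0$ and $\mathrm R^1\pi_*\mathcal K=0$ trivially.

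The main obstacle — and the step requiring genuine care — is verifying that $\mathcal K$ really is zero as an $\lqf$-sheaf, i.e. handling non-reduced objects $U$ in the small $\lqf$-site. An $\lqf$-scheme over $X$ can be non-reduced (e.g. a spectrum of an Artinian local ring), and on such a $U$ the map $\mathcal O_U(U)^\times\to\mathcal O_{U_K}(U_K)^\times$ need not be injective on the nose; nilpotents in the kernel are the issue. The resolution is the same one used in Lemma~\ref{lemr1lqf}: by \cite[Lem.\ 2.2.9, Lem.\ 2.2.9]{RosTD} (the invariance of $\mathbf{G}_{\mathrm m}$-cohomology and sections under nilpotent thickenings, valid since $\mathbf{G}_{\mathrm m}$ is smooth), we may replace $U$ by $U_{\mathrm{red}}$ when computing the relevant sheafification, reducing to the reduced case handled above. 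I would therefore first reduce the computation of $\pi_*\mathcal K$ and $\mathrm R^1\pi_*\mathcal K$ to reduced test schemes via smoothness of $\mathbf{G}_{\mathrm m}$, then invoke schematic dominance of $\Spec K\to X$ on the reduced flat covers, and conclude $\mathcal K=0$. The two desired isomorphisms then drop out of the four-term exact sequence $0\to\pi_*\mathcal K\to\pi_*\mathbf{G}_{\mathrm m,X}\to\pi_*\mathcal Q\to\mathrm R^1\pi_*\mathcal K\to\mathrm R^1\pi_*\mathbf{G}_{\mathrm m,X}\to\mathrm R^1\pi_*\mathcal Q\to\mathrm R^2\pi_*\mathcal K$, using $\mathrm R^i\pi_*\mathcal K=0$ for $i=0,1$.
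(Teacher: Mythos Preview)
Your argument has a genuine gap: the sheaf $\mathcal K=\ker(\alpha)$ is \emph{not} zero on $X_\lqf$. Take any closed point $x\in X$ and $U=\Spec\kappa(x)$; this is locally quasi-finite over $X$ and reduced, but $U\times_X\Spec K=\varnothing$ (since $x$ is not the generic point), so $\mathcal K(U)=\kappa(x)^\times$. No fppf cover of $U$ in $X_\lqf$ can have components dominating $X$, because every such cover maps set-theoretically into $\{x\}$. Thus your claim that ``a covering of any $U\in X_\lqf$ by such objects kills any local section'' fails. The obstruction is not non-reducedness (so the smoothness-of-$\mathbf G_{\mathrm m}$ reduction is beside the point) but non-flatness over $X$: the small lqf-site of $X$ contains objects which are not flat over $X$, and on those $\alpha$ is not injective.

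The paper's proof sidesteps this by never asserting $\mathcal K=0$. It restricts attention to the full subcategory $\mathcal C\subset X_\lqf$ of schemes \emph{flat} over $X$, observes that $\mathcal C$ is closed under fppf covers, and shows $\mathbf G_{\mathrm m}(Y)=\mathcal Q(Y)$ for all $Y\in\mathcal C$ (by flatness of $Y$ over the integral $X$, the map $\mathcal O_Y(Y)\to\mathcal O_{Y_K}(Y_K)$ is injective). Since any $Z\in\Spec(k)_\lqf$ is flat over $k$, the object $X\times_kZ$ lies in $\mathcal C$, as do all its covers and their fibre products; so both $\pi_*$ and $\check{\mathrm H}{}^1=\mathrm H^1$ of $\mathbf G_{\mathrm m,X}$ and $\mathcal Q$ agree on $X_Z$, giving the two isomorphisms directly. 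If you want to salvage your long-exact-sequence approach, you should replace ``$\mathcal K=0$'' with ``$\mathcal K(Y)=0$ for $Y\in\mathcal C$'' and then argue (via \v{C}ech) that $\mathrm R^i\pi_*\mathcal K=0$ for $i\leq 1$; but you will also need to control the connecting map into $\mathrm R^2\pi_*\mathcal K$, which the paper's direct comparison avoids entirely.
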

\begin{prf}
Let $\mathcal C$ denote the full subcategory of $X_\lqf$ consisting of flat morphisms $Y\rightarrow X$. Then each lqf cover of any given object $Y\in\mathcal C$ is also contained in $\mathcal C$ (however, $\mathcal C$ is in general not closed under fiber products with basis different from $X$, hence it is not a site in the sense of \cite{Mil80}). Under our assumptions (in particular, reducedness of $X$), the natural map $\mathcal O_X(X)=R\rightarrow K$ is injective. By flatness, the map $\mathbf G_{\mathrm m}(Y)\rightarrow j_*\mathbf G_{\mathrm m, K}(Y)$ is injective for every affine $Y\in\mathcal C$. It then follows that $\mathbf G_{\mathrm m}(Y) = \mathcal Q(Y)$ for all $Y\in\mathcal C$.

Since Čech and derived functor cohomology on $X_\lqf$ agree in degree $1$ (\cite[III, Cor.\ 2.10]{Mil80}), we immediately get that $\mathrm{H}^1(Y,\mathbf G_{\mathrm m}) = \mathrm{H}^1(Y,\mathcal Q)$ for all $Y\in\mathcal C$. It now suffices~to~note~that all $k$-schemes $Z$ are flat over $k$, and thus $X\times_k Z\in\mathcal C$ for all $Z\in\Spec(k)_\lqf$.
\end{prf}


\begin{defn}
We define the following complexes of sheaves on $\Spec(k)_\lqf$:
$$\mathcal{UPic}'_X\coloneqq\big[\pi_*j_*\mathbf G_{\mathrm m, K}\rightarrow \pi_*\mathcal{Div}_X\big]$$
$$\mathcal{UPic}_X\coloneqq\left[\frac{\pi_*j_*\mathbf G_{\mathrm m, K}}{\mathbf G_m}\rightarrow \pi_*\mathcal{Div}_X\right]$$
Note that there is a natural exact sequence:
\begin{equation}\label{equpic}
    0\rightarrow\mathbf G_m\rightarrow\mathcal{UPic}'_X\rightarrow\mathcal{UPic}_X\rightarrow 0
\end{equation}
\end{defn}

Recall that $\tau_{\leq 1}\mathrm{R}\pi_*\mathbf G_{\mathrm m, X}$ denotes the truncation in degree $1$ of the complex $\mathrm{R}\pi_*\mathbf G_{\mathrm m, X}$ in the derived category $\mathcal D(\Spec(k)_\lqf)$. That is, it is the unique (in the derived category) complex $D$ equipped with a map $D\rightarrow\mathrm{R}\pi_*\mathbf G_{\mathrm m, X}$ which induces the following isomorphisms of homology sheaves:
$$\mathcal H^i(D)\cong\left\{\begin{aligned}
    & \mathrm{R}^i\pi_*\mathbf G_{\mathrm m, X} & &\textrm{if }i\leq 1\\
    & 0 & &\textrm{if }i > 1
\end{aligned}\right.$$

\begin{prop}
There is a natural isomorphism $\tau_{\leq 1}\mathrm{R}\pi_*\mathbf G_{\mathrm m, X}\rightarrow\mathcal{UPic}'_X$ of complexes in the derived category $\mathcal D(\Spec(k)_\lqf)$.
\end{prop}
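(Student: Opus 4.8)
The plan is to construct the isomorphism $\tau_{\leq 1}\mathrm R\pi_*\mathbf G_{\mathrm m, X}\to\mathcal{UPic}'_X$ by exhibiting an explicit two-term complex that computes $\tau_{\leq 1}\mathrm R\pi_*\mathbf G_{\mathrm m, X}$ and identifying it with $\mathcal{UPic}'_X=[\pi_*j_*\mathbf G_{\mathrm m,K}\to\pi_*\mathcal{Div}_X]$. First I would push forward the short exact sequence $0\to\mathbf G_{\mathrm m, X}\to j_*\mathbf G_{\mathrm m, K}\to\mathcal{Div}_X\to 0$ (obtained by composing the defining sequence of $\mathcal Q$ with the isomorphism $\pi_*\mathbf G_{\mathrm m,X}\cong\pi_*\mathcal Q$, $\mathrm R^1\pi_*\mathbf G_{\mathrm m,X}\cong\mathrm R^1\pi_*\mathcal Q$ from the preceding proposition) along $\pi$, and apply $\mathrm R\pi_*$. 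This yields a distinguished triangle $\mathrm R\pi_*\mathbf G_{\mathrm m,X}\to\mathrm R\pi_*(j_*\mathbf G_{\mathrm m,K})\to\mathrm R\pi_*\mathcal{Div}_X\to[1]$. The key input is that $j_*\mathbf G_{\mathrm m,K}$ is $\pi_*$-acyclic in positive degrees beyond what's needed: indeed $\mathrm R^1\pi_*(j_*\mathbf G_{\mathrm m,K})=0$ by Lemma~\ref{lemr1lqf} applied to $\pi\circ j$ (via the inclusion $\mathrm R^1\pi_*(j_*\mathbf G_{\mathrm m,K})\hookrightarrow\mathrm R^1(\pi\circ j)_*\mathbf G_{\mathrm m,K}=0$), so $\tau_{\leq 1}\mathrm R\pi_*(j_*\mathbf G_{\mathrm m,K})=\pi_*j_*\mathbf G_{\mathrm m,K}$ sits in degree $0$.

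Next I would truncate. From the long exact sequence of homology sheaves of the triangle, using $\mathrm R^1\pi_*(j_*\mathbf G_{\mathrm m,K})=0$, one reads off $\mathcal H^0(\mathrm R\pi_*\mathbf G_{\mathrm m,X})=\pi_*\mathbf G_{\mathrm m,X}$, and $\mathcal H^1(\mathrm R\pi_*\mathbf G_{\mathrm m,X})=\mathrm R^1\pi_*\mathbf G_{\mathrm m,X}=\coker(\pi_*j_*\mathbf G_{\mathrm m,K}\to\pi_*\mathcal{Div}_X)$ (this last identification being exactly the four-term exact sequence displayed before the proposition, since $\mathrm R^1\pi_*\mathcal Q\cong\mathrm R^1\pi_*\mathbf G_{\mathrm m,X}$). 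Therefore the two-term complex $[\pi_*j_*\mathbf G_{\mathrm m,K}\to\pi_*\mathcal{Div}_X]$ placed in degrees $0,1$ has exactly the homology sheaves $\mathrm R^i\pi_*\mathbf G_{\mathrm m,X}$ for $i\leq 1$ and vanishing homology above. To promote this numerical coincidence to an actual isomorphism in the derived category I would produce a genuine morphism of complexes $\mathcal{UPic}'_X\to\mathrm R\pi_*\mathbf G_{\mathrm m,X}$: take a $\pi_*$-injective (or at least $\pi_*$-acyclic) resolution $\mathbf G_{\mathrm m,X}\to\mathcal I^\bullet$, note that $\mathbf G_{\mathrm m,X}\to j_*\mathbf G_{\mathrm m,K}$ extends over the resolution, and use the resulting map from the truncated complex $[\pi_*j_*\mathbf G_{\mathrm m,K}\to\pi_*\mathcal{Div}_X]$ — which maps quasi-isomorphically to $\tau_{\leq 1}\pi_*\mathcal I^\bullet$ by the acyclicity of $j_*\mathbf G_{\mathrm m,K}$ and $\mathcal{Div}_X$ in the relevant range — into $\pi_*\mathcal I^\bullet=\mathrm R\pi_*\mathbf G_{\mathrm m,X}$, then compose with the truncation map. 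Checking on homology sheaves (using the computations just made) shows this morphism factors through $\tau_{\leq 1}$ and is a quasi-isomorphism onto it.

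The main obstacle I anticipate is bookkeeping with the lqf-site rather than anything deep: one must be careful that all the sheaf-theoretic operations ($\pi_*$, $j_*$, $\mathcal{Div}_X$, quotients) are taken on the small lqf-sites and that the comparison with cohomology on the big fppf site is harmless — this is precisely why the paper introduced $S_\lqf$, so that Lemma~\ref{lemr1lqf} is available and $\mathrm R^1\pi_*(j_*\mathbf G_{\mathrm m,K})$ genuinely vanishes. A secondary subtlety is making the isomorphism \emph{natural}: I would phrase the construction functorially in $X$ (for morphisms of finite-type $k$-schemes over which generic points behave well) by observing that everything is built canonically from the adjunction unit $\mathbf G_{\mathrm m,X}\to j_*\mathbf G_{\mathrm m,K}$ and the truncation functor, both of which are natural. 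Finally, I would remark that this proposition is the exact fppf analogue of \cite[Cor.\ 2.20]{BvH09}, so that combined with the exact sequence \eqref{equpic} one recovers, over a global or local field where $\mathrm H^3(k,\mathbf G_{\mathrm m})=0$, an identification $\mathbf H^2(k,\mathcal{UPic}_X)\cong\Br_{\mathrm a}(X)$ to be used in the next subsections.
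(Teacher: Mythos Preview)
Your approach is correct and essentially the same as the paper's: both use the vanishing $\mathrm R^1\pi_*(j_*\mathbf G_{\mathrm m,K})=0$ from Lemma~\ref{lemr1lqf} together with the preceding proposition to identify $\tau_{\leq 1}\mathrm R\pi_*\mathbf G_{\mathrm m,X}$ with the two-term complex. The paper's proof is a single line: it applies $\tau_{\leq 1}\mathrm R\pi_*$ to the surjection $\mathbf G_{\mathrm m,X}\to\mathcal Q$ (a quasi-isomorphism on $\tau_{\leq 1}$ by the previous proposition) and then uses the genuinely exact sequence $0\to\mathcal Q\to j_*\mathbf G_{\mathrm m,K}\to\mathcal{Div}_X\to 0$ to write $\tau_{\leq 1}\mathrm R\pi_*\mathcal Q\cong[\pi_*j_*\mathbf G_{\mathrm m,K}\to\pi_*\mathcal{Div}_X]$---this sidesteps your need to build the map via an injective resolution, and also avoids the small inaccuracy in your first line (the sequence $0\to\mathbf G_{\mathrm m,X}\to j_*\mathbf G_{\mathrm m,K}\to\mathcal{Div}_X\to 0$ is not left-exact on $X_\lqf$, only $0\to\mathcal Q\to\cdots$ is).
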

\begin{prf}
The canonical map of complexes
$\tau_{\leq 1}\mathrm{R}\pi_*\mathbf G_{\mathrm m, X}\longrightarrow\tau_{\leq 1}\mathrm{R}\pi_*\mathcal Q\cong\left[\pi_*j_*\mathbf G_{\mathrm m, K}\rightarrow \pi_*\mathcal{Div}_X\right]$
is a quasi-isomorphism by the previous proposition.
\end{prf}

Now \eqref{equpic} gives the following long exact sequence (cf. \cite[Prop.\ 2.19]{BvH09})
$$0\rightarrow\mathrm{Pic}(X)\rightarrow\mathbf{H}^1(k,\mathcal{UPic}_X)\rightarrow\mathrm{Br}(k)\rightarrow\mathrm{Br}'(X)\rightarrow\mathbf{H}^2(k,\mathcal{UPic}_X)\rightarrow\mathrm{H}^3(k,\mathbf G_{\mathrm m})$$
in which $\mathrm{Br}'(X)\coloneqq\ker\big(\mathrm{Br}(X)\rightarrow\mathrm{H}^0(k,\mathrm{R}^2\pi_*\mathbf G_{\mathrm m, X})\big)\;\subseteq\Br(X)$.

\begin{cor}\label{corupic}
If $k$ is a local or global field, then there is a canonical inclusion:
$$\mathbf{H}^2(k,\mathcal{UPic}_X)\xhookrightarrow{\;\;\;\;\;\;}\frac{\mathrm{Br}(X)}{\mathrm{Br}(k)}$$
\end{cor}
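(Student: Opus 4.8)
The plan is to extract the desired inclusion directly from the long exact sequence displayed just above the corollary, namely
$$0\rightarrow\mathrm{Pic}(X)\rightarrow\mathbf{H}^1(k,\mathcal{UPic}_X)\rightarrow\mathrm{Br}(k)\rightarrow\mathrm{Br}'(X)\rightarrow\mathbf{H}^2(k,\mathcal{UPic}_X)\rightarrow\mathrm{H}^3(k,\mathbf G_{\mathrm m}).$$
The exactness here comes from the short exact sequence \eqref{equpic} $0\rightarrow\mathbf G_{\mathrm m}\rightarrow\mathcal{UPic}'_X\rightarrow\mathcal{UPic}_X\rightarrow 0$ together with the identification $\tau_{\leq 1}\mathrm R\pi_*\mathbf G_{\mathrm m,X}\cong\mathcal{UPic}'_X$ from the preceding proposition, which gives $\mathbf H^i(k,\mathcal{UPic}'_X) = \mathbf H^i_\lqf(k,\tau_{\leq 1}\mathrm R\pi_*\mathbf G_{\mathrm m,X})$; since truncation does not affect $\mathbf H^i$ for $i\leq 2$ when the higher homology sheaves are in degree $>1$, and since $\mathbf H^i_\lqf(X,\mathbf G_{\mathrm m}) = \mathrm H^i(X,\mathbf G_{\mathrm m})$ by the remark that cohomology on the lqf-site computes fppf cohomology, one gets the terms $\mathrm{Br}(X)$, $\mathrm{Br}'(X)$ etc. as stated.

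First I would invoke the hypothesis that $k$ is local or global to conclude $\mathrm H^3(k,\mathbf G_{\mathrm m}) = 0$: over a non-archimedean local field this is classical ($\mathrm{cd}(k)\leq 2$ for the relevant coefficients, or directly $\mathrm H^3(k,\mathbf G_{\mathrm m})=0$), and over a global function field $\mathrm H^3(k,\mathbf G_{\mathrm m}) = 0$ as well (this is used already in the excerpt, e.g.\ in the proof of Proposition \ref{proptechbm}). Feeding this vanishing into the six-term sequence, the map $\mathrm{Br}'(X)\rightarrow\mathbf H^2(k,\mathcal{UPic}_X)$ becomes surjective, and exactness at $\mathbf H^2(k,\mathcal{UPic}_X)$ together with exactness at $\mathrm{Br}'(X)$ yields a short exact sequence
$$0\rightarrow\frac{\mathrm{Br}(k)}{\mathrm{im}(\mathbf H^1(k,\mathcal{UPic}_X))}\rightarrow\mathrm{Br}'(X)\rightarrow\mathbf H^2(k,\mathcal{UPic}_X)\rightarrow 0.$$
Reinterpreting the first term: the map $\mathrm{Br}(k)\rightarrow\mathrm{Br}'(X)$ is the one induced by the structure morphism $\pi$, and its kernel is the image of $\mathbf H^1(k,\mathcal{UPic}_X)\rightarrow\mathrm{Br}(k)$. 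Hence $\mathbf H^2(k,\mathcal{UPic}_X)\cong\mathrm{Br}'(X)/\mathrm{im}(\mathrm{Br}(k))$.

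Finally I would observe that $\mathrm{Br}'(X)\subseteq\mathrm{Br}(X)$ by definition, so that the quotient $\mathrm{Br}'(X)/\mathrm{im}(\mathrm{Br}(k))$ maps to $\mathrm{Br}(X)/\mathrm{im}(\mathrm{Br}(k))$; I must check this map is injective, i.e.\ that $\mathrm{Br}'(X)\cap\mathrm{im}(\mathrm{Br}(k)\rightarrow\mathrm{Br}(X)) = \mathrm{im}(\mathrm{Br}(k)\rightarrow\mathrm{Br}'(X))$, which is immediate since $\mathrm{Br}(k)$ already maps into $\mathrm{Br}'(X)$ (any class pulled back from the base is killed in $\mathrm H^0(k,\mathrm R^2\pi_*\mathbf G_{\mathrm m,X})$ after further pullback along a section over an extension, or more simply because the composite $\mathrm{Br}(k)\rightarrow\mathrm{Br}(X)\rightarrow\mathrm H^0(k,\mathrm R^2\pi_*\mathbf G_{\mathrm m,X})$ factors through $\mathrm H^0(k,\mathrm R^2\pi_*\mathbf G_{\mathrm m,k}) = 0$ via base change). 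Composing the isomorphism $\mathbf H^2(k,\mathcal{UPic}_X)\cong\mathrm{Br}'(X)/\mathrm{im}(\mathrm{Br}(k))$ with this injection into $\mathrm{Br}(X)/\mathrm{Br}(k)$ gives the canonical inclusion claimed. The one genuinely delicate point, and the step I expect to be the main obstacle, is justifying cleanly that the six-term sequence displayed before the corollary is itself correct on the lqf-site — specifically the identification of $\mathbf H^i_\lqf(k,\mathcal{UPic}'_X)$ with $\mathrm{Br}(X)$ and $\mathrm{Br}'(X)$ in the right degrees, which relies on the truncation comparison and on $\mathbf H^i_\lqf(X,\mathbf G_{\mathrm m}) = \mathrm H^i_\fppf(X,\mathbf G_{\mathrm m})$; but that sequence is already asserted in the excerpt, so for the corollary proper it suffices to cite it and perform the short diagram chase above.
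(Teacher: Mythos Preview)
The proposal is correct and follows exactly the approach the paper intends: the corollary is stated immediately after the displayed long exact sequence with no proof, because it is meant to be read off directly from that sequence using the vanishing $\mathrm H^3(k,\mathbf G_{\mathrm m})=0$ over local and global fields. Your additional check that $\mathrm{Br}(k)\to\mathrm{Br}(X)$ factors through $\mathrm{Br}'(X)$ is already built into the exact sequence (the map there goes to $\mathrm{Br}'(X)$, not $\mathrm{Br}(X)$), so the last paragraph is slightly more work than needed, but harmless.
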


\subsection{Sheaves Associated to Torsors}
Let $X$ be an affine and integral scheme of finite type over $k$ and suppose given a left $H$-torsor $Y$ over $X$, for a smooth algebraic group $H$ over $k$. Let the maps $j : \Spec(K)\rightarrow X$ and $\pi : X\rightarrow\Spec(k)$ be as above. We also write $\widehat{H}\coloneqq\widehat{H}_\ab$, since $\mathcal{Hom}(H,\mathbf G_{\mathrm m}) = \mathcal{Hom}(H_\ab,\mathbf G_{\mathrm m}) = \widehat{H}_\ab$.

There are two important sheaves $\mathcal F$ and $\mathcal V$ which can be associated to the $H$-torsor $Y\rightarrow X$ on the (big) fppf site of $k$. 
To do this, we first name the surjective morphism:
$$q \;:\; Y\times_X Y\cong H\times Y\longrightarrow H$$
To define the sheaf $\mathcal F$, it suffices to give its points over each $k$-algebra $A$:
$$\mathcal F(A)\coloneqq\left\{
\begin{aligned}
    f : Y_A&\rightarrow\mathbf G_{\mathrm{m},A}\\
    \mathrm{Sch}/A-&\textrm{morphism} 
\end{aligned}
\left|\;
\begin{aligned}
    \exists\;\tilde{f} : H_A&\rightarrow\mathbf G_{\mathrm{m},A}\\
    \mathrm{Sch}/A-&\textrm{morphism} 
\end{aligned}\;\textrm{ s.t.}\!\!
\begin{tikzcd}
    Y_A\times_{X_A} Y_A\arrow[r, "f\times f"]\arrow[d, "q_A"] & \mathbf G_{\mathrm{m},A}\times_A \mathbf G_{\mathrm{m},A}\arrow[d, "\mathrm{id}\cdot\mathrm{inv}"]\\
    H_A\arrow[r,  "\tilde{f}"] & \mathbf G_{\mathrm{m},A}
\end{tikzcd}\!\!\right.\right\}$$
It is indeed a sheaf, as it can be written as an obvious pullback of sheaves (cf. \cite[Def.~2.5]{Don24}). The sheaf $\mathcal V$ is defined in a similar, if a bit more complicated way:
$$\mathcal V(A)\coloneqq\left\{
\begin{aligned}
    f : Y_A&\dashrightarrow\mathbf G_{\mathrm{m},A}\\
    \textrm{ratio}&\textrm{nal map}\\
    \textrm{ov}&\textrm{er }A
\end{aligned}
\left|\;
\begin{aligned}
    \exists\;&\varnothing\neq U\subseteq X\textrm{ open}\\
    f&\textrm{ is defined on }Y_{U_A}\\
    \exists\;&\;\,\tilde{f} : H_A\rightarrow\mathbf G_{\mathrm{m},A}\\
    &\!\mathrm{Sch}/A{-}\textrm{morphism} 
\end{aligned}\!\textrm{ s.t.}\!\!
\begin{tikzcd}
    Y_{U_A}\times_{U_A} Y_{U_A}\arrow[r, "f\times f"]\arrow[d, "q_A"] & \mathbf G_{\mathrm{m},A}\times_A \mathbf G_{\mathrm{m},A}\arrow[d, "\mathrm{id}\cdot\mathrm{inv}"]\\
    H_A\arrow[r,  "\tilde{f}"] & \mathbf G_{\mathrm{m},A}
\end{tikzcd}\!\!\right.\right\}$$
It is also a sheaf, since every fppf covering of an affine scheme can be refined by a finite (affine) fppf covering, and any finite intersection of open sets $U_i\subseteq X$ is again an open set. Although these sheaves are defined on the whole fppf site, we state our main result only over the lqf site (which is necessary to have surjectivity on the right of the second sequence below):

\begin{thm}\label{thmqiso}
There exists the following collection of exact sequences of sheaves on $\Spec(k)_\lqf$
\begin{equation}\label{eqqisomain}
\begin{tikzcd}[row sep = tiny]
0\arrow[r] & \pi_*\mathbf G_{\mathrm m,\,X}\arrow[r] & \mathcal F\arrow[r, "t"] & \widehat{H}\arrow[r, "w"] & \mathrm R^1\pi_*\mathbf G_{\mathrm m,\,X}\\ 
0\arrow[r] & \pi_*j_*\mathbf G_{\mathrm m,\,K}\arrow[r, "v"] & \mathcal V\arrow[r] & \widehat{H}\arrow[r] &  0\\
0\arrow[r] & \mathcal F\arrow[r, "u"] & \mathcal V\arrow[r] & \pi_*\mathcal{Div}_X
\end{tikzcd}
\end{equation}
which define a diagram of complexes of sheaves
$$\left[\begin{tikzcd}
    \mathcal F\arrow[d, "t"] \\ \widehat{H}
\end{tikzcd}\right]
\xleftarrow{\;\;\;\;\varepsilon\;\;\;\;}
\left[\begin{tikzcd}
    \mathcal F\oplus\pi_*j_*\mathbf G_{\mathrm m,\,K}\arrow[d, "u+v"] \\ \mathcal V
\end{tikzcd}\right]
\xrightarrow{\;\;\;\;\;\;\;\;\;\;}
\left[\begin{tikzcd}
    \pi_*j_*\mathbf G_{\mathrm m,\,K}\arrow[d] \\ \pi_*\mathcal{Div}_X
\end{tikzcd}\right] = \mathcal{UPic}'_X$$
such that the map $\varepsilon$ is a quasi-isomorphism. Moreover, the right map is a quasi-isomorphism if and only the sheaf morphism $\mathcal V\rightarrow\pi_*\mathcal{Div}_X$ in the third sequence is surjective.
\end{thm}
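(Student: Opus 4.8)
The plan is to construct the sheaves $\mathcal F$ and $\mathcal V$ and the five sheaf maps $t,w,v,u$ explicitly, verify the three exact sequences in \eqref{eqqisomain}, and then read off the quasi-isomorphism statements from a diagram chase. First I would pin down the maps. The inclusion $\pi_*\mathbf G_{\mathrm m,X}\hookrightarrow\mathcal F$ is given by pullback along $Y\to X$ (a unit $f\colon Y_A\to\mathbf G_{\mathrm m,A}$ factoring through $X_A$ automatically has $\tilde f=1$); similarly $\pi_*j_*\mathbf G_{\mathrm m,K}\hookrightarrow\mathcal V$ is pullback of rational functions, and $\mathcal F\hookrightarrow\mathcal V$ ($=u$) is the obvious inclusion of everywhere-defined morphisms into rational maps. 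The map $t\colon\mathcal F\to\widehat H$ (and likewise $\mathcal V\to\widehat H$) sends $f$ to its associated $\tilde f\colon H_A\to\mathbf G_{\mathrm m,A}$; one checks that $\tilde f$ is forced to be a group homomorphism by the torsor cocycle relation (compatibility of the action of $H$ on $Y\times_X Y\cong H\times Y$ with $q$), so $\tilde f\in\widehat H(A)=\Hom(H_A,\mathbf G_{\mathrm m,A})$, and that $\tilde f$ is uniquely determined by $f$ because $q$ is an fppf surjection. The map $w\colon\widehat H\to\mathrm R^1\pi_*\mathbf G_{\mathrm m,X}$ is the connecting map: given a character $\chi$ of $H$, push forward the $H$-torsor $Y\to X$ along $\chi$ to get a $\mathbf G_{\mathrm m}$-torsor on $X$, i.e. a class in $\mathrm{Pic}(X_A)$, hence a section of $\mathrm R^1\pi_*\mathbf G_{\mathrm m,X}$ after sheafification. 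Finally $\mathcal V\to\pi_*\mathcal{Div}_X$ sends a rational function $f$ on $Y_A$ (with fixed $\tilde f$) to its divisor, which descends to $X$ because the ambiguity in $f$ along fibres of $Y\to X$ is controlled by $\tilde f$ and $\mathbf G_{\mathrm m}$-valued functions, contributing no divisor; this uses that $Y\to X$ is faithfully flat with integral generic fibre structure coming from $K$.

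Next I would verify exactness of the three rows. Row one: injectivity of $\pi_*\mathbf G_{\mathrm m,X}\to\mathcal F$ is clear; exactness at $\mathcal F$ amounts to $f$ having trivial associated character iff $f$ is pulled back from $X$, which follows from descent along the fppf cover $Y\to X$; exactness at $\widehat H$ is exactly the statement that $\chi$ lifts to a function on $Y$ iff the pushed-forward $\mathbf G_{\mathrm m}$-torsor is trivial, i.e. $\ker(w)=\im(t)$ — this is a direct unwinding of the definitions and the description of $\mathcal F$ as a pullback sheaf. Row two is the rational-function analogue: here $v$ is injective, and surjectivity of $\mathcal V\to\widehat H$ uses that \emph{every} character $\chi$ lifts to a rational function on $Y$, because the pushed-forward $\mathbf G_{\mathrm m}$-torsor on $X$ is generically trivial (it trivialises over $\Spec K$, as $\mathrm{Pic}$ of a field is $0$) — this is where passing to rational maps buys exactness where the integral version only gives left-exactness. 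The key input here is Lemma \ref{lemr1lqf}: $\mathrm R^1(\pi\circ j)_*\mathbf G_{\mathrm m,K}=0$ on the lqf site, which is what makes the obstruction vanish after restriction to $\Spec(k)_\lqf$ (and not on the big site). Row three: exactness at $\mathcal V$ says a rational function on $Y_A$ with no divisor on $X_A$ and a given character is in fact everywhere-defined with values in $\mathbf G_{\mathrm m}$, i.e. lies in $\mathcal F$; this is the standard "a rational function with trivial divisor is a unit" argument, applied along $Y\to X$ and using reducedness/integrality of $X$.

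Then the diagram of complexes: $\varepsilon$ is the projection $(\mathcal F\oplus\pi_*j_*\mathbf G_{\mathrm m,K},\ \mathcal V)\to(\mathcal F,\ \widehat H)$ in degree $0$ sending $(a,b)\mapsto a$ and in degree $1$ the map $\mathcal V\to\widehat H$; the right-hand map is the projection to $(\pi_*j_*\mathbf G_{\mathrm m,K},\ \pi_*\mathcal{Div}_X)$ sending $(a,b)\mapsto b$ in degree $0$ and $\mathcal V\to\pi_*\mathcal{Div}_X$ in degree $1$. Commutativity is immediate. To see $\varepsilon$ is a quasi-isomorphism I would compute homology of the middle complex $[\mathcal F\oplus\pi_*j_*\mathbf G_{\mathrm m,K}\xrightarrow{u+v}\mathcal V]$: its $\mathcal H^0$ is $\ker(u+v)$, which by rows one and three is identified with $\pi_*\mathbf G_{\mathrm m,X}$ sitting diagonally, matching $\mathcal H^0[\mathcal F\to\widehat H]=\ker(t)=\pi_*\mathbf G_{\mathrm m,X}$; its $\mathcal H^1$ is $\coker(u+v)$, and since $\im(u)+\im(v)$ surjects onto exactly the subsheaf of $\mathcal V$ of elements whose divisor on $X$ vanishes (row three) together with everything coming from $\pi_*j_*\mathbf G_{\mathrm m,K}$, one checks $\coker(u+v)\cong\coker(t)=\mathrm R^1\pi_*\mathbf G_{\mathrm m,X}/\,\mathrm{(image of }\mathcal F)$... more precisely $\cong\widehat H/\im(t)$ via $\mathcal V\to\widehat H$, which by row one is $\im(w)\subseteq\mathrm R^1\pi_*\mathbf G_{\mathrm m,X}$, exactly $\mathcal H^1[\mathcal F\to\widehat H]=\coker(t)$. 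So $\varepsilon$ induces isomorphisms on $\mathcal H^0$ and $\mathcal H^1$, hence is a quasi-isomorphism. For the last clause: the right-hand map has the same source as $\varepsilon$ and target $\mathcal{UPic}'_X=[\pi_*j_*\mathbf G_{\mathrm m,K}\to\pi_*\mathcal{Div}_X]$; on $\mathcal H^0$ it is always an isomorphism (both equal $\pi_*\mathbf G_{\mathrm m,X}$, by rows two and three), and on $\mathcal H^1$ it is the map $\coker(u+v)\to\coker(\pi_*j_*\mathbf G_{\mathrm m,K}\to\pi_*\mathcal{Div}_X)$ induced by $\mathcal V\to\pi_*\mathcal{Div}_X$; this is an isomorphism precisely when $\mathcal V\to\pi_*\mathcal{Div}_X$ is surjective (surjectivity is clearly necessary; conversely, given surjectivity, a short diagram chase using row three shows injectivity of the induced map on $\mathcal H^1$ as well). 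I expect the main obstacle to be the bookkeeping in identifying $\coker(u+v)$ correctly — specifically, making rigorous that the two images $\im(u)$ and $\im(v)$ inside $\mathcal V$ interact as claimed, which ultimately rests on row three together with the surjectivity of $\mathcal V\to\widehat H$ from row two, and on being careful that all of this happens \emph{after} restriction to the lqf site where Lemma \ref{lemr1lqf} applies.
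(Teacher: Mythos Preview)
Your proposal is correct and follows essentially the same approach as the paper. The paper splits the verification of the three exact sequences into three separate propositions (constructing $t$, $w$, the divisor map, and proving exactness exactly as you outline, with the same key inputs: $q$ being an epimorphism for uniqueness of $\tilde f$, the contracted-product description of $w$, descent of the divisor via $\mathrm{pr}_1^*D=\mathrm{pr}_2^*D$ on $Y\times_X Y$, and Lemma~\ref{lemr1lqf} for the surjectivity in row two), and then simply asserts that the quasi-isomorphism claims ``follow formally'' from the three sequences and the obvious commutativity of the square relating $t$, $u$, $v$, and $\mathcal V\to\pi_*\mathcal{Div}_X$; your explicit diagram chase on $\mathcal H^0$ and $\mathcal H^1$ is precisely the content of that formal verification.
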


The second half of the statement follows formally from the exactness of the three sequences in the first half, as for this it is necessary and sufficient that the following diagram commutes
\begin{center}\begin{tikzcd}[row sep = tiny]
    \mathcal F\arrow[dd, "t"]\arrow[dr, "u"] & & \pi_*j_*\mathbf G_{\mathrm m,\,K}\arrow[dd]\arrow[dl, "v", swap] \\
    & \mathcal V\arrow[dl]\arrow[dr] \\
    \widehat{H} & & \pi_*\mathcal{Div}_X
\end{tikzcd}\end{center}
which will be obvious from the definitions given below. We split the proof of this theorem into three propositions:

\begin{prop}\label{proplqfexseq1}
There is an exact sequence as in the first row of \eqref{eqqisomain}.
\end{prop}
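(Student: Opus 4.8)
The statement claims exactness of the sequence
$$0\rightarrow\pi_*\mathbf G_{\mathrm m,\,X}\rightarrow\mathcal F\xrightarrow{\;t\;}\widehat H\xrightarrow{\;w\;}\mathrm R^1\pi_*\mathbf G_{\mathrm m,\,X}$$
of sheaves on $\Spec(k)_\lqf$, where $\mathcal F$ is the sheaf of morphisms $f:Y_A\rightarrow\mathbf G_{\mathrm m,A}$ whose ``difference along fibers of $Y\times_XY\cong H\times Y\rightarrow Y$'' is a character of $H$. The plan is first to \emph{define the maps}. The inclusion $\pi_*\mathbf G_{\mathrm m,X}\hookrightarrow\mathcal F$ sends an invertible function on $X_A$ to its pullback to $Y_A$ (whose associated $\tilde f$ is trivial, since such a function is constant along fibers of $Y\rightarrow X$). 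The map $t:\mathcal F\rightarrow\widehat H$ sends $f$ to the character $\tilde f$, which is well-defined (unique, since $\mathbf G_{\mathrm m}$ is separated and $Y\rightarrow X$ is faithfully flat so $\tilde f$ is determined by $f$) and is a homomorphism of sheaves because $\mathcal F$ and $\widehat H$ have group structures induced by multiplication in $\mathbf G_{\mathrm m}$. The boundary map $w:\widehat H\rightarrow\mathrm R^1\pi_*\mathbf G_{\mathrm m,X}$ is the one I would extract from the following device: given a character $\chi\in\widehat H(A)$, push the $H$-torsor $Y_A\rightarrow X_A$ forward along $\chi$ to get a $\mathbf G_{\mathrm m}$-torsor $\chi_*Y_A$ over $X_A$, i.e.\ a class in $\mathrm{Pic}(X_A)=\mathrm H^1(X_A,\mathbf G_{\mathrm m})$; sheafifying over $A$ gives a section of $\mathrm R^1\pi_*\mathbf G_{\mathrm m,X}$, and this assignment is visibly a homomorphism. (Equivalently, one realizes all three of $\pi_*\mathbf G_{\mathrm m,X},\ \mathcal F,\ \widehat H,\ \mathrm R^1\pi_*\mathbf G_{\mathrm m,X}$ inside the Leray/descent spectral sequence for $q:Y\times_XY\rightarrow H$ composed with $Y\rightarrow X\rightarrow\Spec k$, and $w$ is a differential in it.)

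Second, \emph{check exactness at each spot}, which can be done on sections over an arbitrary $A$ (an affine object of $\Spec(k)_\lqf$, so $X_A$ is affine and integral) and then sheafify. Exactness at $\pi_*\mathbf G_{\mathrm m,X}$: a function on $X_A$ vanishing in $\mathcal F(A)$ is zero on $Y_A$, hence zero on $X_A$ by faithful flatness of $Y_A\rightarrow X_A$. Exactness at $\mathcal F$: if $t(f)=\tilde f$ is the trivial character, then $f$ is constant along the fibers $H\times Y\rightrightarrows Y$, i.e.\ $f$ is in the equalizer, so it descends to a morphism $X_A\rightarrow\mathbf G_{\mathrm m,A}$ by faithfully flat descent for the affine scheme $\mathbf G_{\mathrm m}$ — this is an element of $\pi_*\mathbf G_{\mathrm m,X}(A)$. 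Exactness at $\widehat H$: given $\chi\in\widehat H(A)$, the class $w(\chi)=[\chi_*Y_A]\in\mathrm{Pic}(X_A)$ (after sheafification) vanishes iff $\chi_*Y_A$ is a trivial $\mathbf G_{\mathrm m}$-torsor over $X_A$, i.e.\ iff $\chi_*Y_A$ has a trivializing section; but a section of $\chi_*Y_A=Y_A\times^{H_A,\chi}\mathbf G_{\mathrm m,A}$ over $X_A$ is precisely the data of an $X_A$-morphism $Y_A\rightarrow\mathbf G_{\mathrm m,A}$ that is $\chi$-equivariant for the $H_A$-action, which is the same as an element $f\in\mathcal F(A)$ with $t(f)=\chi$. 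Here a small but real care point: since $w$ lands in a sheafification, exactness at $\widehat H$ must be argued locally — i.e.\ for a section $\chi$ whose image in $\mathrm R^1\pi_*\mathbf G_{\mathrm m,X}$ is zero, one passes to an fppf cover of $A$ over which $[\chi_*Y]$ is actually trivial in the group $\mathrm{Pic}$, constructs $f$ there, and checks that the $f$'s glue because they are unique given $\chi$ (again by faithful flatness). This gluing/uniqueness is exactly what makes $\mathcal F$ a sheaf and is already built into its definition as a pullback of sheaves.

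The step I expect to be the main obstacle is \emph{pinning down the map $w$ cleanly and proving exactness at $\widehat H$ on the lqf-site}, because $w$ is only defined after sheafification and $\widehat H=\widehat H_\ab$ need not be representable, so one cannot simply ``evaluate $w$ on points'' — the argument has to be phrased in terms of fppf-local triviality of the pushforward torsor $\chi_*Y$ and a descent/uniqueness argument for the resulting trivializations. The use of the lqf-site (rather than the big fppf site) is what guarantees, via Lemma \ref{lemr1lqf} applied to $\pi\circ j$, that $\mathrm R^1\pi_*(j_*\mathbf G_{\mathrm m,K})=0$, which in turn is what will later force the \emph{second} sequence of \eqref{eqqisomain} to be right-exact; for the present first sequence, right-exactness is not claimed, so I only need the four-term exact sequence, and the honest content is the identification of $\ker w$ with $\mathrm{im}\,t$ just described. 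Everything else (that the displayed diagram of complexes commutes, that $\varepsilon$ is a quasi-isomorphism) is formal from the three exact rows and is handled in the two companion propositions.
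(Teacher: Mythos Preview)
Your approach is essentially identical to the paper's: both define $t$ via $f\mapsto\tilde f$, define $w$ by pushing forward the torsor class $[Y_A]$ along the character, and identify exactness at $\widehat H$ with the existence of a trivialization of the resulting $\mathbf G_{\mathrm m}$-torsor over $X_A$.

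One small slip worth flagging: the lifts $f\in\mathcal F(A')$ of a given $\chi$ are \emph{not} unique --- they form a torsor under $\pi_*\mathbf G_{\mathrm m,X}(A')$ --- so your proposed gluing argument does not go through as stated. Fortunately it is also unnecessary: exactness of a sequence of sheaves at $\widehat H$ only requires that every section of $\ker(w)$ lift \emph{locally} to $\mathcal F$, which you already establish by passing to a cover on which $[\chi_*Y]$ is trivial in $\mathrm{Pic}$. The paper handles this in one line (``after the sheafification \ldots\ it follows that $\ker(w)=\mathrm{im}(t)$'') without any gluing.
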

\begin{prf}
As $q$ is an epimorphism of schemes, $\tilde{f}$ is defined uniquely by $f$. It is also easily seen to be a homomorphism, which defines a map $t : \mathcal F\rightarrow\widehat{H}$. Its kernel is exactly the sheaf of maps $f$ which factor through $X$, which is $\pi_*\mathbf G_{\mathrm m,\,X}$.

We define a map $w : \widehat{H}\rightarrow\mathrm R^1\pi_*\mathbf G_{\mathrm m,\,X}$ by sending $\alpha\in\widehat{H}(A)$ 
to $\alpha_*[Y_A]\in\mathrm{H}^1(X_A,\mathbf G_{\mathrm m})$. Here, $[Y_A]\in\mathrm{H}^1(X_A,H)$ is the class of $[Y_A]$ as an $H_A$-torsor of $X_A$. In other words,\;\;
$$\alpha_*[Y_A] = [Z]\;\;\textrm{ for the contracted product }\;\; Z\coloneqq H_A{\setminus}\big(\mathbf G_{\mathrm m,\,X}\times_X Y\big)_A$$
where $H_A$ acts on $\big(\mathbf G_{\mathrm m,\,X}\times_X Y\big)_A$ by $h\cdot(c,y)=(c\cdot\alpha(h)^{-1},h\cdot y)$. The class $[Z]$ is trivial~if~and only if there exists a $\mathbf G_{\mathrm m,\, X_A}$-equivariant morphism $Z\rightarrow\mathbf G_{\mathrm m,\, X_A}$ of $X_A$-schemes. This is true if and only if $\alpha = \tilde{f}$ for some $f\in\mathcal F(A)$, in which case this morphism is $[c,y]\mapsto c\cdot f(y)$~(and conversely, we may define $f(y)$ as the image of $[1,y]$). In particular, after the sheafification of $A\rightsquigarrow\mathrm{H}^1(X_A,\mathbf G_{\mathrm m})$ to construct $\mathrm R^1\pi_*\mathbf G_{\mathrm m,\,X}$, it follows that $\ker(w) = \mathrm{im}(t)$.
\end{prf}

\begin{prop}
There is an exact sequence as in the third row of \eqref{eqqisomain}.
\end{prop}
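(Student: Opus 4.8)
The plan is to exhibit the two maps explicitly and then prove exactness at $\mathcal F$ by a formal diagram chase using the already-established first two rows of \eqref{eqqisomain}, reserving the genuine work for exactness at $\mathcal V$. First I would pin down the map $\mathcal V\rightarrow\pi_*\mathcal{Div}_X$: given $f\in\mathcal V(A)$ defined on $Y_{U_A}$ with associated character $\tilde f:H_A\rightarrow\mathbf G_{\mathrm m,A}$, the rational function $f$ has a divisor on $Y_A$ which $q$-descends to $X_A$, and the resulting section of $\mathcal{Div}_X$ over $X_A$ is our image. The key point making the descent work is that over the dense open $H_A\times Y_{U_A}\subseteq Y_A\times_{X_A}Y_A$ the defining square forces $\mathrm{pr}_1^*f\cdot(\mathrm{pr}_2^*f)^{-1}=\tilde f\circ q$, and since $\tilde f$ is a homomorphism of group schemes into $\mathbf G_{\mathrm m}$ it is invertible on $H_A$, so this rational function has trivial divisor; faithful flatness of $Y_A\rightarrow X_A$ then lets us descend divisors. (Independence of $U$ is clear by passing to the limit defining the sheaf $\mathcal V$, and the target is indeed a section of $\mathcal{Div}_X=j_*\mathbf G_{\mathrm m,K}/\mathcal Q$ by that sheaf's structural description.) The map $u$ is simply the inclusion of morphisms into rational maps; one checks at once that it is compatible with the maps to $\widehat H$ and that the pullback of a global unit on $X_A$ agrees with $v$ applied to the corresponding generic unit, which sets up the compatibilities in the statement.

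For exactness at $\mathcal F$, i.e. injectivity of $u$, I would avoid any hypothesis on $X$ beyond integrality by invoking the first two rows of \eqref{eqqisomain}. These fit into a morphism of (left-)exact sequences whose left vertical map is the inclusion $\pi_*\mathbf G_{\mathrm m,X}\hookrightarrow\pi_*j_*\mathbf G_{\mathrm m,K}$ (injective, from left-exactness of $\pi_*$ applied to $0\rightarrow\mathcal Q\rightarrow j_*\mathbf G_{\mathrm m,K}\rightarrow\mathcal{Div}_X\rightarrow 0$ together with $\pi_*\mathcal Q\cong\pi_*\mathbf G_{\mathrm m,X}$), whose middle vertical map is $u$, and whose right vertical map is the identity of $\widehat H$; the left square commutes by the remark above and the right square commutes because $t'\circ u=t$ (both record the character). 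A four-lemma (injectivity on the two outer terms forces injectivity in the middle) gives that $u$ is injective.

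For exactness at $\mathcal V$, the inclusion $\mathrm{im}(u)\subseteq\ker$ is immediate: a morphism $f:Y_A\rightarrow\mathbf G_{\mathrm m,A}$ is a unit in $\mathcal O(Y_A)$, so its divisor on $Y_A$ vanishes and it maps to $0$ in $\pi_*\mathcal{Div}_X(A)$. Conversely, if $f\in\mathcal V(A)$ maps to $0$, then the $q$-descent of its divisor vanishes on $X_A$; since pullback of divisors along the faithfully flat cover $Y_A\rightarrow X_A$ is injective, the divisor of $f$ on $Y_A$ itself vanishes, so $f\in\mathcal O(Y_A)^\times=\mathrm{Mor}_A(Y_A,\mathbf G_{\mathrm m,A})$. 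Combined with the morphism $\tilde f:H_A\rightarrow\mathbf G_{\mathrm m,A}$, which is already part of the datum of $\mathcal V(A)$, and the compatibility square — which persists on all of $Y_A\times_{X_A}Y_A$ by density once $f$ is defined everywhere — this exhibits $f$ as an element of $\mathcal F(A)$, hence of $\mathrm{im}(u)(A)$. Since this equality of sections holds for every $k$-algebra $A$ and $u$ is injective, $\mathcal F\xrightarrow{\ \sim\ }\ker(\mathcal V\rightarrow\pi_*\mathcal{Div}_X)$ as sheaves, which is the assertion.

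The main obstacle will be the last step's interaction with non-reducedness: over a general $k$-algebra $A$ the scheme $Y_A$ need not be integral or even reduced, so the phrases ``divisor of a rational function on $Y_A$'' and ``trivial divisor $\Leftrightarrow$ unit'' must be handled through the sheaf $\mathcal{Div}_X$ directly rather than via a naive Weil-divisor theory — concretely, $f$ is generically a unit, $q$-descends to a section of $j_*\mathbf G_{\mathrm m,K}$ well-defined modulo $\mathcal Q$, and its vanishing in $\mathcal{Div}_X(X_A)$ means exactly that this descended datum lies in $\mathcal Q(X_A)\cong\pi_*\mathbf G_{\mathrm m,X}(A)$, whence $f$ is regular and invertible. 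I expect the cleanest way to make this rigorous is to reduce, by a standard stalk/limit argument on $\Spec(k)_\lqf$, to the case where $A$ is the spectrum of a field and $X$ is geometrically integral (the situation of the intended application, Lemma \ref{lemphslift}), in which $Y_A$ is a smooth integral scheme and the argument of the previous paragraph runs verbatim.
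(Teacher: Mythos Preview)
Your proposal is correct in outline and lands on the same conclusions as the paper, but it is considerably more roundabout in two places.

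First, injectivity of $u$: you invoke a four-lemma using the first two rows of \eqref{eqqisomain}, but the paper disposes of this in one line by observing that $\mathcal F\subseteq\mathcal V$ is literally the subset of those $f$ for which one may take $U=X$. No diagram chase is needed.

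Second, and more importantly, your handling of the divisor descent and of exactness at $\mathcal V$ goes through ``the divisor of $f$ on $Y_A$'' and then worries (rightly) about non-reducedness of $Y_A$. Your suggested fix of reducing to $A$ a field via a stalk argument on $\Spec(k)_\lqf$ does not quite work as written: the points of $\Spec(k)_\lqf$ include nilpotent thickenings, so passing to stalks will not land you over a field. The paper sidesteps this entirely by never speaking of Weil divisors on $Y_A$: it uses that $\mathcal{Div}_X$ is a sheaf on the \emph{big} fppf site of $X$, so one may evaluate it at $Y_A$ (which is not lqf over $k$), push $f$ through the sheaf map
\[
\mathbf G_{\mathrm m}((Y\times_X U)_A)\longrightarrow j_*\mathbf G_{\mathrm m,K}(Y_A)\longrightarrow\mathcal{Div}_X(Y_A),
\]
and then descend the resulting $D$ to $\mathcal{Div}_X(X_A)$ by checking $\mathrm{pr}_1^*D=\mathrm{pr}_2^*D$ on $(Y\times_X Y)_A$ --- which holds because the difference is the class of $\tilde f\circ q_A$, a morphism defined on all of $(Y\times_X Y)_A$. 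The criterion ``$D=0$ iff one may take $U=X$'' is then a statement about sections of the sheaf $\mathcal{Div}_X$, not about divisors on a possibly non-reduced scheme. This is exactly the route you yourself gesture at (``handle through the sheaf $\mathcal{Div}_X$ directly''); it is the one to take, and the field-reduction detour should be dropped.
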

\begin{prf}
The inclusion of $\mathcal F$ into $\mathcal V$ is clear, it corresponds to those $f\in\mathcal V(A)$ for which $U = X$. To construct the map $\mathcal V\rightarrow\pi_*\mathcal{Div}_X$, we want to show that $f$ naturally determines an element $D\in\mathcal{Div}_X(X_A)$. Consider the fppf covering $Y_A\rightarrow X_A$ and recall that $\mathcal{Div}_X$ is a sheaf on the (big) fppf site of $k$, not just the lqf site. By definition, $f$ is defined on some $(Y\times_X U)_A$, so it maps to a divisor $D\in\mathcal{Div}_X(Y_A)$ via the composition:
$$\mathbf G_{\mathrm m}((Y\times_X U)_A) \longrightarrow\mathbf G_{\mathrm m}((Y\times_X \Spec(K))_A)\cong j_*\mathbf G_{\mathrm m,\,K}(Y_A)\longrightarrow\mathcal{Div}_X(Y_A)$$
To prove that $D$ lies in $\mathcal{Div}_X(X_A)$, it thus suffices to show that $\mathrm{pr_1^*}D-\mathrm{pr_2^*}D = 0$ for the two projections from $(Y\times_X Y)_A$ to $Y_A$. This difference is exactly the divisor of the map
$$(f\circ\mathrm{pr}_1)\cdot\mathrm{inv}(f\circ\mathrm{pr}_2) \;:\; (Y_U\times_U Y_U)_A\longrightarrow\mathbf G_{\mathrm m,\,A}$$
from the definition of $\mathcal V$. But that map is equal to $\tilde{f}\circ q_A$, which is defined on $(Y\times_X Y)_A$. This shows the desired property. Moreover, it is clear that $D = 0$ if and only if we may take~$U$~to~be $X$, which proves exactness of the sequence in the statement.
\end{prf}

\begin{prop}\label{proplqfexseq2}
There is an exact sequence as in the second row of \eqref{eqqisomain}.
\end{prop}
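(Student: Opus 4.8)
The plan is to construct the map $\mathcal V \to \widehat H$ directly, show that its kernel is $\pi_*j_*\mathbf G_{\mathrm m,\,K}$ (embedded via $v$), and then verify surjectivity onto $\widehat H$. An element $f \in \mathcal V(A)$ comes with an auxiliary morphism $\tilde f : H_A \to \mathbf G_{\mathrm m,A}$ satisfying the indicated commutativity on $Y_{U_A}\times_{U_A} Y_{U_A}$. Since $q_A : Y_{U_A}\times_{U_A} Y_{U_A}\to H_A$ is an epimorphism of schemes (it is a pullback of the isomorphism $Y\times_X Y\cong H\times Y$, restricted to the dense open $U$, which is still schematically dominant because $Y\to X$ is flat), the map $\tilde f$ is uniquely determined by $f$; moreover the same epimorphism argument forces $\tilde f$ to be a group homomorphism, hence $\tilde f \in \widehat H(A) = \Hom(H_A,\mathbf G_{\mathrm m,A})$. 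This defines the sheaf morphism $\mathcal V \to \widehat H$ by $f \mapsto \tilde f$, and it fits into the commutative triangle displayed after the theorem statement (compatibility with $t : \mathcal F\to\widehat H$ is immediate since $\mathcal F\subseteq\mathcal V$ is exactly the locus $U = X$, on which $\tilde f$ is unchanged).

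Next I would identify the kernel. If $\tilde f = 1$, then the constraint says $f\circ\mathrm{pr}_1 = f\circ\mathrm{pr}_2$ on $Y_{U_A}\times_{U_A}Y_{U_A}$, i.e.\ $f$ descends along the fppf cover $Y_{U_A}\to U_A$ to a rational map $U_A \dashrightarrow \mathbf G_{\mathrm m,A}$ defined on a dense open of $U_A$; composing with $j$ this is precisely a section of $j_*\mathbf G_{\mathrm m,\,K}$ over $X_A$, that is, an element of $\pi_*j_*\mathbf G_{\mathrm m,\,K}(A)$. Conversely any such section pulls back to an element of $\mathcal V(A)$ with trivial $\tilde f$; this defines the injection $v$ and shows $\ker(\mathcal V\to\widehat H) = \im(v)$. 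The compatibility of $v$ with the map $\pi_*j_*\mathbf G_{\mathrm m,\,K}\to\pi_*\mathcal{Div}_X$ in the third row is immediate from the construction of the divisor map in the previous proposition (the divisor of a function pulled back from $K$ along $j$ is the same whether computed on $X$ or on $Y$).

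For surjectivity onto $\widehat H$, I would argue on the lqf site, which is the reason the statement is restricted there: given $\alpha\in\widehat H(A)$, I want to produce $f\in\mathcal V(A)$ with $\tilde f = \alpha$. By the argument in Proposition \ref{proplqfexseq1}, the obstruction to lifting $\alpha$ to an element of $\mathcal F(A)$ (i.e.\ with $U = X$) is precisely the class $w(\alpha) = \alpha_*[Y_A]\in\mathrm H^1(X_A,\mathbf G_{\mathrm m})$; to lift it only to $\mathcal V(A)$ it suffices that this class become trivial on a dense open $U_A\subseteq X_A$, which is automatic because $\mathbf G_{\mathrm m}$-torsors, i.e.\ line bundles, on an integral scheme are generically trivial — restricting to the generic point $\Spec(K)$ (where $\mathrm H^1(K,\mathbf G_{\mathrm m}) = 0$ by Hilbert 90) and spreading out, we find a nonempty open $U\subseteq X$ over which $\alpha_*[Y]$ trivializes, hence a trivialization over $U_A$ after base change, which is exactly the data of $f\in\mathcal V(A)$ mapping to $\alpha$. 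Sheafifying this construction (every object of $\Spec(k)_\lqf$ is a disjoint union of spectra of finite local $k$-algebras, over each of which $X_A$ is still integral) shows the map $\mathcal V\to\widehat H$ is an epimorphism of lqf-sheaves.

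The main obstacle I anticipate is the bookkeeping around the generic trivialization and its spreading out: one must be careful that the open set $U$ can be chosen independently of $A$ (it can, by taking $U$ over which $\alpha_*[Y]$ already trivializes on $X$ itself, before base change) and that the descent of $f$ through the fppf cover $Y_{U_A}\to U_A$ is genuinely well-defined as a rational map rather than merely a compatible family — this is where the hypothesis that $Y\to X$ is an fppf cover, together with effectivity of fppf descent for quasi-affine schemes, is used. Everything else is formal manipulation of the pullback diagrams defining $\mathcal F$ and $\mathcal V$.
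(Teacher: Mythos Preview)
Your overall architecture matches the paper's: define $\mathcal V\to\widehat H$ by $f\mapsto\tilde f$, identify the kernel with $\pi_*j_*\mathbf G_{\mathrm m,K}$, then prove surjectivity by trivializing the torsor $\alpha_*[Y_A]$ over an open $U_A$. The construction of the map and the kernel identification are fine.

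The surjectivity argument, however, has a real gap. You claim the obstruction class $\alpha_*[Y_A]\in\mathrm H^1(X_A,\mathbf G_{\mathrm m})$ vanishes ``at the generic point $\Spec(K)$ (where $\mathrm H^1(K,\mathbf G_{\mathrm m})=0$ by Hilbert 90)''. But the relevant generic locus of $X_A$ is $\Spec(K\otimes_k A)$, not $\Spec(K)$, and $K\otimes_k A$ is generally not a field: for $A=k[\epsilon]/(\epsilon^2)$ it is non-reduced, and even for $A$ a finite field extension of $k$ it may fail to be a domain. Your proposed fix in the last paragraph---take $U$ ``over which $\alpha_*[Y]$ already trivializes on $X$ itself, before base change''---cannot work, because $\alpha$ is a section over $A$, not over $k$; there is no class $\alpha_*[Y]$ on $X$ to trivialize. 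The parenthetical assertion that ``$X_A$ is still integral'' for $A$ a finite local $k$-algebra is likewise false whenever $A$ is non-reduced.

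The paper's argument is to invoke Lemma~\ref{lemr1lqf} (more precisely, its proof): since $A$ is quasi-finite over $k$, the scheme $\Spec(K\otimes_k A)$ lies in $\Spec(K)_\lqf$, its reduction is a finite disjoint union of spectra of fields, and hence $\mathrm H^1(K\otimes_k A,\mathbf G_{\mathrm m})=0$. This is exactly where the lqf hypothesis enters and why the sequence is only claimed to be exact on $\Spec(k)_\lqf$. Once that vanishing is in hand, spreading out from $K\otimes_k A$ to $\mathcal O_X(U)\otimes_k A$ for some nonempty open $U\subseteq X$ gives the trivialization over $U_A$ and hence the desired $f\in\mathcal V(A)$.
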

\begin{prf}
Both maps are defined analogously to Proposition \ref{proplqfexseq1}. All that remains to prove~is~the surjectivity on the right. Since $\mathcal F\rightarrow\widehat{H}$ factors through $\mathcal V$, it suffices to show that the map
$$\mathcal V\longrightarrow\mathrm{coker}\!\left(\mathcal F\rightarrow\widehat{H}\right) = \mathrm{im}\!\left(\widehat{H}\xrightarrow{\;\;w\;\;}\mathrm R^1\pi_*\mathbf G_{\mathrm m,\,X}\right)$$
is a surjection of sheaves. Consider $\alpha\in\widehat{H}(A)$ and the image $\alpha_*[Y_A]\in\mathrm{H}^1(X_A, \mathbf G_{\mathrm m})$, for any quasi-finite $k$-algebra $A$.
The proof of Lemma \ref{lemr1lqf} shows that $\mathrm{H}^1(K\otimes_k A, \mathbf G_{\mathrm m}) = 0$ and thus any torsor representing $\alpha_*[Y_A]$ trivializes over $U_A$, for some open set $U\subseteq X$. We now find a preimage $f\in\mathcal V(A)$ of $\alpha_*[Y_A]$ exactly as in Proposition \ref{proplqfexseq1}.
\end{prf}

This completes the proof of Theorem \ref{thmqiso}.

Now, suppose that $X$ is a homogeneous space of a smooth connected affine group $G$ over $k$, with smooth geometric stabilizer $\Hb$ which then has its maximal abelian quotient $H_\ab$ uniquely defined over $k$. The main result of this section is about how the above construction relates to the complex $\mathcal{UPic}_X$:

\begin{cor}\label{corqiso}
There exist sheaves $\mathcal E$ and $\mathcal U$ on $\Spec(k)_\lqf$ and maps which fit into the following diagram of complexes of lqf-sheaves
$$\left[\begin{tikzcd}
    \mathcal E\arrow[d] \\ \widehat{H}_\ab
\end{tikzcd}\right]
\xleftarrow{\;\;\;\;\varepsilon\;\;\;\;}
\left[\begin{tikzcd}
    \mathcal E\oplus\dfrac{\pi_*j_*\mathbf G_{\mathrm m,\,K}}{\mathbf G_m}\arrow[d] \\ \mathcal U
\end{tikzcd}\right]
\xrightarrow{\;\;\;\;\;\;\;\;\;\;}
\left[\begin{tikzcd}
    \dfrac{\pi_*j_*\mathbf G_{\mathrm m,\,K}}{\mathbf G_m}\arrow[d] \\ \pi_*\mathcal{Div}_X
\end{tikzcd}\right] = \mathcal{UPic}_X$$
such that the map $\varepsilon$ is a quasi-isomorphism. Moreover, the right map is a quasi-isomorphism if and only the constructed sheaf morphism $\mathcal U\rightarrow\pi_*\mathcal{Div}_X$ is surjective.
\end{cor}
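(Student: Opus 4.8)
The plan is to deduce Corollary~\ref{corqiso} from Theorem~\ref{thmqiso} by a single uniform ``quotient by $\mathbf G_m$'' operation, parallel to the way $\mathcal{UPic}_X$ was extracted from $\mathcal{UPic}'_X$ through the exact sequence \eqref{equpic}. Concretely, I would set $\mathcal E\coloneqq\mathcal F/\mathbf G_m$ and $\mathcal U\coloneqq\mathcal V/\mathbf G_m$, where in both cases $\mathbf G_m$ is the subsheaf of constant sections (constant $\mathbf G_m$-valued functions, resp.\ constant $\mathbf G_m$-valued rational functions, on $Y$); this copy of $\mathbf G_m$ lies inside $\pi_*\mathbf G_{\mathrm m,X}=\ker(t)\subseteq\mathcal F$, inside $\pi_*j_*\mathbf G_{\mathrm m,K}\subseteq\mathcal V$, and is carried by both $u$ and $v$ to the constants inside $\mathcal V$. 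Since the category of $\lqf$-sheaves of abelian groups is abelian and this $\mathbf G_m$ sits compatibly in the relevant terms, applying $(-)/\mathbf G_m$ to the three exact sequences \eqref{eqqisomain} yields exact sequences $0\to\pi_*\mathbf G_{\mathrm m,X}/\mathbf G_m\to\mathcal E\xrightarrow{\bar t}\widehat{H}_\ab\xrightarrow{w}\mathrm R^1\pi_*\mathbf G_{\mathrm m,X}$, then $0\to\pi_*j_*\mathbf G_{\mathrm m,K}/\mathbf G_m\xrightarrow{\bar v}\mathcal U\to\widehat{H}_\ab\to0$, and $0\to\mathcal E\xrightarrow{\bar u}\mathcal U\to\pi_*\mathcal{Div}_X$.

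Next I would pass to the level of complexes. Write $P_L\coloneqq[\mathcal F\xrightarrow{t}\widehat{H}_\ab]$, $P_M\coloneqq[\mathcal F\oplus\pi_*j_*\mathbf G_{\mathrm m,K}\xrightarrow{u+v}\mathcal V]$ and $P_R\coloneqq\mathcal{UPic}'_X$, together with the quasi-isomorphism $\varepsilon\colon P_M\to P_L$ and the projection $P_M\to P_R$ furnished by Theorem~\ref{thmqiso}. Each carries a distinguished subcomplex quasi-isomorphic to $\mathbf G_m[0]$: in $P_L$ and $P_R$ it is $\mathbf G_m$ placed in degree $0$ (for $P_R$ this is precisely the sub of \eqref{equpic}), while in $P_M$ it is the subcomplex $S\coloneqq[\mathbf G_m^{\oplus2}\to\mathbf G_m]$ whose degree-$0$ term is the pair of constant-section copies of $\mathbf G_m$ in $\mathcal F$ and in $\pi_*j_*\mathbf G_{\mathrm m,K}$, whose degree-$1$ term is the constants in $\mathcal V$, and whose differential is (up to sign) the product map --- the last point because $u$ and $v$ restrict to the tautological inclusions of constants. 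As this differential is surjective with kernel $\cong\mathbf G_m$, the complex $S$ is quasi-isomorphic to $\mathbf G_m[0]$, and the maps $\varepsilon$ and $P_M\to P_R$ send $S$ onto the distinguished $\mathbf G_m[0]$'s of $P_L$, $P_R$, inducing quasi-isomorphisms $S\to\mathbf G_m[0]$ (given by the first, resp.\ second, projection $\mathbf G_m^{\oplus2}\to\mathbf G_m$, which is an isomorphism on $H^0(S)$). Quotienting out these subcomplexes gives $P_L/\mathbf G_m[0]=[\mathcal E\xrightarrow{\bar t}\widehat{H}_\ab]$, $P_R/\mathbf G_m[0]=\mathcal{UPic}_X$ and $P_M/S=[\,\mathcal E\oplus(\pi_*j_*\mathbf G_{\mathrm m,K}/\mathbf G_m)\xrightarrow{\bar u+\bar v}\mathcal U\,]$, so $\varepsilon$ and $P_M\to P_R$ descend to the arrow $\varepsilon$ and the right-hand arrow claimed in the statement, with $\mathcal E,\mathcal U$ as defined above.

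Finally, I would conclude by comparing long exact sequences of homology sheaves. Applying the five lemma to the morphism between $0\to S\to P_M\to P_M/S\to0$ and $0\to\mathbf G_m[0]\to P_L\to P_L/\mathbf G_m[0]\to0$ induced by $(\varepsilon|_S,\varepsilon,\bar\varepsilon)$, and using that $\varepsilon|_S$ and $\varepsilon$ are quasi-isomorphisms, forces $\bar\varepsilon$ to be one as well. The same comparison for $0\to S\to P_M\to P_M/S\to0$ and $0\to\mathbf G_m[0]\to P_R\to\mathcal{UPic}_X\to0$ shows that $P_M/S\to\mathcal{UPic}_X$ is a quasi-isomorphism if and only if $P_M\to P_R$ is one, which by the last clause of Theorem~\ref{thmqiso} happens exactly when $\mathcal V\to\pi_*\mathcal{Div}_X$ is surjective; and surjectivity of $\mathcal V\to\pi_*\mathcal{Div}_X$ is equivalent to that of $\mathcal U\to\pi_*\mathcal{Div}_X$, since $\mathcal V\twoheadrightarrow\mathcal U$ and the two maps have the same image. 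This establishes both assertions of the corollary.

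The only genuinely delicate point is organizational rather than mathematical: one must identify the correct copy of $\mathbf G_m$ to remove from $P_M$ --- it is the subcomplex $S$, which is not a direct summand and is merely quasi-isomorphic (not equal) to $\mathbf G_m[0]$ --- and check that $\varepsilon$ and the projection onto $\mathcal{UPic}'_X$ restrict to it compatibly. All the substantive input about the sheaves $\mathcal F$, $\mathcal V$, $\mathcal E$, $\mathcal U$ is already supplied by Theorem~\ref{thmqiso}; what remains is the standard homological formalism of short exact sequences of complexes and the five lemma on homology sheaves.
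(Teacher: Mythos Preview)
Your argument is correct only under an implicit hypothesis that the corollary does not grant you. Theorem~\ref{thmqiso} is stated for a left $H$-torsor $Y\to X$ with $H$ a smooth algebraic group \emph{over $k$}; the sheaves $\mathcal F$ and $\mathcal V$ are then honest sheaves on $\Spec(k)_\lqf$. In the setting of Corollary~\ref{corqiso}, however, $X$ is a homogeneous space of $G$ and one only has a point $x\in X(k')$ for some finite extension $k'/k$; the torsor $r^x:G_{k'}\to X_{k'}$, the stabilizer $\Hb$, and hence the sheaves $\mathcal F$, $\mathcal V$ of Theorem~\ref{thmqiso}, are a priori defined only over $k'$. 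Your definition $\mathcal E\coloneqq\mathcal F/\mathbf G_m$, $\mathcal U\coloneqq\mathcal V/\mathbf G_m$ therefore produces $k'$-sheaves, not $k$-sheaves, and the entire five-lemma comparison you run takes place over $k'$.

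The substance of the paper's proof is precisely the descent step you have omitted: one writes down an explicit candidate descent datum $\varphi_{\mathcal F}$ on $\mathcal F$ (built from $\varphi_G$ and the element $g_x$ coming from the Springer band), observes that it fails the cocycle condition by a factor in $\mathbf G_{\mathrm m}$ (equation~\eqref{eqdescF2}), and concludes that it does induce a genuine descent datum on $\mathcal F/\mathbf G_{\mathrm m,k'}$, hence a $k$-sheaf $\mathcal E$; the same is done for $\mathcal V/\mathbf G_{\mathrm m,k'}$ to obtain $\mathcal U$. One must also check that the maps in the diagram descend and that the construction is independent of the choice of $x\in X(k')$. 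Your homological bookkeeping with the subcomplex $S$ is fine and would handle the case $X(k)\neq\varnothing$ (which the paper dispatches in one line), but it does not address the general case.
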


We will prove the corollary after the remark below, but before that we explain the main~idea. The proof is split into two steps:

First, if $X(k)\neq\varnothing$, then any $k$-point $x\in X(k)$ defines a map $G\rightarrow X$ which makes $G$ into an $H$-torsor of $X$, for a $k$-form $H$ of $\Hb$. Thus the theorem holds for $\mathcal E = \mathcal F/\mathbf G_{\mathrm m}$ and $\mathcal U = \mathcal V/\mathbf G_{\mathrm m}$. The general case (where $X(k)$ is possibly empty) will show that this construction is essentially independent of the chosen $x$.

In the general case, there exists only a finite extension $k'/k$ over which $X(k')\neq\varnothing$ and over which the first step thus holds. We will show that the $\Spec(k')_\lqf$-sheaves $\mathcal F/\mathbf G_{\mathrm m,\,k'}$ and $\mathcal V/\mathbf G_{\mathrm m,\,k'}$ descend to $\Spec(k)_\lqf$-sheaves $\mathcal E$ and $\mathcal U$, as do all the relevant maps, and that this descent is independent of the choice of point in $X(k')$. Then Corollary \ref{corqiso} will automatically hold over $k$, as the required exactness statements can all be checked locally.

\begin{rem}
In \cite[Thm.\ 4.10 and Thm.\ 5.8]{BvH12}, the following chain of quasi-isomorphisms between $\Gamma$-modules is proven (written here in the notation of loc.\ cit.)
$$\left[\begin{tikzcd}
    \widehat G(k_s)\arrow[d] \\ \widehat{H}(k_s)
\end{tikzcd}\right]
\cong
\left[\begin{tikzcd}
    \mathrm{Z}^1_\mathrm{alg}(\overline G, \mathcal O(\overline X)^\times)\arrow[d] \\ \mathrm{Pic}_G(\overline X)
\end{tikzcd}\right]
\xleftarrow{\;\;\;\;\sim\;\;\;\;}
\left[\begin{tikzcd}
    \mathrm{Z}^1_\mathrm{alg}(\overline G, \mathcal O(\overline X)^\times)\oplus\dfrac{k(\overline X)^\times}{k^\times}\arrow[d] \\ \mathrm{UPic}_G(\overline X)^1
\end{tikzcd}\right]
\xrightarrow{\;\;\;\;\sim\;\;\;\;}
\left[\begin{tikzcd}
    \dfrac{k(\overline X)^\times}{k^\times}\arrow[d] \\ \mathrm{Div}(\overline X)
\end{tikzcd}\right]$$
under the assumption $\mathrm{Pic}(\overline G)=0$, which is necessary only to show that the map on the right~is a quasi-isomorphism. We do not attempt to generalize this part of their statement. Moreover,~the connection with $\widehat G(k_s)$ is also possible only by an application of Rosenlicht's lemma, which~fails over nonreduced $X\in\Spec(k)_\lqf$, so we do not consider it.

Our sheaves $\mathcal E$ and $\mathcal U$ can also be shown to directly generalize the $\Gamma$-modules $\mathrm{Z}^1_\mathrm{alg}(\overline G, \mathcal O(\overline X)^\times)$ and $\mathrm{UPic}_G(\overline X)^1$ introduced in \cite[\textsection2]{BvH12}, respectively. However, we chose to define them in a different way, which avoids working with cocycles and linearizations of line bundles. Finally, it is important to remark that the $\Gamma$-module $k(\overline{X})^\times$ does not in general correspond to our sheaf $\pi_*j_*\mathbf G_{\mathrm m,\, K}$; this is true in our situation, but only because $X$ is geometrically integral (see the discussion below on descending $\pi_*j_*\mathbf G_{\mathrm m,\, K'}$ to $k$).
\end{rem}

To prove the corollary, fix $k'/k$ and $x\in X(k')$. Recall the construction of the Springer band in Subsection \ref{ssectspringer}, which is in particular nicely represented (cf. Corollary \ref{corsprbandissep}) and we can thus represent $L_X$ by a triple $(k'/k,\Hb,\varphi_H)$, where $\Hb\coloneqq\mathrm{Stab}_{G_{k'}}(x)$ and $\varphi_H$ is the outer-isomorphism represented by $\varphi_G\circ\mathrm{int}(g_x)^{-1}$ for any $g_x\in G(k'\otimes_k k')$ such that $\mathrm{pr}_1^*(x).g_x = \varphi_X^{-1}(\mathrm{pr}_2^*(x))$.

We will also write $\varphi_H$ for the well-defined descent datum $\mathrm{pr}_1^*(H_{\ab,\,k'})\rightarrow\mathrm{pr}_2^*(H_{\ab,\,k'})$. Now~the descent datum on the sheaf $\widehat{H}_\ab$ takes the form:
$$\varphi_{\widehat{H}}(\alpha)\coloneqq \varphi_{\mathbf G_{\mathrm m}}\circ\alpha\circ \varphi_H^{-1}$$
Similarly, we write $\varphi_{\widehat{X}}$ for the descent datum of the sheaf $\pi_*\mathbf G_{\mathrm m,\,X}$. It is also a good time to note that the morphism $r^x : G_{k'}\rightarrow X_{k'}$ given by $g\mapsto x.g$ does not descent to $k$, but instead the descent datum $\varphi_X$ commutes along $r^x$ with $\varphi_G\circ\ell_{g_x}^{-1}$ (cf. the proof of Proposition \ref{propspringertriple}), where $\ell$ denotes multiplication from the left on $G$.

Next, we show that $\mathcal F/\mathbf G_{\mathrm m,\,k'}$ descends to a sheaf on $k$. Informed by the first exact sequence of \eqref{eqqisomain}, we will initially attempt to construct a descent datum $\varphi_{\mathcal F}$ on $\mathcal F$ such that the following diagram commutes (and then see where this attempt fails):
\begin{center}\begin{tikzcd}
0\arrow[r] & \mathrm{pr}_1^*(\pi_*\mathbf G_{\mathrm m,\,X_{k'}})\arrow[d, "\varphi_{\widehat{X}}"]\arrow[r] & \mathrm{pr}_1^*\mathcal F\arrow[d, "\varphi_{\mathcal F}"]\arrow[r] & \mathrm{pr}_1^*\widehat{H}\arrow[d, "\varphi_{\widehat{H}}"]\\
0\arrow[r] & \mathrm{pr}_2^*(\pi_*\mathbf G_{\mathrm m,\,X_{k'}})\arrow[r] & \mathrm{pr}_2^*\mathcal F\arrow[r] & \mathrm{pr}_2^*\widehat{H}
\end{tikzcd}\end{center}
For the diagram to commute, we must define $\varphi_{\mathcal F}$ as:
\begin{equation}\label{eqdescF1}
\varphi_{\mathcal F}(f)\coloneqq\varphi_{\mathbf G_{\mathrm m}}\circ f\circ \big(\varphi_G\circ\ell_{g_x}^{-1}\big)^{-1}
\end{equation}
We now calculate the cochain condition on an arbitrary local section $f$ of $\mathrm{pr}_{12}^*\mathrm{pr}_1^*\mathcal F$ to get
\begin{equation}\label{eqdescF2}
\left((\mathrm{pr}_{13}^*\varphi_{\mathcal F})^{-1}\circ(\mathrm{pr}_{23}^*\varphi_{\mathcal F})\circ(\mathrm{pr}_{12}^*\varphi_{\mathcal F})\right)(f) 
    = f\circ\ell_{h_x} = \ell_{\tilde{f}(h_x)}\circ f
\end{equation}
for the element $h_x = \mathrm dg_x\in (\mathrm{pr}_{12}^*\mathrm{pr}_1^*H)(k'\otimes_k k'\otimes_k k')$ from Subsection \ref{ssectspringer}. The last equality here follows from the definition of $\mathcal F$, and the element $\tilde{f}(h_x)$ lies in $(\mathrm{pr}_{12}^*\mathrm{pr}_1^*\mathbf G_{\mathrm m,\,k'})(k'\otimes_k k'\otimes_k k')$. This shows that the map $\varphi_{\mathcal F}$ is not necessarily a descent datum on $\mathcal F$, however it does induce a descent datum on $\mathcal F/\mathbf G_{\mathrm m,\,k'}$, since then $[f] = [\ell_{\tilde{f}(h_x)}\circ f]$. Since descent of sheaves is always effective,~we~have~just constructed a sheaf $\mathcal E$ on $\Spec(k)_\lqf$.

We proceed analogously to prove that $\mathcal V/\mathbf G_{\mathrm m,\,k'}$ descends to a sheaf $\mathcal U$ on $\Spec(k)_\lqf$, with one caveat: It is not a priori clear that the $k'$-sheaf $\pi_*j_*\mathbf G_{\mathrm m,\, K'}$ descends to $k$ (here $\Spec(K')$ is the generic point of $X_{k'}$). However, as $X$ is a homogeneous space of a smooth~connected~group, it is in particular geometrically integral and thus $K' = K\otimes_k k'$, where $\Spec(K)$ is the generic point of $X$. Equivalently, for every nonempty open subscheme $U'$ of $X_{k'}$, there exists a nonempty open subscheme $U$ of $X$ such that $U_{k'}\subseteq U'$ in $X_{k'}$ (compare this with the definition of $\mathcal V$). This shows that $\pi_*j_*\mathbf G_{\mathrm m,\, K'} = (\pi_*j_*\mathbf G_{\mathrm m,\, K})_{k'}$. The descent datum of $\pi_*j_*\mathbf G_{\mathrm m,\, K'}$ is now applied in the same way as above to prove that $\mathcal V/\mathbf G_{\mathrm m,\,k'}$ descends to $k$.

Finally, it is immediate that all morphisms descend and commute as required by construction. This concludes the proof of the corollary.

\begin{rem}
These constructions are independent of choices of $x$ and $g_x$. Replacing $g_x$ by $g_x'$ amounts to a conjugation by an element of $H(k'\otimes k')$ which does not affect the Springer~band (and induces the identity map on $\widehat{H}$, $\mathcal E$, $\mathcal U$ and $X$). When replacing $x$ by $x'$ (which replaces $\varphi_H$ by some $\varphi_H'$), we may enlarge $k'$ to find $\tilde{g}\in G(k')$ such that $x = x'.\tilde{g}$. There is a commutative square $\mathrm{int}(\mathrm{pr}_2^*\tilde{g})\circ\varphi_H' = \varphi_H\circ\mathrm{int}(\mathrm{pr}_1^*\tilde{g})$ which represents a canonical isomorphism of the Springer band, and similarly for the other constructions.
\end{rem}
\subsection{Commutativity with the Poitou-Tate Map}\label{ssectptcomm}

Let $k$ be a global field and let $X$ be a homogeneous space of a smooth connected affine group $G$ over $k$, with smooth connected geometric stabilizer $\Hb$. We write $K$ and $K^v$, respectively, for the fields of rational functions on $X$ and $X_{k_v}$ over all places $v$ of $k$. The previous section defines sheaves $\mathcal U$ and $\mathcal U^v$, over $k$ and all completions $k_v$, respectively. These are related by maps $K\otimes_k k_v\rightarrow K^v$ and $\mathcal U_{k_v}\rightarrow\mathcal U^v$ for all $v$ (and, conversely to the above considered situation of finite extensions $k'/k$, these maps are in general not isomorphisms, even for $X = \Spec(k[T])$). Note that this is possible because $\mathcal U$ was defined on the fppf site of $k$ and not just the lqf site. There are in particular natural compositions of maps in hypercohomology
$$\mathbf{H}^2\!\left(k, \big[\pi_*j_*\mathbf G_{\mathrm m,\,K}\rightarrow\mathcal U\big]\right)\longrightarrow\mathbf{H}^2\!\left(k_v, \big[\pi_*j_*\mathbf G_{\mathrm m,\,K}\rightarrow\mathcal U\big]\right)\longrightarrow\mathbf{H}^2\!\left(k_v, \big[\pi_*j_*\mathbf G_{\mathrm m,\,K^v}\rightarrow\mathcal U^v\big]\right)$$
which we will simply denote by $A\mapsto A_v$. These observations implicitly underpin all calculations to follow and will not be repeated later.

Suppose that $X(k_v)\neq\varnothing$ for all places $v$ of $k$. The Springer band $L_X$ has its maximal Abelian quotient $H_\ab$ uniquely defined over $k$. In this subsection, we complete the proof of Lemma \ref{lemphslift} by showing, for any element $A\in\Sh^1(\widehat{H}_\ab)$, the following equality in $\mathbf Q/\mathbf Z$:
\begin{equation}\label{eqptvsbm}
    -\langle\xi_X^\ab, A\rangle_{PT} = BM_X(\phi(A))
\end{equation}
Here, $\xi_X^\ab$ is the Abelianization in $\mathrm{H}^2(k, H_\ab)$ of the Springer class $\xi_X = [h_x]$, and it is in $\Sh^2(H_\ab)$ when $X$ has points over all $k_v$. Next, $\langle-, -\rangle_{PT}$ denotes Rosengarten's Poitou-Tate pairing and $BM_X : \Be(X)\rightarrow\mathbf Q/\mathbf Z$ is the Brauer-Manin obstruction to the Hasse principle (see Definition \ref{defbm}). Finally, $\phi : \mathrm{H}^1(k,\widehat{H}_\ab)\rightarrow\Br(X)/\Br(k)$ is induced by the map $\widehat{H}_\ab[-1]\rightarrow\mathcal{UPic}_X$ in the derived category of $\Spec(k)_\lqf$, which was constructed in Corollary \ref{corqiso}. Since $\phi$ is constructed naturally over any global or local field (cf. Corollary \ref{corupic}), the image $\phi(A)$ indeed lies in $\Be(X)$. We start by making the right-hand side more explicit:

The exact sequence of sheaves
\begin{equation}\label{eqcechcompseq}
0\longrightarrow\mathbf G_{\mathrm m}\longrightarrow\pi_*j_*\mathbf G_{\mathrm m,\,K}\longrightarrow\mathcal U\longrightarrow\widehat{H}_\ab\longrightarrow\mathcal 0   
\end{equation}
induces a sequence in chain complexes of $\mathrm{Ab}$-valued sheaves on $\Spec(k)_\lqf$
$$0\longrightarrow\mathbf G_{\mathrm m}\longrightarrow\big[\pi_*j_*\mathbf G_{\mathrm m,\,K}\rightarrow\mathcal U\big]\longrightarrow\widehat{H}_\ab[-1]\longrightarrow\mathcal 0$$
which becomes an exact triangle in the derived category $\mathcal D(\Spec(k)_\lqf)$. In particular, $\widehat{H}_\ab[-1]$ is quasi-isomorphic to $\big[\pi_*j_*\mathbf G_{\mathrm m,\,K}/\mathbf G_{\mathrm m}\rightarrow\mathcal U\big]$. The same is true when we replace $k$ by any local completion $k_v$ (and $K, \mathcal U$ by $K^v, \mathcal U^v$).

\begin{lem}\label{lemcechcomp1}
Both rows of the following commutative diagram are exact:
\begin{center}\begin{tikzcd}[column sep = 25pt]
0\arrow[r] & \mathrm{H}^2(k, \mathbf G_{\mathrm m})\arrow[r]\arrow[d] & \mathbf{H}^2\!\left(k, \big[\pi_*j_*\mathbf G_{\mathrm m,\,K}\rightarrow\mathcal U\big]\right)\arrow[r]\arrow[d] & \mathrm{H}^1(k, \widehat{H}_\ab)\arrow[d]\arrow[r] & 0\\
0\arrow[r] & \prod_v\mathrm{H}^2(k_v, \mathbf G_{\mathrm m})\arrow[r] & \prod_v\mathbf{H}^2\!\left(k_v, \big[\pi_*j_*\mathbf G_{\mathrm m,\,K^v}\rightarrow\mathcal U^v\big]\right)\arrow[r] & \prod_v\mathrm{H}^1(k_v, \widehat{H}_\ab)\arrow[r] & 0
\end{tikzcd}\end{center}
If $\delta$ denotes the connecting homomorphism given by the snake lemma, then
$$\mathrm{im}(\delta)\subseteq\frac{\bigoplus_v\Br(k_v)}{\Br(k)}\;\;\;\textrm{ and }\;\;\;
\left(\sum\nolimits_v\mathrm{inv}_v\right)\circ\delta \;=\; BM_X\circ\phi|_{\Sh^1(\widehat{H}_\ab)}$$
where $\mathrm{inv}_v : \Br(k_v)\rightarrow\mathbf Q/\mathbf Z$ are the invariant maps (cf. Definition \ref{defbm}).
\end{lem}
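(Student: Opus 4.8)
The plan is to deduce the statement from the exact sequence \eqref{eqcechcompseq} together with the compatibility of all constructions over $k$ and over the completions $k_v$. First I would observe that the two rows of the displayed diagram are obtained by applying the long exact hypercohomology sequence to the exact triangle $\mathbf G_{\mathrm m}\to\big[\pi_*j_*\mathbf G_{\mathrm m,\,K}\to\mathcal U\big]\to\widehat H_\ab[-1]$ in $\mathcal D(\Spec(k)_\lqf)$ (and its analogue over each $k_v$), using that $\mathrm H^3(k,\mathbf G_{\mathrm m})=0$ and $\mathrm H^3(k_v,\mathbf G_{\mathrm m})=0$ over a global or local field, which gives surjectivity on the right, and that $\mathrm H^1(k,\mathbf G_{\mathrm m})=0$ by Hilbert 90, which gives injectivity on the left. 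The vertical maps are the localization maps $A\mapsto A_v$ discussed above, so commutativity of the diagram is automatic from the naturality of the hypercohomology long exact sequence with respect to the morphism of sites $\Spec(k_v)_\lqf\to\Spec(k)_\lqf$; naturality here is exactly the statement that the triangle, and hence its long exact sequence, pulls back compatibly.

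Next I would unwind the target $\mathbf H^2\!\left(k,\big[\pi_*j_*\mathbf G_{\mathrm m,\,K}\to\mathcal U\big]\right)$. By the quasi-isomorphism of Corollary \ref{corqiso} (more precisely, since $\big[\pi_*j_*\mathbf G_{\mathrm m,\,K}/\mathbf G_{\mathrm m}\to\mathcal U\big]\simeq\widehat H_\ab[-1]$ and hence $\big[\pi_*j_*\mathbf G_{\mathrm m,\,K}\to\mathcal U\big]$ sits in a triangle with $\mathbf G_{\mathrm m}$ and $\widehat H_\ab[-1]$), the middle column of the diagram receives the class of $X$ in the following sense: the composite $\mathbf H^2\!\left(k,\big[\pi_*j_*\mathbf G_{\mathrm m,\,K}\to\mathcal U\big]\right)\to\mathrm H^1(k,\widehat H_\ab)$ is surjective, and the map $\phi$ from Corollary \ref{corupic} factors as $\mathrm H^1(k,\widehat H_\ab)=\mathbf H^2(k,\widehat H_\ab[-1])\to\mathbf H^2(k,\mathcal{UPic}_X)\hookrightarrow\Br(X)/\Br(k)$. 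Thus, given $A\in\Sh^1(\widehat H_\ab)$, I would pick a lift $\widetilde A\in\mathbf H^2\!\left(k,\big[\pi_*j_*\mathbf G_{\mathrm m,\,K}\to\mathcal U\big]\right)$ mapping to $A$; then $\phi(A)$ is the image of $\widetilde A$ in $\Br(X)/\Br(k)$ under the map induced by $\big[\pi_*j_*\mathbf G_{\mathrm m,\,K}\to\mathcal U\big]\to\mathcal{UPic}'_X\to\mathcal{UPic}_X$ together with Corollary \ref{corupic}. Since $A$ is locally trivial, each localization $\widetilde A_v$ differs from a lift of $0$ only by an element of $\mathrm H^2(k_v,\mathbf G_{\mathrm m})=\Br(k_v)$; collecting these local discrepancies is precisely the snake-lemma connecting homomorphism $\delta(A)$, and the definition of $BM_X$ (Definition \ref{defbm}) as a snake map applied to the square relating $\Br(k),\Br(X),\Br(X)/\Br(k)$ and their local versions shows that $\big(\sum_v\mathrm{inv}_v\big)\circ\delta=BM_X\circ\phi|_{\Sh^1(\widehat H_\ab)}$, once one checks that $\phi(A)$ does lie in $\Be(X)$ (which follows from $A_v=0$ and naturality of $\phi$) so that $BM_X(\phi(A))$ is defined. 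The containment $\mathrm{im}(\delta)\subseteq\big(\bigoplus_v\Br(k_v)\big)/\Br(k)$ follows because almost all $\widetilde A_v$ can be taken to be the localizations of a single global lift (Lemma \ref{lemcohdirsum} guarantees almost-everywhere local triviality of $A$, hence of the discrepancies), so all but finitely many components of $\delta(A)$ vanish.

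The main obstacle I anticipate is bookkeeping rather than conceptual: carefully matching the snake-lemma connecting map $\delta$ attached to \eqref{eqcechcompseq} with the snake map defining $BM_X$, because the two a priori live on different diagrams — one built from the complex $\big[\pi_*j_*\mathbf G_{\mathrm m,\,K}\to\mathcal U\big]$ and the $\lqf$-site, the other from the honest Brauer groups $\Br(X),\Br(k)$. The reconciliation goes through the explicit identification $\mathbf H^2(k,\mathcal{UPic}_X)\hookrightarrow\Br(X)/\Br(k)$ of Corollary \ref{corupic} and the compatibility of the exact triangle $\mathbf G_{\mathrm m}\to\mathcal{UPic}'_X\to\mathcal{UPic}_X$ with localization; I would verify this compatibility by a direct chase at the level of the long exact sequences, using that the map $\widehat H_\ab[-1]\to\mathcal{UPic}_X$ is induced by an actual map of complexes of sheaves (via the quasi-isomorphism $\varepsilon$ of Corollary \ref{corqiso}) so that everything is functorial in the site. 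The remaining equality \eqref{eqptvsbm}, i.e.\ the identification of $\big(\sum_v\mathrm{inv}_v\big)\circ\delta$ with $-\langle\xi_X^\ab,-\rangle_{PT}$, is not part of this lemma and would be handled subsequently using an explicit Čech representative $h_x$ of $\xi_X$ as indicated in Subsection \ref{ssectspringer}.
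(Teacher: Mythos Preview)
Your overall strategy---deduce the rows from the long exact sequence of the triangle $\mathbf G_{\mathrm m}\to\big[\pi_*j_*\mathbf G_{\mathrm m,\,K}\to\mathcal U\big]\to\widehat H_\ab[-1]$ and then compare the two snake maps via the morphism of diagrams to the $\mathcal{UPic}'_X$/$\mathcal{UPic}_X$ picture and Definition~\ref{defbm}---is exactly the paper's approach. However, your argument for injectivity on the left is wrong, and this is a genuine gap.

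You claim that $\mathrm H^1(k,\mathbf G_{\mathrm m})=0$ (Hilbert~90) gives injectivity of $\mathrm H^2(k,\mathbf G_{\mathrm m})\to\mathbf H^2\big(k,[\pi_*j_*\mathbf G_{\mathrm m,\,K}\to\mathcal U]\big)$. But in the long exact sequence of the triangle, the term \emph{preceding} $\mathrm H^2(k,\mathbf G_{\mathrm m})$ is $\mathbf H^1(k,\widehat H_\ab[-1])=\mathrm H^0(k,\widehat H_\ab)$, not $\mathrm H^1(k,\mathbf G_{\mathrm m})$. The connecting map $\mathrm H^0(k,\widehat H_\ab)\to\Br(k)$ is essentially cup-product with the extension class (related to the Springer class), and there is no reason for it to vanish in general. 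The paper explicitly flags this point (``except for the leftmost map in both rows'') and proves injectivity differently: it constructs the map of short exact sequences from the lemma's diagram to the diagram of Definition~\ref{defbm}, noting that the leftmost vertical maps are the identity on $\mathrm H^2(k,\mathbf G_{\mathrm m})$ and $\mathrm H^2(k_v,\mathbf G_{\mathrm m})$. Since $\Br(k)\hookrightarrow\Br(X)$ is injective (this uses $X(k_v)\neq\varnothing$ for all $v$, via Brauer--Hasse--Noether), the commutative square forces the top-left map to be injective as well. You should replace the Hilbert~90 step by this comparison argument; note in particular that the ambient hypothesis $X(k_v)\neq\varnothing$ is actually needed here.

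A smaller point: your justification that $\mathrm{im}(\delta)\subseteq\big(\bigoplus_v\Br(k_v)\big)/\Br(k)$ is muddled. For $A\in\Sh^1(\widehat H_\ab)$ one has $A_v=0$ for \emph{all} $v$ by definition, so Lemma~\ref{lemcohdirsum} is irrelevant (and in any case does not apply to $\widehat H_\ab$, which need not be representable). The finiteness of the nonzero components of $\delta(A)$ is inherited from the analogous property for the Brauer--Manin snake map in Definition~\ref{defbm}, once you have established the map of diagrams; this is how the paper obtains it.
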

\begin{prf}
The exactness in the statement is clear (recall that $\mathrm{H}^3(k,\mathbf G_{\mathrm m}) = 0$ and $\mathrm{H}^3(k_v,\mathbf G_{\mathrm m}) = 0$ by \cite[Ch.\ VII, \textsection 11.4]{CF67}), except for the leftmost map in both rows. For the rest, note~that~there is a natural commutative diagram of complexes of sheaves with exact rows
\begin{center}\begin{tikzcd}
0\arrow[r] & \mathbf G_{\mathrm m}\arrow[r]\arrow[d, equal] & \big[\pi_*j_*\mathbf G_{\mathrm m,\,K}\rightarrow\mathcal U\big]\arrow[r]\arrow[d] & \big[\pi_*j_*\mathbf G_{\mathrm m,\,K}/\mathbf G_{\mathrm m}\rightarrow\mathcal U\big]\arrow[d]\arrow[r] & 0\\
0\arrow[r] & \mathbf G_{\mathrm m}\arrow[r] & \mathcal{UPic}'_X\arrow[r] & \mathcal{UPic}_X\arrow[r] & 0
\end{tikzcd}\end{center}
and similarly over every local completion $k_v$, by Corollary \ref{corqiso}. Taking long exact sequences in hypercohomology, we construct a map between the diagram in the statement of the lemma and the diagram \eqref{eqdiagbm} in Definition \ref{defbm}, with identity maps on $\mathrm{H}^2(k,\mathbf G_{\mathrm m})$ and on $\mathrm{H}^2(k_v,\mathbf G_{\mathrm m})$. This shows exactness on the left of the first diagram, as well as the desired compatibility in the second part of the statement (analogous to the proof of \cite[Lem.\ 2.7]{Don24}).
\end{prf}

The main results of this subsection will be proven using Čech hypercohomology:
\begin{defn}\label{defnhypercech}
Recall that, given a complex $\mathcal F^\bullet$ of (pre)sheaves on $\Spec(k)_\lqf$ and a cover $\mathcal Z\rightarrow\Spec(k)$, we may consider the total complex of Abelian groups:
$$\mathbf{\check C}^n(\mathcal Z,\mathcal F^\bullet)\coloneqq\bigoplus\nolimits_{i+j=n}\mathrm{\check C}^j(\mathcal Z,\mathcal F^i)$$
We will need only the example $\mathcal F^\bullet = [\mathcal B\xrightarrow{\Psi}\mathcal C]$ and make the convention that $\mathrm d = \mathrm d^{\mathbf{\check C}}$ 
is given~by $\mathrm d(b,c) = (\mathrm db,\Psi(b)-\mathrm dc)$. This is consistent with the common convention: $\mathrm d^{\mathcal C[-1]} = -\mathrm d^{\mathcal C}$

We write $\mathbf{\check H}^n(\mathcal Z,\mathcal F^\bullet)$ for the $n$-th cohomology group of this complex. Since every cover of $k$ is refined by some finite extension $k'/k$, define the \textit{Čech (hyper)cohomology groups}
$$\mathbf{\check H}^n(k,\mathcal F^\bullet)\coloneqq\varinjlim\nolimits_{k\subseteq k'\subseteq\overline{k}}\mathbf{\check H}^n(k'/k,\mathcal F^\bullet)$$
where the limit is taken over all finite extensions $k'/k$ in some fixed algebraic closure $\overline{k}/k$. Equivalently, these are the cohomology groups of a fixed \textit{Čech complex}:
$$\mathbf{\check C}^n(k,\mathcal F^\bullet)\coloneqq\varinjlim\nolimits_{k\subseteq k'\subseteq\overline{k}}\mathbf{\check C}^n(k'/k,\mathcal F^\bullet)$$
By the usual arguments (\cite[Lem.\ 08BN]{Stacks}), there are natural maps $\mathbf{\check H}^n(k,\mathcal F^\bullet)\rightarrow\mathbf{H}^n(k,\mathcal F^\bullet)$.
\end{defn}

\begin{prop}\label{propc3presh}
Given a complex of (pre)sheaves
$0\longrightarrow\mathcal A\longrightarrow\mathcal B\longrightarrow\mathcal C\longrightarrow\mathcal D\longrightarrow 0$
on $\Spec(k)_\lqf$, consider the following complexes of Abelian groups for $n\in\mathbf Z$:
\begin{equation*}
0\longrightarrow \mathrm{\check C}^n(k, \mathcal A)\longrightarrow \mathrm{\check C}^n(k, \mathcal B)\longrightarrow \mathrm{\check C}^n(k, \mathcal C)\longrightarrow \mathrm{\check C}^n(k, \mathcal D)\longrightarrow 0\tag{$\ast_n$}
\end{equation*}
The following properties hold (over $k$, but analogously also over all $k_v$):
\begin{itemize}
    \item If $(\ast_n)$ is exact for all $n$, then there is a long exact sequence in Čech cohomology:
$$\cdots\longrightarrow \mathrm{\check H}^n(k, \mathcal A)\longrightarrow \mathbf{\check H}^n\!\left(k, \big[\mathcal B\rightarrow\mathcal C\big]\right)\longrightarrow \mathrm{\check H}^{n-1}(k, \mathcal D)\longrightarrow\mathrm{\check H}^{n+1}(k, \mathcal A)\longrightarrow\cdots$$
    \item $(\ast_n)$ is exact for all $n$ for our complex \eqref{eqcechcompseq}
\end{itemize}
\end{prop}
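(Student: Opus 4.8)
The plan is to establish the two bullet points in turn. For the first bullet, the input is a complex of sheaves $0\to\mathcal A\to\mathcal B\to\mathcal C\to\mathcal D\to 0$ together with the hypothesis that applying $\mathrm{\check C}^n(k,-)$ keeps it exact for every $n$. I would first break this four-term complex into two short exact sequences of cochain complexes in the usual way: set $\mathcal Q\coloneqq\ker(\mathcal C\to\mathcal D)=\operatorname{im}(\mathcal B\to\mathcal C)$, so that $0\to\mathcal A\to\mathcal B\to\mathcal Q\to 0$ and $0\to\mathcal Q\to\mathcal C\to\mathcal D\to 0$ are exact, and both remain exact after applying $\mathrm{\check C}^n(k,-)$ (for the first because $\mathrm{\check C}^n(k,-)$ is exact on it by hypothesis and by left-exactness of $\mathrm{\check C}^n(k,-)$ in general; for the second one must observe that the cokernel of $\mathrm{\check C}^n(k,\mathcal A)\to\mathrm{\check C}^n(k,\mathcal B)$ injects into $\mathrm{\check C}^n(k,\mathcal Q)$ and then compare with the $\mathcal C\to\mathcal D$ piece — this is a routine diagram chase). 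Now $\mathbf{\check C}^\bullet(k,[\mathcal B\to\mathcal C])$ is the total complex of a double complex with rows $\mathrm{\check C}^\bullet(k,\mathcal B)$ and $\mathrm{\check C}^\bullet(k,\mathcal C)$ (placed in complex-degrees $0$ and $1$, with the sign convention $\mathrm d(b,c)=(\mathrm db,\Psi(b)-\mathrm dc)$ fixed in Definition \ref{defnhypercech}). The short exact sequences of cochain complexes $\mathrm{\check C}^\bullet(k,\mathcal A)\hookrightarrow\mathrm{\check C}^\bullet(k,\mathcal B)\twoheadrightarrow\mathrm{\check C}^\bullet(k,\mathcal Q)$ and $\mathrm{\check C}^\bullet(k,\mathcal Q)\hookrightarrow\mathrm{\check C}^\bullet(k,\mathcal C)\twoheadrightarrow\mathrm{\check C}^\bullet(k,\mathcal D)$ give, upon taking cones and identifying $\mathrm{Cone}(\mathrm{\check C}^\bullet(k,\mathcal B)\to\mathrm{\check C}^\bullet(k,\mathcal C))$ with $\mathbf{\check C}^\bullet(k,[\mathcal B\to\mathcal C])[1]$ up to the standard sign, an exact triangle in the derived category of abelian groups relating $\mathrm{\check C}^\bullet(k,\mathcal A)$, $\mathbf{\check C}^\bullet(k,[\mathcal B\to\mathcal C])$ and $\mathrm{\check C}^\bullet(k,\mathcal D)[1]$. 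Passing to cohomology (and to the filtered colimit over finite $k'/k$, which is exact) yields precisely the stated long exact sequence; the degree shift by $-1$ on $\mathcal D$ and by $+1$ on the connecting map come from the cone of a cone. I should double-check the signs and the indexing against the convention $\mathrm d^{\mathcal C[-1]}=-\mathrm d^{\mathcal C}$ recorded in the definition, but this is bookkeeping rather than a genuine difficulty.

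For the second bullet, the task is to verify that $(\ast_n)$ is exact for all $n$ when the four-term complex of sheaves is \eqref{eqcechcompseq}, namely $0\to\mathbf G_{\mathrm m}\to\pi_*j_*\mathbf G_{\mathrm m,\,K}\to\mathcal U\to\widehat H_\ab\to 0$. Since $\mathrm{\check C}^n(k,-)$ is a product of sections functors $\mathcal F\mapsto\prod\mathcal F(\mathcal Z^{\times(n+1)})$ over covers, and finite products are exact, it suffices to check that for each finite field extension $k'/k$ and each finite iterated tensor power $A\coloneqq k'^{\,\otimes_k(n+1)}$ — more precisely over each object of $\Spec(A)_\lqf$, but by the sheaf-product structure it reduces to evaluating on such $A$ — the sequence of abelian groups
$$0\longrightarrow\mathbf G_{\mathrm m}(A)\longrightarrow(\pi_*j_*\mathbf G_{\mathrm m,\,K})(A)\longrightarrow\mathcal U(A)\longrightarrow\widehat H_\ab(A)\longrightarrow 0$$
is exact. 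The first three terms of exactness (injectivity at $\mathbf G_{\mathrm m}$ and exactness at $\pi_*j_*\mathbf G_{\mathrm m,\,K}$ and at $\mathcal U$) are immediate from the exact sequences of sheaves in Theorem \ref{thmqiso} (the second and fourth rows of \eqref{eqqisomain}, together with the quotient construction of $\mathcal U$ from $\mathcal V$), which are exact as sequences of sheaves and hence in particular left-exact on sections; and exactness at $\mathcal U$ of the composite, i.e. $\ker(\mathcal U\to\widehat H_\ab)=\operatorname{im}(\pi_*j_*\mathbf G_{\mathrm m,\,K}/\mathbf G_{\mathrm m}\to\mathcal U)$, is again a statement of sheaves that holds on the nose. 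The only point requiring the $\lqf$-site rather than a general fppf site is the surjectivity at $\widehat H_\ab$ on sections over $A$: this is exactly the content of Proposition \ref{proplqfexseq2}, whose proof uses Lemma \ref{lemr1lqf} (vanishing of $\mathrm R^1\pi_*\mathbf G_{\mathrm m}$ on the $\lqf$-site, which in turn rests on $\mathrm{H}^1$ of $\mathbf G_{\mathrm m}$ over a reduced quasi-finite algebra being trivial by Hilbert 90). So the surjectivity $\mathcal U(A)\twoheadrightarrow\widehat H_\ab(A)$ holds because every $A$ occurring as a level in a Čech complex over $\Spec(k)_\lqf$ is itself (an object of) the $\lqf$-site — the tensor powers $k'^{\otimes_k(n+1)}$ are finite $k$-algebras, hence quasi-finite — so the sheaf-level surjectivity in \eqref{eqqisomain} descends to genuine surjectivity on these sections.

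The main obstacle, as just indicated, is this last surjectivity: it is the one place where working on the small $\lqf$-site is essential and where one cannot simply cite left-exactness of sections. The argument is to note that $\mathcal U\to\widehat H_\ab$ factors as $\mathcal U\twoheadrightarrow\operatorname{coker}(\mathcal F\to\widehat H)\hookrightarrow\mathrm R^1\pi_*\mathbf G_{\mathrm m,\,X}$, and that on an object $A$ of $\Spec(k)_\lqf$ (equivalently, on each piece of a cover) a class $\alpha\in\widehat H_\ab(A)$ pushes forward the torsor $[Y_A]$ to a $\mathbf G_{\mathrm m}$-torsor over $X_A$ that becomes trivial over the generic points (by Lemma \ref{lemr1lqf}), hence over a dense open $U_A\subseteq X_A$, which is precisely the data of an element of $\mathcal V(A)$, hence of $\mathcal U(A)$, mapping to $\alpha$. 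I would carry this out by invoking Proposition \ref{proplqfexseq2} directly rather than re-deriving it. Everything else — the two-short-exact-sequence splitting, the cone computation, and the colimit over finite extensions — is formal homological algebra.
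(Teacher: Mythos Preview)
Your proposal is correct and follows essentially the same approach as the paper. For the first bullet, the paper is slightly more direct: rather than introducing the intermediate sheaf $\mathcal Q$ and splicing two short exact sequences, it observes in one line that $\mathbf{\check C}^\bullet(k,[\mathcal B\to\mathcal C])$ is (a shift of) the cone of $\mathrm{\check C}^\bullet(k,\mathcal B)\to\mathrm{\check C}^\bullet(k,\mathcal C)$ and then reads off the long exact sequence from the sequence $0\to\mathrm{\check C}^\bullet(k,\mathcal A)\to\mathrm{cone}\to\mathrm{\check C}^\bullet(k,\mathcal D)[-1]\to 0$; your version with $\mathcal Q$ is really just an unpacking of why that last sequence gives the correct long exact sequence. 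For the second bullet the paper likewise splits \eqref{eqcechcompseq} into the two short pieces through $\pi_*j_*\mathbf G_{\mathrm m,K}/\mathbf G_{\mathrm m}$ and checks the two right-hand surjectivities on sections by pointing to the \emph{proofs} of Lemma~\ref{lemr1lqf} and Proposition~\ref{proplqfexseq2} (not merely their sheaf-level statements), which is exactly the mechanism you describe.
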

\begin{prf}
For the first point, assume that all $(\ast_n)$ are exact. Straight from the definition of Čech complexes, we see that their construction commutes with taking cones in the categories of chain complexes (of $\Spec(k)_\lqf$-sheaves and of Abelian groups, respectively):
$$\mathbf{\check C}^n\!\left(k, \big[\mathcal B\rightarrow\mathcal C\big]\right) = \mathbf{\check C}^n\!\left(k, \mathrm{cone}(\mathcal B\rightarrow\mathcal C)\right) = \mathrm{cone}\left(\mathrm{\check C}^n(k, \mathcal B)\rightarrow\mathrm{\check C}^n(k, \mathcal C)\right)$$
Then the desired long exact sequence is simply the hypercohomology sequence of the following short exact sequence of complexes of Abelian groups.
$$0\rightarrow\mathrm{\check C}^n(k, \mathcal A)\rightarrow\mathrm{cone}\left(\mathrm{\check C}^n(k, \mathcal B)\rightarrow\mathrm{\check C}^n(k, \mathcal C)\right)\rightarrow\mathrm{\check C}^n(k, \mathcal D)[-1]\rightarrow 0$$

For the second point, it suffices to check that the following two short sequences are exact
$$0\longrightarrow\mathbf G_{\mathrm m}(R)\longrightarrow\pi_*j_*\mathbf G_{\mathrm m,\,K}(R)\longrightarrow(\pi_*j_*\mathbf G_{\mathrm m,\,K}/\mathbf G_{\mathrm m})(R)\longrightarrow\mathcal 0$$
$$0\longrightarrow(\pi_*j_*\mathbf G_{\mathrm m,\,K}/\mathbf G_{\mathrm m})(R)\longrightarrow\mathcal U(R)\longrightarrow\widehat{H}_\ab(R)\longrightarrow\mathcal 0$$
for any quasi-finite $k'$-algebra $R$ (for some $k'$ such that $X(k')\neq\varnothing$, over which we may replace $\mathcal U$ by $\mathcal V$). It suffices to check surjectivity on the right, which follows by examining the proofs of Lemma \ref{lemr1lqf} and Proposition \ref{proplqfexseq2}, respectively. 
\end{prf}

\begin{cor}\label{corcechcomp1}
The following commutative diagram has exact rows
\begin{center}\begin{tikzcd}
0\arrow[r] & \mathrm{\check H}^2(k, \mathbf G_{\mathrm m})\arrow[r]\arrow[d, pos=0.4, "\rotatebox{90}{$\sim$}"] & \mathbf{\check H}^2\!\left(k, \big[\pi_*j_*\mathbf G_{\mathrm m,\,K}\rightarrow\mathcal U\big]\right)\arrow[r]\arrow[d, pos=0.4, "\rotatebox{90}{$\sim$}"] & \mathrm{\check H}^1(k, \widehat{H}_\ab)\arrow[r]\arrow[d, pos=0.4, "\rotatebox{90}{$\sim$}"] & 0\\
0\arrow[r] & \mathrm{H}^2(k, \mathbf G_{\mathrm m})\arrow[r] & \mathbf{H}^2\!\left(k, \big[\pi_*j_*\mathbf G_{\mathrm m,\,K}\rightarrow\mathcal U\big]\right)\arrow[r] & \mathrm{H}^1(k, \widehat{H}_\ab)\arrow[r] & 0
\end{tikzcd}\end{center}
and all three vertical maps are isomorphisms. The same holds when $k$ is replaced by any $k_v$.
\end{cor}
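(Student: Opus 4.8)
\textbf{Proof plan for Corollary \ref{corcechcomp1}.}

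The statement to prove is that the Čech-to-derived map induces an isomorphism on the three-term exact sequence associated to \eqref{eqcechcompseq}, both over $k$ and over each completion $k_v$. The plan is to treat the three vertical maps one at a time and use the five lemma (for abelian group sequences) to conclude.

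First I would handle the two outer vertical maps, which concern ordinary (non-hyper) cohomology. For the left-hand map $\mathrm{\check H}^2(k, \mathbf G_{\mathrm m})\rightarrow\mathrm{H}^2(k, \mathbf G_{\mathrm m})$: this is exactly Lemma \ref{lemrosentensfield}(b) applied to the smooth commutative group $Z = \mathbf G_{\mathrm m}$ (recall that Čech cohomology on $\Spec(k)_\lqf$ with the fppf topology agrees, via the refinement remark in the definition of the lqf-site, with fppf Čech cohomology of $k$). The same reference handles the completions $k_v$. For the right-hand map $\mathrm{\check H}^1(k, \widehat{H}_\ab)\rightarrow\mathrm{H}^1(k, \widehat{H}_\ab)$: in degree $1$, Čech and derived cohomology always agree on any site (this is the classical fact \cite[III, Cor.\ 2.10]{Mil80}, valid since $\widehat{H}_\ab$ is an fppf sheaf on the lqf-site), so this vertical map is an isomorphism with no further hypotheses; the same holds over each $k_v$.

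Second, the bottom row of the diagram is exact — this is precisely the content of Lemma \ref{lemcechcomp1} (its top row, reindexed) applied over $k$ and over each $k_v$, using that $\mathrm{H}^3(k,\mathbf G_{\mathrm m}) = 0$ over a global or local function field and $\mathrm{H}^3(k_v,\mathbf G_{\mathrm m}) = 0$ over a local field. The top row is exact by Proposition \ref{propc3presh}: its second bullet point asserts that the sequence $(\ast_n)$ of Čech cochain groups associated to \eqref{eqcechcompseq} is exact for all $n$, and then the first bullet yields the long exact Čech hypercohomology sequence; the relevant stretch of that sequence, together with the vanishing $\mathrm{\check H}^3(k,\mathbf G_{\mathrm m}) = \mathrm H^3(k,\mathbf G_{\mathrm m}) = 0$ (via the already-established left vertical isomorphism and Lemma \ref{lemrosentensfield}(b), or directly), gives the short exact top row displayed. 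The commutativity of the diagram is built into the natural comparison maps $\mathbf{\check H}^n(k,-)\rightarrow\mathbf H^n(k,-)$ of Definition \ref{defnhypercech}, which are compatible with the morphisms of complexes of sheaves inducing the horizontal arrows.

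Finally, with both rows short exact, the diagram commutative, and the two outer vertical maps isomorphisms, the five lemma forces the middle vertical map $\mathbf{\check H}^2(k, [\pi_*j_*\mathbf G_{\mathrm m,\,K}\rightarrow\mathcal U])\rightarrow\mathbf H^2(k, [\pi_*j_*\mathbf G_{\mathrm m,\,K}\rightarrow\mathcal U])$ to be an isomorphism as well. Running the identical argument over each completion $k_v$ (replacing $K,\mathcal U$ by $K^v,\mathcal U^v$) gives the last sentence of the statement. The only real subtlety — and the step I would be most careful about — is verifying the exactness of $(\ast_n)$ for all $n$ in Proposition \ref{propc3presh}, i.e.\ that surjectivity of the relevant sheaf maps on sections over a \emph{quasi-finite} $k'$-algebra $R$ holds; but this is already dispatched there by appealing to the proofs of Lemma \ref{lemr1lqf} and Proposition \ref{proplqfexseq2}, so in the present corollary it can simply be invoked.
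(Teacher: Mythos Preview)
Your approach is essentially identical to the paper's: establish the outer vertical isomorphisms, get the bottom row from Lemma \ref{lemcechcomp1}, get the top row from Proposition \ref{propc3presh} plus $\mathrm{\check H}^3(k,\mathbf G_{\mathrm m})=0$, then invoke the five lemma. The only difference in citations is cosmetic (the paper cites \cite[Prop.\ 2.9.6, 2.9.9]{RosTD} directly rather than Lemma \ref{lemrosentensfield}(b) and \cite[III, Cor.\ 2.10]{Mil80}, but these are the same facts).

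One small point you gloss over: the long exact sequence from Proposition \ref{propc3presh} together with $\mathrm{\check H}^3(k,\mathbf G_{\mathrm m})=0$ gives exactness of the top row at the middle and right terms, but not the injectivity of $\mathrm{\check H}^2(k,\mathbf G_{\mathrm m})\rightarrow\mathbf{\check H}^2(k,[\pi_*j_*\mathbf G_{\mathrm m,K}\rightarrow\mathcal U])$ on the left. The paper explicitly notes this and deduces it from the commutativity of the left square together with the already-established left vertical isomorphism and the injectivity in the bottom row. Alternatively you could extend the diagram one step left (to $\mathrm{\check H}^0=\mathrm H^0$ of $\widehat{H}_\ab$) and apply the five lemma there; either way, this step should be made explicit since left-exactness of the top row is part of the statement.
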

\begin{prf}
The bottom row is exact by Lemma \ref{lemcechcomp1}. The first and third vertical maps are isomorphisms by \cite[Prop.\ 2.9.6 and Prop.\ 2.9.9]{RosTD}, which in particular proves injectivity on the left of the top row. The preceding proposition now shows that the top row is exact (again, by loc.\ cit.\ $\mathrm{\check H}^3(K, \mathbf G_{\mathrm m})\cong\mathrm{H}^3(K, \mathbf G_{\mathrm m}) = 0$ for a local or global field $K$). Finally, the middle vertical map is an isomorphism by the $5$-lemma.
\end{prf}

In view of Lemma \ref{lemcechcomp1} and Corollary \ref{corcechcomp1}, to prove \eqref{eqptvsbm} it suffices to show that the snake lemma map in the following Čech cohomology diagram agrees with the map $-\langle\xi_X^\ab, -\rangle_{PT}$:
\begin{center}\begin{tikzcd}[column sep = 25pt]
0\arrow[r] & \mathrm{\check H}^2(k, \mathbf G_{\mathrm m})\arrow[r]\arrow[d] & \mathbf{\check H}^2\!\left(k, \big[\pi_*j_*\mathbf G_{\mathrm m,\,K}\rightarrow\mathcal U\big]\right)\arrow[r]\arrow[d] & \mathrm{\check H}^1(k, \widehat{H}_\ab)\arrow[d]\arrow[r] & 0\\
0\arrow[r] & \prod_v\mathrm{\check H}^2(k_v, \mathbf G_{\mathrm m})\arrow[r] & \prod_v\mathbf{\check H}^2\!\left(k_v, \big[\pi_*j_*\mathbf G_{\mathrm m,\,K^v}\rightarrow\mathcal U^v\big]\right)\arrow[r] & \prod_v\mathrm{\check H}^1(k_v, \widehat{H}_\ab)\arrow[r] & 0
\end{tikzcd}\end{center}
Explicitly, take an element $A\in\Sh^1(\widehat{H}_\ab)$. Then $A$ lifts to a class represented by some Čech cocycle $(b,\overline{c})\in\mathbf{\check Z}^2\!\left(k, \big[\pi_*j_*\mathbf G_{\mathrm m,\,K}\rightarrow\mathcal U\big]\right)$. Using that $A_v = 0$ for all $v$, we show in Remark \ref{remcechsetup} that there are $(b^v,\overline{c}^v)\in\mathbf{\check C}^1\!\left(k_v, \big[\pi_*j_*\mathbf G_{\mathrm m,\,K^v}\rightarrow\mathcal U^v\big]\right)$ such that $(b,\overline{c})_v - \mathrm d(b^v,\overline{c}^v) = (b_v-\mathrm db^v,0)$ lies in the subgroup $\mathrm{\check Z}^2(k_v, \mathbf G_{\mathrm m})$. The image of $A$ under the connecting homomorphism is the sum of the invariants $\mathrm{inv}_v([b_v-\mathrm db^v])$ in $\mathbf Q/\mathbf Z$, which is finite and independent of choices.

On the other hand, we now recall the construction of the global Poitou-Tate duality pairing
$$\langle -,-\rangle_{PT} : \Sh^2(H_\ab)\times\Sh^1(\widehat{H}_\ab)\rightarrow\mathbf Q/\mathbf Z$$
from \cite[\textsection 5.13]{RosTD} (up to reordering terms in the cup product and the resulting sign changes; cf.\  \cite[Rem.\ 3.4]{Don24} for a detailed explanation): Given classes $\xi\in\Sh^2(H_\ab)$ and ${A\in\Sh^1(\widehat{H}_\ab)}$, fix some representative Čech cocycles $h\in \mathrm{\check Z}^2(k,H_\ab)$ and $\alpha\in \mathrm{\check Z}^1(k,\widehat{H}_\ab)$. For all $v$, there exist cochains $\chi^v\in \mathrm{\check C}^1(k_v,H_\ab)$ 
with $\mathrm d\chi^v = h_v$. 
Moreover, since $\mathrm{\check H}^3(k,\mathbf G_\mathrm{m}) = 0$, there is a cochain $t\in\mathrm{\check C}^2(k,\mathbf G_\mathrm{m})$ such that $\mathrm dt = \alpha\!\smallsmile\! h$. Then in particular $\mathrm dt_v = -\mathrm d(\alpha_v\!\smallsmile\!\chi^v)$, which defines classes $[-(\alpha_v\!\smallsmile\!\chi^v)-t_v]\in\mathrm{\check H}^2(k_v,\mathbf G_\mathrm{m})$. The pairing $\langle \xi,A\rangle_{PT}$ is well-defined as the sum of the corresponding invariants in $\mathbf Q/\mathbf Z$, i.e. the resulting sum is finite and its value independent of all choices made in the construction.

Let an arbitrary class $A\in\Sh^1(\widehat{H}_\ab)$ be given. The remainder of this section will be devoted to showing that we can make all of the choices of $b,\overline{c},b^v,\overline{c}^v,h,\chi^v,t$ above so that the identity
\begin{equation}\label{eqptvsbm2}
    [(\alpha_v\!\smallsmile\!\chi^v)+t_v] = [b_v-\mathrm db^v]
\end{equation}
is satisfied in $\mathrm{\check H}^2(k_v, \mathbf G_{\mathrm m})$ for $\xi = \xi_X^\ab$, and for every place $v$. This will then immediately imply the desired equality \eqref{eqptvsbm}.

From now on, fix some finite field extension $k'/k$ and a point $x\in X(k')$. This choice determines a $G$-equivariant map $r^x : G_{k'}\rightarrow X_{k'}$ with fiber $\Hb\coloneqq (r^x)^{-1}(x)$. It also defines a short exact sequence
$$0\longrightarrow\mathbf G_{\mathrm m,\,k'}\longrightarrow\mathcal V\longrightarrow\mathcal U_{k'}\longrightarrow 0$$
by construction of $\mathcal U$. Since $\mathrm{H}^1(k'\otimes_k k',\mathbf G_{\mathrm m}) = 0$ (cf. the proof of Lemma \ref{lemr1lqf}), we may write any element of $\mathcal U(k'\otimes_k k')$ as $\overline{c}$ for some $c\in\mathcal V(k'\otimes_k k')$ (as usual in this paper, $k'\otimes_k k'$ is seen here as a $k'$-algebra via the first factor), not to be confused with its image $\widetilde{c}$ in $\widehat{H}_\ab(k'\otimes_k k')$.\quad\quad\quad

We write $\Psi$ for the map $\pi_*j_*\mathbf G_{\mathrm m,\,K}\rightarrow\mathcal U$. If $\Psi(b_0) = \overline{c}_0$ for some local sections $b_0,c_0$, that means that $b_0\circ r^x$ and $c_0$ differ by a constant (as locally defined scheme maps from~$G_{k'}$~to~$\mathbf G_{\mathrm m,\,k'}$). Last, we will simply write $g\coloneqq g_x$ and $h\coloneqq h_x$ for the elements defining, as in Proposition \ref{propcechsprclassdef}, the Springer class with respect to $x$ (and we do not distinguish between $h$ in $\Hb$ and its image in $H_\ab$). Recall that $h = \mathrm dg\coloneqq\mathrm{pr}_{12}^*g\cdot(\mathrm{pr}_{12}^*\varphi_G^{-1})(\mathrm{pr}_{23}^*g)\cdot\mathrm{pr}_{13}^*g^{-1}\in\Hb(k'\otimes_k k'\otimes_k k')$.

\begin{prop}\label{prophardclaim}
Suppose given $g'\in G(k')$ and a cocycle $(b,\overline{c})\in\mathbf{\check Z}^2\!\left(k, \big[\pi_*j_*\mathbf G_{\mathrm m,\,K}\rightarrow\mathcal U\big]\right)$. Then there is a cocycle of the form $(b',\overline{c\circ r_{g'}})$, where $c$ stays the same and ${r_{g'} : G_{k'\otimes_k k'}\rightarrow G_{k'\otimes_k k'}}$ denotes multiplication by $\mathrm{pr}^*_1(g')$ on the right. Moreover, the images of both cocycles define the same class in $\mathrm{\check H}^1(k, \widehat{H}_\ab)$, their image under the map $[b_0,\overline{c_0}]\mapsto[\widetilde{c_0}]$ in the above notation.
\end{prop}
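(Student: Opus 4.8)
\emph{Proof proposal.} The plan is to lift $\overline c$ to a section $c$ of $\mathcal V$, precompose it with right translation by $\mathrm{pr}_1^*(g')$ to obtain $c\circ r_{g'}$, check that this is again a section of $\mathcal V$ \emph{with the same associated character} $\widetilde{c\circ r_{g'}}=\widetilde c$, and then adjust the first coordinate $b$ to a section $b'$ making $(b',\overline{c\circ r_{g'}})$ a cocycle. The equality of classes in $\mathrm{\check H}^1(k,\widehat H_\ab)$ will then be immediate. Throughout I would enlarge the finite extension $k'/k$ freely, so that $\mathrm H^1(\mathbf G_{\mathrm m})$ of each of the finite products of fields $k'^{\otimes 2},k'^{\otimes 3}$ appearing below vanishes (Hilbert's theorem~90, as in the proof of Lemma~\ref{lemr1lqf}); this first lets me write $\overline c=\overline c$ for some $c\in\mathcal V(k'\otimes_k k')$, as in the text.

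The heart of the matter is the following observation about the torsor $Y=G_{k'}\to X_{k'}$ from Subsection~\ref{ssectspringer}: its structure map $q\colon Y\times_X Y\xrightarrow{\sim}\Hb\times Y\to\Hb$, with the first arrow sending $(y_1,y_2)$ to $(y_1y_2^{-1},y_2)$, satisfies
\begin{equation*}
q\circ(r_{g'}\times r_{g'})=q,
\end{equation*}
since $\bigl(y_1\cdot\mathrm{pr}_1^*(g')\bigr)\bigl(y_2\cdot\mathrm{pr}_1^*(g')\bigr)^{-1}=y_1y_2^{-1}$. Combined with $\mathrm{pr}_j\circ(r_{g'}\times r_{g'})=r_{g'}\circ\mathrm{pr}_j$ and the identity $r^x\circ r_{g'}=\rho_{g'}\circ r^x$ — where $\rho_{g'}$ is the automorphism of $X_{k'\otimes_k k'}$ given by right translation by $\mathrm{pr}_1^*(g')$ — this shows that if $c\in\mathcal V(k'\otimes_k k')$ is defined over $Y_U$ with associated character $\widetilde c\colon\Hb\to\mathbf G_{\mathrm m}$, then $c\circ r_{g'}$ is a rational map defined over $Y_{\rho_{g'}^{-1}(U)}$ (a nonempty open, $\rho_{g'}$ being an automorphism), the torsor-compatibility diagram for $c$ transports along $r_{g'}\times r_{g'}$ to that for $c\circ r_{g'}$, and $\widetilde{c\circ r_{g'}}=\widetilde c$. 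Hence $c\circ r_{g'}\in\mathcal V(k'\otimes_k k')$, and I set $\overline{c\circ r_{g'}}\in\mathcal U(k'\otimes_k k')$ to be its image under $\mathcal V\to\mathcal V/\mathbf G_{\mathrm m}=\mathcal U$.

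To produce $b'$, recall from the exact sequence~\eqref{eqcechcompseq} that $\ker(\Psi)=\mathbf G_{\mathrm m}$ and $\operatorname{im}(\Psi)=\ker(\mathcal U\to\widehat H_\ab)$. Since $(b,\overline c)\in\mathbf{\check Z}^2$ we have $\Psi(b)=\mathrm d\overline c$, and composing with $\mathcal U\to\widehat H_\ab$ gives $\mathrm d\widetilde c=0$; hence $\mathrm d\widetilde{c\circ r_{g'}}=\mathrm d\widetilde c=0$, so $\mathrm d\overline{c\circ r_{g'}}$ takes values in $\operatorname{im}(\Psi)$ and I may pick $b'$ with $\Psi(b')=\mathrm d\overline{c\circ r_{g'}}$. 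Then $\Psi(\mathrm db')=\mathrm d\mathrm d\overline{c\circ r_{g'}}=0$, so $\mathrm db'$ is a $3$-cocycle valued in $\mathbf G_{\mathrm m}$; as $\mathrm{\check H}^3(k,\mathbf G_{\mathrm m})\cong\mathrm H^3(k,\mathbf G_{\mathrm m})=0$ over a local or global field (\cite[Ch.\ VII, \textsection11.4]{CF67}), after a further enlargement I may write $\mathrm db'=\mathrm d\beta$ with $\beta$ valued in $\mathbf G_{\mathrm m}$, and replacing $b'$ by $b'\beta^{-1}$ (which leaves $\Psi(b')$ unchanged, $\beta\in\ker\Psi$) yields $\mathrm db'=0$. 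Thus $(b',\overline{c\circ r_{g'}})\in\mathbf{\check Z}^2\!\left(k,[\pi_*j_*\mathbf G_{\mathrm m,\,K}\to\mathcal U]\right)$. Finally, by Proposition~\ref{propc3presh} applied to \eqref{eqcechcompseq}, the map to $\mathrm{\check H}^1(k,\widehat H_\ab)$ sends a cocycle $(\,\cdot\,,\overline d)$ to the class of the image of $\overline d$ in $\widehat H_\ab$, so $(b,\overline c)\mapsto[\widetilde c]$ and $(b',\overline{c\circ r_{g'}})\mapsto[\widetilde{c\circ r_{g'}}]=[\widetilde c]$, and the two cocycles define the same class. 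The hard part will be the verification in the middle paragraph that precomposition by $r_{g'}$ maps $\mathcal V$ to itself while preserving the associated character; but this is exactly what the one-line identity $q\circ(r_{g'}\times r_{g'})=q$ buys, and everything else is routine Čech bookkeeping together with the two vanishing inputs for $\mathrm H^1$ and $\mathrm H^3$ of $\mathbf G_{\mathrm m}$.
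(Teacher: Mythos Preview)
Your proof is correct, but it takes a different route from the paper's. Both arguments start the same way (showing $\widetilde{c\circ r_{g'}}=\widetilde c$, which you phrase cleanly via $q\circ(r_{g'}\times r_{g'})=q$), but diverge at the construction of $b'$. The paper defines $b'$ explicitly by the relation $b'\circ r^x-\mathrm d(c\circ r_{g'})=b\circ r^x-\mathrm dc$ and verifies $\mathrm db'=0$ by computing $\mathrm d(\mathrm dc)$ directly: this turns out to equal $\bigl(\mathrm{pr}_{13}^*\varphi_{\widehat H}^{-1}(\mathrm{pr}_{34}^*\widetilde c)\bigr)(h^{-1})$, which depends only on $\widetilde c$ and hence coincides with $\mathrm d(\mathrm d(c\circ r_{g'}))$. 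You instead lift $\mathrm d\overline{c\circ r_{g'}}$ abstractly via the exactness in Proposition~\ref{propc3presh}, then invoke $\mathrm{\check H}^3(k,\mathbf G_{\mathrm m})=0$ to kill $\mathrm db'$.

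Your argument is shorter and self-contained for this proposition, but it costs you the global-field hypothesis (which the paper's explicit construction does not need at this step, though it is of course in force in Subsection~\ref{ssectptcomm}). More importantly, the paper's explicit computation of $\mathrm d(\mathrm dc)$ is not wasted effort: the same formula is essentially what drives the next proposition, where $\mathrm dt=\widetilde c\smile h$ is established via analogous manipulations of $\mathrm dc$ under the twisted descent $\varphi_{\mathcal V}$. So the paper's approach is part of a running explicit calculation, while yours is a clean one-off that would leave the subsequent propositions to be argued from scratch.
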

\begin{prf}
It is clear that the images $\widetilde{c}$ and $\widetilde{c\circ r_{g'}}$ in $\widehat{H}_\ab(k'\otimes_k k')$ coincide, since
$$\widetilde{c}(h_0) = c(h_0g_0)-c(g_0) = c(h_0g_0g')-c(g_0g')$$
for any local sections $h_0,g_0$ by definition. The fact that $\mathrm d(b,\overline{c}) = 0$ means exactly that $\mathrm db = 0$ and $\Psi(b) = \mathrm d\overline{c}$. It remains thus to prove that there exists $b'\in\pi_*j_*\mathbf G_{\mathrm m,\,K}(k'\otimes_k k'\otimes_k k')$ such that $\mathrm db' = 0$ and $\Psi(b') = \mathrm d\overline{(c\circ r_{g'})}$.

Although $\mathcal V$ is only a sheaf over $k'$, not $k$, we may define in $\mathcal V(k'\otimes_k k'\otimes_k k')$ the following element (here, the transition map $\varphi_{\mathcal V}$ is analogous to \eqref{eqdescF1} from the definition of $\mathcal{E}$, $\mathcal{U}$)
\begin{align*}
    \mathrm dc \coloneqq\;&\mathrm{pr}^*_{12}c + \mathrm{pr}^*_{12}\varphi_{\mathcal V}^{-1}(\mathrm{pr}^*_{23}c) - \mathrm{pr}^*_{13}c\\
    =\;& \mathrm{pr}^*_{12}c + \big(\mathrm{pr}^*_{12}\varphi_{\mathbf G_{\mathrm m}}^{-1}\circ\mathrm{pr}^*_{23}c\circ\mathrm{pr}^*_{12}\ell_g\circ\mathrm{pr}^*_{12}\varphi_G^{-1}\big) - \mathrm{pr}^*_{13}c
\end{align*}
so that $\overline{\mathrm dc} = \mathrm d\overline{c}$. In particular, this implies that $b\circ r^x$ and $\mathrm dc$ differ by a constant.

We may similarly write $\overline{\mathrm d(\mathrm dc)} = \mathrm d(\mathrm d\overline{c}) = 0$, which proves that $\mathrm d(\mathrm dc)$ is a constant map (equal to some value in $\mathbf G_{\mathrm m}(k'\otimes_k k'\otimes_k k'\otimes_k k')$), although in general nonzero. In fact, when calculating it straight from the definition, all terms cancel out except the following (cf. \eqref{eqdescF2}):
\begin{align*}
    \mathrm d(\mathrm dc) &= (\mathrm{pr}^*_{12}\varphi_{\mathcal V}^{-1}\circ\mathrm{pr}^*_{23}\varphi_{\mathcal V}^{-1})(\mathrm{pr}^*_{34}c)-\mathrm{pr}^*_{13}\varphi_{\mathcal V}^{-1}(\mathrm{pr}^*_{34}c) \\
    &= \left(\big((\mathrm{pr}^*_{13}\varphi_{\mathcal V})^{-1}\circ(\mathrm{pr}^*_{23}\varphi_{\mathcal V})\circ(\mathrm{pr}^*_{12}\varphi_{\mathcal V})\big)^{-1}-\mathrm{id}\right)\!\big(\mathrm{pr}^*_{13}\varphi_{\mathcal V}^{-1}(\mathrm{pr}^*_{34}c)\big) = \big(\mathrm{pr}^*_{13}\varphi_{\widehat{H}}^{-1}(\mathrm{pr}^*_{34}\widetilde{c})\big)(h^{-1})
\end{align*}
Moreover, this shows that $\mathrm d(\mathrm dc) = \mathrm d(\mathrm d(c\circ r_{g'}))$, since $\widetilde{c} = \widetilde{c\circ r_{g'}}$. Therefore we may simply take the element $b'$ defined by the relation $b'\circ r^x - \mathrm d(c\circ r_{g'}) = b\circ r^x-\mathrm dc$, which makes sense since $\widetilde{\mathrm d(c\circ r_{g'})} = \mathrm d(\widetilde{c\circ r_{g'}}) = \mathrm d(\widetilde{c}) = 0$.
\end{prf}

We discuss briefly the implications of this statement: Our fixed class $A\in\Sh^1(\widehat{H}_\ab)$ lifts to a class represented by $(b,\overline{c})\in\mathbf{\check Z}^2\!\left(k, \big[\pi_*j_*\mathbf G_{\mathrm m,\,K}\rightarrow\mathcal U\big]\right)$. We claim that, given any fixed collection of points $x_1,\ldots,x_n\in X(k'\otimes_k k'\otimes_k k')$, we may choose (up to enlarging $k'/k$) this pair $(b,\overline{c})$ defined over $k'$ such that the values $b(x_j)\in\mathbf G_{\mathrm m}(k'\otimes_k k'\otimes_k k')$ are well-defined. Recall that $b$ is a rational map on $X_{k'\otimes_k k'\otimes_k k'}$, whose domain of definition includes the preimage of a nonempty open subscheme of $U^b\subseteq X$, and thus also the preimage of $(U^b)_{k'}\subseteq X_{k'}$. Similarly, for any choice of $c$, fix a nonempty open subscheme $U^c\subseteq X_{k'}$ on whose preimage(s) $c$ is defined.

To see this claim, note that the proof of the above proposition shows that $b\circ r^x$ is defined wherever $\mathrm dc$ is, so in particular wherever all three of $\mathrm{pr}^*_{12}c$, $\mathrm{pr}^*_{13}c$ and $\mathrm{pr}^*_{12}\varphi_{\mathcal V}^{-1}(\mathrm{pr}^*_{23}c)$ are. This reduces the problem to choosing $c$ which is defined on a finite number of prescribed points $y_1,\ldots,y_n\in G(k'\otimes_k k'\otimes_k k')$. The set-theoretic images of all these $y_i$, by the various projections from $G_{k'\otimes_k k'\otimes_k k'}$, lie in some finite set of closed points $\overline{z}_j\in G_{k'}$. By enlarging $k'/k$, we may assume that each closed point $\overline{z}_j$ corresponds to a $k$-point $z_j\in G(k)$. For any starting choice of $c$, the finite intersection $\bigcap_j z_j^{-1}U^c$ of open dense subschemes is a nonempty subscheme $T\subseteq G_{k'}$, and we may take $g'\in T(k')$ up to enlarging $k'$.

The above proposition then gives a pair $(b',c\circ r_{g'})$ with the desired property. We summarize the consequences of this claim in the following very long remark, before proving \eqref{eqptvsbm2}:

\begin{rem}\label{remcechsetup}
As above, take any $(b,\overline{c})\in\mathbf{\check Z}^2\!\left(k'/k, \big[\pi_*j_*\mathbf G_{\mathrm m,\,K}\rightarrow\mathcal U\big]\right)$ such that $[\,\widetilde{c}\,] = A$. We fix finite extensions $k'_v/k_v$, with $k'\subseteq k'_v$ for each $v$. Since $A_v = 0$, we may use Proposition~\ref{propc3presh} to choose $\overline{c}^v\in\mathcal U^v(k'_v)$ such that $-\mathrm d\widetilde{c}^v = \widetilde{c}_v$ (the minus sign here comes from the shift $\widehat{H}_\ab[-1]$, cf.\ Definition \ref{defnhypercech}). As before, $\overline{c}^v$ lifts to some $c^v\in\mathcal V(k'_v)$, for the same fixed sheaf $\mathcal V$ on $k'$. Again by Proposition \ref{propc3presh}, there is $b^v\in\pi_*j_*\mathbf G_{\mathrm m,\,K^v}(k'_v\otimes_{k_v} k'_v)$ such that $\Psi(b^v) = \overline{c}_v+\mathrm d\overline{c}^v$, so:
$$(b,\overline{c})_v-\mathrm d(b^v,\overline{c}^v) = (b_v,\overline{c}_v)-(\mathrm db^v,\Psi(b^v)-\mathrm d\overline{c}^v) = (b_v-\mathrm db^v,0)$$
Moreover, this must be the image of a constant cocycle in $\mathrm{Z}^2(k_v, \mathbf G_{\mathrm m})$, because:
$$\Psi(b_v-\mathrm db^v) = \Psi(b_v)-\mathrm d\Psi(b^v) = d\overline{c}_v-(d\overline{c}_v+\mathrm{d}^2\overline{c}^v) = 0$$
Hence, we may indeed take this $b^v$ to appear in \eqref{eqptvsbm2}. To get the value of the difference $b_v-\mathrm db^v$, we would like to evaluate it at some point in $X(k'_v\otimes_{k_v}k'_v\otimes_{k_v}k'_v)$. A natural candidate to consider is (some restriction of) the point $x$, and for this we introduce additional notation:

For sections $b_0\in\mathrm{\check C}^p(k'/k, \pi_*j_*\mathbf G_{\mathrm m,\,K}) = (\pi_*j_*\mathbf G_{\mathrm m,\,K})(k'{^{\,\otimes_k (p+1)}})$ and $x_0\in X(k'{^{\,\otimes_k (q+1)}})$, we let
$$b_0\smile x_0\coloneqq(\mathrm{pr}_{1,\ldots,p+1}^*b_0)\left(\mathrm{pr}_{1,p+1}^*\varphi_X^{-1}(\mathrm{pr}_{p+1,\ldots,p+q+1}^*x_0)\right)\in\mathbf G_{\mathrm m}(k'{^{\,\otimes_k (p+q+1)}}) = \mathrm{\check C}^{p+q}(k'/k, \mathbf G_{\mathrm m})$$
if this evaluation is well-defined. This notation mimics the usual cup product, and we may similarly define $c_0\smile g_0$ for $\mathcal V$ and $G_{k'}$, however in this situation we must use $\varphi_G\circ\ell_g^{-1}$ in place of $\varphi_G$ (this definition should be taken as is; we are not bothered by the fact that $\mathcal V$ is not defined over $k$, nor that this twisted ``descent datum'' might not satisfy the cocycle property, although this does mean that the formula for $\mathrm d(c_0\smile g_0)$ is slightly different than the one we give now, as we will see later). The advantage of this notation is that differentials satisfy intuitive formulas; for example, although an expression of the form ``$\mathrm dx_0$'' does not make sense, it still holds that:
$$\mathrm d(b_0\smile x_0) = \mathrm db_0\smile x_0+(-1)^p\sum\nolimits_{i=1}^{q+1}(-1)^{i-1}(\mathrm{pr}_{1,\ldots,p+1}^*b_0)\left(\mathrm{pr}_{1,m(i)}^*\varphi_X^{-1}(\mathrm{pr}_{p+1,\ldots,\widehat{p+i},\ldots,p+q+2}^*x_0)\right)$$
where $m(i)$ in $\mathrm{pr}_{1,m(i)}^*\varphi_X^{-1}$ equals $p+2$ if $i = 1$, and $p+1$ otherwise. Again, this formula makes sense only if all terms are well-defined (which is not immediate from the well-definedness of $b_0\smile x_0$, as some additional terms will need to be introduced, added and subtracted on the right side of the identity).

Continuing the remark, we explain the main setup for the rest of the section: For each $v$, we fix a point $x^v\in X(k_v)$ and choose (up to enlarging $k'_v/k_v$) some $g^v\in G(k'_v)$ such that $x_v.g^v = x^v$ holds for our fixed $x\in X(k')$. In the calculations to follow, we would like to use many ``values'' of the form $b\smile x$, $c\smile g$, $c\smile 1$, $\mathrm db^v\smile x_v$, $\mathrm dc^v\smile g^v$, $\mathrm dc^v\smile 1$, etc., as well as some evaluations appearing in intermediate calculations which are not of this form (as commented on above), such as e.g. $\mathrm dc^v(g_v)$. 
For some such value to be well-defined, it is equivalent to the ``condition'' that the domain of definition of $b$ (resp.\ $c$, $b^v$, $c^v$) includes a finite number of predetermined points, depending on the evaluation required. Now, having already fixed $x,g$ and all $x^v,g^v$, we make careful choices of $b,c$ and (for all $v$) $b^v,c^v$ using the ``claim'' following Proposition \ref{prophardclaim} to ensure that all the required values are indeed well-defined. 

Very importantly, because that claim only allows us to assume that finitely many arbitrary predetermined points lie in the domain of definition of $b$ (resp.\ $c$, $b^v$, $c^v$), we must ensure that, at the point of choosing $b$ (resp.\ $c$, $b^v$, $c^v$), we only impose finitely many conditions on its domain of definition. For this reason, we choose them in the following order:

First, choose $b,c$ such that the global expressions $b\smile x$, $c\smile g$, etc.\ are well-defined. Each of the conditions on $b$ adds additional conditions on $c$ (since $\Psi(b) = \mathrm d\overline{c}$; see the discussion~after Proposition \ref{prophardclaim}), but only finitely many. We cannot require that the local expressions $b_v\smile x^v$ or $c_v\smile g^v$ are defined for all $v$, as that would impose infinitely many conditions on $b,c$. 

Second, for each $v$ choose $b^v,c^v$ such that the local expressions $b^v\smile x_v$, $\mathrm dc^v\smile g^v$, $\mathrm dc^v(g_v)$, etc.\ are well-defined. Note that the situation is more subtle for $b^v$ than in the global case, as we cannot in general force $b^v\smile x^v$ to be well-defined: Indeed, the domain of $b^v$ depends on the domain of both $c$ and $c^v$ through the condition $\Psi(b^v) = \overline{c}_v+\mathrm d\overline{c}^v$ and we have already made our choice of $c$. However, $b^v\smile x_v = (c\smile g)_v+\mathrm dc^v\smile g_v+(\mathrm{const.})$ is well-defined as the input~of~$c$ rests only on the well-defined global value $c\smile g$; similarly for $\mathrm db^v\smile x_v$.

From this point onward, we assume that all the finitely many expressions appearing until the end of this section are well-defined. We may in particular suppose both $b_v\smile x_v$ and $\mathrm db^v\smile x_v$ well-defined in $\mathbf G_{\mathrm m}(k'_v\otimes_{k_v}k'_v\otimes_{k_v}k'_v)$, leading us to evaluate the constant cocycle $b_v-\mathrm db^v$ as:
$$(b_v\smile x_v)-(\mathrm db^v\smile x_v) = b_v(\mathrm{pr}_3^*x_v)-\mathrm db^v(\mathrm{pr}_3^*x_v) = (b_v-\mathrm db^v)(\mathrm{pr}_3^*x_v)\in\mathrm{\check Z}^2(k'_v/k_v, \mathbf G_{\mathrm m})$$
The class of this element is the right-hand side of \eqref{eqptvsbm2}. Note that the notation here is slightly different from most of this work, as we write $\mathrm{pr}_3^*$ instead of $\mathrm{pr}_{13}^*\mathrm{pr}_2^*$ for brevity.
\end{rem}

Let $x,g,h,b,c,x^v,g^v,b^v,c^v$ be as in the remark above. We will calculate the left side of \eqref{eqptvsbm2}:

\begin{prop}
The element $\chi^v$ defined by the following expression
$$\chi^v\coloneqq g_v\cdot \varphi_G^{-1}(\mathrm{pr}_2^*g^v)\cdot(\mathrm{pr}_1^*g^v)^{-1}\in G(k'_v\otimes_{k_v} k'_v)$$
lies in $\Hb(k'_v\otimes_{k_v} k'_v)$, which maps to $\mathrm{\check C}^1(k_v, H_\ab)$. Furthermore, $d\chi^v = h_v$ in $\mathrm{\check Z}^2(k_v, H_\ab)$.
\end{prop}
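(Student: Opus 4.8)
The plan is to verify the two assertions directly, using the identities attached to the Springer band recalled in Subsection~\ref{ssectspringer} together with the cocycle and equivariance properties of the descent data $\varphi_G$, $\varphi_X$.

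First I would show that $\chi^v$ lies in $\Hb(k'_v\otimes_{k_v}k'_v)$. By the description of $\varphi_H$ in the construction of the Springer band, $\mathrm{pr}_1^*\Hb$ is precisely the stabilizer of $\mathrm{pr}_1^*(x_v)$ inside $\mathrm{pr}_1^*G_{k'_v}$, so it suffices to check $\mathrm{pr}_1^*(x_v).\chi^v=\mathrm{pr}_1^*(x_v)$. The ingredients are: the defining relation $\mathrm{pr}_1^*(x).g=\varphi_X^{-1}(\mathrm{pr}_2^*(x))$ (base changed to $k'_v$), the equivariance $\varphi_X^{-1}(z).\varphi_G^{-1}(w)=\varphi_X^{-1}(z.w)$, the relation $x_v.g^v=x^v$, and the fact that $x^v\in X(k_v)$ forces $\varphi_X^{-1}(\mathrm{pr}_2^*(x^v))=\mathrm{pr}_1^*(x^v)$. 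Chaining these,
$$\mathrm{pr}_1^*(x_v).\chi^v=\varphi_X^{-1}\!\big(\mathrm{pr}_2^*(x_v.g^v)\big).(\mathrm{pr}_1^*g^v)^{-1}=\mathrm{pr}_1^*(x^v).(\mathrm{pr}_1^*g^v)^{-1}=\mathrm{pr}_1^*(x_v.g^v).(\mathrm{pr}_1^*g^v)^{-1}=\mathrm{pr}_1^*(x_v),$$
which is what we want; the image of $\chi^v$ in $H_\ab(k'_v\otimes_{k_v}k'_v)$ is then the asserted $1$-cochain.

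For the equality $\mathrm{d}\chi^v=h_v$ in $\mathrm{\check Z}^2(k_v,H_\ab)$, I would expand the Čech differential of $\overline{\chi^v}$, that is $\mathrm{pr}_{12}^*\overline{\chi^v}\cdot(\mathrm{pr}_{12}^*\overline{\varphi_H}^{-1})(\mathrm{pr}_{23}^*\overline{\chi^v})\cdot(\mathrm{pr}_{13}^*\overline{\chi^v})^{-1}$, substituting $\chi^v=g_v\cdot\varphi_G^{-1}(\mathrm{pr}_2^*g^v)\cdot(\mathrm{pr}_1^*g^v)^{-1}$, the cocycle identity $\mathrm{pr}_{13}^*\varphi_G=\mathrm{pr}_{23}^*\varphi_G\circ\mathrm{pr}_{12}^*\varphi_G$ for the descent datum of $G$, and the definition $h=\mathrm{d}g=\mathrm{pr}_{12}^*g\cdot(\mathrm{pr}_{12}^*\varphi_G^{-1})(\mathrm{pr}_{23}^*g)\cdot\mathrm{pr}_{13}^*g^{-1}$. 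The contributions of $g^v$ telescope, and the remaining expression is $h_v$ modified by conjugation by a base change of $g^v$ and by the inner-automorphism discrepancy between $\varphi_H$ and $\varphi_G$ (recall $\varphi_H=\varphi_G\circ\mathrm{int}(g_x^{-1})$ on $\Hb$). Since $\Hb\to H_\ab$ is a homomorphism that kills inner automorphisms and commutators, all of these corrections vanish after projecting to $H_\ab$, yielding $\mathrm{d}\overline{\chi^v}=h_v$.

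The main obstacle is exactly this last step. The factors $g_v$ and $\varphi_G^{-1}(\mathrm{pr}_2^*g^v)\cdot(\mathrm{pr}_1^*g^v)^{-1}$ are only sections of $G$, not of $\Hb$, so one cannot split $\mathrm{d}\chi^v$ additively into $\mathrm{d}g_v$ and the differential of a $G$-valued coboundary; the nonabelian cross-terms have to be followed through the $G$-level identities and shown to collapse only after abelianization. Once the bookkeeping is set up with the usual convention of favoring pullbacks along smallest-indexed projections, each individual manipulation is routine, but the order of applying the cocycle condition and $h=\mathrm{d}g$ is what keeps the intermediate terms under control.
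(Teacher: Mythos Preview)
Your proposal is correct and follows essentially the same route as the paper: the first claim is verified by the identical chain of equalities using $\mathrm{pr}_1^*(x_v).g_v=\varphi_X^{-1}(\mathrm{pr}_2^*x_v)$, equivariance, and $x^v\in X(k_v)$; for the second, the paper likewise expands the three factors of $\mathrm d\chi^v$ (in a convenient ordering) and observes that the $g^v$-terms telescope via the cocycle identity for $\varphi_G$ to leave exactly $h_v$, with the passage to $H_\ab$ used only to absorb the reordering of factors. Your phrasing of the residual discrepancy as ``conjugation by $g^v$ plus the inner-automorphism gap between $\varphi_H$ and $\varphi_G$'' is just a different bookkeeping of the same cancellation.
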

\begin{prf}
The first claim is equivalent to checking that $\mathrm{pr}_1^*x_v.\chi^v = \mathrm{pr}_1^*x_v$. Recall that $x^v\in X(k_v)$, so $\mathrm{pr}_1^*x^v = \varphi_X^{-1}(\mathrm{pr}_2^*x^v)\in X(k'_v\otimes_{k_v} k'_v)$, and that $x^v = x_v.g^v$. We calculate:
\begin{align*}
\mathrm{pr}_1^*x_v.\chi^v &= \varphi_X^{-1}(\mathrm{pr}_2^*x_v)\cdot\varphi_G^{-1} (\mathrm{pr}_2^*g^v)\cdot(\mathrm{pr}_1^*g^v)^{-1} = \varphi_X^{-1}(\mathrm{pr}_2^*(x_vg^v))\cdot(\mathrm{pr}_1^*g^v)^{-1}\\
&= \varphi_X^{-1}(\mathrm{pr}_2^*x^v)\cdot(\mathrm{pr}_1^*g^v)^{-1} = \mathrm{pr}_1^*x^v\cdot(\mathrm{pr}_1^*g^v)^{-1} = \mathrm{pr}_1^*x_v
\end{align*}

For the second part, recall that $\varphi_H = \varphi_G\circ\mathrm{int}(g)^{-1}$. We write:
\begin{align*}
(\mathrm{pr}_{12}^*\varphi_H^{-1})(\mathrm{pr}_{23}^*\chi^v)\cdot\mathrm{pr}_{12}^*\chi^v\cdot(\mathrm{pr}_{13}^*\chi^v)^{-1} &= \mathrm{pr}_{12}^*g_v\cdot\mathrm{pr}_{12}^*\varphi_G^{-1}\left(\mathrm{pr}_{23}^*g_v\cdot \mathrm{pr}_{23}^*\varphi_G^{-1}(\mathrm{pr}_3^*g^v)\cdot(\mathrm{pr}_2^*g^v)^{-1}\right)\\
&\hspace{-150pt} \cdot\;\mathrm{pr}_{12}^*g_v^{-1} \cdot\left(\mathrm{pr}_{12}^*g_v\cdot \mathrm{pr}_{12}^*\varphi_G^{-1}(\mathrm{pr}_2^*g^v)\cdot(\mathrm{pr}_1^*g^v)^{-1}\right)\cdot\left(\mathrm{pr}_1^*g^v\cdot \mathrm{pr}_{13}^*\varphi_G^{-1}(\mathrm{pr}_3^*g^v)^{-1}\cdot\mathrm{pr}_{13}^*g_v^{-1}\right)\\
&= \mathrm{pr}_{12}^*g_v\cdot\mathrm{pr}_{12}^*\varphi_G^{-1}(\mathrm{pr}_{23}^*g_v)\cdot\mathrm{pr}_{13}^*g_v^{-1} = h
\end{align*}
Finally, $\mathrm d\chi^v = \mathrm{pr}_{12}^*\chi^v\cdot(\mathrm{pr}_{12}^*\varphi_H^{-1})(\mathrm{pr}_{23}^*\chi^v)\cdot(\mathrm{pr}_{13}^*\chi^v)^{-1}$, but as we only need equality in the Abelian quotient $H_\ab$, we are free to reorder these terms to agree with the image of $h$.
\end{prf}

The difficulty in proving the next proposition lies, of course, in the fact that the map $c$ is not necessarily additive. We also need to be careful with the ``descent datum'' $\varphi_{\mathcal V}$ associated with $\mathcal V$, as its ``cocycle rule'' yields additional terms in the equations below, which nevertheless all cancel out (as suggested by parentheses in the definition of $t$):

\begin{prop}
Let $1^2\in G(k'\otimes_k k')$ denote the neutral element. We define a cochain
$$t\coloneqq (c\smile 1^2-c\smile g)+(b\smile x)$$
in $\mathrm{\check C}^2(k, \mathbf G_{\mathrm m})$. Then $\mathrm dt = \widetilde{c}\smile h$ holds in $\mathrm{\check Z}^3(k, \mathbf G_{\mathrm m})$, where $\widetilde c$ is the image of $c$ under $\mathcal U\rightarrow\widehat{H}_\ab$.
\end{prop}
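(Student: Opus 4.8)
The strategy is a direct cocycle computation in Čech hypercohomology: we must show that the coboundary $\mathrm dt$ of the explicitly defined cochain $t = (c\smile 1^2 - c\smile g) + (b\smile x)$ equals the cup product $\widetilde c\smile h$. The key input is the identity $\Psi(b) = \mathrm d\overline c$ (i.e.\ $b\circ r^x$ and $\mathrm dc$ differ by a constant, which is precisely $\mathrm dc$ itself once we pin down the evaluation), together with the concrete description of $h = \mathrm dg$ and the ``twisted descent datum'' $\varphi_{\mathcal V} = \varphi_{\mathbf G_{\mathrm m}}\circ(-)\circ\ell_g\circ\varphi_G^{-1}$ used in defining $c\smile g$. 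First I would write out $\mathrm d(b\smile x)$ using the Leibniz-type formula for $\mathrm d(b_0\smile x_0)$ stated in Remark~\ref{remcechsetup}: since $\mathrm db = 0$, the first term vanishes, and what remains is the alternating sum of evaluations of $\mathrm{pr}^*b$ at faces of $\mathrm{pr}^*x$; the key point is that $b$ evaluated against a coface of $x$ is, up to the constant discrepancy $\mathrm dc$, the same as $\mathrm dc$ evaluated at the corresponding point of $G_{k'}$ (namely $g$ or $1$), which lets us re-express $\mathrm d(b\smile x)$ in terms of $c$ and $g$ only.

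Next I would compute $\mathrm d(c\smile 1^2)$ and $\mathrm d(c\smile g)$ separately. Here the subtlety flagged in the remark comes in: because $\varphi_{\mathcal V}$ does not satisfy the naive cocycle condition — instead $(\mathrm{pr}^*_{13}\varphi_{\mathcal V})^{-1}\circ(\mathrm{pr}^*_{23}\varphi_{\mathcal V})\circ(\mathrm{pr}^*_{12}\varphi_{\mathcal V})$ is conjugation by (the image of) $h$ via $\widetilde c$, as established in the computation of $\mathrm d(\mathrm dc)$ earlier — the Leibniz formula for $\mathrm d(c\smile g)$ acquires an extra term measuring this failure, and that extra term is exactly $\widetilde c\smile h$ (or its inverse). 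The pairing with $1^2$ behaves more simply since multiplication by $1$ is trivial; its contribution should cancel the ``diagonal'' terms coming from $\mathrm d(c\smile g)$ and from $\mathrm d(b\smile x)$. The bookkeeping is organized by the parenthesization in the definition $t = (c\smile 1^2 - c\smile g) + (b\smile x)$: the grouping is chosen precisely so that the ill-behaved $\varphi_{\mathcal V}$-terms in $\mathrm d(c\smile g)$ are matched against $\varphi_X$- and $\varphi_G\circ\ell_g^{-1}$-terms elsewhere, leaving only $\widetilde c\smile h$.

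After assembling all three coboundaries, I would collect terms face-by-face on $\Spec(k'^{\otimes_k 4})$. The terms of the form ``$b$ (or $c$) evaluated at an interior face'' should cancel in pairs between $\mathrm d(b\smile x)$ and $\mathrm d(c\smile g)$ once the constant-discrepancy identity $b\circ r^x = \mathrm dc\cdot(\text{const})$ is used to trade one for the other; the terms involving $1^2$ versus $g$ interact through the relation $\mathrm{pr}_1^*x.g = \varphi_X^{-1}(\mathrm{pr}_2^*x)$ defining $g$, which is what turns an evaluation of $b$ at $\varphi_X^{-1}(\mathrm{pr}_2^*x)$ into an evaluation at $\mathrm{pr}_1^*x.g$; and the single surviving obstruction term is $\widetilde c\smile h$ by the displayed formula for $\mathrm d(\mathrm dc)$. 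One must also check that $\mathrm dt$ genuinely lands in $\mathrm{\check Z}^3(k,\mathbf G_{\mathrm m})$, i.e.\ that the right-hand side $\widetilde c\smile h$ is a cocycle — this follows because $\widetilde c$ is (the image of) a cocycle representing $A\in\mathrm{\check H}^1(k,\widehat H_\ab)$ and $h$ is a cocycle representing $\xi_X^{\ab}\in\mathrm{\check H}^2(k,H_\ab)$, so their cup product is closed, and the cocycle identity for $b,c$ (namely $\mathrm d(b,\overline c)=0$) guarantees no stray terms.

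\textbf{Main obstacle.} The genuine difficulty is not conceptual but combinatorial: tracking the signs and the many ``correction'' terms produced by the fact that $\varphi_{\mathcal V}$ (and the twisted $\varphi_G\circ\ell_g^{-1}$) fail the strict cocycle condition, so that every application of a Leibniz rule introduces terms that are themselves not obviously zero. The parenthesized form of $t$ is engineered to make these cancel, but verifying the cancellation requires writing all terms over the common base $k'^{\otimes_k 4}$ with consistent projection conventions (favoring smallest-indexed projections, per the paper's convention) and matching each $\varphi_{\mathcal V}$-induced term against its partner. I expect this to be where the bulk of the work lies, and where an error is most likely; the rest — $\mathrm d\chi^v = h_v$, $\chi^v\in\Hb$, and the final comparison $[(\alpha_v\smallsmile\chi^v)+t_v] = [b_v-\mathrm db^v]$ in $\mathrm{\check H}^2(k_v,\mathbf G_{\mathrm m})$ — follows by restricting these global identities to each completion and using the setup of Remark~\ref{remcechsetup}.
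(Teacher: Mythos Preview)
Your plan is correct and matches the paper's approach: compute $\mathrm d(c\smile g_0)$ for a general $g_0$ (the paper does this uniformly, then specializes to $g_0=1^2$ and $g_0=g$ and subtracts), use $\mathrm db=0$ together with $b\circ r^x = \mathrm dc + \text{const}$ to rewrite $\mathrm d(b\smile x)$, and observe that the surviving term is exactly the $\widetilde c$-of-$h$ correction from the failure of the twisted descent datum to satisfy the cocycle condition. The paper's proof is precisely this bookkeeping written out explicitly, so your identification of the combinatorial cancellation as the main obstacle is accurate.
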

\begin{prf}
Let $g_0$ denote some element of $G(k'\otimes_k k')$. Then by definition:
$$c\smile g_0 = \mathrm{pr}_{12}^*c\left(\mathrm{pr}_{12}^*(\varphi_G\circ\ell_g^{-1})^{-1}(\mathrm{pr}_{23}^*g_0)\right) = \mathrm{pr}_{12}^*c\left(\mathrm{pr}_{12}^*g\cdot \mathrm{pr}_{12}^*\varphi_G^{-1}(\mathrm{pr}_{23}^*g_0)\right)$$
Reordering terms in the definition of $\mathrm d(c\smile g_0)$, we get
\begin{align*}
\hspace{30pt}&\hspace{-30pt}
\begin{aligned}
    \mathrm d(c\smile g_0) + \mathrm{pr}_{13}^*c\left(\mathrm{pr}_{13}^*g\cdot \mathrm{pr}_{13}^*\varphi_G^{-1}(\mathrm{pr}_{34}^*g_0)\right) &- \mathrm{pr}_{12}^*c\left(\mathrm{pr}_{12}^*g\cdot \mathrm{pr}_{12}^*\varphi_G^{-1}(\mathrm{pr}_{24}^*g_0)\right) \\
    &+ \mathrm{pr}_{12}^*c\left(\mathrm{pr}_{12}^*g\cdot \mathrm{pr}_{12}^*\varphi_G^{-1}(\mathrm{pr}_{23}^*g_0)\right)
\end{aligned}\\
&= \mathrm{pr}_{12}^*\varphi_{\mathbf G_{\mathrm m}}^{-1}\left(\mathrm{pr}_{23}^*c\left(\mathrm{pr}_{23}^*g\cdot \mathrm{pr}_{23}^*\varphi_G^{-1}(\mathrm{pr}_{34}^*g_0)\right)\right) \\
&= \left(\mathrm{pr}_{12}^*\varphi_{\mathbf G_{\mathrm m}}^{-1}\circ\mathrm{pr}_{23}^*c\circ\mathrm{pr}_{12}^*(\varphi_G\circ\ell_g^{-1})\right)\left(\mathrm{pr}_{12}^*g\cdot\mathrm{pr}_{12}^*\varphi_G^{-1}\left(\mathrm{pr}_{23}^*g\cdot \mathrm{pr}_{23}^*\varphi_G^{-1}(\mathrm{pr}_{34}^*g_0)\right)\right) \\
&= \left(\mathrm{pr}_{12}^*\varphi_{\mathcal V}^{-1}(\mathrm{pr}_{23}^*c)\right)\left(\mathrm{pr}_{12}^*g\cdot\mathrm{pr}_{12}^*\varphi_G^{-1}(\mathrm{pr}_{23}^*g)\cdot \mathrm{pr}_{12}^*\varphi_G^{-1}\left( \mathrm{pr}_{23}^*\varphi_G^{-1}(\mathrm{pr}_{34}^*g_0)\right)\right) \\
&= \left(\mathrm{pr}_{12}^*\varphi_{\mathcal V}^{-1}(\mathrm{pr}_{23}^*c)\right)\left(\mathrm{pr}_{123}^*h\cdot \mathrm{pr}_{13}^*g\cdot\mathrm{pr}_{13}^*\varphi_G^{-1}(\mathrm{pr}_{34}^*g_0)\right) \\
&= \left(\mathrm{pr}_{12}^*\varphi_{\widehat{H}}^{-1}(\mathrm{pr}_{23}^*\widetilde{c})\right)(\mathrm{pr}_{123}^*h) +  \left(\mathrm{pr}_{12}^*\varphi_{\mathcal V}^{-1}(\mathrm{pr}_{23}^*c)\right)\left(\mathrm{pr}_{13}^*g\cdot\mathrm{pr}_{13}^*\varphi_G^{-1}(\mathrm{pr}_{34}^*g_0)\right)
\end{align*}
or equivalently (this is just the usual formula for $\mathrm d(c\smile g_0) - \mathrm dc\smile g_0$, but an extra term with $h$ appears because $\varphi_G\circ\ell_g^{-1}$ does not satisfy the cocycle property):
\begin{align*}
\mathrm d(c\smile g_0) - \left(\mathrm{pr}_{12}^*\varphi_{\widehat{H}}^{-1}(\mathrm{pr}_{23}^*\widetilde{c})\right)&(\mathrm{pr}_{123}^*h) - \mathrm{pr}_{123}^*\mathrm dc\left(\mathrm{pr}_{13}^*g\cdot \mathrm{pr}_{13}^*\varphi_G^{-1}(\mathrm{pr}_{34}^*g_0)\right) =
\\\mathrm{pr}_{12}^*c\left(\mathrm{pr}_{12}^*g\cdot \mathrm{pr}_{12}^*\varphi_G^{-1}(\mathrm{pr}_{24}^*g_0)\right) -\mathrm{pr}_{12}^*c&\left(\mathrm{pr}_{12}^*g\cdot \mathrm{pr}_{12}^*\varphi_G^{-1}(\mathrm{pr}_{23}^*g_0)\right) - \mathrm{pr}_{12}^*c\left(\mathrm{pr}_{13}^*g\cdot \mathrm{pr}_{13}^*\varphi_G^{-1}(\mathrm{pr}_{34}^*g_0)\right)
\end{align*}

For $g_0 = 1^2$, this equality becomes
\begin{align*}
\mathrm d(c\smile 1^2) &- \left(\mathrm{pr}_{12}^*\varphi_{\widehat{H}}^{-1}(\mathrm{pr}_{23}^*\widetilde{c})\right)(\mathrm{pr}_{123}^*h) - \mathrm{pr}_{123}^*\mathrm dc\,(\mathrm{pr}_{13}^*g) = - \mathrm{pr}_{12}^*c\,(\mathrm{pr}_{13}^*g)
\end{align*}
while for $g_0 = g$, it becomes:
\begin{align*}
\mathrm d(c\smile g) &- \left(\mathrm{pr}_{12}^*\varphi_{\widehat{H}}^{-1}(\mathrm{pr}_{23}^*\widetilde{c})\right)(\mathrm{pr}_{123}^*h) - \mathrm{pr}_{123}^*\widetilde{\mathrm dc}\,(\mathrm{pr}_{134}^*h) - \mathrm{pr}_{123}^*\mathrm dc\,(\mathrm{pr}_{14}^*g) =\\
&\mathrm{pr}_{12}^*\widetilde{c}\,(\mathrm{pr}_{124}^*h-\mathrm{pr}_{123}^*h-\mathrm{pr}_{134}^*h) + \mathrm{pr}_{12}^*c\,(\mathrm{pr}_{14}^*g) -\mathrm{pr}_{12}^*c\,(\mathrm{pr}_{13}^*g) - \mathrm{pr}_{12}^*c\,(\mathrm{pr}_{14}^*g)
\end{align*}
Subtracting the two (and noting that $\widetilde{\mathrm dc} = \mathrm d\widetilde{c} = 0$), we get:
$$\mathrm d(c\smile 1^2-c\smile g) + \mathrm{pr}_{123}^*\mathrm dc\,(\mathrm{pr}_{14}^*g) - \mathrm{pr}_{123}^*\mathrm dc\,(\mathrm{pr}_{13}^*g) = \mathrm{pr}_{12}^*\widetilde{c}\,(\mathrm{pr}_{134}^*h-\mathrm{pr}_{124}^*h+\mathrm{pr}_{123}^*h)$$

On the other hand, we calculate (using that $\mathrm db = 0$)
\begin{align*}
    \mathrm d(b\smile x) = \mathrm d(b\smile x) - \mathrm db\smile x &= \mathrm{pr}_{123}^*b\left(\mathrm{pr}_{14}^*\varphi_X^{-1}(\mathrm{pr}_4^*x)\right)-\mathrm{pr}_{123}^*b\left(\mathrm{pr}_{13}^*\varphi_X^{-1}(\mathrm{pr}_3^*x)\right)\\
    &= \mathrm{pr}_{123}^*(b\circ r^x)\left(\mathrm{pr}_{14}^*g\right)-\mathrm{pr}_{123}^*(b\circ r^x)\left(\mathrm{pr}_{13}^*g\right)\\
    &= \mathrm{pr}_{123}^*\mathrm dc\left(\mathrm{pr}_{14}^*g\right)-\mathrm{pr}_{123}^*\mathrm dc\left(\mathrm{pr}_{13}^*g\right)
\end{align*}
where we have used that $b\circ r^x$ and $\mathrm dc$ differ by a constant (since $\Psi(b) = \mathrm d\overline{c}$). Finally, we have
$$\widetilde{c}\smile h = \mathrm{pr}_{12}^* \widetilde{c}\left(\mathrm{pr}_{12}^*\varphi_H^{-1}(\mathrm{pr}_{234}^*h)\right) = \mathrm{pr}_{12}^* \widetilde{c}\,(\mathrm dh) +\mathrm{pr}_{12}^*\widetilde{c}\,(\mathrm{pr}_{134}^*h-\mathrm{pr}_{124}^*h+\mathrm{pr}_{123}^*h)$$
and $\mathrm dh = 0$. Combining the last three identities gives that $\mathrm dt = \widetilde{c}\smile h$.
\end{prf}

Recall by Remark \ref{remcechsetup} that $[\,\widetilde c\,] = A\in\Sh^1(\widehat{H}_\ab)$ is the class we started with. Using the two previous propositions, we may rewrite the desired identity \eqref{eqptvsbm2} in the following form:
$$\big[(\widetilde{c}_v\smile\chi^v)+(c_v\smile 1^2_v - c_v\smile g_v)+(b_v\smile x_v)\big] = \big[(b_v\smile x_v)-(\mathrm db^v\smile x_v)\big]$$
However, this new identity is clearly implied by the following straightforward calculation, which ends this section and the proof of Lemma \ref{lemphslift}:

\begin{prop}
Let $1^1_v\in G(k'_v)$ denote the neutral element. The equality 
$$(\widetilde{c}_v\smile\chi^v)+(c_v\smile 1^2_v - c_v\smile g_v) = -(\mathrm db^v\smile x_v)+\mathrm d(b^v\smile x_v) + \mathrm d(\mathrm dc^v\smile g^v-\mathrm dc^v\smile 1^1_v)$$
holds in $\mathrm{\check Z}^2(k_v, \mathbf G_{\mathrm m})$.
\end{prop}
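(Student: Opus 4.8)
The plan is to prove this as a pointwise identity of Čech $2$-cochains valued in $\mathbf G_{\mathrm m}$ over $k_v$, by expanding every $\smile$ according to its definition in Remark \ref{remcechsetup} and then reducing. Both sides lie in $\mathrm{\check C}^2(k'_v/k_v,\mathbf G_{\mathrm m})$, and each term becomes an evaluation of $b^v$, $c_v$, $\widetilde c_v$ or $\mathrm dc^v$ at an explicit $X$- or $G$-valued point over $k'_v\otimes_{k_v}k'_v$ or $k'_v\otimes_{k_v}k'_v\otimes_{k_v}k'_v$, built from $g=g_x$, $h=h_x$, $g^v$ and the relevant base change of $x_v$. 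The two geometric facts feeding these reductions are $\varphi_X^{-1}(\mathrm{pr}_2^*x)=\mathrm{pr}_1^*x.g$ (so cupping against $x_v$ amounts to evaluating along $r^{x_v}(g_v)$) and the convention that the ``descent datum'' on $\mathcal V$ is $\varphi_{\mathcal V}=\varphi_G\circ\ell_g^{-1}$ (which governs $c_v\smile 1^2_v$ and $c_v\smile g_v$). The three differentials on the right are expanded with the cup-product Leibniz formulas of Remark \ref{remcechsetup}: there is no ``$\mathrm dx_v$'' contribution, but all the face terms from $x_v$, $g^v$ and $1$ do appear.

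The organizing move is to eliminate $b^v$ using its defining property $\Psi(b^v)=\overline c_v+\mathrm d\overline c^v$. Since $\ker\Psi=\mathbf G_{\mathrm m}$, this says exactly that $b^v\circ r^{x_v}$ and $c_v+\mathrm dc^v$ agree as rational maps $G_{k'_v}\dashrightarrow\mathbf G_{\mathrm m}$ up to an additive $\mathbf G_{\mathrm m}(k'_v\otimes_{k_v}k'_v)$-valued constant $e^v$ (constant along the $G$-direction). Substituting $b^v\circ r^{x_v}=c_v\cdot\mathrm dc^v\cdot e^v$ into $b^v\smile x_v$ and into $\mathrm db^v\smile x_v$ turns both into expressions in $c_v$, $c^v$ and their differentials; the factor $e^v$ contributes only $\mathrm de^v$, which occurs with opposite signs in $\mathrm d(b^v\smile x_v)$ and in $-(\mathrm db^v\smile x_v)$ and so drops out. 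After this substitution the whole identity is phrased purely in terms of $c_v$, $c^v$, $\widetilde c_v$, $g$, $h$, $g^v$, and one finishes by invoking: the relation $\widetilde c(h_0)=c(h_0g_0)-c(g_0)$ (recognizing $c_v\smile 1^2_v-c_v\smile g_v$ as $\widetilde c$ of an explicit element and matching it against $\widetilde c_v\smile\chi^v$ via the formula $\chi^v=g_v\,\varphi_G^{-1}(\mathrm{pr}_2^*g^v)\,(\mathrm{pr}_1^*g^v)^{-1}$); the cocycle identities $\mathrm db=0$ and $\mathrm dh=1$; and the fact, already used to produce $t$, that $b\circ r^x$ and $\mathrm dc$ differ by a constant. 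The remaining terms then cancel in pairs, and together with the two preceding propositions this yields \eqref{eqptvsbm2} for every $v$, hence \eqref{eqptvsbm} and Lemma \ref{lemphslift}.

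As the text already signals, the difficulty is bookkeeping rather than conceptual, and the one real subtlety is the same one encountered in the previous proposition: the twisted map $\varphi_{\mathcal V}=\varphi_G\circ\ell_g^{-1}$ is \emph{not} a cocycle — its failure is measured precisely by $h_x$ — so the Leibniz formula for $\mathrm d(c_0\smile g_0)$ picks up correction terms involving $\widetilde c$ and $h$ that must be carried through every expansion and then reconciled against the $\widetilde c_v\smile\chi^v$ term. Secondary annoyances are the sign in the shift $\widehat H_\ab[-1]$ (where $\mathrm d^{\mathcal C[-1]}=-\mathrm d^{\mathcal C}$, cf.\ Definition \ref{defnhypercech}) and keeping the base changes of $x$, $g$, $h$ from $k'$ to $k'_v$ consistent, exactly as flagged in Remark \ref{remcechsetup}. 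Concretely I would: expand the left-hand side; expand each differential on the right, recording the $h$-correction terms separately; substitute for $b^v\circ r^{x_v}$ and cancel the $e^v$-terms; collect the $\widetilde c$-/$h$-terms and check they cancel using the explicit $\chi^v$ and $\mathrm dh=1$; and finally verify that the leftover $c_v$- and $c^v$-terms cancel using $\mathrm db=0$ and $b\circ r^x\equiv\mathrm dc$.
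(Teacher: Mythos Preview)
Your plan is essentially the paper's proof: expand the $\smile$-terms via the Leibniz-type formula, specialize $\mathrm d(\mathrm dc^v\smile g_0^v)$ at $g_0^v\in\{1^1_v,g^v\}$, and replace $b^v\circ r^{x_v}$ by $c_v+\mathrm dc^v$ up to a constant (from $\Psi(b^v)=\overline c_v+\mathrm d\overline c^v$) so that the constant drops out of the difference $-(\mathrm db^v\smile x_v)+\mathrm d(b^v\smile x_v)$. Two small corrections to your ingredient list: the identities $\mathrm db=0$, $\mathrm dh=1$, and $b\circ r^x\equiv\mathrm dc$ belong to the \emph{previous} proposition (computing $\mathrm dt$) and are not used here; what you do need and did not name is $\mathrm d\widetilde c^{\,v}=-\widetilde c_v$ (the defining choice of $c^v$), which is precisely what converts the $\mathrm d\widetilde c^{\,v}$-terms produced by the $h$-correction into the $\widetilde c_v\smile\chi^v$ term on the left.
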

\begin{prf}
Let $g^v_0$ denote some element of $G(k'_v)$. Then by definition:
$$\mathrm dc^v\smile g^v_0 = \mathrm dc\left((\varphi_G\circ\ell_g^{-1})^{-1}(\mathrm{pr}_2^*g^v_0)\right) = \mathrm dc\left(g_v\cdot \varphi_G^{-1}(\mathrm{pr}_2^*g^v_0)\right)$$
As in the proof of the previous proposition, we calculate:
\begin{align*}
\mathrm d(\mathrm dc^v&\smile g^v_0) - \mathrm{pr}_{12}^*\mathrm dc^v\left(\mathrm{pr}_{12}^*g_v\cdot \mathrm{pr}_{12}^*\varphi_G^{-1}(\mathrm{pr}_2^*g^v_0)\right) = - \mathrm{pr}_{13}^*\mathrm dc^v\left(\mathrm{pr}_{13}^*g_v\cdot \mathrm{pr}_{13}^*\varphi_G^{-1}(\mathrm{pr}_3^*g^v_0)\right) \\
&\;\begin{aligned}    + \left(\mathrm{pr}_{12}^*\varphi_{\mathcal V}^{-1}(\mathrm{pr}_{23}^*\mathrm dc)\right)\!&\left(\mathrm{pr}_{12}^*g_v\cdot \mathrm{pr}_{12}^*\varphi_G^{-1}(\mathrm{pr}_{23}^*g_v)\cdot \mathrm{pr}_{13}^*\varphi_G^{-1}(\mathrm{pr}_3^*g^v_0)\right)
\\
= \mathrm{pr}_{13}^*\mathrm d\widetilde{c}^v(h_v) +\left(- \mathrm{pr}_{13}^*\mathrm dc^v + \mathrm{pr}_{12}^*\varphi_{\mathcal V}^{-1}(\mathrm{pr}_{23}^*\mathrm dc)\right)\!&\left(\mathrm{pr}_{12}^*g_v\cdot \mathrm{pr}_{12}^*\varphi_G^{-1}(\mathrm{pr}_{23}^*g_v)\cdot \mathrm{pr}_{13}^*\varphi_G^{-1}(\mathrm{pr}_3^*g^v_0)\right)
\end{aligned} \\
&= \mathrm{pr}_{13}^*\mathrm d\widetilde{c}^v(h_v) +\big(\mathrm d(\mathrm dc^v) - \mathrm{pr}_{12}^*\mathrm dc^v\big)\!\left(\mathrm{pr}_{12}^*g_v\cdot \mathrm{pr}_{12}^*\varphi_G^{-1}(\mathrm{pr}_{23}^*g_v)\cdot \mathrm{pr}_{13}^*\varphi_G^{-1}(\mathrm{pr}_3^*g^v_0)\right)
\end{align*}
Now, recall that $\mathrm d(\mathrm dc^v)$ is constantly equal to some value $C$, because $\widetilde{\mathrm d(\mathrm dc^v)} = \mathrm d(\mathrm d\widetilde{c}^v) = 0$. For $g^v_0 = 1^1_v$, we therefore have
\begin{align*}
\mathrm d(\mathrm dc^v\smile 1^1_v) - \mathrm{pr}_{13}^*\mathrm d\widetilde{c}^v(h_v) - C &= \mathrm{pr}_{12}^*\mathrm dc^v\left(\mathrm{pr}_{12}^*g_v\right) - \mathrm{pr}_{12}^*\mathrm dc^v\left(\mathrm{pr}_{12}^*g_v\cdot \mathrm{pr}_{12}^*\varphi_G^{-1}(\mathrm{pr}_{23}^*g_v)\right) \\
&= \mathrm{pr}_{12}^*\mathrm dc^v\left(\mathrm{pr}_{12}^*g_v\right) - \mathrm{pr}_{12}^*\mathrm dc^v\left(\mathrm{pr}_{13}^*g_v\right) - \mathrm{pr}_{12}^*\mathrm d\widetilde{c}^v(h_v)
\end{align*}
and similarly, when substituting $g^v_0 = g^v$, this equality becomes:
\begin{align*}
\mathrm d(\mathrm dc^v&\smile g^v) - \mathrm{pr}_{13}^*\mathrm d\widetilde{c}^v(h_v) - C\\ &= - \mathrm{pr}_{12}^*\mathrm d\widetilde{c}^v\left(\left(\mathrm{pr}_{12}^*g_v\cdot \mathrm{pr}_{12}^*\varphi_G^{-1}(\mathrm{pr}_{23}^*g_v)\cdot \mathrm{pr}_{13}^*\varphi_G^{-1}(\mathrm{pr}_3^*g^v)\right)\cdot\left(\mathrm{pr}_{12}^*g_v\cdot \mathrm{pr}_{12}^*\varphi_G^{-1}(\mathrm{pr}_2^*g^v)\right)^{-1}\right) \\
&= - \mathrm{pr}_{12}^*\mathrm d\widetilde{c}^v\left(\left(\mathrm{int}(\mathrm{pr}_{12}^*g_v)\circ \mathrm{pr}_{12}^*\varphi_G^{-1}\right)\!\left(\mathrm{pr}_{23}^*g_v\cdot \mathrm{pr}_{23}^*\varphi_G^{-1}(\mathrm{pr}_3^*g^v)\cdot(\mathrm{pr}_2^*g^v)^{-1}\right)\right) \\
&= - \mathrm{pr}_{12}^*\mathrm d\widetilde{c}^v\left(\mathrm{pr}_{12}^*\varphi_H^{-1}(\mathrm{pr}_{23}^*\chi^v)\right)
\end{align*}
Next, we compute the following expression, using that $\Psi(b^v) = \widetilde{c}_v+\mathrm d\widetilde{c}^v$:
\begin{align*}
-(\mathrm db^v\smile x_v) + \mathrm d(b^v\smile x_v) &= \mathrm{pr}_{12}^*b^v\left(\mathrm{pr}_{12}^*\varphi_X^{-1}(\mathrm{pr}_2^*x_v)\right)-\mathrm{pr}_{12}^*b^v\left(\mathrm{pr}_{13}^*\varphi_X^{-1}(\mathrm{pr}_3^*x_v)\right)\\
    &= \mathrm{pr}_{12}^*(b^v\circ r^x)\left(\mathrm{pr}_{12}^*g_v\right)-\mathrm{pr}_{12}^*(b^v\circ r^x)\left(\mathrm{pr}_{13}^*g_v\right)\\
    &= \mathrm{pr}_{12}^*(c_v+\mathrm dc^v)\left(\mathrm{pr}_{12}^*g_v\right)-\mathrm{pr}_{12}^*(c_v+\mathrm dc^v)\left(\mathrm{pr}_{13}^*g_v\right)
\end{align*}
Combining the last three identities and using that $d\widetilde{c}^v = -\widetilde{c}_v$, we conclude that the right-hand side of the equality in the proposition statement can be written as
\begin{align*}
&\mathrm{pr}_{12}^*\widetilde{c}_v\left(\mathrm{pr}_{12}^*\varphi_H^{-1}(\mathrm{pr}_{23}^*\chi^v)\right) + \mathrm{pr}_{12}^*c_v\left(\mathrm{pr}_{12}^*g_v\right)-\mathrm{pr}_{12}^*c_v\left(\mathrm{pr}_{13}^*g_v\right) - \mathrm{pr}_{12}^*\widetilde{c}_v(h_v)\\
=\;& \mathrm{pr}_{12}^*\widetilde{c}_v\left(\mathrm{pr}_{12}^*\varphi_H^{-1}(\mathrm{pr}_{23}^*\chi^v)\right) + \mathrm{pr}_{12}^*c_v\left(\mathrm{pr}_{12}^*g_v\right)-\mathrm{pr}_{12}^*c_v\left(\mathrm{pr}_{12}^*g_v\cdot \mathrm{pr}_{12}^*\varphi_G^{-1}(\mathrm{pr}_{23}^*g_v)\right)\\
=\;& (\widetilde{c}_v\smile\chi^v)+(c_v\smile 1^2_v - c_v\smile g_v)
\end{align*}
which is exactly the left-hand side.
\end{prf}

\newpage

\bigskip
\medskip



\begin{thebibliography}{[Abc12]}
\medskip
    \setlength{\itemsep}{5 pt}

    \bibitem[Ald08]{Ald08} E.\ Aldrovandi, \textit{2-Gerbes bound by complexes of gr-stacks, and cohomology}, J.\ Pure and App.\ Alg.\ \textbf{212} (2008), pp.\ 994-1038.\

    \bibitem[BLR90]{BLR90} S.\ Bosch, W.\ L\"utkebohmert, M.\ Raynaud, \textit{N\'eron models}, Springer, 1990.

    \bibitem[Bre94]{Bre94} L.\ Breen.\ \textit{On the classification of 2-gerbes and 2-stacks}.\ Ast\'erisque No.\ \textbf{225}, Soci\'et\'e Math\'ematique de France, Paris, 1994.\


    \bibitem[Brv92]{Brv92} M.\ Borovoi, \textit{Non-abelian hypercohomology of a group with coefficients in a crossed module, and Galois cohomology}, preprint, 1992.\
    
    \bibitem[Brv93]{Brv93} M.\ Borovoi, \textit{Abelianization of the second nonabelian Galois cohomology}, Duke Math.\ J.\ \textbf{72} (1993), pp.\ 217–239

    \bibitem[Brv96]{Brv96} M.\ Borovoi, \textit{The Brauer-Manin obstructions for homogeneous spaces with connected or abelian stabilizer}, J.\ Reine Angew.\ Math.\ \textbf{473} (1996), pp.\ 181-194.\

    \bibitem[BvH09]{BvH09} M.\ Borovoi and J.\ van Hamel, \textit{Extended Picard complexes and linear algebraic groups}, J.\ Reine Angew.\ Math.\ \textbf{627} (2009), pp.\ 53-82.\

    \bibitem[BvH12]{BvH12} M.\ Borovoi and J.\ van Hamel, \textit{Extended equivariant Picard complexes and homogeneous spaces}, Transform.\ Groups \textbf{17} (2012), pp.\ 51-86.\

    \bibitem[GA12]{GA12} C.\ D.\ Gonz\'alez-Avil\'es, \textit{Quasi-abelian crossed modules and nonabelian cohomology}, J.\ Algebra \textbf{369} (2012), pp.\ 235-255.\

    \bibitem[CF67]{CF67} J.W.S.\ Cassels, A.\ Fr\"ohlich, \textit{Algebraic Number Theory}, Academic Press, 1967

    \bibitem[ČesTC]{CesTC} K.\ Česnavičius, \textit{Topology on Cohomology of Local Fields}, Forum of Math.\ Sigma \textbf{3} (2015), pp.\ 1–55.\

    \bibitem[ČesPT]{CesPT} K.\ Česnavičius, \textit{Poitou–Tate Without Restrictions on the Order}, Mathematical Research Letters, \textbf{22} (2015), no.\ 6, 1621-1666.\

    
    \bibitem[Con12]{Con12} B.\ Conrad, \textit{Finiteness theorems for algebraic groups over function fields}, Compositio Math.\ \textbf{148} (2012), pp.\ 555-639.\

    

    \bibitem[CGP15]{CGP15} B.\ Conrad, O.\ Gabber, G.\ Prasad, \textit{Pseudo-reductive groups} (2nd ed.), Cambridge Univ.\ Press, 2015.\

    \bibitem[CP15]{CP15} B.\ Conrad, G.\ Prasad, \textit{Classification of Pseudo-reductive Groups}, Annals of Math.\ Stud.\ \textbf{191}, Princeton Univ.\ Press, 2015.\
    
    \bibitem[DD99]{DD99} P.\ D\`ebes, J.-C.\ Douai, \textit{Gerbes and Covers}, Comm.\ in Algebra, vol.\ \textbf{27}, no 2 (1999), 577-594.\

    \bibitem[Del79]{Del79} P.\ Deligne, \textit{Vari\'et\'es de Shimura: Interpr\'etation modulaire, et techniques de construction de mod\`eles canoniques}.\ in: Proc.\ Symp.\ Pure Math.\ vol.\ \textbf{33}, Part 2, pp.\ 247–289.\ Providence: AMS 1979

    \bibitem[DG70]{DG70} M.\ Demazure, P.\ Gabriel, \textit{Groupes alg\'ebriques}, Masson, Paris, 1970.\

    \bibitem[DH22]{DH22} C.\ Demarche, D.\ Harari, \textit{Local-global principles for homogeneous spaces of reductive groups over global function fields}, Ann.\ H.\ Lebesgue, Vol.\ \textbf{5} (2022), pp.\ 1111-1149.\

    \bibitem[DLA19]{DLA19} C.\ Demarche, G.\ Lucchini Arteche, \textit{Le principe de Hasse pour les espaces homogènes : réduction au cas des stabilisateurs finis}, Compos.\ Math.\ \textbf{155} (2019), no.\ 8, pp.\ 1568-1593

    \bibitem[DM82]{DM82} P.\ Deligne, J.\ Milne, Appendix to \textit{Tannakian Categories} in: Lecture Notes in Math.\ \textbf{900}, Springer-Verlag, New York, 1982.\

    \bibitem[Đon24]{Don24} A.\ Đonlagić, \textit{Brauer-Manin obstructions for homogeneous spaces of commutative affine algebraic groups over global fields}, 2024, preprint available at: \texttt{\url{https://arxiv.org/abs/2410.12127v2}}

    \bibitem[Dou76]{Dou76} J.-C.\ Douai, \textit{2-Cohomologie Galoisienne des groupes semi-simples}.\ Th\`ese de doctorat d’etat, Universit\'e de Lille (1976).\ \'Editions universitaires europ\'eennes, 2010.\

    \bibitem[EGA]{EGA} A.\ Grothendieck, \textit{El\'ements de G\'eom\'etrie Alg\'ebrique}, Publ.\ Math.\ Inst.\ Hautes \'Etudes Sci.\ \textbf{4}, \textbf{8}, \textbf{11}, \textbf{17}, \textbf{20}, \textbf{24}, \textbf{28}, \textbf{32} (1960-67).

    \bibitem[FSS98]{FSS98} Y.\ Flicker, C.\ Scheiderer, R.\ Sujatha, \textit{Grothendieck’s theorem on non-abelian H2 and local-global principles}, J.\ Amer.\ Math.\ Soc.\ \textbf{11}, no.\ 3 (1998), pp.\ 731-750.

    \bibitem[Gir71]{Gir71} J.\ Giraud, \textit{Cohomologie non ab\'elienne}.\ Grundlehren der mathematischen Wissenschaften \textbf{179}, Springer, Berlin, 1971.\

    \bibitem[Gro68]{Gro68} A.\ Grothendieck, \textit{Le groupe de Brauer.\ I, II et III, en : Dix Expos\'es sur la Cohomologie des Sch\'emas}, North-Holland, Amsterdam, 1968, pp.\ 46–188.\

    \bibitem[Har20]{Har20} D.\ Harari, \textit{Galois Cohomology and Class Field Theory}, Springer, 2020.\

    
    \bibitem[Mar07]{Mar07} B.\ Margaux, \textit{Passage to the Limit in Non-Abelian Cech Cohomology}, Journal of Lie Theory \textbf{17} (2007), no.\ 3, 591-596.
    
    \bibitem[Mat90]{Mat90} H.\ Matsumura, \textit{Commutative Ring Theory}, Cambridge Univ.\ Press, 1990
    
    \bibitem[Mil80]{Mil80} J.\ Milne, \textit{\'Etale Cohomology}, Princeton Univ.\ Press, 1980

    \bibitem[Mil06]{Mil06} J.\ Milne, \textit{Arithmetic Duality Theorems}, 2nd ed.\, BookSurge, LLC, Charleston, SC, 2006.\

    \bibitem[Mil17]{Mil17} J.\ Milne, \textit{Algebraic Groups: The Theory of Group Schemes of Finite Type over a Field} (Cambridge University Press, 2017).\

    \bibitem[Lan02]{Lan02} S.\ Lang, \textit{Algebra} (3rd ed.), Grad.\ texts in math.\ \textbf{211}, Springer, 2002.\

    \bibitem[Ngu25]{Ngu25} \foreignlanguage{vietnamese}{Nguyễn} M.\ Linh, \textit{Nonabelian descent types}, J.\ Algebra \textbf{661} (2025), pp.\ 380–429.

    \bibitem[NNR24]{NNR24}
    \foreignlanguage{vietnamese}{Nguyễn D.\ Tân,\, Nguyễn Q.\ Thắng},\,
    Z.\ Rosengarten,\, \textit{On the Galois and Flat Cohomology of Unipotent Algebraic Groups over Local and Global Function Fields II}, Michigan Math.\ J.\ Vol.\ \textbf{74}, no.\ 4 (2024), pp.\ 721-741.\
    
    
    \bibitem[Poo17]{Poo17} B.\ Poonen, \textit{Rational Points on Varieties}, Graduate Studies in Mathematics, Vol.\ \textbf{186}, 2017.\

    \bibitem[PR94]{PR94} V.\ P.\ Platonov, A.\ S.\ Rapinchuk, \textit{Algebraic Groups and Number Theory}, Nauka, Moscow, 1991 (Russian).\ English translation: Academic Press, 1994.\
    
    \bibitem[RosTD]{RosTD} Z.\ Rosengarten, \textit{Tate Duality in Positive Dimension over Function Fields}, Memoirs of the American Mathematical Society Vol.\ \textbf{290}, no.\ 1444 (2023)

    \bibitem[RosTN]{RosTN} Z.\ Rosengarten, \textit{Tamagawa numbers and other invariants of pseudoreductive groups over global function fields}, Algebra and Number Theory \textbf{15}(8), pp.\ 1865-1920 (2021)


    \bibitem[Rud62]{Rud62} W.\ Rudin, \textit{Fourier Analysis On Groups}, John Wiley and Sons, New York, 1962.
    
    \bibitem[San81]{San81} J.-J.\ Sansuc, \textit{Groupe de Brauer et arithm\'etique des groupes alg\'ebriques lin\'eaires sur un corps de nombres}, J.\ Reine Angew.\ Math.\ \textbf{327} (1981) 12-80.\
    
    \bibitem[Ser97]{Ser97} J.-P.\ Serre, \textit{Galois Cohomology}, Springer, 1997.\
    
    \bibitem[Ser16]{Ser16} J.-P.\ Serre, \textit{Finite Groups: an introduction}, International Press, 2016.\

    \bibitem[SGA3]{SGA3} M.\ Demazure, A.\ Grothendieck, \textit{Sch\'emas en groupes} (Volumes 1, 2, 3), Lecture Notes in Math \textbf{151}, \textbf{152}, \textbf{153}, Springer-Verlag, New York (1970). Online reissue by P.\ Polo and Ph.\ Gille (2024) available at: \url{https://webusers.imj-prg.fr/~patrick.polo/SGA3/}
    

    \bibitem[Spr66]{Spr66} T.\ A.\ Springer, \textit{Nonabelian H2 in Galois cohomology}.\ In: Algebraic Groups and Discontinuous Subgroups, ed.\ A.\ Borel, G.\ D.\ Mostow, Proc.\ Symp.\ Pure Math.\ IX, Providence, R. I., 1966, pp.\ 164-182
    
    \bibitem[Stacks]{Stacks} \textit{Stacks Project}: \url{https://stacks.math.columbia.edu}

\end{thebibliography}
\end{document}